\pdfoutput=1
\documentclass[11pt]{amsart}

\usepackage{graphicx}
\usepackage[english]{babel}
\usepackage[T1]{fontenc}
\usepackage[latin1]{inputenc}
\usepackage{amsfonts}
\usepackage{amssymb}
\usepackage{amsthm}
\usepackage{amsmath}
\usepackage{tikz}
\usepackage{float}
\usepackage{enumerate}
\usepackage{accents}
\usepackage{mathtools}
\usepackage{mathrsfs}
\usepackage{comment}
\usepackage{afterpage}
\usepackage{bbm}
\usepackage{dsfont}
\usepackage{stmaryrd}
\usepackage{hyperref}
\usepackage{mleftright}
\usepackage{subfig}
\usepackage{soul}
\usepackage{tikz-cd} 
\usepackage{fullpage}
\usepackage{cleveref}

\numberwithin{equation}{section}

\theoremstyle{plain}
\newtheorem{thm}{Theorem}[section]
\newtheorem*{thm*}{Theorem}
\newtheorem{prop}[thm]{Proposition}
\newtheorem*{prop*}{Proposition}
\newtheorem{cor}[thm]{Corollary}
\newtheorem*{cor*}{Corollary}
\newtheorem{lem}[thm]{Lemma}

\newtheorem{thmintro}{Theorem}

\newtheorem{propintro}[thmintro]{Proposition}

\theoremstyle{definition}
\newtheorem{defn}[thm]{Definition}
\newtheorem*{defn*}{Definition}
\newtheorem{ex}[thm]{Example}
\newtheorem{rmk}[thm]{Remark}
\newtheorem*{rmk*}{Remarks}

\newtheorem*{conj*}{Conjecture}
\newtheorem*{quest*}{Question}

\newtheorem{ass}[thm]{Assumption}

\newtheoremstyle{blue-environment}{}{}{}{}{\color{blue}\bfseries}{.}{ }{}
\theoremstyle{blue-environment}

\newcommand{\acts}{\curvearrowright}
\newcommand{\ra}{\rightarrow}
\newcommand{\Ra}{\Rightarrow}

\newcommand{\sq}{\subseteq}

\newcommand{\wt}{\widetilde}
\newcommand{\wh}{\widehat}
\newcommand{\x}{\times}
\renewcommand{\o}{\circ}
\newcommand{\id}{\mathrm{id}}

\newcommand{\mc}{\mathcal}
\newcommand{\mf}{\mathfrak}
\newcommand{\mscr}{\mathscr}

\newcommand{\R}{\mathbb{R}}
\newcommand{\Z}{\mathbb{Z}}
\newcommand{\N}{\mathbb{N}}
\newcommand{\Q}{\mathbb{Q}}

\newcommand{\s}{\sigma}
\newcommand{\eps}{\epsilon}
\newcommand{\Om}{\Omega}
\newcommand{\om}{\omega}
\newcommand{\g}{\gamma}
\newcommand{\G}{\Gamma}
\newcommand{\CAT}{{\rm CAT(0)}}
\newcommand{\Out}{{\rm Out}}
\newcommand{\Aut}{{\rm Aut}}

\newcommand{\SVP}{\mc{SVP}}

\DeclareMathOperator{\SL}{SL}

\DeclareMathOperator{\Elem}{Elem}
\DeclareMathOperator{\Sal}{Sal}

\DeclareMathOperator{\xSal}{xSal}
\DeclareMathOperator{\ot}{at}
\DeclareMathOperator{\pt}{pt}

\newcommand{\X}{\mc{X}}

\DeclareMathOperator{\diam}{diam} 
\DeclareMathOperator{\isom}{Isom}
\DeclareMathOperator{\lk}{lk}
\DeclareMathOperator{\St}{st}
\DeclareMathOperator{\Fix}{Fix}
\DeclareMathOperator{\Min}{Min}
\DeclareMathOperator{\Hom}{Hom}

\DeclareMathOperator{\Gr}{Gr}
\DeclareMathOperator{\rk}{rk}

\DeclareMathOperator{\Perp}{\perp}

\begin{document}

\title{Generators for automorphisms of special groups} 

\author[E.\,Fioravanti]{Elia Fioravanti}\address{Institute of Algebra and Geometry, Karlsruhe Institute of Technology}\email{elia.fioravanti@kit.edu} 
\thanks{The author is supported by Emmy Noether grant 515507199 of the Deutsche Forschungsgemeinschaft (DFG)}

\begin{abstract}
    Let $G$ be a (compact) special group in the sense of Haglund and Wise. We show that $\Out(G)$ is finitely generated, and provide a virtual generating set consisting of Dehn twists and ``pseudo-twists''. We exhibit instances where Dehn twists alone do not suffice and completely characterise this phenomenon: it is caused by certain abelian subgroups of $G$, called ``poison subgroups'', which can be removed by replacing $G$ with a finite-index subgroup.

    Similar results hold for coarse-median preserving automorphisms, without the pathologies: For every special group $G$, the coarse-median preserving subgroups $\Out(G,[\mu])\leq\Out(G)$ are virtually generated by finitely many Dehn twists with respect to splittings of $G$ over centralisers.

    Proofs are based on a novel, hierarchical version of Rips and Sela's shortening argument.
\end{abstract}

\maketitle

\section{Introduction}

Given an infinite, finitely generated group $G$, a fundamental problem is to describe the outer automorphism group $\Out(G)$.
One cannot expect any structure in $\Out(G)$ in complete generality, but three classical examples serve as a guiding light: non-abelian free groups $F_n$, closed surface groups $\pi_1(S)$, and free abelian groups $\Z^n$. The corresponding outer automorphism groups are $\Out(F_n)$, the extended mapping class group ${\rm Mod}^{\pm}(S)$, and the arithmetic group ${\rm GL}_n(\Z)$, whose intricate structures have animated a great deal of research in the last decades. 

In these three examples, it has been known since the work of Dehn and Nielsen in the early 20th century that $\Out(G)$ is finitely generated \cite{Nielsen,Dehn}. In fact, up to passing to finite index, these automorphism groups even admit finite classifying spaces. At the same time, exotic examples also abound. For the Baumslag--Solitar group $G={\rm BS}(2,4)$, the group $\Out(G)$ is not finitely generated \cite{Collins-Levin}: it is locally virtually free, with torsion of unbounded order \cite{Clay-BS}. Furthermore, every countable group arises as $\Out(G)$ for some finitely generated group $G$ \cite{Bumagin-Wise}.

A more complex problem is to understand, given a family $\mf{F}$ of finitely generated groups, what properties are shared by the groups $\Out(G)$ as $G$ varies in $\mf{F}$. General results of this kind are rare, and this is the setting of our article. Here two typical questions arise, as vague as they are natural:
\begin{itemize}
    \item Is the group $\Out(G)$ ``structured'' for all groups $G\in\mf{F}$, or can it be ``wild''?
    \item What ``explicit'' features of the group $G$ cause it to have many outer automorphisms?
\end{itemize}
One of the few families $\mf{F}$ for which we have good answers to the above are Gromov-hyperbolic groups \cite{Gromov-hyp}, due to breakthroughs of Rips and Sela in the 90s. Here $\Out(G)$ is always finitely generated \cite{RS94}, and it virtually admits a finite classifying space \cite{Sela-canonical,Levitt-GD}. Moreover, $\Out(G)$ can only be infinite if $G$ admits an ``obvious'' infinite-order automorphism: a \emph{Dehn twist}. 

Dehn twists provide a completely general construction of automorphisms for groups $G$ that split as an amalgamated product or HNN extension \cite{Bass-Jiang,Levitt-GD}.
For instance, an amalgam $G=A\ast_C B$ admits automorphisms that fix $A$ pointwise and conjugate $B$ by each element of the centraliser $Z_B(C)$ (see \Cref{sub:DTs} for details). When $G$ is hyperbolic, Dehn twists with respect to cyclic splittings of $G$ virtually generate $\Out(G)$ \cite{RS94}. Toral relatively hyperbolic groups \cite{Groves-II,GL-relhyp} and closed $3$--manifold groups \cite{Johannson} behave similarly, but this phenomenon fails in greater generality. There are many groups that have infinitely many outer automorphisms, but do not admit a single (non-identity) Dehn twist, and not even a single splitting: for instance Thompson's group $V$ \cite{Bleak-etal,Farley}, 
or the Kazhdan groups constructed in \cite{Ollivier-Wise(T)}. Even in the class of polycyclic groups --- where $\Out(G)$ is by all means ``structured'', as it is arithmetic over $\Q$ \cite{Baues-Grunewald} --- Dehn twists typically only generate 
a small portion of $\Out(G)$; see \Cref{ex:nilpotent}.

Beyond hyperbolic groups, understanding automorphisms of \emph{non-positively curved} groups remains a fundamental open problem. All the wild examples mentioned above display serious failures of non-positive curvature, but there is also little evidence that non-positive curvature alone should suffice to coerce automorphisms into tameness.
Swarup asked whether $\Out(G)$ is virtually generated by Dehn twists for every $\CAT$ group $G$ \cite[Q2.1]{BesQ}. Rips asked about the structure of $\Out(G)$ under the stronger hypothesis that $G$ be cocompactly cubulated \cite[p.\,826]{Sela-new1}. 

In the present article, we address Swarup's and Rips' questions for the smaller class of (compact) \emph{special groups}, in the sense of Haglund and Wise \cite{HW08,Wise-hierarchy}. Special groups are arguably the most important class of cocompactly cubulated groups --- marrying a wealth of examples with a powerful toolkit to study them. 
In particular, the three classical groups with structured automorphisms --- free groups, surface groups and free abelian groups --- are all special. Among special groups, we also mention many free-by-cyclic and $3$--manifold groups (including all hyperbolic ones, up to finite index \cite{Hagen-Wise,Agol}), right-angled Artin groups, commutator subgroups of right-angled Coxeter groups \cite{Droms}, and graph braid groups \cite{Crisp-Wiest}. Our main result is the following.

\begin{thmintro}\label{thmnewintro:fg}
    For all special groups $G$, the outer automorphism group $\Out(G)$ is finitely generated.
\end{thmintro}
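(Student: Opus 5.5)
We follow the strategy signalled in the abstract: a hierarchical version of the Rips--Sela shortening argument, combined with an induction on the complexity of $G$. Fix a compact special cube complex $X$ with $G=\pi_1X$, so that $G$ embeds convexly in a right-angled Artin group $A_\Gamma$. The complexity to induct on is lexicographic: first the dimension of $X$, then the number of hyperplane orbits. The inductive hypothesis will be applied to centralisers and to vertex groups of splittings over centralisers. These subgroups are again special, since centralisers in special groups are convex-cocompact, and they have smaller complexity.

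\emph{Step 1 (degenerations).} Let $\mc{D}\subseteq\Out(G)$ be the subgroup generated by Dehn twists and pseudo-twists with respect to the finitely many (up to $\Out(G)$) splittings of $G$ over centralisers. The goal is to show that every $\varphi\in\Out(G)$ can be shortened by elements of $\mc{D}$ to a representative in a finite set. Suppose not. Then there is a sequence $\varphi_n$ that is shortest in its $\mc{D}$-coset but whose displacement on a fixed finite generating set tends to infinity. Rescaling the actions $G\acts X\circ\varphi_n$ and taking an ultralimit yields an action of $G$ on a finite-rank median space (an asymptotic cone of the universal cover of $X$). The edge stabilisers of this action are controlled by centralisers, using specialness and convex-cocompactness of centralisers.

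\emph{Step 2 (decomposition and shortening).} Here we apply the structure theory for $G$-actions on median spaces with centraliser-like arc stabilisers. Either the limit action yields a splitting of $G$ over a centraliser, with some vertex groups acting with smaller complexity, or it contains an indecomposable "surface/axial" piece. In the indecomposable case, the classical Rips--Sela argument provides a twist shortening $\varphi_n$ for large $n$, contradicting minimality. The hierarchical feature arises where the limit restricted to a vertex group $V$ (or a centraliser) does not itself shorten. In that case, apply the inductive hypothesis to $V$: $\Out(V)$, and more precisely the relevant relative automorphism group, is generated by finitely many twists and pseudo-twists. These automorphisms extend to $G$ and shorten $\varphi_n$ along that piece. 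The pseudo-twists are exactly the extensions coming from automorphisms of abelian "poison" subgroups that do not extend to genuine Dehn twists.

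\emph{Step 3 (finiteness and generation).} Two finiteness facts are needed. There are finitely many $\Out(G)$-orbits of relevant centraliser splittings, which follows from finiteness of conjugacy classes of centralisers up to automorphism and an accessibility argument in bounded dimension. Each twist group is finitely generated, since it is a quotient of a product of centralisers of centralisers, which are special and hence finitely generated. Together these show that $\mc{D}$ plus a finite set generates $\Out(G)$.

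The main obstacle is Step 2. The shortening argument must be run relative to the hierarchy, with the limit median space replacing an $\R$-tree. The difficulty is to ensure that when a vertex group's limit action fails to shorten, the automorphisms supplied by induction really extend to $G$ and decrease length. I would first attempt this by working with the $\R$-trees obtained from individual hyperplane-labels (the coordinate projections to $A_\Gamma$'s factors), applying Rips theory to each factor tree and then recombining the shortenings.
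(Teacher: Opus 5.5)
Your outline names the right tools, but the steps that carry the weight either fail or stop exactly where the difficulty lies.

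\textbf{Step~2, axial pieces.} The claim that an indecomposable (surface/axial) piece is always shortened by a Rips--Sela twist is false for special groups. Suppose an IOM-line $\alpha\subseteq T^v_\om$ has stabiliser whose centre $A$ (an extra-salient abelian, e.g.\ $A\cong\Z^2$) acts freely and indiscretely on $\alpha$. Then, after an inner correction, every Dehn twist of $G$ acts on $\wh A$ through the danger group $\Elem^{\overline A}(\wh A)$, which can have infinite index in $\SL_2(\Z)$. The twists one would need live on infinitely many distinct, non-commuting HNN splittings (\Cref{ex:poisonous_centre}, \Cref{rmk:shortening_motile+}). Relatedly, arc-stabilisers of the limit are only co-abelian subgroups of centralisers, possibly infinitely generated. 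So ``a splitting over a centraliser'' is not the right dichotomy, and ascetic twists over such edge groups are needed. You put pseudo-twists into $\mc{D}$ but never say how they shorten; in the paper they shorten nothing. Instead, \Cref{prop:reduction_to_preserving_complements} uses that pseudo-twists restrict to a finite-index subgroup of the finitely generated group $\Aut(A;(A')^t)$. Up to finitely many cosets and a finitely generated group of pseudo-twists, this reduces to automorphisms trivial on a system of complements. For those, every degeneration is decentralised (\Cref{prop:salient_vs_degenerations}(2)), and only then do IOM-lines become shrinkable by ascetic twists.

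\textbf{Step~2, many trees.} Your fallback --- run Rips theory in each coordinate tree and recombine --- is exactly what breaks. Twists that shorten $T^v_\om$ generally lengthen the other $T^w_\om$ and the action on $\mf{E}_G$. The missing idea is the canonical, $\Aut(G)$-invariant hierarchy $\mf{H}(G)$ together with the well-order $\sqsubseteq$ it induces on $\Out(G)$ (modulo finite classes). One shortens a generating set of the $\ll$-minimal member $H$ that is non-elliptic in $\X_\om$, using twists that are inner on all lower members, so their lengths are unchanged. Ellipticity of the product subgroups below $H$ is what makes the twists from one tree come from a shortening isometry of the whole median space $\X_\om$ (\Cref{prop:shortening_inert}, \Cref{prop:shortening_motile}). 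That is why lengths actually drop in $\mf{E}_G$. The residual free-product case needs the separate Fuchs--Rabinowitz shortening theorem of the appendix.

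Your induction on complexity cannot replace this. A centraliser can be $G$ itself, convex cores of subgroups can have more hyperplane orbits, and automorphisms of a vertex group need not extend to $G$. The paper does not induct over subgroups; every automorphism it uses is a twist of $G$.

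\textbf{Step~3.} The finiteness claim is false as stated. Already in $F_3$, the splittings $\langle a,b\rangle\ast_{\langle a^kb^k\rangle}\langle a^kb^k,c\rangle$ for $k\geq 2$ lie in pairwise distinct $\Out(F_3)$-orbits. In the paper, finite generation comes instead from the shortening theorem providing, for each degeneration, a \emph{finitely generated} group $\Delta(\X_\om)$. This is combined with enumerating the countably many twists and taking $\sqsubseteq$-minimal coset representatives.
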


While the true power of \Cref{thmnewintro:fg} lies in its generality, there are also several concrete instances where it is new and interesting. For right-angled Artin groups $\mc{A}_{\G}$, finite generation of $\Out(\mc{A}_{\G})$ is due to Laurence and Servatius \cite{Laurence,Servatius}, but nothing was seemingly known about $\Out(G)$ for \emph{finite-index} subgroups $G\leq\mc{A}_{\G}$ (except for those that are themselves Artin groups). Additionally, finite generation of $\Out(W_{\G}')$ is new for commutator subgroups $W_{\G}'$ of right-angled Coxeter groups.

We also prove finite generation of the \emph{relative} automorphism groups $\Out(G;\mscr{F}^t)$, where $G$ is special and $\mscr{F}$ is an arbitrary finite set of finitely generated subgroups of $G$ on which the automorphisms are required to act trivially (up to inner automorphisms of $G$); see \Cref{thm:main}. This was known when $G$ is a right-angled Artin group and $\mscr{F}$ is a set of parabolic subgroups \cite{Day-Wade}, and when $G$ is toral relatively hyperbolic with $\mscr{F}$ arbitrary \cite{GL-McCool}.

The proof of \Cref{thmnewintro:fg} is highly non-constructive, but it does show that $\Out(G)$ is virtually generated by automorphisms of the following three concrete kinds (see \Cref{thm:main}(2)).
\begin{enumerate}
    \item \emph{Centraliser twists}: Dehn twists with respect to $1$--edge splittings of $G$ over centralisers.
    \item \emph{Ascetic twists}: Dehn twists with respect to HNN splittings of $G$ over co-cyclic subgroups of centralisers, with edge groups normalised by a stable letter (\Cref{defn:ascetic_twist}).
    \item \emph{Pseudo-twists}: automorphisms of certain canonical abelian subgroups $A\leq G$ that can be extended to automorphisms of $G$ by means of splittings having the centraliser $Z_G(A)$ as a vertex group (\Cref{defn:pseudo-twists}).
\end{enumerate}
For a right-angled Artin group $\mc{A}_{\G}$, we recover the Laurence--Servatius generators of the finite-index \emph{pure} subgroup $\Out^0(\mc{A}_{\G})\leq\Out(\mc{A}_{\G})$, namely transvections and partial conjugations.

In general, our generators of the third kind --- pseudo-twists --- are not Dehn twists. This cannot be avoided, as there are surprisingly simple examples where $\Out(G)$ is not virtually generated by Dehn twists alone (regardless of the class of splittings used to define them):

\begin{propintro}\label{propnewintro:bad_examples}
    There exists $G$ special with $\Out(G)$ not virtually generated by Dehn twists, and:
    \begin{itemize}
        \item we can take $G$ to be a finite-index subgroup of a right-angled Artin group;
        \item we can alternatively take $G$ to virtually split as $\Z^2\x H$ with $H$ hyperbolic.
    \end{itemize}
\end{propintro}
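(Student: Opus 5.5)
We need an explicit special group $G$ together with a finite-index subgroup of $\Out(G)$ that we can describe completely, and an invariant showing that every Dehn twist lies in a proper subgroup of infinite index. The natural source is an abelian subgroup $A\cong\Z^2$ (a poison subgroup) whose centraliser $Z_G(A)$ is a vertex group of a splitting. Pseudo-twists act on $A$ by an arbitrary matrix of ${\rm GL}_2(\Z)$ (or a finite-index subgroup of it). Dehn twists, by contrast, can only act on $A$ through unipotent transvections fixing a line.

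For the second bullet, I would take $G=\Z^2\x H$ with $H$ a torsion-free hyperbolic group with $\Out(H)$ finite, for instance a one-ended rigid hyperbolic special group such as the fundamental group of a closed hyperbolic $3$--manifold. Then $\Out(G)$ is computed by the standard description of automorphisms of direct products with centreless factors. Since $Z(G)=\Z^2$ and $H$ has trivial centre, every automorphism preserves $\Z^2$. The quotient $G/\Z^2\cong H$ gives $\Out(G)\cong \Hom(H,\Z^2)\rtimes({\rm GL}_2(\Z)\x\Out(H))$. Taking $H$ with $H^1(H;\Z)=0$ as well (for example an integer homology sphere), this is virtually ${\rm GL}_2(\Z)$. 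Now I need to show that Dehn twists generate a subgroup of infinite index. Any splitting of $G$ over a subgroup $C$ must have the central $\Z^2$ acting elliptically, so the splitting comes from a splitting of $H$; but $H$ is one-ended and rigid, hence does not split. A slightly more careful check is that the only splittings are ones in which $\Z^2\le C$ and $H$ is not freely decomposed, which is impossible for an action without a global fixed point. So there are no nontrivial Dehn twists, yet $\Out(G)$ is infinite.

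I would still have to verify that $\Z^2\x H$ is special, which is immediate because products of special groups are special. The difficulty is that the statement says "virtually splits as $\Z^2\x H$", which suggests the author wants a less degenerate example in which Dehn twists do exist but do not suffice. So I would instead let $G$ be a suitable finite-index subgroup, or an amalgam $(\Z^2\x H)\ast_{\Z^2}(\Z^2\x H')$ or similar. In that case the Dehn twists act on $\Z^2$ by transvections, and I would argue via the restriction homomorphism $\Out(G)\to{\rm GL}(A)$. Transvections (elementary matrices) generate a finite-index subgroup of ${\rm SL}_2(\Z)$, so I would need to restrict the image to a structure that they fail to generate. For example, if the $\Z^2$ acts through a torus with extra peripheral constraints, twists would land in a single unipotent subgroup $\begin{pmatrix}1&\ast\\0&1\end{pmatrix}$, which is infinite index in a free-abelian-by-finite image.

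For the first bullet, I would take a finite-index subgroup of a RAAG such as $F_2\x F_2$ or $\Z^2\x F_2$, chosen so that the centre, or a canonical $\Z^2$, has its ${\rm GL}_2(\Z)$-action realised only by pseudo-twists. The main obstacle, in both bullets, is the key step of proving that every Dehn twist, for every splitting of $G$, acts on the poison $\Z^2$ by a unipotent element with a common fixed line, while pseudo-twists realise a hyperbolic matrix in ${\rm GL}_2(\Z)$. I would attack it by showing that any edge group $C$ of a splitting must meet $A$ in a fixed canonical cyclic subgroup, and then arguing that the twist acts on $A$ fixing $C\cap A$.
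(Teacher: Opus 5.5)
\paragraph{Your first example fails.} It fails for two reasons.

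First, a nontrivial special group embeds in a RAAG, so it is residually torsion-free nilpotent and has infinite abelianisation. Hence $H^1(H;\Z)=0$ is incompatible with $H$, and so $G=\Z^2\times H$, being special.

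Second, and more seriously, your claim that the central $\Z^2$ is elliptic in every splitting is false: a central element can be loxodromic when the minimal subtree is a line. Write $\Z^2=\langle a,b\rangle$. The epimorphism $G\to\Z$ with $a\mapsto 1$ and $b,H\mapsto 0$ gives an HNN splitting with vertex group and edge group $\langle b\rangle\times H$ and stable letter $a$. The Dehn twists $a\mapsto b^ka$ are elementary matrices on $\Z^2$, and together with the symmetric ones they generate $\SL_2(\Z)$. So in your setting $\Out(G)\cong\mathrm{GL}_2(\Z)\times\Out(H)$ is virtually generated by Dehn twists.

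In the paper's language, the centre of $\Z^2\times H$ always maps onto a direct summand of $G_{\rm ab}/\mathrm{torsion}$, so it is never poison. Your first-bullet candidates fail for the same reason:
\begin{itemize}
\item for any finite-index $G\le\Z^2\times F_2$, the image of the centre in $G_{\rm ab}$ is root-closed, because the free factor has torsion-free abelianisation;
\item finite-index subgroups of $F_2\times F_2$ have no salient abelian subgroups at all.
\end{itemize}

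\paragraph{The fallback uses the wrong invariant.} Each individual Dehn twist does act on the relevant $\Z^2$ by a transvection, but twists from different splittings do not share a fixed line. In the example that actually works, the transvections $\beta\mapsto\beta\alpha^m$ and $\alpha\mapsto\alpha\beta^m$ both arise as Dehn twists, and they generate a free group containing hyperbolic matrices.

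\paragraph{What the paper actually uses.} Let $\overline A$ be the image of $A$ in $Z_G(A)/\mc{L}(A)$ and $\wh A$ its root-closure. The correct invariant (\Cref{lem:arbitrary_Dehn_twists_in_danger}) is that every Dehn twist acts on $\wh A$ by a transvection whose multiplier lies in $\overline A$. So all Dehn twists land in $\Elem^{\overline A}(\wh A)$, while automorphisms of $G$ realise a finite-index subgroup of $\Aut(\wh A)$ (\Cref{prop:ot<pt}).

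This is an obstruction only when $\overline A\neq\wh A$. You then also need the genuinely rank-$2$ input of \Cref{lem:infinitely_generated_mth_power}. For $\overline A=\wh A^{24m}$ with $m$ large, $\Elem^{\overline A}(\wh A)$ lies in the $m$-th power subgroup of the free level-$3$ congruence subgroup, so it has infinite index by Adian--Lysenok.

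\paragraph{The missing construction needs torsion.} Producing $\overline A\neq\wh A$ requires torsion, which your proposal never introduces. As in \Cref{ex:poisonous_centre}:
\begin{itemize}
\item take $H$ centreless and special, with $(\Z/m)^2$ a direct summand of $H_{\rm ab}$ generated by the images of $x,y\in H$;
\item set $G=\langle a^m,b^m,ax,by,\ker\eta\rangle\le\langle a,b\rangle\times H$;
\item since $(ax)^m=a^mx^m$ with $x^m\in[H,H]\le[G,G]$, the centre $\langle a^m,b^m\rangle$ maps onto $m$-th powers in the direct summand $\langle\alpha,\beta\rangle$ of $G_{\rm ab}$.
\end{itemize}
Such $H$ exist as finite-index subgroups of hyperbolic $3$-manifold groups (Sun), which gives the second bullet. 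Via a Haglund--Wise virtual retraction they also exist as finite-index subgroups of RAAGs, which gives the first bullet.
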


Despite its occurrence, the failure of virtual generation of $\Out(G)$ by Dehn twists remains a rare phenomenon and there is a simple and explicit description of exactly when it occurs. It is regulated by the presence of certain canonical abelian subgroups of $G$, which we term \emph{poison subgroups} (see \Cref{thmnewintro:poison} below).
While poison subgroups can have arbitrarily large rank, they are all manifestations of the rank--$2$ free abelian group $\Z^2$. Ultimately, this comes down to the fact that $\SL_2(\Z)$ has non-trivial infinite-index normal subgroups, unlike $\SL_n(\Z)$ for $n\geq 3$.

Importantly, poison subgroups can always be eliminated by passing to finite index, and hence:

\begin{thmintro}\label{thmnewintro:fi_no_poison}
    Every special group $G$ admits a characteristic finite-index subgroup $G_0\lhd G$ such that $\Out(G_0)$ is virtually generated by Dehn twists.
\end{thmintro}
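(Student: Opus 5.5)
The plan is to derive the statement from the main generation result (\Cref{thm:main}(2)) together with the characterisation of poison subgroups announced in \Cref{thmnewintro:poison}. By \Cref{thm:main}(2), $\Out(G_0)$ is virtually generated by centraliser twists, ascetic twists and pseudo-twists, and all three are defined for any special group. The first two kinds are Dehn twists. So it is enough to produce a characteristic finite-index $G_0\lhd G$ whose pseudo-twists are, virtually, products of Dehn twists. I expect \Cref{thmnewintro:poison} to say exactly that this holds when $G_0$ has no poison subgroups. The whole proof therefore reduces to showing that poison subgroups can be killed by passing to a characteristic finite-index subgroup.

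\textbf{Step 1: finiteness of the poison data.} I would show that the poison subgroups of $G$ fall into finitely many $G$-conjugacy classes. They are canonical abelian subgroups attached to centralisers, and a special group has only finitely many conjugacy classes of centralisers, since these are convex in the cube complex up to finite Hausdorff distance. I would also check that being poison is detected by some finite-index "rank-2 quotient" condition. The likely form is this: the pseudo-twists restrict on a poison $A$ to a subgroup of $\mathrm{GL}(A)$ whose image acts on a rank-$2$ quotient $\Z^2$ of $A$ through an infinite-index subgroup of $\SL_2(\Z)$ that is not generated by the transvections coming from Dehn twists.

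\textbf{Step 2: choosing $G_0$.} Next I would pass to a finite-index subgroup in which every such $A$ acquires extra splittings, or in which the relevant $\Z^2$ quotient becomes "visible". Concretely, I would use separability, since special groups are QCERF and abelian subgroups are separable. This gives a finite-index $G_1$ such that, for each poison $A$, the intersection $A\cap G_1$ sits inside a larger centraliser structure whose elementary twists already produce the pseudo-twist. A simpler mechanism may also be available: for a suitable modulus $m$, the subgroup $A\cap G_1$ is no longer a maximal abelian subgroup with the poisoning property. Then I would set $G_0$ to be the intersection of all subgroups of $G$ of index at most $[G:G_1]$. This $G_0$ is characteristic and of finite index, because $G$ is finitely generated. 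I would then verify that the "no poison" property passes to further finite-index characteristic subgroups, so that shrinking from $G_1$ to $G_0$ does not create new poison.

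\textbf{Main obstacle.} The hard step is this monotonicity under passing to finite index. Finite-index subgroups can in general create poison: \Cref{propnewintro:bad_examples} exhibits poison inside finite-index subgroups of RAAGs, while RAAGs themselves have none. So I cannot just take any small enough subgroup. I would try to show that poisonness of $A\cap G_0$ forces poisonness of the corresponding subgroup in $G$ under a compatibility condition, namely that $G_0$ contains $m$-th powers in a uniform way, for instance $G_0\subseteq$ the kernel of $G\to H_1(G;\Z/m)$-type quotients. I would choose $m$ to kill the finitely many obstructions found in Step 1.
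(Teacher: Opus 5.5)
Your top-level reduction matches the paper's. \Cref{thmnewintro:fi_no_poison} follows from \Cref{prop:no_poison_in_fi} (with $\mscr{E}=\emptyset$) together with \Cref{thm:main}(3), which gives the backward arrow of \Cref{thm:main}(4), applied to the special group $G_0$. All the content lies in producing a poison-free characteristic finite-index subgroup, and there your proposal gives no argument.

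Two corrections first. The reason you give in Step~1 is false: a non-abelian special group has infinitely many conjugacy classes of centralisers (\Cref{sub:prelims}). Finiteness of the poison data holds instead because poison subgroups are extra-salient, i.e.\ centres of elements of $\SVP(G)$, and these are staunchly $G$--parabolic. Your description of poison is also reversed. Pseudo-twists restrict to a finite-index subgroup of the automorphisms of $A$ fixing $A\cap\mc{L}(A)$ (\Cref{prop:ot<pt}(2)), where $\mc{L}(A):=\mc{L}_{\emptyset}(A)$. It is Dehn twists whose restrictions land in $\mf{D}(A)=\Elem^{\overline A}(\wh A)$ (\Cref{lem:arbitrary_Dehn_twists_in_danger}). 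The operative criterion is that $\overline A$ is a non-root-closed finite-index subgroup of $\wh A\cong\Z^2$ inside the free abelian group $Z_G(A)/\mc{L}(A)$. That already tells you what has to be killed.

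The missing idea is the killing mechanism, which the paper gets as follows.
\begin{enumerate}
\item Pick a finite-index $F\le Z_G(A)/\mc{L}(A)$ that contains $\overline A$ as a direct summand.
\item The finite quotient $Z_G(A)\to (Z_G(A)/\mc{L}(A))/F$ vanishes on $\mc{L}(A)$. Via a shunning amalgam $G=Z_G(A)\ast_{\mc{L}(A)}W$ (\Cref{rmk:folding_shunning}) it extends to a homomorphism $\tilde\eta$ on $G$ that is trivial on $W$.
\item The kernel of $\tilde\eta$ contains $A$, $\mc{L}(A)$, and every extra-salient abelian not containing a conjugate of $A$ (see the proof of \Cref{lem:E^A_replacement}). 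It meets $Z_G(A)$ in the preimage of $F$, where $A$ projects onto a direct summand. So, as long as the locus has not shrunk, $\overline A=\wh A$ and $\mf{D}(A)=\SL(\wh A)$.
\end{enumerate}
Separability, ``extra splittings'' or ``no longer maximal abelian'' do not produce this. The paper then intersects the $\Aut(G)$--orbits of these kernels for all poison classes of minimal height, and iterates on that height. Your choice of $G_0$ as the intersection of all subgroups of bounded index is worse: it need not contain $A$, and $A\cap G_0$ can again project to a non-root-closed subgroup.

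For monotonicity, your $m$-th power condition does not help by itself. The poisonous group of \Cref{ex:poisonous_centre} contains the mod-$m$ homology kernel of $\langle a,b\rangle\times H$, which is poison-free when $H=M_m$ is hyperbolic. The paper instead first passes to a characteristic finite-index subgroup with two stability properties for all further characteristic finite-index $G'$:
\begin{itemize}
\item $A'=A\cap G'$ gives an $\Aut$-equivariant bijection $\xSal(G')\to\xSal(G)$;
\item $L(A';G')=L(A;G)\cap G'$.
\end{itemize}
After that, \Cref{rmk:elem_fg_fi} shows that non-poison and root-closedness persist for every $A\le G_0$.

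The paper justifies this stabilisation only by noting that a drop of the locus raises $\rk Z_{G'}(A')/L(A';G')$. That rank is not bounded in general: for $A=Z_G(G)$ it equals $b_1(G')$, which is unbounded e.g.\ for $G=\Z^2\times H$ with $H$ a special hyperbolic $3$--manifold group. So you are right that this is the delicate step, but your proposal leaves it open too.
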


Note that \Cref{thmnewintro:fi_no_poison} badly fails for fundamental groups of non-compact special cube complexes,
and it also fails for general $\CAT$ groups. In both of these classes, there are groups $G$ such that every finite-index subgroup $G_0\leq G$ has infinite $\Out(G_0)$ and not a single (non-identity) Dehn twist. These examples can be easily extracted from recent work of Italiano--Martelli--Migliorini \cite{IMM,Groves-Manning-IMM} and Martelli \cite{Martelli-closing}, as we explain in \Cref{ex:IMM}.
In particular, the most general form of Swarup's question \cite[Q2.1]{BesQ} has a negative answer.

In a different direction, our results can be extended to groups of \emph{coarse-median preserving} automorphisms, which we introduced in \cite{Fio10a}. Examples include all automorphisms of hyperbolic groups, untwisted automorphisms of right-angled Artin groups (in the sense of \cite{CSV}), and arbitrary automorphisms of right-angled Coxeter groups. 

Roughly, an automorphism of a special group $G$ is coarse-median preserving if it has no ``skewing'' behaviours along abelian subgroups. More precisely, this notion depends on the choice of a convex-cocompact embedding $\iota\colon G\hookrightarrow\mc{A}_{\G}$, where $\mc{A}_{\G}$ is a right-angled Artin group. The median operator on $\mc{A}_{\G}$ can then be pulled back via $\iota$ to a coarse median operator $\mu\colon G^3\ra G$. The subgroup $\Out(G,[\mu])\leq\Out(G)$ consists of those outer automorphisms that coarsely preserve $\mu$.

Since the pathologies requiring pseudo-twists (and even ascetic twists) are uniquely supported on abelian subgroups, those generators are not needed in the coarse-median preserving case, leading to the following pleasantly simple result in this setting:

\begin{thmintro}\label{thmintro:cmp}
    Let $G$ be a special group, equipped with the coarse median structure $[\mu]$ induced by some convex-cocompact embedding $G\hookrightarrow\mc{A}_{\G}$ into a right-angled Artin group. Then the coarse-median preserving group $\Out(G,[\mu])\leq\Out(G)$ is virtually generated by finitely many centraliser twists.
\end{thmintro}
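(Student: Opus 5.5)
The strategy is a hierarchical version of the Rips--Sela shortening argument, run inside the coarse-median preserving group $\Out(G,[\mu])$. Fix a finite generating set of $G$ and call $\phi\in\Out(G,[\mu])$ \emph{short} if its displacement is minimal (up to a fixed additive constant) in its orbit under the subgroup $\mc{D}\leq\Out(G,[\mu])$ generated by centraliser twists that preserve $[\mu]$. The goal is to show that only finitely many outer classes are short; once this holds, $\Out(G,[\mu])$ is generated by $\mc{D}$ together with finitely many elements. A second step is then needed to reduce $\mc{D}$ itself, up to finite index, to finitely many centraliser twists.

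First, I would argue by contradiction. Suppose there is an infinite sequence of pairwise distinct short $\phi_n\in\Out(G,[\mu])$. Rescale the twisted actions $G\acts\mc{A}_{\G}$ (through the convex-cocompact embedding) by the displacements and pass to an ultralimit. This produces a limit action $G\acts T$ on an asymptotic cone of the universal cover of the Salvetti complex, which is a median space. The coarse-median preserving hypothesis is essential at this point. It guarantees that the limit action is by median isometries compatible with the structure pulled back by $\iota$, rather than being skewed. I expect this to imply that arc stabilisers, and more generally stabilisers of the relevant median subspaces, are controlled by centralisers of elements (convex subgroups), and not by arbitrary abelian subgroups.

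Second, I would set up the hierarchy. The median space $T$ is not an $\R$-tree, so I would induct on rank. Choose a hyperplane-like direction of $T$, that is, a factor of a suitable restriction quotient, on which $G$ acts with a nontrivial $\R$-tree projection. Apply the Rips machine to that tree action to decompose it into simplicial, axial and IET (surface) pieces. Each piece yields either a splitting of $G$ over a centraliser or a vertex group of lower complexity, where complexity means rank of the centraliser poset or dimension of $\Gamma$. Where the decomposition is simplicial or axial, precomposing $\phi_n$ with a centraliser twist (for axial pieces, a twist in the centraliser of the axis) shortens $\phi_n$ for $n$ large. For surface pieces, I would use mapping classes of the surface, realised as products of Dehn twists over cyclic centralisers. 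For vertex groups, which are again special and convex-cocompact, I would apply the relative version of the statement by induction, keeping incident edge groups fixed, and then extend the result to $G$ through the splitting. In every case this contradicts shortness.

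The main obstacle is the axial and abelian pieces. For the full group $\Out(G)$, this is exactly where pseudo-twists and ascetic twists become necessary. Here I would show that coarse-median preservation forces the induced automorphism of any maximal abelian piece $A$ to preserve its standard product structure, so that it lies, virtually, in a product of $\Z^k$-translation groups, one for each factor. Such automorphisms are realised by centraliser twists over the splittings with vertex group $Z_G(A)$. Once this is established, the finiteness of the collection of relevant centralisers up to conjugacy, which follows from speciality, shows that finitely many centraliser twists suffice, up to finite index.
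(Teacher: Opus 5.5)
There is a genuine gap, and it sits exactly where the paper locates the main difficulty. You measure shortness by the displacement of a single generating set $S$ of $G$ in the limit median space. You then assert that precomposing with a centraliser twist obtained from the Rips machine on one coordinate tree $T^v_\omega$ shortens $\phi_n$.

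Such a twist only shortens that one coordinate. In general it lengthens the other trees $T^w_\omega$, and hence the displacement of $S$ in $\mathcal{X}_\omega\subseteq\prod_w T^w_\omega$. Nothing in your plan controls this.

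To shorten the whole median space you need a shortening isometry of $\mathcal{X}_\omega$ commuting with $\rho(G_\beta)$. The paper builds one from multipliers $h_n\in\Perp_H(G_\beta)$, using the orthogonal and parabolic structure of $\mathcal{A}_\Gamma$ (Proposition~\ref{prop:shortening_inert}). This works only when the normaliser $N_H(G_\beta)$ is elliptic.

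The paper's key new idea is to arrange that ellipticity hierarchically. It builds an $\Aut(G)$-invariant family $\mathfrak{H}^*(G)$ of staunchly parabolic subgroups, obtained from $G$ by passing to centres, extended factors, children and junctures, and then adding $1$-ended free factors. It then well-orders outer classes lexicographically by the lengths $\ell(\varphi(S_H^2);\mathfrak{E}_G)$ for $H\in\mathfrak{H}^*(G)$. One shortens the lowest member $H$ that is non-elliptic in $\mathcal{X}_\omega$, using that everything below it is elliptic.

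When $\Perp_H(G_\beta)$ is still non-elliptic, minimality forces $G_\beta=\Perp_G(H')$, so $H'$ acts with trivial arc stabilisers. One then shortens $S_H$, not $S$ (which may well get longer), using surface twists or the free-product shortening theorem of the appendix.

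Your induction on vertex groups of splittings does not substitute for this. The automorphisms $\phi_n$ do not preserve those vertex groups, and a relative generation statement for them produces no twists that shorten $\phi_n$.

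Your discussion of axial and abelian pieces is also misdirected. For coarse-median preserving sequences there are no indiscrete or motile lines at all (Theorem~\ref{thm:inert_vs_motile} and Lemma~\ref{lem:line_addenda}(4)), so no axial components arise. The real difficulty is the interaction between the coordinate trees.

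The finiteness step fails as well. It is false that the relevant centralisers are finitely many up to conjugacy: $\mathcal{Z}(G)$ has infinitely many conjugacy classes as soon as $G$ is non-abelian. In any case a single edge group supports infinitely many splittings, so no count of centralisers yields finitely many twists.

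The paper instead gets finite generation from the shortening argument itself. Each degeneration requires only a finitely generated group $\Delta(\mathcal{X}_\omega)$ of twists. Enumerate all twists as $\tau_1,\tau_2,\dots$ and suppose the groups $\langle\tau_1,\dots,\tau_n\rangle$ never stabilise. Degenerating a sequence of $\sqsubseteq$-minimal representatives of the cosets $\tau_{n+1}\langle\tau_1,\dots,\tau_n\rangle$ then gives a contradiction.
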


We conclude this introduction by saying more about poison subgroups, and then sketching the hierarchical shortening argument underlying the proof of all our main results.

\smallskip
{\bf Poison subgroups.}
As hinted at above, one can completely characterise whether $\Out(G)$ is virtually generated by Dehn twists in terms of the absence of ``poison subgroups'' of $G$, which are to be searched for among a finite canonical collection of abelian subgroups of $G$. 

The definition of poison subgroups requires combining many different elements, but it is sufficiently important that we choose to sketch it here. We proceed in five steps.
\begin{enumerate}
    \setlength\itemsep{.25em}
    \item Every special group $G$ has finitely many (conjugacy classes of) particularly natural abelian subgroups $A_1,\dots,A_k$, which we call the \emph{salient abelians} of $G$. These are the only candidates to be poison subgroups. They are defined as follows: start with a maximal subgroup $\Pi\leq G$ splitting as a direct product, then repeatedly pick an irreducible factor of $\Pi$ and replace it with a maximal product within itself. This yields a canonical finite collection of products within $G$, which we call the \emph{standard products} (\Cref{defn:SVP}). Salient abelian subgroups are the centres of (some of) the standard products (\Cref{defn:salient}).
    \item For each salient abelian subgroup $A_i\leq G$, we consider all simplicial trees $G\acts T$ in which the centraliser $Z_G(A_i)$ is an entire vertex group, and in which all incident edge groups are the same subgroup $L\lhd Z_G(A_i)$ with a free abelian quotient $Z_G(A_i)/L$. 
    Let $\mc{L}_i\lhd Z_G(A_i)$ be the smallest possible such subgroup $L$ as we vary the $G$--tree $T$. 
    \item Let $\overline A_i$ be the projection of $A_i$ to the free abelian quotient $Z_G(A_i)/\mc{L}_i$. Then let $\wh A_i$ be the direct summand of $Z_G(A_i)/\mc{L}_i$ that contains $\overline A_i$ as a finite-index subgroup. For $A_i$ to be a poison subgroup, it is necessary that $\overline A_i\neq\wh A_i$ and $\wh A_i\cong\Z^2$ (though not sufficient).
    \item Finally, we define a matrix group $\mf{D}(A_i)\leq\Aut(\wh A_i)\cong{\rm GL}_{n_i}(\Z)$, where $n_i:=\rk(\wh A_i)$. This is the group generated by the elementary automorphisms of $\wh A_i$ whose multiplier lies in the finite-index subgroup $\overline A_i$. A typical example of $\mf{D}(A_i)$ is the group normally generated by the $N$--th powers of the elementary generators of $\SL_{n_i}(\Z)$, for some large $N\geq 1$. The subgroup $\mf{D}(A_i)\leq{\rm GL}_{n_i}(\Z)$ is always finite-index if $n_i\neq 2$, but it need not be for $n_i=2$.
    \item A \emph{poison subgroup} of $G$ is any of the salient abelian subgroups $A_1,\dots,A_k\leq G$ for which the matrix group $\mf{D}(A_i)\leq {\rm GL}_{n_i}(\Z)$ has infinite index (\Cref{defn:poison}).
\end{enumerate}
With this definition, we prove the following equivalence:

\begin{thmintro}\label{thmnewintro:poison}
    For a special group $G$, the outer automorphism group $\Out(G)$ is virtually generated by Dehn twists if and only if none of the salient abelian subgroups $A_1,\dots,A_k$ is a poison subgroup.
\end{thmintro}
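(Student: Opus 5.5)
The plan is to use the virtual generating set from \Cref{thm:main}(2) as the starting point: $\Out(G)$ is virtually generated by centraliser twists, ascetic twists and pseudo-twists. Centraliser and ascetic twists are Dehn twists, so the theorem reduces to deciding when the pseudo-twists, up to finite index, lie in the subgroup $\mc{D}\leq\Out(G)$ generated by Dehn twists.

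First I would localise each pseudo-twist at a salient abelian $A_i$. Pseudo-twists supported at $A_i$ are built from automorphisms of $\wh A_i\leq Z_G(A_i)/\mc{L}_i$ that are extended over a tree in which $Z_G(A_i)$ is a vertex group with incident edge groups $\mc{L}_i$. This should give a homomorphism $\rho_i$ from a finite-index subgroup of the stabiliser of the conjugacy class of $A_i$ in $\Out(G)$ to $\Aut(\wh A_i)\cong{\rm GL}_{n_i}(\Z)$, recording the induced action on $\wh A_i$. Two facts then need to be checked.
\begin{enumerate}
    \item Every Dehn twist, for an arbitrary splitting of $G$, has image under $\rho_i$ virtually contained in $\mf{D}(A_i)$. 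Here one has to show that a twist acting nontrivially on $\wh A_i$ acts as an elementary (transvection-type) automorphism whose multiplier comes from a twistor in the centraliser of an edge group. Since such a twistor commutes with $A_i$, its image in $Z_G(A_i)/\mc{L}_i$ should lie in $\overline A_i$.
    \item Conversely, every generator of $\mf{D}(A_i)$, meaning an elementary automorphism with multiplier in $\overline A_i$, is realised by an ascetic or centraliser twist whose action outside $Z_G(A_i)$ is trivial modulo other twists.
\end{enumerate}

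\emph{If direction.} Suppose no $A_i$ is poison, so each $\mf{D}(A_i)$ has finite index in ${\rm GL}_{n_i}(\Z)$. A pseudo-twist $\phi$ at $A_i$ then has some power $\phi^m$ with $\rho_i(\phi^m)\in\mf{D}(A_i)$. Using (2), I would write $\phi^m$ as a product of Dehn twists times an automorphism acting trivially on $\wh A_i$. A pseudo-twist with trivial action on $\wh A_i$ should itself be a product of centraliser twists, because it only conjugates adjacent pieces by elements of $Z_G(A_i)$. Doing this for the finitely many pseudo-twist generators and the finitely many salient abelians shows that $\mc{D}$ has finite index. Here I use that the subgroup generated by a finite-index subgroup of each of finitely many generators, together with the existing Dehn twists, has finite index; this needs care since $\Out(G)$ is not abelian. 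I would handle it via the structure in \Cref{thm:main}, where the pseudo-twists generate a subgroup that is normalised appropriately and maps onto a product of the groups ${\rm GL}_{n_i}(\Z)$.

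\emph{Only if direction.} Suppose $A_i$ is poison. By (1), $\rho_i(\mc{D})$ is virtually contained in $\mf{D}(A_i)$, which has infinite index in ${\rm GL}_{n_i}(\Z)$. On the other hand, pseudo-twists realise a finite-index subgroup of $\Aut(\wh A_i)$: the canonical tree lets any automorphism of the free abelian quotient $Z_G(A_i)/\mc{L}_i$ that preserves the summand $\wh A_i$ extend to $G$. So $\rho_i(\Out(G))$ contains a finite-index subgroup of ${\rm GL}_2(\Z)$ while $\rho_i(\mc{D})$ has infinite index in it, hence $\mc{D}$ has infinite index in $\Out(G)$.

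The main obstacle is step (1) for \emph{arbitrary} splittings. I would use canonicity: $A_i$ is the centre of a standard product, so it must be elliptic in every splitting of $G$ over abelian-by-something edge groups, or else sit inside an edge group. This would let me compare an arbitrary tree with the canonical tree defining $\mc{L}_i$, through minimality of $\mc{L}_i$, and so pin down the image of the twist on $\wh A_i$.
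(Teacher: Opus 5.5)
There is a genuine gap in your ``if'' direction. Your step (1) in the ``only if'' direction is also justified incorrectly.

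\textbf{The ``if'' direction.} Suppose each pseudo-twist generator $\phi$ has a power in $\mc{D}$. This does not make $\mc{D}$ of finite index; compare $\langle x,y^2\rangle\leq F_2=\langle x,y\rangle$. Moreover, \Cref{thm:main}(2) is only a list of virtual generators. It provides no normalised subgroup mapping onto $\prod_i{\rm GL}_{n_i}(\Z)$ for you to fall back on.

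The absorption step also fails as stated, for three reasons.
\begin{itemize}
\item Once you multiply $\phi^m$ by Dehn twists realising $\rho_i(\phi^m)\in\mf{D}(A_i)$, the leftover is no longer a pseudo-twist. Those twists need not fix the $W$ and $N$ attached to $\phi$.
\item Even a genuine pseudo-twist acting trivially on $\wh A_i$ can act on $A_i$ by a nontrivial transvection $a\mapsto a\eta(a)$ with $\eta(a)\in A_i\cap\mc{L}_i$. Centraliser twists never produce this. An extra-salient $A_i$ is elliptic in every splitting over centralisers, since otherwise $A_i/(A_i\cap G_e)\cong\Z$ for some edge group $G_e\in\mc{Z}(G)$. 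Hence centraliser twists preserving $A_i$ are the identity on it.
\item You never control what the twists used at $A_i$ do to the other $A_j$.
\end{itemize}

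The paper does not absorb pseudo-twists after the fact. Instead it reruns \Cref{prop:reduction_to_preserving_complements} as follows.
\begin{itemize}
\item It orders $A_1,\dots,A_m$ by containment, so that twists at $A_i$ are inner on $A_1,\dots,A_{i-1}$ (\Cref{lem:E^A_replacement} and \Cref{defn:ascetic_twist}(1)).
\item When no $A_i$ is poison, it chooses the finitely generated groups $\mc{O}_i$ directly inside $\ot(A_i)$. This is possible because $\overline\ot(A_i)$ is commensurable with $\Xi\rtimes\mf{D}(A_i)$, hence of finite index in $\overline\pt(A_i)$ (\Cref{cor:poison_obstructs_DT_generation}(1)).
\item This gives $\Aut(G)=\mc{U}_{\mf{C}}\cdot\mc{V}_{\mf{C}}$, with $\mc{V}_{\mf{C}}$ generated by ascetic twists and $\mc{U}_{\mf{C}}$ a finite union of cosets of $\Aut(G;\mf{C}^t)$.
\item The shortening theorem, applied to the decentralised degenerations coming from $\Aut(G;\mf{C}^t)$, shows this last group is virtually generated by centraliser and ascetic twists.
\end{itemize}
Finitely many cosets of $\langle\mc{U}'_{\mf{C}},\mc{V}_{\mf{C}}\rangle$ then cover $\Out(G)$.

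\textbf{The ``only if'' direction.} Its shape matches \Cref{cor:poison_obstructs_DT_generation}(2). However, step (1) needs exact containment: $\rho_A(\kappa\tau)\in\mf{D}(A)$ for every Dehn twist $\tau$ and a suitable inner automorphism $\kappa$. Virtual containment of individual generators is useless here. The elementary matrices generate $\SL_2(\Z)$, yet each has a power lying in $\mf{D}(A)$.

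Your justification of (1) fails in two places.
\begin{itemize}
\item It is false that $A_i$ must be elliptic or lie in an edge group. The splittings defining the RAAG transvections $\tau_{v,w}$ with $\St(v)\subseteq\St(w)$ have $A=\mc{A}_{\kappa(v)}$ loxodromic (\Cref{ex:type_1}).
\item Commuting with $A_i$ only places a multiplier in $Z_G(A_i)$, not in $A_i$.
\end{itemize}

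The missing ingredient is extra-salience. Suppose $A$ is non-elliptic with axis $\alpha$, and let $K$ be the kernel of $Z_G(A)\acts\alpha$. Then $\tau$ fixes $K$ pointwise, and $g\mapsto g^{-1}\tau(g)$ lands in $Z_K(K)$. Extra-salience forces $\tau(A)=A$ and $Z_K(K)=K\cap A$. This is exactly what makes $\rho_A(\tau)$ an $\overline A$--local elementary automorphism (\Cref{lem:arbitrary_Dehn_twists_in_danger}).

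Finally, automorphisms of $Z_G(A_i)/\mc{L}_i$ do not simply extend to $G$. Only a finite-index subgroup of $\Aut(\wh A_i)$ is realised. This goes through \Cref{lem:fi_image_centre} and \Cref{prop:ot<pt}(2), which use that $A_i$ is central and that there is a complement $N$ with $A_i\cap N=\{1\}$. That weaker statement suffices for your argument, but it is a real step.
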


In fact, when there are no poison subgroups, centraliser twists and ascetic twists suffice in order to virtually generate $G$; see \Cref{thm:main}(3). As already mentioned, poison subgroups can always be removed by passing to a finite index subgroup of $G$ (\Cref{thmnewintro:fi_no_poison}).

\smallskip
{\bf The hierarchical shortening argument.} 
The proofs of Theorems~\ref{thmnewintro:fg}, \ref{thmintro:cmp} and~\ref{thmnewintro:poison} are all based on a \emph{shortening argument}, just like Rips and Sela's original work \cite{RS94}. The lack of hyperbolicity, however, forces us to consider finitely many trees at the same time, which leads to the main difficulty: shortening at least one of these trees without lengthening any of the others. Our main contribution is a hierarchical procedure to carry out the shortening process, moving up along a canonical family $\mf{H}(G)$ of larger and larger subgroups of $G$, which ensures that none of the relevant trees gets lengthened. We now explain this in greater detail.

Fix a proper cocompact action on a metric space $G\acts X$ and choose a finite generating set $S\sq G$. Let $\mc{DT}\leq\Aut(G)$ be the subgroup generated by some family of Dehn twists. Given a sequence of automorphisms $\varphi_n\in\Aut(G)$ (in distinct outer classes), our goal is to find elements $\tau_n\in\mc{DT}(G)$ that ``shorten'' the generating sets $\varphi_n(S)$, in the sense that the inequality
\[ \inf_{x\in X}\max_{s\in S} d(\varphi_n\tau_n(s)x,x) < \inf_{x\in X}\max_{s\in S} d(\varphi_n(s)x,x) \]
holds for infinitely many values of $n$. If this is possible for all sequences $(\varphi_n)_n$, then the argument of Rips and Sela shows that $\mc{DT}$ projects to a finite-index subgroup of $\Out(G)$. If, in addition, we can find all the $\tau_n$ within a \emph{finitely generated} subgroup $\Delta\leq\mc{DT}$ (which is allowed to depend on the sequence $(\varphi_n)_n$, but not on the specific index $n$), then $\Out(G)$ is finitely generated.

In order to find the $\tau_n$, we need a procedure to construct splittings of $G$, as well as Dehn twists with respect to these splittings. When $G$ is a special group, we developed such a procedure in \cite{Fio10e}. Roughly, we choose a convex-cocompact embedding into a right-angled Artin group $G\hookrightarrow\mc{A}_{\G}$. Each vertex $v\in\G$ corresponds to an HNN splitting $\mc{A}_{\G}\acts T^v$, and $G$ acts properly and cocompactly on a (non-convex) subcomplex $X\sq\prod_vT^v$. The induced splittings $G\acts T^v$ are useless: they typically do not admit any Dehn twists. However, we can apply the Bestvina--Paulin construction \cite{Bes88,Pau91}: take an ultralimit of copies of the action $G\acts X$, twisted by the automorphisms $\varphi_n$ and rescaled by a diverging sequence of factors. We thus obtain a \emph{degeneration} $G\acts X_{\om}$, which is a non-elliptic action on a finite-rank median space, and which equivariantly embeds in a finite product of $\R$--trees $G\acts T^v_{\om}$ (namely the ultralimits of twisted and rescaled copies of the simplicial trees $T^v$). The $\R$--trees $T^v_{\om}$ are neither small nor acylindrical, and they often have infinitely-generated arc-stabilisers. At the same time, we showed in \cite{Fio10e} that these $\R$--trees are \emph{stable} in the sense of \cite{BF-stable}, and that their arc-stabilisers are co-abelian subgroups of centralisers in $G$. We can then exploit the Rips machine to extract from each $T^v_{\om}$ a splitting of $G$ with some Dehn twists.

The Dehn twists constructed from the $\R$--tree $T^v_{\om}$ shorten the action of the generating sets $\varphi_n(S)$ on the simplicial tree $T^v$. However, in general, they lengthen the action on the trees $T^w$ for $w\in\G\setminus\{v\}$, and they can also lengthen the action on $X$.

A fundamental new insight of this article is that these issues do not arise if ``sufficiently many'' of the subgroups of $G$ splitting as a direct product are elliptic in the degeneration $X_{\om}$ (see in particular Propositions~\ref{prop:shortening_inert} and~\ref{prop:shortening_motile}). Thus, if some product subgroup $\Pi\leq G$ happens to be non-elliptic in $X_{\om}$, we should prioritise shortening a generating set of $\Pi$, even if this comes at the cost of lengthening $S\sq G$. In turn, this amounts to shortening a generating set of an irreducible factor $F\leq\Pi$ (while not spoiling the factors of $\Pi$ that are already elliptic in $X_{\om}$). Finally, if all maximal products within $F$ are elliptic in $X_{\om}$, then we can shorten immediately; otherwise, we restrict to a maximal product within $F$ and repeat.

Multiple issues aside, this leads to the existence of an $\Aut(G)$--invariant family of subgroups $\mf{H}(G)$, which we refer to as the \emph{hierarchy} of $G$ (\Cref{defn:hierarchy}). Fixing $X$ and $X_{\om}$ as above, the core of the article lies in showing that, if we consider a ``lowest'' subgroup $H\in\mf{H}(G)$ that is non-elliptic in $X_{\om}$ and a finite generating set $S_H\sq H$, then we can shorten the generating sets $\varphi_n(S_H)$ (with respect to $X$) by employing Dehn twists of $G$. Moreover, these twists are inner on the elements of $\mf{H}(G)$ that lie ``below'' $H$, and so they do not ruin whatever was previously arranged. 

The core of the hierarchical shortening argument lies in \Cref{thm:shortening}, whose statement is fairly accessible. Once that is obtained, the proof of the main theorems is short and elementary; we describe it in \Cref{thm:main}.

In truth, the above is a faithful account of our proof only in the coarse-median preserving case. In full generality, additional issues arise which may prevent us from finding Dehn twists that shorten even a single tree $T^v$. A typical example is that of an axial component $\alpha\sq T^v_{\om}$ whose stabiliser $G_{\alpha}$ virtually splits as $\Z^m\x K$ with $m\geq 2$, where $\Z^m$ acts freely on the line $\alpha$ and $K$ is the kernel of the $G_{\alpha}$--action. Since $G_{\alpha}$ contains $\Z^m\x K$ only as a \emph{finite-index} subgroup, it is possible that only very high powers of the Dehn twists of $\Z^m$ can be extended to Dehn twists of $G_{\alpha}$ and $G$. 
For $m=2$, this can mean that all Dehn twists of $G$ actually lengthen the actions on $T^v_{\om}$ and $T^v$, and this corresponds precisely to the existence of poison subgroups. Luckily, the particular structure of axial components ensures that ``most'' automorphisms of $\Z^m\leq G_{\alpha}$ can be extended to automorphisms of $G$, producing what we call ``pseudo-twists''. It turns out that interactions between pseudo-twists arising from distinct abelian subgroups of $G$ are remarkably limited, and this allows us to reduce to automorphisms of $G$ that only cause ``tame'' axial components in their degenerations. The key results in this direction are \Cref{prop:reduction_to_preserving_complements} and \Cref{prop:salient_vs_degenerations}.

\smallskip
{\bf Structure of the paper.} 
\Cref{sect:special_groups} is concerned with general properties of special groups. After collecting some terminology and results from the literature, we describe some new families of subgroups in \Cref{sub:staunchly_parabolic}; the most important ones are \emph{(extra-)salient abelians} (\Cref{defn:salient}).

\Cref{sect:DTs} regards Dehn twists. In \Cref{sub:shunning+ascetic}, we introduce and study the class of \emph{shunning splittings} (\Cref{defn:isolating}), which plays an important role in the definition of poison subgroups. Then \Cref{sub:more_DTs} defines \emph{centraliser twists}, \emph{ascetic twists} and \emph{pseudo-twists}.

\Cref{sect:poison} is about poison subgroups. It contains the proofs of \Cref{propnewintro:bad_examples} (\Cref{ex:poisonous_centre}) and of the forward implication in \Cref{thmnewintro:poison} (\Cref{cor:poison_obstructs_DT_generation}). It also reduces \Cref{thmnewintro:fi_no_poison} to the backward implication of \Cref{thmnewintro:poison} (\Cref{prop:no_poison_in_fi}), which is proven later in \Cref{sect:proofs}.

In \Cref{sect:hierarchy}, we construct and study the \emph{hierarchy} $\mf{H}(G)$ (\Cref{defn:hierarchy}). We then discuss \emph{reference systems} (\Cref{defn:reference_system}), which are non-canonical ways of ordering $\mf{H}(G)$ so as to produce a well-ordering on $\Out(G)$. The latter formalises the exact concept of ``shortening'' that we rely on in the hierarchical shortening argument.

\Cref{sec:degenerations} is concerned with the structure of degenerations $G\acts X_{\om}$ of special groups $G$. We review some results of \cite{Fio10e} (collected in \Cref{thm:inert_vs_motile}), and then analyse certain pathological features of degenerations (called \emph{IOM-lines}), showing that these are uniquely caused by (extra-)salient abelian subgroups (see \Cref{lem:line_stab_SVP} and \Cref{prop:salient_vs_degenerations}).

\Cref{sect:shortening} contains the technical core of the article, as it deals with the actual shortening argument. First, in \Cref{sub:shortening_median}, we develop a general procedure to shorten actions on median spaces, provided that there is an equivariant projection to an $\R$--tree with features of one of two kinds: \emph{shrinkable arcs} (\Cref{defn:shrinkable_arcs}) or \emph{shrinkable lines} (\Cref{defn:shrinkable_lines}). Then, in \Cref{sub:shortening_special}, we show how to find such features in degenerations of special groups and, finally, in \Cref{thm:shortening} we prove our hierarchical shortening theorem.

\Cref{sect:proofs} deduces Theorems~\ref{thmnewintro:fg}, \ref{thmintro:cmp} and~\ref{thmnewintro:poison} from \Cref{thm:shortening} and \Cref{prop:reduction_to_preserving_complements}. It also discusses failures of \Cref{thmnewintro:fi_no_poison} for non-compact special groups, CAT(0) groups, and nilpotent groups.

Finally, \Cref{app:A} proves a shortening theorem for automorphism groups of free products (\Cref{cor:shortening_free_products}), which is needed in one of the cases of the proof of the shortening theorem. We expect this to be known to experts, but it does not seem to appear in the literature.

\smallskip
{\bf Acknowledgements.} 
I am grateful to Adrien Abgrall, Benjamin Br\"uck, Sami Douba, Vincent Guirardel, Camille Horbez and Ric Wade for helpful conversations related to this paper.

\setcounter{tocdepth}{1}
\tableofcontents

\section{Special groups}\label{sect:special_groups}

\Cref{sub:prelims} mainly fixes terminology and collects various needed results on special groups that already appear in the literature. Then, in \Cref{sub:staunchly_parabolic}, we discuss new facts about products within special groups, and we define salient abelian subgroups (\Cref{defn:salient}).

\subsection{Preliminaries}\label{sub:prelims}

Throughout the article, a \emph{special group} $G$ is the fundamental group of a compact special cube complex in the sense of \cite{HW08,Wise-hierarchy}. Equivalently, $G$ embeds in a right-angled Artin group (RAAG) $\mc{A}_{\G}$ as a \emph{convex-cocompact} subgroup, meaning that there is a $G$--invariant convex subcomplex of the universal cover of the Salvetti complex of $\mc{A}_{\G}$ on which $G$ acts cocompactly.

We now review the most important families of subgroups of a special group $G$. For some of these families, the definition depends on the choice of a particular convex-cocompact embedding $\iota\colon G\hookrightarrow\mc{A}_{\G}$. Such embeddings are never canonical, and in general they can affect the properties of the families of subgroups under consideration. 

\subsubsection{Root-closed subgroups}

For a subgroup $H\leq G$, we define the \emph{root-closure} of $H$ as the set $\{g\in G\mid \langle g\rangle\cap H\neq\{1\}\}$. A subgroup is \emph{root-closed} if it equals its root-closure. In general, the root-closure of a subgroup need not be a subgroup, but it will be whenever we consider this notion. The typical situation is when $H_0$ is a finite-index subgroup of some root-closed subgroup $H\leq G$: then the root-closure of $H_0$ is precisely $H$. 

\subsubsection{Centralisers}

For a subgroup $H\leq G$ and a subset $U\sq G$, we employ the notation:
\begin{align*}
    Z_H(U)&=\{h\in H\mid hu=uh,\ \forall u\in U\}, & N_H(U)&=\{h\in H\mid hUh^{-1}=U\}. 
\end{align*}
A subgroup $H\leq G$ is a \emph{centraliser} in $G$ if it is of the form $H=Z_G(U)$ for some subset $U\sq G$. For every subgroup $H\leq G$, we have the inclusion $H\leq Z_GZ_G(H)$ and this inclusion is an equality precisely when $H$ is a centraliser in $G$. The group $G$ is always a centraliser in itself.

Centralisers are always root-closed and convex-cocompact, regardless of the chosen embedding $\iota\colon G\hookrightarrow\mc{A}_{\G}$. This is because these properties hold for centralisers in RAAGs \cite[Section~III]{Servatius}. 

We denote by $\mc{Z}(G)$ the family of centralisers in $G$. This family is $\Aut(G)$--invariant and closed under intersections. Chains of subgroups in $\mc{Z}(G)$ have uniformly bounded length. If $G$ is non-abelian, then the family $\mc{Z}(G)$ contains \emph{infinitely many} distinct $G$--conjugacy classes of subgroups. 

\subsubsection{$G$--parabolic subgroups}

Parabolic subgroups of a RAAG $\mc{A}_{\G}$ are those of the form $h\mc{A}_{\Delta}h^{-1}$ with $\Delta\sq\G$ and $h\in\mc{A}_{\G}$. Given a convex-cocompact embedding $\iota\colon G\hookrightarrow\mc{A}_{\G}$, a subgroup $H\leq G$ is \emph{$G$--parabolic} with respect to $\iota$ if it is of the form $H=\iota^{-1}(P)$ for a parabolic subgroup $P\leq G$; in other words, viewing $G\leq\mc{A}_{\G}$, we have $H=G\cap P$. Note that the whole group $G$ is always a $G$--parabolic subgroup of itself.

Parabolic subgroups do not admit a definition purely in terms of the algebra of $G$. As such, this notion depends on the choice of the embedding $\iota$ and it is {\bf not} preserved by automorphisms of $G$, not even by coarse-median preserving ones.
Still, in \Cref{sub:staunchly_parabolic} we will describe many subgroups of $G$ that are $G$--parabolic regardless of the choice of the embedding $\iota$.

The family of $G$--parabolic subgroups is closed under taking intersections, and all $G$--parabolic subgroups are root-closed. They are also convex-cocompact with respect to the chosen embedding $\iota\colon G\hookrightarrow\mc{A}_{\G}$. The following lemma summarises other important properties; see \cite[Corollary~3.21]{Fio10e} and \cite[Lemma~2.5]{Fio11} for proofs.

\begin{lem}\label{lem:parabolics_basics}
    Let $G\leq\mc{A}_{\G}$ be convex-cocompact.
    \begin{enumerate}
        \item There are only finitely many $G$--conjugacy classes of $G$--parabolic subgroups of $G$.
        \item For every $G$--parabolic subgroup $P\leq G$, the normaliser $N_G(P)$, the centraliser $Z_G(P)$ and the centre $Z_P(P)$ are all $G$--parabolic.
    \end{enumerate}
\end{lem}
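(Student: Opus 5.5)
The plan is to work in the universal cover $\wt{S}$ of the Salvetti complex of $\mc{A}_{\G}$, using a $G$--invariant convex subcomplex $Y\sq\wt{S}$ on which $G$ acts cocompactly. The guiding principle is that $G$--parabolic subgroups are controlled by the geometry of $Y$. If $P=hP_\Delta h^{-1}$ is parabolic in $\mc{A}_{\G}$, then $hP_\Delta h^{-1}$ stabilises the convex subcomplex $h\wt{S}_\Delta$, which is a copy of the universal cover of the Salvetti complex of $\mc{A}_{\Delta}$. Moreover, $G\cap P$ acts cocompactly on the convex subcomplex $Y\cap h\wt{S}_\Delta$ whenever this intersection is nonempty and $G\cap P$ is nontrivial. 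To see this, combine the facts that $G$ and $P$ are both convex-cocompact and that intersections of convex-cocompact subgroups of cocompactly cubulated groups are convex-cocompact, using a bounded-neighbourhood argument in the CAT(0) cube complex. Two reductions follow. First, we may replace $P$ by the smallest parabolic subgroup containing $G\cap P$; by Servatius, parabolic subgroups of $\mc{A}_{\G}$ are closed under intersections, so this minimal parabolic exists. Second, the minimal parabolic $P$ containing $H=G\cap P$ has the property that its convex subcomplex can be chosen to meet a uniformly bounded neighbourhood of $Y$, after conjugating by $G$.

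For part (1), I would take the finite compact core $G\backslash Y$ and a basepoint vertex set $F\sq Y$ meeting every $G$--orbit of vertices. For a $G$--parabolic $H=G\cap P$ with $P$ chosen minimal as above, the cocompact action of $H$ on a convex subcomplex $C\sq Y$ lets me translate by an element of $G$ so that $C$ contains a vertex $x\in F$ (or lies within distance $R$ of $F$). A parabolic subgroup of $\mc{A}_{\G}$ whose standard subcomplex passes within distance $R$ of a fixed finite set is of the form $hP_\Delta h^{-1}$ with $|h|\le R'$ and $\Delta\sq\G$. There are only finitely many such parabolic subgroups, which gives finitely many $G$--conjugacy classes of $G\cap P$. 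The key technical point, which I expect to be the main obstacle, is showing that $P$ can be replaced by a parabolic subgroup whose standard subcomplex actually comes close to $Y$. Naively $h\wt{S}_\Delta$ could be far from $Y$ even though $G\cap P$ is large. I would handle this by projecting: the gate-projection of $h\wt{S}_\Delta$ to $Y$ is a convex subcomplex invariant under $G\cap P$. The hyperplanes crossing this projection are exactly those crossing both $Y$ and $h\wt{S}_\Delta$, and their labels span some $\Delta'\sq\Delta$. This gives a parabolic $h'P_{\Delta'}h'^{-1}$ whose standard subcomplex meets $Y$ and whose intersection with $G$ is still $G\cap P$. Checking the last claim uses that elements of $G\cap P$ translate along hyperplanes crossing both subcomplexes.

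For part (2), normalisers and centralisers of parabolics in $\mc{A}_{\G}$ are explicit. If $P=P_\Delta$ (after conjugation), then $N(P_\Delta)=P_{\Delta\cup\lk(\Delta)}$ when $\Delta$ is not a single vertex, with the appropriate modification otherwise. Also $Z(P_\Delta)=P_{\lk(\Delta)}\x Z(P_\Delta)$, and the centre of $P_\Delta$ is $P_{\Delta_0}$, where $\Delta_0$ is the set of vertices of $\Delta$ adjacent to all of $\Delta$. These are therefore parabolic (or the centre is a parabolic subgroup of a product), and all are computed with $P$ taken minimal for $H=G\cap P$. Then $N_G(H)$ sits inside $G\cap N_{\mc{A}_{\G}}(P)$, using minimality and uniqueness of the minimal parabolic, since conjugation by an element normalising $H$ preserves the minimal parabolic containing $H$. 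The reverse inclusion needs care, because $G\cap N(P)$ normalises $P$ and hence $G\cap P$, so equality holds. The same argument gives $Z_G(H)=G\cap Z_{\mc{A}_{\G}}(P)$. For this I would use that $H$ has finite index in no smaller parabolic and that centralisers in RAAGs of a subgroup coincide with centralisers of its parabolic closure, by Servatius' centraliser theorem applied to cyclically reduced elements. The centre $Z_H(H)=H\cap Z(H)$ is an intersection of $G$--parabolics and hence $G$--parabolic.
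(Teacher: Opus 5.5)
Your part~(1) works, and so does the normaliser argument $N_G(H)=G\cap N_{\mathcal{A}_{\Gamma}}(P)$. The centraliser step, however, has a genuine gap.

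One detail in part~(1): the equality $G\cap P'=G\cap P$ needs $P'\le P$ as well as $G\cap P\le P'$. The inclusion $P'\le P$ comes from the product structure of the bridge between $Y$ and $h\widetilde{S}_{\Delta}$. Every hyperplane separating the two shores is transverse to every hyperplane crossing them. So a vertex $y$ of the shore in $Y$ and its gate-projection $y'$ to $h\widetilde{S}_{\Delta}$ differ by an element commuting with $\mathcal{A}_{\Delta'}$. Hence $y\mathcal{A}_{\Delta'}y^{-1}=y'\mathcal{A}_{\Delta'}y'^{-1}\le P$.

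\textbf{The gap.} The identity $Z_G(H)=G\cap Z_{\mathcal{A}_{\Gamma}}(P)$, with $P$ the parabolic closure of $H$, is false. So is the principle you invoke for it, that a subgroup of a RAAG has the same centraliser as its parabolic closure. Here is a counterexample.
\begin{itemize}
\item Take $\Gamma$ to be two non-adjacent vertices $a,b$, so $\mathcal{A}_{\Gamma}=F(a,b)$.
\item Take $G=\langle ab\rangle$. It acts cocompactly on its axis in the Cayley tree, so it is convex-cocompact.
\item Then $H:=G=G\cap\mathcal{A}_{\Gamma}$ is $G$--parabolic with parabolic closure $F(a,b)$.
\item So $G\cap Z_{\mathcal{A}_{\Gamma}}(P)=\{1\}$, whereas $Z_G(H)=G$.
\end{itemize}
Servatius' theorem actually points the other way. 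The centraliser of a cyclically reduced element contains the roots of its pure factors (here $Z(ab)=\langle ab\rangle$), and these do not centralise its parabolic closure. Your ``same argument'' as for normalisers only yields $Z_G(H)\le N_G(H)$. Your centre step uses that $Z_G(H)$ is $G$--parabolic, so it inherits the gap.

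\textbf{The missing idea} is to use a parabolic subgroup larger than $Z_{\mathcal{A}_{\Gamma}}(P)$.
\begin{itemize}
\item Conjugate so that $P=\mathcal{A}_{\Delta}$. Split $\mathcal{A}_{\Delta}=\mathcal{A}_{\Delta_1}\times\dots\times\mathcal{A}_{\Delta_k}\times\mathcal{A}_{\Lambda_1}\times\dots\times\mathcal{A}_{\Lambda_m}$ into irreducible factors as in the proof of \Cref{lem:extended_factors}.
\item In this splitting, $H\cap\mathcal{A}_{\Delta_i}$ is non-cyclic. As in the proofs of \Cref{lem:extended_factors} and \Cref{lem:special_modulo_centre}, it then has trivial centraliser in $\mathcal{A}_{\Delta_i}$. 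Also $H\cap\mathcal{A}_{\Lambda_j}=\langle g_j\rangle\cong\mathbb{Z}$.
\item Let $\Lambda=\bigcup_j\Lambda_j$, and let $\Delta^{\perp}$ be the set of vertices adjacent to every vertex of $\Delta$.
\end{itemize}
Then $Z_G(H)=G\cap(\mathcal{A}_{\Lambda}\times\mathcal{A}_{\Delta^{\perp}})$, which is $G$--parabolic; in the example above it is $G\cap F(a,b)=G$.

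For ``$\le$'': an element of $Z_G(H)$ normalises $\mathcal{A}_{\Delta}$ by your normaliser argument, so it lies in $\mathcal{A}_{\Delta}\times\mathcal{A}_{\Delta^{\perp}}$. Its $\mathcal{A}_{\Delta_i}$--components centralise $H\cap\mathcal{A}_{\Delta_i}$, so they are trivial.

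For ``$\ge$'', let $x$ lie in the right-hand side.
\begin{itemize}
\item $x$ normalises $\mathcal{A}_{\Delta}$ and each $\mathcal{A}_{\Lambda_j}$. Hence it normalises $H=G\cap\mathcal{A}_{\Delta}$ and each $\langle g_j\rangle$.
\item No nontrivial element of a RAAG is conjugate to its inverse, so $x$ centralises each $g_j$.
\item $x$ also centralises each $H\cap\mathcal{A}_{\Delta_i}$. By \Cref{lem:cc_basics}(4), these subgroups and the $\langle g_j\rangle$ generate a finite-index subgroup of $H$, so $x$ centralises it.
\item Since $Z_{\mathcal{A}_{\Gamma}}(h^n)=Z_{\mathcal{A}_{\Gamma}}(h)$ in RAAGs, $x$ centralises all of $H$.
\end{itemize}
With $Z_G(H)$ known to be $G$--parabolic, your argument for the centre $Z_H(H)=H\cap Z_G(H)$ goes through unchanged.
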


\subsubsection{Convex-cocompact subgroups}

Fixing a convex-cocompact embedding $\iota\colon G\hookrightarrow\mc{A}_{\G}$ allows us to speak of \emph{convex-cocompact} subgroups of $G$ with respect to $\iota$: these are the subgroups $H\leq G$ such that there exists an $\iota(H)$--invariant convex subcomplex of the universal cover $\X_{\G}$ of the Salvetti complex of $\mc{A}_{\G}$ on which $\iota(H)$ acts cocompactly. We say that an element $g\in G$ is \emph{convex-cocompact} if the cyclic subgroup $\langle g\rangle$ is.

Changing the embedding $\iota$ can affect which subgroups of $G$ qualify as convex-cocompact. In fact, convex-cocompactness is completely determined by a coarse median structure: each convex-cocompact embedding $\iota\colon G\hookrightarrow\mc{A}_{\G}$ allows us to pull back the cubical median operator $m\colon\X_{\G}^3\ra\X_{\G}$ to a \emph{coarse median operator} $\mu\colon G^3\ra G$ in the sense of Bowditch \cite{Bow13}. A \emph{coarse median structure} is an equivalence class $[\mu]$ of coarse median operators on $G$ pairwise at bounded distance from each other. Convex-cocompact subgroups are then precisely those that are quasi-convex with respect to the coarse median structure induced by $\iota$; see \cite[Section~2.4]{FLS}.

For a coarse median structure $[\mu]$ arising from a convex-cocompact embedding $\iota\colon G\hookrightarrow\mc{A}_{\G}$, we are also interested in the \emph{coarse-median preserving} subgroup $\Aut(G,[\mu])\leq\Aut(G)$. An automorphism $\varphi\in\Aut(G)$ lies in $\Aut(G,[\mu])$ when the elements $\varphi(\mu(a,b,c))$ and $\mu(\varphi(a),\varphi(b),\varphi(c))$ are at uniformly bounded distance with respect to a word metric on $G$, as we vary the elements $a,b,c\in G$. All inner automorphisms satisfy this property, giving rise to a well-defined projection $\Out(G,[\mu])\leq\Out(G)$ of $\Aut(G,[\mu])$. Equivalently, coarse-median preserving automorphisms are precisely those taking convex-cocompact subgroups to convex-cocompact subgroups \cite[Theorem~2.17]{FLS}. All automorphisms of hyperbolic groups are coarse-median preserving, as there is only one possible coarse median structure in this case. Additionally, coarse-median preserving automorphisms of RAAGs are precisely those that are \emph{untwisted} in the sense of \cite{CSV}.

The following lemma summarises the key properties of convex-cocompact subgroups.

\begin{lem}\label{lem:cc_basics}
    Let $G\leq\mc{A}_{\G}$ and $H,K\leq G$ all be convex-cocompact.
    \begin{enumerate}
        \item The intersection $H\cap K$ is convex-cocompact.
        \item The normaliser $N_G(H)$ is convex-cocompact and virtually splits as $H\x P$ for a $G$--parabolic subgroup $P\leq G$.
        \item If $gHg^{-1}\leq H$ for some $g\in G$, then $gHg^{-1}=H$.
        \item If $H\leq K$ and $K=K_1\x\dots\x K_m$ with all $K_i$ convex-cocompact, then $H$ contains the product $(H\cap K_1)\x\dots\x (H\cap K_m)$ as a finite-index subgroup.
        \item There exists an integer $q=q(H)$ with the following property. For each $g\in G\setminus\{1\}$, there exist an integer $1\leq n\leq q$ and pairwise-commuting convex-cocompact elements $g_1,\dots,g_k\in G$ such that $g^n=g_1\cdot\ldots\cdot g_k$ and $Z_G(g)=Z_G(g_1)\cap\dots\cap Z_G(g_k)$.
        \item All elements of $\mc{Z}(G)$ and all $G$--parabolic subgroups are convex-cocompact.
    \end{enumerate}
\end{lem}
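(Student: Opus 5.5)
This lemma collects standard facts, and I would prove it item by item, working throughout inside the ambient RAAG $\mc{A}_{\G}$ and its universal Salvetti cover $\X_{\G}$. Three tools recur: the Haglund--Wise theory of convex subcomplexes in CAT(0) cube complexes, the parabolic structure of $\mc{A}_{\G}$, and \Cref{lem:parabolics_basics}. Since these are essentially known results (see \cite{Fio10e,FLS,Fio11}), the plan is to reduce each item to a statement about convex subcomplexes or about RAAGs.

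\emph{(1) Intersections.} Let $C_H,C_K\sq\X_{\G}$ be invariant convex subcomplexes on which $H$ and $K$ act cocompactly. After enlarging one of them by a bounded neighbourhood (cubical convex hulls of bounded neighbourhoods are still convex and still cocompact), I may assume $C_H\cap C_K\neq\emptyset$. A standard finiteness argument then shows that $H\cap K$ acts cocompactly on $C_H\cap C_K$: there are only finitely many $H$--orbits of cubes in $C_H$ and $K$--orbits in $C_K$, and double cosets are handled by local finiteness. Since $C_H\cap C_K$ is convex, this gives (1).

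\emph{(3) Conjugates contained in $H$.} I would use a volume or index argument: if $gHg^{-1}\lneq H$, then $H$ would be quasi-isometric, via the cocompact action on $C_H$, to a proper convex-cocompact subgroup of itself. To rule this out, iterate the inclusion to get $g^nHg^{-n}\lneq\dots\lneq H$. Then compare the number of $H$--orbits of cubes in a ball with that of $gHg^{-1}$; the two counts are equal because conjugation is an isometry of $\X_{\G}$. Alternatively, I would apply (2) to $\langle H,g\rangle$.

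\emph{(2) Normalisers.} By the parabolic description, $N_{\mc{A}_{\G}}(H)$ is commensurable to $H\x Z$, where $Z$ is the pointwise stabiliser of the ``product complement'' of the hull of $H$. In a RAAG, this complement is a parabolic subgroup: it is the link factor in a splitting $C_H\x C'$ of a maximal product region. Intersecting with $G$ gives $P=G\cap Z$, which is $G$--parabolic, and $N_G(H)$ virtually splits as $H\x P$. I expect this to be the main obstacle. One has to show that $N_G(H)$ preserves a convex subcomplex of the form $C_H\x C'$, for instance the union of all $H$--invariant convex subcomplexes that are parallel to $C_H$. One also has to check that the $P$ obtained this way is exactly of parabolic form. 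My first attempt would use the parallelism-class decomposition of hyperplanes crossing a minimal $H$--invariant convex subcomplex.

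\emph{(4) Products.} Here (1) gives that each $H\cap K_i$ is convex-cocompact. Let $C_K=C_1\x\dots\x C_m$ be the induced product decomposition. Then the projections of the hull $C_H$ to the factors $C_i$ are convex, and cocompactness of $H$ forces $H\cap K_i$ to act cocompactly on them. Hence the product $(H\cap K_1)\x\dots\x(H\cap K_m)$ has finite index in $H$.

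\emph{(5) Roots of elements.} Let $n$ be the order of the finite image of $g$ in the automorphism group of the hyperplane-parallelism structure of the finitely many orbits; this gives the uniform bound $q$. Then $g^n$ is a product of commuting convex-cocompact pieces, as follows. In $\mc{A}_{\G}$, every element is conjugate to a cyclically reduced one of the form $g_1\cdots g_k$, with the $g_i$ supported on the irreducible join components of its support. The centraliser of $g$ equals the intersection of the centralisers of these pieces, by Servatius' centraliser theorem. Passing to a power $g^n$ with $n\le q$ lands each piece inside $G$.

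\emph{(6) Centralisers and parabolics.} This follows from \cite{Servatius} and from (1): a centraliser $Z_G(U)$ equals $G\cap Z_{\mc{A}_{\G}}(U)$, which is an intersection of $G$ with a convex-cocompact subgroup of $\mc{A}_{\G}$. The same argument applies to $G\cap P$ for a parabolic subgroup $P$.
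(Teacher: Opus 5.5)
There are genuine gaps, mainly in (3) and (5). Your arguments for (1) and (6) are sound, and (6) is exactly the paper's route: Servatius, then intersect with $G$. Note, though, that the paper proves the whole lemma by citation (\cite[Lemma~2.8, Lemma~3.22(1), Remark~3.7(6)]{Fio10e} and \cite[Lemma~2.4(3)]{Fio11}), so a from-scratch proof has to close every item.

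\textbf{Item (3).} Nothing in your sketch shows that $K:=gHg^{-1}$ has finite index in $H$. $K$ acts cocompactly on $gC_H$, not on $C_H$, and being quasi-isometric to a proper convex-cocompact subgroup of itself is no contradiction: a nonabelian free group contains infinite-index free subgroups quasi-isometric to it. Counting cubes in balls only gives inequalities up to constants, so it cannot exclude $[H:K]=\infty$. ``Applying (2) to $\langle H,g\rangle$'' is also unavailable, since $g$ need not normalise $H$ and $\langle H,g\rangle$ need not be convex-cocompact. Some non-coarse input is required. One clean route goes through separability. By canonical completion and retraction \cite{HW08}, $H$ is a virtual retract of $\mc{A}_{\G}$, hence separable in $G$. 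In the compact group $\widehat G$, the set $\{x : x\overline{H}x^{-1}\sq\overline{H}\}$ is a closed semigroup containing $g$. It therefore contains $g^{-1}$, because the closure of $\{g^n\}_{n\geq 1}$ is a subgroup. Hence $g\overline{H}g^{-1}=\overline{H}$, and so $gHg^{-1}=g(\overline{H}\cap G)g^{-1}=\overline{H}\cap G=H$.

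\textbf{Item (5).} Since $g\in G$, it acts trivially on the $G$--orbits of hyperplanes, so your proposed $q$ carries no information. You give no reason why a power of $g$ of bounded exponent has all its $\mc{A}_{\G}$--pure factors in $G$. The existence of \emph{some} such power does follow from your own (1) and (4): apply them in $\mc{A}_{\G}$ to $A:=\langle g_1',\dots,g_k'\rangle\cong\Z^k$ and to $A\cap G$, then use uniqueness of coordinates in $A$. But a bound independent of $g$ needs a separate finiteness argument.

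\textbf{Item (2)} is only a plan. You must prove three things: that $N_G(H)$ preserves $\mf{E}_H\x\mf{E}_H^{\perp}$ (it permutes the $H$--essential hyperplanes); that its image acting on the $\mf{E}_H$--factor contains $H$ with finite index, which is a discreteness statement; and that the kernel of this action is $G\cap H^{\perp}=\Perp_G(H)$.

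\textbf{Item (4).} The step ``cocompactness of $H$ forces $H\cap K_i$ to act cocompactly'' needs the fact that a convex subcomplex of $C_1\x\dots\x C_m$ is the product of its projections. That places the fibres through a point of $C_H$ inside $C_H$, and properness of the action on the other factors then makes $H\cap K_i$ cobounded on them.
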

\begin{proof}
    Items~(1),~(2) and~(5) are respectively Lemma~2.8, Lemma~3.22(1) and Remark~3.7(6) in \cite{Fio10e}. Item~(3) is \cite[Lemma~2.4(3)]{Fio11}. Item~(4) is clear, thinking of convex-cocompactness as coarse median quasi-convexity. Finally, Item~(6) is clear for $G$--parabolics, and it follows from \cite[Section~III]{Servatius} for centralisers.
\end{proof}

\subsubsection{Singular subgroups}\label{subsub:singular}

Let $\mc{VP}(G)$ be the family of \emph{virtual products} in $G$, that is, the subgroups of $G$ that have a finite-index subgroup splitting as a nontrivial direct product (necessarily with infinite factors, since $G$ is torsion-free). A subgroup $H\leq G$ is \emph{strongly irreducible} if $H\not\in\mc{VP}(G)$.

\emph{Singular subgroups} are the maximal elements of the family $\mc{VP}(G)$ ordered by inclusion. The family of singular subgroups is denoted by $\mc{S}(G)$. Every element of $\mc{VP}(G)$ is contained in at least one element of $\mc{S}(G)$. Moreover, there are only finitely many $G$--conjugacy classes of subgroups in $\mc{S}(G)$, and they are always $G$--parabolic, regardless of the chosen convex-cocompact embedding $\iota\colon G\hookrightarrow\mc{A}_{\G}$. The family $\mc{S}(G)$ is empty precisely when $G$ is Gromov-hyperbolic. See \cite[Proposition~4.5]{Fio11} for proofs.

\subsubsection{Orthogonals}\label{subsub:orthogonals}

Consider for a moment a RAAG $\mc{A}_{\G}$. Let $\X_{\G}$ be the universal cover of the Salvetti complex of $\mc{A}_{\G}$, and let $\mscr{W}(\X_{\G})$ be its set of hyperplanes. For any convex subcomplex $C\sq\X_{\G}$, we denote by $\mscr{W}(C)\sq\mscr{W}(\X_{\G})$ the set of hyperplanes that it crosses. 

A \emph{parabolic stratum} is a subcomplex $\mc{P}\sq\X_{\G}$ such that, under the standard identification between the $0$--skeleton of $\X_{\G}$ and the group $\mc{A}_{\G}$, the vertex set of $\mc{P}$ becomes a left coset of a parabolic subgroup of $\mc{A}_{\G}$. We say that a subset of $\mscr{W}(\X_{\G})$ is \emph{parabolic} if it is of the form $\mscr{W}(\mc{P})$ for some parabolic stratum $\mc{P}$. Note that, conversely, if $C\sq\X_{\G}$ is a convex subcomplex such that the set of hyperplanes $\mscr{W}(C)$ is parabolic, then $C$ is a parabolic stratum. Each subset $\mc{W}\sq\mscr{W}(\X_{\G})$ has an \emph{orthogonal} $\mc{W}^{\perp}\sq\mscr{W}(\X_{\G})$, which is the set of hyperplanes transverse to all hyperplanes in $\mc{W}$. The orthogonal $\mc{W}^{\perp}$ is always a parabolic subset, even if $\mc{W}$ was not. 

Let $C\sq\X_{\G}$ be a convex subcomplex. For any vertex $x\in C$, we can consider the unique parabolic stratum $\mc{P}_x\sq\X_{\G}$ such that $x\in\mc{P}_x$ and $\mscr{W}(\mc{P}_x)=\mscr{W}(C)^{\perp}$; we refer to $\mc{P}_x$ as the \emph{orthogonal subcomplex} for $C$ through $x$. The isomorphism type of the cube complex $\mc{P}_x$ is independent of the choice of the vertex $x$, and we denote it by $C^{\perp}$. The union $\bigcup_{x\in C}\mc{P}_x$ is a convex subcomplex of $\X_{\G}$ isomorphic to the product $C\x C^{\perp}$.

\begin{rmk}\label{rmk:orthogonal_preserves_hyperplanes}
    If an element $g\in\mc{A}_{\G}$ leaves invariant a convex subcomplex $C\sq\X_{\G}$, then we have $g\mf{w}=\mf{w}$ for every hyperplane $\mf{w}\in\mscr{W}(C)^{\perp}\sq\mscr{W}(\X_{\G})$.
\end{rmk}

Let now $H\leq\mc{A}_{\G}$ be a convex-cocompact subgroup, and let $\mf{E}_H\sq\X_{\G}$ be some convex $H$--essential subcomplex \cite[Section~3.4]{CS11}. All the orthogonal subcomplexes for $\mf{E}_H$ through the vertices of $\mf{E}_H$ have the same $\mc{A}_{\G}$--stabiliser. This is a parabolic subgroup of $\mc{A}_{\G}$ and we denote it by $H^{\perp}$. If $G,H\leq\mc{A}_{\G}$ are convex-cocompact subgroups (not necessarily contained in each other), we write 
\[ \Perp_G(H):=H^{\perp}\cap G , \] 
and we refer to $\Perp_G(H)$ as the \emph{orthogonal} of $H$ in $G$. Alternatively, $\Perp_G(H)$ can be described as the (unique) maximal convex-cocompact subgroup of the centraliser $Z_G(H)$ that intersects $H$ trivially. 
When $H$ has trivial centre, we simply have $\Perp_G(H)=Z_G(H)$, and so orthogonal subgroups are a ``new'' notion only when $H$ has infinite centre.

When $H\leq G$, the subgroup $\Perp_G(H)$ is precisely the $G$--parabolic subgroup $P$ appearing in \Cref{lem:cc_basics}(2). Also note that we always have the inclusion $H\leq\Perp_G\Perp_G(H)$.

\begin{rmk}\label{rmk:cmp_preserves_Perp}
    Let $H\leq G\leq\mc{A}_{\G}$ be convex-cocompact subgroups and let $[\mu]$ be the induced coarse median structure on $G$. Then each coarse-median preserving automorphism $\varphi\in\Aut(G,[\mu])$ satisfies $\varphi(\Perp_G(H))=\Perp_G(\varphi(H))$, since $\varphi$ preserves convex-cocompactness of subgroups. At the same time, the previous equality can fail for general automorphisms of $G$, and the definition of the orthogonal $\Perp_G(H)$ depends on the choice of the convex-cocompact embedding $G\hookrightarrow\mc{A}_{\G}$ in general.
\end{rmk}

Since $\Perp_G(H)$ is always a $G$--parabolic subgroup of $G$, there are only finitely many $G$--conjugacy classes of subgroups of the form $\Perp_G(H)$, even though there can be infinitely many conjugacy classes of convex-cocompact subgroups $H\leq G$. In particular, there is a constant $C=C(G)$ such that, independently of the convex-cocompact subgroup $H\leq\mc{A}_{\G}$, the orthogonal $\Perp_G(H)$ acts on a convex subcomplex of $\mf{E}_G$ with at most $C$ orbits of vertices.

\subsection{Staunchly parabolic subgroups}\label{sub:staunchly_parabolic}

Let $G$ be a special group. Fixing some convex-cocompact embedding $\iota\colon G\hookrightarrow\mc{A}_{\G}$, the induced notion of $G$--parabolic subgroup can be extremely useful, but unfortunately it depends on the choice of $\iota$ and is not preserved by $\Aut(G)$ in general.

Still, certain ``particularly natural'' subgroups will always end up being $G$--parabolic:

\begin{defn}\label{defn:staunchly_parabolic}
    A subgroup $H\leq G$ is \emph{staunchly $G$--parabolic} if $H$ is $G$--parabolic with respect to {\bf every} convex-cocompact embedding of $G$ into a RAAG.
\end{defn}

Note that this property is invariant under automorphisms of $G$, and there are only finitely many conjugacy classes of staunchly parabolic subgroups. Thus, if $P\leq G$ is staunchly $G$--parabolic, then a finite-index subgroup of $\Out(G)$ will preserve the $G$--conjugacy class of $P$.

\begin{rmk}\label{rmk:SP_of_SP}
    If $P\leq G$ is staunchly $G$--parabolic and $H\leq P$ is staunchly $P$--parabolic, then $H$ is also staunchly $G$--parabolic.
\end{rmk}

Singular subgroups of $G$ are staunchly $G$--parabolic (see \Cref{subsub:singular}). In Sections~\ref{subsub:extended_factors}--\ref{subsub:salient} below, we identify further classes of subgroups with this property. The most important are the \emph{standard virtual products} and \emph{salient abelian} subgroups already mentioned in the Introduction. The elements of the hierarchy defined in \Cref{sect:hierarchy} will also be staunchly $G$--parabolic.

\subsubsection{Extended factors}\label{subsub:extended_factors}

There always exists a finite-index subgroup of $G$ that splits as a product $G_1\x\dots\x G_k\x A$ for some $k\geq 0$, where $A$ is the centre of $G$ and the $G_i$ are strongly irreducible. 
While the centre $A$ is uniquely determined, the factors $G_i$ are not: in general, there is no way of algebraically distinguishing $G_i$ from the graph of a homomorphism $G_i\ra A$. However, this is the only source of ambiguity. To show this, we need the following observation.

\begin{lem}\label{lem:special_modulo_centre}
    The quotient $G/Z_G(G)$ is special and has trivial centre.
\end{lem}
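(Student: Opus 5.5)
The plan is to realise $G/Z_G(G)$ as a convex-cocompact subgroup of a right-angled Artin group directly, using the product decomposition of the convex core. Write $A:=Z_G(G)$ and fix a convex-cocompact embedding $G\leq\mc{A}_{\G}$ with a convex $G$--essential subcomplex $\mf{E}_G\sq\X_{\G}$ on which $G$ acts cocompactly. Since $A$ is central, it is normal, so \Cref{lem:cc_basics}(2) applied with $H=A$ shows that $G=N_G(A)$ virtually splits as $A\x\Perp_G(A)$. The main point is to make this splitting visible geometrically. I would do this by decomposing $\mf{E}_G$ (after passing to an essential core) as a product $C_A\x C'$, where $C_A$ is the essential core of $A$ and $C'$ is the orthogonal factor. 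This uses the standard product decomposition of essential cores in the spirit of Caprace--Sageev: the hyperplanes crossed by $A$--axes are exactly those transverse to all others.

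Concretely, let $\mc{W}_A\sq\mscr{W}(\mf{E}_G)$ be the set of hyperplanes that are skewered by elements of $A$. Because $A$ is central, for each $g\in G$ and $a\in A$ we have $a=gag^{-1}$, which shows that $\mc{W}_A$ is $G$--invariant. Every hyperplane skewered by $A$ is transverse to every hyperplane not skewered by $A$; this follows because $A$ is central and acts essentially. Hence $\mf{E}_G=C_A\x C'$, and $G$ preserves this decomposition.

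The induced action of $G$ on $C'$ is cocompact. Its kernel contains $A$, since each $a\in A$ fixes every hyperplane in $\mscr{W}(C')$ by \Cref{rmk:orthogonal_preserves_hyperplanes}. Moreover, $A$ acts on $C'$ trivially and not merely hyperplane-preservingly: $C'$ is essential and $A$ commutes with $G$, and an automorphism of an essential cube complex fixing all halfspaces is trivial. Conversely, the kernel is exactly $A$. An element acting trivially on $C'$ acts on $\mf{E}_G\cong C_A\x C'$ through the $C_A$ factor, which is a Euclidean cube complex on which $A$ acts cocompactly. Such an element commutes with $G$ up to finite index, and root-closure and torsion-freeness then place it in $A$.

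It follows that $G/A$ acts freely (by torsion-freeness, checked below) and cocompactly on $C'$. This action should be special, since the hyperplanes of $C'$ are hyperplanes of $\X_{\G}$ and inherit the absence of self-osculation and inter-osculation from the Salvetti complex. Alternatively, and more cleanly, $G/A\cong\Perp_G(A)\cdot A/A$ commensurably embeds in $\mc{A}_{\Delta}$, where $A^{\perp}=\mc{A}_{\Delta}$ up to conjugacy. This can be obtained via the projection $\mc{A}_{\G}\supseteq\mc{A}_{\mathrm{lk}}\x\ldots$ onto a factor of the centraliser structure.

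For the triviality of the centre, suppose $gA$ is central in $G/A$. Then $G\ni h\mapsto [g,h]\in A$ is a homomorphism to a free abelian group. For a finite-index subgroup we can factor $G_0=A\x\Perp_G(A)$, and $g$ centralises $\Perp_G(A)$ modulo $A$. Using the unique-root property and the fact that $\Perp_G(A)$ has trivial intersection with $A$ and lies in the orthogonal parabolic, the commutators with elements of $\Perp_G(A)$ must vanish. Root-closure then gives $g\in Z_G(G)=A$.

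I expect the main obstacle to be freeness and specialness of the quotient action, that is, ensuring that the quotient $G/A$ is torsion-free and that the quotient complex is special rather than merely virtually special. Torsion-freeness I would get from root-closure of centralisers: if $g^n\in A$ then $g\in A$, since $A=Z_G(G)$ is root-closed in a RAAG. For specialness, I would use the projection of $G$ to the parabolic $\mc{A}_{\Delta}$, writing $\mc{A}_{\G}\geq\mc{A}_{\Delta}\x\mc{A}_{\Delta^{\perp}}$ for the factor containing $A$ in the $\Delta^{\perp}$ part. Then $G/A$ maps injectively to $\mc{A}_{\Delta}$, with convex-cocompact image given by the image of $C'$.
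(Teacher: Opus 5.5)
You have a genuine gap: none of your three sketches proves that $G/A$ is special, and that is the main content of the lemma. The trivial-centre half is essentially fine. What kills $[g,p]$ for $p\in\Perp_G(A)$ is that conjugation by $g$ preserves $A$, hence preserves $\Perp_G(A)$, so $[g,p]\in A\cap\Perp_G(A)=\{1\}$; root-closure of $Z_G(g)$ then gives $g\in A$.

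The ``cleaner'' alternative is wrong. $G/A$ is not in general isomorphic to $\Perp_G(A)A/A\cong\Perp_G(A)$; it only contains it with finite index. Containing a convex-cocompact subgroup of a RAAG with finite index gives only virtual specialness. For example, the Klein bottle group is torsion-free and virtually $\Z^2$, but it is not bi-orderable, so it embeds in no RAAG.

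In the cube-complex route, your one-line reason for specialness is not yet an argument. $G$ permutes the fibres $\{c\}\x C'$ of $\mf{E}_G=C_A\x C'$, so $C'/(G/A)$ does not map to the Salvetti complex, and nothing is inherited automatically. To make this route work, you must verify the Haglund--Wise conditions for $G/A\acts C'$, phrased for pairs of hyperplanes in the universal cover: no self-intersection, two-sidedness, no direct self-osculation, no inter-osculation. You would transfer them from $G\acts\mf{E}_G$, using that $C_A\x\mathfrak{u}_1$ and $C_A\x\mathfrak{u}_2$ are transverse (resp.\ directly osculate) exactly when $\mathfrak{u}_1,\mathfrak{u}_2$ are (resp.\ do), and that $G/A$-orbits of hyperplanes of $C'$ are $G$-orbits. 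You also need to prove, not assert, that $A$ acts cocompactly on $C_A$, for instance via \Cref{rmk:orthogonal_preserves_hyperplanes} applied to $\Perp_G(A)\leq A^{\perp}$. Both your computation that the kernel is exactly $A$ and the finiteness of stabilisers in $G/A$ depend on this.

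Your last paragraph is the paper's route, but injectivity of $G/A\ra\mc{A}_{\Delta}$ is exactly where the work lies, and you only assert it. You need $G$ to lie in a parabolic subgroup splitting as $\mc{A}_{\Delta}\x\mc{A}_{\Lambda}$, and you need $G\cap\mc{A}_{\Lambda}$ to be central in $G$ (it then equals $Z_G(G)$). Neither follows from $A^{\perp}=\mc{A}_{\Delta}$ alone; you must actually use Servatius's description of $Z_{\mc{A}_{\G}}(A)$. The paper streamlines this by assuming without loss of generality that $G$ lies in no proper parabolic subgroup. Then $\mc{A}_{\G}$ itself splits as $\mc{A}_{\Delta}\x\mc{A}_{\Lambda_1}\x\dots\x\mc{A}_{\Lambda_m}$, with $G_0:=G\cap\mc{A}_{\Delta}$ centreless and $G\cap\mc{A}_{\Lambda_i}\cong\Z$. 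Root-closure gives $Z_G(G)=G\cap\mc{A}_{\Lambda}$, which is precisely the kernel on $G$ of the projection $\pi\colon\mc{A}_{\G}\ra\mc{A}_{\Delta}$. Finally, $\pi(G)$ contains $G_0$ with finite index. So $\pi(G)$ is convex-cocompact, making $G/Z_G(G)\cong\pi(G)$ special, and it is centreless because $G_0$ has trivial centraliser in $\mc{A}_{\Delta}$. This also makes your separate centre argument unnecessary.
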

\begin{proof}
    Realise $G$ as a convex-cocompact subgroup of a RAAG $\mc{A}_{\G}$; without loss of generality $G$ is not contained in any proper parabolic subgroup of $\mc{A}_{\G}$. If the centre $Z_G(G)$ is nontrivial, then the structure of centralisers in $\mc{A}_{\G}$ \cite{Servatius} implies that we have a splitting $\mc{A}_{\G}=\mc{A}_{\Delta}\x\mc{A}_{\Lambda_1}\x\dots\x\mc{A}_{\Lambda_m}$ such that $G_0:=G\cap\mc{A}_{\Delta}$ is centreless and $G\cap\mc{A}_{\Lambda_i}\cong\Z$ for all $i$. Setting $\Lambda:=\bigcup\Lambda_i$ and $A:=G\cap\mc{A}_{\Lambda}$, we have that the subgroup $G_0\x A$ has finite index in $G$, by \Cref{lem:cc_basics}(4). Since $G_0$ is centreless and not contained in any proper parabolic subgroup of $\mc{A}_{\Delta}$, the centraliser of $G_0$ in $\mc{A}_{\Delta}$ is trivial, and hence $Z_G(G)\leq A$. Conversely, since $Z_G(G)$ is root-closed in $G$ and since $A$ is virtually generated by the subgroups $G\cap\mc{A}_{\Lambda_i}$, we must have $Z_G(G)=A=G\cap\mc{A}_{\Lambda}$. 

    Now, consider the factor projection $\pi\colon\mc{A}_{\Delta}\x\mc{A}_{\Lambda}\ra\mc{A}_{\Delta}$. The kernel of $\pi|_G$ equals $Z_G(G)$, and its image contains $G_0$ as a finite-index subgroup. Thus, $\pi(G)$ is a convex-cocompact subgroup of $\mc{A}_{\Delta}$ with trivial centre, and it is isomorphic to the quotient $G/Z_G(G)$.
\end{proof}

With the lemma in mind, we can give the following:

\begin{defn}[Extended factors]\label{defn:extended_factors}
    Let $G$ be a special group.
    \begin{itemize}
        \item If $G$ has trivial centre, we say that subgroups $E_1,\dots,E_k\leq G$ are \emph{extended factors} of $G$ if:
            \begin{enumerate}
                \item the subgroup $\langle E_1,\dots,E_k\rangle$ is the product $E_1\x\dots\x E_k$ and has finite index in $G$;
                \item each $E_i$ is strongly irreducible and root-closed in $G$.
            \end{enumerate}
        \item In general, subgroups $E_1,\dots,E_k\leq G$ are \emph{extended factors} of $G$ if they all contain the centre $Z_G(G)$ and they project to extended factors of the centreless special group $G/Z_G(G)$.
    \end{itemize}
\end{defn}

If the quotient $G/Z_G(G)$ is strongly irreducible (e.g.\ if $G$ is abelian), then $G$ is the only extended factor of itself. We now prove existence and uniqueness of extended factors in general.

\begin{lem}\label{lem:extended_factors}
    For a special group $G$, all the following hold.
    \begin{enumerate}
        \item There exists a unique collection of extended factors $E_1,\dots,E_k$ of $G$.
        \item Extended factors are staunchly $G$--parabolic.
        \item Isomorphisms between special groups take extended factors to extended factors.
        \item For each convex-cocompact embedding $\iota\colon G\hookrightarrow\mc{A}_{\G}$, there is a finite-index subgroup of $G$ of the form $G_1\x\dots\x G_k\x Z_G(G)$, with the $G_i$ strongly irreducible and $G$--parabolic with respect to $\iota$. The extended factors $E_i$ then equal the root-closures in $G$ of the subgroups $G_i\x Z_G(G)$.
    \end{enumerate}
\end{lem}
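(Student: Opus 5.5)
The plan is to reduce to the centreless case via \Cref{lem:special_modulo_centre}, prove existence and uniqueness there, and then derive the remaining items. Suppose first that $G$ has trivial centre. Fix a convex-cocompact embedding $\iota\colon G\hookrightarrow\mc{A}_{\G}$, with $G$ not contained in a proper parabolic. Decompose $\G$ as a join $\G=\Delta_1\ast\dots\ast\Delta_k$ of irreducible pieces; a clique piece would contribute centre, so for centreless $G$ all relevant pieces are non-clique after discarding those meeting $G$ trivially. Set $G_i:=G\cap\mc{A}_{\Delta_i}$. By \Cref{lem:cc_basics}(4), $G_1\x\dots\x G_k$ has finite index in $G$, and the $G_i$ are $G$--parabolic by construction. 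Each $G_i$ should be strongly irreducible: a convex-cocompact subgroup of an irreducible RAAG factor whose induced action is essential should not be a virtual product unless it lies in a proper join parabolic, and that would contradict minimality of the join decomposition. If this fails, I would refine the decomposition further (iterating on $G_i$ with a finer join) until all factors are strongly irreducible. Then let $E_i$ be the root-closure of $G_i$. This is a subgroup because $G_i$ is root-closed; indeed $E_i=G_i$ here, since parabolic subgroups are root-closed. This gives existence and the centreless case of item (4).

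For uniqueness, let $E_1',\dots,E_m'$ be another collection of extended factors. Since $E_j'$ is strongly irreducible and centreless (a nontrivial centre would persist in a finite-index product inside a centreless group), I will show that it virtually lies in a single $G_i$. Intersect a finite-index subgroup of $E_j'$ with $G_1\x\dots\x G_k$ and project to each factor. The projections to different factors commute, and a centreless strongly irreducible group cannot map with infinite image into two commuting factors while remaining a non-product. I expect this to be the main obstacle: the projections need not be injective, so I would instead argue with centralisers. The centraliser of $E_j'$ contains the other $E_l'$, so it is infinite, and in a product of centreless irreducibles this forces $E_j'$ to be virtually contained in $\prod_{i\in I}G_i$ for a set $I$ of indices. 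Strong irreducibility then gives $|I|=1$. Symmetry yields a bijection between the two collections with $E_j'$ and $G_i$ commensurable, and root-closedness upgrades commensurability to equality.

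For general $G$, apply the above to $G/Z_G(G)$, which is special and centreless by \Cref{lem:special_modulo_centre}, and take preimages; this proves item (1). Item (3) is immediate from uniqueness, since the definition is purely algebraic: isomorphisms preserve the centre, strong irreducibility, root-closedness and finite index. For item (4) with nontrivial centre, use the splitting $\mc{A}_{\Delta}\x\mc{A}_{\Lambda}$ from the proof of \Cref{lem:special_modulo_centre}. Apply the centreless construction to $G_0=G\cap\mc{A}_{\Delta}$, obtaining $G$--parabolic factors $G_i$. Then $G_1\x\dots\x G_k\x Z_G(G)$ has finite index in $G$, and its image in $G/Z_G(G)$ is a finite-index product whose factors are the images of the $G_i$. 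Uniqueness identifies the extended factors with the root-closures of $G_i\x Z_G(G)$. Finally, for item (2), this root-closure equals $G\cap\mc{A}_{\Delta_i\cup\Lambda}$, which is parabolic for every choice of $\iota$. Since the $E_i$ are intrinsic by item (1), they are staunchly $G$--parabolic.
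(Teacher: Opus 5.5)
Your overall route is the paper's: split $\mc{A}_{\G}$ into irreducible factors, take $G_i=G\cap\mc{A}_{\Delta_i}$, get strong irreducibility because otherwise $\mc{A}_{\Delta_i}$ would split, and get uniqueness from commensurability plus root-closedness. Passing to $G/Z_G(G)$ first, instead of working directly with $G\cap(\mc{A}_{\Delta_i}\x\mc{A}_{\Lambda})$, is cosmetic. Two minor points. The ``refine the join further'' fallback is not available, since $\Delta_i$ is already irreducible, but you do not need it. In the centreless case, a non-clique irreducible $\Delta_i$ could a priori meet $G$ in $\Z$ (e.g.\ $\langle ab\rangle\leq F_2$). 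This is ruled out because such a generator is centralised by a finite-index subgroup of $G$, hence is central, since centralisers are root-closed.

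The genuine gap is the uniqueness step, which you flag yourself. The paper states it in one sentence, but your justification does not work. Infiniteness of $Z_G(E_j')$ does not force $E_j'$ into a proper sub-product $\prod_{i\in I}G_i$: every projection $p_i(E_j')$ may be nontrivial and still have nontrivial centraliser in $G_i$. Even granting $E_j'\leq\prod_{i\in I}G_i$, strong irreducibility of $E_j'$ bounds $|I|$ only if $E_j'$ has \emph{finite index} there; a diagonal subgroup of $G_1\x G_2$ with $G_1\cong G_2$ shows this. Also, for $m=1$ there are no other $E_l'$ to use.

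The fix is to use the complementary factor rather than the centraliser, and to apply strong irreducibility of the $G_i$ before that of $E_j'$.
\begin{enumerate}
\item Set $\Pi:=G_1\x\dots\x G_k$, $H:=E_j'\cap\Pi$ and $K:=\big(\prod_{l\neq j}E_l'\big)\cap\Pi$, where $K$ may be trivial. Then $H,K$ commute and $HK$ has finite index in $\Pi$.
\item For each $i$, the commuting subgroups $p_i(H),p_i(K)$ therefore generate a finite-index $L_i\leq G_i$. Their intersection is central in $L_i$, hence trivial, since finite-index subgroups of the centreless $G_i$ are centreless by root-closure of centralisers.
\item So strong irreducibility of $G_i$ forces $p_i(H)=\{1\}$ or $p_i(K)=\{1\}$.
\item Put $I:=\{i\mid p_i(H)\neq\{1\}\}$. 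Then $H\leq\prod_{i\in I}G_i$ and $K\leq\prod_{i\notin I}G_i$, so $H$ has finite index in $\prod_{i\in I}G_i$. Only now does strong irreducibility of $E_j'$ give $|I|=1$.
\item The resulting sets $I_j$ are pairwise disjoint, since $E_{j'}'\cap\Pi\leq K$ for $j'\neq j$. They cover $\{1,\dots,k\}$, since $\prod_j(E_j'\cap\Pi)$ has finite index in $\Pi$.
\end{enumerate}
This yields the bijection, $m=k$, and the commensurabilities you need; the rest of your proposal then goes through as in the paper.
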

\begin{proof}
    We begin by simultaneously showing the existence of extended factors and proving Item~(4). Let $\iota\colon G\hookrightarrow\mc{A}_{\G}$ be some convex-cocompact embedding into a RAAG; assume without loss of generality that $G$ is not contained in a proper parabolic subgroup of $\mc{A}_{\G}$. 
    
    Let $\mc{A}_{\Delta_1}\x\dots\x\mc{A}_{\Delta_k}\x\mc{A}_{\Lambda_1}\x\dots\x\mc{A}_{\Lambda_m}$ be the decomposition of $\mc{A}_{\G}$ into directly indecomposable factors, grouped up so that $G_i:=G\cap\mc{A}_{\Delta_i}$ is non-cyclic and $G\cap\mc{A}_{\Lambda_j}\cong\Z$ for all $i,j$. Set $\Lambda:=\bigcup_j\Lambda_j$ and $A:=G\cap\mc{A}_{\Lambda}$. The product $G_1\x\dots\x G_k\x A$ has finite index in $G$ by \Cref{lem:cc_basics}(4), and the subgroups $G_i$ are $G$--parabolic. No factor $G_i$ is contained in a proper parabolic subgroup of $\mc{A}_{\Delta_i}$, which implies that all $G_i$ have trivial centre and are strongly irreducible (otherwise this would cause $\mc{A}_{\Delta_i}$ to split as a product). As in the proof of \Cref{lem:special_modulo_centre}, we have $A=Z_G(G)$.
    
    Now, the root closures of the subgroups $G_i\x Z_G(G)$ are precisely the $G$--parabolic groups $R_i:=G\cap (\mc{A}_{\Delta_i}\x\mc{A}_{\Lambda})$, and it is immediate to check that these satisfy the definition of extended factors. Conversely, if $E_1,\dots,E_m$ are some other collection of extended factors of $G$, then $m=k$ and each subgroup $E_i/Z_G(G)$ is commensurable to $R_i/Z_G(G)$ within $G/Z_G(G)$ after a permutation (since both families give virtual splittings of $G/Z_G(G)$ with strongly irreducible factors). Commensurable root-closed subgroups are equal, and so $E_i/Z_G(G)=R_i/Z_G(G)$ and $E_i=R_i$. 
    
    This proves Items~(1) and~(4). Finally, Item~(2) is an immediate consequence of Item~(4), while Item~(3) follows from Item~(1), given that the definition of extended factors is purely algebraic.
\end{proof}

\subsubsection{Standard virtual products}\label{subsub:standard_products}

Recall that $\mc{VP}(G)$ is the family of virtual products in $G$. Its maximal elements are singular subgroups, which are staunchly $G$--parabolic.

\begin{defn}[Children]\label{defn:children}
    Let $H\leq G$ be a staunchly $G$--parabolic subgroup. Choose a writing of some finite-index subgroup of $H$ as $H_1\x\dots\x H_k\x Z_H(H)$ with $H_i\not\in\mc{VP}(G)$ and $k\geq 0$. The \emph{children} of $H$ are all of the following subgroups:
    \begin{itemize}
        \item the centre $Z_G(H)$, provided that it has rank $\geq 2$ and that it is distinct from $H$;
        \item the root-closure of the product $H_1\x\dots\x H_{i-1}\x S\x H_{i+1}\x\dots\x H_k\x Z_H(H)$, for each index $1\leq i\leq k$ and each singular subgroup $S\in\mc{S}(H_i)$.
    \end{itemize}
\end{defn}

Children are always proper subgroups of $H$, and they always lie in $\mc{VP}(G)$. If $H$ is childless, then $H$ is either abelian or a direct product of non-elementary hyperbolic groups (with $\geq 1$ factors).

\begin{lem}\label{lem:children}
    Let $H\leq G$ be a staunchly $G$--parabolic subgroup.
    \begin{enumerate}
        \item The definition of the children of $H$ is independent of the choice of the virtual factors $H_i$.
        \item The children of $H$ are staunchly $G$--parabolic elements of $\mc{VP}(G)$.
    \end{enumerate}
\end{lem}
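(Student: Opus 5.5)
The plan is to reduce everything to the structure of extended factors (\Cref{lem:extended_factors}), applied to $H$ itself. Since $H$ is staunchly $G$--parabolic, it is $G$--parabolic and hence convex-cocompact for every convex-cocompact embedding $\iota\colon G\hookrightarrow\mc{A}_{\G}$. Restricting $\iota$ therefore gives a convex-cocompact embedding of $H$ into a RAAG, so $H$ is special. I read the first bullet of \Cref{defn:children} as referring to the centre $Z_H(H)$, and the condition $H_i\not\in\mc{VP}(G)$ as saying that the $H_i$ are strongly irreducible.

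\emph{Item~(1).} Let $Z:=Z_H(H)$ and let $\pi\colon H\ra H/Z$ be the quotient map. By \Cref{lem:special_modulo_centre}, $H/Z$ is special and centreless. For any writing $H_1\x\dots\x H_k\x Z$ as in \Cref{defn:children}, each $H_i$ meets $Z$ trivially, so $\pi$ is injective on it. The images $\pi(H_i)$ are strongly irreducible and virtually split $H/Z$. Hence, as in the uniqueness part of the proof of \Cref{lem:extended_factors}, after reordering each $\pi(H_i)$ is commensurable to $\pi(E_i)$, where $E_1,\dots,E_k$ are the unique extended factors of $H$.

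Consequently, $\pi$ induces a bijection between $\mc{S}(H_i)$ and the commensurability classes of singular subgroups of $\pi(E_i)$. This is because being a maximal virtual product is invariant under passing to finite index and under isomorphism, and singular subgroups are root-closed. For $S\in\mc{S}(H_i)$, the product $H_1\x\dots\x S\x\dots\x H_k\x Z$ is commensurable to $\pi^{-1}$ of the product $\pi(H_1)\x\dots\x\pi(S)\x\dots\x\pi(H_k)$. Its root-closure is therefore the root-closure of
\[ \pi^{-1}\bigl(\pi(E_1)\cdots\pi(S')\cdots\pi(E_k)\bigr), \]
where $S'\in\mc{S}(\pi(E_i))$ is the singular subgroup commensurable to $\pi(S)$. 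This description does not depend on the chosen $H_j$, because commensurable root-closed subgroups coincide. The first type of child, the centre $Z$, is intrinsic, so this proves Item~(1).

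\emph{Item~(2), membership in $\mc{VP}(G)$.} Both types of child lie in $\mc{VP}(G)$. A centre of rank $\geq2$ is free abelian, hence a virtual product. The second type contains, with finite index, a product in which $S$ is itself a virtual product with infinite factors.

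\emph{Item~(2), staunch parabolicity.} Fix an arbitrary $\iota$; we must show each child is $G$--parabolic with respect to it.
\begin{itemize}
\item[(a)] The centre $Z_H(H)$ is $H$--parabolic by \Cref{lem:parabolics_basics}(2), applied inside $H$ with the restricted embedding.
\item[(b)] By Item~(1), we may compute the other children using the particular virtual splitting given by \Cref{lem:extended_factors}(4) for $H$. Write $H=G\cap P$ with $P=h\mc{A}_{\Delta}h^{-1}$ and conjugate so that $h=1$. Decompose $\mc{A}_{\Delta}$ into its join factors $\mc{A}_{\Delta_1}\x\dots\x\mc{A}_{\Delta_k}\x\mc{A}_{\Lambda}$, with $H_i=H\cap\mc{A}_{\Delta_i}$ and $Z=H\cap\mc{A}_{\Lambda}$.
\item[(c)] Each singular subgroup $S\in\mc{S}(H_i)$ is $H_i$--parabolic with respect to the embedding $H_i\hookrightarrow\mc{A}_{\Delta_i}$, by \Cref{subsub:singular}. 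So $S=H_i\cap Q$ for some parabolic $Q$ of $\mc{A}_{\Delta_i}$.
\item[(d)] The subgroup $Q':=\mc{A}_{\Delta_1}\x\dots\x Q\x\dots\x\mc{A}_{\Delta_k}\x\mc{A}_{\Lambda}$ is a parabolic subgroup of $\mc{A}_{\G}$, being a conjugate of a product of parabolics in distinct join factors. Then $G\cap Q'$ is $G$--parabolic, hence root-closed.
\item[(e)] By \Cref{lem:cc_basics}(4), $G\cap Q'$ contains $H_1\x\dots\x S\x\dots\x H_k\x Z$ with finite index. Therefore $G\cap Q'$ is exactly the root-closure of this product, i.e.\ the child, which is thus $G$--parabolic.
\end{itemize}

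The same transitivity argument handles the centre in~(a): an $H$--parabolic subgroup of the form $H\cap R$, with $R$ parabolic in $\mc{A}_{\Delta}$, equals $G\cap R$ because $H=G\cap\mc{A}_{\Delta}$ and $R\leq\mc{A}_{\Delta}$. Since $\iota$ was arbitrary, all children are staunchly $G$--parabolic.

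\emph{Main obstacle.} I expect the hardest step to be the bookkeeping for this transitivity. It requires conjugating so that $P$ is standard, and checking that the embedding of $H$ into $\mc{A}_{\Delta}$ is convex-cocompact and that $H$ is not contained in a proper parabolic subgroup of $\mc{A}_{\Delta}$, so that \Cref{lem:extended_factors}(4) applies cleanly. If $H$ does lie in a smaller parabolic, I would first replace $\Delta$ by the minimal one before decomposing into join factors.
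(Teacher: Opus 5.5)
Your proposal is correct and follows essentially the same route as the paper: compare an arbitrary virtual splitting of $H$ with the $\iota$--adapted one through commensurability of the strongly irreducible factors and of their singular subgroups, and then identify each child with the root-closed $G$--parabolic subgroup $G\cap Q'$ (the paper's $G\cap(U\x\mc{A}_{\Delta_2}\x\dots\x\mc{A}_{\Lambda})$). The differences are only organisational: the paper reduces to $H=G$ via \Cref{rmk:SP_of_SP} and obtains Item~(1) as a byproduct of this identification for a fixed $\iota$, whereas you prove Item~(1) intrinsically in $H/Z_H(H)$ using uniqueness of extended factors and carry out the passage from $H$ to $G$ (the content of that remark) by hand.
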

\begin{proof}
    In view of \Cref{rmk:SP_of_SP} is not restrictive to assume that $H=G$. Let $G_1\x\dots\x G_k\x Z_G(G)$ be a product splitting of a finite-index subgroup of $G$, with $G_i\not\in\mc{VP}(G)$ and $k\geq 0$. The centre of $G$ is staunchly $G$--parabolic by \Cref{lem:parabolics_basics}(2), so it suffices to consider a child $H$ that is the root-closure of $S\x G_2\x\dots\x G_k\x Z_G(G)$ for some $S\in\mc{S}(G_1)$. 
    
    Let $\iota\colon G\hookrightarrow\mc{A}_{\G}$ be a convex-cocompact embedding into a RAAG, and assume without loss of generality that $G$ is not contained in any proper parabolic subgroup of $\mc{A}_{\G}$. As in the proof of \Cref{lem:extended_factors}, we can write $\mc{A}_{\G}=\mc{A}_{\Delta_1}\x\dots\x\mc{A}_{\Delta_k}\x\mc{A}_{\Lambda}$ with $\mc{A}_{\Delta_i}$ irreducible and $G\cap\mc{A}_{\Lambda}=Z_G(G)$. Consider the $G$--parabolic subgroups $G_i':=G\cap\mc{A}_{\Delta_i}$, which give a finite-index subgroup of $G$ splitting as $G_1'\x\dots\x G_k'\x Z_G(G)$. Up to permuting the $G_i'$, we have that each product $G_i\x Z_G(G)$ is commensurable to the product $G_i'\x Z_G(G)$ (as their root-closures are the same extended factor of $G$). Thus, there exists a singular subgroup $S'\in\mc{S}(G_1')$ such that $S\x Z_G(G)$ and $S'\x Z_G(G)$ are commensurable subgroups of $G$. 
    
    Note that, being a singular subgroup, $S'$ is $G_1'$--parabolic, and hence also $G$--parabolic. Let $U$ be the smallest parabolic subgroup of $\mc{A}_{\Delta_1}$ containing $S'$, and let $H'$ be the root-closure in $G$ of the product $S'\x G_2\x\dots\x G_k\x Z_G(H)$. We then have $H'=G\cap (U\x\mc{A}_{\Delta_2}\x\dots\mc{A}_{\Delta_k}\x\mc{A}_{\Lambda})$, and so $H'$ is a well-defined subgroup (rather than just a subset) and it is $G$--parabolic. Now, since the child $H$ was defined as the root-closure of $S\x G_2\x\dots\x G_k\x Z_G(G)$, which is commensurable to $H'$, it follows that we have $H=H'$. In conclusion, children are staunchly $G$--parabolic and independent of the chosen factors $G_i$ (as the definition of $H'$ only depended on the choice of $\iota$).
\end{proof} 

We can now define \emph{standard virtual products} as those subgroups of $G$ obtained by starting with a maximal virtual product in $G$ and then repeatedly replacing a strongly irreducible virtual factor with a maximal virtual product within it. More precisely:

\begin{defn}\label{defn:SVP}
    The family $\SVP(G)$ of \emph{standard virtual products} is the smallest child-closed family of subgroups of $G$ containing the family of singular subgroups $\mc{S}(G)$.
\end{defn}

By \Cref{lem:children}, the elements of $\SVP(G)$ are staunchly $G$--parabolic and lie in $\mc{VP}(G)$. In particular, there are only finitely many $G$--conjugacy classes of subgroups in $\SVP(G)$. Any isomorphism $\varphi\colon G\ra H$ between special groups maps the elements of $\SVP(G)$ to those of $\SVP(H)$.

\begin{ex}\label{ex:stars_SVP}
    For a RAAG $\mc{A}_{\G}$ and a vertex $v\in\G$ with $\lk(v)\neq\emptyset$, we have:
    \[ \mc{A}_{\St(v)}\in\SVP(\mc{A}_{\G}) \quad \Longleftrightarrow \quad \mc{A}_{\lk(v)}\not\in\mc{Z}(\mc{A}_{\G}) \quad \Longleftrightarrow \quad \text{$\not\exists x\in\G\setminus\{v\}$ with $\lk(v)\sq\lk(x)$.} \]
    The equivalence of (2) and (3) is straightforward, so we focus on the equivalence of (1) and (3). Regarding $(1)\Ra(3)$, note that a vertex $x\in\G\setminus\{v\}$ with $\lk(v)\sq\lk(x)$ would yield a transvection $\tau\in\Aut(\mc{A}_{\G})$ mapping $v\mapsto vx$ and fixing $\Gamma\setminus\{v\}$ pointwise. Then, the subgroups $\tau^n(\mc{A}_{\St(v)})$ would be pairwise non-conjugate, which cannot happen if $\mc{A}_{\St(v)}\in\SVP(\mc{A}_{\G})$. 
    
    Finally, regarding $(3)\Ra(1)$, suppose that there is no $x\in\G\setminus\{v\}$ with $\lk(v)\sq\lk(x)$. Since $\mc{A}_{\St(v)}$ splits as a product, there exists a singular subgroup $\mc{A}_{\Delta}\in\mc{S}(\mc{A}_{\G})$ containing $\mc{A}_{\St(v)}$. Write $\Delta$ as a join $\Delta_1\ast\dots\ast\Delta_k\ast\kappa$, with $\Delta_i$ irreducible and $\kappa$ a clique. Without loss of generality, we have either $v\in\kappa$ or $v\in\Delta_1$. In the former case, we have $\St(v)=\Delta$ and so $\mc{A}_{\St(v)}\in\mc{S}(\mc{A}_{\G})\sq\SVP(\mc{A}_{\G})$. In the latter, we have $\Delta_2\cup\dots\cup\Delta_k\cup\kappa\sq\lk(v)$. If $\Delta_1=\{v\}$, then we again have $\St(v)=\Delta$ and we are done. Otherwise, we must have $\lk(v)\cap\Delta_1\neq\emptyset$ by the assumption of (3),
    and so $\mc{A}_{\St(v)\cap\Delta_1}$ splits as a product and we can repeat the previous argument with $\Gamma$ replaced by $\Delta_1$. After finitely many steps, this shows that $\mc{A}_{\St(v)}$ is a standard virtual product, as desired.
\end{ex}

We will also need the following notion related to standard virtual products.

\begin{defn}
    The \emph{cofactors} of an element $P\in\SVP(G)$ are defined as follows:
    \begin{itemize}
        \item if $P$ has at least two extended factors, then its cofactors are the root-closures of the subgroups generated by all extended factors of $P$ but one;
        \item if $P$ is non-abelian and has a unique extended factor (i.e.\ itself), then its only cofactor is the centre $Z_P(P)$ (which is necessarily nontrivial, since $P$ is a virtual product);
        \item if $P$ is abelian, then it has no cofactors.
    \end{itemize}
\end{defn}

Note that cofactors are always nontrivial, proper subgroups of $P$ when they exist.

\begin{lem}\label{lem:cofactors}
    Let $Q$ be a cofactor of a virtual standard product $P\in\SVP(G)$. Then $Q$ is staunchly $G$--parabolic and the quotient $N_G(Q)/Q$ is non-abelian.
\end{lem}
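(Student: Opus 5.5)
Two things need proving: that $Q$ is staunchly $G$--parabolic, and that $N_G(Q)/Q$ is non-abelian. I would work inside $P$, using \Cref{rmk:SP_of_SP}. Since $P\in\SVP(G)$ is staunchly $G$--parabolic, it suffices to show that $Q$ is staunchly $P$--parabolic.

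\textbf{Staunch parabolicity.} There are two kinds of cofactor.
\begin{itemize}
    \item If $Q=Z_P(P)$, this is \Cref{lem:parabolics_basics}(2) applied to $P$ itself, which is $P$--parabolic for every embedding.
    \item Otherwise $P$ has extended factors $E_1,\dots,E_k$ with $k\geq 2$, and $Q$ is the root-closure of $\langle E_j : j\neq i\rangle$.
\end{itemize}
In the second case, fix an arbitrary convex-cocompact embedding $P\hookrightarrow\mc{A}_{\G}$ with $P$ in no proper parabolic. As in the proof of \Cref{lem:extended_factors}, write $\mc{A}_{\G}=\mc{A}_{\Delta_1}\x\dots\x\mc{A}_{\Delta_k}\x\mc{A}_{\Lambda}$, so that $E_j=P\cap(\mc{A}_{\Delta_j}\x\mc{A}_{\Lambda})$. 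The candidate is
\[ Q' := P\cap\Big(\prod_{j\neq i}\mc{A}_{\Delta_j}\x\mc{A}_{\Lambda}\Big). \]
This is $P$--parabolic and root-closed. By \Cref{lem:cc_basics}(4) it contains $\prod_{j\neq i}G_j\x Z_P(P)$ with finite index, and so does $Q$. Two commensurable root-closed subgroups are equal, so $Q=Q'$. Since the embedding was arbitrary, $Q$ is staunchly $P$--parabolic, hence staunchly $G$--parabolic.

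\textbf{Non-abelian quotient.} Here I would exhibit a non-abelian subgroup of $P$ normalising $Q$ and meeting $Q$ trivially, or at least mapping injectively modulo $Q$.
\begin{itemize}
    \item \emph{Case $k\geq 2$.} Take $G_i=P\cap\mc{A}_{\Delta_i}$. It commutes with $\prod_{j\neq i}\mc{A}_{\Delta_j}\x\mc{A}_{\Lambda}$, so it normalises $Q'=Q$. It also intersects $Q$ trivially, since $\mc{A}_{\Delta_i}$ meets the complementary factor trivially. Hence $G_i$ embeds in $N_P(Q)/Q\leq N_G(Q)/Q$. As $G_i$ is strongly irreducible and centreless, it is non-abelian.
    \item \emph{Case $k=1$, $P$ non-abelian.} Then $Q=Z_P(P)$ is normal in $P$, so $P/Z_P(P)$ embeds in $N_G(Q)/Q$. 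This quotient is centreless by \Cref{lem:special_modulo_centre}. It is nontrivial because $P$ is non-abelian, so it is non-abelian.
\end{itemize}

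\textbf{Expected obstacle.} The main point to verify is the identification $Q=Q'$ in the case $k\geq 2$. One must check that the root-closure of the subgroup generated by the other extended factors really is the intersection of $P$ with the complementary parabolic. This mirrors the argument of \Cref{lem:children}, using \Cref{lem:cc_basics}(4) and root-closedness of parabolic subgroups. The rest of the argument is routine.
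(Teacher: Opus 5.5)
Your proof is correct and takes essentially the paper's route. Staunch parabolicity comes from identifying $Q$ with the intersection of $P$ and the complementary parabolic, exactly as in the proof of \Cref{lem:extended_factors}. Non-abelianness comes from embedding a centreless non-abelian subgroup into $N_G(Q)/Q$: the paper uses the factor $P_1$ uniformly for all $k$, while you use $P/Z_P(P)$ when $k=1$. The step you flag goes through once you note $Q\leq Q'$, which holds because $Q'$ is root-closed and contains every $E_j$ with $j\neq i$; since $\prod_{j\neq i}G_j\times Z_P(P)\leq Q$ has finite index in $Q'$, this gives $Q=Q'$.
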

\begin{proof}
    Let $Q$ be a cofactor of $P\in\SVP(G)$. A finite-index subgroup of $P$ splits as $P_1\x\dots\x P_k\x Z_P(P)$ with strongly irreducible factors $P_i$. Up to permuting the $P_i$, the group $Q$ is the root-closure of the product $P_2\x\dots\x P_k\x Z_P(P)$. We have $k\geq 1$, as otherwise $P$ would have no cofactors.

    The fact that $Q$ is a staunchly $G$--parabolic subgroup is clear, arguing as in the proof of \Cref{lem:extended_factors}. As to the other statement, note that the extended factors of $P$ are all normal in $P$, and hence $Q\lhd P$. It follows that the non-abelian group $P_1$ is contained in $N_G(Q)$. Observing that $P_1\cap Q=\{1\}$, it follows that $P_1$ embeds in the quotient $N_G(Q)/Q$, showing that this is non-abelian.
\end{proof}

\subsubsection{Algebraically convex elements}\label{subsub:algconvex}

As always, let $G$ be a special group.

\begin{defn}
    An element $g\in G\setminus\{1\}$ is \emph{algebraically convex} if, for every standard virtual product $P\in\SVP(G)$ containing $g$, there exists an extended factor $E\leq P$ containing $g$.
\end{defn}

The typical source of algebraically convex elements are convex-cocompact ones, with respect to any fixed embedding in a RAAG. However, unlike convex-cocompactness, algebraic convexity is a purely algebraic notion, and so it is preserved by all automorphisms of $G$.

\begin{lem}\label{lem:cc->ac}
    Consider some $g\in G\setminus\{1\}$. If the subgroup $\langle g\rangle$ is convex-cocompact with respect to at least one convex-cocompact embedding $\iota\colon G\hookrightarrow\mc{A}_{\G}$, then $g$ is algebraically convex in $G$.
\end{lem}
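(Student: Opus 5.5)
Fix a convex-cocompact embedding $\iota\colon G\hookrightarrow\mc{A}_{\G}$ for which $\langle g\rangle$ is convex-cocompact, and let $P\in\SVP(G)$ be a standard virtual product containing $g$. By \Cref{lem:children} (and \Cref{defn:SVP}), $P$ is staunchly $G$--parabolic, hence $G$--parabolic with respect to this particular $\iota$, and in particular convex-cocompact by \Cref{lem:cc_basics}(6). The plan is to use the description of extended factors relative to $\iota$ given in \Cref{lem:extended_factors}(4), applied to the special group $P$ with the convex-cocompact embedding $P\hookrightarrow\mc{A}_{\G}$ obtained by restricting $\iota$. Thus we obtain a finite-index subgroup $P_1\x\dots\x P_k\x Z_P(P)\leq P$ with each $P_i$ strongly irreducible and $P$--parabolic (hence convex-cocompact), and the extended factors of $P$ are the root-closures $E_i$ of $P_i\x Z_P(P)$. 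Concretely, as in the proof of \Cref{lem:extended_factors}, after conjugating, $P=G\cap\mc{A}_{\Delta'}$ for a parabolic $\mc{A}_{\Delta'}=\mc{A}_{\Delta_1}\x\dots\x\mc{A}_{\Delta_k}\x\mc{A}_{\Lambda}$ with $E_i=P\cap(\mc{A}_{\Delta_i}\x\mc{A}_{\Lambda})$.

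Next, apply \Cref{lem:cc_basics}(4) with $H=\langle g\rangle$ and $K=P$ split as the product of the convex-cocompact factors $P_1,\dots,P_k,Z_P(P)$ (after passing to the finite-index product subgroup $K_0$; note $\langle g\rangle\cap K_0$ is a finite-index subgroup $\langle g^m\rangle$, still convex-cocompact). This gives that $\langle g^m\rangle$ virtually equals $(\langle g^m\rangle\cap P_1)\x\dots\x(\langle g^m\rangle\cap P_k)\x(\langle g^m\rangle\cap Z_P(P))$. Since $\langle g^m\rangle\cong\Z$ is not a nontrivial direct product and the factors intersect trivially, at most one of these intersections is nontrivial and it has finite index in $\langle g\rangle$. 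Hence some power $g^n\neq 1$ lies either in some $P_i$ or in $Z_P(P)$; in both cases $g^n\in P_i\x Z_P(P)$ for some $i$ (if $k=0$, $P$ is abelian and is its own unique extended factor, so there is nothing to prove).

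Finally, since $E_i$ is the root-closure of $P_i\x Z_P(P)$ and $\langle g\rangle\cap(P_i\x Z_P(P))\neq\{1\}$, we get $g\in E_i$, an extended factor of $P$ containing $g$. As $P$ was arbitrary, $g$ is algebraically convex. The main point needing care is the application of \Cref{lem:cc_basics}(4): one must make sure that the factors $P_i$ and $Z_P(P)$ are genuinely convex-cocompact for the chosen $\iota$ (this is exactly why we use the $\iota$-dependent description in \Cref{lem:extended_factors}(4), where the $P_i$ are parabolic, together with \Cref{lem:parabolics_basics}(2) for the centre), and handle the passage to the finite-index product subgroup, which is harmless because root-closure absorbs powers.
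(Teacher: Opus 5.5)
Your proof is correct and is essentially the paper's argument. Since $P$ is $G$--parabolic for the chosen $\iota$, it virtually splits as $P_1\x\dots\x P_k\x Z_P(P)$ with $G$--parabolic (hence convex-cocompact) factors; \Cref{lem:cc_basics}(4) then forces a power of $g$ into a single factor, and root-closure puts $g$ in an extended factor. The only cosmetic differences are that you invoke root-closedness of $E_i$ directly, where the paper uses root-closedness of $P_i$ and $Z_P(P)$, and that you treat the abelian case $k=0$ explicitly.
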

\begin{proof}
    Consider some standard virtual product $P\in\SVP(G)$ containing $g$. Since $P$ is staunchly $G$--parabolic, it is in particular $G$--parabolic with respect to the embedding $\iota$. We can then write a finite-index subgroup $P'\leq P$ as $P'=P_1\x\dots\x P_k\x Z_P(P)$, where the $P_i$ are $G$--parabolic and strongly irreducible (see again the proof of \Cref{lem:extended_factors}). Denoting by $E_i$ the extended factors of $P$, we have that each $E_i$ contains the product $P_i\x Z_P(P)$ as a finite-index subgroup.

    Now, since $g$ lies in $P$, there exists some $n\geq 1$ such that $g^n\in P'$. Since $\langle g\rangle$ is convex-cocompact with respect to $\iota$, a further power $g^N$ must lie either in one of the $P_i$ or in the centre $Z_P(P)$, by \Cref{lem:cc_basics}(4). Since $Z_P(P)$ and the $P_i$ are $G$--parabolic and hence root-closed, this implies that either $g\in P_i$ or $g\in Z_P(P)$. Either way, $g$ lies in an extended factor $E_i$, proving the lemma.
\end{proof}

The main reason for our interest in algebraically convex elements (besides invariance under automorphisms) is that their centralisers have the following pleasantly rigid structure. 

\begin{prop}\label{prop:centraliser_g_algconvex}
    If $g\in G$ is algebraically convex and $Z_G(g)\not\cong\Z$, then:
    \begin{itemize}
        \item either $Z_G(g)\in\SVP(G)$;
        \item or there exists a subgroup $Q\lhd Z_G(g)$ with $Z_G(g)/Q\cong\Z$ such that $Q$ is a cofactor of an element of $\SVP(G)$ (and hence $Q$ is staunchly $G$--parabolic).
    \end{itemize}
\end{prop}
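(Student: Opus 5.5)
Since $Z_G(g)\not\cong\Z$ is a nontrivial centraliser containing the infinite cyclic $\langle g\rangle$ in its centre, it is a virtual product: either it is abelian of rank $\geq2$, or $\langle g\rangle\x K$ with $K$ infinite embeds with finite index. So $Z_G(g)\in\mc{VP}(G)$. I plan to descend through $\SVP(G)$ to a smallest standard virtual product containing $Z_G(g)$, and then compare the two.

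\textbf{Step 1: choosing $P$.} Since $Z_G(g)\in\mc{VP}(G)$, it lies in some singular subgroup, which belongs to $\SVP(G)$. Among the elements of $\SVP(G)$ containing $Z_G(g)$, I take a minimal one $P$. This is possible because chains of $G$--parabolic subgroups (for a fixed embedding) have bounded length. As $g\in P$ and $g$ is algebraically convex, $g$ lies in some extended factor $E$ of $P$. Fix a convex-cocompact embedding $\iota$. Following \Cref{lem:extended_factors}(4), I write a finite-index subgroup of $P$ as $P_1\x\dots\x P_k\x Z_P(P)$ with the $P_i$ $G$--parabolic and strongly irreducible. Then either $g\in Z_P(P)$ up to root-closure, or $g\in E_1$ and $g$ is not central in $P$.

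\textbf{Step 2: the non-central case, $g\in E_1\setminus Z_P(P)$.} Every element of $P_2\x\dots\x P_k\x Z_P(P)$ commutes with $E_1$, so $Z_G(g)$ contains the cofactor $Q$, the root-closure of $P_2\x\dots\x P_k\x Z_P(P)$. When $k=1$ the cofactor is $Z_P(P)$ instead. By \Cref{lem:cc_basics}(4), $Z_G(g)=Z_P(g)$ virtually splits as $Z_{P_1}(g')\x P_2\x\dots\x P_k\x Z_P(P)$, where $g'$ is the projection of a power of $g$ into $P_1$.

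Set $C:=Z_{P_1}(g')$. If $C$ is cyclic, then $Z_G(g)/Q$ is virtually $\Z$. Since $Q$ is root-closed, the quotient is torsion-free and virtually $\Z$, hence $\Z$. This gives the second alternative.

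If $C$ is not cyclic, then $C$ is a virtual product in $P_1$, so it lies in some $S\in\mc{S}(P_1)$. The corresponding child of $P$, namely the root-closure of $S\x P_2\x\dots\x Z_P(P)$, then contains $Z_G(g)$. This contradicts the minimality of $P$, so this case cannot occur.

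\textbf{Step 3: the central case, $g\in Z_P(P)$.} Here $Z_G(g)\supseteq P$, and since $Z_G(g)\leq P$ we get $Z_G(g)=P\in\SVP(G)$. This is the first alternative.

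The same contradiction also handles the case where $P$ is abelian with $g$ non-central. An abelian $P$ equals its own centre, so this cannot happen. The only special situation is when $P$ is abelian, which falls under Step 3.

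\textbf{Main obstacle.} The delicate point is Step 2. I must check that $Z_{P_1}(g')$ being non-cyclic really makes it a virtual product contained in a singular subgroup of $P_1$, and that the resulting child contains all of $Z_G(g)$ and not just a finite-index subgroup. Root-closedness of parabolic subgroups should handle the finite-index issue. For the first part, I would argue that $g'$ is central in $C$ and $C$ is not cyclic, so $C$ virtually splits as $\langle g'\rangle\x C'$ with $C'$ infinite (or is abelian of rank $\geq2$), and in either case lies in $\mc{VP}(P_1)$.

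A second point to verify is that $Z_G(g)$ is actually contained in $P$ and not just commensurable with a subgroup of it. This holds by the choice of $P$ in Step 1.
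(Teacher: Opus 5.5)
Your proposal is correct and follows the paper's proof essentially step for step: take a minimal $P\in\SVP(G)$ containing $Z_G(g)$, use algebraic convexity to place $g$ in an extended factor, conclude $Z_G(g)=P$ in the central case, and in the non-central case use minimality of $P$ (via a child built from a singular subgroup of $P_1$) to force $Z_{P_1}(g')\cong\Z$. The only point you leave implicit is that $Q\lhd Z_G(g)$, which the paper obtains from $Q\lhd P$ (extended factors of $P$ are normal in $P$) together with $Z_G(g)\leq P$.
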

\begin{proof}
    Let $g\in G$ be algebraically convex. Assuming that $Z_G(g)\not\cong\Z$, we have that $Z_G(g)$ is a virtual product, and so $Z_G(g)$ is contained in at least one singular subgroup of $G$. Let $P\in\SVP(G)$ be a minimal standard virtual product containing $Z_G(g)$. A finite-index subgroup of $P$ splits as $P_1\x\dots\x P_k\x Z_P(P)$ with $P_i\not\in\mc{VP}(G)$. Since $g\in Z_G(g)\leq P$ and $g$ is algebraically convex, there exists an extended factor $E\leq P$ containing $g$. Say that $E$ is the root-closure of $P_1\x Z_P(P)$.
    
    If $g\in Z_P(P)$, then $Z_G(g)=Z_P(g)=P$, and so $Z_G(g)\in\SVP(G)$ and we are in the first case of the proposition. Otherwise, a power of $g$ is a product $p_1z$ for some $p_1\in P_1\setminus\{1\}$ and some $z\in Z_P(P)$. In this case, we have $Z_G(g)=Z_P(g)=Z_P(p_1)$ and this subgroup is the root-closure of the product
    \[ Z_{P_1}(p_1)\x P_2\x\dots\x P_k\x Z_P(P) .\]
    We must have $Z_{P_1}(p_1)\cong\Z$, as otherwise $Z_{P_1}(p_1)$ would be contained in a singular subgroup of $P_1$, which would give rise to a child of $P$ containing $Z_G(g)$; this would violate the fact that $P$ is a minimal element of $\SVP(G)$ containing $Z_G(g)$.

    Now, let $Q$ be the root-closure of the product $\Pi:=P_2\x\dots\x P_k\x Z_P(P)$. Thus, $Q$ is a cofactor of $P$, which implies that $Q$ is staunchly $G$--parabolic, by \Cref{lem:cofactors}. Since $\Pi$ is contained in the root-closed subgroup $Z_G(g)$, we have $Q\leq Z_G(g)$. Since $Q$ is a cofactor of $P$, we have $Q\lhd P$ and hence $Q\lhd Z_G(g)$. The quotient $Z_G(g)/Q$ is virtually cyclic because $Z_{P_1}(p_1)\cong\Z$. At the same time, this quotient is torsion-free because $Q$ is root-closed, and hence $Z_G(g)/Q\cong\Z$, as required. In conclusion, we are in the second case of the proposition.
\end{proof}

\subsubsection{Salient abelians}\label{subsub:salient}

Finally, we introduce two last families of subgroups of a special group $G$.

\begin{defn}\label{defn:salient}
    An abelian subgroup $A\leq G$ is:
    \begin{enumerate}
        \item \emph{salient} if $A\neq\{1\}$ and $A$ is the centre of some $P\in\SVP(G)$ such that $N_G(P)=P$;
        \item \emph{extra-salient} if, in addition, there do not exist any centralisers $Z\in\mc{Z}(G)$ with $A/A\cap Z\cong\Z$.
    \end{enumerate}
    We denote by $\xSal(G)\sq\Sal(G)$ the families of extra-salient and salient abelian subgroups of $G$.
\end{defn}

The main motivation for considering salient abelian subgroups is that they are the only subgroups of $G$ that can arise as centres of certain pathological line-stabilisers within the degenerations of $G$ (see \Cref{lem:line_stab_SVP}). Consequently, it is only salient (in fact, \emph{extra}-salient) abelians that can become poison subgroups of $G$ (\Cref{defn:poison}).
 
\begin{rmk}
    We have $\Sal(G)\sq\mc{Z}(G)$. Indeed, if an element $P\in\SVP(G)$ satisfies $N_G(P)=P$, then it also satisfies $Z_G(P)\leq P$ and hence $A:=Z_P(P)=Z_G(P)\in\mc{Z}(G)$.
\end{rmk}

Summing up, salient abelian subgroups are staunchly $G$--parabolic abelian centralisers, and there are at most finitely many $G$--conjugacy classes of them. 

\begin{rmk}\label{rmk:N_G(A)=Z_G(A)}
    For any $A\in\Sal(G)$, we have $N_G(A)=Z_G(A)$. (This holds more generally for any abelian subgroup that is convex-cocompact with respect to some embedding in a RAAG.) Indeed, \Cref{lem:cc_basics}(2) shows that $N_G(A)$ is virtually generated by $A$ and $Z_G(A)$. The fact that $Z_G(A)$ is root-closed and contains $A$ then implies the equality $N_G(A)=Z_G(A)$.
\end{rmk}

\section{Dehn twists}\label{sect:DTs}

\Cref{sub:DTs} covers generalities on Dehn twists, while \Cref{sub:more_DTs} is concerned with two particular kinds: \emph{centraliser twists} and \emph{ascetic twists}. There, we also introduce another type of automorphism, called a \emph{pseudo-twist}, which is not always a Dehn twist. We will show in \Cref{sect:proofs} that, for a special group $G$, pseudo-twists, centraliser twists, and ascetic twists suffice in order to virtually generate $\Out(G)$. Along the way, \Cref{sub:shunning+ascetic} discusses \emph{shunning splittings}, which are needed here to introduce pseudo-twists, and also later on in the definition of poison subgroups.

\subsection{Basic notions}\label{sub:DTs}

Let $G$ be a finitely generated group. A \emph{$G$--tree} is an action on a simplicial tree $G\acts T$ without edge-inversions. Following \cite{GL-JSJ}, we speak of an \emph{$(\mscr{A},\mscr{E})$--tree}, where  $\mscr{A}$ and $\mscr{E}$ are families of subgroups of $G$, if the $G$--stabiliser of each edge of $T$ lies in $\mscr{A}$, and if each subgroup in $\mscr{E}$ is elliptic in $T$. If $G$ is not elliptic in $T$, there is a unique smallest $G$--invariant subtree of $T$, which we denote by $\Min(G;T)$; we say that the $G$--tree $T$ is \emph{minimal} if $\Min(G;T)=T$. A \emph{splitting} is a non-elliptic minimal $G$--tree. We speak of a \emph{$1$--edge splitting} if $G$ acts edge-transitively, and of a \emph{free splitting} if all edge-stabilisers are trivial (even if the $G$--action is not free).

Consider a $1$--edge splitting $G\acts T$, pick an oriented edge $e\sq T$, and call $C$ its $G$--stabiliser. We obtain a writing of $G$ of one of two kinds, depending on whether $T$ has two or one orbits of vertices:
\begin{itemize}
    \setlength\itemsep{.2em}
    \item an \emph{amalgamated-product} $G=A\ast_C B$, where $A,B$ are the stabilisers of the vertices of $e$;
    \item an \emph{HNN extension} $G=A\ast_{\g}:=\langle A,t\mid t^{-1}ct=\gamma(c),\ \forall c\in C\rangle$, where $t\in G$ is a loxodromic element such that there exists a vertex $v\in T$ with $e=[v,tv]$, the subgroup $A$ is the stabiliser of $v$, and $\g$ is the isomorphism between the stabilisers of $e$ and $t^{-1}e$ given by conjugation by $t^{-1}$. We refer to $t$ as a \emph{stable letter} for the splitting $T$ and the choice of oriented edge $e$.
\end{itemize}
In each case, we have the following simple recipe \cite{Bass-Jiang,Levitt-GD} for constructing automorphisms of $G$. In the amalgam case, we choose an element $z\in Z_B(C)$ and define
\begin{align}\label{eq:DT_1}
    \varphi(a)&=a,\quad\forall a\in A, & \varphi(b)&=zbz^{-1},\quad\forall b\in B.
\end{align}
One can of course also pick an element $z\in Z_A(C)$ and swap the roles of the subgroups $A$ and $B$. In the HNN case, we can choose an element $z\in Z_A(C)$ and define
\begin{align}\label{eq:DT_2}
    \varphi(a)&=a,\quad\forall a\in A, & \varphi(t)&=zt\phantom{'},
\end{align}
or choose an element $z'\in Z_A(\gamma(C))$ and define
\begin{align}\label{eq:DT_3}
    \varphi(a)&=a,\quad\forall a\in A, & \varphi(t)&=tz'.
\end{align}
Note that modifying the choice of the unoriented edge underlying $e\sq T$, or that of the stable letter $t\in G$, simply amounts to composing the automorphism $\varphi$ with an inner automorphism of $G$.

\begin{defn}\label{defn:DT}
    A \emph{Dehn twist}\footnote{We called these ``DLS automorphisms'' in \cite{Fio10e}, but we now prefer the more standard term ``Dehn twist''.} with respect to the $1$--edge splitting $G\acts T$ is any automorphism $\varphi\in\Aut(G)$ of one of the forms \ref{eq:DT_1}, \ref{eq:DT_2}, \ref{eq:DT_3}. An outer class $\phi\in\Out(G)$ is a \emph{Dehn twist} if it is represented by at least one Dehn twist in the previous sense. 
\end{defn}

We refer to the element $z$ as the \emph{multiplier} of the Dehn twist $\varphi$. Note that, fixing the choice of the oriented edge $e\sq T$, the multiplier $z$ is uniquely determined in Cases~\ref{eq:DT_2} and~\ref{eq:DT_3}, while it is defined up to right multiplication by elements of the centre of $B$ in Case~\ref{eq:DT_1}.

\begin{rmk}
    In the literature, Dehn twists are often defined with the stronger requirement that $z$ be in the \emph{centre} of $C$ or $\gamma(C)$. That definition is in fact equivalent to ours, provided that we allow ourselves to modify the $1$--edge splitting $T$: we can fold edges to produce a new $1$--edge splitting in which $z$ fixes an edge. For instance, in Case~\ref{eq:DT_1}, we also have $G=(A\ast_C Z_B(z))\ast_{Z_B(z)}B$ and $\varphi$ still has Form~\ref{eq:DT_1} with respect to this new splitting. We prefer the added flexibility of \Cref{defn:DT} because it usually allows us to work with more natural splittings of $G$: this is particularly evident in the HNN case, for instance when we wish to view transvections of RAAGs
    as Dehn twists.
\end{rmk}

When discussing coarse-median preserving automorphisms, we will occasionally need criteria to determine whether a Dehn twist preserves a coarse median structure on $G$. This requires distinguishing between three types of Dehn twists, but these are unrelated to the ``centraliser twists'' and ``ascetic twists'' mentioned in the Introduction, and they are of little relevance when we are not interested in preserving a coarse median. The following terminology comes from \cite{Fio10e}, where it was chosen to parallel the naming convention for elementary automorphisms of RAAGs in \cite[Section~2.2]{CSV}. We have chosen to rename ``twists'' to ``skews'' here, in order to avoid confusion with the all-encompassing concept of a Dehn twist\footnote{We nevertheless stick to speaking of ``untwisted'' automorphisms of RAAGs, rather than ``unskewed''.}. 

Let $G$ be special and fix a convex-cocompact embedding $\iota\colon G\hookrightarrow\mc{A}_{\G}$, in order to be able to talk about convex-cocompactness and orthogonals as in \Cref{sub:prelims}.

\begin{defn}\label{defn:DT_types}
    Suppose that the $1$--edge splitting $G\acts T$ has convex-cocompact edge-stabilisers. A Dehn twist with respect to $T$ is a:
    \begin{itemize}
        \item \emph{partial conjugation} if it has Form~\ref{eq:DT_1};
        \item \emph{fold} if has Form~\ref{eq:DT_2} (resp.\ \ref{eq:DT_3}) and the multiplier lies in $\Perp_G(C)$ (resp.\ in $\Perp_G(\g(C))$);
        \item \emph{skew} if has Form~\ref{eq:DT_2} (resp.\ \ref{eq:DT_3}) and the multiplier lies in the centre of $C$ (resp.\ of $\g(C)$).
    \end{itemize}
\end{defn}

Note that the above definition depends on the coarse median structure induced on $G$ by the embedding $\iota$. One can characterise rather precisely when the three types of Dehn twists preserve this coarse median structure: this is the subject of \cite[Theorem~E]{Fio10e}, which we reproduce here.

\begin{thm}[\cite{Fio10e}]\label{thm:cmp_DT}
    Let $G\leq\mc{A}_{\G}$ be a convex-cocompact subgroup. Consider a $1$--edge splitting $G\acts T$ with convex-cocompact edge groups, and a Dehn twist $\varphi\in\Aut(G)$ with respect to $T$.
    \begin{enumerate}
        \item If $\varphi$ is a fold or partial conjugation, then $\varphi$ is coarse-median preserving.
        \item If $\varphi$ is a skew with multiplier $z$, where $\langle z\rangle$ is convex-cocompact in $G$ and $Z_G(z)$ is elliptic in $T$, then $\varphi$ is coarse-median preserving.
    \end{enumerate}
\end{thm}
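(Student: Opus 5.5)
The plan is to verify the criterion of \cite[Theorem~2.17]{FLS}: an automorphism is coarse-median preserving if and only if it maps convex-cocompact subgroups to convex-cocompact subgroups. Equivalently, it suffices to check directly that $\varphi(\mu(a,b,c))$ and $\mu(\varphi(a),\varphi(b),\varphi(c))$ stay uniformly close.

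\textbf{Step 1: a tree-of-spaces model for $G$.} Let $\mf{E}_G\sq\X_{\G}$ be a convex $G$--essential subcomplex. Since the edge groups of $T$ are convex-cocompact, I will build a $G$--equivariant coarse decomposition of $\mf{E}_G$ along $T$. To each vertex $v\in T$ I associate a convex subcomplex $Y_v$ on which $G_v$ acts cocompactly; vertex groups are convex-cocompact, being commensurable with stabilisers in a splitting with convex-cocompact edge groups. To each edge $e$ I associate the product region $\mf{E}_{G_e}\x\mf{E}_{G_e}^{\perp}$, which is convex by \Cref{subsub:orthogonals}.

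The key lemma will say that the coarse median of three points of $G$ is computed in two stages. First take the median vertex $m\in T$ of the three projections to $T$. Then take the median inside $Y_m$ of the three gate-projections of the points to $Y_m$. This is the standard compatibility of medians with convex subcomplexes and gates in CAT(0) cube complexes, and it holds up to bounded error once the $Y_v$ are uniformly cocompact.

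\textbf{Step 2: partial conjugations and folds.} In both cases $\varphi$ changes normal forms along paths in $T$ only by inserting the multiplier $z$ at edge transitions. I will check that this insertion commutes with gate-projections up to bounded error.
\begin{itemize}
    \item For a partial conjugation, $z\in Z_B(C)$. The map $\varphi$ is an isometry on each vertex space, namely the identity on $A$--translates and conjugation by $z$ on $B$--translates. Across an edge, the two vertex pieces are reglued by a translation by $z$, which preserves the edge space because $z$ centralises $C$.
    \item For a fold, $z\in\Perp_G(C)$. By \Cref{rmk:orthogonal_preserves_hyperplanes}, $z$ translates only in the orthogonal direction to $\mf{E}_C$. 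Hence it preserves the set of hyperplanes crossing the edge space and commutes with the gate onto it.
\end{itemize}
In both cases the two-stage median of Step~1 is respected, since the tree part is unchanged and each vertex part changes by an isometry. This gives Item~(1).

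\textbf{Step 3: skews (the main obstacle).} For a skew, $z$ is central in $C$ and lies in $C$ itself, so it is not orthogonal to $C$. The regluing then shears the edge space $\mf{E}_C\x\mf{E}_C^{\perp}$ along the $\langle z\rangle$--direction, which in general distorts medians. This is exactly where twisted transvections of RAAGs fail.

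The hypotheses are designed to rule this out. Since $\langle z\rangle$ is convex-cocompact, the axis of $z$ is a convex line, so the shear is an isometry of the product $\mathrm{axis}(z)\x\mathrm{axis}(z)^{\perp}$. Since $Z_G(z)$ is elliptic in $T$, it lies in a single vertex group. Consequently, every flat in which $z$ translates, including the product region $\mf{E}_{Z_G(z)}\x\mf{E}_{Z_G(z)}^{\perp}$, sits coarsely inside one vertex space and never straddles edges in a way that gets sheared.

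I would first try to reduce to the fold case by refolding: use the splitting $G=(A\ast_C Z_A(z))\ast\dots$ so that the new edge group is $Z_G(z)$. This is convex-cocompact by \Cref{lem:cc_basics}(6). Then $z$ is central in an edge group whose orthogonal contains $\langle z\rangle$ up to finite index, by \Cref{lem:cc_basics}(2). The skew thereby becomes a fold with respect to the refolded splitting, and Item~(1) applies.

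The delicate point is checking that $z$ indeed lies in $\Perp_G$ of the appropriate new edge group, rather than merely in its centre. This is where convex-cocompactness of $\langle z\rangle$ will be used. If this check fails, the fallback is to run Step~2's gate-projection argument directly, using the ellipticity of $Z_G(z)$.
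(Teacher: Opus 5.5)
The paper gives no proof of this statement; it is quoted as Theorem~E of \cite{Fio10e}. Your argument therefore has to stand on its own, and Step~3 fails at the decisive point: a skew cannot be refolded into a fold. Once the edge group is enlarged to $Z_G(z)$ (or $Z_A(z)$), the multiplier $z$ lies in that edge group, in fact in its centre. But $\Perp_G(H)$ is by definition the maximal convex-cocompact subgroup of $Z_G(H)$ meeting $H$ trivially, so no nontrivial power of $z$ lies in $\Perp_G(Z_G(z))$. You have misread \Cref{lem:cc_basics}(2): applied to $\langle z\rangle$, it says that $Z_G(z)$ is virtually $\langle z\rangle\x\Perp_G(\langle z\rangle)$. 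That is, $\Perp_G(\langle z\rangle)$ is orthogonal to $z$, not $z$ orthogonal to $Z_G(z)$. The folding trick in the remark following \Cref{defn:DT} actually goes the other way: it makes the multiplier central in the edge group, turning any Dehn twist, folds included, into a skew. For a twist along an essential simple closed curve $c$ in $\pi_1(S_g)$ with $g\geq 2$, one has $Z_G(c)=\langle c\rangle$ and $\Perp_G(\langle c\rangle)=\{1\}$. So there is no fold to reduce to, yet Item~(2) must cover this case. Item~(2) is therefore unproved, and your ``fallback'' is precisely the missing content.

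Step~2 does not work as written either, because nothing in it distinguishes the cases where the conclusion holds from those where it fails. Take $\Z^2=\langle a\rangle\ast_{\langle a\rangle}$ with stable letter $b$, and the skew $b\mapsto ab$. It too is, up to bounded error, a left translation on each vertex piece, with adjacent pieces reglued by a translation that centralises the edge group and preserves the edge space. Yet it is the shear $(p,q)\mapsto(p+q,q)$, which is not coarse-median preserving. This example also refutes the claim in Step~3 that such a shear is an isometry of $\mathrm{axis}(z)\x\mathrm{axis}(z)^{\perp}$.

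What your two-stage formula hides is that the gate-projection to $Y_m$ of a point far away in $T$ depends on the whole tree path. Bounded regluing errors can therefore accumulate, and ruling this out is exactly where the hypotheses must be used, differently in each case.
\begin{itemize}
\item For folds, the point is that $z\in\Perp_G(C)$ acts as the identity on the $\mf{E}_C$--factor of $\mf{E}_C\x\mf{E}_C^{\perp}$. Hence it fixes every hyperplane crossing the edge space together with its halfspaces, and does not move gate-projections to edge spaces at all. Your phrase ``commutes with the gate'' is also true for skews, so it is not enough.
\item For partial conjugations, ``$z$ centralises $C$'' cannot be the reason, since $Z_B(C)$ contains the centre of $C$, whose elements shear edge spaces exactly like skew multipliers. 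Such partial conjugations are coarse-median preserving even when $Z_G(z)$ is non-elliptic: conjugate $x$ by $z$ in $\langle a,z\rangle\ast_{\langle z\rangle}(\langle b,z\rangle\ast\langle x\rangle)=(F(a,b)\x\langle z\rangle)\ast\langle x\rangle$. What saves them is that entering and leaving a $B$--vertex through edges $ge$ and $gbe$ contributes $gzg^{-1}$ and then $gbz^{-1}b^{-1}g^{-1}$. These cancel precisely along the chains with $b\in Z_B(z)$, which are where a skew accumulates.
\item For skews, you would have to show that the shear is not transmitted from one edge space to the next along tree paths, using that $\langle z\rangle$ is convex-cocompact and $Z_G(z)$ is elliptic.
\end{itemize}

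Finally, the two-stage lemma of Step~1 is not a formality. Since gate-projections are median-preserving, it amounts to showing that $\mu(g_1,g_2,g_3)$ lies uniformly close to $Y_m$, which is a coarse-separation statement needing proof. Also, your justification that vertex groups are convex-cocompact is circular.
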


On the other hand, it is easy to see that a skew $\varphi$ can never be coarse-median preserving if its multiplier $z$ has centraliser $Z_G(z)$ that is non-elliptic in $T$.

\begin{ex}
    Let $G=\pi_1(S_g)$ for the closed orientable surface $S_g$. Here all Dehn twists with respect to cyclic splittings are skews, according to \Cref{defn:DT_types}. These skews are all coarse-median preserving for $g\geq 2$, while none of them is for $g=1$. (Here, both facts happen to be independent of the chosen embedding of $G$ into a RAAG.)
\end{ex}

\subsection{Ascetic and shunning splittings}\label{sub:shunning+ascetic}

When $G$ is special, it turns out that two rather particular kinds of Dehn twists play a special role in virtually generating $\Out(G)$: these are the ``centraliser twists'' and ``ascetic twists'' mentioned in the Introduction. In order to talk about them, we first need to develop two related classes of splittings of $G$. This is the goal of the present subsection, which is also important in the definition of poison subgroups in \Cref{sub:poison}.

We argue in full generality: let $H\leq G$ be finitely generated groups and let $\mscr{E}$ be any family of subgroups of $G$. Given an action on an $\R$--tree $G\acts T$, we say that a subtree $\s\sq T$ is \emph{ascetic} if distinct $G$--translates of $\s$ always share at most one point. Ascetic lines frequently occur in degenerations of special groups, see \Cref{lem:line_addenda}(3) below. This motivates the following concept:

\begin{defn}\label{defn:ascetic_splitting}
    An HNN splitting $G\acts T$ is \emph{$(H,\mscr{E})$--ascetic} if the following hold.
    \begin{enumerate}
        \item The group $H$ is non-elliptic in $T$ and the subtree $\Min(H;T)$ is a line.
        \item The action $H\acts\Min(H;T)$ is edge-transitive and does not swap the two ends of $\Min(H;T)$.
        \item The $G$--stabiliser of each edge of $\Min(H;T)$ is contained in $H$.
        \item All subgroups in $\mscr{E}$ are elliptic in $T$.
    \end{enumerate}
\end{defn}

Items~(1)--(3) of \Cref{defn:ascetic_splitting} imply that the line $\alpha:=\Min(H;T)$ is ascetic in $T$, motivating our terminology. They also imply that the $G$--stabiliser of the line $\alpha$ is precisely the subgroup $H$, and that the $G$--stabiliser of each edge of $\alpha$ equals the kernel of the action $H\acts\alpha$. 

\begin{defn}\label{defn:isolating}
A simplicial tree $G\acts T$ 
is \emph{$(H,\mscr{E})$--isolating} if the following hold.
\begin{enumerate}
    \item There exists a vertex $v\in T$ such that $H$ equals the $G$--stabiliser of $v$.
    \item There is a proper subgroup $L\lhd H$ such that all the following hold:
    \begin{enumerate}
        \item $H/L$ is free abelian,
        \item $L$ contains all $G$--stabilisers of edges of $T$ incident to $v$,
        \item $L$ contains all elements of $\mscr{E}$ contained in $H$.
    \end{enumerate}
    \item All subgroups in $\mscr{E}$ are elliptic in $T$.
\end{enumerate}
We refer to the smallest subgroup $L$ satisfying Item~(2) as the \emph{locus} of the tree $T$. 

The tree $G\acts T$ is \emph{$(H,\mscr{E})$--shunning} if it is $(H,\mscr{E})$--isolating and its locus is minimal among loci of $(H,\mscr{E})$--isolating trees of $G$.
\end{defn}

We allow isolating trees that are not splittings, but this is only relevant when $H=G$, where $T$ can be taken to be a single point. The locus of $(G,\emptyset)$--shunning trees is then simply the root-closure of the commutator subgroup of $G$. For $H\neq G$, the group $G$ cannot be elliptic in $T$,
and we can always replace $T$ by $\Min(G;T)$ without affecting the fact that this is $(H,\mscr{E})$--isolating.

\begin{rmk}\label{rmk:folding_shunning}
    Let $G\acts T$ be an $(H,\mscr{E})$--isolating splitting with locus $L$. Then there exists another $(H,\mscr{E})$--isolating splitting $G\acts T'$ such that all edges of $T'$ incident to the $H$--fixed vertex have $G$--stabiliser \emph{equal} to $L$ (rather than simply \emph{contained} in $L$). The splitting $T'$ is obtained from $T$ by folding into a single edge each $L$--orbit of edges incident to the $H$--fixed vertex of $T$. 
    In fact, after a further folding and a collapse, we can even take $T'$ to be the Bass--Serre tree of an amalgamated-product splitting of the form $G=H\ast_LH'$ for some subgroup $H'\leq G$.
\end{rmk}

Existence of $(H,\mscr{E})$--isolating trees always implies existence of $(H,\mscr{E})$--shunning ones, since the abelianisation of $H$ is finitely generated by hypothesis, and so descending chains of loci must eventually stabilise. Note that the definition of shunning trees does not require, a priori, that loci of shunning trees be contained in \emph{all} loci of isolating trees. However, this stronger property does follow from \Cref{cor:same_locus} below.

The next two lemmas and remark clarify the relationship between ascetic and isolating splittings. We denote by $\ell(g;T)$ the translation length of an element $g\in G$ in a $G$--tree $T$.

\begin{lem}\label{lem:isolating->ascetic}
    Let $\eta\colon H\twoheadrightarrow\Z$ be an epimorphism that vanishes on the locus of some $(H,\mscr{E})$--isolating tree $G\acts T$. Then there exists an $(H,\mscr{E})$--ascetic HNN splitting $G\acts T'$ such that:
    \begin{enumerate}
        \item $\ell(h;T')=|\eta(h)|$ for all elements $h\in H$;
        \item if a subgroup $K\leq G$ is elliptic in $T$ and does not intersect any $G$--conjugate of $H\setminus\ker(\eta)$, then $K$ is also elliptic in $T'$.
    \end{enumerate}
\end{lem}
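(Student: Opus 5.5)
Starting from an $(H,\mscr{E})$--isolating tree $G\acts T$ with locus $L$ and $\eta|_L=0$, we first put $T$ in normal form using \Cref{rmk:folding_shunning}. After folding and collapsing, we may assume $T$ is the Bass--Serre tree of an amalgam $G=H\ast_L H'$. Ellipticity of any subgroup that was elliptic in the original tree is preserved, since folding and collapsing do not destroy ellipticity. We then build $T'$ as a graph of groups obtained from this amalgam by blowing up the vertex $H$ into an HNN extension along $\eta$. Concretely, $K_0:=\ker(\eta)\lhd H$ contains $L$, and $H=K_0\rtimes\langle t\rangle$ for any $t\in H$ with $\eta(t)=1$. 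Hence $H$ is the HNN extension $K_0\ast_{K_0}$ with stable letter $t$, acting by conjugation, and its Bass--Serre tree is a line $\alpha$ on which $H$ acts by translations via $\eta$, with kernel $K_0$.

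Since the edge group $L$ lies in $K_0$, which fixes every vertex of $\alpha$, we can refine the amalgam. Attach $H'$ along $L$ to the vertex group $K_0$ of the HNN extension, giving the graph of groups
\[ G=(K_0\ast_L H')\ast_{K_0}, \]
with stable letter $t$. This is an HNN splitting, and we call its Bass--Serre tree $T'$. Equivalently, $T'$ is obtained by collapsing in the refined tree the edges with stabilisers conjugate to $L$; these are not collapsed away in $T'$ itself, because the vertex group is $K_0\ast_L H'$. We must check that this presentation is really an isomorphism with $G$: the relation $t^{-1}kt=t^{-1}kt$ for $k\in K_0$ is an automorphism of $K_0$ and not of $K_0\ast_L H'$, so it is a genuine HNN extension with associated subgroups $K_0$ and $K_0$. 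It recovers $H\ast_L H'$ because $\langle K_0,t\rangle=H$.

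Next we verify the axioms of \Cref{defn:ascetic_splitting}. The subgroup $H=\langle K_0,t\rangle$ acts on the axis of $t$. Edges of that axis are the translates $t^ne$ of the edge $e$ with stabiliser $K_0$, and $H$ acts on the axis transitively via $\eta$, without swapping the ends. So $\Min(H;T')$ is this line, giving (1) and (2), and the stabilisers of its edges are conjugates of $K_0$ by powers of $t$, hence lie in $H$, giving (3). The same description gives $\ell(h;T')=|\eta(h)|$, which is Item~(1) of the lemma.

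The main work is ellipticity, which gives condition (4) of the definition and Item~(2) of the lemma. For $K$ elliptic in $T$, after conjugation $K$ lies in $H'$ or in $H$. If $K\leq H'$, it is elliptic in $T'$. If $K\leq H$ and $K$ misses $H\setminus K_0$, then $K\leq K_0$ and it is elliptic in $T'$. Subgroups $E\in\mscr{E}$ are elliptic in $T$, so they are conjugate into $H$ or $H'$. If conjugate into $H$, they lie in $L\leq K_0$ by condition 2(c) of \Cref{defn:isolating}, so they are elliptic in $T'$. The delicate point, and the main obstacle, is ensuring that the folding step of \Cref{rmk:folding_shunning} keeps elliptic subgroups conjugate into $H$ or $H'$, rather than into a vertex group that is only later merged. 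We handle this by noting that collapse maps $T\to T_{\mathrm{normal}}$ are equivariant, so elliptic subgroups remain elliptic throughout.
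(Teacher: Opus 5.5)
Your proof is correct: the tree $T'$ with vertex group $\ker(\eta)\ast_L H'$ is exactly what the paper's construction produces when applied to the amalgam $H\ast_L H'$ (blow up the $H$-fixed vertex into the line on which $H$ acts via $\eta$, then collapse all old edges). The only real difference is your preliminary normalisation via \Cref{rmk:folding_shunning}. The paper avoids it by blowing up the $H$-fixed vertex of the given tree $T$ directly. That is slightly more economical: it also covers the case $H=G$, where $T$ may be a point and the remark does not apply. It also does not rely on that remark's folding step, which must be done with some care (e.g.\ after subdividing edges) so as not to enlarge the stabiliser of the $H$-fixed vertex.
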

\begin{proof}
    The homomorphism $\eta$ gives an HNN splitting $H\acts R$, where $R$ is a line being acted upon by translations according to $|\eta|$. Since $H$ is the $G$--stabiliser of a vertex $v\in T$, we can refine $T$ into a splitting $G\acts T''$ by blowing up $v$ to a copy of $R$. Then, let $G\acts T'$ be obtained from $T''$ by collapsing all the ``old'' edges, that is, by collapsing the edges of $T''$ on which the collapse map $T''\ra T$ is injective. It is immediate that $T'$ is an HNN splitting of $G$ satisfying Items~(1)--(3) of \Cref{defn:ascetic_splitting}, as well as the equality $\ell(h;T')=|\eta(h)|$ for all $h\in H$. Finally, if a subgroup $K\leq G$ is elliptic in $T$ and disjoint from all conjugates of $H\setminus\ker(\eta)$, then $K$ stays elliptic in the blow-up $T''$, and hence also in its collapse $T'$. All elements of $\mscr{E}$ satisfy this property, by the definition of $(H,\mscr{E})$--isolating splitting, and so they are elliptic in $T'$, yielding Item~(4) of \Cref{defn:ascetic_splitting}.
\end{proof}

\begin{rmk}\label{rmk:ascetic->isolating}
    There is a close relationship between ascetic splittings and isolating ones. Given an $(H,\mscr{E})$--isolating splitting, \Cref{lem:isolating->ascetic} shows how to construct $(H,\mscr{E})$--ascetic ones. There is also a standard construction of $(H,\mscr{E})$--isolating splitting from $(H,\mscr{E})$--ascetic ones, as we now explain. 

    Let $G\acts T$ be an $(H,\mscr{E})$--ascetic HNN splitting and set $\alpha:=\Min(H;T)$. Since $T$ is edge-transitive and the line $\alpha$ is ascetic, the $G$--translates of $\alpha$ form a transverse covering of $T$, in the sense of \cite[Definition~1.4]{Guir-Fourier}. Assuming that $H\neq G$, this yields a bipartite simplicial splitting $G\acts\mc{S}$ constructed as follows: $\mc{S}$ has a white vertex for each $G$--translate of $\alpha$, and a black vertex for each vertex of $T$; edges of $\mc{S}$ connect incident line-vertex pairs. The splitting $G\acts\mc{S}$ is $(H,\mscr{E})$--isolating with locus equal to the kernel of the action $H\acts\alpha$.

    In fact, a similar argument constructs an isolating splitting of $G$ from any minimal geometric $\R$--tree $G\acts T$ with an ascetic subtree $\s\sq T$. (We need the tree to be geometric to ensure that each arc of $T$ intersects only finitely many $G$--translates of $\s$ \cite{LP97}.)
\end{rmk}

\begin{lem}\label{lem:ell/lox_trick}
    If $G\acts T$ is an $(H,\mscr{E})$--ascetic HNN splitting, then the kernel of $H\acts\Min(H;T)$ contains the locus of each $(H,\mscr{E})$--shunning tree of $G$.
\end{lem}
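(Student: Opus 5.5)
The plan is to argue by minimality of the locus of the shunning tree. Let $K$ be the kernel of $H\acts\alpha$, where $\alpha:=\Min(H;T)$. Let $G\acts S'$ be an $(H,\mscr{E})$--shunning tree with locus $L$, and let $v\in S'$ be the vertex with stabiliser $H$. It suffices to produce an $(H,\mscr{E})$--isolating tree whose locus is contained in $L\cap K$. Minimality of $L$ among loci of isolating trees then forces $L\cap K=L$, that is, $L\leq K$.

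\textbf{Step 1 (the quotient is free abelian).} By Items~(1)--(3) of \Cref{defn:ascetic_splitting}, $H$ acts on the line $\alpha$ by translations and edge-transitively, so $H/K\cong\Z$. Since $H/L$ is free abelian, $H/(L\cap K)$ embeds in $H/L\x H/K$ and is therefore free abelian. Moreover $L\cap K$ is proper in $H$, because $K$ is. So the only real work is to build the tree.

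\textbf{Step 2 (the candidate tree).} I would refine $S'$ at the vertex $v$ by blowing it up into a copy of the line $\alpha$, with $H$ acting through $H\twoheadrightarrow H/K\cong\Z$. This is the construction in the proof of \Cref{lem:isolating->ascetic}. Then I would take the bipartite ``line versus other vertices'' tree of \Cref{rmk:ascetic->isolating}. In that tree $H$ is again a vertex stabiliser, and the incident edge groups are of two kinds:
\begin{itemize}
\item stabilisers of points of $\alpha$, which are $K$;
\item old incident edge groups of $S'$, which lie in $L$ and must be elliptic in $\alpha$, hence lie in $K$.
\end{itemize}
Elements of $\mscr{E}$ contained in $H$ lie in $L$, and they lie in $K$ because they are elliptic in $T$ (Item~(4) of \Cref{defn:ascetic_splitting}) and $K$ is exactly the elliptic part of $H$ acting on $\alpha$. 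The remaining elements of $\mscr{E}$ are elliptic in $S'$, and hence stay elliptic after the blow-up. If every incident edge group of $v$ is elliptic in $\alpha$, the resulting locus is contained in $L\cap K$ and we are done.

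\textbf{Step 3 (main obstacle: an ``ell/lox'' argument).} The difficulty is that an edge group $E\leq L$ of $S'$ at $v$ need not be elliptic in $T$. Such an $E$ contains some $h\in H\setminus K$, which is loxodromic in $T$ with axis $\alpha$.

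What I would try first is to rule this out. Let $u$ be the other endpoint of the edge with stabiliser $E$, so $E\leq G_u$ with $G_u\neq H$. Then use the ascetic property: the stabiliser in $G$ of $\alpha$ is $H$, and distinct translates of $\alpha$ meet in at most one point.
\begin{itemize}
\item Since $E$ contains $h$, the subtree $\Min(E;T)$ is contained in $\alpha$.
\item Either $G_u$ is elliptic in $T$, which is impossible since $G_u\ni h$.
\item Or $\Min(G_u;T)\supseteq\alpha$ strictly. Some $g\in G_u\setminus H$ then moves $\alpha$ to a translate $g\alpha$ that shares the axis of $ghg^{-1}\in G_u$. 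This is to be played off against $E$ being an edge group of $S'$ and against $H=N$-type rigidity (\Cref{rmk:N_G(A)=Z_G(A)}-style arguments may not apply, so one must use asceticity directly).
\end{itemize}
If this direct route fails, the fallback is to fold first, using \Cref{rmk:folding_shunning}, so that $S'$ is an amalgam $G=H\ast_L H'$. The only incident edge group is then $L$ itself. In that case I would compare $L$ with $K$ via the elliptic/loxodromic dichotomy: either $L$ is elliptic in $T$ and then $L\leq K$ directly, or $L$ contains a $T$--loxodromic element and one derives a contradiction with $H'$ being elliptic or not in $T$. I expect this case analysis to be the crux of the proof.
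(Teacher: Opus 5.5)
There is a genuine gap, at exactly the point you flag as the crux. Your overall strategy is right: build a competing $(H,\mscr{E})$--isolating tree with locus in $L\cap K$ and invoke minimality. But Step~2 does no work. If every edge group of $S'$ at $v$ lies in $K$, then $K$ itself satisfies Item~(2) of \Cref{defn:isolating} for $S'$ (your treatment of $\mscr{E}$ is correct). So $L\leq K$ follows immediately from the definition of the locus, and the blow-up is superfluous; the hypothesis of Step~2 is the conclusion. The entire content of the lemma is therefore Step~3, which you leave open.

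Both routes sketched in Step~3 argue only from asceticity of $T$ and the shape of $S'$, and never use minimality of $L$. That cannot suffice, because the analogous claim fails for isolating trees. Take $G=\langle a,b,c\mid [a,b]\rangle$ and $H=\langle a,b\rangle$.
\begin{itemize}
\item The amalgam $G=H\ast_{\langle a\rangle}\langle a,c\rangle$ is $(H,\emptyset)$--isolating with locus $\langle a\rangle$.
\item The HNN splitting $T$ of $\langle b\rangle\ast\langle c\rangle$ with stable letter $a$ and edge group $\langle b\rangle$ is $(H,\emptyset)$--ascetic with kernel $K=\langle b\rangle$.
\end{itemize}
Here the incident edge group $\langle a\rangle$ contains the $T$--loxodromic element $a$, and $H'=\langle a,c\rangle$ is non-elliptic in $T$. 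So neither your direct route nor your fallback dichotomy can yield a contradiction. (The shunning locus is $\{1\}$, via $G=H\ast\langle c\rangle$.)

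The missing idea is to refine in the opposite direction. The edge groups of $T$ along $\alpha$ equal $K\leq H=G_v$, so $T$ is elliptic with respect to $S'$. Hence there is a tree $T'$ that refines $T$, dominates $S'$, and keeps elliptic every subgroup elliptic in both, such as $K$ and the elements of $\mscr{E}$ \cite[Proposition~2.2]{GL-JSJ}. This needs no information about the edge groups of $S'$.

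Since $K\lhd H$ is elliptic in $T'$, the subtree $\alpha':=\Min(H;T')$ is a line on which $H$ acts by translations through $H/K\cong\Z$. Now distinguish two cases.
\begin{itemize}
\item If $\alpha'$ does not map to the single point $v\in S'$, then $K$ fixes an edge of $S'$ at $v$. So $K\leq L\lneq H$, and torsion-freeness of $H/L$ forces $L=K$.
\item Otherwise, first collapse the edges of $T'$ collapsed by both maps. Then the edges sent to points of $S'$ are exactly those in $G$--translates of $\alpha'$, and distinct translates are disjoint. Collapsing these yields a tree $T''$ with a vertex $v''$ (the image of $\alpha'$) whose stabiliser is $H$.
\end{itemize}
In the second case, each edge of $T''$ at $v''$ has a stabiliser that fixes an arc of $S'$ issuing from $v$, hence lies in $L$. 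It also fixes a point of $\alpha'$ while lying in $H$, hence lies in $K$. Thus $T''$ is $(H,\mscr{E})$--isolating with locus contained in $L\cap K$, and minimality of $L$ gives $L\leq K$. Containment in $K$ is forced by the geometry of the new tree, rather than assumed of the old one.
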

\begin{proof}
    Assume without loss of generality $H\neq G$. Let $G\acts T_1$ be an $(H,\mscr{E})$--shunning splitting with locus $L_1\lhd H$. Let $G\acts T_2$ be an $(H,\mscr{E})$--ascetic HNN splitting, set $\alpha_2:=\Min(H;T)$ and let $K_2\lhd H$ be the kernel of the action $H\acts\alpha_2$. We will show that $K_2\geq L_1$. 
    
    The subgroup $K_2$ is the $G$--stabiliser of each edge of the line $\alpha_2$. Thus, since $K_2\leq H$, the HNN splitting $T_2$ is elliptic in $T_1$, and it follows that there exists a splitting $G\acts T'$ such that:
    \begin{itemize}
        \item $T'$ refines $T_2$ and dominates $T_1$;
        \item subgroups of $G$ that are elliptic in both $T_1$ and $T_2$ are also elliptic in $T'$.
    \end{itemize}
    (See e.g.\ \cite[Proposition~2.2]{GL-JSJ}.) Let $\mu_2\colon T'\ra T_2$ and $\mu_1\colon T'\ra T_1$ denote, respectively, the collapse map to $T_2$ and a $G$--equivariant morphism to $T_1$ (where we can assume that, for each edge $e\sq T'$, either $\mu_1|_e$ is injective or $\mu_1(e)$ is a single point). Without loss of generality, there is no edge $e\sq T'$ such that both $\mu_1(e)$ and $\mu_2(e)$ are single points (otherwise simply replace $T'$ with the result of collapsing all such edges).
    
    Set $\alpha:=\Min(H;T')$. Since the subgroup $K_2$ is elliptic in both $T_1$ and $T_2$, it is also elliptic in $T'$ and, since $K_2\lhd H$, we have that the subtree $\alpha:=\Min(H;T')$ is fixed pointwise by $K_2$. In fact, since $H/K_2\cong\Z$, the tree $\alpha$ is a line and $K_2$ equals the kernel of the action $H\acts\alpha$. 
    
    The image $\mu_1(\alpha)$ is connected and $H$--invariant, so it contains the unique $H$--fixed point $v_1\in T_1$. If $\mu_1(\alpha)\neq\{v_1\}$, then $K_2$ fixes an edge of $T_1$ incident to $v_1$, which forces $L_1=K_2$ since $H/K_2\cong\Z$. In the rest of the proof, we can thus assume that $\mu_1(\alpha)=\{v_1\}$.
    
    Note that the edges $e\sq T'$ such that $\mu_1(e)$ is a single point are precisely the edges contained in the $G$--translates of $\alpha$. In one direction, we have seen that $\mu_1(\alpha)=\{v_1\}$. In the other, if $\mu_1(e)$ is a point, then $\mu_2|_e$ is injective (by assumption) and we can appeal to the fact that $G\acts T_2$ has a unique orbit of edges (being an HNN extension).
    Another important observation is that distinct $G$--translates of $\alpha$ are disjoint: this is because they get collapsed by $\mu_1$ to distinct points of $T_1$, since the $G$--stabiliser of $v_1$ is no bigger than $H$. 
    
    Now, let $G\acts T''$ be the splitting obtained from $T'$ by collapsing to a point each edge of $T'$ that gets collapsed by the morphism $\mu_1$. In view of the previous paragraph, the fibres of the collapse map $T'\ra T''$ are precisely the $G$--translates of $\alpha$.
     Let $v''\in T''$ be the point to which $\alpha$ gets collapsed, and let $\mu_1''\colon T''\ra T_1$ be the morphism to which $\mu_1$ descends. 
     
     We claim that $T''$ is $(H,\mscr{E})$--isolating with locus contained in $L_1\cap K_2$. Since $\mu_1''(v'')=v_1$, we immediately see that the $G$--stabiliser of $v''$ equals $H$. Moreover, the subgroups in $\mscr{E}$ are elliptic in both $T_1$ and $T_2$, and hence also in $T'$ and $T''$. We are only left to check Item~(2) of \Cref{defn:isolating} for the subgroup $L_1\cap K_2$. The quotient $H/(L_1\cap K_2)$ is free abelian, because $H$ is finitely generated and both $H/L_1$ and $H/K_2$ are free abelian. Every subgroup of $H$ lying in $\mscr{E}$ is contained in both $L_1$ and $K_2$, and hence in $L_1\cap K_2$. Finally, if $e''\sq T''$ is an edge incident to $v''$, then the stabiliser $G_{e''}$ fixes $\mu_1''(e'')$, which is an arc in $T_1$ based at $v_1$; it follows that $G_{e''}\leq L_1$. In fact, $G_{e''}$ also fixes the lift of $e''$ to an edge $\overline e''\sq T'$. Note that $\overline e''$ intersects $\alpha$ (here it is essential that the fibres of the collapse map $T'\ra T''$ are precisely the $G$--translates of $\alpha$ and not larger). Since $\overline e''$ intersects $\alpha$ and we have $G_{e''}\leq H$, it follows that $G_{e''}$ fixes $\alpha$ pointwise, and hence $G_{e''}\leq K_2$. (The last three sentences are the essence of the proof, see \Cref{rmk:no_ell_ell}.)
    
    In conclusion, $G\acts T''$ is an $(H,\mscr{E})$--isolating splitting with locus contained in $L_1\cap K_2$. Since $T_1$ is $(H,\mscr{E})$--shunning with locus $L_1$, we must have $L_1=L_1\cap K_2$, and hence $L_1\leq K_2$ as desired. 
\end{proof}

\begin{rmk}\label{rmk:no_ell_ell}
    If we had run the proof of \Cref{lem:ell/lox_trick} for a pair of isolating splittings $T_1,T_2$ with loci $L_1,L_2$, we would \emph{not} have found an isolating splitting with locus $\leq L_1\cap L_2$ in general. The difficulty lies in ensuring that the splitting $T''$ satisfies Item~(2b) of \Cref{defn:isolating}, which is why we had to pass through an ascetic splitting, where $H$ has an axis.
\end{rmk}

Combining the previous lemmas, we obtain an important property of shunning trees:

\begin{cor}\label{cor:same_locus}
    All $(H,\mscr{E})$--shunning trees have the same locus.
\end{cor}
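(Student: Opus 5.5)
We will show that any two $(H,\mscr{E})$--shunning trees $T_1,T_2$, with loci $L_1,L_2$, satisfy $L_1\leq L_2$; by symmetry this gives $L_1=L_2$. The idea is to combine \Cref{lem:isolating->ascetic} with \Cref{lem:ell/lox_trick}. We use the first to turn $T_2$ into ascetic HNN splittings detecting every epimorphism $H\twoheadrightarrow\Z$ killing $L_2$. We then use the second to see that $L_1$ lies in the kernel of each such epimorphism.

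Concretely, we may assume $H\neq G$. The case $H=G$ is handled directly: there, every isolating tree has locus equal to the smallest $L$ with $G/L$ free abelian containing the elements of $\mscr{E}$ lying in $G$. This is canonical, since the condition on edge stabilisers is vacuous for a point, and for a nontrivial minimal tree no vertex has stabiliser $G$.

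For $H\neq G$, fix an epimorphism $\eta\colon H\twoheadrightarrow\Z$ with $L_2\leq\ker\eta$. By \Cref{lem:isolating->ascetic}, applied to $T_2$, there is an $(H,\mscr{E})$--ascetic HNN splitting $G\acts T'$ with $\ell(h;T')=|\eta(h)|$ for all $h\in H$. In particular, the kernel of the action $H\acts\Min(H;T')$ is exactly $\ker\eta$. Indeed, by \Cref{defn:ascetic_splitting}(2), $H$ acts on the line by translations without swapping ends, so an element of $H$ acts trivially iff its translation length vanishes. Now \Cref{lem:ell/lox_trick}, applied with the shunning tree $T_1$, gives $L_1\leq\ker\eta$.

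Since $H/L_2$ is free abelian of finite rank, the intersection of the kernels of all epimorphisms $H\twoheadrightarrow\Z$ vanishing on $L_2$ is exactly $L_2$. Here we use that coordinate projections of $H/L_2\cong\Z^r$ separate points, and that $r\geq 1$ because $L_2$ is proper. Hence $L_1\leq L_2$, and swapping roles gives $L_2\leq L_1$.

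The only delicate point is the identification of the kernel of $H\acts\Min(H;T')$ with $\ker\eta$, which relies on the translation-length statement in \Cref{lem:isolating->ascetic}(1) together with $H$ acting by translations. The rest is a direct assembly of the two preceding lemmas. I expect no real obstacle, beyond double-checking the degenerate case $H=G$ and that $H/L_2\neq 0$.
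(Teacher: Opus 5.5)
Your proposal is correct and is essentially the paper's proof: the paper writes $L=K_1\cap\dots\cap K_k$ with $H/K_i\cong\Z$ and combines \Cref{lem:isolating->ascetic} with \Cref{lem:ell/lox_trick} to get $L'\leq K_i$ for each $i$, just as you do using all epimorphisms $\eta$ that vanish on $L_2$. Two minor remarks on your aside about $H=G$. First, it is not needed, since both lemmas also hold in that case. Second, the claim that \emph{every} isolating tree has the canonical locus is overstated: a non-minimal tree with a global fixed vertex can have a larger locus. The conclusion for shunning trees is still correct.
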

\begin{proof}
    Let $L,L'$ be loci of $(H,\mscr{E})$--shunning trees. Since $H/L$ is free abelian, we can choose subgroups $K_1,\dots,K_k\lhd H$ with $H/K_i\cong\Z$ and $L=K_1\cap\dots\cap K_k$. The combination of \Cref{lem:isolating->ascetic} and \Cref{lem:ell/lox_trick} shows that $L'\leq K_i$ for all $i$, and hence $L'\leq L$. The opposite inclusion is obtained in the same way.
\end{proof}

\subsection{The elementary generators}\label{sub:more_DTs}

Let $G$ be special and let $\mscr{E}$ be a family of subgroups. Here we discuss the three kinds of ``elementary automorphisms'' that, as mentioned in the Introduction, we will use to virtually generate $\Out(G)$. Only two of these are actual Dehn twists.

We also wish to generate \emph{relative} automorphism groups. Let\footnote{This is sometimes also called a Fuchs-Rabinowitz group or a McCool group in the literature, depending on the context. The notation we use, with the superscript $t$ denoting triviality, seems to have first appeared in \cite{GL-McCool}.} $\Aut(G;\mscr{E}^t)\leq\Aut(G)$ be the group of automorphisms that coincide with inner automorphisms of $G$ on all subgroups in $\mscr{E}$ (with possibly different inner automorphisms on different subgroups). Let $\Out(G;\mscr{E}^t)$ be the projection to $\Out(G)$. We adopt the convention that arbitrary families of arbitrary subgroups of $G$ are denoted by the letter $\mscr{E}$, whereas finite families of finitely generated subgroups are denoted by $\mscr{F}$.

\subsubsection{Centraliser twists and ascetic twists}\label{subsub:centraliser+ascetic}

The first two kinds of elementary generators are Dehn twists with respect to suitable splittings of $G$.

\begin{defn}\label{defn:DT_main_types}
    A \emph{centraliser twist} relative to $\mscr{E}$ is a Dehn twist $\tau\in\Aut(G;\mscr{E}^t)$ with respect to a $1$--edge $(\mc{Z}(G),\mscr{E})$--splitting of $G$.
\end{defn}

\begin{defn}\label{defn:ascetic_twist}
    Let $A\in\Sal(G)$. An \emph{$(A,\mscr{E})$--ascetic twist} is a Dehn twist $\tau\in\Aut(G;\mscr{E}^t)$ with respect to a $(Z_G(A),\mscr{E})$--ascetic HNN splitting $G\acts T$ such that the following conditions hold:
    \begin{enumerate}
        \item if $B\in\Sal(G)$ does not contain a $G$--conjugate of $A$, then $B$ is elliptic in $T$;
        \item we have $\tau(A)=A$ and there exists $a_0\in A$ such that $\tau(z)\in z\langle a_0\rangle$ for all $z\in Z_G(A)$.
    \end{enumerate}
    We speak of \emph{ascetic twists} relative to $\mscr{E}$ if we do not wish to specify $A$. Let $\ot_{\mscr{E}}(A)\leq\Aut(G;\mscr{E}^t)$ be the group generated by $(A,\mscr{E})$--ascetic twists, and $\overline\ot_{\mscr{E}}(A)\leq\Aut(A)$ its restriction to $A$.
\end{defn}

Note that the HNN splittings giving rise to ascetic twists are rather particular: we can choose the stable letter so that it normalises an edge group. The edge groups may be infinitely generated, but they are still co-cyclic subgroups of centralisers of salient abelians. 

Item~(1) of \Cref{defn:ascetic_twist} does not play much of a role in our proofs, but we found interesting that we only use twists and splittings with this feature (see \Cref{prop:salient_vs_degenerations}). When $B\in\xSal(G)$, this property is actually automatic (see the proof of \Cref{lem:E^A_replacement} below).

In the case of RAAGs, each of the Laurence--Servatius generators of $\Out(\mc{A}_{\G})$ \cite{Servatius,Laurence,CSV} is either a centraliser twist or an ascetic twist, with the exception of graph automorphisms and inversions, which are not needed if we only wish to \emph{virtually} generate $\Out(\mc{A}_{\G})$:

\begin{ex}\label{ex:type_1}
    For a RAAG $\mc{A}_{\G}$, the Laurence--Servatius generators known as partial conjugations are always centraliser twists. The transvections $\tau_{v,w}\in\Aut(\mc{A}_{\G})$ with $\lk(v)\sq\lk(w)$ are also centraliser twists, as they are Dehn twists with respect to an HNN splitting of $\mc{A}_{\G}$ with edge group $\mc{A}_{\lk(v)}$, which is the centraliser of $\langle v,w\rangle$ since $\St(v)\cap\St(w)=\lk(v)$. 
    
    Finally, the transvections $\tau_{v,w}$ with $\St(v)\sq\St(w)$ are centraliser twists if and only if $\lk(v)$ is an intersection of stars, that is, if and only if $\mc{A}_{\lk(v)}\in\mc{Z}(\mc{A}_{\G})$. In particular, no transvections of $\Z^n$ are centraliser twists. If a transvection of $\mc{A}_{\G}$ is not a centraliser twist, then it is an ascetic twist, as we now explain. Thus, consider vertices $v,w\in\G$ such that $\St(v)\sq\St(w)$ and such that $\lk(v)$ is not an intersection of stars. Consider the clique $\kappa(v):=\{w\in\G\mid\St(v)\sq\St(w)\}$ and set $A:=\mc{A}_{\kappa(v)}$. Note that $A$ is the centre of $\mc{A}_{\St(v)}$ and, by \Cref{ex:stars_SVP}, we have $\mc{A}_{\St(v)}\in\SVP(\mc{A}_{\G})$. It is clear that $\mc{A}_{\St(v)}$ equals its own normaliser, and so $A\in\Sal(\mc{A}_{\G})$. Now, let $\mc{A}_{\G}\acts T$ be the HNN splitting with $v$ as a stable letter, $\mc{A}_{\lk(v)}$ as an edge group, and $\mc{A}_{\G\setminus\{v\}}$ as a vertex group. This is a $(Z_G(A),\emptyset)$--ascetic splitting inducing the transvection $\tau_{v,w}$ as a Dehn twist. If a parabolic subgroup $\mc{A}_{\Delta}\leq\mc{A}_{\G}$ is abelian and not elliptic in $T$, then we have $v\in\Delta$. If in addition $\mc{A}_{\Delta}$ is staunchly parabolic, then $\kappa(v)\sq\Delta$, because of the transvections $\tau_{v,w}$ with $w\in\kappa(v)$. In conclusion, the elements of $\Sal(\mc{A}_{\G})$ that do not contain a conjugate of $A$ are elliptic in $T$, showing that $\tau_{v,w}$ is an $(A,\emptyset)$--ascetic twist.
\end{ex}

\begin{rmk}
    In general, there are infinitely many elements of $\Out(G)$ that are represented by centraliser twists and ascetic twists. We will show that $\Out(G)$ is virtually generated by \emph{finitely many} centraliser twists, ascetic twists and pseudo-twists, but we cannot exhibit an explicit such finite set. Our proofs are by a shortening argument, and so they heavily rely on the axiom of choice and are non-constructive.
\end{rmk}

\subsubsection{Pseudo-twists}\label{subsub:pseudo-twists}

The elementary generators of the third kind are not always Dehn twists, though they are closely related automorphisms. They only arise from \emph{extra}-salient abelians.

Given $A\in\xSal(G)$, we denote by $\mc{L}_{\mscr{E}}(A)\lhd Z_G(A)$ the shared locus of $(Z_G(A),\mscr{E})$--shunning trees of $G$. This is well-defined by \Cref{cor:same_locus}. 

\begin{rmk}\label{rmk:degenerate_situations}
    The following two degenerate situations require further comment.
    \begin{itemize}
        \item If $G$ has no $(Z_G(A),\mscr{E})$--shunning trees (or equivalently no isolating ones), then we simply set $\mc{L}_{\mscr{E}}(A):=Z_G(A)$.
        \item If $G$ has no $(Z_G(A),\mscr{E})$--shunning splittings, then there may still be some $(Z_G(A),\mscr{E})$--shunning trees, though this can only occur for $Z_G(A)=G$. Hence $A$ must equal the centre of $G$ (since we have $A\in\mc{Z}(G)$ by definition). In this case, $\mc{L}_{\mscr{E}}(A)$ is the intersection of all root-closed co-abelian subgroups of $G$ containing the elements of $\mscr{E}$. In particular, $\mc{L}_{\emptyset}(A)$ is the root-closure of the commutator subgroup of $G$.
    \end{itemize}
\end{rmk}

If $\mc{L}_{\mscr{E}}(A)\neq Z_G(A)$ and $Z_G(A)\neq G$, then \Cref{rmk:folding_shunning} shows that $G$ admits amalgamated-product splittings of the form $G=Z_G(A)\ast_{\mc{L}_{\mscr{E}}(A)} W$ for subgroups $W\leq G$. This allows us to extend automorphisms of $Z_G(A)$ fixing $\mc{L}_{\mscr{E}}(A)$ to automorphisms of $G$, by setting them to be the identity on $W$. In general, such extensions are non-unique, as they depend on the choice of $W$. Pseudo-twists are automorphisms obtained by this extension procedure, with the additional requirement that their non-identity behaviour is ``uniquely supported'' on the abelian subgroup $A$:

\begin{defn}\label{defn:pseudo-twists}
    Let $A\in\xSal(G)$. An \emph{$(A,\mscr{E})$--pseudo-twist} is an automorphism $\pi\in\Aut(G;\mscr{E}^t)$ that satisfies $\pi(A)=A$, 
    $\pi|_N=\id_N$ and $\pi|_W=\id_W$ for two subgroups $N$ and $W$ chosen as follows:
    \begin{enumerate}
        \item $W$ fits in an amalgamated-product writing $G=Z_G(A)\ast_{\mc{L}_{\mscr{E}}(A)} W$ relative to $\mscr{E}$, where $\mc{L}_{\mscr{E}}(A)\lhd Z_G(A)$ is the locus of $(Z_G(A),\mscr{E})$--shunning trees of $G$;
        \item $N$ is a subgroup with $\mc{L}_{\mscr{E}}(A)\leq N\lhd Z_G(A)$ such that $Z_G(A)/N$ is free abelian and $A\cdot N$ has finite index in $Z_G(A)$.
    \end{enumerate} 
    We speak of \emph{pseudo-twists} relative to $\mscr{E}$ if we do not wish to specify $A$. Let $\pt_{\mscr{E}}(A)\leq\Aut(G;\mscr{E}^t)$ be the group generated by $(A,\mscr{E})$--pseudo-twists, and $\overline\pt_{\mscr{E}}(A)\leq\Aut(A)$ its restriction to $A$.
\end{defn}

\begin{rmk}
    We speak of a ``writing'' rather than a ``splitting'' in Item~(1) of \Cref{defn:pseudo-twists} because we wish to cover the degenerate situations discussed in \Cref{rmk:degenerate_situations}:
    \begin{itemize}
        \item If $\mc{L}_{\mscr{E}}(A)=Z_G(A)$, then the only $(A,\mscr{E})$--pseudo-twist is the identity of $G$.
        \item If $Z_G(A)=G$, then $(A,\mscr{E})$--pseudo-twists are those automorphisms of $G$ that fix pointwise a subgroup $N\lhd G$ containing all elements of $\mscr{E}$, with $G/N$ free abelian, and with $A\cdot N\leq G$ of finite index.
    \end{itemize}
    Outside these two exceptions, $\mc{L}_{\mscr{E}}(A)$ is required to be a \emph{proper} subgroup of both $Z_G(A)$ and $W$.
\end{rmk}

\begin{rmk}
    If the subgroup $N$ in \Cref{defn:pseudo-twists} satisfies $Z_G(A)/N\cong\Z$, then we recover \emph{precisely} the definition of $(A,\mscr{E})$--ascetic twists; this follows e.g.\ from the proof of \Cref{prop:ot<pt}(1) below. (Note, however, that ascetic twists are defined for all $A\in\Sal(G)$, while we only consider pseudo-twists when $A\in\xSal(G)$.) Except for ascetic twists, all pseudo-twists needed in the article satisfy $Z_G(A)/N\cong\Z^2$, so this requirement can safely be added to \Cref{defn:pseudo-twists} if the reader so wishes. We prefer to avoid this as it makes statements a little more awkward.
\end{rmk}

\begin{rmk}
    The reason why ascetic twists are defined for all $A\in\Sal(G)$ while pseudo-twists are only defined for $A\in\xSal(G)$ is not meant to be transparent at this point in the article. The true reason are certain pathologies of degenerations of special groups (see \Cref{lem:line_stab_SVP}), and the issues that these may cause during the shortening argument (see \Cref{rmk:shortening_motile+}). The subgroups in $\Sal(G)\setminus\xSal(G)$ are not subject to these pathologies, and so we do not need pseudo-twists to deal with them (although \Cref{defn:pseudo-twists} would make perfect sense for all $A\in\Sal(G)$ as well).
\end{rmk}

A useful property is that interactions between pseudo-twists with respect to non-conjugate elements of $\xSal(G)$ are fairly limited:

\begin{lem}\label{lem:E^A_replacement}
    Consider $A,B\in\xSal(G)$ and an $(A,\mscr{E})$--pseudo-twist $\pi\in\Aut(G;\mscr{E}^t)$. If $B$ does not contain a conjugate of $A$, then $\pi$ coincides with an inner automorphism of $G$ on $B$.
\end{lem}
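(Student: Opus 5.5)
The plan is to study how $B$ acts on the Bass--Serre tree $T$ of the amalgam $G=Z_G(A)\ast_{\mc{L}_{\mscr{E}}(A)}W$ from \Cref{defn:pseudo-twists}(1), writing $L:=\mc{L}_{\mscr{E}}(A)$. First, the degenerate writings of \Cref{rmk:degenerate_situations}. If $L=Z_G(A)$ then $\pi=\id_G$ and there is nothing to prove. If $Z_G(A)=G$, then $A$ is central, so $A\leq Z_G(B)$, and this case is handled exactly like Case~2 below. So assume $T$ is a genuine splitting. Since $B$ is abelian and finitely generated, it is either elliptic in $T$ or it preserves a line on which it acts by translations.

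\emph{Case~1: $B$ fixes a vertex conjugate to the $W$--vertex}, say $B\leq gWg^{-1}$. For $w\in W$ we have $\pi(gwg^{-1})=\pi(g)w\pi(g)^{-1}$, since $\pi|_W=\id_W$. Hence $\pi$ agrees with conjugation by $\pi(g)g^{-1}$ on $gWg^{-1}$, and in particular on $B$.

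\emph{Case~2: $B$ fixes a conjugate of the $Z_G(A)$--vertex.} By the same computation we may assume $B\leq Z_G(A)$. It then suffices to show $B\leq N$, since $\pi|_N=\id_N$. Suppose not. Because $Z_G(A)/N$ is free abelian, there is an epimorphism $\eta\colon Z_G(A)\twoheadrightarrow\Z$ that vanishes on $N\supseteq L$ with $\eta(B)\neq0$; and since $AN$ has finite index in $Z_G(A)$, also $\eta(A)\neq 0$. By \Cref{lem:isolating->ascetic}, $\eta$ yields a $(Z_G(A),\mscr{E})$--ascetic HNN splitting $T'$ in which $B$ acts on the axis $\alpha=\Min(Z_G(A);T')$ with $\ell(b;T')=|\eta(b)|$. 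So $B_0:=B\cap\ker\eta$ satisfies $B/B_0\cong\Z$.

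Extra-salience of $B$ forbids $B_0$ from being of the form $B\cap Z$ with $Z\in\mc{Z}(G)$. The natural candidate for such a $Z$ is $Z_G(Z_G(B_0))$, or a centraliser of a suitable element of $Z_G(A)$ built from the structure of standard virtual products via \Cref{prop:centraliser_g_algconvex}. The aim is to show that either $B\cap Z_G(Z_G(B_0))=B_0$, which contradicts extra-salience, or $Z_G(B_0)$ is large enough to force $B=Z_G(P)$ to contain a conjugate of $A$. For the second alternative I would use that $A\leq Z_G(B)$ and that $A$ and $B$ are both centres of self-normalising standard virtual products, comparing the products $P_A$ and $P_B$ inside $Z_G(AB)$.

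\emph{Case~3: $B$ is loxodromic in $T$.} Then $B$ acts on a line with kernel $B_0$ contained in edge stabilisers, which are conjugates of $L$, and $B/B_0\cong\Z$. The same dichotomy as in Case~2 should produce a centraliser cutting $B$ down to $B_0$, contradicting extra-salience, or else show that $B$ contains a conjugate of $A$. Hence this case cannot occur.

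The main obstacle is the dichotomy in Cases~2--3: producing a genuine element of $\mc{Z}(G)$ from the co-cyclic subgroup $B_0$, since kernels of ascetic axes need not be centralisers. The remark after \Cref{defn:ascetic_twist} suggests that the paper proves exactly this, namely that extra-salient $B$ not containing a conjugate of $A$ is automatically elliptic in $(Z_G(A),\mscr{E})$--ascetic splittings. I would first try to prove that ellipticity statement directly and then feed it into Case~2 to obtain $B\leq N$.
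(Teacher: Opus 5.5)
Your Case~1 and the reduction to a genuine amalgam $T$ for $G=Z_G(A)\ast_L W$ are fine. However, Cases~2 and~3, which you yourself flag as the main obstacle, are left unproved. The route you sketch (ascetic splittings built from $\eta$, a candidate centraliser such as $Z_GZ_G(B_0)$, comparing $P_A$ and $P_B$) is unnecessary and is not completed. Note also that the remark after \Cref{defn:ascetic_twist} refers to the proof of this very lemma, so you cannot use that ellipticity statement as an input.

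The missing idea is that the vertex group $Z_G(A)$ is itself a centraliser, combined with $A=Z_GZ_G(A)$ and $B=Z_GZ_G(B)$. Suppose $B$ is not elliptic in $T$. Every edge of $T$ has one endpoint of each type, so the axis of $B$ contains some vertex $gv$, where $v$ is the $Z_G(A)$--vertex. Put $Z:=gZ_G(A)g^{-1}\in\mc{Z}(G)$. Then $B\cap Z$ contains the kernel of $B\acts\Min(B;T)$, which is co-cyclic in $B$. Also $B\cap Z\neq B$, because $B$ is not elliptic. Since centralisers are root-closed, $B/(B\cap Z)$ is torsion-free, hence $\cong\Z$, contradicting extra-salience of $B$. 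So your Case~3 is excluded without constructing any new centraliser.

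Now suppose instead that $B\leq Z_G(A)$ after conjugation. Either $B\leq L$, so $B$ fixes an edge at $v$ and hence a $W$--type vertex, which is your Case~1. Or $v$ is the unique fixed vertex of $B$. In the latter situation $Z_G(B)$ preserves $\Fix(B;T)=\{v\}$, so $Z_G(B)\leq Z_G(A)$. Taking centralisers gives $A=Z_GZ_G(A)\leq Z_GZ_G(B)=B$, contradicting the hypothesis. Thus this subcase never occurs, and you never need $N$, $\eta$ or ascetic splittings.

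The same double-centraliser observation disposes of your degenerate case $Z_G(A)=G$. There $A=Z_G(G)\leq Z_GZ_G(B)=B$ for every $B\in\xSal(G)$, so the lemma is vacuous. Deferring this case to your Case~2 leaves it unproved as well.
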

\begin{proof}
    The lemma is void if $A$ is the centre of $G$, 
    and trivial if $\mc{L}_{\mscr{E}}(A)=Z_G(A)$. 
    Thus, we can assume that the amalgam $G=Z_G(A)\ast_{\mc{L}_{\mscr{E}}(A)} W$ appearing in the definition of $\pi$ has $\mc{L}_{\mscr{E}}(A)$ properly contained in both $Z_G(A)$ and $W$. Let $G\acts T$ be the Bass--Serre tree of this amalgam. 

    If $B$ were non-elliptic in $T$ then, up to conjugating $B$, the line $\Min(B;T)$ would contain the $Z_G(A)$--fixed vertex of $T$. Hence $Z_G(A)\cap B$ would lie in $\mc{Z}(G)$ and contain the kernel of the action $B\acts\Min(B;T)$, which is co-cyclic in $B$. At the same time, $Z_G(A)\cap B$ is elliptic in $T$, and hence a proper subgroup of $B$, contradicting the fact that $B$ is extra-salient. Thus, $B$ is elliptic in $T$. If $B$ is conjugate into $W$, the lemma is clear. Otherwise, up to conjugating $B$, we have $B\leq Z_G(A)$ and $B\not\leq\mc{L}_{\mscr{E}}(A)$. However, in this case the $Z_G(A)$--fixed vertex of $T$ is the \emph{unique} $B$--fixed vertex of $T$, and hence $Z_G(B)\leq Z_G(A)$. Since $A$ and $B$ are centralisers, it follows that $A=Z_GZ_G(A)\leq Z_GZ_G(B)\leq B$, contradicting our hypotheses and proving the lemma.
\end{proof}


For $N$ as in \Cref{defn:pseudo-twists}, the product $A\cdot N$ has finite index in $Z_G(A)$ and splits as $A_0\x N$ for a subgroup $A_0\leq A$. Each $\varphi\in\pt_{\mscr{E}}(A)$ preserves $A\cdot N$, fixes $N$ pointwise, and on $A_0$ it is of the form $a\mapsto\alpha(a)\eta(a)$ for some $\alpha\in\Aut(A_0)$ and some $\eta\in{\rm Hom}(A_0,A\cap N)$. Not all automorphisms of $A_0\x N$ of the latter form extend to automorphisms of $Z_G(A)$, but ``most'' of them do. To check this (\Cref{prop:ot<pt}), we need the following general observation.

\begin{lem}\label{lem:fi_image_centre}
    Let $H$ be a group and let $C$ be a subgroup of its centre. Consider $N\lhd H$ with $H/N$ free abelian, $C\cap N=\{1\}$, and $C\cdot N$ of finite index in $H$. 
    Then there exists a finite-index subgroup of $\Aut(C)$ whose elements extend to automorphisms of $H$ preserving $C$ and fixing $N$ pointwise.
\end{lem}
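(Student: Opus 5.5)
The plan is to build the extensions explicitly from a set-theoretic section of the quotient map. Set $Q:=H/N$, which is free abelian. The image $\bar C$ of $C$ in $Q$ is isomorphic to $C$, since $C\cap N=\{1\}$, and it has finite index in $Q$, since $C\cdot N$ does. Hence $C$ is free abelian of the same rank $r$ as $Q$ (we may assume $r$ finite; otherwise $C\cong\bar C$ has finite index in an infinite-rank free abelian group, and the argument below works the same). Let $m$ be such that $mQ\leq\bar C$; for instance, $m$ can be the index $[Q:\bar C]$.

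The candidate finite-index subgroup is the principal congruence subgroup
\[ \Gamma:=\ker\big(\Aut(C)\to\Aut(C/C^{m'})\big),\qquad m':=m^2 \text{ (or a suitable multiple).} \]
Given $\alpha\in\Gamma$, transport it to $\bar\alpha\in\Aut(\bar C)$. The first step is to check that $\bar\alpha$ extends to an automorphism $\beta$ of $Q$. Indeed, $\bar\alpha$ extends uniquely to $Q\otimes\Q$. Since $\bar\alpha\equiv\id$ modulo $m'\bar C$, we can write $\bar\alpha=\id+m'\delta$ with $\delta$ an endomorphism of $\bar C$. On $Q$ we then have $m'\delta(q)=m\,\delta(mq)\in m\bar C\subseteq Q$, which shows that $\beta(Q)\subseteq Q$. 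Applying the same reasoning to $\bar\alpha^{-1}$ shows that $\beta$ is an automorphism of $Q$.

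The main step is to lift $\beta$ to $H$. Every element of $H$ can be written as $h$, and I define
\[ \Phi(h):=h\cdot s(\beta(\bar h)-\bar h), \]
where $s$ must be a homomorphism $Q\to C$ splitting the projection on the relevant subgroup. The obstacle is that $Q\to\bar C$ does not split, since only $\bar C$ lifts to $C$. The fix is to use the difference $\beta-\id=m'\delta$, which maps $Q$ into $m\bar C\subseteq\bar C$. Define $\Phi(h):=h\cdot c(h)$, where $c(h)\in C$ is the unique element of $C$ whose image in $Q$ is $(\beta-\id)(\bar h)$; it is unique because $C\to\bar C$ is an isomorphism.

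Because $C$ is central and $h\mapsto c(h)$ is a homomorphism $H\to C$ (it is a composition of homomorphisms), $\Phi$ is a homomorphism. It fixes $N$ pointwise, since $\bar n=0$ for $n\in N$. For $x\in C$, we get $\Phi(x)=x\cdot c(x)$, whose image in $Q$ is $\beta(\bar x)=\bar\alpha(\bar x)$. Since this element lies in $C$ and $C\to\bar C$ is injective, $\Phi(x)=\alpha(x)$, so $\Phi$ preserves $C$ and extends $\alpha$.

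It remains to check bijectivity. The map $\Phi$ induces $\beta$ on $Q=H/N$ and the identity on $N$, so by the five lemma (on the extension $N\to H\to Q$) it is an automorphism. This gives the required finite-index subgroup $\Gamma$. The only delicate point is the integrality of $\beta$ on $Q$, and that is handled by the choice of congruence level $m^2$.
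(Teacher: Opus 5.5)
Your argument is correct when $H/N$ has finite rank, which is the only setting where the lemma holds (see the correction below). The extension it builds is the same map as the paper's, but you verify it by a more explicit route.

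\textbf{Comparison with the paper.} The paper embeds $H$ into $H/N\times H/C$ via $(\pi_N,\pi_C)$ and uses automorphisms $\alpha\times\id_{H/C}$ with $\alpha\in\Aut(H/N)$. It then argues abstractly that:
\begin{enumerate}
\item a finite-index subgroup of $\Aut(C)$ extends to $H/N$;
\item the extensions that preserve the image of $H$ still restrict to a finite-index subgroup of $\Aut(C)$.
\end{enumerate}
Unwinding the embedding, $\alpha\times\id_{H/C}$ restricted to $H$ is exactly your map $h\mapsto h\,c(h)$. The paper's two finiteness claims are exactly your two integrality conditions $\beta(Q)=Q$ and $(\beta-\id)(Q)\subseteq\bar C$.

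Your route gives an explicit finite-index subgroup, namely a principal congruence subgroup. It also checks homomorphism, fixing of $N$ and bijectivity by hand, instead of relying on the finite-orbit arguments the paper leaves implicit. Level $m$ already suffices, since $(\beta-\id)(q)=\delta(mq)\in\bar C$ for all $q\in Q$; taking $m^2$ is harmless but unnecessary. The paper's route is shorter: once $H$ sits inside the product, being a homomorphism and fixing $N$ are automatic.

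\textbf{Correction.} Your parenthetical claim that the infinite-rank case ``works the same'' is false. If $C$ has infinite rank, then $C/C^{m'}$ is infinite and $\Gamma$ has infinite index in $\Aut(C)$. In fact the statement itself fails there. Take $H=\bigoplus_{i\geq 1}\Z e_i$, with $N$ trivial and $C=2\Z e_1\oplus\bigoplus_{i\geq 2}\Z e_i$. An automorphism $\alpha$ of $C$ extends to $H$ if and only if $\alpha(2e_1)\equiv 2e_1$ modulo $2C$. The transvections $2e_1\mapsto 2e_1+e_k$ lie in pairwise distinct cosets of this stabiliser, so the extendable automorphisms form an infinite-index subgroup.

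So the lemma must be read with $H/N$ finitely generated. The paper's proof also uses finite rank, when it asserts that a finite-index subgroup of $\Aut(C)$ preserves the power subgroup of $H/N$. Finite generation holds in the application, where $H=Z_G(A)$. You should simply drop that parenthetical.
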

\begin{proof}
    Consider the quotient projections $\pi_N\colon H\ra H/N$ and $\pi_C\colon H\ra H/C$. 
    Since $C\cap N$ is trivial, the product homomorphism $\eta:=(\pi_N,\pi_C)$ is an embedding $H\hookrightarrow H/N\x H/C=:\Pi$. We have $\eta(C\cdot N)=\pi_N(C)\x\pi_C(N)\cong C\x N$, which has finite index in $\Pi$, so in particular $\eta(H)$ has finite index in $\Pi$. From now on, we identify the group $H$ with the subgroup $\eta(H)\leq\Pi$, and we identify the subgroups $C$ and $N$ with $\pi_N(C)\x\{1\}$ and $\{1\}\x\pi_C(N)$, respectively.
    
    Consider the free abelian group $\wh C:=H/N\x\{1\}\leq\Pi$, of which $C$ is a finite-index subgroup. Let $\mc{O}\leq\Aut(\Pi)$ be the group of automorphisms of the form $\alpha\times\id_{H/C}$, where $\alpha\in\Aut(\wh C)$. Note that the elements of $\mc{O}$ fix $N$ pointwise. Let $\mc{O}_C,\mc{O}_H\leq\mc{O}$ be the subgroups of automorphisms leaving $C$ and $H$ invariant, respectively. Since $C$ and $H$ have finite index in $\wh C$ and $\Pi$, respectively, it follows that $\mc{O}_C$ and $\mc{O}_H$ have finite index in $\mc{O}$. 
    Also note that the restriction homomorphism $\mc{O}_C\ra\Aut(C)$ has finite-index image: indeed $C$ contains the power subgroup $\wh C^m$ 
    for $m=[\wh C:C]$; a finite-index subgroup of $\Aut(C)$ then leaves $\wh C^m$ invariant, and thus extends to $\wh C$.
    
    Combining these observations, we obtain that the restriction homomorphism $\mc{O}_C\cap\mc{O}_H\ra\Aut(C)$ has image of finite index in $\Aut(C)$. This restriction factors through a subgroup of $\Aut(H)$ fixing $N$ pointwise, proving the lemma.
\end{proof}

The following relates the actions on $A$ by $(A,\mscr{E})$--pseudo-twists and $(A,\mscr{E})$--ascetic twists. Note that, for a subgroup $A'\leq A$, the notation $\Aut(A;(A')^t)$ simply stands for the group of automorphisms of $A$ that fix $A'$ pointwise, because $A$ is abelian.

\begin{prop}\label{prop:ot<pt}
    The following hold for all $A\in\xSal(G)$, setting $A'_{\mscr{E}}:=A\cap\mc{L}_{\mscr{E}}(A)$.
    \begin{enumerate}
        \setlength\itemsep{.2em}
        \item We have $\overline\ot_{\mscr{E}}(A)\leq\overline{\rm pt}_\mscr{E}(A)$.
        \item The subgroup $\overline{\rm pt}_\mscr{E}(A)\leq\Aut(A;(A'_{\mscr{E}})^t)$ has finite index.
     \end{enumerate}
\end{prop}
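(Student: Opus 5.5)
For Item~(1), I would take an $(A,\mscr{E})$--ascetic twist $\tau$ with respect to a $(Z_G(A),\mscr{E})$--ascetic HNN splitting $G\acts T$, and show that it is itself an $(A,\mscr{E})$--pseudo-twist. Set $\alpha:=\Min(Z_G(A);T)$ and let $K\lhd Z_G(A)$ be the kernel of $Z_G(A)\acts\alpha$, so that $Z_G(A)/K\cong\Z$. By \Cref{lem:ell/lox_trick}, $\mc{L}_{\mscr{E}}(A)\leq K$. The candidate for $N$ is $K$. Since $A$ is extra-salient, $A/(A\cap K)$ cannot be $\Z$; as $A\cap K$ is the kernel of $A$ acting on a line, either $A\leq K$ or $A$ acts non-trivially. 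I would argue that $A$ is not elliptic: otherwise $A\leq K$, and since $Z_G(A)$ has a nontrivial image in $\Z$, the twist would be trivial on $A$, which is harmless. Then $A\cdot K$ has finite index in $Z_G(A)$ whenever $A\not\leq K$. Condition (2) in \Cref{defn:ascetic_twist}, namely $\tau(z)\in z\langle a_0\rangle$, shows that $\tau$ fixes $K$ pointwise, provided $a_0$ arises as the multiplier and $K$ lies in the vertex group. Next, from \Cref{rmk:ascetic->isolating} I would obtain the bipartite isolating splitting $\mc{S}$, whose locus is $K$, and fold it as in \Cref{rmk:folding_shunning}. I then need an amalgam $G=Z_G(A)\ast_{\mc{L}_{\mscr{E}}(A)}W$ on which $\tau$ is the identity on $W$. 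Here I would check that $\tau$, being a Dehn twist whose multiplier lies in $A\leq Z_G(A)$, acts as the identity on the complementary black-vertex side up to an inner automorphism, and then compose $\tau$ with that inner automorphism.

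The main obstacle is to pass from the locus $K$ of this particular splitting to the shunning locus $\mc{L}_{\mscr{E}}(A)$, which is smaller. Item~(1) of \Cref{defn:pseudo-twists} requires $W$ to be amalgamated over $\mc{L}_{\mscr{E}}(A)$, not over $K$. My first attempt is to refine using a shunning tree. I would take a common refinement of $\mc{S}$ and a shunning tree, as in the proof of \Cref{lem:ell/lox_trick}, and produce an amalgam over $\mc{L}_{\mscr{E}}(A)$ whose complementary factor $W$ lies in the part fixed by $\tau$. This works because $\tau$ is the identity on everything outside the $G$--orbit of the line $\alpha$.

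For Item~(2), inclusion is immediate, since pseudo-twists fix $N\supseteq\mc{L}_{\mscr{E}}(A)$ pointwise. For finite index, I would fix an amalgam $G=Z_G(A)\ast_{\mc{L}}W$ with $\mc{L}:=\mc{L}_{\mscr{E}}(A)$ (assuming we are outside the degenerate cases, which are handled directly), and choose $N$ as the root-closure of $\mc{L}$ together with a complement, so that $Z_G(A)/N$ is free abelian, $A\cap N=A'_{\mscr{E}}$ up to finite index, and $A\cdot N$ has finite index. I would then split $A$ virtually as $A_0\x A'_{\mscr{E}}$ and apply \Cref{lem:fi_image_centre} with $H=Z_G(A)$, $C=A_0$ and this $N$. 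This gives a finite-index subgroup of $\Aut(A_0)$ whose elements extend to automorphisms of $Z_G(A)$ fixing $N\supseteq\mc{L}$ pointwise. Each such extension then extends to $G$ by the identity on $W$, and lies in $\Aut(G;\mscr{E}^t)$ because $\mscr{E}$ is elliptic in the amalgam and fixed on $\mc{L}$. Finally, $\Aut(A;(A'_{\mscr{E}})^t)$ is virtually $\Aut(A_0)\ltimes\Hom(A_0,A'_{\mscr{E}})$. The $\Hom$ part is realised by varying the choice of $N$, taking graphs of homomorphisms, and each such choice gives further pseudo-twists. Together these generate a finite-index subgroup.
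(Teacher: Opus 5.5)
Item~(1) has a genuine gap, at exactly the step you flag as the main obstacle. You try to show that $\tau$ \emph{itself} is an $(A,\mscr{E})$--pseudo-twist. That requires a writing $G=Z_G(A)\ast_{\mc{L}_{\mscr E}(A)}W$ with $\tau|_W=\id$, and the reason you give for it is false. You claim that $\tau$ is the identity on everything outside the $G$--orbit of $\alpha$, but a Dehn twist only fixes one vertex group $G_v$ pointwise. It acts on $gG_vg^{-1}$ by conjugation by $\tau(g)g^{-1}$, which varies with $g$. Moreover, the shunning tree is not $\tau$--invariant, so a common refinement with it carries no $\tau$--structure, and nothing in your sketch controls where $W$ lands.

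Your stronger claim can probably be rescued. In the tree $T''$ from the proof of \Cref{lem:ell/lox_trick}, the point where $\Min(Z_G(A);T')$ meets $\mu_2^{-1}(v)$ has stabiliser $K$, and the new edges there have stabilisers in $\mc L_{\mscr E}(A)$. This gives $G_v=K\ast_{\mc L_{\mscr E}(A)}W$ with $W\le G_v$, but none of this is in your proposal.

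The simpler missing idea is that Item~(1) only asks for \emph{some} pseudo-twist agreeing with $\tau$ on $A$. Since $\tau(A)=A$, $\tau$ restricts to an automorphism of $Z_G(A)$ fixing $K\supseteq\mc L_{\mscr E}(A)$ pointwise. So for \emph{any} writing $G=Z_G(A)\ast_{\mc L_{\mscr E}(A)}W$ relative to $\mscr E$ (\Cref{rmk:folding_shunning}), the universal property of amalgams gives an automorphism equal to $\tau$ on $Z_G(A)$ and to $\id$ on $W$. This is a pseudo-twist with $N=K$, and this is the paper's argument.

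Also delete the sentence invoking extra-salience. $K$ need not be a centraliser, and whenever $\tau|_A\neq\id$ one has exactly $A/(A\cap K)\cong\Z$.

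In Item~(2), your $\Aut(A_0)$--part matches the paper (take $N=p^{-1}(U)$ and apply \Cref{lem:fi_image_centre}). Here $p\colon Z_G(A)\to Z_G(A)/\mc L_{\mscr E}(A)=\wh A_{\mscr E}\oplus U$, and $\wh A_{\mscr E}$ is the root-closure of $p(A)$. The $\Hom(A_0,A'_{\mscr E})$--part, however, is underspecified. \Cref{lem:fi_image_centre} only produces automorphisms preserving a chosen complement, and these need not give anything of finite index in $\Hom(A_0,A'_{\mscr E})$. For instance, if $\rk(A/A'_{\mscr E})=1$, the subgroup of $\Aut(A_0)$ it provides may be trivial.

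What does work in your spirit is the following. Take $\eta_0\in\Hom(\wh A_{\mscr E},A'_{\mscr E})$, extend it by $0$ on $U$, and put $\eta:=\eta_0\circ p$.
\begin{itemize}
\item Since $A$ is central in $Z_G(A)$ and $A'_{\mscr E}\le\ker\eta$, the map $z\mapsto z\eta(z)$ is an automorphism of $Z_G(A)$, with inverse $z\mapsto z\eta(z)^{-1}$.
\item It fixes $N:=\ker\eta\supseteq\mc L_{\mscr E}(A)$ pointwise, $Z_G(A)/N$ is free abelian, and $A\cdot N$ has finite index.
\item Extending by $\id_W$ therefore gives a pseudo-twist acting on $A$ by $a\mapsto a\,\eta(a)$.
\end{itemize}
As $\eta_0$ varies, these realise the image of the pull-back $\Hom(\wh A_{\mscr E},A'_{\mscr E})\to\Hom(A_0,A'_{\mscr E})$, which has finite index because $p(A_0)$ has finite index in $\wh A_{\mscr E}$.

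This is a valid and more direct route than the paper's. The paper realises the same subgroup by ascetic twists, built via \Cref{lem:isolating->ascetic}, and then invokes Item~(1). That detour buys the stronger inclusion of this subgroup into $\overline\ot_{\mscr E}(A)$, which the paper reuses in \Cref{cor:poison_obstructs_DT_generation}.
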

\begin{proof}
    For part~(1), consider an $(A,\mscr{E})$--ascetic twist $\tau\in\Aut(G;\mscr{E}^t)$, arising from a $(Z_G(A),\mscr{E})$--ascetic HNN splitting $G\acts T$. Let $N\lhd Z_G(A)$ be the kernel of the $Z_G(A)$--action on the line $\Min(Z_G(A);T)$, which satisfies $Z_G(A)/N\cong\Z$. Assuming without loss of generality that $\tau|_A$ is not the identity, we have that $A$ is non-elliptic in $T$, and hence $A\cdot N$ has finite index in $Z_G(A)$. Moreover, by \Cref{lem:ell/lox_trick}, we have $\mc{L}_{\mscr{E}}(A)\leq N$. Now, consider a subgroup $W\leq G$ fitting in a writing of the form $G=Z_G(A)\ast_{\mc{L}_{\mscr{E}}(A)} W$, relative to $\mscr{E}$. Since $\tau$ is the identity on $N$, which contains $\mc{L}_{\mscr{E}}(A)$, there is an automorphism $\pi\in\Aut(G;\mscr{E}^t)$ that is the identity on $W$ and coincides with $\tau$ on $Z_G(A)$. Note that $\pi$ is a pseudo-twist, and it coincides with $\tau$ on $Z_G(A)$. This shows that every $(A,\mscr{E})$--ascetic twist coincides on $A$ with an $(A,\mscr{E})$--pseudo-twist, proving part~(1).

    We now prove part~(2). Assume without loss of generality that $\mc{L}_{\mscr{E}}(A)\neq Z_G(A)$, and note that we have $\overline\pt_{\mscr{E}}(A)\leq\Aut(A;(A'_{\mscr{E}})^t)$ by definition. Since $\mc{L}_{\mscr{E}}(A)$ is root-closed by definition, the subgroup $A'_{\mscr{E}}$ is a direct summand of $A$. Choose $V\leq A$ such that $A=A'_{\mscr{E}}\oplus V$. Denote by $p\colon Z_G(A)\ra Z_G(A)/\mc{L}_{\mscr{E}}(A)$ the quotient projection, and let $\wh A_{\mscr{E}}$ be the direct summand of the free abelian group $Z_G(A)/\mc{L}_{\mscr{E}}(A)$ that contains $p(A)$ as a finite-index subgroup. Note that $p|_V$ is an isomorphism between $V$ and $p(A)\cong A/A'_{\mscr{E}}$.

    Every homomorphism $\xi\colon V\ra A'_{\mscr{E}}$ determines an automorphism $\wh\xi\in\Aut(A;(A'_{\mscr{E}})^t)$ with $\wh\xi(v)=\xi(v)v$ for all $v\in V$. Let $\Xi\leq\Aut(A;(A'_{\mscr{E}})^t)$ be the subgroup formed by all the automorphisms of the form $\wh\xi$, and let $\Lambda\leq\Aut(A;(A'_{\mscr{E}})^t)$ be the subgroup of automorphisms preserving $V$. We then have a decomposition $\Aut(A;(A'_{\mscr{E}})^t)=\Xi\rtimes\Lambda$,
    where $\Xi$ is identified with the free abelian group $\Hom(V,A'_{\mscr{E}})$, and $\Lambda$ is identified with $\Aut(V)$.
    
    \smallskip
    {\bf Claim.} \emph{There is a finite-index subgroup $\Xi_0\leq\Xi$ such that $\Xi_0\leq\overline\ot_{\mscr{E}}(A)$.}

    \smallskip\noindent
    \emph{Proof of claim.}
    Observe that any homomorphism $\eta\colon \wh A_{\mscr{E}}\ra A'_{\mscr{E}}$ gives rise to an automorphism $\varphi\in\ot_{\mscr{E}}(A)\leq\Aut(G;\mscr{E}^t)$ such that $\varphi(v)=v(\eta\o p)(v)$ for all $v\in V$. Indeed, we can write $\eta$ as the sum of finitely many homomorphisms $\eta_i\colon \wh A_{\mscr{E}}\ra A'_{\mscr{E}}$, each with infinite cyclic image. Since $\wh A_{\mscr{E}}$ is a direct summand of $Z_G(A)/\mc{L}_{\mscr{E}}(A)$, each $\eta_i$ extends to a homomorphism $\eta_i'\colon Z_G(A)\ra A'_{\mscr{E}}$ that vanishes on $\mc{L}_{\mscr{E}}(A)$, coincides with $\eta_i\o p$ on $p^{-1}(\wh A_{\mscr{E}})$, and has the same image as $\eta_i$. Each $\eta_i'$ yields a $(Z_G(A),\mscr{E})$--ascetic splitting $G\acts T_i$, using \Cref{lem:isolating->ascetic}, and this splitting produces an $(A,\mscr{E})$--ascetic Dehn twist $\tau_i\in\Aut(G;\mscr{E}^t)$ such that $\tau_i(v)=v(\eta_i\o p)(v)$ for all $v\in V$. Finally, it suffices to define $\varphi\in\ot_{\mscr{E}}(A)$ as the composition of the twists $\tau_i$.
    
    Now, since the injection $p|_V\colon V\hookrightarrow \wh A_{\mscr{E}}$ has finite-index image, the pull-back map 
    \[ p^*\colon \Hom(\wh A_{\mscr{E}},A'_{\mscr{E}})\ra\Hom(V,A'_{\mscr{E}}) \] 
    is injective with finite-index image.
    Identifying $\Xi$ with $\Hom(V,A'_{\mscr{E}})$, we define $\Xi_0$ as the image of $p^*$. The previous paragraph then shows that $\Xi_0\leq\overline\ot_{\mscr{E}}(A)$ as desired.
    \hfill$\blacksquare$

    \smallskip
    In view of part~(1) and the claim, we are only left to find a finite-index subgroup of $\Lambda\cong\Aut(V)$ that is contained in $\overline\pt_{\mscr{E}}(A)$. Write $Z_G(A)/\mc{L}_{\mscr{E}}(A)=\wh A_{\mscr{E}}\oplus U$ for a subgroup $U$. Consider the normal subgroup $N:=p^{-1}(U)\lhd Z_G(A)$, and observe that $V\cap N=\{1\}$ and that $V\cdot N$ has finite index in $Z_G(A)$. \Cref{lem:fi_image_centre} then yields a finite-index subgroup of $\Lambda^0\leq\Lambda$ whose elements extend to automorphisms of $Z_G(A)$ fixing $N$ pointwise. We can then extend these automorphisms of $Z_G(A)$ to automorphisms of $G$ as in \Cref{defn:pseudo-twists} to produce some pseudo-twists. In conclusion we have $\Lambda^0\leq\overline\pt_{\mscr{E}}(A)$, proving part~(2).
\end{proof}

As we will see in \Cref{sect:poison}, the inclusion $\overline\ot_{\mscr{E}}(A)\leq\overline{\rm pt}_\mscr{E}(A)$ can be of infinite index. In the notation of the proof of \Cref{prop:ot<pt}, this can only occur when $p(A)\neq\wh A_{\mscr{E}}$, that is, when $A$ projects to a subgroup of $Z_G(A)/\mc{L}_{\mscr{E}}(A)$ that is not root-closed.

\section{Poison subgroups}\label{sect:poison}

This section is concerned with poison subgroups (\Cref{defn:poison}). We show that poison subgroups obstruct virtual generation of $\Out(G)$ by Dehn twists (\Cref{cor:poison_obstructs_DT_generation}), and we exhibit examples of special groups whose centre is a poison subgroup (\Cref{ex:poisonous_centre}). We then show that poison subgroups can be removed by passing to finite index (\Cref{prop:no_poison_in_fi}). Thus, this section contains the proofs of \Cref{propnewintro:bad_examples}, of one arrow of \Cref{thmnewintro:poison}, and of a reduction of \Cref{thmnewintro:fi_no_poison} to \Cref{thmnewintro:poison}. The rest of the results in the Introduction are proven in \Cref{sect:proofs}.

At the end, in \Cref{sub:SOCs}, we show that we can use pseudo-twists to reduce to studying automorphisms ``behaving tamely'' on extra-salient abelians (\Cref{prop:reduction_to_preserving_complements}). This is a key result in order to prove finite generation of $\Out(G)$ even when Dehn twists do not virtually generate.

\subsection{Elementary automorphisms of abelian groups}\label{sub:elem_abelian}

As sketched in the Introduction, only salient abelian subgroups $A\in\Sal(G)$ can be declared to be poison subgroups (in fact, only \emph{extra}-salient ones). For this to happen, however, $A$ needs to pass two additional tests. The first is the existence of $(Z_G(A),\emptyset)$--shunning trees. The second requires that certain subgroups of $\SL_n(\Z)$ have infinite index. In this subsection, we discuss these latter matrix groups.

Consider an (abstract) free abelian group $A\cong\Z^k$ of rank $k\geq 1$. Let $\SL(A)\leq\Aut(A)$ be the index--$2$ subgroup of orientation-preserving automorphisms. 

\begin{defn}
    An \emph{elementary automorphism} is any $\tau_{c,C}\in\SL(A)$ defined as follows, starting from a codimension--$1$ direct summand $C\leq A$ and an element $c\in C$. Pick some $x\in A$ such that $A=\langle x\rangle\oplus C$, then set $\tau_{c,C}|_C:=\id$ and $\tau_{c,C}(x):=xc$. The element $c$ is the \emph{multiplier} of $\tau_{c,C}$. 
\end{defn}

Note that the choice of complementary element $x$ is inconsequential: any other choice lies in the cosets $x^{\pm 1}C$ and yields the exact same automorphism of $A$ (or its inverse). If we fix a basis $\mc{B}$ for $A$ and only consider $c\in\mc{B}$ and $C$ generated by basis elements, then the elementary automorphisms $\tau_{c,C}$ correspond precisely to classical ``elementary matrices'' with respect to $\mc{B}$. In particular, $\SL(A)$ is generated by finitely many elementary automorphisms.

Consider now a finite-index subgroup $A_0\leq A$. 
Let $\SL_{A_0}(A)\leq\SL(A)$ be the finite-index subgroup of automorphisms leaving $A_0$ invariant.

\begin{defn}
    An elementary automorphism $\tau_{c,C}\in\SL(A)$ is \emph{$A_0$--local} if $c\in A_0\cap C$. Denote by ${\rm Elem}^{A_0}(A)\leq\SL_{A_0}(A)$ the subgroup generated by $A_0$--local elementary automorphisms of $A$.
\end{defn}

It is important to remember that being $A_0$--local is a significantly stronger requirement on elementary automorphism than simply preserving the finite-index subgroup $A_0$ (see \Cref{ex:elem}). We also note that $\SL_{A_0}(A)$--conjugates of $A_0$--local elementary automorphisms are again $A_0$--local and elementary, and hence we have $\Elem^{A_0}(A)\lhd\SL_{A_0}(A)$.

\begin{ex}\label{ex:elem}
    Suppose that $A_0=A^m$ for some integer $m\geq 2$, that is, $A_0$ the subgroup of $m$--th powers (we use multiplicative notation even though $A$ is abelian). In this case, we have $\SL_{A_0}(A)=\SL(A)$, while the group $\Elem^{A_0}(A)$ is the normal closure in $\SL(A)$ of the group generated by $m$--th powers of elementary matrices (with respect to any choice of a basis for $A$).
\end{ex}

As we now observe, the subgroup $\Elem^{A_0}(A)$ can have infinite index in $\SL(A)$ or, equivalently, it can fail to be finitely generated.

\begin{lem}\label{lem:infinitely_generated_mth_power}
    Consider a finite-index subgroup $A_0\leq A$.
    \begin{enumerate}
        \item If $\rk(A)\neq 2$, then $\Elem^{A_0}(A)$ has finite-index in $\SL(A)$.
        \item If $\rk(A)=2$ and $A_0=A^{24m}$ with $m\geq 8000$, then $\Elem^{A_0}(A)$ has infinite index in $\SL(A)$.
    \end{enumerate}
\end{lem}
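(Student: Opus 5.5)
Both parts reduce to classical results on congruence subgroups of $\SL_n(\Z)$, once $\Elem^{A_0}(A)$ is compared with groups generated by powers of elementary matrices. Fix a basis $\mc{B}$ of $A$, identifying $\SL(A)\cong\SL_k(\Z)$ with $k=\rk(A)$. Since $A_0$ has finite index, there is $m\geq 1$ with $A^m\leq A_0$. Then every $A^m$--local elementary automorphism is $A_0$--local, so $\Elem^{A^m}(A)\leq\Elem^{A_0}(A)$. By \Cref{ex:elem}, $\Elem^{A^m}(A)$ is the normal closure $E_k(m)$ in $\SL_k(\Z)$ of the $m$--th powers of the elementary matrices.

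\textbf{Part (1).} The case $k=1$ is trivial, since $\SL(A)$ is trivial. For $k\geq3$, I will invoke the solution of the congruence subgroup problem by Bass--Milnor--Serre (or Mennicke): the normal subgroup $E_k(m)$ contains a principal congruence subgroup $\Gamma(m')$ for some $m'$, and hence has finite index in $\SL_k(\Z)$. In fact a weaker, more elementary statement suffices: $E_k(m)$ contains $\Gamma(m^2)$, or at least some normal subgroup of finite index. This can be cited from Bass--Milnor--Serre. It follows that $\Elem^{A_0}(A)\geq E_k(m)$ has finite index, as required.

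\textbf{Part (2).} Now $k=2$ and $A_0=A^{24m}$, so $\Elem^{A_0}(A)=E_2(24m)$ is the normal closure in $\SL_2(\Z)$ of the matrices
\[ \begin{pmatrix}1&24m\\0&1\end{pmatrix}, \qquad \begin{pmatrix}1&0\\24m&1\end{pmatrix}. \]
It suffices to show that this normal closure has infinite index. Passing to $\mathrm{PSL}_2(\Z)\cong\Z/2\ast\Z/3$, the quotient $\mathrm{PSL}_2(\Z)/\langle\langle T^{N}\rangle\rangle$ with $T=\begin{pmatrix}1&1\\0&1\end{pmatrix}$ is the triangle group $\Delta(2,3,N)$. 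The lower-triangular generator is conjugate to $T^{-N}$ via $S$, so it adds no new relations. Hence $\SL_2(\Z)/E_2(N)$ surjects onto (and differs by at most $\pm I$ from) $\langle x,y\mid x^2,y^3,(xy)^N\rangle$. For $N\geq7$ this is an infinite hyperbolic triangle group, since $1/2+1/3+1/N<1$. Thus $E_2(N)$ has infinite index for every $N\geq 7$, in particular for $N=24m$ with $m\geq8000$.

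The main obstacle is the bookkeeping in part (2). I will check carefully that the normal closure of the $N$--th powers of both elementary generators maps onto exactly $\langle\langle (xy)^N\rangle\rangle$, up to the centre, with $x=S$ and $y=ST$ of orders $2$ and $3$ in $\mathrm{PSL}_2(\Z)$. The specific constants $24$ and $8000$ in the statement presumably come from a cited reference, for instance on Burnside-type quotients, and the triangle-group argument gives more than is needed. For part (1) the only risk is citing the right form of the congruence subgroup property.
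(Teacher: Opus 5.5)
Your proof is correct, and part~(2) takes a genuinely different route from the paper's.

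\textbf{Part (1).} You follow the paper's route: reduce to $A^m\leq A_0$ and cite a deep theorem about $\SL_k(\Z)$ for $k\geq 3$. The paper cites the Margulis normal subgroup theorem (or Tits); you cite Mennicke/Bass--Milnor--Serre, which is if anything the sharper reference, since it identifies the normal closure of the $e_{ij}(m)$ with the principal congruence subgroup $\Gamma(m)$ itself.

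\textbf{Part (2).} The paper argues as follows. The level-$3$ congruence subgroup $\mc{P}_3$ is free of index $24$. Every conjugate of a $24m$-th power of an elementary matrix is an $m$-th power in $\mc{P}_3$. The Adian--Lysenok solution of the Burnside problem then shows that the $m$-th power subgroup of $\mc{P}_3$, and hence $\Elem^{A^{24m}}(A)$, has infinite index. This is where the constants $24$ and $8000$ come from.

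You instead identify the quotient directly. Since $e_{21}(N)=ST^{-N}S^{-1}$, one has $\Elem^{A^N}(A)=\langle\langle T^N\rangle\rangle$. Modulo $\pm I$ the quotient is the von Dyck group $\langle x,y\mid x^2,y^3,(xy)^N\rangle$, where $x=\bar S$, $y=\overline{ST}$ and $xy=\bar T$ in $\mathrm{PSL}_2(\Z)\cong\Z/2\ast\Z/3$.

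Your argument is much more elementary: it needs only the standard presentation of $\mathrm{PSL}_2(\Z)$ and the infiniteness of non-spherical triangle groups. It is also essentially sharp. Adding the Euclidean case $N=6$, and noting that the $(2,3,N)$ groups are finite for $N\leq 5$, recovers exactly the threshold $6$ the paper attributes to Abgrall just after the lemma.

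The paper's route avoids any presentation bookkeeping and exhibits $\Elem^{A^{24m}}(A)$ directly as a nontrivial infinite-index normal subgroup of the free group $\mc{P}_3$. That gives infinite generation immediately, at the cost of a very heavy theorem and non-optimal constants.
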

\begin{proof}
    The case when $\rk(A)\leq 1$ can be safely ignored, as $\SL(A)$ is finite there. For $k\geq 3$ and any $m\geq 1$, the group $\SL_k(\Z)$ is virtually normally generated by $m$--th powers of elementary matrices: for instance, this follows from the normal subgroup theorem \cite{Margulis_book}. (In fact, even without taking a normal closure, powers of elementary matrices virtually generate $\SL_k(\Z)$ by \cite{Tits-generators}.) 
    In other words, if $\rk(A)\neq 2$, then the subgroup $\Elem^{A^m}(A)$ has finite index in $\SL(A)$ for all $m\geq 1$. The finite-index subgroup $A_0\leq A$ satisfies $A^m\leq A_0$ for $m=[A:A_0]$, and hence $\Elem^{A^m}(A)\leq\Elem^{A_0}(A)$.

    Suppose now that $A\cong\Z^2$. The principal congruence subgroup $\mc{P}_3:=\ker(\SL_2(\Z)\ra\SL_2(\Z/3\Z))$ is torsion-free, so it is a finitely generated free group of index $24$ in $\SL_2(\Z)$. 
    All $\SL_2(\Z)$--conjugates of $24m$--th powers of elementary matrices are $m$--th powers of elements of $\mc{P}_3$, and so $\Elem^{A^{24m}}(A)$ is contained in the $m$--th power subgroup of $\mc{P}_3$. The solution to the Burnside problem \cite{Adian_Burnside,Lysenok_Burnside} then implies that $\Elem^{A^{24m}}(A)$ has infinite index in $\mc{P}_3$ for $m\geq 8000$, and so $\Elem^{A^{24m}}(A)$ is an infinitely generated free group.
\end{proof}

While the bound in \Cref{lem:infinitely_generated_mth_power}(2) suffices for our purposes, it is far from optimal. After reading a version of this article, Adrien Abgrall communicated to me that, for $A\cong\Z^2$ and $A_0=A^m$, the subgroup $\Elem^{A_0}(A)\leq\SL(A)$ has infinite index if and only if $m\geq 6$ \cite{Abgrall}.

\begin{rmk}\label{rmk:elem_fg_fi}
    For all finite-index subgroups $A_0\leq A'\leq A$, the groups $\Elem^{A_0}(A)$ and $\Elem^{A_0}(A')$ are isomorphic. 
    This is because all $A_0$--local automorphisms of $A$ leave $A'$ invariant and, conversely, all $A_0$--local automorphisms of $A'$ extend to $A_0$--local automorphisms of $A$.
\end{rmk}

\subsection{Poison and its antidotes}\label{sub:poison}

We are now ready to define poison subgroups of a special group $G$, relative to any family of subgroups $\mscr{E}$. These regulate virtual generation by Dehn twists for the relative automorphism group $\Out(G;\mscr{E}^t)\leq\Out(G)$, that is, for the group of outer classes of automorphisms that coincide with inner automorphisms of $G$ on all subgroups in $\mscr{E}$.

\subsubsection{The poison}\label{subsub:poison}

As in \Cref{subsub:pseudo-twists}, consider an extra-salient abelian subgroup $A\in\xSal(G)$, and let $\mc{L}_{\mscr{E}}(A)\lhd Z_G(A)$ be the shared locus of $(Z_G(A),\mscr{E})$--shunning trees of $G$. Denote by $\overline A_{\mscr{E}}$ the projection of $A$ to $Z_G(A)/\mc{L}_{\mscr{E}}(A)$, and let $\wh A_{\mscr{E}}$ be the root-closure of $\overline A_{\mscr{E}}$ within this quotient. Thus, $\overline A_{\mscr{E}}$ has finite index in $\wh A_{\mscr{E}}$, and $\wh A_{\mscr{E}}$ is a direct summand of the free abelian group $Z_G(A)/\mc{L}_{\mscr{E}}(A)$. In the notation of \Cref{sub:elem_abelian}, we define the \emph{$(A,\mscr{E})$--danger} group of $A$ as:
\[ \mf{D}_{\mscr{E}}(A):=\Elem^{\overline A_{\mscr{E}}}(\wh A_{\mscr{E}}) .\]

\begin{defn}\label{defn:poison}
    An \emph{$\mscr{E}$--poison subgroup} for $G$ is an element $A\in\xSal(G)$ whose danger group $\mf{D}_{\mscr{E}}(A)$ has infinite index in $\Aut(\wh A_{\mscr{E}})$. Equivalently, $\mf{D}_{\mscr{E}}(A)$ is not finitely generated.
\end{defn}

Poison subgroups as meant in the Introduction are simply $\emptyset$--poison ones in the above sense. Note that, for an extra-salient abelian subgroup $A\in\xSal(G)$ to be an $\mscr{E}$--poison subgroup, we need two fairly coincidental properties to occur simultaneously: by \Cref{lem:infinitely_generated_mth_power}, it is necessary that $\overline A_{\mscr{E}}\cong\Z^2$, and also that $\overline A_{\mscr{E}}$ is not root-closed within $Z_G(A)/\mc{L}_{\mscr{E}}(A)$. In particular, we need $\mc{L}_{\mscr{E}}(A)\neq Z_G(A)$, which requires either a $(Z_G(A),\mscr{E})$--isolating splitting, or the equality $Z_G(A)=G$.

Now, denote by $\Aut_A(G;\mscr{E}^t)\leq\Aut(G;\mscr{E}^t)$ the subgroup of automorphisms taking $A$ to itself. Its projection $\Out_A(G;\mscr{E}^t)$ is a finite-index subgroup of $\Out(G;\mscr{E}^t)$, because $\xSal(G)$ is finite up to $G$--conjugacy. Observe that we have a well-defined restriction homomorphism:
\[ \rho_{A,\mscr{E}}\colon\Out_A(G;\mscr{E}^t)\ra\Aut(\wh A_{\mscr{E}}) . \]
Indeed, any automorphism $\varphi\in\Aut_A(G;\mscr{E}^t)$ preserves both $A$ and the family $\mscr{E}$ (since, without loss of generality, $\mscr{E}$ is closed under taking $G$--conjugates). It follows that $\varphi$ preserves $Z_G(A)$ and $\mc{L}_{\mscr{E}}(A)$, and so it descends to an automorphism of $Z_G(A)/\mc{L}_{\mscr{E}}(A)$ leaving $\wh A_{\mscr{E}}$ invariant. If $\psi\in\Aut_A(G;\mscr{E}^t)$ lies in the same outer class as $\varphi$, then we have $\psi|_A=\varphi|_A$ by \Cref{rmk:N_G(A)=Z_G(A)}, and hence $\psi$ and $\varphi$ induce the same automorphism of $\wh A_{\mscr{E}}$. This shows that the homomorphism $\rho_{A,\mscr{E}}$ is well-defined.

The image of $\rho_{A,\mscr{E}}$ is contained in the finite-index subgroup of $\Aut(\wh A_{\mscr{E}})$ consisting of automorphisms preserving $\overline A_{\mscr{E}}\leq\wh A_{\mscr{E}}$. By \Cref{prop:ot<pt}(2), the images of $(A,\mscr{E})$--pseudo-twists generate a finite-index subgroup of $\Aut(\wh A_{\mscr{E}})$. As we now show, images of Dehn twists instead always lie in the $(A,\mscr{E})$--danger subgroup. This is true of \emph{completely arbitrary} Dehn twists of $G$.

\begin{lem}\label{lem:arbitrary_Dehn_twists_in_danger}
    Let $\tau\in\Aut(G;\mscr{E}^t)$ be a Dehn twist with respect to an arbitrary $1$--edge splitting $G\acts T$ relative to $\mscr{E}$. Then, for each $A\in\xSal(G)$, there exists an inner automorphism $\kappa\in{\rm Inn}(G)$ such that $\kappa\tau(A)=A$ and $\rho_{A,\mscr{E}}(\kappa\tau)\in\mf{D}_{\mscr{E}}(A)\leq\Aut(\wh A_{\mscr{E}})$.
\end{lem}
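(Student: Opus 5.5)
The plan is to split according to whether $A$ is elliptic in the Bass--Serre tree $T$ of the $1$--edge splitting defining $\tau$.

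\emph{First case: $A$ is elliptic in $T$.} Then $A$ fixes a vertex $v\in T$, and after replacing $A$ by a conjugate (which only changes $\kappa$) I will assume $v$ is one of the two standard vertices.
\begin{itemize}
\item In the amalgam case~\ref{eq:DT_1}, $\tau$ restricts to the identity on the $A$-side vertex group and to conjugation by $z$ on the $B$-side vertex group.
\item In the HNN cases~\ref{eq:DT_2} and~\ref{eq:DT_3}, $\tau$ is the identity on the vertex group.
\end{itemize}
In every case, $\tau|_A$ agrees with the restriction of an inner automorphism $\kappa^{-1}$. Hence $\kappa\tau(A)=A$ and $\kappa\tau|_A=\id_A$. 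By \Cref{rmk:N_G(A)=Z_G(A)}, this also shows that $\rho_{A,\mscr{E}}$ is well defined on this class. Since $\kappa\tau$ is the identity on $A$, it induces the identity on $\overline A_{\mscr{E}}$, and therefore on the root-closure $\wh A_{\mscr{E}}$ (the group is torsion-free). So $\rho_{A,\mscr{E}}(\kappa\tau)=1\in\mf{D}_{\mscr{E}}(A)$.

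\emph{Second case: $A$ is not elliptic in $T$.} Set $\ell:=\Min(A;T)$, which is a line because $A$ is free abelian. Since $A\lhd Z_G(A)$, the line $\ell$ is $Z_G(A)$--invariant. This gives:
\begin{itemize}
\item a translation homomorphism $\eta\colon Z_G(A)\ra\Z$ recording the signed number of edges translated along $\ell$;
\item its kernel $K$, which fixes $\ell$ pointwise (after passing to an index-$\le 2$ subgroup to avoid end swaps).
\end{itemize}
I will compute $\tau$ explicitly on $Z_G(A)$ by following a fundamental domain of $\ell$. Each time a path crosses an edge in the twisted orbit, a conjugate of the multiplier $z$ is inserted. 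The goal is to show that, after composing with a suitable inner automorphism $\kappa$ (conjugating by an element near a base vertex of $\ell$), we have
\[ \kappa\tau(x)=x\,c^{\eta(x)} \quad\text{for all } x\in Z_G(A), \]
for a single element $c$ that fixes $\ell$ pointwise. The element $c$ should be a product of the conjugates of $z$ met in one period, and it commutes with $K$ because the relevant multipliers centralise the edge groups along $\ell$.

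Given this formula, the conclusions follow quickly.
\begin{itemize}
\item $\kappa\tau(A)=A$, and for any $a\in A$ with $\eta(a)\ne 0$ we get $c^{\eta(a)}=a^{-1}\kappa\tau(a)\in A$. Since $A$ is root-closed, $c\in A$.
\item The projection $p(c)$ lies in $\overline A_{\mscr{E}}$.
\item $\eta(c)=0$, because $c$ fixes $\ell$.
\end{itemize}
Hence, on $\wh A_{\mscr{E}}$ the induced map is $x\mapsto x\,p(c)^{\eta(x)}$. Writing $\eta=d\cdot\eta_0$ with $\eta_0$ primitive, this map is the $d$--th power of the elementary automorphism $\tau_{p(c),\ker\eta_0}$. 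Its multiplier lies in $\overline A_{\mscr{E}}\cap\ker\eta_0$, so the automorphism is $\overline A_{\mscr{E}}$--local, and the map lies in $\mf{D}_{\mscr{E}}(A)$.

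The main obstacle is making this descend to $Z_G(A)/\mc{L}_{\mscr{E}}(A)$: I must show that $\eta$ vanishes on the locus $\mc{L}_{\mscr{E}}(A)$, that is, $\mc{L}_{\mscr{E}}(A)\le K$. My approach has three steps.
\begin{enumerate}
\item Use extra-salience as in the proof of \Cref{lem:E^A_replacement}. The aim is to show that the line $\ell$ is ascetic and that the $G$--stabiliser of each of its edges lies in $Z_G(A)$; otherwise some edge-stabiliser would intersect $A$ in a co-cyclic subgroup of the form $A\cap Z$ with $Z$ a centraliser, contradicting extra-salience.
\item With asceticity in hand, collapse the edges of $T$ outside $G\ell$, exactly as in \Cref{rmk:ascetic->isolating} and \Cref{lem:isolating->ascetic}. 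This should produce a $(Z_G(A),\mscr{E})$--ascetic HNN splitting whose kernel of $Z_G(A)$ on its axis is $K$. The $\mscr{E}$--ellipticity needed here holds because $\tau\in\Aut(G;\mscr{E}^t)$ is taken relative to $\mscr{E}$.
\item Apply \Cref{lem:ell/lox_trick} to obtain $\mc{L}_{\mscr{E}}(A)\le K$.
\end{enumerate}
I expect step~1, verifying asceticity and the edge-stabiliser containment from extra-salience, to be the delicate point. If it fails, a fallback is to fold $T$ along $\ell$ first, in the spirit of \Cref{rmk:folding_shunning}, before collapsing.
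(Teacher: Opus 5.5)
Your elliptic case is correct. Granting your displayed formula, your last step (a $d$-th power of an $\overline A_{\mscr{E}}$-local elementary automorphism) is also correct, provided $\eta$ descends to $Z_G(A)/\mc{L}_{\mscr{E}}(A)$. The gap is the formula $\kappa\tau(x)=x\,c^{\eta(x)}$ itself, which carries the whole content of the lemma. Comparing $\kappa\tau(xy)$ with $\kappa\tau(x)\kappa\tau(y)$ shows that the formula forces each $c^{\eta(x)}$ to be central in $Z_G(A)$. Hence $c\in A$, since the centre of $Z_G(A)$ is the root-closed group $A$, and hence $\kappa\tau(A)=A$ with axis $\ell$. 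Tracking ``conjugates of $z$ met in one period'' does not deliver this, for two reasons. First, nothing in your sketch shows that the $\tau$-equivariant isometry $\Phi$ of $T$ maps $\ell$ onto a $G$-translate of $\ell$, rather than onto a different line sharing an edge with it. Second, even when it does, $x\mapsto x^{-1}\kappa\tau(x)$ is a priori only a crossed homomorphism into the centre of $K$. Both points require extra-salience, which you never invoke here. The paper's argument runs as follows. Take $\Phi$ fixing an edge $e$ such that $\tau=\id$ on the stabiliser $G_v$ of an endpoint $v$, and conjugate so that $e\subseteq\ell$. Then $K\le G_v$, so $\tau|_K=\id$ and $A\cap K\le A\cap\tau(A)\in\mc{Z}(G)$. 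Since $A/(A\cap K)\cong\Z$, extra-salience gives $\tau(A)=A$. Hence $\Phi$ preserves $\ell$ and, because it fixes $e$, fixes $\ell$ pointwise; therefore $g^{-1}\tau(g)$ lies in the centre of $K$ for every $g\in Z_G(A)$. If that centre were not central in $Z_G(A)$, then $K$ would equal a centraliser $Z_G(A)\cap Z_G(u)$ with $A\cap K$ co-cyclic in $A$, again contradicting extra-salience. Only at this point is $g\mapsto g^{-1}\tau(g)$ a homomorphism $Z_G(A)\to K\cap A$, which is your formula.

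Your ``main obstacle'' is unnecessary, and the intermediate claims you aim for are false in general. Edge stabilisers along $\ell$ need not lie in $Z_G(A)$, and $\mc{L}_{\mscr{E}}(A)\le K$ can fail. For example, take the RAAG on vertices $a_1,a_2,b,c,d$ with edges $a_1a_2$, $a_ib$, $a_ic$, $bd$. There $A=\langle a_1,a_2\rangle$ is extra-salient and $Z_G(A)=\mc{A}_{\{a_1,a_2,b,c\}}$. Moreover $b\in\mc{L}_{\emptyset}(A)$: $b$ commutes with $d\notin Z_G(A)$, so it fixes an edge at the $Z_G(A)$-vertex of every isolating tree. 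Yet for the line given by the character $a_1,b\mapsto 1$ and $a_2,c,d\mapsto 0$, we have $b\notin K$, and the edge group contains $d$. The paper avoids the issue with a dichotomy, using that $\tau$ preserves $\mc{L}_{\mscr{E}}(A)$. Either $\eta$ vanishes on $\mc{L}_{\mscr{E}}(A)$ and descends, or some $l\in\mc{L}_{\mscr{E}}(A)$ has $\eta(l)\neq 0$. In the latter case $c^{\eta(l)}=l^{-1}\tau(l)\in\mc{L}_{\mscr{E}}(A)$, so $c\in\mc{L}_{\mscr{E}}(A)$ by root-closure, and the induced automorphism of $\wh A_{\mscr{E}}$ is trivial. (No index-$2$ passage is needed either: $A$ is central in $Z_G(A)$ and translates $\ell$, so $Z_G(A)$ cannot swap the ends of $\ell$.)
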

\begin{proof}
    We can assume that $A$ is non-elliptic in $T$, as otherwise $\tau$ coincides with an inner automorphism of $G$ on $A$, and the lemma is clear.
    
    Consider the line $\alpha:=\Min(A;T)$. Since $\tau$ is a Dehn twist with respect to $T$, the splitting $G\acts T$ is $\tau$--invariant: this means that there exists an isometry $\Phi\colon T\ra T$ (taking vertices to vertices) such that $\Phi(gx)=\tau(g)\Phi(x)$ for all $x\in T$ and $g\in G$. Moreover, there exist an edge $e\sq T$ and a vertex $v\in e$, such that $\Phi$ fixes $e$ and $\tau$ is the identity on the stabiliser $G_v$. Up to composing $\tau$ with an inner automorphism of $G$, we can assume that $e\sq\alpha$.
    
    Now, let $K\lhd Z_G(A)$ be the kernel of the action $Z_G(A)\acts\alpha$.
    We have $K\leq G_v$, so the twist $\tau$ is the identity on $K$, and it follows that $A\cap K\leq A\cap\tau(A)$. Combined with the fact that $A/K\cap A\cong\Z$, that $A\cap\tau(A)\in\mc{Z}(G)$ and that $A$ is extra-salient, this implies that $\tau(A)=A$.

    We are left to check that $\rho_{A,\mscr{E}}(\tau)$ lies $\mf{D}_{\mscr{E}}(A)$, for which we need to analyse the restriction of $\tau$ to $Z_G(A)$. We have seen that $\tau$ is the identity on the co-cyclic subgroup $K\lhd Z_G(A)$. Moreover, since $\tau(A)=A$ and $\Phi|_e=\id_e$, the isometry $\Phi$ fixes the line $\alpha$ pointwise. Thus, we have $gx=\tau(g)x$ for all $x\in\alpha$ and $g\in Z_G(A)$, which implies that $g^{-1}\tau(g)\in K$. For each $g\in Z_G(A)$ and $k\in K$, we also have $gkg^{-1}=\tau(gkg^{-1})=\tau(g)k\tau(g)^{-1}$, meaning that $g^{-1}\tau(g)$ lies in the centre of $K$.

    We claim that the centre of $K$ is contained in the centre of $Z_G(A)$. Otherwise, there would exist an element $u\in G$ such that $K\leq Z_G(A)\cap Z_G(u)\neq Z_G(A)$. Since centralisers are root-closed, it would follow that $Z_G(A)\cap Z_G(u)$ has infinite index in $Z_G(A)$ and, since $K$ is co-cyclic in $Z_G(A)$, we would have $K=Z_G(A)\cap Z_G(u)$ and hence $K\in\mc{Z}(G)$. Since $K\cap A\in\mc{Z}(G)$ is co-cyclic in $A$, this would again violate the fact that $A$ is extra-salient.
    
    In conclusion, the centre of $K$ is contained in the centre of $Z_G(A)$, and so it equals $K\cap A$. 
    Now, the map $Z_G(A)\ra K\cap A$ sending $g\mapsto g^{-1}\tau(g)$ is in fact a homomorphism:
    \[ (g_1g_2)^{-1}\tau(g_1g_2)= g_2^{-1}\cdot g_1^{-1}\tau(g_1)\cdot \tau(g_2) = g_1^{-1}\tau(g_1)\cdot g_2^{-1}\tau(g_2) . \]
    Summing up, we have shown that there exists a homomorphism $\eta\colon Z_G(A)\ra K\cap A$ with kernel equal to $K$ and such that $\tau(g)=g\eta(g)$ for all $g\in Z_G(A)$. 
    
    Since $T$ is relative to $\mscr{E}$, we have $\tau\in\Aut(G;\mscr{E}^t)$ and hence $\tau$ takes $\mc{L}_{\mscr{E}}(A)$ to itself. Thus, if $\mc{L}_{\mscr{E}}(A)$ is not contained in the kernel of $\eta$, then the image of $\eta$ has nontrivial intersection with $\mc{L}_{\mscr{E}}(A)$. Since the quotient $Z_G(A)/\mc{L}_{\mscr{E}}(A)$ is free abelian, the locus $\mc{L}_{\mscr{E}}(A)$ is root-closed and hence the image of $\eta$ is entirely contained in $\mc{L}_{\mscr{E}}(A)$. In this case, $\tau$ descends to the identity automorphism on the quotient $Z_G(A)/\mc{L}_{\mscr{E}}(A)$, and hence $\rho_{A,\mscr{E}}(\tau)=\id$.

    We are left to consider the case when $\mc{L}_{\mscr{E}}(A)\leq\ker(\eta)$. Here $\eta$ descends to a homomorphism $Z_G(A)/\mc{L}_{\mscr{E}}(A)\ra K\cap A$. Restricting the latter to $\wh A_{\mscr{E}}$ and composing it with the projection $A\cong\overline A_{\mscr{E}}\leq\wh A_{\mscr{E}}$, we obtain a homomorphism $\overline\eta\colon\wh A_{\mscr{E}}\ra \overline A_{\mscr{E}}$ whose image is cyclic and contained in $\ker(\overline\eta)$. The automorphism $\rho_{A,\mscr{E}}(\tau)\in\Aut(\wh A_{\mscr{E}})$ is now of the form $a\mapsto a\overline\eta(a)$, which means precisely that $\rho_{A,\mscr{E}}(\tau)\in\Elem^{\overline A_{\mscr{E}}}(\wh A_{\mscr{E}})=\mf{D}_{\mscr{E}}(A)$. This concludes the proof of the lemma.  
\end{proof} 

\begin{rmk}\label{rmk:res(ot)=D}
    \Cref{lem:arbitrary_Dehn_twists_in_danger} shows that we have $\rho_{A,\mscr{E}}(\ot_{\mscr{E}}(A))\leq\mf{D}_{\mscr{E}}(A)$ for all $A\in\xSal(G)$. In fact, we have an equality $\rho_{A,\mscr{E}}(\ot_{\mscr{E}}(A))=\mf{D}_{\mscr{E}}(A)$: this is easily shown arguing as in the claim during the proof of \Cref{prop:ot<pt}.
\end{rmk}

The following proves one implication of \Cref{thmnewintro:poison} from the Introduction.

\begin{cor}\label{cor:poison_obstructs_DT_generation}
    The following statements hold for each $A\in\xSal(G)$.
    \begin{enumerate}
        \item The subgroup $\overline\ot_{\mscr{E}}(A)\leq\overline\pt_{\mscr{E}}(A)$ has infinite index if and only if $A$ is $\mscr{E}$--poison.
        \item If $A$ is $\mscr{E}$--poison, then $\Out(G;\mscr{E}^t)$ is not virtually generated by Dehn twists relative to $\mscr{E}$.
    \end{enumerate}
\end{cor}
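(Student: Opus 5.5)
For part~(1), I will compare both groups inside $\Aut(\wh A_{\mscr{E}})$ through the restriction map $\rho_{A,\mscr{E}}$, rather than inside $\Aut(A)$. First I check that restricting to $A$ and projecting to $\wh A_{\mscr{E}}$ is compatible for the automorphisms in $\pt_{\mscr{E}}(A)$ and $\ot_{\mscr{E}}(A)$. Every such automorphism preserves $A$, $Z_G(A)$ and $\mc{L}_{\mscr{E}}(A)$. Its action on $A$ therefore determines its action on $\overline A_{\mscr{E}}$, and hence on $\wh A_{\mscr{E}}$, since $\overline A_{\mscr{E}}$ has finite index in the torsion-free group $\wh A_{\mscr{E}}$. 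These automorphisms also fix $A'_{\mscr{E}}=A\cap\mc{L}_{\mscr{E}}(A)$ pointwise. So an element of $\Aut(A;(A'_{\mscr{E}})^t)$ is determined by the pair consisting of its image in $\Aut(A/A'_{\mscr{E}})\cong\Aut(\overline A_{\mscr{E}})$ and a ``shear'' part in $\Hom(V,A'_{\mscr{E}})$, exactly as in the decomposition $\Xi\rtimes\Lambda$ from the proof of \Cref{prop:ot<pt}.

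By the Claim in that proof, $\overline\ot_{\mscr{E}}(A)$ already contains a finite-index subgroup $\Xi_0$ of the shear part $\Xi$. Both $\overline\ot_{\mscr{E}}(A)$ and $\overline\pt_{\mscr{E}}(A)$ contain $\Xi_0$, so the index $[\overline\pt_{\mscr{E}}(A):\overline\ot_{\mscr{E}}(A)]$ is finite if and only if the index of their images in $\Lambda\cong\Aut(V)\cong\Aut(\overline A_{\mscr{E}})$ is finite. I then identify these images. By \Cref{rmk:res(ot)=D}, the image of $\ot_{\mscr{E}}(A)$ in $\Aut(\wh A_{\mscr{E}})$ is exactly $\mf{D}_{\mscr{E}}(A)$. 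By \Cref{prop:ot<pt}(2) and its proof, the image of $\pt_{\mscr{E}}(A)$ has finite index in $\Aut(\wh A_{\mscr{E}})$.

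Finally, the map from the subgroup of $\Aut(\wh A_{\mscr{E}})$ preserving $\overline A_{\mscr{E}}$ to $\Aut(\overline A_{\mscr{E}})$ is injective, and its image has finite index. Chasing indices, $\overline\ot_{\mscr{E}}(A)$ has infinite index in $\overline\pt_{\mscr{E}}(A)$ if and only if $\mf{D}_{\mscr{E}}(A)$ has infinite index in $\Aut(\wh A_{\mscr{E}})$, that is, if and only if $A$ is $\mscr{E}$--poison. The step that needs the most care is this bookkeeping: I must make sure the shear part is handled uniformly for both groups, and that no information about the $\Xi$--component is lost when I pass to $\wh A_{\mscr{E}}$.

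For part~(2), I argue by contradiction. Suppose a finite-index subgroup $\mc{O}\leq\Out(G;\mscr{E}^t)$ is generated by Dehn twists relative to $\mscr{E}$. Intersect $\mc{O}$ with $\Out_A(G;\mscr{E}^t)$, which still has finite index. I then need \Cref{lem:arbitrary_Dehn_twists_in_danger} to give $\rho_{A,\mscr{E}}(\mc{O}\cap\Out_A)\leq\mf{D}_{\mscr{E}}(A)$, but generators of $\mc{O}$ need not lie in $\Out_A$. To handle this, I instead use the fact that the lemma makes every Dehn twist, after an inner correction, preserve $A$ itself. Consequently every Dehn twist already lies in $\Out_A(G;\mscr{E}^t)$, so $\mc{O}\leq\Out_A(G;\mscr{E}^t)$ and $\rho_{A,\mscr{E}}(\mc{O})\leq\mf{D}_{\mscr{E}}(A)$. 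On the other hand, $\rho_{A,\mscr{E}}(\mc{O})$ has finite index in $\rho_{A,\mscr{E}}(\Out_A(G;\mscr{E}^t))$, and the latter contains the images of pseudo-twists, which have finite index in $\Aut(\wh A_{\mscr{E}})$. Hence $\mf{D}_{\mscr{E}}(A)$ has finite index in $\Aut(\wh A_{\mscr{E}})$, contradicting the assumption that $A$ is $\mscr{E}$--poison.
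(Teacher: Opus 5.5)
Your proposal is correct and follows the paper's proof essentially step by step. It uses the same ingredients: the decomposition $\Aut(A;(A'_{\mscr{E}})^t)=\Xi\rtimes\Lambda$ with $\Xi_0\leq\overline\ot_{\mscr{E}}(A)$, the identification of $\Lambda\cong\Aut(\overline A_{\mscr{E}})$ with the finite-index subgroup of $\Aut(\wh A_{\mscr{E}})$ preserving $\overline A_{\mscr{E}}$, \Cref{lem:arbitrary_Dehn_twists_in_danger} together with \Cref{rmk:res(ot)=D} to pin down $\mf{D}_{\mscr{E}}(A)$, and \Cref{prop:ot<pt}(2) for part~(2). The only cosmetic difference is that you compare the projections of $\overline\ot_{\mscr{E}}(A)$ and $\overline\pt_{\mscr{E}}(A)$ to $\Lambda$, whereas the paper states $\overline\ot_{\mscr{E}}(A)\cap\Lambda=\mf{D}_{\mscr{E}}(A)$ and deduces commensurability with $\Xi\rtimes\mf{D}_{\mscr{E}}(A)$; your version reads off directly from $\rho_{A,\mscr{E}}(\ot_{\mscr{E}}(A))=\mf{D}_{\mscr{E}}(A)$.
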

\begin{proof}
    Set $A'_{\mscr{E}}:=A\cap\mc{L}_{\mscr{E}}(A)$ and choose a subgroup $V\leq A$ such that $A=A'_{\mscr{E}}\oplus V$. \Cref{prop:ot<pt} shows that we have $\overline\ot_{\mscr{E}}(A)\leq\overline\pt_{\mscr{E}}(A)$, and that $\overline\pt_{\mscr{E}}(A)\leq\Aut(A;(A'_{\mscr{E}})^t)$ has finite index. Moreover, as in the proof of \Cref{prop:ot<pt}, we have a decomposition $\Aut(A;(A'_{\mscr{E}})^t)=\Xi\rtimes\Lambda$, where:
    \begin{itemize}
        \item $\Lambda\leq\Aut(A;(A'_{\mscr{E}})^t)$ is the subgroup of automorphisms preserving $V$;
        \item a finite-index subgroup $\Xi_0\leq\Xi$ is contained in $\overline\ot_{\mscr{E}}(A)$.
    \end{itemize}
    The projection $Z_G(A)\ra Z_G(A)/\mc{L}_{\mscr{E}}(A)$ gives an isomorphism between $V$ and $\overline A_{\mscr{E}}$, and so we can identify $\Lambda\cong\Aut(V)=\Aut(\overline A_{\mscr{E}})$. We can also identify a finite-index subgroup of $\Aut(\overline A_{\mscr{E}})$ with the finite-index subgroup of $\Aut(\wh A_{\mscr{E}})$ consisting of automorphisms preserving $\overline A_{\mscr{E}}$. Under these identifications, we can view the danger group $\mf{D}_{\mscr{E}}(A)\leq\Aut(\wh A_{\mscr{E}})$ as a subgroup of $\Lambda$.
    
    Now, \Cref{lem:arbitrary_Dehn_twists_in_danger} and \Cref{rmk:res(ot)=D} show that the intersection between $\overline\ot_{\mscr{E}}(A)$ and $\Lambda$ equals the danger group $\mf{D}_{\mscr{E}}(A)$. Thus, $\overline\ot_{\mscr{E}}(A)$ is commensurable to $\Xi\rtimes\mf{D}_{\mscr{E}}(A)$. It is then clear that $\overline\ot_{\mscr{E}}(A)$ has infinite index in $\overline\pt_{\mscr{E}}(A)$ exactly when $A$ is an $\mscr{E}$--poison subgroup. This proves part~(1).

    Regarding part~(2), suppose that a finite-index subgroup $\Delta\leq\Out(G;\mscr{E}^t)$ is generated by Dehn twists with respect to (arbitrary) splittings of $G$ relative to $\mscr{E}$. \Cref{lem:arbitrary_Dehn_twists_in_danger} then shows that we have $\Delta\leq\Out_A(G;\mscr{E}^t)$ for each $A\in\xSal(G)$ and that $\rho_{A,\mscr{E}}(\Delta)\leq\mf{D}_{\mscr{E}}(A)$. At the same time, the subgroup $\rho_{A,\mscr{E}}(\Out_A(G;\mscr{E}^t))$ has finite index in $\Aut(\wh A_{\mscr{E}})$ by \Cref{prop:ot<pt}(2). As a consequence, $\mf{D}_{\mscr{E}}(A)$ must have finite index in $\Aut(\wh A_{\mscr{E}})$, showing that no $A\in\xSal(G)$ can be an $\mscr{E}$--poison subgroup.
\end{proof}

We still have not shown that poison subgroups can truly arise, but we are about to rectify this. Combined with \Cref{cor:poison_obstructs_DT_generation}(2), the following example proves \Cref{propnewintro:bad_examples} from the Introduction.

\begin{ex}\label{ex:poisonous_centre}
    There exist special groups $G$ whose centre is an $\emptyset$--poison subgroup. In fact, such groups $G$ can already be found among finite-index subgroups of RAAGs. This follows by combining the following two items.
    \begin{enumerate}
    \setlength\itemsep{.2em}
        \item Let $m\geq 2$ be a (large) integer. Let $H$ be a special group with trivial centre such that the abelianisation of $H$ splits as $H_{\rm ab}=D_m\oplus V$ for some subgroup $V$ and a subgroup $D_m\cong\Z/m\Z\oplus\Z/m\Z$. (The next item explains how to construct such a group $H$.) Let $\eta\colon H\ra D_m$ be the quotient projection and let $x,y\in H$ be elements such that $\eta(x)$ and $\eta(y)$ are generators of the two cyclic summands of $D_m$. Finally, let $\langle a,b\rangle\cong\Z^2$ be a rank--$2$ free abelian group; we will write its group operation in multiplicative notation.
        
        Consider the subgroup $G\leq\langle a,b\rangle\x H$ defined by $G:=\langle a^m,b^m,ax,by,\ker(\eta)\rangle$. Note that $G$ contains the subgroup $G_0:=\langle a^m,b^m\rangle\x\ker(\eta)$ as a normal, finite-index subgroup. In particular, $G$ has finite index in the special group $\langle a,b\rangle\x H$, and so $G$ is itself special. Also note that $G\cap(\langle a,b\rangle\x\{1\})=\langle a^m,b^m\rangle$
        and hence $A:=\langle a^m,b^m\rangle$ is precisely the centre of $G$ (since $H$ has trivial centre). 
        
        Now, let $\pi_1\colon G\ra\langle a,b\rangle$ and $\pi_2\colon G\ra H$ be the two factor projections arising from the fact that $G\leq\langle a,b\rangle\x H$. The second projection realises $G$ as a (non-split) central extension:
        \[ 1\to A\to G\xrightarrow{\pi_2} H\to 1 .\]
        However, we are rather interested in the first projection: since $\pi_1(ax)=a$ and $\pi_1(by)=b$, the homomorphism $\pi_1\colon G\ra\langle a,b\rangle$ is surjective. This implies that, denoting by $\alpha$ and $\beta$ the projections of $ax$ and $by$ to the abelianisation $G_{\rm ab}$, respectively, we have $\langle\alpha,\beta\rangle\cong\Z^2$ and
        \[ G_{\rm ab}=\langle\alpha,\beta\rangle\oplus W \]
        for some subgroup $W\leq G_{\rm ab}$ (namely the projection of $\ker(\pi_1)$ to $G_{\rm ab}$). At the same time, the projection of the centre $A=\langle a^m,b^m\rangle$ to $G_{\rm ab}$ is the $m$--th power subgroup $\langle\alpha^m,\beta^m\rangle$. In other words, $\wh A_{\emptyset}=\langle\alpha,\beta\rangle$ and $\overline A_{\emptyset}=\langle\alpha^m,\beta^m\rangle$.

        In conclusion, the $(A,\emptyset)$--danger group $\mf{D}(A)$ is isomorphic to $\Elem^{(m\Z)^2}(\Z^2)\leq\SL_2(\Z)$. \Cref{lem:infinitely_generated_mth_power} then shows that $\mf{D}(A)$ is not finitely generated for large $m$, and so the centre $A=Z_G(G)$ is an $\emptyset$--poison subgroup for $G$.
        \item We are left to construct, for each integer $m\geq 2$, a special group $H$ as required by Item~(1). Let $M$ be the fundamental group of a closed hyperbolic $3$--manifold. By the combination of \cite{Kahn-Markovic,Bergeron-Wise,Agol}, we can pass to finite index and assume that $M$ is special. By \cite[Theorem~1.5]{Sun} (also see \cite{Chu-Groves}), there exists a finite-index subgroup $M_m\leq M$ whose abelianisation has $D_m$ as a direct summand, for all $m\geq 2$. We can then take $H:=M_m$.

        In order to find $H$ of finite index within a RAAG, realise instead $M_m$ as a convex-cocompact subgroup of a RAAG $\mc{A}_{\G}$. By \cite{HW08}, there is a finite-index subgroup $H\leq\mc{A}_{\G}$ that contains $M_m$ as a retract. This implies that the abelianisation $(M_m)_{\rm ab}$ is a retract of $H_{\rm ab}$, and hence $(M_m)_{\rm ab}$ is a direct summand of $H_{\rm ab}$.
    \end{enumerate}
\end{ex}

\subsubsection{The antidote}

Poison subgroups require a fair amount of coincidence in order to arise. In particular, $G$ has no $\mscr{E}$--poison subgroups if each $A\in\xSal(G)$ projects to a root-closed subgroup of $Z_G(A)/\mc{L}_{\mscr{E}}(A)$, or if this projection has rank $\neq 2$. Unfortunately, checking these conditions requires a perfect understanding of the locus $\mc{L}_{\mscr{E}}(A)$, which is only possible when one has a near-complete understanding of all possible $(Z_G(A),\mscr{E})$--isolating splittings of $G$. 

Nevertheless, poison subgroups can always be avoided by passing to finite index, as we now discuss. For the next proposition, assume without loss of generality that the family $\mscr{E}$ is closed under passing to subgroups and $G$--conjugates. Given a subgroup $H\leq G$, we denote by $\mscr{E}|_H$ the set of subgroups in $\mscr{E}$ that are contained in $H$. We say that a subgroup of $G$ is \emph{$\mscr{E}$--characteristic} if it is preserved by all elements of $\Aut(G;\mscr{E}^t)$; in particular, $\emptyset$--characteristic subgroups are precisely characteristic subgroups in the classical sense.

\begin{prop}\label{prop:no_poison_in_fi}
    There exists an $\mscr{E}$--characteristic, finite-index subgroup $G_0\lhd G$ such that $G_0$ has no $\mscr{E}|_{G_0}$--poison subgroups.
\end{prop}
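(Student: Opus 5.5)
The plan is to kill the non-root-closedness of $\overline A_{\mscr{E}}$ in $Z_G(A)/\mc{L}_{\mscr{E}}(A)$ by passing to a subgroup where the relevant projections become surjective onto their root-closures. By \Cref{lem:infinitely_generated_mth_power}(1), poison can only occur when $\overline A\neq\wh A$; and if $\overline A=\wh A$, then $\mf{D}(A)=\Elem^{\wh A}(\wh A)=\SL(\wh A)$ has finite index. So it suffices to find $G_0$ such that every $A_0\in\xSal(G_0)$ projects onto a root-closed subgroup of $Z_{G_0}(A_0)/\mc{L}_{\mscr{E}|_{G_0}}(A_0)$.

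The first step is to fix $m\geq 1$ and let $G_0$ be the intersection of all subgroups of index at most $m$ that are preserved by $\Aut(G;\mscr{E}^t)$. More concretely, I would take $G_0$ to be the intersection of the kernels of all homomorphisms $G\to\Z/m\Z$ that vanish on all subgroups in $\mscr{E}$. This subgroup is finite-index, since $G$ is finitely generated, and it is $\mscr{E}$--characteristic.

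The second step relates salient abelians and loci in $G_0$ to those in $G$. Standard virtual products, centralisers and root-closures behave well under finite index: each $P_0\in\SVP(G_0)$ should be $P\cap G_0$ for some $P\in\SVP(G)$, so each $A_0\in\xSal(G_0)$ is commensurable with some $A\in\Sal(G)$. I also need that $(Z_G(A),\mscr{E})$--isolating trees restrict to $(Z_{G_0}(A_0),\mscr{E}|_{G_0})$--isolating trees, and that conversely shunning loci in $G_0$ are commensurable with $\mc{L}_{\mscr{E}}(A)\cap G_0$. The converse direction requires inducing trees of $G_0$ up to $G$, or using the transverse-covering construction of \Cref{rmk:ascetic->isolating} applied to the $G$--orbit of the tree. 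There are finitely many conjugacy classes of $A$, so a single $m$ can be chosen to handle all of them, for instance a multiple of all the indices $[\wh A:\overline A]$ that occur.

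The third step is the key computation, which is also the main obstacle. After this choice of $m$, the image of $A_0$ in $Z_{G_0}(A_0)/\mc{L}_0$ should be root-closed. The idea is that an element $g\in Z_{G_0}(A_0)$ with $g^k\in A_0\mathcal{L}_0$ must already have a power-root in $A$, because $G_0$ kills the torsion mod $m$ in the relevant abelian quotients. The danger is that the obstruction is not governed by $G_{\rm ab}$ alone but by the quotient $Z_G(A)/\mc{L}(A)$. So I would instead choose $G_0$ via homomorphisms $G\to Z_G(A)/\mc{L}(A)\otimes\Z/m$, which are defined only on $Z_G(A)$. I would make this work by using the amalgam $G=Z_G(A)\ast_{\mc{L}}W$ from \Cref{rmk:folding_shunning}, extending such maps by zero on $W$ to homomorphisms of $G$ relative to $\mscr{E}$.

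If exact root-closedness fails, the fallback is to arrange that $\overline A_0$ has rank $\neq2$, or that the danger group is finite index via \Cref{rmk:elem_fg_fi}. Verifying that the new locus does not shrink in a way that recreates a non-root-closed image is where I expect most of the work to lie.
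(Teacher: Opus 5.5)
Your outline matches the paper's: kill a finite abelian quotient of $Z_G(A)/\mc{L}_{\mscr{E}}(A)$, extend it to $G$ through the shunning amalgam $G=Z_G(A)\ast_{\mc{L}_{\mscr{E}}(A)}W$, and intersect over $\Aut(G;\mscr{E}^t)$--orbits. However, the two points on which the proof actually rests are missing.

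\textbf{First, the loci.} Your claim that the shunning locus in $G_0$ is commensurable with $\mc{L}_{\mscr{E}}(A)\cap G_0$ is false. $G_0$ can have isolating splittings that do not come from $G$, and the locus can drop by an infinite amount. For $G=\Z^2\times F_2$, $\mscr{E}=\emptyset$ and $A$ the centre, one has $Z_{G'}(A\cap G')=G'$ for every finite-index $G'$. The locus is then the root-closure of $[G',G']$, so the quotient is $\Z^{b_1(G')}$, and $b_1(G')$ is unbounded over characteristic finite-index $G'$. No restriction or induction of trees repairs this.

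What is needed is a preliminary stabilisation. Replace $G$ once and for all by a finite-index $\mscr{E}$--characteristic subgroup beyond which further passage to finite index changes nothing relevant: the correspondence $A'=A\cap G'$ between $\xSal(G')$ and $\xSal(G)$ becomes a bijection, orbits coincide, and $A$ keeps its position relative to its locus. The paper phrases the last condition as equality of loci, using that a strict drop of the locus raises $\rk(Z/\mc{L})$. The example above shows that rank is unbounded, so the quantity one can genuinely stabilise is $\rk(A\cap\mc{L}_{\mscr{E}}(A))\le\rk A$, which can only decrease.

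This is enough. Write $\mc{L}(A;G_0)$ for the shunning locus of $Z_{G_0}(A)$ in $G_0$, and suppose $A\le G_0$ and this rank does not drop. Then $A\cap\mc{L}(A;G_0)=A\cap\mc{L}(A;G)$, since the quotient is finite and sits in a torsion-free group. Consequently any $z\in Z_{G_0}(A)\cap\mc{L}(A;G)$ with $z^k\in A\,\mc{L}(A;G_0)$ lies in $\mc{L}(A;G_0)$. Hence the root-closure of the image of $A$ in $Z_{G_0}(A)/\mc{L}(A;G_0)$ maps isomorphically onto a group between $\overline A_{\mscr{E}}$ and $\wh A_{\mscr{E}}$. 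By \Cref{rmk:elem_fg_fi} the danger group is unchanged, and any root-closedness of $\overline A_{\mscr{E}}$ in the image of $Z_{G_0}(A)$ is inherited.

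\textbf{Second, keeping $A$ inside $G_0$.} The step above needs $A\le G_0$, and your construction does not provide it. The kernel of $Z_G(A)\to(Z_G(A)/\mc{L})\otimes\Z/m$ meets $A$ only in $\{a:\bar a\in m\wh A_{\mscr{E}}\}$, which is a proper subgroup in the poison situation. Moreover, by intersecting the kernels for all classes at once, you also cut $A$ and $Z_G(A)$ by the maps built for other $B\in\xSal(G)$. These need not vanish on $Z_G(A)$, for instance when $A$ contains a conjugate of $B$. Even with perfectly behaved loci this creates new roots: if $z=a\ell\in G_0$ with $a\in A$, $\ell\in\mc{L}$, but $z\notin(A\cap G_0)(\mc{L}\cap G_0)$, then some power of $z$ lies in $(A\cap G_0)(\mc{L}\cap G_0)$.

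The paper prevents this in three steps.
(i) It chooses a finite-index $F_i\le Z_G(A_i)/\mc{L}_{\mscr{E}}(A_i)$ containing $\overline{A_i}$ as a direct summand and kills only $(Z_G(A_i)/\mc{L}_{\mscr{E}}(A_i))/F_i$, so $A_i$ lies in the kernel.
(ii) It notes that the extension $\wt\eta_i$ vanishes on every $A\in\xSal(G)$ containing no conjugate of $A_i$, as in the proof of \Cref{lem:E^A_replacement}.
(iii) It inducts on the height of poison classes in the finite poset $\mscr{S}(G;\mscr{E})$, treating only the minimal ones at each step.

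Then every class of height at most the current poison height lies entirely in $G_0$, and its new quotient sits between $\overline A$ and the old one. Non-poison classes stay non-poison, the treated classes become root-closed, and the poison height strictly increases, which terminates the iteration.
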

\begin{proof}
    We start with several preliminary observations. Denote by $\mscr{S}(G;\mscr{E})$ the set of $\Aut(G;\mscr{E}^t)$--orbit of subgroups in $\xSal(G)$. Each such orbit is a finite union of $G$--conjugacy classes of subgroups.
    Moreover, the set $\mscr{S}(G;\mscr{E})$ is finite. There is a natural partial ordering on $\mscr{S}(G;\mscr{E})$, namely $[A]\preceq[A']$ if there exists $\varphi\in\Aut(G;\mscr{E}^t)$ such that $A\leq \varphi(A')$. We can then define the \emph{height} $h([A])$ of a class $[A]\in\mscr{S}(G;\mscr{E})$ as the length of a longest $\preceq$--chain in $\mscr{S}(G;\mscr{E})$ having $[A]$ as its maximum; height--$1$ elements are precisely the $\preceq$--minimal ones. 

    If $G'\lhd G$ is an $\mscr{E}$--characteristic finite-index subgroup then, for each $A'\in\xSal(G')$, there exists a unique element $A\in\xSal(G)$ such that $A'=A\cap G'$.
    For any $\varphi\in\Aut(G;\mscr{E}^t)$, the restriction $\varphi|_{G'}$ lies in $\Aut(G';(\mscr{E}|_{G'})^t)$ and maps $A\cap G'$ to $\varphi(A)\cap G'$. As a consequence, we have $\#\mscr{S}(G';\mscr{E}|_{G'})\leq\#\mscr{S}(G;\mscr{E})$ and,
    for $A,A'$ as above, we also have $h([A'])\leq h([A])$.

    For each $A\in\xSal(G)$, let $L(A;G)\lhd Z_G(A)$ be the locus of $(Z_G(A),\mscr{E})$--shunning trees of $G$. Consider again a finite-index $\mscr{E}$--characteristic subgroup $G'\lhd G$, and some $A'\in\xSal(G')$ with $A'=A\cap G'$ for some $A\in\xSal(G)$. If we restrict to $G'$ any shunning tree of $G$, we see that $L(A;G)\cap G'\geq L(A';G')$ (this is only an inclusion because $G'$ might have isolating splittings that do not extend to $G$). Moreover, if this inclusion is strict, then the free abelian group $Z_G(A)/L(A;G)$ has strictly lower rank than the free abelian group $Z_{G'}(A')/L(A';G')$.

    We now begin with the proof of the proposition proper. The above observations guarantee that, up to replacing $G$ with a finite-index $\mscr{E}$--characteristic subgroup, we can assume that the following conditions hold for all finite-index $\mscr{E}$--characteristic subgroups $G'\lhd G$:
    \begin{itemize}
        \item there is an $\Aut(G;\mscr{E}^t)$--equivariant bijection $\iota\colon\xSal(G')\ra\xSal(G)$ such that $A'=\iota(A')\cap G'$ for all $A'\in\xSal(G')$;
        \item $\Aut(G;\mscr{E}^t)$--orbits and $\Aut(G';(\mscr{E}|_{G'})^t)$--orbits coincide in $\xSal(G')$;
        \item for all $A'\in\xSal(G')$, we have $L(A';G')=L(\iota(A');G)\cap G'$.
    \end{itemize}
    We then define the \emph{$\mscr{E}$--poison height} $ph(G;\mscr{E})$ as the least height of a class $[A]\in\mscr{S}(G;\mscr{E})$ consisting of $\mscr{E}$--poison subgroups of $G$.
    We set $ph(G;\mscr{E})=+\infty$ if no $\mscr{E}$--poison subgroups exist. Assuming that $ph(G;\mscr{E})<+\infty$, we will construct an $\mscr{E}$--characteristic finite-index subgroup $G_0\lhd G$ such that $ph(G_0;\mscr{E}|_{G_0})>ph(G;\mscr{E})$. An iterated application of this procedure then proves the proposition.

    Let $A_1,\dots,A_k$ be representatives of the $\mscr{E}$--poison classes of height $ph(G;\mscr{E})$ in $\mscr{S}(G;\mscr{E})$. Let $G\acts T_i$ be a $(Z_G(A_i),\mscr{E})$--shunning tree of $G$; by \Cref{rmk:folding_shunning}, we can take $T_i$ to be an amalgam with $Z_G(A_i)$ as a vertex group. For simplicity, we set $L_i:=L(A_i;G)$ and $\overline A_i:=(\overline A_i)_{\mscr{E}}$, $\wh A_i:=(\wh A_i)_{\mscr{E}}$. Recall that $\overline A_i$ has finite index in $\wh A_i$, which is a direct summand of the free abelian group $Z_G(A_i)/L_i$. Since $A_i$ is $\mscr{E}$--poison, the group $\overline A_i$ is a proper subgroup of $\wh A_i$. Choose for each index $i$ a finite-index subgroup $F_i\leq Z_G(A_i)/L_i$ that contains $\overline A_i$ as a direct summand. Consider the quotient projection
    \[ \eta_i\colon Z_G(A_i)\twoheadrightarrow (Z_G(A_i)/L_i)/F_i ;\]
    thus, $\ker(\eta_i)$ is a finite-index subgroup of $Z_G(A_i)$ containing $A_i$. By means of the tree $T_i$, we can extend $\eta_i$ to a homomorphism $\wt\eta_i\colon G\ra{\rm im}(\eta_i)$ that vanishes on all subgroups of $G$ that fix at least one vertex of $T_i$ not fixed by any conjugate of $Z_G(A_i)$. All $A\in\xSal(G)$ with $[A_i]\not\preceq[A]$ are of the latter kind, as shown in the proof of \Cref{lem:E^A_replacement}, and so $\wt\eta_i$ vanishes on all of them.
    
    Define $G_i:=\ker\widetilde\eta_i$. Let $G_0$ be the intersection of $G_1,\dots,G_k$ and all the subgroups in their $\Aut(G;\mscr{E}^t)$--orbit. It is clear that $G_0$ is a finite-index, $\mscr{E}$--characteristic subgroup of $G$. 
    
    For each index $1\leq i\leq k$, we have $A_i\leq G_i$ and $L_i\leq G_i$, and the group $A_i$ projects to a root-closed subgroup of the quotient $Z_{G_i}(A_i)/L_i\cong F_i$. Moreover, we have $A\leq G_i$ for all $A\in\xSal(G)$ with $[A_i]\not\preceq [A]$.

    Now, consider some $A\in\xSal(G)$ with $h([A])\leq ph(G;\mscr{E})$, which implies that $A\leq G_0$. The itemised properties above imply that $L(A;G_0)=L(A;G)\cap G_0$, while it is clear that $L(A;G)\cap G_0=L(A;G)\cap Z_{G_0}(A)$. Thus, the quotient $Z_{G_0}(A)/L(A;G_0)$ is naturally identified with the image of the finite-index subgroup $Z_{G_0}(A)\leq Z_G(A)$ in the quotient $Z_G(A)/L(G;A)$. Using \Cref{rmk:elem_fg_fi}, this implies that, if $A$ was not $\mscr{E}$--poison for $G$, then $A$ is still not $\mscr{E}|_{G_0}$--poison for $G_0$. If instead $A=A_i$, then the projection of $A_i$ to $Z_{G_0}(A_i)/L(A_i;G_0)$ has become root-closed: this is true in $G_i$ by construction, and the previous argument shows that this property is not lost when passing from $G_i$ to $G_0$. In particular, $A_i$ is no longer $\mscr{E}|_{G_0}$--poison for $G_0$.

    Summing up, each $A\in\xSal(G)$ with $h([A])\leq ph(G;\mscr{E})$ is contained in $G_0$ and not $\mscr{E}|_{G_0}$--poison for $G_0$. In other words, we have $ph(G_0;\mscr{E}|_{G_0})>ph(G;\mscr{E})$. A finite iteration of this procedure eventually yields an $\mscr{E}$--characteristic finite-index subgroup $G'\lhd G$ with $ph(G';\mscr{E}|_{G'})>\#\mscr{S}(G;\mscr{E})$, which must mean that $ph(G';\mscr{E}|_{G'})=+\infty$, that is, that $G'$ has no $\mscr{E}|_{G'}$--poison subgroups.
\end{proof}

\subsection{Systems of complements}\label{sub:SOCs}

Extra-salient abelian subgroups are often problematic when studying automorphisms of special groups. We have seen above that they can be $\mscr{E}$--poison subgroups, but they can also cause other issues in \Cref{sect:shortening} during the shortening argument. For this reason, it is usually desirable to reduce to automorphisms ``behaving tamely'' on extra-salient abelians, and this is precisely the goal of this subsection.

Let $G$ be a special group with a family of subgroups $\mscr{E}$. As above, it is convenient to set $A'_{\mscr{E}}:=A\cap\mc{L}_{\mscr{E}}(A)$ for all $A\in\xSal(G)$, where $\mc{L}_{\mscr{E}}(A)$ is the locus of $(Z_G(A),\mscr{E})$--shunning trees of $G$.

\begin{defn}\label{defn:SOC}
    An \emph{$\mscr{E}$--system of complements} is the data $\mf{C}$ of a subgroup $\mf{C}_A\leq A$ for each extra-salient abelian subgroup $A\in\xSal(G)$, such that the following two conditions hold:
    \begin{enumerate}
        \item $A=A'_{\mscr{E}}\oplus\mf{C}_A$ for all $A\in\xSal(G)$;
        \item $\mf{C}_{gAg^{-1}}=g\mf{C}_Ag^{-1}$ for all $g\in G$ and $A\in\xSal(G)$.
    \end{enumerate}
\end{defn}

Note that $\mscr{E}$--system of complements always exist. Indeed, recalling that $Z_G(A)/\mc{L}_{\mscr{E}}(A)$ is free abelian by definition, it follows that $A'_{\mscr{E}}$ is a root-closed subgroup of $A$, and hence a direct summand. This ensures that we can find a decomposition as in Item~(1) of \Cref{defn:SOC} for each extra-salient abelian subgroup. The fact that the conjugation action $N_G(A)\acts A$ is trivial (\Cref{rmk:N_G(A)=Z_G(A)}) then implies that Item~(2) can also be ensured.

Note that, viewing $\mf{C}=\{\mf{C}_A\}_A$ as a family of subgroups of $G$, we can consider the relative automorphism group $\Aut(G;\mf{C}^t)\leq\Aut(G)$, as defined at the start of \Cref{sub:more_DTs}.

\begin{prop}\label{prop:reduction_to_preserving_complements}
    Given an $\mscr{E}$--system of complements $\mf{C}$, we can write $\Aut(G;\mscr{E}^t)=\mc{U}_{\mf{C}}\cdot\mc{V}_{\mf{C}}$ for a subset $\mc{U}_{\mf{C}}\sq\Aut(G;\mscr{E}^t)$ and a subgroup $\mc{V}_{\mf{C}}\leq\Aut(G;\mscr{E}^t)$ satisfying all of the following.
    \begin{enumerate}
        \item The set $\mc{U}_{\mf{C}}$ is a finite union of left cosets 
        of the subgroup $\Aut(G;(\mf{C}\cup\mscr{E})^t)\leq\Aut(G;\mscr{E}^t)$. 
        \item The subgroup $\mc{V}_{\mf{C}}$ is generated by finitely many pseudo-twists of $G$ relative to $\mscr{E}$.
        \item If $G$ has no $\mscr{E}$--poison subgroups, then $\mc{V}_{\mf{C}}$ is generated by ascetic twists of $G$ relative to $\mscr{E}$.
    \end{enumerate}
\end{prop}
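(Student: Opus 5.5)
The plan is to reduce everything to the restriction homomorphisms $\rho_{A,\mscr{E}}$, one extra-salient abelian at a time. Choose representatives $A_1,\dots,A_k$ of the finitely many $G$--conjugacy classes in $\xSal(G)$. Let $\Aut^0\leq\Aut(G;\mscr{E}^t)$ be the finite-index subgroup of automorphisms preserving each $G$--conjugacy class $[A_i]$. Then $\Aut(G;\mscr{E}^t)$ is a finite union of left cosets of $\Aut^0$, so it suffices to handle $\Aut^0$. Every $\varphi\in\Aut^0$ can be composed with inner automorphisms so that it preserves a chosen $A_i$. Since $N_G(A_i)=Z_G(A_i)$ acts trivially on $A_i$ (\Cref{rmk:N_G(A)=Z_G(A)}), the restriction $\varphi|_{A_i}\in\Aut(A_i)$ is then well-defined. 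Moreover $\varphi$ fixes $(A_i)'_{\mscr{E}}$ setwise, since it preserves $Z_G(A_i)$ and $\mc{L}_{\mscr{E}}(A_i)$.

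An automorphism lies in $\Aut(G;\mf{C}^t)$ exactly when, after these inner corrections, $\varphi|_{A_i}$ is the identity on $\mf{C}_{A_i}$ for every $i$. Condition~(2) of \Cref{defn:SOC} makes this independent of the choice of conjugate.

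For each $i$, let $\mc{V}_i:=\pt_{\mscr{E}}(A_i)$, or, when $A_i$ is not $\mscr{E}$--poison, the subgroup $\ot_{\mscr{E}}(A_i)$.

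\textbf{Key step.} Pseudo-twists for distinct indices do not interfere. By \Cref{lem:E^A_replacement}, an $(A_i,\mscr{E})$--pseudo-twist acts as an inner automorphism on $A_j$ whenever $A_j$ contains no conjugate of $A_i$. The same holds for ascetic twists, by Item~(1) of \Cref{defn:ascetic_twist}, since elliptic subgroups are acted on by inner automorphisms. This gives an ordering of the indices by height, as in the proof of \Cref{prop:no_poison_in_fi}. If $A_j$ contains a conjugate of $A_i$ with $i\neq j$, then $A_i$ lies strictly below $A_j$.

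\textbf{Correction, level by level.} Fix $\varphi\in\Aut^0$ and proceed from the top of the poset downwards, so that later corrections do not disturb earlier ones. At index $i$, we want $\psi\in\mc{V}_i$ such that $\varphi\psi$ restricts to $A_i$ as an element of a fixed finite set of coset representatives modulo automorphisms fixing $\mf{C}_{A_i}$ pointwise.

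This works because of \Cref{prop:ot<pt}(2). The group $\overline\pt_{\mscr{E}}(A_i)$ has finite index in $\Aut(A_i;((A_i)'_{\mscr{E}})^t)$. That group is the semidirect product $\Xi\rtimes\Lambda$ in which $\Lambda$ acts on $\mf{C}_{A_i}$ (taking $V=\mf{C}_{A_i}$), and the subgroup fixing $\mf{C}_{A_i}$ pointwise is $\Xi$. So modulo $\Xi$ only the finite-index image in $\Aut(\mf{C}_{A_i})$ matters. Note that $\varphi|_{A_i}$ need not fix $(A_i)'_{\mscr{E}}$ pointwise, only setwise. The residual action on $(A_i)'_{\mscr{E}}$ must therefore be absorbed into the definition of $\mc{U}_{\mf{C}}$. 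I would handle this by letting $\mc{U}_{\mf{C}}$ consist of all $\varphi$ that, after the corrections, fix $\mf{C}$ up to finitely many possibilities, and checking that this set is a finite union of cosets of $\Aut(G;(\mf{C}\cup\mscr{E})^t)$.

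Because $\pt_{\mscr{E}}(A_i)$ modulo the kernel of restriction to $\mf{C}_{A_i}$ is a finite-index subgroup of ${\rm GL}(\mf{C}_{A_i})$, hence finitely generated, finitely many pseudo-twists suffice. These form the generators of $\mc{V}_{\mf{C}}$.

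\textbf{Main obstacle.} The delicate point is that $\mc{V}_{\mf{C}}$ must be a subgroup with $\Aut(G;\mscr{E}^t)=\mc{U}_{\mf{C}}\mc{V}_{\mf{C}}$, so corrections at lower $A_i$ must not spoil the normalisation achieved at $A_j$. Under the descending order this follows from \Cref{lem:E^A_replacement}: the lower correction is inner on $A_j$ unless $A_j$ contains a conjugate of $A_i$. I expect the residual case, where $A_j\supseteq gA_ig^{-1}$, to need an extra argument. There I would use extra-salience to see that the $(A_i,\mscr{E})$--pseudo-twists restrict to $A_j$ inside $\Xi$ for $A_j$, so they do not change the class modulo automorphisms fixing $\mf{C}_{A_j}$.

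For item~(3), in the non-poison case \Cref{cor:poison_obstructs_DT_generation}(1) shows that $\overline\ot_{\mscr{E}}(A_i)$ has finite index in $\overline\pt_{\mscr{E}}(A_i)$. So the same argument runs with finitely many ascetic twists in place of the pseudo-twists.
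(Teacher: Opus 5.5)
There is a genuine gap, and it comes from the order in which you perform the corrections. You go from the top of the poset down. So when you correct at a lower $A_i$, you need the $(A_i,\mscr{E})$--pseudo-twists not to disturb an already normalised higher $A_j$. But ``$A_i$ lies below $A_j$'' usually means precisely that $A_j$ contains a conjugate of $A_i$, which is exactly the case \Cref{lem:E^A_replacement} does not cover. Your patch for this residual case is not justified. Even if the restrictions to $A_j$ did land in $\Xi$, that would not help: $\Xi$ is \emph{not} the subgroup fixing $\mf{C}_{A_j}$ pointwise. Its elements $\hat\xi$ send $v\mapsto\xi(v)v$, so they fix $\mf{C}_{A_j}$ only modulo $(A_j)'_{\mscr{E}}$. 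Inside $\Aut(A_j;((A_j)'_{\mscr{E}})^t)$, only the identity fixes $\mf{C}_{A_j}$ pointwise.

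The paper avoids the issue by running the induction the other way. Take a linear extension $A_1,\dots,A_m$ of the relation ``$A_j$ contains a conjugate of $A_i$'' and correct $A_1$ first. For $j<i$, the group $A_j$ contains no conjugate of $A_i$, so by \Cref{lem:E^A_replacement} every $(A_i,\mscr{E})$--pseudo-twist is inner on $A_1,\dots,A_{i-1}$. That is, $\pt_{\mscr{E}}(A_i)\leq\Aut(G;\mscr{C}_{i-1}^t)$ with $\mscr{C}_{i-1}=\mscr{E}\cup\{\mf{C}_{A_1},\dots,\mf{C}_{A_{i-1}}\}$. Later corrections therefore never spoil earlier ones, and no residual case arises.

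The same misidentification breaks your normalisation step. Membership in $\Aut(G;(\mf{C}\cup\mscr{E})^t)$ requires controlling the whole restriction $\varphi|_{\mf{C}_{A_i}}\colon\mf{C}_{A_i}\to A_i$. This includes its $(A_i)'_{\mscr{E}}$--component, which is exactly the $\Xi$--coordinate. Normalising only ``modulo $\Xi$'' (your reduction to ${\rm GL}(\mf{C}_{A_i})$) controls just the induced action on $A_i/(A_i)'_{\mscr{E}}$. That leaves infinitely many possible restrictions, so $\mc{U}_{\mf{C}}$ would not be a finite union of cosets.

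What you need is \Cref{prop:ot<pt}(2) in full: $\overline\pt_{\mscr{E}}(A_i)$ has finite index in all of $\Aut(A_i;((A_i)'_{\mscr{E}})^t)=\Xi\rtimes\Lambda$. Then choose a finitely generated $\mc{O}_i\leq\pt_{\mscr{E}}(A_i)$ with finite-index image there. To absorb the residual action on $(A_i)'_{\mscr{E}}$ that you worried about, the paper uses one observation. For any two automorphisms $\psi,\psi'$ of $A_i$ preserving $(A_i)'_{\mscr{E}}$, there is $\zeta\in\Aut(A_i;((A_i)'_{\mscr{E}})^t)$ with $\psi'=\psi\circ\zeta$ on $\mf{C}_{A_i}$. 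It deduces that, after right-multiplying $\varphi$ by an element of $\mc{O}_i$, the restriction $\varphi|_{\mf{C}_{A_i}}$ lies in a fixed finite set up to inner automorphisms.

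With the order reversed and this normalisation, the rest of your outline matches the paper's proof and goes through. That includes the finite-index reduction over the permutation of conjugacy classes and item~(3) via \Cref{cor:poison_obstructs_DT_generation}(1).
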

\begin{proof}
    Let $A_1,\dots,A_m$ be representatives of the finitely many conjugacy classes of subgroups in $\xSal(G)$. By \Cref{lem:cc_basics}(3), we can place a partial order on $\{1,\dots,m\}$ for which we have $i\preceq j$ exactly when $A_j$ contains a conjugate of $A_i$. We then arbitrarily extend this partial order to a total order and assume, without loss of generality, that it is precisely the standard order on the integers $1,\dots,m$. Set for simplicity $\mf{C}_i:=\mf{C}_{A_i}$, $\mscr{C}_i:=\mscr{E}\cup\{\mf{C}_1,\dots,\mf{C}_i\}$ and $A_i':=(A_i)_{\mscr{E}}'$.

    \Cref{lem:E^A_replacement} shows that each $(A_i,\mscr{E})$--pseudo-twist coincides with inner automorphisms of $G$ on $A_1,\dots,A_{i-1}$ so, in particular, we have $\pt_{\mscr{E}}(A_i)\leq\Aut(G;\mscr{C}_{i-1}^t)$. By \Cref{prop:ot<pt}(2), the subgroup of restrictions of $(A_i,\mscr{E})$--pseudo-twists $\overline\pt_{\mscr{E}}(A_i)\leq\Aut(A_i;(A_i')^t)$ has finite index. Since $\Aut(A_i;(A_i')^t)$ is finitely generated (it is a parabolic subgroup of ${\rm GL}_m(\Z)$ for some $m\geq 0$), the group $\overline\pt_{\mscr{E}}(A_i)$ is also finitely generated. In conclusion, there exists a finitely generated subgroup $\mc{O}_i\leq\pt_{\mscr{E}}(A_i)\leq\Aut(G;\mscr{C}_{i-1}^t)$ mapping onto a finite-index subgroup of $\Aut(A_i;(A_i')^t)$ under the restriction to $A_i$. If $A_i$ is not an $\mscr{E}$--poison subgroup, then \Cref{cor:poison_obstructs_DT_generation}(1) shows that $\mc{O}_i$ can even be found within the group $\ot_{\mscr{E}}(A_i)$ generated by $(A_i,\mscr{E})$--ascetic twists.

    Now, note that if $\psi,\psi'$ are any two automorphisms of $A_i$ taking $A_i'$ to itself, then there exists an element $\zeta\in\Aut(A_i;(A_i')^t)$ such that $\psi'=\psi\o\zeta$ on $\mf{C}_i$. Also recall that $\Aut(G;\mscr{E}^t)$ permutes the $G$--conjugacy classes of the subgroups $A_1,\dots,A_k$, and it permutes in the same way the loci $\mc{L}_{\mscr{E}}(A_i)$ and consequently the subgroups $A_i'$. It follows that there exists a finite set $\mc{H}_i$ of homomorphisms $\mf{C}_i\ra G$ such that, for each automorphism $\varphi\in\Aut(G;\mscr{E}^t)$, there exists an automorphism $\zeta\in\mc{O}_i$ such that the restriction $\varphi\zeta|_{\mf{C}_i}$ lies in $\mc{H}_i$ up to composing with an inner automorphism of $G$.

    Arguing by induction on $k$, this shows the following: Each automorphism $\varphi\in\Aut(G;\mscr{E}^t)$ admits automorphisms $\zeta_i\in\mc{O}_i\leq\Aut(G;\mscr{C}_{i-1}^t)$ such that, for all indices $1\leq j\leq k\leq m$, the composition $\varphi\zeta_1\zeta_2\dots\zeta_k$ has restriction to $\mf{C}_j$ lying in $\mc{H}_j$ up to composing with inner automorphisms of $G$.

    To conclude, let $\mc{V}_{\mf{C}}\leq\Aut(G;\mscr{E}^t)$ be the subgroup generated by $\mc{O}_1\cup\dots\cup\mc{O}_m$, and note that it satisfies Items~(2) and~(3) of the proposition. Let $\mc{U}_{\mf{C}}\sq\Aut(G;\mscr{E}^t)$ be the subset of automorphisms whose restriction to each $\mf{C}_i$ lies in $\mc{H}_i$ up to composition with inner automorphisms of $G$. The previous paragraph with $k=m$ shows that $\Aut(G;\mscr{E}^t)=\mc{U}_{\mf{C}}\cdot\mc{V}_{\mf{C}}$, so we are only left to check Item~(1). Note that there is a map $\rho\colon\mc{U}_{\mf{C}}\ra\prod_i\mc{H}_i$ keeping track of the restrictions to the $\mf{C}_i$. If $\psi,\psi'$ are automorphisms in the same fibre of $\rho$, then $\psi^{-1}\psi'$ coincides with an inner automorphism of $G$ on each $\mf{C}_i$, 
    that is, we have $\psi^{-1}\psi'\in\Aut(G;(\mf{C}\cup\mscr{E})^t)$. This shows that each fibre of $\rho$ is a left coset of $\Aut(G;(\mf{C}\cup\mscr{E})^t)$ and, since the product $\prod_i\mc{H}_i$ is finite, $\mc{U}_{\mf{C}}$ is the union of finitely many such cosets. This yields Item~(1) of the proposition, concluding the proof.
\end{proof}

\Cref{prop:reduction_to_preserving_complements} reduces the proof of Theorems~\ref{thmnewintro:fg} and~\ref{thmnewintro:poison} to the case of automorphisms acting trivially on some system of complements $\mf{C}$. More precisely, in order to show that $\Out(G;\mscr{E}^t)$ is finitely generated, it suffices to prove that $\Out(G;(\mscr{E}\cup\mf{C})^t)$ is finitely generated. Similarly, in order to show that $\Out(G;\mscr{E}^t)$ is virtually generated by Dehn twists (in the absence of $\mscr{E}$--poison subgroups), it is enough to prove that $\Out(G;(\mscr{E}\cup\mf{C})^t)$ is virtually generated by Dehn twists.

\section{The hierarchy}\label{sect:hierarchy}

The shortening argument in the proof of the main theorems (\Cref{thm:shortening}) works its way up a canonical family $\mf{H}(G)$ of subgroups of $G$. This section is devoted to the construction of this \emph{hierarchy} $\mf{H}(G)$ (\Cref{defn:hierarchy}) and to its properties. At the end, in \Cref{sub:reference_systems}, we discuss \emph{reference systems}, which are ways of ordering $\mf{H}(G)$ so as to produce a well-ordering on $\Out(G)$.

\subsection{Junctures}\label{sub:junctures}

Let $G$ be a special group. Before introducing the hierarchy $\mf{H}(G)$, we need to discuss one more family of subgroups of $G$. Given a subgroup $H\leq G$, we first define the family
\[ \mc{J}_G'(H):=\{H\cap Z_G(g)\mid g\in G\setminus N_G(H)\} .\]
We then denote by $\mc{J}_G(H)\sq\mc{J}_G'(H)$ the sub-family of maximal elements of $\mc{J}_G'(H)$ (under inclusion). We refer to the subgroups in $\mc{J}_G(H)$ as the \emph{junctures} of $H$ within $G$.

\begin{lem}\label{lem:junctures}
    Fix a convex-cocompact embedding $\iota\colon G\hookrightarrow\mc{A}_{\G}$. Let $H\leq G$ be a convex-cocompact subgroup such that the normaliser $N_G(H)$ is root-closed in $G$. Then all the following hold.
    \begin{enumerate}
        \item The elements of $\mc{J}_G(H)$ are $H$--parabolic.
        \item For $J\in\mc{J}_G(H)$, there exists a convex-cocompact element $g\in G\setminus N_G(H)$ with $J=H\cap Z_G(g)$.
        \item For all $\varphi\in\Aut(G)$, we have $\varphi(\mc{J}_G(H))=\mc{J}_G(\varphi(H))$.
    \end{enumerate}
\end{lem}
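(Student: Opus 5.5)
The plan is to prove the three items in the order (3), (2), (1), since (1) will use the convex-cocompact element provided by (2).

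Item~(3) is formal. An automorphism $\varphi$ maps $N_G(H)$ to $N_G(\varphi(H))$ and $Z_G(g)$ to $Z_G(\varphi(g))$. Hence it maps the family $\mc{J}'_G(H)$ bijectively onto $\mc{J}'_G(\varphi(H))$, via
\[ H\cap Z_G(g)\ \mapsto\ \varphi(H)\cap Z_G(\varphi(g)). \]
This bijection preserves inclusions, so it takes maximal elements to maximal elements, which gives $\varphi(\mc{J}_G(H))=\mc{J}_G(\varphi(H))$.

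For Item~(2), write $J=H\cap Z_G(g)$ with $g\in G\setminus N_G(H)$, and apply \Cref{lem:cc_basics}(5) to $g$. This gives an integer $n$ and pairwise-commuting convex-cocompact elements $g_1,\dots,g_k$ with $g^n=g_1\cdots g_k$ and $Z_G(g)=\bigcap_i Z_G(g_i)$. Since $N_G(H)$ is root-closed and $g\notin N_G(H)$, we get $g^n\notin N_G(H)$. Therefore some $g_i\notin N_G(H)$, since otherwise the product $g^n$ would lie in the subgroup $N_G(H)$. Then $H\cap Z_G(g_i)$ belongs to $\mc{J}'_G(H)$ and contains $H\cap Z_G(g)=J$. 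Maximality of $J$ forces $J=H\cap Z_G(g_i)$, and $g_i$ is convex-cocompact, as required.

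For Item~(1), take $J\in\mc{J}_G(H)$ and use Item~(2) to write $J=H\cap Z_G(g)$ with $\langle g\rangle$ convex-cocompact and $g\notin N_G(H)$. The claim to establish is $J=H\cap\langle g\rangle^{\perp}$, where $\langle g\rangle^{\perp}$ is the parabolic subgroup of $\mc{A}_{\G}$ from \Cref{subsub:orthogonals}. Since the right-hand side is the intersection of $H$ with a parabolic subgroup of $\mc{A}_{\G}$, this would show that $J$ is $H$--parabolic.

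\begin{itemize}
    \item The inclusion $\supseteq$ holds because $\langle g\rangle^{\perp}\cap G=\Perp_G(\langle g\rangle)\leq Z_G(g)$.
    \item For $\subseteq$, first, $J$ is convex-cocompact by \Cref{lem:cc_basics}(1),(6), being the intersection of $H$ with a centraliser.
    \item Second, $J\cap\langle g\rangle=\{1\}$. Indeed, if $g^m\in J\leq H\leq N_G(H)$ for some $m\neq 0$, then root-closure of $N_G(H)$ would give $g\in N_G(H)$, a contradiction.
    \item So $J$ is a convex-cocompact subgroup of $Z_G(g)$ meeting $\langle g\rangle$ trivially. By the characterisation of $\Perp_G(\langle g\rangle)$ as the unique maximal such subgroup, $J\leq\Perp_G(\langle g\rangle)\leq\langle g\rangle^{\perp}$.
\end{itemize}

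The main point to double-check is that this maximality characterisation of $\Perp_G(\langle g\rangle)$ applies to arbitrary convex-cocompact subgroups of $Z_G(g)$ meeting $\langle g\rangle$ trivially, and not only to those containing the orthogonal. If it does not, the fallback is \Cref{lem:cc_basics}(2), which says $Z_G(g)\leq N_G(\langle g\rangle)$ is virtually $\langle g\rangle\times\Perp_G(\langle g\rangle)$. One would then apply \Cref{lem:cc_basics}(4) to the convex-cocompact subgroup $J$ inside this product. This shows $J$ is virtually $(J\cap\langle g\rangle)\times(J\cap\Perp_G(\langle g\rangle))$, which equals $J\cap\Perp_G(\langle g\rangle)$ since $J\cap\langle g\rangle=\{1\}$. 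Finally, $J$ is root-closed in $H$ (both $H\cap Z_G(g)$ and $\langle g\rangle^\perp\cap H$ are), so the virtual containment upgrades to the genuine inclusion $J\leq\langle g\rangle^{\perp}$.
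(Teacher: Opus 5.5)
Your proof is correct and follows the paper's. Items~(2) and~(3) are argued identically, and for item~(1) the paper uses exactly your fallback route: \Cref{lem:cc_basics}(2) to virtually split $Z_G(g)$ as $\langle g\rangle\times P$ with $P=\Perp_G(\langle g\rangle)$, which is $G$--parabolic; then finite index of $H\cap P$ in $J$; then root-closedness of $H$--parabolic subgroups. Your primary route simply cites the maximality characterisation of $\Perp_G$ from \Cref{subsub:orthogonals}, which that same argument justifies.

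One small fix in the fallback: apply \Cref{lem:cc_basics}(4) to $J$ intersected with the finite-index product subgroup of $N_G(\langle g\rangle)$, rather than to $J$ itself, since $Z_G(g)$ only virtually splits.
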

\begin{proof}
    Part~(3) is clear, since the definition of the family $\mc{J}_G(\cdot)$ is purely algebraic. Regarding part~(2), consider a subgroup $J\in\mc{J}_G(H)$ and an element $g\in G\setminus N_G(H)$ such that $J=H\cap Z_G(g)$. By \Cref{lem:cc_basics}(5), there exist $m\geq 1$ and convex-cocompact elements $g_1,\dots,g_k\in G$ such that $g^m=g_1\dots g_k$ and $Z_G(g)=Z_G(g_1)\cap\dots\cap Z_G(g_k)$. Since $N_G(H)$ is root-closed, we have $g^m\not\in N_G(H)$ and thus $g_1\not\in N_G(H)$ after possibly permuting the $g_i$. Since $J\leq H\cap Z_G(g_1)$, the maximality of $J$ within $\mc{J}_G'(H)$ implies that $J=H\cap Z_G(g_1)$, yielding part~(2).

    Finally, we show part~(1). Since $g_1$ is convex-cocompact, \Cref{lem:cc_basics}(2) implies that the centraliser $Z_G(g_1)$ virtually splits as $P\x\langle g_1\rangle$, for a $G$--parabolic subgroup $P$. Since $H$ is convex-cocompact and $H\cap\langle g_1\rangle=\{1\}$, 
    it follows that the $H$--parabolic subgroup $H\cap P$ has finite index in $H\cap Z_G(g_1)=J$. In fact, since $H$--parabolics are root-closed in $H$, the subgroup $J$ must itself be $H$--parabolic.
\end{proof}

Thus, if $H\leq G$ is staunchly $G$--parabolic, then all elements of $\mc{J}_G(H)$ are staunchly $G$--parabolic.

\begin{rmk}
    The important property of junctures is part~(2) of \Cref{lem:junctures}. However, we could not have simply defined junctures by this property, because convex-cocompactness of elements is not automorphism-invariant. Hence the more roundabout definition through the family $\mc{J}_G'(H)$.
\end{rmk}

\subsection{Definition of the hierarchy}\label{sub:hierarchy}

Let $G$ be a special group. The following definition relies on terminology and notation from \Cref{defn:extended_factors}, \Cref{defn:children} and \Cref{sub:junctures}.

\begin{defn}\label{defn:hierarchy}
The \emph{hierarchy} $\mf{H}(G)$ is the smallest family of subgroups of $G$ that satisfies the following closure properties:
\begin{enumerate}
    \item[$(H0)$] we have $G\in\mf{H}(G)$;
    \item[$(H1)$] if $H\in\mf{H}(G)$, then the centre $Z_H(H)$ and all extended factors of $H$ lie in $\mf{H}(G)$; 
    \item[$(H2)$] if $H\in\mf{H}(G)$ and $H/Z_H(H)$ is strongly irreducible, then $\mf{H}(G)$ contains the children of $H$;
    \item[$(H3)$] if $H\in\mf{H}(G)$ and $H/Z_H(H)$ is strongly irreducible, then $\mc{J}_G(H)\sq\mf{H}(G)$.
\end{enumerate}
\end{defn}

\begin{rmk}
    Property~$(H2)$ only applies to subgroups $H$ such that $H/Z_H(H)$ is strongly irreducible and, as a consequence, we have $\SVP(G)\not\sq\mf{H}(G)$ in general. This is important, as standard virtual products do not always satisfy part~(2) of \Cref{prop:hierarchy_properties} below.
\end{rmk}

\begin{rmk}
    The need for Property~$(H3)$ in \Cref{defn:hierarchy} may appear a little mysterious. It is there uniquely to ensure that Step~4 in the proof of \Cref{thm:shortening} works. Roughly, it allows us to reduce to $\R$--trees with trivial arc-stabilisers during the shortening argument.
\end{rmk}

The following summarises the most important properties of the hierarchy.

\begin{prop}\label{prop:hierarchy_properties}
    For all special groups $G$:
    \begin{enumerate}
        \item the hierarchy $\mf{H}(G)$ is $\Aut(G)$--invariant and consists of staunchly $G$--parabolic subgroups;
        \item for each $H\in\mf{H}(G)$, the extended factors of $H$ all lie in $\mc{Z}(G)$.
    \end{enumerate}
\end{prop}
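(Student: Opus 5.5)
Both parts will be proved by induction along the closure rules $(H0)$--$(H3)$ defining $\mf{H}(G)$. Concretely, I will show that the family of subgroups satisfying the desired properties contains $G$ and is closed under each of the operations $(H1)$--$(H3)$. Since $\mf{H}(G)$ is the smallest family with these closure properties, it is then contained in that family.

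\textbf{Part (1).} Invariance under $\Aut(G)$ is formal. Each operation is defined purely algebraically and commutes with automorphisms:
\begin{itemize}
\item centres;
\item extended factors, by \Cref{lem:extended_factors}(3);
\item children, whose definition is algebraic by \Cref{lem:children}(1);
\item junctures, by \Cref{lem:junctures}(3).
\end{itemize}
The family $\varphi(\mf{H}(G))$ therefore satisfies $(H0)$--$(H3)$, so minimality gives $\mf{H}(G)\sq\varphi(\mf{H}(G))$, and applying the same argument to $\varphi^{-1}$ gives equality.

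For staunch parabolicity, note first that $G$ is staunchly parabolic in itself. Now suppose $H\in\mf{H}(G)$ is staunchly $G$--parabolic. I check each operation in turn.
\begin{itemize}
\item Its centre $Z_H(H)$ is staunchly $H$--parabolic by \Cref{lem:parabolics_basics}(2), hence staunchly $G$--parabolic by \Cref{rmk:SP_of_SP}.
\item Its extended factors are staunchly $H$--parabolic by \Cref{lem:extended_factors}(2), so again \Cref{rmk:SP_of_SP} applies.
\item Its children are staunchly $G$--parabolic by \Cref{lem:children}(2).
\item For junctures I apply \Cref{lem:junctures}(1) with respect to an arbitrary embedding $\iota$. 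This needs $H$ convex-cocompact, which holds because $G$--parabolics are convex-cocompact. It also needs $N_G(H)$ root-closed, which holds because $N_G(H)$ is $G$--parabolic by \Cref{lem:parabolics_basics}(2). The lemma then makes each juncture $H$--parabolic with respect to $\iota$, hence $G$--parabolic, since parabolics of parabolics are parabolic in the RAAG. As $\iota$ was arbitrary, junctures are staunchly $G$--parabolic.
\end{itemize}

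\textbf{Part (2).} This is the substantive part, and I strengthen the induction hypothesis. The claim to propagate is that every $H\in\mf{H}(G)$ is a centraliser, i.e.\ $H\in\mc{Z}(G)$, and that its extended factors are centralisers too.

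For the base case $H=G$, write $G_1\x\dots\x G_k\x Z_G(G)$ as in \Cref{lem:extended_factors}(4). The extended factor $E_1$ is then the root-closure of $G_1\x Z_G(G)$. I expect $E_1=Z_G(G_2\x\dots\x G_k)$. This uses that each $G_i$ is centreless and not contained in a proper parabolic of its RAAG factor $\mc{A}_{\Delta_i}$, so that the centraliser of $G_2\x\dots\x G_k$ in $\mc{A}_{\G}$ is $\mc{A}_{\Delta_1}\x\mc{A}_\Lambda$, exactly as in the proof of \Cref{lem:special_modulo_centre}.

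The same argument applied inside $H$ shows that extended factors of $H$ are centralisers in $H$. To upgrade this to centralisers in $G$ when $H\in\mc{Z}(G)$, I use that a centraliser in $H=Z_G(U)$ has the form $Z_H(V)=Z_G(U\cup V)$. It therefore remains to check that each closure operation produces centralisers of $G$:
\begin{itemize}
\item Centres: $Z_H(H)=H\cap Z_G(H)$ is an intersection of centralisers, hence a centraliser.
\item Extended factors: they are centralisers by the previous paragraph.
\item Junctures: $H\cap Z_G(g)$ is an intersection of centralisers.
\item Children under $(H2)$: here $H/Z_H(H)$ is strongly irreducible, and a child is either the centre or the root-closure of $S\x Z_H(H)$ with $S\in\mc{S}(H_1)$.
\end{itemize}

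\textbf{Main obstacle.} The hardest step is showing that these root-closures of $S\x Z_H(H)$ are centralisers. A singular subgroup of a centreless strongly irreducible group need not be a centraliser in general; this is precisely why $\SVP(G)\not\sq\mf{H}(G)$. So I would aim for a weaker invariant: the extended factors of the child, as opposed to the child itself, are centralisers.

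The child's extended factors are root-closures of $S_j\x Z_H(H)$ with $S_j$ a factor of $S$. Each $S_j$ has nontrivial centraliser $S_{j'}$ inside $H_1$, and I would try to show $\mathrm{rc}(S_j\x Z_H(H))=Z_H(\prod_{j'\neq j}S_{j'})$ using the RAAG description of centralisers via \cite{Servatius}. If $S$ has a centre factor, I would handle it by taking $Z_H(S)$. The induction invariant would then be ``$H$ is a centraliser or an extended factor of something, and its extended factors are centralisers''. I expect the bookkeeping here to be delicate, especially showing that the sequence of closures never leaves the class of intersections of centralisers with $H$.
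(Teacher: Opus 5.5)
Your part (1) is correct and essentially the paper's argument. One ordering point: children are only defined, and only commute with automorphisms via \Cref{lem:children}(1), for staunchly $G$--parabolic subgroups. So staunch parabolicity must be proved before, or together with, $\Aut(G)$--invariance; the paper does this by writing $\mf{H}(G)=\bigcup_n\Delta^n(G)$.

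Part (2), however, has a genuine gap: the induction is never closed. The invariant you end with (``$H$ is a centraliser or an extended factor of something, and its extended factors are centralisers'') is false on $\mf{H}(G)$. Take the RAAG $G=(F_2\x F_2)\ast\Z$, which is centreless and strongly irreducible. Rule $(H2)$ puts the singular subgroup $F_2\x F_2$ into $\mf{H}(G)$. But $Z_G(F_2\x F_2)=\{1\}$, so it is not a centraliser, and it is not an extended factor of any member of $\mf{H}(G)$. Your bullets for centres ($H\cap Z_G(H)$) and junctures ($H\cap Z_G(g)$) likewise silently assume $H\in\mc{Z}(G)$. The child case itself is left as something you ``would try to show''. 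Such non-centraliser children genuinely lie in $\mf{H}(G)$. So what separates $\mf{H}(G)$ from $\SVP(G)$ is not that children can fail to be centralisers, but that $(H2)$ and $(H3)$ only apply when $H/Z_H(H)$ is strongly irreducible.

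The missing idea is to propagate only ``every extended factor of $H$ lies in $\mc{Z}(G)$'' and to exploit exactly that restriction.

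When $(H2)$ or $(H3)$ is applied, $H$ is its own unique extended factor, hence $H\in\mc{Z}(G)$. This makes junctures $H\cap Z_G(g)$ centralisers, and lets you compute children inside a centraliser.

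For $(H1)$, $H$ has the same centre as each of its extended factors $E$, so $Z_H(H)=E\cap Z_G(E)\in\mc{Z}(G)$. Extended factors of centralisers are centralisers by your (correct) base-case argument combined with $Z_{Z_G(U)}(V)=Z_G(U\cup V)$.

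For a child you need no appeal to \cite{Servatius}, only maximality of $S$. Write $H$ virtually as $H'\x Z_H(H)$, and $S\in\mc{S}(H')$ virtually as $S_1\x\dots\x S_k\x Z_S(S)$ with centreless strongly irreducible $S_i$.
\begin{itemize}
\item If $k\leq 1$, then $Z_S(S)\neq\{1\}$. For $s\in Z_S(S)\setminus\{1\}$, the virtual product $Z_{H'}(s)\supseteq S$ equals $S$, so the child is $H\cap Z_G(s)$.
\item If $k\geq 2$, then $Z_{H'}(S_2\x\dots\x S_k)$ and $S_2\x\dots\x S_k$ generate a virtual product containing a finite-index subgroup of $S$. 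Maximality forces $Z_{H'}(S_2\x\dots\x S_k)\leq S$, so the extended factor given by the root-closure of $S_1\x Z_S(S)\x Z_H(H)$ equals $H\cap Z_G(S_2\x\dots\x S_k)$.
\end{itemize}
Note that $Z_S(S)$ must appear in this extended factor; your formula drops it. Also, for $k\leq1$ it is the child itself, not its centre $Z_H(S)$, that has to be shown to be a centraliser.
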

\begin{proof}
    For a staunchly $G$--parabolic subgroup $H\leq G$, let $\Delta(H)$ be the family of subgroups of $H$ defined by the following list of requirements: 
    \begin{itemize}
        \item we always have $Z_H(H)\in\Delta(H)$;
        \item if $H/Z_H(H)$ is not strongly irreducible, then $\Delta(H)$ contains the extended factors of $H$;
        \item if $H/Z_H(H)$ is strongly irreducible, then $\Delta(H)$ contains the children and junctures of $H$.
    \end{itemize}
    We then inductively define $\Delta^0(G):=\{G\}$ and $\Delta^{n+1}(G)=\bigcup_{H\in\Delta^n(G)}\Delta(H)$ for $n\geq 0$. Straight from definitions, we have $\mf{H}(G)=\bigcup_{n\geq 0}\Delta^n(G)$.

    For all $\varphi\in\Aut(G)$, we have $\Delta(\varphi(H))=\varphi(\Delta(H))$: this follows from \Cref{lem:extended_factors}(3), \Cref{lem:children}(1) and \Cref{lem:junctures}(3). If $H$ is staunchly $G$--parabolic, then so is its normaliser $N_G(H)$ by \Cref{lem:parabolics_basics}(2), and hence $N_G(H)$ is root-closed. It then follows that all elements of $\Delta(H)$ are staunchly $G$--parabolic, by combining \Cref{lem:parabolics_basics}(2), \Cref{lem:extended_factors}(2), \Cref{lem:children}(2) and \Cref{lem:junctures}(1) with \Cref{rmk:SP_of_SP}. Finally, arguing by induction on $n$, we obtain that each family $\Delta^n(G)$ is $\Aut(G)$--invariant and consists of staunchly $G$--parabolic subgroups. This proves part~(1).

    We are left to prove part~(2). It will be useful to remember that, for each $Z\in\mc{Z}(G)$, the extended factors of $Z$ and the centre $Z_Z(Z)$ all lie in $\mc{Z}(G)$. Also note that any special group has the same centre as each of its extended factors.
    
    Now, consider $H\in\Delta^n(G)$; we will prove that the extended factors of $H$ lie in $\mc{Z}(G)$ arguing by induction on $n$. The base case $n=0$ is clear. Thus, pick $K\in\Delta^{n-1}(G)$ such that $H\in\Delta(K)$ and suppose that the statement has been proven for $K$. If $H$ is the centre or an extended factor of $K$, then it follows from the previous observations that $H\in\mc{Z}(G)$. In the remaining cases, we can assume that $K/Z_K(K)$ is strongly irreducible, and so $K$ is an extended factor of itself, and we have $K\in\mc{Z}(G)$ by the inductive assumption. If $H\in\mc{J}_G(K)$, it is immediate from definitions that $H\in\mc{Z}(G)$, and so the extended factors of $H$ also lie in $\mc{Z}(G)$. 
    
    We are left to consider the case when $H$ is a child of $K$. A finite-index subgroup of $K$ splits as $K'\x Z_K(K)$ with $K'$ strongly irreducible,
    and $H$ virtually splits as $S\x Z_K(K)$ with $S\in\mc{S}(K')$. Let $S_1\x\dots\x S_k\x Z_S(S)$ be a finite-index subgroup of $S$ with strongly irreducible factors $S_i$. If $k\leq 1$, then $Z_S(S)\neq\{1\}$ and each $s\in Z_S(S)\setminus\{1\}$ satisfies $S=Z_{K'}(s)$. In this case, the group $H=K\cap Z_G(s)$ lies in $\mc{Z}(G)$, and so do its extended factors. If instead $k\geq 2$, consider an extended factor $H_1$ of $H$, without loss of generality the root-closure of the product $S_1\x Z_S(S)\x Z_K(K)$. We then have $H_1=K\cap Z_G(S_2\x\dots\x S_k)$, which again lies in $\mc{Z}(G)$. 
    
    In conclusion, we have shown that the extended factors of $H$ lie in $\mc{Z}(G)$, regardless of how $H$ originates from $K$, completing the proof of the proposition.
\end{proof}

Another important property of the members of the hierarchy $\mf{H}(G)$ involves actions on $\R$--trees (\Cref{prop:no_indecomposable_crossing}). We briefly recall here some of the relevant concepts.

An action on an $\R$--tree $G\acts T$ is \emph{geometric} if it arises as the dual to a (tame) measured foliation on a compact $2$--dimensional cell complex with fundamental group $G$; see \cite{LP97} for a more precise definition, keeping in mind that we are only interested in the case when $G$ is finitely presented. Any minimal geometric tree $G\acts T$ decomposes as a tree of ``edge-like'' and ``indecomposable'' pieces \cite[Proposition~1.25]{Guir-Fourier}. This means that we have a $G$--equivariant decomposition $T=\bigcup_i U_i$ for a collection of subtrees $U_i$ that pairwise share at most one point; each $U_i$ is either a maximal arc with no branch points of $T$ in its interior, or an \emph{indecomposable} subtree in the sense of \cite[Definition~1.17]{Guir-Fourier}; additionally, each arc of $T$ intersects only finitely many subtrees $U_i$, and there are only finitely many $U_i$ up to the $G$--action. The indecomposable subtrees $U_i\sq T$ are uniquely defined, and we refer to them as the \emph{indecomposable components} of $G\acts T$. 

A subtree $S\sq T$ is said to be \emph{stable} if all its arcs have the same $G$--stabiliser. Suppose that $G\acts T$ is geometric and $U\sq T$ is an indecomposable component that happens to be stable. Let $G_U\leq G$ be the stabiliser of the set $U$, and let $K_U\lhd G_U$ be the shared $G$--stabiliser of the arcs of $U$; suppose that $K_U$ is finitely generated. We then have that the action $G_U\acts U$ factors through the action $G_U/K_U\acts U$, which now has trivial arc-stabilisers, while remaining geometric and indecomposable. The Rips' machine then shows that the action $G_U/K_U\acts U$ is always of one of three types: \emph{axial}, \emph{exotic}, or \emph{surface-type}; see \cite[Proposition~A.6]{Guir-Fourier}.

\begin{prop}\label{prop:no_indecomposable_crossing}
    Let $H\in\mf{H}(G)$. Let $G\acts T$ be a minimal geometric $\R$--tree and let $U\sq T$ be an indecomposable component. Let $G_U$ and $H_U$ be the $G$-- and $H$--stabiliser of $U$. Suppose that
    \begin{enumerate}
        \item $H$ is non-elliptic in $T$ and $\Min(H;T)$ shares an arc with $U$;
        \item $U$ is stable 
        and the shared $G$--stabiliser $K_U$ of the arcs of $U$ is convex-cocompact with respect to some convex-cocompact embedding $\iota\colon G\hookrightarrow\mc{A}_{\G}$.
    \end{enumerate}
   Then, the image of the composition $H_U\hookrightarrow G_U\twoheadrightarrow G_U/K_U$ has finite index in $G_U/K_U$.
\end{prop}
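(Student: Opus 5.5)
The plan is an induction along the construction of $\mf{H}(G)=\bigcup_n\Delta^n(G)$ from the proof of \Cref{prop:hierarchy_properties}. The induction keeps track of the indecomposable component $U$, or rather of the chain of subgroups in the hierarchy leading down to $H$. The engine is a general ``crossing'' principle for stable indecomposable components. Let $B\leq G$ be a subgroup such that $\Min(B;T)$ shares an arc $I$ with $U$. Since $U$ is indecomposable, any other arc $J\sq U$ is covered by finitely many translates $g_1I,\dots,g_rI$ with $g_i\in G_U$, and consecutive translates overlap in nondegenerate arcs. Stability of $U$ means every arc of $U$ has stabiliser exactly $K_U$. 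I would then use the standard consequence of indecomposability combined with trivial arc stabilisers in $G_U/K_U$: the image of $B_U$ in $G_U/K_U$ is either finite-index, or $\Min(B_U;U)$ contains only countably many orbits of branch directions and fails to fill $U$. In the second case some $g\in G_U$ moves an arc of $\Min(B;T)\cap U$ to an arc meeting it in a nondegenerate subarc, without normalising $B$.

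The key step is to show that such a $g$ cannot exist when $B=H\in\mf{H}(G)$ and the image is infinite-index. I would get a contradiction from the algebraic closure properties $(H1)$--$(H3)$. First reduce to $H/Z_H(H)$ strongly irreducible. If $H$ virtually splits as $E_1\x\dots\x E_k\x Z_H(H)$, then by \Cref{lem:cc_basics}(4) some factor, or the centre, also shares an arc with $U$ (a product acting non-elliptically on a tree has one factor with an axis through the relevant arc, while the others fix it or act as translations along a line). So we may pass to an extended factor or the centre, which lie in $\mf{H}(G)$ by $(H1)$, provided the conclusion for a subgroup implies it for $H$. This holds trivially because $H_U$ contains the smaller group's stabiliser. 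If instead $H$ is abelian, $\Min(H;T)$ is a line. A line crossing an indecomposable component in an arc forces the component to be axial, with $H$ translating along it, and finite index follows from $G_U/K_U$ being virtually $\Z^m$ acting on the line with dense orbits. Here I would use that $K_U$ is convex-cocompact together with \Cref{lem:cc_basics}(2) to identify $G_U$ with the normaliser of $K_U$ modulo $K_U$.

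In the strongly irreducible non-abelian case, take $g\in G_U\setminus N_G(H)$ as above, with $gI\cap I$ a nondegenerate arc, and first try $g$ commuting with a large subgroup. The arc $gI\cap I$ is fixed by $K_U$, and both $H\cap K_U$ and $g$ normalise $K_U$. Using \Cref{lem:junctures}(2), I would replace $g$ by a convex-cocompact element and show that $H\cap Z_G(g)$ contains a subgroup of $H$ acting on $\Min(H;T)\cap U$ with finite-index image in $H_U/(H\cap K_U)$. Then a juncture $J\in\mc{J}_G(H)\sq\mf{H}(G)$ (by $(H3)$) containing it still shares an arc with $U$, and the lower-complexity hierarchy element is handled by induction. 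Alternatively, if $H$ contains a virtual product crossing $U$, I would pass to a child via $(H2)$. Both descents terminate because chains in $\mc{Z}(G)$ have bounded length (\Cref{prop:hierarchy_properties}(2)).

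The main obstacle is precisely this step: producing, from the failure of finite index, an element $g\notin N_G(H)$ whose centraliser in $H$ still carries the arc-sharing. This needs the Rips-machine classification of $G_U/K_U\acts U$ (axial, surface, exotic). I would treat the surface and exotic cases by showing that $H_U$ with infinite-index image would be elliptic or act on a line in $U$, and then apply convex-cocompactness of $K_U$ to lift centralisers from $G_U/K_U$ back to $G$.
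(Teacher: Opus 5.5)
You have several of the right ingredients: induction over $\Delta^n(G)$, the different ways a member arises, and \Cref{lem:cc_basics}(2) applied to $K_U$. But your reduction runs in the wrong direction, and that is where the argument breaks down.

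\textbf{The direction of the induction.} You handle $H$ by descending to a smaller hierarchy member that still meets $U$ in an arc, using that finite index for the smaller stabiliser gives finite index for $H_U$. That implication is fine, but it puts the whole burden on the members where the descent stops, and there you have no argument. For instance, if $H$ is strongly irreducible and its children and junctures are all elliptic in $T$, nothing in your scheme forces $H_U$ to have finite-index image. The step you yourself flag as the main obstacle is exactly this, and it is left unproved.

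\textbf{Incorrect claims.} Your abelian base case is wrong as stated. A line sharing an arc with an indecomposable component need not make it axial: in a surface-type component, every loxodromic element of $G_U$ has its axis inside $U$. Even for axial $U$, an abelian subgroup translating along $U$ can have infinite-index image, e.g.\ a cyclic subgroup of $G_U/K_U\cong\Z^2$. The same example breaks your dichotomy ``finite index, or $\Min(B_U;U)$ does not fill $U$''. Also, $G_U$ is only contained in $N_G(K_U)$; it is not identified with $N_G(K_U)$ modulo $K_U$.

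\textbf{The missing idea.} Replace ``finite index'' by a containment that is inherited \emph{downward} from $G$, where the statement is trivial. Since $K_U$ is convex-cocompact, $N_G(K_U)$ virtually splits as $R\x K_U$ with $R=\Perp_G(K_U)$. Since $K_U\le G_U\le N_G(K_U)$, the $R$--stabiliser $R_U$ of $U$ maps onto a finite-index subgroup of $G_U/K_U$. Fix an arc $\beta\sq U$ and prove by induction on $n$ that every $H\in\Delta^n(G)$ with $\beta\sq\Min(H;T)$ contains $R_U$; the base case $H=G$ is trivial.

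For the inductive step, take $H\in\Delta(J)$ with $R_U\le J$. Show that the part of $J$ complementary to $H$ fixes $\beta$, hence lies in $K_U$. Then $R_U\le\Perp_G(K_U)\cap J$ lies in the orthogonal within $J$ of that part, which is $H$. The complementary part is:
\begin{itemize}
\item the factors $J_i$, when $H=Z_J(J)$;
\item the factors $J_2,\dots,J_k$, when $H$ is an extended factor;
\item the element $g$, when $H=J\cap Z_G(g)$ is a juncture (if $g$ were loxodromic, one gets $g\in R_U\le J$, contradicting $g\notin N_G(J)$);
\item all but one factor of $S\x Z_J(J)$, when $H$ is a child.
\end{itemize}
Ellipticity of these parts comes from strong irreducibility and convex-cocompactness of kernels. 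You also need a line lemma: a convex-cocompact subgroup preserving a line $\alpha\neq U$ that shares an arc with $U$ is trivial, $\Z$ or $\Z\rtimes\Z/2\Z$ modulo a convex-cocompact normal subgroup fixing $\alpha$. This holds because surface and exotic components contain no indiscrete lines. In the abelian case, it is $R_U\le J$, not the line alone, that forces $U$ to be axial and gives $R_U\le Z_J(J)$.
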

\begin{proof}
    A simple explanation of why the proposition holds is that, if $\Min(H;T)$ crosses $U$ without containing $U$, then the many Dehn twists of $G$ arising from $U$ take $H$ to infinitely many non-conjugate subgroups of $G$, violating the fact that $H$ is staunchly $G$--parabolic. The reader happy with this intuitive argument can skip the rest of the proof. Unfortunately, formalising the previous idea is fiddly and requires more background on $\R$--trees, so we prefer to give the following, rather different argument, which exploits directly the algebraic definition of the hierarchy $\mf{H}(G)$.

    Fix throughout the convex-cocompact embedding $\iota\colon G\hookrightarrow\mc{A}_{\G}$. Since the kernel $K_U$ is convex-cocompact, the normaliser $N_G(K_U)$ virtually splits as $R\x K_U$ for a $G$--parabolic subgroup $R$, by \Cref{lem:cc_basics}(2). We have $K_U\leq G_U\leq N_G(K_U)$ and so, denoting by $R_U$ the $R$--stabiliser of $U$, we have that $R_U$ projects to a finite-index subgroup of $G_U/K_U$.
    The subgroups $G_U$ and $R_U$ will not be convex-cocompact in general, but we do have $R_U\leq\Perp_G(K_U)$ (in the notation of \Cref{subsub:orthogonals}).

    We before we continue, we need the following observation.

    \smallskip
    {\bf Claim.} \emph{If a convex-cocompact subgroup $B\leq G$ leaves invariant a line $\alpha\sq T$ such that $\alpha\cap U$ contains an arc and $U\neq\alpha$, then there exists a convex-cocompact subgroup $B_0\lhd B$ fixing $\alpha$ pointwise such that $B/B_0$ is either $\{1\}$, $\Z$ or $\Z\rtimes\Z/2\Z$.}

    \smallskip\noindent
    \emph{Proof of claim.}
    Letting $\beta\sq\alpha\cap U$ be any arc, we set $B_0:=G_{\beta}\cap B$ and note that this is the kernel of the action $B\acts\alpha$; in particular, $B_0\lhd B$ and $B/B_0$ is virtually abelian. We have $G_{\beta}=K_U$, which is convex-cocompact, and so $B_0$ is convex-cocompact by \Cref{lem:cc_basics}(1). It follows that $B$ virtually splits as $B_0\x A$ with $A$ abelian, by \Cref{lem:cc_basics}(2). If $A$ were to have rank $\geq 2$, then $B$ would contain elements with arbitrarily small translation length along $\alpha$ and, since $U$ is indecomposable, this would imply that $\alpha\sq U$. However, the latter is impossible: since $U\neq\alpha$, the component $U$ is of either surface or exotic type, and neither of these can contain an indiscrete line (this is clear in the surface case and follows e.g.\ from \cite[Proposition~7.2]{Guir-CMH} in the exotic case). 
    In conclusion, $A$ is cyclic and the claim follows.
    \hfill$\blacksquare$

    \smallskip
    Let again the operator $\Delta(\cdot)$ and the families $\Delta^n(G)$ be as defined in the proof of \Cref{prop:hierarchy_properties}. Let $\beta\sq U$ be an arc. Our next claim is that, if $H\in\mf{H}(G)$ is non-elliptic in $T$ with $\beta\sq\Min(H;T)$, then $R_U\leq H$. The proposition will immediately follow from this. 
    
    This claim holds trivially for $H\in\Delta^0(G)$. Thus, arguing by induction, suppose the claim has been shown for all $H\in\Delta^n(G)$, and consider some $H\in\Delta^{n+1}(G)$ that is non-elliptic in $T$ with $\beta\sq\Min(H;T)$. Pick an element $J\in\Delta^n(G)$ such that $H\in\Delta(J)$. Since $H\leq J$, we have $\beta\sq\Min(J;T)$ and the inductive hypothesis guarantees that $R_U\leq J$. To conclude, we need to distinguish four cases, depending on how $H$ originates from $J$ (recall the definition of $\Delta(\cdot)$). Throughout, we let $J_1\x\dots\x J_k\x Z_J(J)$ be the finite-index subgroup of $J$ with strongly irreducible, $G$--parabolic factors $J_i$, as provided by \Cref{lem:extended_factors}(4).
    \begin{enumerate}
    \setlength\itemsep{.2em}
        \item \emph{The subgroup $H$ is the centre of $J$.} \\
        Since $H$ is not elliptic in $T$, it follows that $\Min(J;T)=\Min(H;T)$ is a line and, since $R_U\leq J$, the component $U$ must be axial. Each intersection $J_i\cap K_U$ is the kernel of the $J_i$--action on the line $U=\Min(J;T)$; in particular, $J_i\cap K_U$ is convex-cocompact and $J_i/J_i\cap K_U$ is virtually abelian. Since $J_i$ is strongly irreducible, this implies that $J_i\leq K_U$. In conclusion, 
        \[ R_U\leq \Perp_G(K_U)\cap J\leq \Perp_J(J_1\x\dots\x J_k) = Z_J(J)= H . \]
        \item \emph{We have $H\in\mc{J}_G(J)$.} \\
        Here we have $H=J\cap Z_G(g)$ for a convex-cocompact element $g\in G\setminus N_G(J)$, by \Cref{lem:junctures}(2). The centraliser $Z_G(g)$ virtually splits as $\langle g\rangle\x\Perp_G(\langle g\rangle)$ and we have $H=J\cap\Perp_G(\langle g\rangle)$. If $g$ were loxodromic in $T$, then its axis would equal $\Min(H;T)\supseteq\beta$ and the Claim (applied to $B=Z_G(g)$) would imply that this line equals $U$; 
        since $g$ is convex-cocompact, we would then have $g\in R_U\leq J$ contradicting the fact that $g\not\in N_G(J)$. Thus, $g$ must be elliptic in $T$, and it must fix $\Min(H;T)\supseteq\beta$ pointwise, so we have $g\in K_U$. In particular, $R_U\leq\Perp_G(K_U)\leq\Perp_G(\langle g\rangle)$ and, recalling that $R_U\leq J$, we obtain $R_U\leq H$ as desired.
        \item \emph{We have $k\geq 2$ and $H$ is the root-closure of $J_1\x Z_J(J)$.} \\
        If a factor $J_i$ were to contain a loxodromic element for $i\neq 1$, then the axis of this element would be $H$--invariant and it would equal $\Min(H;T)$; since $J_i$ and $H$ commute, this line would also equal $\Min(J_i;T)$. As in Case~1, this would violate the fact that $J_i$ is strongly irreducible. Thus, the factors $J_2,\dots,J_k$ are elliptic in $T$, and they must fix $\Min(H;T)\supseteq\beta$ pointwise. It follows that the subgroup $J_2\x\dots\x J_k$ is contained in $K_U$, and hence 
        \[ R_U\leq J\cap\Perp_G(K_U)\leq\Perp_J(J_2\x\dots\x J_k)=H. \]
        \item \emph{We have $k=1$ and $H$ is the root-closure of $S\x Z_J(J)$ for some $S\in\mc{S}(J_1)$.} \\
        We can assume that the centre $Z_J(J)$ is elliptic in $T$, otherwise, arguing as in Case~1, we have $R_U\leq Z_J(J)\leq H$. Let $S_1\x\dots\x S_s\x Z_S(S)$ be the finite-index subgroup of $S$ with strongly irreducible, $G$--parabolic factors $S_i$. 
        If one of the $S_i$ is non-elliptic, say $S_1$, then $\Min(H;T)=\Min(S_1;T)$ is fixed pointwise by $S_2\x\dots\x S_s\x Z_S(S)\x Z_J(J)$ (arguing as in Case~1) and we have
        \[ R_U\leq J\cap\Perp_G(K_U)\leq\Perp_J(S_2\x\dots\x S_s\x Z_S(S)\x Z_J(J))=S_1\leq H, \]
        as desired. Suppose instead that all $S_i$ are elliptic, so that $\Min(H;T)=\Min(Z_S(S);T)$. If $s\geq 1$, we again have
        \[ R_U\leq J\cap\Perp_G(K_U)\leq\Perp_J(S_1\x\dots\x S_s\x Z_J(J))=Z_S(S)\leq H . \]
        If instead $s=0$, then $S$ is free abelian of rank $\geq 2$ and, using \cite[Proposition~4.5(5)]{Fio11}, we have $S=Z_{J_1}(g)$ for all $g\in S\setminus\{1\}$. In particular, we have $H=Z_J(g)$ for all $g\in S\setminus\{1\}$. If the action $S\acts\Min(S;T)$ is free, then the Claim implies that $U=\Min(S;T)$; the subgroup $R_U\leq J$ is then virtually abelian 
        and contains $S$,
        so $R_U\leq N_J(S)=H$. Finally, if there exists a nontrivial element $g_0\in S$ fixing $\Min(S;T)$, then $g_0\in K_U$ and consequently $R_U\leq Z_J(K_U)\leq Z_J(g_0)=H$, as we wanted.
    \end{enumerate}
    This concludes the proof of the proposition
\end{proof}

We also need a larger version of the hierarchy that takes into account free factors. If $H\leq G$ is staunchly $G$--parabolic, we denote by $\mc{F}(H)$ the collection of subgroups of $H$ that contain the centre $Z_H(H)$ and project to $1$--ended free factors 
of the quotient $H/Z_H(H)$.

\begin{defn}\label{defn:enlarged_hierarchy}
    The \emph{enlarged hierarchy} $\mf{H}^*(G)$ is the family of subgroups
    \[ \mf{H}^*(G):=\mf{H}(G)\cup\bigcup_{H\in\mf{H}(G)}\mc{F}(H) .\]
\end{defn}

The enlarged hierarchy $\mf{H}^*(G)$ is $\Aut(G)$--invariant and consists of finitely many $G$--conjugacy classes of convex-cocompact subgroups (regardless of the embedding $G\hookrightarrow\mc{A}_{\G}$, see \cite[Proposition~2.30]{Fio11}). However, not all elements of $\mf{H}^*(G)$ are $G$--parabolic.

Importantly, we do not require the enlarged hierarchy $\mf{H}^*(G)$ to satisfy property $(H3)$ from \Cref{defn:hierarchy}, as this would yield an unwieldy family of subgroups. At the same time, it it is easy to see (and unimportant) that the enlarged hierarchy does satisfy properties $(H0)-(H2)$. 

\subsection{Reference systems}\label{sub:reference_systems}

Let $G$ be a special group. As sketched in the Introduction, the core of the article will seek to ``shorten'' a sequence of automorphisms $\phi_n\in\Out(G)$ by replacing it with a sequence $\phi_n\tau_n$, where each $\tau_n$ is a suitable product of Dehn twists. For this to be possible, the shortening will need to be carried out hierarchically: choosing a smallest element $H\in\mf{H}^*(G)$ on which the sequence $\phi_n$ grows as fast as on the entire group $G$, and aiming to shorten just the restrictions $\phi_n|_H$, ignoring larger elements of $\mf{H}^*(G)$. In this subsection, we set up some terminology that is useful for this hierarchical shortening process.

Denote by $[\mf{H}(G)]$ and $[\mf{H}^*(G)]$ the (finite) sets of $G$--conjugacy classes of subgroups in $\mf{H}(G)$ and $\mf{H}^*(G)$, respectively. Both sets are equipped with a natural poset structure: we write $[H_1]\preceq [H_2]$ if some $G$--conjugate of $H_1$ is contained in a $G$--conjugate of $H_2$. Note that $[H_1]\preceq [H_2]\preceq [H_1]$ can indeed only occur when $[H_1]=[H_2]$, by \Cref{lem:cc_basics}(3). 

\begin{defn}\label{defn:reference_system}
    A \emph{reference system} is a triple $\mf{R}=(\mc{H},\ll,\{S_H\}_{H\in\mc{H}})$ where:
    \begin{itemize}
    \item $\mc{H}\sq\mf{H}^*(G)$ is a finite subset projecting bijectively to the quotient $[\mf{H}^*(G)]$;
    \item $S_H\sq H$ is a finite generating set with $S_H=S_H^{-1}$ and $1\in S_H$ for each subgroup $H\in\mc{H}$;
    \item $\ll$ is a total order on $\mc{H}$ satisfying the following conditions:
    \begin{itemize}
        \item[(a)] on $\mc{H}\cap\mf{H}(G)$, the total order $\ll$ extends the partial order $\preceq$ inherited from $[\mf{H}(G)]$;
        \item[(b)] if $H\in\mf{H}(G)$ and $H'\in\mc{F}(H)$, then we have $H'\ll H$ and all chains $H'\ll K\ll H$ satisfy $K\in\mc{F}(H)$.
    \end{itemize}
\end{itemize}
\end{defn}

Note that reference systems always exist: we can first arbitrarily extend $\preceq$ to a total order on $[\mf{H}(G)]$, and then extend this to a total order on $[\mf{H}^*(G)]$ by squeezing the elements of each set $[\mc{F}(H)]$ into the gap just below $[H]$.

Given an action on a metric space $G\acts X$, an element $g\in G$ and a finite subset $\Om\sq G$, write:
\begin{align*}
    \ell(g;X)&:=\inf_{x\in X} d(x,gx), & \ell(\Om;X)&:=\sum_{s\in\Om}\ell(s;\Om), \\
    \Om^2&:=\{ss'\mid s,s'\in\Om\}, & \mf{t}(\Om;X)&:=\inf_{x\in X}\max_{s\in\Om}d(x,sx) .
\end{align*}
Note that $\ell(\Om;X)=\ell(g\Om g^{-1};X)$ and $\mf{t}(\Om;X)=\mf{t}(g\Om g^{-1};X)$ for all $g\in G$.

It is customary to measure the ``length'' of a subset $\Om\sq G$ by the displacement parameter $\mf{t}(\Om;X)$. In our setting, it is more convenient to work with lengths of squares, that is, with the quantity $\ell(\Om^2;X)$. This is entirely equivalent in view of the following observation. (Note that we will be interested in sequences of generating sets obtained from a fixed one by applying automorphisms, so their cardinality will be uniformly bounded.)

\begin{lem}\label{lem:t_vs_ell2}
    Let $\X_{\G}$ be the universal cover of the Salvetti complex of a RAAG $\mc{A}_{\G}$. For all finite subsets $\Om\sq\mc{A}_{\G}$ with $1\in\Om$, we have:
    \[ \frac{1}{2|\Om^2|}\cdot\ell(\Om^2;\X_{\G}) \leq \mf{t}(\Om;\X_{\G})\leq \tfrac{3}{2}|\G|\cdot\ell(\Om^2;\X_{\G}) .\]
\end{lem}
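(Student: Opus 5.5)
The lower bound is elementary, and the real work is the upper bound. For the lower bound, fix a point $x\in\X_{\G}$ nearly realising $\mf{t}(\Om;\X_{\G})$, say with $\max_{s\in\Om}d(x,sx)\leq\mf{t}+\epsilon$. For $s,s'\in\Om$ the triangle inequality gives $d(x,ss'x)\leq d(x,sx)+d(sx,ss'x)=d(x,sx)+d(x,s'x)\leq 2(\mf{t}+\epsilon)$. Hence $\ell(ss';\X_{\G})\leq 2\mf{t}+2\epsilon$ for each of the $|\Om^2|$ elements of $\Om^2$. Summing and letting $\epsilon\to 0$ yields $\ell(\Om^2;\X_{\G})\leq 2|\Om^2|\,\mf{t}(\Om;\X_{\G})$. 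Here I read $\ell(\Om;X)$ as $\sum_{s\in\Om}\ell(s;X)$, which is clearly the intended meaning.

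For the upper bound, the plan is to exploit the cubical structure of $\X_{\G}$, which is a finite-dimensional CAT(0) cube complex of dimension at most $|\G|$. The natural route is to bound $\mf{t}(\Om)$ hyperplane by hyperplane, or via the restriction quotient trees $T^v$ for $v\in\G$. Concretely, I would first reduce to a statement about each tree $T^v$. The tree $T^v$ is the HNN Bass--Serre tree of $\mc{A}_{\G}$ associated to $v$, obtained by collapsing all hyperplanes not labelled $v$. The $\ell^1$ metric on $\X_{\G}$ is the sum over $v$ of the pseudo-metrics pulled back from the $T^v$, up to the way hyperplanes of the same label are grouped. Since $\X_{\G}$ is median and the restriction quotients are compatible, I would use a Helly-type argument to approximately realise near-optimal points simultaneously for all $v$. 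Given points $x_v$ in each $T^v$, the hyperplane orientations they determine are consistent up to a bounded defect, and this defect produces a vertex $x\in\X_{\G}$ with $d(x,sx)\lesssim\sum_v d_{T^v}(x_v,sx_v)$. This step is where the factor $|\G|$ should enter.

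The key tree fact is Serre's lemma, in a quantitative form. For a group generated by finitely many isometries of a simplicial tree, if all pairwise products $ss'$ are elliptic then the whole set has a common fixed point. Quantitatively, I would use $\mf{t}(\Om;T)\leq\tfrac32\max_{g\in\Om^2}\ell(g;T)$. To prove it, I would use that $\ell(ss')$ controls the distance between the characteristic sets $\Min(s)$ and $\Min(s')$. If two isometries of a tree have disjoint minimal sets, then $\ell(ss')=\ell(s)+\ell(s')+2d(\Min s,\Min s')$. Otherwise the overlap is controlled, so every pair of sets $\{y : d(y,sy)\leq R\}$ intersects once $R\geq\max_{g\in\Om^2}\ell(g)$, roughly. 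These sets are convex subtrees, and by the Helly property for trees pairwise intersection implies a common point. The constant $\tfrac32$ comes from converting between translation length $\ell(s)$ and displacement $R$: the sets $\{y : d(y,sy)\leq R\}$ are $(R-\ell(s))/2$-neighbourhoods of $\Min(s)$. Finally, bounding the maximum by the sum gives $\mf{t}(\Om;T^v)\leq\tfrac32\ell(\Om^2;T^v)$. Summing over the $|\G|$ vertices, and using $\ell(g;T^v)\leq\ell(g;\X_{\G})$, gives the claim.

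The main obstacle is the passage from the trees $T^v$ back to $\X_{\G}$: gluing near-optimal points $x_v$ into a single vertex of $\X_{\G}$ without losing more than a bounded factor. I would handle it with the median/Helly property in $\X_{\G}$ itself rather than in the factors. For each $s$, the set $C_s=\{x : d(x,sx)\leq R_s\}$ is not convex in $\X_{\G}$, so I would instead work with convex hulls of the minimal sets $\Min(s)$, which are convex subcomplexes by Haglund's combinatorial semisimplicity. I would then show that pairwise products control pairwise distances between these subcomplexes. Helly for convex subcomplexes then provides a common point of suitable neighbourhoods. If this fails, the fallback is to do everything hyperplane-label by label, as sketched above.
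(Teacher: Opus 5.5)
\textbf{There is a genuine gap: the step gluing tree-wise points into one vertex of $\X_{\G}$.} Your lower bound is correct and is the paper's argument. Your tree estimate is also fine: Helly applied to the convex sets $\{y:d(y,sy)\le R\}$ in $T^v$, with pairwise intersection controlled by $\ell(ss')$. The overall skeleton matches the paper too: $d=\sum_v d_v$, a per-tree bound, and a sum over $v\in\G$.

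The problem is exactly the step you flag. You never produce a single point $x\in\X_{\G}$ whose projections are simultaneously good in every $T^v$, and neither of your mechanisms works.

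\begin{itemize}
\item \emph{Independently chosen near-optimal $x_v\in T^v$ are not consistent up to bounded defect.} Take $\mc{A}_{\G}=F_2=\langle a,b\rangle$ and $\Om=\{1,c\}$ with $c=[a,b]$. The optimal sets in $T^a$ and $T^b$ are the axes of $c$, so $\langle b\rangle\in T^a$ and $c^m\langle a\rangle\in T^b$ are both optimal. Yet the pair $(\langle b\rangle,c^m\langle a\rangle)$ lies at distance at least $2m-1$ from the image of $\X_{\G}$ in $T^a\times T^b$: you must minimise $|c^mh|_a+|h|_b$ over $h\in F_2$. So the $x_v$ have to be chosen coherently, and that coherence is the whole content of the upper bound.
\item \emph{The Helly route fails as stated.} $\ell^1$--neighbourhoods of convex subcomplexes are not convex; already balls in the square tiling of $\R^2$ are not. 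So Helly cannot be applied to them.
\item \emph{Switching to convex preimages does not close the gap.} You could use the genuinely convex sets $(\pi^v)^{-1}(Y_v)$, where $Y_v\sq T^v$ is the set of points moved at most $R_v$ by every $s\in\Om$. You would still need to prove these sets pairwise intersect. That is not automatic: $(\pi^a)^{-1}(\langle b\rangle)$ and $(\pi^b)^{-1}(c\langle a\rangle)$ are disjoint cosets. Any such proof must also keep the per-tree bounds exactly, to reach the constant $\tfrac32|\G|$ rather than an unspecified $\lesssim$.
\end{itemize}

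The paper fills this gap with the \emph{multibridges} of \cite{Fio10a}. These are canonical subsets $\mc{B}(\Om;\cdot)$ of median algebras with three properties:
\begin{itemize}
\item they are compatible with the restriction quotients: $\pi^v(\mc{B}(\Om;\X_{\G}))=\mc{B}(\Om;T^v)$ for every $v$;
\item on a tree they are good: $d(y,sy)\le 3\,\mf{t}(\Om;T^v)$ for all $y\in\mc{B}(\Om;T^v)$ and $s\in\Om$;
\item $\mc{B}(\Om;\X_{\G})$ is nonempty.
\end{itemize}
Combining the last property with the tree fact $2\,\mf{t}(\Om;T^v)\le\ell(\Om^2;T^v)$, any $x\in\mc{B}(\Om;\X_{\G})$ satisfies $\max_{s}\sum_v d_v(x,sx)\le\sum_v 3\,\mf{t}(\Om;T^v)\le\tfrac32\sum_v\ell(\Om^2;T^v)\le\tfrac32|\G|\,\ell(\Om^2;\X_{\G})$. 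Without such a projection-compatible construction, or an actual proof that your convex preimages share a point, the upper bound is not established.
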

\begin{proof}
    The first inequality holds for any action $G\acts X$. It suffices to observe that
    \[ \ell(\Om^2;X)\leq|\Om^2|\cdot\max_{s\in\Om}\ell(s;\Om)\leq |\Om^2|\cdot\mf{t}(\Om^2;X)\leq 2|\Om^2|\cdot \mf{t}(\Om;X) . \]

    We now discuss the second inequality. For each $v\in\G$, let $T^v$ be the tree dual to the set of hyperplanes of $\X_{\G}$ labelled by $v$. There are natural collapse maps $\pi^v\colon\X_{\G}\ra T^v$ giving an $\mc{A}_{\G}$--equivariant isometric embedding $\prod_v\pi^v\colon\X_{\G}\hookrightarrow\prod_vT^v$. For all $x,y\in\X_{\G}$, set for simplicity $d_v(x,y):=d(\pi_v(x),\pi_v(y))$. We always have $d(x,y)=\sum_{v\in\G}d_v(x,y)$.

    In \cite[Definition~6.8]{Fio10a}, we introduced convex subcomplexes $\mc{B}(\Om;\X_{\G})\sq\X_{\G}$ and $\mc{B}(\Om;T^v)\sq T^v$, which we called \emph{multibridges}; this is a canonical construction that works in any median algebra, yielding a set of points roughly realising the parameter $\mf{t}(\Om;\cdot)$. From the definition of multibridges, it is immediate that $\pi^v(\mc{B}(\Om;\X_{\G}))=\mc{B}(\Om;T^v)$. For all points $x\in\mc{B}(\Om;T^v)$ and elements $s\in\Om$, we have $d(x,sx)\leq 3\mf{t}(\Om;T^v)$ by \cite[Proposition~6.11(3)]{Fio10a}. Finally, since $T^v$ is a tree, the value $\mf{t}(\Om;T^v)$ is either the translation length of an element of $\Om$, or it is double the distance between the axes or fixed sets of two elements of $\Om$. This shows that $2\mf{t}(\Om;T^v)\leq\ell(\Om^2;T^v)$. 
    
    Combining all the above observations and considering a point $x\in\mc{B}(\Om;\X_{\G})$ realising $\mf{t}(\Om;\X_{\G})$ (which exists by \cite[Proposition~6.9(1)]{Fio10a}), we obtain the desired inequality:
    \[ \mf{t}(\Om;\X_{\G})=\max_{s\in \Om}\sum_v d_v(x,sx)\leq \sum_v \max_{s\in \Om} d_v(x,sx)\leq\sum_v\tfrac{3}{2}\cdot\ell(\Om^2;T^v)\leq \tfrac{3}{2}|\G|\cdot\ell(\Om^2;\X_{\G}) .\]
\end{proof}

Now, fix a convex-cocompact embedding $\iota\colon G\hookrightarrow\mc{A}_{\G}$ and let $\mf{E}_G\sq\X_{\G}$ be a $G$--essential convex subcomplex, in the sense of \cite[Section~3.4]{CS11}. We have $\ell(g;\X_{\G})=\ell(G;\mf{E}_G)$ for all $g\in G$. Each reference system $\mf{R}=(\mc{H},\ll,\{S_H\}_{H\in\mc{H}})$ determines an order-like relation $\sqsubseteq$ on $\Out(G)$ as follows. Consider $\varphi,\psi\in\Aut(G)$ and their outer classes $[\varphi],[\psi]\in\Out(G)$. For each $H\in\mc{H}$, we write 
\begin{align*}
    &[\varphi]\sqsubset_H[\psi] \quad\text{if}\quad \ell(\varphi(S_H^2);\mf{E}_G)<\ell(\psi(S_H^2);\mf{E}_G), \\
    &[\varphi]\asymp_H[\psi] \quad\text{if}\quad \ell(\varphi(S_H^2);\mf{E}_G)=\ell(\psi(S_H^2);\mf{E}_G).
\end{align*}
Note the squaring of $S_H$ in these definitions. We then write
\begin{align*}
    &[\varphi]\sqsubset[\psi] \quad\text{if $[\varphi]\sqsubset_H[\psi]$ for some $H\in\mc{H}$ such that $[\varphi]\asymp_K[\psi]$ for all $K\in\mc{H}$ with $K\ll H$,} \\
    &[\varphi]\asymp[\psi] \quad\text{if $[\varphi]\asymp_H[\psi]$ for all $H\in\mc{H}$.}
\end{align*}
The symbols $\asymp_H$ and $\asymp$ define equivalence relations on $\Out(G)$. Each equivalence class of $\asymp_G$ is finite,
by \Cref{lem:t_vs_ell2}, and so the equivalence classes of $\asymp$ are also finite. Using the symbol $\sqsubseteq$ to denote either $\sqsubset$ or $\asymp$, we have that $\sqsubseteq$ is a total order on the set of equivalence classes $\Out(G)/\asymp$. 

Since $\ell(g;\mf{E}_G)\in\N$ for all $g\in G$, and since the set $\mc{H}$ is finite, it is immediate to check that every (possibly infinite) subset of $\Out(G)$ has a $\sqsubseteq$--minimum. In other words:

\begin{lem}\label{lem:well-ordering}
    The total order $\sqsubseteq$ on $\Out(G)/\asymp$ is a well-ordering. 
\end{lem}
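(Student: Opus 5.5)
The plan is to show that every nonempty subset $\mc{S}\sq\Out(G)$ has a $\sqsubseteq$--minimal $\asymp$--class. This is equivalent to well-ordering, since $\sqsubseteq$ is already known to be a total order on $\Out(G)/\asymp$. Enumerate $\mc{H}=\{H_1\ll H_2\ll\dots\ll H_r\}$, which is possible because $\mc{H}$ is finite. For each $[\varphi]\in\Out(G)$, consider the vector
\[ v([\varphi]):=\bigl(\ell(\varphi(S_{H_1}^2);\mf{E}_G),\dots,\ell(\varphi(S_{H_r}^2);\mf{E}_G)\bigr)\in\N^r . \]
First I will check that this vector is well defined on outer classes. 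Each entry is a sum of translation lengths, and translation lengths are conjugation-invariant, so composing $\varphi$ with an inner automorphism does not change $v$. Each entry lies in $\N$, because $\mf{E}_G$ is a cube complex with combinatorial metric and $G$ acts by combinatorial automorphisms, so $\ell(g;\mf{E}_G)\in\N$.

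Next, unwinding the definitions of $\sqsubset$ and $\asymp$ will give two facts. First, $[\varphi]\asymp[\psi]$ holds exactly when $v([\varphi])=v([\psi])$. Second, $[\varphi]\sqsubset[\psi]$ holds exactly when $v([\varphi])$ is strictly smaller than $v([\psi])$ in the lexicographic order on $\N^r$, with coordinates read in the $\ll$--increasing order. Thus $v$ induces an order-embedding of $(\Out(G)/\asymp,\sqsubseteq)$ into $(\N^r,\le_{\rm lex})$.

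It then suffices to recall that the lexicographic order on $\N^r$ is a well-ordering, which I will prove by induction on $r$. Given a nonempty subset, take the least possible first coordinate; since $\N$ is well-ordered, this minimum exists. Among the elements with that first coordinate, take the least second coordinate, and continue in the same way. After $r$ steps this produces a minimum vector. The $\asymp$--class of any element of $\mc{S}$ whose vector equals this minimum is then the $\sqsubseteq$--minimum of $\mc{S}$.

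There is no serious obstacle. The only point requiring care is integrality of translation lengths, and the paper asserts $\ell(g;\mf{E}_G)\in\N$ for all $g\in G$; this is also standard, since combinatorial isometries of CAT(0) cube complexes achieve their translation length in the $\ell^1$ metric at vertices. Finiteness of the $\asymp$--classes, via \Cref{lem:t_vs_ell2}, is not needed for the well-ordering itself.
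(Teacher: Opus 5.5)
Your proof is correct and follows the same route as the paper. The paper's one-line justification is that $\ell(g;\mf{E}_G)\in\N$ and $\mc{H}$ is finite, so $\sqsubseteq$ is a lexicographic order on finitely many natural-number coordinates; you have simply made the order-embedding into $(\N^r,\le_{\rm lex})$ explicit.
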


Note that $\asymp$ and $\sqsubseteq$ depend on the choice of the embedding $\iota$ and of reference system $\mf{R}$, but not on the choice of the subcomplex $\mf{E}_G$.

\section{Structure of degenerations}\label{sec:degenerations}

Let $G$ be a special group. An infinite sequence in $\Out(G)$ and the choice of a non-principal ultrafilter $\om$ give rise to an isometric action on a median space $G\acts\X_{\om}$, which equivariantly embeds in a finite product of $\R$--trees $T^v_{\om}$. We refer to such actions as \emph{degenerations}. We briefly recall this construction and its properties shown in \cite{Fio10e}, then prove a few more.

Realise $G$ as a convex-cocompact subgroup $G\leq\mc{A}_{\G}$, for a RAAG $\mc{A}_{\G}$, and endow $G$ with the resulting coarse median structure. Let $\X_{\G}$ be the universal cover of the Salvetti complex of $\mc{A}_{\G}$. For each vertex $v\in\G$, let $\mc{A}_{\G}\acts T^v$ be the Bass--Serre tree of the HNN splitting of $\mc{A}_{\G}$ with vertex group $\mc{A}_{\G\setminus\{v\}}$, edge group $\mc{A}_{\lk(v)}$ and stable letter $v$. Equivalently, $T^v$ is the tree dual to the collection of hyperplanes of $\X_{\G}$ labelled by $v$, and so there is a natural projection $\pi^v\colon\X_{\G}\ra T^v$, which is a \emph{restriction quotient} in the sense of \cite{CS11} (also see \cite[Section~2.5.1]{Fio10a}). The maps $\pi^v$ give an $\mc{A}_{\G}$--equivariant isometric embedding $\X_{\G}\hookrightarrow\prod_{v\in\G}T^v$, using $\ell_1$--metrics.

Let $\mf{E}_G\sq\X_{\G}$ be a $G$--essential convex subcomplex. Assuming without loss of generality that $G$ is not contained in any proper parabolic subgroup of $\mc{A}_{\G}$, we have that the subcomplex $\mf{E}_G\sq\X_{\G}$ is unique and $\pi^v(\mf{E}_G)=\Min(G;T^v)$ for all $v\in\G$; in particular, $G$ is non-elliptic in all $T^v$. Convex-cocompactness of $G$ in $\mc{A}_{\G}$ means that the action $G\acts\mf{E}_G$ is cocompact \cite[Lemma~3.2]{Fio10a}. 

Fix a non-principal ultrafilter $\om$, and consider a sequence of automorphisms $\varphi_n\in\Aut(G)$ projecting to an infinite sequence $\phi_n\in\Out(G)$. We now apply the Bestvina--Paulin construction \cite{Bes88,Pau91} to the action $G\acts\mf{E}_G$. This means that we pick a finite generating set $S\sq G$, define $\lambda_n:=\mf{t}(\varphi_n(S);\mf{E}_G)$ (in the notation of \Cref{sub:reference_systems}), and choose basepoints $o_n\in\mf{E}_G$ realising these displacement parameters $\lambda_n$. We then denote by $G\acts\X_{\om}$ the $\om$--limit of countably many copies of the action $G\acts\X_{\G}$, with the $n$--th copy twisted by $\varphi_n$, 
rescaled by $\lambda_n$, and based at $o_n$. For each $v\in\G$, we similarly obtain an action $G\acts T^v_{\om}$ as the $\om$--limit of copies of the action $G\acts T^v$, rescaling the $n$--th copy by the same parameter $\lambda_n$ and basing it at the point $\pi^v(o_n)$.

The degeneration $G\acts\X_{\om}$ is an isometric action on a median space with unbounded orbits. The spaces $T^v_{\om}$ are $\R$--trees; the group $G$ can be elliptic in $T^v_{\om}$ for some vertices $v\in\G$, though never for all of them. The maps $\pi^v\colon\X_{\G}\ra T^v$ induce $G$--equivariant, $1$--Lipschitz, median-preserving maps $\pi^v_{\om}\colon\X_{\om}\ra T^v_{\om}$, which together give an isometric embedding $\X_{\om}\hookrightarrow\prod_{v\in\G}T^v_{\om}$. Although $\pi^v\colon\X_{\G}\ra T^v$ is surjective, the maps $\pi^v_{\om}\colon\X_{\om}\ra T^v_{\om}$ need not be.

We refer to the action $G\acts\X_{\om}$ as the \emph{degeneration} determined by $\om$ and $[\varphi_n]\in\Out(G)$ (as well as the chosen embedding $G\hookrightarrow\mc{A}_{\G}$). The choice of representatives $\varphi_n\in\Aut(G)$ only affects $\X_{\om}$ by a $G$--equivariant isometry. A priori, the degeneration can depend on the choice of basepoints $o_n$, but this is inconsequential. For an arc $\beta$ in an $\R$--tree $T$, we denote by $G_{\beta}$ its pointwise $G$--stabiliser; instead, for a line $\alpha$, we denote by $G_{\alpha}$ the setwise stabiliser.

\begin{thm}\label{thm:inert_vs_motile}
    Let $G\acts\X_{\om}$ be the degeneration determined by a sequence $\varphi_n\in\Aut(G)$. Pick $v\in\G$ with $G$ non-elliptic in $T^v_{\om}$. Then each arc $\beta\sq\Min(G;T^v_{\om})$ is of one of the following two types.
    \begin{enumerate}
    \setlength\itemsep{.2em}
        \item An ``inert'' arc: for $\om$--all $n$, the subgroup $\varphi_n(G_{\beta})$ is elliptic in $T^v$. In this case, either we have $G_{\beta}\in\mc{Z}(G)$, or there exists a line $\beta\sq\alpha\sq T^v_{\om}$ such that $G_{\beta}\lhd G_{\alpha}$, $G_{\alpha}\in\mc{Z}(G)$, and the quotient $G_{\alpha}/G_{\beta}$ is free abelian and acts freely on $\alpha$ with dense orbits.
        \item A ``motile'' arc: for $\om$--all $n$, the subgroup $\varphi_n(G_{\beta})$ is non-elliptic in $T^v$. In this case, $\varphi_n(G_{\beta})$ leaves invariant a line $\alpha_n\sq T^v$. The line $\alpha:=\lim_{\om}\alpha_n$ satisfies $\beta\sq\alpha$, $G_{\beta}\lhd G_{\alpha}$, and $G_{\alpha}\in\mc{Z}(G)$. The quotient $G_{\alpha}/G_{\beta}$ is free abelian and acts freely on $\alpha$.
    \end{enumerate}
    Furthermore, if the automorphisms $\varphi_n$ are coarse-median preserving, then all inert arcs $\beta$ have $G_{\beta}\in\mc{Z}(G)$, and all motile arcs $\beta$ have $G_{\beta}=G_{\alpha}$ (i.e.\ the action $G_{\alpha}\acts\alpha$ is trivial).
\end{thm}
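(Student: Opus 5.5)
This theorem is a repackaging of results already proved in \cite{Fio10e}, so the plan is to derive each clause from the structure theory there rather than rebuild it. The starting point is the dichotomy. Given an arc $\beta\sq\Min(G;T^v_{\om})$, the ultrafilter $\om$ decides whether $\varphi_n(G_{\beta})$ is elliptic in $T^v$ for $\om$--all $n$. Since $G_{\beta}$ may be infinitely generated, I would first use the fact from \cite{Fio10e} that arc-stabilisers of $T^v_{\om}$ are convex-cocompact subgroups of centralisers up to a co-abelian kernel. This lets us replace $G_{\beta}$ by a finitely generated subgroup with the same elliptic/non-elliptic behaviour. Ellipticity of a finitely generated group in a simplicial tree is detected by finitely many generators and their pairwise products, so the dichotomy is well-defined.

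\emph{Motile case.} If $\varphi_n(G_{\beta})$ is non-elliptic, I would argue as follows.
\begin{enumerate}
\item Elements of $\varphi_n(G_{\beta})$ have translation lengths in $T^v$ that are $o(\lambda_n)$, because they fix the arc $\beta$ in the limit. Combined with the structure of $\mc{A}_{\G}$--trees, where subgroups with a common small displacement are virtually abelian modulo an elliptic part, this should show that $\varphi_n(G_{\beta})$ preserves a line $\alpha_n$, namely the common axis of its loxodromics.
\item Setting $\alpha=\lim_{\om}\alpha_n$ gives $\beta\sq\alpha$.
\item The stabiliser of $\alpha_n$ in $\mc{A}_{\G}$ is a centraliser. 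Intersecting with $\varphi_n(G)$ and pulling back via $\varphi_n^{-1}$ gives $G_{\alpha}\in\mc{Z}(G)$, using that $\mc{Z}(G)$ is $\Aut(G)$--invariant and satisfies the chain condition, so the pullback stabilises for $\om$--all $n$.
\item Normality $G_{\beta}\lhd G_{\alpha}$ follows because $G_{\alpha}$ acts on $\alpha$ by translations, with no reflections by orientation considerations, and therefore preserves pointwise stabilisers of subarcs.
\item The quotient $G_{\alpha}/G_{\beta}$ is then a group acting freely on a line, hence free abelian (torsion-free, finitely generated via \Cref{lem:cc_basics}).
\end{enumerate}

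\emph{Inert case.} Here each $\varphi_n(G_{\beta})$ fixes a point of $T^v$. Edge stabilisers of $T^v$ are conjugates of $\mc{A}_{\lk(v)}\cap\varphi_n(G)$, which are parabolic. The intersection of stabilisers along the long segment converging to $\beta$ is therefore a parabolic subgroup $P_n$, and $\varphi_n^{-1}(P_n)$ is a centraliser-type subgroup containing $G_{\beta}$. Two outcomes are possible. If $G_{\beta}$ equals this limit, then $G_{\beta}\in\mc{Z}(G)$. Otherwise the discrepancy comes from elements whose displacement goes to zero after rescaling without being fixed. These assemble into an abelian group acting with dense orbits on a line $\alpha\supseteq\beta$, with $G_{\alpha}\in\mc{Z}(G)$ by the same pullback argument as in the motile case.

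\emph{Coarse-median preserving case.} The dense-orbit and nontrivial-translation phenomena both come from skewing along abelian subgroups. When the $\varphi_n$ preserve $[\mu]$, convex-cocompactness and orthogonals are preserved (\Cref{rmk:cmp_preserves_Perp}). One can then take $\alpha_n$ to be axes of convex-cocompact elements, and a fixed $\Perp$ forces $G_{\alpha}$ to fix $\alpha$. This yields $G_{\beta}=G_{\alpha}$ in the motile case and excludes the dense-orbit line in the inert case.

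The main obstacle I expect is step 3 together with the inert dense-orbit alternative. This requires controlling infinitely generated stabilisers and showing that the limiting group is exactly a centraliser. I would address it by citing the corresponding structural theorems of \cite{Fio10e}, on stable trees and arc-stabilisers being co-abelian in centralisers, rather than reproving them.
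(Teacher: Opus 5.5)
Your motile case is close to the paper's argument. It rests, however, on a tool you leave implicit: the bounded-overlap property of axes in $T^v$. If $g,h\in G$ are loxodromic with distinct axes, these axes share an arc of length at most $C\max\{\ell(g;T^v),\ell(h;T^v)\}$. This property is what makes all of $\varphi_n(G_{\beta})$ preserve a single line, including its elliptic elements (via the axes $\varphi_n(g)\alpha_n$ of $\varphi_n(gh_0g^{-1})$). It also yields $G_{\alpha_n}=Z_G(g_n)$ for convex-cocompact $g_n$, hence orientation-preservation and $G_{\alpha}=\bigcap_{\om}\varphi_n^{-1}(G_{\alpha_n})\in\mc{Z}(G)$.

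The motile argument also needs a \emph{single} $h_0\in G_{\beta}$ with $\varphi_n(h_0)$ loxodromic for $\om$--all $n$. Your subgroup-level dichotomy does not provide this when $G_{\beta}$ is infinitely generated, since a non-elliptic infinitely generated group need not contain loxodromics. Your proposed reduction via ``arc-stabilisers are co-abelian subgroups of centralisers'' invokes essentially the conclusion being proved (cf.\ \Cref{rmk:BF-stable}). The paper instead splits elementwise: either every $h\in G_{\beta}$ is $\om$--elliptic, or some $h_0$ is $\om$--loxodromic. It then proves ellipticity of all of $\varphi_n(G_{\beta})$ afterwards.

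The genuine gap is the inert case. That the pointwise stabiliser $P_n$ of an arc of $T^v$ is $G$--parabolic does not make $P_n$, let alone $\varphi_n^{-1}(P_n)$, a centraliser. Automorphisms do not preserve parabolicity, and in exactly the situation producing the dense-orbit line, the arc-stabiliser is only a co-abelian subgroup of a centraliser. Nor does that line come from ``elements whose displacement goes to zero without being fixed''. Elements with vanishing rescaled displacement on $\beta$ already lie in $G_{\beta}$, and here the difference between $G_{\beta}$ and $\bigcap_{\om}\varphi_n^{-1}(G_{\beta_n})$ is only an end-effect, removed by perturbing $\beta_n$.

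The missing input is \cite[Theorem~4.2]{Fio10e}: one can choose arcs $\beta_n\to\beta$ so that one of two things happens. Either $G_{\beta_n}\in\mc{Z}(G)$, giving $G_{\beta}=\bigcap_{\om}\varphi_n^{-1}(G_{\beta_n})\in\mc{Z}(G)$. Or $\beta_n$ lies on the axis $\gamma_n$ of a convex-cocompact $g_n$ with $\sup_{\om}\ell(g_n;T^v)<+\infty$, where $G_{\gamma_n}=Z_G(g_n)$ and $G_{\beta_n}$ is the kernel $K_n$ of $Z_G(g_n)\acts\gamma_n$. In the second case, set $\alpha:=\lim_{\om}\gamma_n$. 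Then $G_{\alpha}=\bigcap_{\om}Z_G(\varphi_n^{-1}(g_n))\in\mc{Z}(G)$, and $G_{\beta}=\bigcap_{\om}\varphi_n^{-1}(K_n)$ is the kernel of $G_{\alpha}\acts\alpha$. Moreover $\varphi_n(G_{\beta})$ is elliptic, because its action on $\gamma_n$ factors through the finitely generated abelianisation of $G_{\alpha}$.

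You also never prove density of orbits, i.e.\ that $G_{\alpha}/G_{\beta}\not\cong\Z$ when $G_{\alpha}\neq G_{\beta}$. The paper argues as follows. First, $g_n\in\varphi_n(G_{\alpha})$, because $\varphi_n(G_{\alpha})$ is a non-elliptic, convex-cocompact, root-closed subgroup of $Z_G(g_n)$. From this it deduces $\varphi_n(G_{\alpha})=Z_G(g_n)$ for $\om$--all $n$. If the quotient were $\Z$, then $\varphi_n(G_{\beta})=K_n$, so a lift $g$ of a generator would satisfy $\ell(\varphi_n(g);T^v)\leq\ell(g_n;T^v)$, which is bounded. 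Then $g$ would fix $\alpha$, contradicting $g\notin G_{\beta}$.

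Your coarse-median-preserving paragraph is likewise only a heuristic, although the paper also defers that clause to \cite{Fio10e}.
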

\begin{proof}
    Part of the theorem is \cite[Proposition~5.12]{Fio10e} and the rest is implicit at various points in \cite{Fio10e}, though the terminology ``inert/motile arc'' is new. We explain how to recover a full proof, omitting details. We will repeatedly use the following fact (see e.g.\ \cite[Lemma~3.16(2)]{Fioravanti-Kerr}):
    \begin{enumerate}
        \item[$(*)$] There is a constant $C\geq 1$ such that, if two elements $g,h\in G$ are loxodromic in $T^v$ with distinct axes $A_g,A_h\sq T^v$, then the intersection $A_g\cap A_h$ has length $\leq C\cdot\max\{\ell(g;T^v),\ell(h;T^v)\}$. Moreover, if $g,h\in G$ are convex-cocompact and $A_g=A_h$, then $\langle g\rangle=\langle h\rangle$.
    \end{enumerate}
    Throughout, we consider an ``unadulterated'' copy of the $G$--tree $T^v$: unscaled and untwisted.
    
    Consider an arc $\beta\sq\Min(G;T^v_{\om})$. We distinguish two cases, based on the behaviour of $G_{\beta}$:
    \begin{enumerate}
        \item[(i)] for each $h\in G_{\beta}$, the element $\varphi_n(h)$ is elliptic in $T^v$ for $\om$--all $n$; 
        \item[(ii)] there exists an element $h_0\in G_{\beta}$ such that $\varphi_n(h_0)$ is loxodromic in $T^v$ for $\om$--all $n$.
    \end{enumerate}
    Unsurprisingly, Case~(i) will yield that $\beta$ is inert, and Case~(ii) that $\beta$ is motile.

    In Case~(ii), let $\alpha_n$ be the axis of $\varphi_n(h_0)$ in $T^v$, and set $\alpha:=\lim_{\om}\alpha_n$. Property~$(*)$ implies that we have $G_{\alpha_n}=Z_G(g_n)$ for some elements $g_n\in G$ that are convex-cocompact in $G$ and loxodromic in $T^v$ with $\ell(g_n;T^v)\leq\ell(\varphi_n(h_0);T^v)$. Property~$(*)$ also implies that $G_{\alpha}=\bigcap_{\om}\varphi_n^{-1}(G_{\alpha_n})$ and that the line $\alpha_n$ is $\varphi_n(G_{\beta})$--invariant for $\om$--all $n$. It follows that $\beta\sq\alpha$, $G_{\beta}\leq G_{\alpha}$ and $G_{\alpha}\in\mc{Z}(G)$. Thus, $G_{\beta}$ is the kernel of the action $G_{\alpha}\acts\alpha$, and the quotient $G_{\alpha}/G_{\beta}$ acts freely. In conclusion, $\beta$ is ``motile'' and Item~(2) of the theorem holds.

    In Case~(i), we need to further distinguish two cases. Indeed, \cite[Theorem~4.2]{Fio10e} shows that we can find arcs $\beta_n\sq\Min(G;T^v)$ such that $\beta=\lim_{\om}\beta_n$ and:
    \begin{enumerate}
        \item[(A)] either $G_{\beta_n}\in\mc{Z}(G)$ for $\om$--all $n$;
        \item[(B)] or $\beta_n\sq\gamma_n$ for $\om$--all $n$, where the lines $\gamma_n\sq T^v$ are the axes of elements $g_n\in G$ that are convex-cocompact in $G$ and loxodromic in $T^v$ with $\sup_{\om}\ell(g_n;T^v)<+\infty$.
    \end{enumerate}
    In Case~(A), we can argue as in the proof of \cite[Proposition~5.12]{Fio10e} to conclude that, up to perturbing the arcs $\beta_n$, we have $G_{\beta}=\bigcap_{\om}\varphi_n^{-1}(G_{\beta_n})$. 
    In particular, we have $G_{\beta}\in\mc{Z}(G)$ and hence, since $G_{\beta}$ is finitely generated, the subgroup $\varphi_n(G_{\beta})$ is elliptic in $T^v$ for $\om$--all $n$.

    In the rest of the proof, we consider Case~(iB). The line $\gamma:=\lim_{\om}\gamma_n$ contains $\beta$. Denote by $K_n\lhd Z_G(g_n)$ the kernel of the action $Z_G(g_n)\acts\gamma_n$. Property~$(*)$ implies that we have $G_{\gamma_n}=Z_G(g_n)$ and $G_{\beta_n}=K_n$ for $\om$--all $n$, 
    and also that $G_{\beta}=\bigcap_{\om}\varphi_n^{-1}(K_n)$
    and $G_{\gamma}=\bigcap_{\om}Z_G(\varphi_n^{-1}(g_n))$. This shows that $G_{\gamma}\in\mc{Z}(G)$, that $G_{\beta}\leq G_{\gamma}$ and that the action $G_{\gamma}\acts\gamma$ is orientation-preserving. In particular, $G_{\beta}$ is the kernel of the action $G_{\gamma}\acts\gamma$, and the quotient $G_{\gamma}/G_{\beta}$ acts freely on $\gamma$. Also note that the group $\varphi_n(G_{\beta})$ is elliptic in $T^v$ for $\om$--all $n$, even when $G_{\beta}$ is infinitely generated: this is because the action $G_{\beta}\cong\varphi_n(G_{\beta})\acts\gamma_n$
    factors through the projection of $G_{\beta}$ to the abelianisation of $G_{\gamma}$, which is finitely generated since $G_{\gamma}\in\mc{Z}(G)$.
    
    Now, if $G_{\beta}=G_{\gamma}$, we have $G_{\beta}\in\mc{Z}(G)$ as in Case~(A). Supposing instead that $G_{\beta}\neq G_{\gamma}$, we are only left to show that $G_{\gamma}\acts\gamma$ has dense orbits, that is, that we have $G_{\gamma}/G_{\beta}\not\cong\Z$. 

    Since $G_{\gamma}\neq G_{\beta}$, the group $\varphi_n(G_{\gamma})$ is non-elliptic in $T^v$ for $\om$--all $n$. Since $\varphi_n(G_{\gamma})\leq Z_G(g_n)$ is convex-cocompact and root-closed, being a centraliser, we must have $g_n\in\varphi_n(G_{\gamma})$ for $\om$--all $n$. Thus, the elements $\varphi_n^{-1}(g_n)$ all lie in the centre of $G_{\gamma}$, which implies that the centraliser $Z_G(\varphi_n^{-1}(g_n))$ is $\om$--constant (as in the proof of \cite[Proposition~5.15(a)]{Fio10e}). This shows that we have $\varphi_n(G_{\gamma})=Z_G(g_n)$ for $\om$--all $n$. If we had $G_{\gamma}/G_{\beta}\cong\Z$, it would follow that $\varphi_n(G_{\beta})=K_n$ for $\om$--all $n$. 
    Picking an element $g\in G_{\gamma}$ projecting to a generator of $G_{\gamma}/G_{\beta}$, we would then have that $\varphi_n(g)$ projects to a generator of $Z_G(g_n)/K_n$. Recalling that $\sup_{\om}\ell(g_n;T^v)<+\infty$, this would yield that $\sup_{\om}\ell(\varphi_n(g);T^v)<+\infty$ and so $g$ would be elliptic in $T^v_{\om}$, contradicting the fact that $g\not\in G_{\beta}$. 
    
    This completes the proof of the theorem.
\end{proof}

\Cref{thm:inert_vs_motile} should also be taken as the definition of \emph{inert} and \emph{motile} arcs, which will play an important role in \Cref{sect:shortening}. If $\beta$ is a motile arc, we refer to the line $\alpha\supseteq\beta$ described in the theorem as the \emph{characteristic line} of $\beta$. Note that the action $G_{\alpha}\acts\alpha$ can be discrete or trivial, unlike for lines arising from inert arcs; when the action is trivial, we will typically have $\alpha\not\sq\Min(G;T^v_{\om})$.

\begin{rmk}\label{rmk:BF-stable}
    \Cref{thm:inert_vs_motile} shows that all arc-stabilisers of $\Min(G;T^v_{\om})$ are co-(free abelian) subgroups of elements of $\mc{Z}(G)$. In particular, chains of arc-stabilisers have bounded length (see \cite[Lemma~3.36]{Fio10e}), and so every arc contains a stable sub-arc, that is, the action $G\acts\Min(G;T^v_{\om})$ is \emph{stable} in the sense of \cite{BF-stable}. In fact, it is even \emph{piecewise stable} in the sense of \cite{Guir-Fourier}.
\end{rmk}

We call a line $\alpha\sq T^v_{\om}$ \emph{motile} if all its arcs are motile,
and \emph{indiscrete} if the action $G_{\alpha}\acts\alpha$ has dense orbits. For simplicity, we speak of \emph{IOM-lines} when referring to lines that are either indiscrete or motile. The following are a few useful addenda to the statement of \Cref{thm:inert_vs_motile}.

\begin{lem}\label{lem:line_addenda}
    Consider $v\in\G$ such that $G$ is non-elliptic in $T^v_{\om}$.
    \begin{enumerate}
        \item For any line $\alpha\sq T^v_{\om}$ with $G_{\alpha}\acts\alpha$ nontrivial, we have that $G_{\alpha}=Z_G(g)$ for an algebraically convex element $g\in G\setminus\{1\}$. Moreover, $\Min(\varphi_n(G_{\alpha});T^v)$ is a line for $\om$--all $n$ and, if $C$ denotes the centre of $G_{\alpha}$, we have $Z_G(C)=G_{\alpha}$.
        \item A line $\alpha\sq T^v_{\om}$ is motile if and only if it is the characteristic line of some motile arc.
        \item If $\alpha,\beta\sq T^v_{\om}$ are distinct IOM-lines, then $\alpha\cap\beta$ contains at most one point.
        \item If the automorphisms $\varphi_n$ are coarse-median preserving, there are no IOM-lines in $T^v_{\om}$.
    \end{enumerate}
\end{lem}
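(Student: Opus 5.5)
The plan is to prove the four items in order, extracting everything from \Cref{thm:inert_vs_motile} and Property~$(*)$ from its proof.

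\emph{Item~(1).} Let $\alpha\sq T^v_{\om}$ be a line on which $G_{\alpha}$ acts nontrivially, and take an arc $\beta\sq\alpha$.
\begin{itemize}
    \item[(a)] By \Cref{thm:inert_vs_motile}, either $\beta$ is motile with characteristic line $\alpha'$, or $\beta$ is inert and lies on a line $\alpha'$ with indiscrete action. In both cases $G_{\alpha'}\in\mc{Z}(G)$.
    \item[(b)] Using $(*)$ again, show $\alpha=\alpha'$. The point is that distinct lines with nontrivial stabiliser actions cannot share the arc $\beta$ unless the corresponding axes $\alpha_n$ (or $\gamma_n$) in $T^v$ overlap for long stretches. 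Rescaled, this contradicts the bound $|A_g\cap A_h|\leq C\max\ell$ from $(*)$.
    \item[(c)] The proof of \Cref{thm:inert_vs_motile} then expresses $G_{\alpha}=\bigcap_{\om}\varphi_n^{-1}(Z_G(g_n))$, with $g_n$ convex-cocompact and loxodromic in $T^v$. Since chains of centralisers have bounded length, this intersection is $\om$--constant and equals $Z_G(\varphi_n^{-1}(g_n))$ for $\om$--all $n$.
    \item[(d)] The element $g_n$ is convex-cocompact, so it is algebraically convex by \Cref{lem:cc->ac}. Algebraic convexity is $\Aut(G)$--invariant, so $g:=\varphi_n^{-1}(g_n)$ is algebraically convex.
    \item[(e)] $\Min(\varphi_n(G_{\alpha});T^v)=\Min(Z_G(g_n);T^v)$ is the axis of $g_n$, since $g_n$ is central in its centraliser.
    \item[(f)] For the last claim, $g$ lies in the centre $C$, so $Z_G(C)\leq Z_G(g)=G_{\alpha}$. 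Conversely $G_{\alpha}\leq Z_G(C)$ trivially.
\end{itemize}

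\emph{Item~(2).} One direction is the definition: a characteristic line is a limit of axes, so every arc of it has stabiliser containing the non-elliptic $\varphi_n^{-1}$-images. It should therefore be motile, with the same characteristic line by the uniqueness in~(1b). For the converse, any arc of a motile line is motile, and its characteristic line shares that arc with $\alpha$. So it equals $\alpha$ by the same overlap argument.

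\emph{Item~(3).} Suppose distinct IOM-lines $\alpha,\beta$ share a nondegenerate arc $\sigma$.
\begin{itemize}
    \item Each line has nontrivial stabiliser action. This is clear for indiscrete lines. For motile lines, use the characteristic line of an arc together with~(2), treating the degenerate case of trivial action directly via the lines $\alpha_n$.
    \item In the limit, both lines are $\om$--limits of axes $\alpha_n,\beta_n$ of elements with translation length $o(\lambda_n)$ relative to the overlap. For indiscrete lines, use elements of $G_{\alpha}$ with arbitrarily small translation length along $\alpha$. For motile lines, the elements $\varphi_n(h_0)$ fix $\beta$ in the limit, so $\ell(\cdot)=o(\lambda_n)$.
    \item Their intersection has length about $\lambda_n|\sigma|$, which by $(*)$ forces $\alpha_n=\beta_n$ for $\om$--all $n$, and hence $\alpha=\beta$.
\end{itemize}
I expect this to be the main obstacle. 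One must arrange that the approximating elements have translation length small compared with the overlap, which requires care in mixed cases where one line is indiscrete and the other motile.

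\emph{Item~(4).} In the coarse-median preserving case, \Cref{thm:inert_vs_motile} says inert arcs have $G_{\beta}\in\mc{Z}(G)$ with no indiscrete line attached, and motile arcs have $G_{\beta}=G_{\alpha}$.
\begin{itemize}
    \item A motile line would act trivially. Yet the arcs of $\Min(G;T^v_{\om})$ lying on it would need $\varphi_n(G_{\beta})$ non-elliptic with $G_{\beta}$ fixing $\alpha$. I plan to derive a contradiction using that coarse-median preserving maps send the convex-cocompact $G_{\alpha}$ to convex-cocompact subgroups, following \cite{Fio10e}. If this fails, I will at least rule out motile arcs lying in $\Min(G;T^v_{\om})$.
    \item An indiscrete line would give an inert arc whose stabiliser is not in $\mc{Z}(G)$. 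This is excluded by the theorem.
\end{itemize}
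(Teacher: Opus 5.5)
\textbf{Item~(1) has a genuine gap at step~(c).} Bounded length of chains in $\mc{Z}(G)$ only shows that $\bigcap_{\om}\varphi_n^{-1}(Z_G(g_n))$ is itself a centraliser: it equals $\bigcap_{n\in I}\varphi_n^{-1}(Z_G(g_n))$ for some $I\in\om$. It does not make the sequence $\varphi_n^{-1}(Z_G(g_n))$ $\om$--constant, since pairwise distinct centralisers can all properly contain their $\om$--intersection. So you have not shown that $G_{\alpha}$ is the centraliser of a \emph{single} element, and step~(e), which uses $\varphi_n(G_{\alpha})=Z_G(g_n)$, falls with it.

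The missing idea is the one at the end of Case~(iB) in the proof of \Cref{thm:inert_vs_motile}, which the paper reuses here:
\begin{itemize}
\item $\varphi_n(G_{\alpha})\leq Z_G(g_n)$ is a centraliser (this is what bounded chains actually buy you), hence convex-cocompact and root-closed.
\item It is non-elliptic in $T^v$ because $G_{\alpha}$ acts nontrivially on $\alpha$. By the second half of $(*)$ and root-closure, this forces $g_n\in\varphi_n(G_{\alpha})$.
\item Hence $h_n:=\varphi_n^{-1}(g_n)$ is central in $G_{\alpha}$. The centralisers $Z_G(h_n)$ are then $\om$--constant (the paper refers to the proof of \cite[Proposition~5.15(a)]{Fio10e}), so $G_{\alpha}=\bigcap_{\om}Z_G(h_n)=Z_G(h_n)$ for $\om$--all $n$.
\end{itemize}

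Drop steps~(a)--(b) and apply $(*)$ directly to $\varphi_n(G_{\alpha})$ to get the lines $\alpha_n$, as the paper does. Your dichotomy omits inert arcs with $G_{\beta}\in\mc{Z}(G)$ lying on no indiscrete line, e.g.\ arcs of an axis with discrete stabiliser action. It is also false that distinct lines with nontrivial stabiliser action cannot share an arc: $(*)$ only applies when the overlap exceeds $C$ times the relevant translation lengths. Indeed, the characteristic line of a motile arc of $\alpha$ can leave $\alpha$ (example below). Your step~(f) is correct, and simpler than the paper's argument, once $G_{\alpha}=Z_G(g)$ is established.

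\textbf{Items~(2)--(3)} follow the paper's idea, but your order is circular: (3) invokes (2), which invokes (1b). Prove~(3) first from $(*)$. Approximate long arcs of each line, extending past the branch point, by axes of elements with translation length $o(\lambda_n)$. For a motile line these elements come from the stabilisers of those arcs, so no nontrivial stabiliser action is needed. Then deduce~(2) from~(3), as the paper does.

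\textbf{Item~(4).} The paper just cites \Cref{thm:inert_vs_motile} and \cite[Proposition~5.15(c1)]{Fio10e}. Your indiscrete case is asserted rather than argued: the dichotomy in \Cref{thm:inert_vs_motile} does not by itself forbid an inert arc with $G_{\beta}\in\mc{Z}(G)$ lying on an indiscrete line.

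Your fallback for motile lines cannot succeed. Take the RAAG $G=\langle a,b\rangle\times\langle c\rangle$ (so $\G$ is the path $a-c-b$), and let $\varphi_n$ be the $n$--th power of the fold $a\mapsto ab$, which is coarse-median preserving by \Cref{thm:cmp_DT}(1).
\begin{itemize}
\item Here $\varphi_n(b)=b$ is loxodromic in $T^b$, yet $b$ fixes pointwise the line $\lim_{\om}{\rm Ax}(b;T^b)\sq T^b_{\om}$.
\item An arc of positive length of this line lies on the axis of $a$.
\end{itemize}
So motile arcs in $\Min(G;T^v_{\om})$ do occur for coarse-median preserving sequences. The cmp clause of \Cref{thm:inert_vs_motile} only gives that their characteristic lines carry a trivial stabiliser action.
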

\begin{proof}
    Regarding part~(1), the fact that $\alpha_n:=\Min(\varphi_n(G_{\alpha});T^v)$ is $\om$--always a line follows from the Property~$(*)$ mentioned at the start of the proof of \Cref{thm:inert_vs_motile}. We then have $G_{\alpha}=\bigcap_{\om}\varphi_n^{-1}(G_{\alpha_n})$ and $G_{\alpha_n}=Z_G(g_n)$ for convex-cocompact elements $g_n\in G$ that are loxodromic in $T^v$. Now, the argument used at the end of Case~(iB) in the proof of \Cref{thm:inert_vs_motile} shows that we actually have $\varphi_n(G_{\alpha})=Z_G(g_n)$ for $\om$--all $n$ (the specific hypotheses of Case~(iB) were irrelevant in that part of the argument). In particular, the equality $G_{\alpha}=Z_G(\varphi_n^{-1}(g_n))$ holds for at least one index $n$, and the element $g:=\varphi_n^{-1}(g_n)$ is algebraically convex by \Cref{lem:cc->ac}. Finally, if $C$ is the centre of $G_{\alpha}$, then $\varphi_n(C)$ is the centre of $Z_G(g_n)$, which contains $g_n$. Thus $\varphi_n(C)$ is non-elliptic in $T^v$ with axis $\alpha_n$ for $\om$--all $n$. It follows that the centraliser $Z_G(C)$ preserves the lines $\alpha_n$, and so it preserves their $\om$--limit $\alpha$. Hence $Z_G(C)\leq G_{\alpha}$, and the other inclusion is clear. 
    This proves part~(1).

    Part~(2) easily follows from part~(3). 
    In turn, part~(3) is another straightforward consequence of Property~$(*)$: just approximate $\alpha,\beta\sq T^v_{\om}$ by long arcs $\alpha_n,\beta_n\sq T^v$, and observe that $\varphi_n(G_{\alpha})$ and $\varphi_n(G_{\beta})$ contain elements whose translation lengths in $T^v$ are much smaller than the lengths of $\alpha_n$ and $\beta_n$. Finally, part~(4) follows from \Cref{thm:inert_vs_motile} and \cite[Proposition~5.15(c1)]{Fio10e}.
\end{proof}

IOM-lines are problematic because, in view of \Cref{thm:inert_vs_motile}, they are the ``cause'' of arc-stabilisers outside $\mc{Z}(G)$. Fortunately, the next result shows that such lines are extremely restricted.

\begin{lem}\label{lem:line_stab_SVP}
    Let $\alpha\sq T^v_{\om}$ be an IOM-line and let $A$ denote the centre of $G_{\alpha}$.
    \begin{enumerate}
        \item If $G_{\alpha}\acts\alpha$ is nontrivial, then we have $G_{\alpha}\in\SVP(G)$ and $A\in\Sal(G)$ (\Cref{defn:salient}).
        \item If $A\acts\alpha$ is nontrivial, then $A\in\xSal(G)$.
    \end{enumerate}
\end{lem}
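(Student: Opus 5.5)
Assume first that $G_{\alpha}\acts\alpha$ is nontrivial. By \Cref{lem:line_addenda}(1), $G_{\alpha}=Z_G(g)$ for an algebraically convex element $g\neq 1$, and $Z_G(A)=G_{\alpha}$. The plan is to apply \Cref{prop:centraliser_g_algconvex} to $g$ and rule out every outcome except $G_{\alpha}\in\SVP(G)$.

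We first dispose of the case $G_{\alpha}\cong\Z$. An IOM-line stabiliser is non-cyclic: an indiscrete line needs a rank $\geq 2$ free abelian quotient acting with dense orbits. For a motile line, the arc stabiliser $G_{\beta}$ is a nontrivial kernel; indeed $\varphi_n(G_{\beta})$ is non-elliptic in $T^v$, so $G_{\beta}\neq 1$, and $G_{\beta}\lhd G_{\alpha}$ with $G_{\alpha}/G_{\beta}\neq 1$. This forces $G_{\alpha}\not\cong\Z$.

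Next we rule out the second alternative of \Cref{prop:centraliser_g_algconvex}. Suppose there is a cofactor $Q\lhd G_{\alpha}$ of some $P\in\SVP(G)$ with $G_{\alpha}/Q\cong\Z$. Then $Q$ is staunchly $G$--parabolic, and $N_G(Q)/Q$ is non-abelian by \Cref{lem:cofactors}. We compare $Q$ with the kernel $K$ of $G_{\alpha}\acts\alpha$. In the motile case $K=G_{\beta}$ and $\varphi_n(K)$ is non-elliptic. In the indiscrete case $G_{\alpha}/K$ has rank $\geq 2$, so $Q\not\leq K$ is possible only when \dots; the precise comparison is where the main work lies.

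The idea is that staunch parabolicity makes $\varphi_n(Q)$ range over finitely many conjugacy classes. Its behaviour in $T^v$ is therefore bounded, so $\varphi_n(Q)$ has bounded displacement data in $T^v$, and after rescaling $Q$ acts on $\alpha$ either trivially or through a single $\Z$. Combined with $G_{\alpha}/Q\cong\Z$, this makes $G_{\alpha}/K$ cyclic with $G_{\alpha}\acts\alpha$ discrete. That contradicts indiscreteness. In the motile case it contradicts the assumption that $\varphi_n(G_{\beta})$ moves along $\alpha_n$ with translation lengths that are $o(\lambda_n)$ while being non-elliptic.

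I expect this step to be the main obstacle. I would first try to exploit the non-abelian quotient $N_G(Q)/Q$: elements of $P$ normalise $Q$, and $Q$ is conjugate-rigid, so the axis of $\varphi_n(Q)$ (if any) is preserved by $\varphi_n(P)$. Property $(*)$ from the proof of \Cref{thm:inert_vs_motile} then forces $P$ to stabilise $\alpha$, so $P\leq G_{\alpha}$. That is impossible because $P/Q$ is non-abelian whereas $G_{\alpha}/Q\cong\Z$.

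Hence $G_{\alpha}\in\SVP(G)$. For $A\in\Sal(G)$ it remains to check that $N_G(G_{\alpha})=G_{\alpha}$ and $A\neq 1$. Since $G_{\alpha}$ is a centraliser, it is root-closed and convex-cocompact, so by \Cref{lem:cc_basics}(2) its normaliser is virtually $G_{\alpha}\times\Perp_G(G_{\alpha})$. The orthogonal $\Perp_G(G_{\alpha})$ lies in $Z_G(G_{\alpha})\leq Z_G(A)=G_{\alpha}$, and root-closure then gives equality. Also $A\ni g$, so $A\neq 1$.

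For part (2), assume $A\acts\alpha$ is nontrivial, and suppose $Z\in\mc{Z}(G)$ satisfies $A/A\cap Z\cong\Z$. Then $A\cap Z\in\mc{Z}(G)$ is co-cyclic in $A$. Twisting by $\varphi_n$, the groups $\varphi_n(A\cap Z)$ are centralisers in $G$ and hence convex-cocompact. Since $A$ acts nontrivially, the axis of $\varphi_n(A)$ in $T^v$ is $\alpha_n$. Using Property $(*)$ as in Case (iB) of \Cref{thm:inert_vs_motile}, the co-cyclic centraliser $A\cap Z$ must either act on $\alpha$ with a discrete cyclic quotient of $A$, or fix $\alpha$. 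In both cases $A\acts\alpha$ is discrete with the kernel containing $A\cap Z$. This contradicts indiscreteness, or in the motile case contradicts that $G_{\beta}$ is the full kernel. I would model this argument on the final paragraph of the proof of \Cref{thm:inert_vs_motile}.
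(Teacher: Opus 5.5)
For part (1) you follow the paper's route, and the decisive step is the one in your ``main obstacle'' paragraph. If $\varphi_n(Q)$ were non-elliptic in $T^v$ for $\om$--all $n$, its minimal subtree would be the line $\alpha_n=\Min(\varphi_n(G_{\alpha});T^v)$. This line is preserved by $\varphi_n(P)\leq\varphi_n(N_G(Q))$, forcing $P\leq G_{\alpha}$, which is impossible since $P/Q$ is non-abelian and $G_{\alpha}/Q\cong\Z$.

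Write the finish explicitly. Since $\varphi_n(Q)$ is elliptic, $Q$ is elliptic in $T^v_{\om}$ and, being normal in $G_{\alpha}$, fixes $\alpha=\Min(G_{\alpha};T^v_{\om})$ pointwise. Thus $Q$ is the kernel $K$ of $G_{\alpha}\acts\alpha$: $G_{\alpha}/K\cong\Z$ rules out indiscreteness, and ellipticity of $\varphi_n(K)$ rules out motility.

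Drop the ``bounded displacement'' paragraph. Finitely many conjugacy classes of the subgroups $\varphi_n(Q)$ do not control $\varphi_n|_Q$. Moreover, if $Q$ acted on $\alpha$ through $\Z$, then $G_{\alpha}/K$ would not have to be cyclic. Your derivation of $N_G(G_{\alpha})=G_{\alpha}$ from $\Perp_G(G_{\alpha})\leq Z_G(G_{\alpha})\leq Z_G(A)=G_{\alpha}$ plus root-closure is a valid substitute for the paper's observation that $N_G(G_{\alpha})$ preserves $\Min(G_{\alpha};T^v_{\om})=\alpha$.

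Part (2), however, has a genuine gap. Your argument uses nothing about $A\cap Z$ beyond its being co-cyclic and convex-cocompact, and with only that the conclusion is false. Take $G=\Z^2\times F_2=\langle a,b\rangle\times\langle x,y\rangle$ as a RAAG, and let $\varphi_n$ fix $a,y$ with $b\mapsto a^nb$ and $x\mapsto xa^{\lfloor n\sqrt{2}\rfloor}$. Then $T^a_{\om}$ is an indiscrete (and motile) line. The centre $A=\langle a,b\rangle$ acts on it nontrivially through $\Z$, with kernel the co-cyclic convex-cocompact subgroup $\langle a\rangle$. So ``$A\acts\alpha$ is discrete with kernel containing $A\cap Z$'' contradicts neither indiscreteness (the image of $A$ in $G_{\alpha}/K$ may have smaller rank) nor motility (the kernel $A\cap K$ of the $A$--action can be much smaller than $K$).

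The missing idea is to split according to whether $\varphi_n(A\cap Z)$ is elliptic in $T^v$.
\begin{itemize}
\item If it is not, its axis is $\alpha_n$, which is then preserved by the commuting subgroup $\varphi_n(Z_G(Z))$. Hence $Z_G(Z)\leq G_{\alpha}=Z_G(A)$ and $A=Z_GZ_G(A)\leq Z_GZ_G(Z)=Z$, a contradiction. This bicommutant step is exactly where $Z\in\mc{Z}(G)$ is needed, and your proposal lacks it.
\item If it is elliptic, then $A\cap Z$ is the kernel of $A\acts\alpha$, and $\alpha$ is a discrete inert line for the $A$--action. The paper normalises $\varphi_n(A)=A$, using that $A\in\Sal(G)$ is staunchly parabolic by part (1). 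It then applies \Cref{thm:inert_vs_motile} to the abelian group $A$, where $\mc{Z}(A)=\{A\}$ forces $\Min(A;\cdot)$ to be an IOM-line, a contradiction.
\end{itemize}

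Your Case (iB) analogy can replace that last step, but only if you actually run it. The elements $h_n\in A$ of \Cref{rmk:short_central_elements} satisfy $\lim_{\om}\frac{1}{\lambda_n}\ell(\varphi_n(h_n);T^v)=0$. They translate $\alpha_n$ by nonzero multiples of $\ell(\varphi_n(a_0);T^v)$, where $a_0$ generates $A$ modulo $A\cap Z$. So $A$ would be elliptic in $T^v_{\om}$, contradicting that $A\acts\alpha$ is nontrivial.
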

\begin{proof}
    We begin with part~(1). By \Cref{lem:line_addenda}(1), we have $G_{\alpha}=Z_G(g)$ for an algebraically convex element $g$. Since $\alpha$ is an IOM-line, we have $G_{\alpha}\not\cong\Z$. Thus, supposing for the sake of contradiction that $G_{\alpha}\not\in\SVP(G)$, \Cref{prop:centraliser_g_algconvex} and \Cref{lem:cofactors} yield a staunchly $G$--parabolic subgroup $Q\lhd G_{\alpha}$ such that $G_{\alpha}/Q\cong\Z$ and $N_G(Q)/Q$ is non-abelian. Observe that $\varphi_n(Q)$ is elliptic in $T^v$ for $\om$--all $n$: otherwise, $N_G(Q)$ would preserve the $\varphi_n(Q)$--minimal subtrees of $T^v$, which would be lines (since the $\varphi_n(G_{\alpha})$--minimal subtrees are lines by \Cref{lem:line_addenda}(1)); these lines would then converge to $\alpha$, giving $N_G(Q)\leq G_{\alpha}$ and violating the fact that $N_G(Q)/Q$ is non-abelian.
    
    Now, since $\varphi_n(Q)$ is $\om$--always elliptic in $T^v$, we have that $Q$ is elliptic in $T^v_{\om}$, and so $Q$ fixes $\alpha$ pointwise. This prevents $\alpha$ from being indiscrete, since $G_{\alpha}/Q\cong\Z$. It also implies that $Q$ is the kernel of the action $G_{\alpha}\acts\alpha$, since this action is nontrivial by hypothesis and $G_{\alpha}/Q\cong\Z$. In turn, the fact that $Q$ is the kernel prevents $\alpha$ from being motile, which is a contradiction.

    In conclusion, we have shown that $G_{\alpha}\in\SVP(G)$. Since $G_{\alpha}\acts\alpha$ is nontrivial, the line $\alpha$ is the $G_{\alpha}$--minimal subtree of $T^v_{\om}$ and so we have $N_G(G_{\alpha})=G_{\alpha}$. Finally, the centre $A\leq G_{\alpha}$ is nontrivial because $G_{\alpha}=Z_G(g)$ with $g\neq 1$, and so $A\in\Sal(G)$, proving part~(1).

    As to part~(2), assume now that $A$ is non-elliptic in $T^v_{\om}$. Note that $Z_G(A)=G_{\alpha}$ by \Cref{lem:line_addenda}(1).
    Suppose for the sake of contradiction that there is a centraliser $Z\in\mc{Z}(G)$ with $A/A\cap Z\cong\Z$. 
    
    Suppose for a moment that the abelian group $\varphi_n(A\cap Z)$ is non-elliptic in $T^v$ for $\om$--all $n$ and let $\gamma_n$ be its axis. This must coincide with the line $\Min(\varphi_n(G_{\alpha});T^v)$, and so the lines $\gamma_n\sq T^v$ converge to $\alpha\sq T^v_{\om}$ modulo $\om$. Since the centraliser $\varphi_n(Z_G(Z))$ commutes with $\varphi_n(A\cap Z)$, it preserves the line $\gamma_n$. Consequently, $Z_G(Z)$ preserves the line $\alpha$ and hence $Z_G(Z)\leq G_{\alpha}=Z_G(A)$. This implies that $A=Z_GZ_G(A)\leq Z_GZ_G(Z)=Z$, contradicting the fact that $A/A\cap Z\cong\Z$.

    Thus, the subgroup $\varphi_n(A\cap Z)$ must be elliptic in $T^v$ for $\om$--all $n$, and hence $A\cap Z$ is elliptic in $T^v_{\om}$. Since $A/A\cap Z\cong\Z$ and $A$ is non-elliptic in $T^v_{\om}$, it follows that $A\cap Z$ is precisely the kernel of the action $A\acts\alpha$. In conclusion, the action $A\acts\alpha$ is discrete and its kernel is elliptic in $T^v$ when twisted by $\om$--all of the $\varphi_n$. This means that $\alpha$ is \emph{not} an IOM-line for the action $A\acts T^v_{\om}$.

    We can finally reach a contradiction by applying \Cref{thm:inert_vs_motile} to the restrictions $\varphi_n|_A$. More precisely, since $A$ lies in $\Sal(G)$ by part~(1), it is staunchly $G$--parabolic, and so the subgroups $\varphi_n(A)$ fall into finitely many $G$--conjugacy classes. Thus, up to composing each $\varphi_n\in\Aut(G)$ with an inner automorphism of $G$ (which only modifies $\X_{\om}$ and $T^v_{\om}$ by a $G$--equivariant isometry), we can suppose that the subgroup $\varphi_n(A)$ is $\om$--constant. Moreover, up to right-composing each $\varphi_n$ with some automorphism $\psi\in\Aut(G)$ (which simply twists the action on $T^v_{\om}$ by $\psi$), we can assume that $\varphi_n(A)=A$ for $\om$--all $n$. Now, we can consider the degeneration $A\acts\mc{Y}_{\om}$ and the $\R$--tree $A\acts\mscr{T}^v_{\om}$ that arise from the sequence $\varphi_n|_A\in\Aut(A)$. Since $A$ is non-elliptic in $T^v_{\om}$, we can use the same scaling factors to define $\X_{\om}$ and $\mc{Y}_{\om}$, and so the actions $A\acts\Min(A;T^v_{\om})$ and $A\acts\Min(A;\mscr{T}^v_{\om})$ are equivariantly isometric. Since $A$ is abelian, we have $\mc{Z}(A)=\{A\}$ and so \Cref{thm:inert_vs_motile} guarantees that $\Min(A;\mscr{T}^v_{\om})$ is either indiscrete or motile, that is, an IOM-line. 

    The previous two paragraphs stand in contradiction to each other, and so no centraliser $Z\in\mc{Z}(G)$ as above can exist, showing that $A\in\xSal(G)$ and concluding the proof.
\end{proof}

\begin{rmk}\label{rmk:abelians_give_IOM}
    It is worth recording the following fact, which we deduced from \Cref{thm:inert_vs_motile} in the second-last paragraph of the proof of \Cref{lem:line_stab_SVP}: \emph{If $B\leq G$ is an abelian, staunchly $G$--parabolic subgroup that is non-elliptic in $T^v_{\om}$ (for some degeneration of $G$), then the subtree $\Min(B;T^v_{\om})$ is an IOM-line (with respect to both the $B$-- and $G$--actions on $T^v_{\om}$).}
\end{rmk}

We will also need the following two observations.

\begin{rmk}\label{rmk:short_central_elements}
    If $\alpha\sq T^v_{\om}$ is an IOM-line, then we have $G_{\alpha}=Z_G(h_n)$ for elements $h_n$ such that $\varphi_n(h_n)$ is convex-cocompact in $G$ and loxodromic in $T^v$ with $\lim_{\om}\frac{1}{\lambda_n}\ell(\varphi_n(h_n);T^v)=0$. Indeed, recall from the proof of \Cref{lem:line_addenda}(1), that we have $\varphi_n(G_{\alpha})=Z_G(g_n)$ for $\om$--all $n$, where $g_n\in G$ is convex-cocompact in $G$ and loxodromic in $T^v$. By \Cref{lem:line_stab_SVP}(1), the groups $\varphi_n(G_{\alpha})$ are all $G$--parabolic and, by \Cref{lem:cc_basics}(2), the cyclic subgroups $\langle g_n\rangle$ are virtual direct factors of $\varphi_n(G_{\alpha})$. Thus, assuming that we picked the $g_n$ so that they are not proper powers, all the subgroups $\langle g_n\rangle$ are $G$--parabolic and hence $\sup_{\om}\ell(g_n;T^v)<+\infty$. Thus, we can simply take $h_n:=\varphi_n^{-1}(g_n)$.
\end{rmk}

\begin{rmk}\label{rmk:elliptic_in_degeneration}
    If the degeneration $G\acts\X_{\om}$ arises from a sequence $\varphi_n\in\Aut(G;\mscr{E}^t)$, where $\mscr{E}$ is an arbitrary family of arbitrary subgroups of $G$, then all $E\in\mscr{E}$ are elliptic in $\X_{\om}$. Indeed, each element $g\in E$ satisfies $\ell(\varphi_n(g);\X_{\G})=\ell(g;\X_{\G})$ for all $n$, and hence also $\ell(g;\X_{\om})=0$. It follows that each finitely generated subgroup $E_0\leq E$ is elliptic in all the $\R$--trees $T^v_{\om}$ (by Serre's lemma \cite[p.\,64]{Serre}), and hence it is elliptic in $\X_{\om}$. Finally, the entire $E$ is itself elliptic because chains of arc-stabilisers in the trees $T^v_{\om}$ have uniformly bounded length (using e.g.\ \cite[Lemma~2.18]{Fioravanti-Kerr}).
\end{rmk}

We can now relate IOM-lines to the isolating splittings considered in Sections~\ref{sub:more_DTs} and~\ref{sub:poison}, as well as to the systems of complements from \Cref{defn:SOC}.

\begin{prop}\label{prop:salient_vs_degenerations}
    Consider a degeneration $G\acts\X_{\om}$ and some $v\in\G$. Let $\mscr{F}$ be a finite set of finitely generated subgroups of $G$ that are elliptic in $T^v_{\om}$. If $\alpha\sq\Min(G;T^v_{\om})$ is an IOM-line with $G_{\alpha}\acts\alpha$ nontrivial,
    then the following hold.
    \begin{enumerate}
        \item The group $G$ has a $(G_{\alpha},\mscr{F})$--isolating splitting $T$ with locus contained in the kernel of $G_{\alpha}\acts\alpha$.
        If $B\in\Sal(G)$ does not contain a conjugate of the centre of $G_{\alpha}$, then $B$ is elliptic in $T$.
        \item If $\X_{\om}$ arises from a sequence $\varphi_n\in\Aut(G;\mf{C}^t)$ for an $\mscr{F}$--system of complements $\mf{C}$, then the centre of $G_{\alpha}$ is elliptic in $T^v_{\om}$.
    \end{enumerate}
\end{prop}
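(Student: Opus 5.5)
For part~(1), the plan is to build the isolating splitting from a geometric approximation of the $\R$--tree $G\acts\Min(G;T^v_{\om})$, using the fact that $\alpha$ is an ascetic subtree there. By \Cref{lem:line_addenda}(3), distinct $G$--translates of the IOM-line $\alpha$ share at most one point, and its stabiliser is exactly $G_{\alpha}$. This is the situation of \Cref{rmk:ascetic->isolating}, except that $T^v_{\om}$ need not be geometric. So I would first approximate $G\acts\Min(G;T^v_{\om})$ by a geometric tree $G\acts T'$ resolving it, in the standard Levitt--Paulin way.

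I would choose the approximation so that a few things are preserved. The finitely generated subgroups in $\mscr{F}$ should remain elliptic; this uses finitely many finitely generated subgroups and a fine enough finite presentation. The preimage of $\alpha$ should still contain an ascetic subtree with stabiliser $G_{\alpha}$. Here the nontriviality of $G_{\alpha}\acts\alpha$ is used, together with the fact from \Cref{lem:line_addenda}(1) that $G_{\alpha}$ is a finitely generated centraliser. Applying the transverse-covering construction of \Cref{rmk:ascetic->isolating} then gives a bipartite simplicial tree $G\acts T$. In it, $G_{\alpha}$ stabilises a vertex, and the incident edge groups fix points of $\alpha$, so they lie in the kernel $K$ of $G_{\alpha}\acts\alpha$.

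To obtain an isolating tree, I need $G_{\alpha}/L$ free abelian for some $L$ containing the incident edge groups and the elements of $\mscr{F}$ inside $G_{\alpha}$. I would take $L$ to be the root-closure of $K$. By \Cref{thm:inert_vs_motile}, $G_{\alpha}/K$ is free abelian. The subgroups of $\mscr{F}$ contained in $G_{\alpha}$ are elliptic in $T^v_{\om}$, hence act on $\alpha$ with a fixed point and so lie in $K$. This gives locus contained in $K$.

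For the second claim of part~(1), take $B\in\Sal(G)$ non-elliptic in $T$. I would argue as in the proof of \Cref{lem:E^A_replacement}. By \Cref{rmk:abelians_give_IOM}, $\Min(B;T^v_{\om})$ is an IOM-line. Non-ellipticity in $T$ should force it to cross the translates of $\alpha$, and by \Cref{lem:line_addenda}(3) it must then equal a translate $g\alpha$. Hence $B\le gG_{\alpha}g^{-1}$ acts nontrivially on $g\alpha$. Centraliser arguments ($Z_GZ_G$, using $Z_G(C)=G_{\alpha}$ for the centre $C$ from \Cref{lem:line_addenda}(1)) then show that $B$ contains $gCg^{-1}$. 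The main obstacle is this step together with the geometric approximation: ensuring that ellipticity in $T^v_{\om}$ transfers to $T$, and that non-ellipticity in $T$ genuinely comes from the lines.

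For part~(2), suppose the centre $C$ of $G_{\alpha}$ is non-elliptic in $T^v_{\om}$. Then \Cref{lem:line_stab_SVP}(2) gives $C\in\xSal(G)$, and \Cref{lem:line_addenda}(1) gives $Z_G(C)=G_{\alpha}$. Part~(1) supplies a $(Z_G(C),\mscr{F})$--isolating tree with locus inside $K$. By \Cref{cor:same_locus} and minimality, $\mc{L}_{\mscr{F}}(C)\le K$. Since $\varphi_n$ is trivial on $\mf{C}_C$, \Cref{rmk:elliptic_in_degeneration} makes $\mf{C}_C$ elliptic, so $\mf{C}_C\le K$. But $C=C'_{\mscr{F}}\oplus\mf{C}_C$ with $C'_{\mscr{F}}\le\mc{L}_{\mscr{F}}(C)\le K$, so $C\le K$ fixes $\alpha$. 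This contradicts non-ellipticity.
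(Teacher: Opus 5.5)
Your proposal is correct and follows essentially the same route as the paper. The shared steps are: a geometric approximation in which $\mscr{F}$ stays elliptic, the transverse-covering splitting built from the ascetic line $\alpha$, the double-centraliser argument for salient abelians, and, for part~(2), the decomposition $C=C'_{\mscr{F}}\oplus\mf{C}_C$.

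One step needs stating more precisely. The inclusion that feeds the $Z_GZ_G$ argument is $Z_G(B)\le gG_\alpha g^{-1}=Z_G(gCg^{-1})$, which holds because $Z_G(B)$ preserves $\Min(B;T^v_\om)=g\alpha$; the inclusion $B\le gG_\alpha g^{-1}$ alone does not suffice. Since $B$ and $gCg^{-1}$ are both centralisers, this gives $gCg^{-1}=Z_GZ_G(gCg^{-1})\le Z_GZ_G(B)=B$.
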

\begin{proof}
    For part~(1), consider a geometric approximation $G\acts\mc{G}$ of $\Min(G;T^v_{\om})$ and set $\mc{M}:=\Min(G_{\alpha};\mc{G})$. Choosing $\mc{G}$ suitably, we can ensure that all subgroups in $\mscr{F}$ are elliptic in $\mc{G}$,
    and that the approximating morphism maps $\mc{M}$ into $\alpha$. 
    Now, \Cref{lem:line_addenda}(3) shows that the line $\alpha$ is ascetic, and hence distinct $G$--translates of the subtree $\mc{M}\sq\mc{G}$ share at most one point; moreover, the $G$--stabiliser of $\mc{M}$ is precisely $G_{\alpha}$. As observed in \Cref{rmk:ascetic->isolating} (also see Steps~1--2 in \Cref{subsub:shrinkable_lines} below for details), this allows us to construct a splitting $G\acts\mc{S}$ with the following properties:
    \begin{itemize}
        \item there is a vertex $[\mc{M}]\in\mc{S}$ whose $G$--stabiliser is $G_{\alpha}$;
        \item the $G$--stabiliser of each edge incident to $[\mc{M}]$ is the $G_{\alpha}$--stabiliser of a point of $\mc{M}\sq\mc{G}$;
        \item if a subgroup $H\leq G$ leaves invariant a subtree $\mc{H}\sq\mc{G}$ sharing at most one point with each $G$--translate of $\mc{M}$, then $H$ is elliptic in $\mc{S}$ (in particular, this is the case if $H$ is elliptic in $\mc{G}$).
    \end{itemize}
    These properties imply that $G\acts\mc{S}$ is $(G_{\alpha},\mscr{F})$--isolating with locus contained in the kernel of the action $G_{\alpha}\acts\alpha$. 

    Now, let $A$ be the centre of $G_{\alpha}$. We are left to check that, if $B\in\Sal(G)$ does not contain any conjugates of $A$, then $B$ is elliptic in $\mc{S}$. This is clear if $B$ is elliptic in $T^v_{\om}$, or if $\Min(B;T^v_{\om})$ does not share arcs with any $G$--translate of $\alpha$, provided that we chose the geometric approximation $\mc{G}$ suitably. Suppose instead that $\Min(B;T^v_{\om})$ shares an arc with $\alpha$ (after possibly replacing $B$ with a $G$--conjugate). By \Cref{rmk:abelians_give_IOM}, the line $\Min(B;T^v_{\om})$ is IOM and so \Cref{lem:line_addenda}(3) implies that $\Min(B;T^v_{\om})=\alpha$. Now, the centraliser $Z_G(B)$ preserves the axis of $B$ and hence $Z_G(B)\leq G_{\alpha}=Z_G(A)$, using \Cref{lem:line_addenda}(1). Since $A$ and $B$ are centralisers, it follows that $A=Z_GZ_G(A)\leq Z_GZ_G(B)\leq B$, violating our assumptions on $B$. This proves part~(1).

    Regarding part~(2), if the centre $A$ does not lie in $\xSal(G)$, then $A$ is certainly elliptic in $T^v_{\om}$ by \Cref{lem:line_stab_SVP}(2). Thus, suppose that $A\in\xSal(G)$.
    Denoting by $K_{\alpha}\lhd Z_G(A)$ the kernel of the action $Z_G(A)\acts\alpha$, the splitting constructed in part~(1) shows that we have $\mc{L}_{\mscr{F}}(A)\leq K_{\alpha}$, where $\mc{L}_{\mscr{F}}(A)$ is the locus of $(Z_G(A),\mscr{F})$--shunning trees of $G$. By the definition of systems of complements we have $A=(A\cap\mc{L}_{\mscr{F}}(A))\oplus\mf{C}_A$, and we have just seen that the first direct summand is elliptic in $T^v_{\om}$. Now, if $\X_{\om}$ arises from a sequence $\varphi_n\in\Aut(G;\mf{C}^t)$, then $\mf{C}_A$ is elliptic in $\X_{\om}$ and $T^v_{\om}$ by \Cref{rmk:elliptic_in_degeneration}. This shows that $A$ is elliptic in $T^v_{\om}$ as desired.
\end{proof}

\section{Shortening a degeneration}\label{sect:shortening}

This section develops our hierarchical shortening argument and it constitutes the technical core of the article. First, in \Cref{sub:shortening_median}, we develop a general framework to shorten an action on a median space, provided that there is an equivariant projection to an $\R$--tree with arcs or lines of a certain kind (\Cref{defn:shrinkable_arcs} and \Cref{defn:shrinkable_lines}). We then apply this framework to the degenerations of a special group $G$ in \Cref{sub:shortening_special}, where we prove two results producing Dehn twists shortening finite subsets of $G$ (Propositions~\ref{prop:shortening_inert} and~\ref{prop:shortening_motile}). Finally, \Cref{sub:shortening_theorem} is uniquely concerned with the proof of the shortening theorem (\Cref{thm:shortening}). Once this is obtained, the proof of all the main theorems will follow quickly and easily in \Cref{sect:proofs}. 

\subsection{Shortening an action on a median space}\label{sub:shortening_median}

This subsection is concerned with the problem of shortening a general group action on a median space. After reviewing some general terminology, we introduce shortening isometries in \Cref{subsub:shortening_isometries}. We then discuss a shortening procedure arising from two different features: \emph{shrinkable arcs} in \Cref{subsub:shrinkable_arcs}, and \emph{shrinkable lines} in \Cref{subsub:shrinkable_lines}.

\subsubsection{Generalities}

For definitions and background on median metric spaces, we refer to \cite[Section~2]{CDH}, \cite[Sections~4--7]{Bow13}, \cite[Section~2.1]{Fio1}, and to the excellent monograph \cite{Bow-book}.

Let $X$ be a geodesic median space with compact intervals. Note that we do not assume completeness of $X$, and indeed our examples of interest will almost never be complete. For instance, an $\R$--tree $T\not\cong\R$ admitting a minimal action of a countable group with dense orbits cannot be complete, as explained in \cite[Example~II.6]{GL95}.

A \emph{halfspace} of $X$ is a nonempty convex subset $\mf{h}\sq X$ such that the complement $\mf{h}^*:=X\setminus\mf{h}$ is also nonempty and convex. An \emph{(abstract) wall} $\mf{w}$ is an unordered pair $\{\mf{h},\mf{h}^*\}$ for some halfspace $\mf{h}$; we say that the halfspaces $\mf{h}$ and $\mf{h}^*$ are \emph{bounded} by the wall $\mf{w}$. Each wall $\mf{w}=\{\mf{h},\mf{h}^*\}$ determines a \emph{geometric wall} $\mf{W}\sq X$, namely the intersection of the closures of the two corresponding halfspaces; in symbols, $\mf{W}=\overline{\mf{h}}\cap\overline{\mf{h}^*}$. Two subsets $A,B\sq X$ are \emph{separated} by a wall $\mf{w}=\{\mf{h},\mf{h}^*\}$ if $A\sq\mf{h}$ and $B\sq\mf{h}^*$, or vice versa. A subset $A$ is \emph{crossed} by $\mf{w}$ if two points of $A$ are separated by $\mf{w}$.

\begin{rmk}
    Geometric walls are gate-convex subsets of $X$: despite the lack of completeness, this can be deduced from compactness of intervals; see \cite[Lemma~2.6]{Fio1}.
    
    Additionally, all median spaces of relevance to this article have finite rank. This implies that geometric walls are nowhere-dense \cite[Corollary~2.23]{Fio1} and have strictly lower rank than $X$ \cite[Lemma~7.5]{Bow13}, but these latter properties are not used in the discussion below.
\end{rmk}

A \emph{wallpair} is a pair $(\mf{w},\mf{w}')$ of walls of $X$ such that either $\mf{w}=\mf{w}'$, or the corresponding geometric walls $\mf{W},\mf{W}'$ are disjoint. The wallpair is \emph{degenerate} in the former case and \emph{non-degenerate} in the latter; note that a non-degenerate wallpair always has $d(\mf{W},\mf{W}')>0$. We say that a tuple of walls $(\mf{w}_1,\dots,\mf{w}_k)$ is a \emph{chain} if we can choose halfspaces $\mf{h}_i$ bounded by $\mf{w}_i$ so that $\mf{h}_1\sq\mf{h}_2\sq\dots\sq\mf{h}_k$. Wallpairs are a very particular kind of chain of length $2$. A chain of walls \emph{separates} two subsets $A,B\sq X$ if all walls in the chain separate the two subsets; in other words, we can choose the $\mf{h}_i$ so that $A\sq\mf{h}_1$ and $B\sq\mf{h}_k^*$ or vice versa.

If $C_1,C_2\sq X$ are gate-convex subsets with gate-projections $\pi_i\colon X\ra C_i$, then the sets $\mf{s}_1:=\pi_1(C_2)\sq C_1$ and $\mf{s}_2:=\pi_2(C_1)\sq C_2$ are again gate-convex within $X$, and they are isometric to each other: the restriction of $\pi_2$ gives an isometry $\mf{s}_1\ra\mf{s}_2$ whose inverse is the restriction of $\pi_1$. We refer to $\mf{s}_1,\mf{s}_2$ as the \emph{shores} for the pair of gate-convex sets $C_1,C_2$.
Note that a pair of points $(x_1,x_2)\in C_1\x C_2$ realises the distance between $C_1$ and $C_2$ if and only if $(x_1,x_2)\in\mf{s}_1\x\mf{s}_2$ and $x_2=\pi_2(x_1)$. See \cite[Section~2.2]{Fio3} for further details.

\subsubsection{Shortening isometries}\label{subsub:shortening_isometries}

We are interested in isometries that move a geometric wall towards another without shearing along the wall. They are the most important concept for this subsection:

\begin{defn}\label{defn:shortening_isom}
    A \emph{shortening isometry} for a wallpair $(\mf{w},\mf{w}')$ is an element $\Phi\in\isom(X)$ such that $\Phi|_{\mf{s}'}=\pi|_{\mf{s}'}$, where $\mf{s}\sq\mf{W}$, $\mf{s}'\sq\mf{W}'$ are the shores for the pair of geometric walls and $\pi\colon X\ra\mf{s}$ denotes the gate-projection to $\mf{s}$.	
\end{defn}

Note that a shortening isometry $\Phi$ is not required to take the wall $\mf{w}'$ to $\mf{w}$, although the geometric walls $\mf{W}$ and $\Phi(\mf{W}')$ must intersect, as they both contain the shore $\mf{s}=\Phi(\mf{s}')$. Also note that we do not require the walls $\mf{w}'$ and $\Phi(\mf{w}')$ to be on the same side of $\mf{w}$.

If $\Phi$ is a shortening isometry for a wallpair $(\mf{w},\mf{w}')$, then $\Phi(\mf{u})=\mf{u}$ for all walls $\mf{u}$ crossing both $\mf{W}$ and $\mf{W}'$. Moreover, $\Phi^{-1}$ is a shortening isometry for the reverse wallpair $(\mf{w}',\mf{w})$. In the case of a degenerate wallpair, shortening isometries are those that fix the entire geometric wall $\mf{W}$ pointwise; the identity is always a shortening isometry in that case.

The following remark contains the basic idea at the core of the shortening process for median spaces. We say that a sequence of points $x_1,\dots,x_m\in X$ is a \emph{geodesic sequence} if there exists a geodesic in $X$ meeting the points in this order.

\begin{rmk}\label{rmk:basic_shortening}
    Consider two points $x,y\in X$, a wallpair $(\mf{w},\mf{w}')$ separating $x$ from $y$, and a shortening isometry $\Phi$ for $(\mf{w},\mf{w}')$. We then have 
    \[ d(x,\Phi y)\leq d(x,y)-d(\mf{W},\mf{W}') .\] 
    In order to see this, let $z,z'$ be the gate-projections of $x$ to $\mf{W},\mf{W}'$, respectively, and let $w$ be the gate-projection of $z'$ to $\mf{W}$. Note that $x,z,w,z',y$ is a geodesic sequence, and that $d(w,z')=d(\mf{W},\mf{W}')$. Since we have $\Phi(z')=w$, this yields the claimed inequality.
\end{rmk}

We can apply the previous observation to shorten an isometric action $G\acts X$, provided that the group $G$ has a suitable algebraic splitting. We will use the two parts of the following lemma to formalise this idea for HNN and amalgamated-product splittings, respectively. 

\begin{lem}\label{lem:basic_shortening}
    Consider isometries $a_1,\dots,a_m,b_1,\dots,b_m\in\isom(X)$ for some $m\geq 1$, and set $g_i:=(a_1b_1)\dots (a_ib_i)$ for $0\leq i\leq m$. Consider wallpairs $(\mf{w}_i,\mf{w}_i')$ for $1\leq i\leq m$, and shortening isometries $\Phi_i$ for them. Finally, consider a basepoint $x\in X$.
    \begin{enumerate}
    \setlength\itemsep{.2em}
    \item Suppose that the wallpair $(\mf{w}_i,\mf{w}_i')$ separates $x$ from $a_ix$ for each $1\leq i\leq m$, and that the tuple $(\mf{w}_1,\mf{w}_1',\dots,g_{i-1}\mf{w}_i,g_{i-1}\mf{w}_i',\dots,g_{m-1}\mf{w}_m,g_{m-1}\mf{w}_m')$ is a chain separating $x$ from $g_m x$. Then, setting $g_m':=(\Phi_1a_1b_1)\dots (\Phi_ma_mb_m)$, we have $d(x,g_m' x)\leq d(x,g_m x)$ and the inequality is strict if at least one of the wallpairs is non-degenerate.
    \item Suppose that $(\mf{w}_i,\mf{w}_i',a_i\mf{w}_i',a_i\mf{w}_i)$ separates $x$ from $a_ix$ for each $1\leq i\leq m$, and that the tuple obtained by concatenating the $4$--tuples $g_{i-1}(\mf{w}_i,\mf{w}_i',a_i\mf{w}_i',a_i\mf{w}_i)$ is a chain separating $x$ from $g_m x$. Then, setting $g_m':=(\Phi_1a_1\Phi_1^{-1}b_1)\dots (\Phi_ma_m\Phi_m^{-1}b_m)$, we have $d(x,g_m'x)\leq d(x,gx)$ and the inequality is strict if at least one of the wallpairs is non-degenerate.
    \end{enumerate}
\end{lem}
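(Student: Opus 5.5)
We argue by building a geodesic sequence from $x$ to $g_m x$ through the geometric walls in the chain, and then applying \Cref{rmk:basic_shortening} once per wallpair, replacing one segment at a time. Part~(1) is the cleaner case, and part~(2) reduces to it.

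For part~(1), write $\mf{W}_i,\mf{W}_i'$ for the geometric walls. Since the concatenated tuple is a chain separating $x$ from $g_mx$, any geodesic from $x$ to $g_mx$ crosses the translated geometric walls $g_{i-1}\mf{W}_i,g_{i-1}\mf{W}_i'$ in the chain order. Take points $z_i\in g_{i-1}\mf{W}_i$ and $z_i'\in g_{i-1}\mf{W}_i'$ realising the distance between the two walls, that is, $z_i'$ in the shore and $z_i$ its gate-projection. Choosing them via iterated gate-projections of $x$, as in \Cref{rmk:basic_shortening}, should give a geodesic sequence
\[ x,z_1,z_1',z_2,z_2',\dots,z_m,z_m',g_mx. \]
I expect this step to be the main obstacle: one has to verify that successive gate-projections onto nested walls of a chain still produce a geodesic sequence. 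The tool is that walls in a chain are nested, so gate-projections compose well and intervals behave additively.

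We then prove inductively that $g_i'':=(\Phi_1a_1b_1)\cdots(\Phi_ia_ib_i)(a_{i+1}b_{i+1})\cdots(a_mb_m)$ satisfies $d(x,g_i''x)\le d(x,g_{i-1}''x)-d(\mf{W}_i,\mf{W}_i')$. Inserting $\Phi_i$ amounts to conjugating by $h=g'_{i-1}$, the already-modified prefix. The key observation is that $h\Phi_ih^{-1}$ is a shortening isometry for the translated wallpair, and that it fixes the shore point $z_i'$ in the sense of mapping it to $z_i$. The segment of the path before $z_i$ is unchanged, and the segment after $z_i'$ is moved isometrically so that $z_i'$ lands on $z_i$. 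This removes exactly the length $d(z_i,z_i')=d(\mf{W}_i,\mf{W}_i')$, and the resulting path is still a concatenation of geodesic pieces, so the triangle inequality gives the bound. Carrying out this bookkeeping requires the separation hypotheses on $x$ versus $a_ix$: they guarantee that the walls relevant at step $i$ are still positioned correctly after the earlier $\Phi_j$ have been inserted. Summing the bounds gives $d(x,g_m'x)\le d(x,g_mx)-\sum_i d(\mf{W}_i,\mf{W}_i')$, which is strict as soon as some wallpair is non-degenerate.

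For part~(2) the same scheme applies with the $4$--tuples. Inside each factor, $\Phi_ia_i\Phi_i^{-1}$ shortens twice. Near $x$ it collapses the gap between $\mf{W}_i$ and $\mf{W}_i'$. Near $a_ix$ it collapses the gap between $a_i\mf{W}_i'$ and $a_i\mf{W}_i$, since $a_i\Phi_i^{-1}a_i^{-1}$ is a shortening isometry for the reversed translated wallpair. Applying \Cref{rmk:basic_shortening} twice per factor along the analogous geodesic sequence then yields the non-strict inequality, and strictness follows whenever some wallpair is non-degenerate.
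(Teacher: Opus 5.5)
Your route is the paper's: a path through the chain, \Cref{rmk:basic_shortening} applied once per wallpair, and part~(2) via $\Phi_ia_i\Phi_i^{-1}=\Phi_i(a_i\Phi_i^{-1}a_i^{-1})a_i$. The geodesic sequence you flag as the main obstacle is routine: iterate the construction of \Cref{rmk:basic_shortening}, restarting from $z_i'$ each time, and concatenate. The real problem is your inductive step. With $h=g_{i-1}'$, the isometry $h\Phi_ih^{-1}$ is a shortening isometry for $g_{i-1}'(\mf w_i,\mf w_i')$. It is not one for the chain wallpair $g_{i-1}(\mf w_i,\mf w_i')$ on which you placed $z_i,z_i'$. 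Put $\Psi_j:=g_{j-1}\Phi_jg_{j-1}^{-1}$. Then $h=\Psi_1\cdots\Psi_{i-1}g_{i-1}$, so $h\Phi_ih^{-1}=(\Psi_1\cdots\Psi_{i-1})\Psi_i(\Psi_1\cdots\Psi_{i-1})^{-1}$. This sends the transported point $\Psi_1\cdots\Psi_{i-1}z_i'$ to $\Psi_1\cdots\Psi_{i-1}z_i$, not $z_i'$ to $z_i$. As a result, your per-step inequality $d(x,g_i''x)\le d(x,g_{i-1}''x)-d(\mf W_i,\mf W_i')$ is false in general. The separation of $x$ from $a_ix$ cannot rescue it, because it says nothing about where the earlier $\Phi_j$ send points. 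For a counterexample, take $X=\R$, $x=0$, $b_1=b_2=\id$, $a_1(t)=t+13$, $a_2(t)=t+5$, $\mf W_1=\{10\}$, $\mf W_1'=\{12\}$, $\mf W_2=\{1\}$, $\mf W_2'=\{3\}$, $\Phi_1(t)=22-t$, $\Phi_2(t)=t-2$. All hypotheses of part~(1) hold: the chain sits at $10<12<14<16$ and $g_2x=18$. Yet $g_1''x=4$ and $g_2''x=6$, so the second step increases the distance.

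The fix is to compare everything with the single geodesic sequence $x,z_1,z_1',\dots,z_m,z_m',g_mx$ of the original element, never with intermediate distances. By induction one has $g_m'=\Psi_1\Psi_2\cdots\Psi_mg_m$, and each $\Psi_i$ is a shortening isometry for $g_{i-1}(\mf w_i,\mf w_i')$ with $\Psi_i(z_i')=z_i$. Set $z_0':=x$, $P_{m+1}:=g_mx$ and $P_k:=\Psi_kP_{k+1}$. The triangle inequality gives $d(z_{k-1}',P_k)\le d(z_{k-1}',z_k)+d(\Psi_kz_k',\Psi_kP_{k+1})=d(z_{k-1}',z_k)+d(z_k',P_{k+1})$. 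Telescoping then yields $d(x,g_m'x)\le d(x,g_mx)-\sum_i d(\mf W_i,\mf W_i')$. Equivalently, induct on the length of the broken path $x,z_1,\Psi_1z_2,\Psi_1\Psi_2z_3,\dots$ rather than on $d(x,g_i''x)$; that is what your ``move the segment after $z_i'$'' picture actually computes. This bookkeeping matters, since the paper's short proof is also phrased replacement by replacement. Part~(2) works the same way with $2m$ isometries: $g_m'=\Psi_1\Theta_1\cdots\Psi_m\Theta_mg_m$, where $\Theta_i:=g_{i-1}a_i\Phi_i^{-1}a_i^{-1}g_{i-1}^{-1}$ is a shortening isometry for $g_{i-1}a_i(\mf w_i',\mf w_i)$. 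The concatenated $4$--tuples are exactly the chain along which you telescope.
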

\begin{proof}
    Consider first part~(1). Pick geodesics $\alpha_i,\beta_i\sq X$ from $x$ to $a_ix$ and $b_ix$, respectively. Let $\gamma$ be the path from $x$ to $g_mx$ obtained by concatenating the paths $g_{i-1}\alpha_i$ and $g_{i-1}a_i\beta_i$; note that this will not be a geodesic from $x$ to $g_mx$ in general. Nevertheless, part~(1) immediately follows from a repeated application of \Cref{rmk:basic_shortening}, replacing the elements $a_i$ with $\Phi_ia_i$ one at a time, in any order: with each replacement, the distance $d(x,g_mx)$ does not increase, and it decreases by at least the distance $d(\mf{W}_i,\mf{W}_i')$ if the wallpair $(\mf{w}_i,\mf{w}_i')$ is non-degenerate.
	
    Part~(2) then follows by a double application of part~(1). Indeed, both wallpairs $(\mf{w}_i,\mf{w}_i')$ and $a_i(\mf{w}_i',\mf{w}_i)$ separate $x$ from $a_ix$, and we have $\Phi_ia_i\Phi_i^{-1}=\Phi_i(a_i\Phi_i^{-1}a_i^{-1})a_i$, where $\Phi_i$ is a shortening isometry for the wallpair $(\mf{w}_i,\mf{w}_i')$, and $a_i\Phi_i^{-1}a_i^{-1}$ is a shortening isometry for $a_i(\mf{w}_i',\mf{w}_i)$.
\end{proof}

\subsubsection{Shrinkable arcs}\label{subsub:shrinkable_arcs}

In this subsection, we consider the following setting:
\begin{itemize}
    \item $G$ is a finitely presented group;
    \item $X$ is a finite-rank, geodesic median space with compact intervals;
    \item $\rho\colon G\ra\isom(X)$ is a homomorphism inducing an action $G\acts X$;
    \item $G\acts\mscr{T}$ is a minimal action on an $\R$--tree such that there exists a $G$--equivariant, Lipschitz, median-preserving map $\Pi\colon X\ra\mscr{T}$. 
\end{itemize}

Let $[x,y]\sq\mscr{T}$ be an arc
and let $\pi_{x,y}\colon \mscr{T}\ra [x,y]$ be its nearest-point projection. For each point $z\in [x,y)$, we obtain partition of $X$ into two halfspaces, namely 
\[ X=(\Pi\o\pi_{x,y})^{-1}\big([x,z]\big)\sqcup (\Pi\o\pi_{x,y})^{-1}\big((z,y]\big) ;\] 
we denote by $\mf{w}_{[x,y]}(z)$ the corresponding wall of $X$. We say that a wall of $X$ \emph{arises} from the arc $[x,y]$ if it is of the form $\mf{w}_{[x,y]}(z)$ for a point $z\in [x,y)$.  We also say that a wallpair \emph{arises} from $[x,y]$ if it is of the form $(\mf{w}_{[x,y]}(z),\mf{w}_{[x,y]}(z'))$ for two points $z,z'\in [x,y)$.
Walls and wallpairs arising from the reverse arc $[y,x]$ are different from those arising from $[x,y]$, but this will not cause issues.

For each isometry $g\in\isom(X)$, we can consider the \emph{minset}
\[ \mc{L}(g;X):=\{x\in X\mid d(x,gx)=\ell(g;X) \}. \]
Since $X$ is finite-rank and connected, we always have $\mc{L}(g;X)\neq\emptyset$ by \cite[Corollary~A]{Fio10b}. Note that, for all elements $g\in G$, we have the inclusion $\Pi(\mc{L}(\rho(g);X))\sq\mc{L}(g;\mscr{T})$ 
and, if $g$ is loxodromic in $\mscr{T}$, this is actually an equality $\Pi(\mc{L}(\rho(g);X))={\rm Ax}(g;\mscr{T})$ (we prefer the latter notation for minsets of loxodromics in trees, namely their axes).

We have briefly considered geometric $\R$--trees in \Cref{sub:hierarchy}, and we refer to the discussion there for relevant terminology. Now, it additionally becomes important that each minimal $\R$--tree $G\acts T$ can be approximated by geometric $\R$--trees in the topology of strong convergence \cite{LP97}. Since $G$ is finitely presented, every sufficiently close geometric $\R$--tree $\mc{G}$ is equipped with an action of the group $G$ itself, and the approximating morphism $\mc{G}\ra T$ is equivariant: we will only consider geometric approximations of this kind, without mentioning this again explicitly.

The following is the first kind of feature that allows us to shorten the $G$--action on the median space $X$. We say that an arc $\beta$ in an $\R$--tree $T$ is \emph{edge-like} if the interior of $\beta$ does not contain any branch points of $T$.

\begin{defn}[Shrinkable arcs]\label{defn:shrinkable_arcs}
    Consider a geometric approximation $f\colon\mc{G}\ra\mscr{T}$ and an arc $\beta\sq\mscr{T}$. We say that the pair $(\beta,\mc{G})$ is \emph{shrinkable} if both the following hold:
    \begin{enumerate}
        \item $\beta\sq\mscr{T}$ isometrically lifts to an edge-like arc $\tilde\beta\sq\mc{G}$ with the same $G$--stabiliser;
        \item there exists a non-degenerate wallpair of $X$ arising from $\beta$ that admits a shortening isometry $\Phi\in\isom(X)$ commuting with the subgroup $\rho(G_{\beta})\leq\isom(X)$.
    \end{enumerate}
\end{defn}

\begin{rmk}
    Given an arc $\beta\sq\mscr{T}$, the typical situation in which we will be able to find a geometric approximation $\mc{G}$ such that $(\beta,\mc{G})$ is shrinkable (in the setting of degenerations of special groups) is when $\beta$ is stable, $G_{\beta}$ is finitely generated, and the normaliser $N_G(G_{\beta})$ is elliptic in $\mscr{T}$.
\end{rmk}

In the setting of \Cref{defn:shrinkable_arcs}, we can define a simplicial tree $\mc{G}(\tilde\beta)$ by collapsing all arcs of $\mc{G}$ that are disjoint from the interiors of the $G$--translates of $\tilde\beta$. More formally, vertices of $\mc{G}(\tilde\beta)$ are connected components of the set $\mc{G}\setminus\bigcup_{g\in G}g\cdot{\rm int}(\tilde\beta)$, with two vertices spanning an edge when the corresponding subsets of $\mc{G}$ are separated by a single $G$--translate of $\tilde\beta$. The natural action $G\acts\mc{G}(\tilde\beta)$ is minimal and edge-transitive. The arc $\tilde\beta\sq\mc{G}$ determines a distinguished edge $e(\tilde\beta)\sq\mc{G}(\tilde\beta)$, whose stabiliser coincides with $G_{\tilde\beta}=G_{\beta}$. Thus, the action $G\acts\mc{G}(\tilde\beta)$ determines a splitting of $G$ as an amalgamated product or HNN extension over the subgroup $G_{\beta}$. 

Let $u,v$ denote the vertices of the edge $e(\tilde\beta)$, and let $U,V$ be their $G$--stabiliser; in the HNN case, also choose an element $t\in G$ with $tu=v$. By definition, there is a shortening isometry $\Phi\in\isom(X)$ for a non-degenerate wallpair $(\mf{w},\mf{w}')$, which arises from distinct points $z,z'\in\beta$. By lifting to $\mc{G}$ and then projecting to $\mc{G}(\tilde\beta)$, the points $z,z'$ determine points $\overline z,\overline z'$ on the edge $e(\tilde\beta)$. We should be careful to have labeled the vertices of $e(\tilde\beta)$ so that $u$ is closer to $\overline z$, while $v$ is closer to $\overline z'$ (otherwise $\Phi$ needs to be replaced by $\Phi^{-1}$ in the following definition).

Since the shortening isometry $\Phi$ commutes with $\rho(G_{\beta})$ by definition, we can use it to define a new ``shrunk'' representation $\rho'\colon G\ra\isom(X)$.

\begin{defn}\label{defn:shrunk_homo_arcs}
    Let $(\beta,\mc{G})$ be a shrinkable arc, and let the shortening isometry $\Phi$ and the $1$--edge splitting $G\acts \mc{G}(\tilde\beta)$ be as above. The \emph{shrunk homomorphism} $\rho'\colon G\ra\isom(X)$ is defined as follows:
    \begin{itemize}
        \item if $G\acts\mc{G}(\tilde\beta)$ is an amalgam, we set $\rho'(g):=\rho(g)$ for $g\in U$ and $\rho'(h)=\Phi\rho(h)\Phi^{-1}$ for $h\in V$;
        \item if $G\acts\mc{G}(\tilde\beta)$ is an HNN extension, we set $\rho'(g):=\rho(g)$ for $g\in U$ and $\rho'(t):=\Phi\rho(t)$.
    \end{itemize}
\end{defn}

The shrunk homomorphism is ``shorter'' in the following sense.

\begin{prop}\label{prop:shrinkable_arc}
    Let $(\beta,\mc{G})$ be a shrinkable arc and let $\rho'\colon G\ra\isom(X)$ be the resulting shrunk homomorphism. The following statements hold for all elements $g\in G$.
    \begin{enumerate}
        \item If $g$ is elliptic in $\mc{G}(\tilde\beta)$, then $\rho'(g)$ and $\rho(g)$ are conjugate, so $\ell(\rho'(g);X)=\ell(\rho(g);X)$. 
        \item Let $g$ be loxodromic in $\mscr{T}$ with $\beta\sq{\rm Ax}(g;\mscr{T})$. 
        Suppose that the morphism $f\colon\mc{G}\ra\mscr{T}$ restricts to an isometry on the axis ${\rm Ax}(g;\mc{G})$.
        Then we have $\ell(\rho'(g);X)<\ell(\rho(g);X)$. 
    \end{enumerate}
\end{prop}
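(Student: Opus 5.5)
Part~(1) is a direct computation from \Cref{defn:shrunk_homo_arcs}. If $g$ is elliptic in $\mc{G}(\tilde\beta)$, then $g$ is $G$--conjugate into $U$ or $V$, say $g=kuk^{-1}$ with $u\in U\cup V$. In the amalgam case, $\rho'$ coincides with $\rho$ on $U$ and with $\Phi\rho(\cdot)\Phi^{-1}$ on $V$. So $\rho'(u)$ is conjugate to $\rho(u)$ in $\isom(X)$, and therefore $\rho'(g)=\rho'(k)\rho'(u)\rho'(k)^{-1}$ is conjugate to $\rho(g)$ as well. For the HNN case, first note that $\rho'$ is a well-defined homomorphism: the relation $t^{-1}ct=\g(c)$ is preserved because $\Phi$ commutes with $\rho(G_\beta)$. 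Then $V=tUt^{-1}$, so on $V$ the map $\rho'$ equals $\Phi\rho(t)\rho(\cdot)\rho(t)^{-1}\Phi^{-1}$, which is conjugation of $\rho$ by $\Phi$. The same argument therefore applies. Since translation length is a conjugacy invariant, this gives $\ell(\rho'(g);X)=\ell(\rho(g);X)$.

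For part~(2), I will reduce to \Cref{lem:basic_shortening}. Since $f$ is isometric on ${\rm Ax}(g;\mc{G})$, and $\beta$ lifts to $\tilde\beta$ with the same stabiliser, the axis of $g$ in $\mc{G}$ contains a translate of $\tilde\beta$. Hence $g$ is loxodromic in $\mc{G}(\tilde\beta)$. After conjugating $g$ (which by part~(1)'s reasoning conjugates both $\rho(g)$ and $\rho'(g)$ compatibly), I will write $g$ as a cyclically reduced normal form $g=a_1b_1\cdots a_mb_m$ adapted to the splitting. In the HNN case the $a_i$ are the letters $t^{\pm1}$, or in the amalgam case the $a_i\in V$, and the $b_i$ lie in $U$. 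Each crossing of a translate $g_{i-1}e(\tilde\beta)$ by the axis corresponds to a translate $g_{i-1}\beta$ in ${\rm Ax}(g;\mscr{T})$.

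Next I will choose a basepoint $x\in\mc{L}(\rho(g);X)$, so that $\Pi(x)\in{\rm Ax}(g;\mscr{T})$, and pull back the non-degenerate wallpair $(\mf{w},\mf{w}')$ arising from $\beta$. Because the projections of $x,g x,\dots$ lie on the axis, and the translates $g_{i-1}\beta$ appear along the axis in order, the translated walls $g_{i-1}\mf{w},g_{i-1}\mf{w}'$ (and in the amalgam case also $g_{i-1}a_i\mf{w}',g_{i-1}a_i\mf{w}$) form a chain separating $x$ from $\rho(g)x$. This holds because walls arising from arcs in the axis are nested along the axis. The HNN case then falls under \Cref{lem:basic_shortening}(1), and the amalgam case under part~(2), with all $\Phi_i=\Phi$ and the wallpair non-degenerate. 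This yields $d(x,\rho'(g)x)<d(x,\rho(g)x)=\ell(\rho(g);X)$, and hence $\ell(\rho'(g);X)<\ell(\rho(g);X)$.

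The main obstacle is the bookkeeping in this chain verification. I need to check that, with the orientation convention ($u$ closer to $\overline z$), each wallpair separates $x$ from $a_ix$ in the required order. I also need the separation chain to be genuinely nested across consecutive syllables, which uses that the axis isometrically lifts, so no backtracking occurs. Finally, if $\tilde\beta$ appears multiple times per period I will handle each occurrence with its own syllable, and if the edge orientation is reversed I will use $\Phi^{-1}$.
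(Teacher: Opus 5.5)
Your route is the paper's. Part~(1) uses the same conjugation computation. Part~(2) writes $g$ in normal form for $\mc{G}(\tilde\beta)$, takes $x\in\mc{L}(\rho(g);X)$ with $\Pi(x)$ on ${\rm Ax}(g;\mscr{T})$, and feeds the wallpairs coming from the arcs $f(g_i\tilde\beta)\sq{\rm Ax}(g;\mscr{T})$ into \Cref{lem:basic_shortening}. The paper leaves the chain bookkeeping at the same level of detail you do.

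One small correction to your bookkeeping concerns a syllable $t^{-1}$. There $\rho'(t^{-1})=\bigl(\rho(t)^{-1}\Phi^{-1}\rho(t)\bigr)\rho(t)^{-1}$, so the shortening isometry to use is this conjugate, which shortens the wallpair $\rho(t)^{-1}(\mf{w}',\mf{w})$. It is not $\Phi^{-1}$ itself.

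One step is not justified by what you cite: the claim that ${\rm Ax}(g;\mc{G})$ contains a $G$--translate of $\tilde\beta$. That $f$ is isometric on the axis only gives \emph{some} isometric lift of $\beta$ inside ${\rm Ax}(g;\mc{G})$. Since $f$ may fold, $f^{-1}(\beta)$ can contain arcs in several $G$--orbits, and the equality $G_{\tilde\beta}=G_{\beta}$ says nothing about uniqueness of lifts.

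The step cannot simply be dropped. If no translate of $\tilde\beta$ shares an arc with the axis, then $g$ is elliptic in $\mc{G}(\tilde\beta)$, and part~(1) gives equality of translation lengths rather than a strict inequality.

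The paper's proof uses this fact tacitly. It lists the translates $g_1\tilde\beta,\dots,g_k\tilde\beta$ meeting $\eta$ and conjugates so that $g_1\tilde\beta=\tilde\beta$, which presupposes $k\geq 1$. So you should state it as an extra hypothesis: some $G$--translate of $\tilde\beta$ shares an arc with ${\rm Ax}(g;\mc{G})$. That has to be secured through the choice of the geometric approximation $\mc{G}$ whenever the proposition is applied. With that hypothesis made explicit, your argument goes through exactly as the paper's does.
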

\begin{proof}
    Part~(1) is clear, so we concentrate on part~(2). Our hypotheses imply that $\ell(g;\mc{G})=\ell(g;\mscr{T})=:\ell$; choose an arc $\eta\sq{\rm Ax}(g;\mc{G})$ of length $\ell$. Let $g_1\tilde\beta,\dots,g_k\tilde\beta$ be the $G$--translates of the arc $\tilde\beta\sq\mc{G}$ that share an arc with $\eta$, in the order in which they are met along $\eta$. In fact, up to slightly sliding $\eta$ along ${\rm Ax}(g;\mc{G})$, we can assume that the $g_i\tilde\beta$ are all contained in $\eta$ and, up to conjugating the element $g$, it is also not restrictive to assume that $g_1\tilde\beta=\tilde\beta$. We also have $g_1\beta=\beta$, and the arc $f(\eta)\sq{\rm Ax}(g;\mscr{T})$ has length $\ell$ and contains the arcs $\beta,g_2\beta,\dots,g_k\beta$, which have pairwise disjoint interiors. (However, it is important to remember that the picture in $\mscr{T}$ can be rather different from the one in $\mc{G}$: there can be infinitely many other $G$--translates of $\beta$ sharing an arc with $f(\eta)$.) 

    Let $(\mf{w},\mf{w}')$ be the non-degenerate wallpair of $X$ for which $\Phi$ is a shortening isometry, and let $z,z'\in\beta\sq\mscr{T}$ be the distinct points from which this wallpair arises. We set $\Phi_i:=\rho(g_i)\Phi \rho(g_i)^{-1}$, which is a shortening isometry for the wallpair $\rho(g_i)(\mf{w},\mf{w}')$, which arises from the arc $g_i\beta\sq\mscr{T}$. Picking any point $x\in X$ such that $\Pi(x)$ is the initial endpoint of the arc $f(\eta)$, we have that $\Pi(\rho(g)x)$ is the terminal endpoint of $f(\eta)$. In particular, the concatenation of the wall pairs $\rho(g_i)(\mf{w},\mf{w}')$ is a chain of walls of $X$ separating $x$ from $\rho(g)x$. In fact, recalling that $\Pi(\mc{L}(\rho(g);X))={\rm Ax}(g;\mscr{T})$, we can ensure that $x\in\mc{L}(\rho(g);X)$ when we choose the point $x$.
    
    Now, writing $g$ in normal form with respect to the $1$--edge splitting $G\acts\mc{G}(\tilde\beta)$ and invoking \Cref{lem:basic_shortening} with respect to the wallpairs  $\rho(g_i)(\mf{w},\mf{w}')$ and shortening isometries $\Phi_i$, we obtain that
    \[ d(x,\rho'(g)x)< d(x,\rho(g)x)=\ell(\rho(g);X) ,\]
    and therefore $\ell(\rho'(g);X)<\ell(\rho(g);X)$ as required.
\end{proof}

\subsubsection{Shrinkable lines}\label{subsub:shrinkable_lines}

It will not always be possible to find shrinkable arcs in our cases of interest, so there is a second feature of our actions that we will need to consider in order to shorten them. Throughout, we retain the general setup described at the start of \Cref{subsub:shrinkable_arcs}.

Recall that a subtree in a $G$--tree is \emph{ascetic} if it shares at most one point with each of its $G$--translates. For a line $\alpha$, we denote by $G_{\alpha}$ its $G$--stabiliser, and by $K_{\alpha}$ the kernel of $G_{\alpha}\acts\alpha$.

\begin{defn}[Shrinkable lines]\label{defn:shrinkable_lines}
    A line $\alpha\sq\mscr{T}$ is \emph{shrinkable} if $G_{\alpha}\neq K_{\alpha}$ and the following hold:
    \begin{enumerate}
        \item the line $\alpha$ is ascetic, the stabiliser $G_{\alpha}$ is finitely generated, and $G_{\alpha}/K_{\alpha}$ is free abelian;
        \item for arbitrarily small numbers $\eps>0$, there exists an element $\Phi_{\eps}\in\isom(X)$ commuting with $\rho(G_{\alpha})$ such that either $\Phi_{\eps}$ or $\Phi_{\eps}^{-1}$ is a shortening isometry for each wallpair of the form $(\mf{w}_{\beta}(z),\mf{w}_{\beta}(z'))$, where $\beta\sq\alpha$ is an arc and $z,z'\in{\rm int}(\beta)$ are points with $d(z,z')=\eps$.
    \end{enumerate}
\end{defn}

Note that our definition of shrinkability is a bit stronger for lines than it is for arcs: we want the shortening isometries to simultaneously work for \emph{all} wallpairs arising from the line. This is also what allows us to make the definition independent of the choice of a geometric approximation.

\begin{rmk}
    In degenerations of special groups, shrinkable lines will originate from IOM-lines $\alpha$ such that the centre of $G_{\alpha}$ is elliptic. The case with non-elliptic centre is more delicate, and it is precisely what can lead to the existence of poison subgroups and infinite generation of $\Out(G)$.
\end{rmk}

The definition of the shrunk homomorphism and the shortening procedure are significantly more involved in the case of lines. They rely on the following construction, which can be applied to any sufficiently close geometric approximation of $\mscr{T}$, and which we explain in several steps.

Let $\alpha$ be a shrinkable line. Choose a finite generating set $S\sq G_{\alpha}$ with $S=S^{-1}$ and a basepoint $x\in\alpha$. Let $f\colon\mc{G}\ra\mscr{T}$ be a geometric approximation admitting a point $\tilde x\in f^{-1}(x)$ such that $f$ is isometric on each of the arcs $[\tilde x,s\tilde x]$ with $s\in S$. Let $\mc{M}$ be the $G_{\alpha}$--minimal subtree of $\mc{G}$.
Note that, in general, $\mc{M}$ is not a line.

\smallskip
{\bf Step~1.} \emph{Properties of the $G_{\alpha}$--minimal subtree $\mc{M}\sq\mc{G}$.}

\smallskip\noindent
We claim that $\mc{M}\sq\mc{G}$ is ascetic and has $G$--stabiliser precisely $G_{\alpha}$. Indeed, each arc of $\mc{M}$ shares an arc with $h[\tilde x,s\tilde x]$ for some $s\in S$ and some $h\in G_{\alpha}$. Supposing that $\delta$ is an arc contained in $g\mc{M}\cap\mc{M}$ for some $g\in G$, we have that a sub-arc $\delta'\sq\delta$ is contained in an intersection $h[\tilde x,s\tilde x]\cap gh'[\tilde x,s'\tilde x]$ with $s,s'\in S$ and $h,h'\in G_{\alpha}$. By our choice of $\mc{G}$, it follows that the morphism $f$ maps $\delta'$ isometrically onto an arc of $\alpha\cap g\alpha$, and so $g\in G_{\alpha}$ since $\alpha$ is ascetic.

By the claim, $\mc{M}$ contains each indecomposable component of $\mc{G}$ that it shares an arc with.
Thus, $\mc{M}$ is a union of edge-like arcs and indecomposable components of $\mc{G}$, which shows that each arc of $\mc{G}$ intersects only finitely many distinct $G$--translates of $\mc{M}$. 

\smallskip
{\bf Step~2.} \emph{The bipartite splitting $G\acts\mc{S}$.}

\smallskip\noindent
By Step~1, there is a transverse covering of $\mc{G}$ (in the sense of \cite[Definition~1.4]{Guir-Fourier}) formed by the $G$--translates of $\mc{M}$ and the closures of the connected components of $\mc{G}\setminus\bigcup_{g\in G}g\mc{M}$. We thus obtain a bipartite simplicial splitting $G\acts\mc{S}$ constructed as follows: $\mc{S}$ has a white vertex for each translate $g\mc{M}$ with $g\in G$; there is a black vertex of $\mc{S}$ for every maximal closed subset of $\mc{G}$ that is connected by arcs intersecting each translate $g\mc{M}$ in at most one point (we disregard maximal subsets if they are a single point and this point lies in a unique $G$--translate of $\mc{M}$); finally, edges of $\mc{S}$ connect a white vertex to a black one if the corresponding subsets of $\mc{G}$ intersect, which necessarily happens at a single point. The induced action $G\acts\mc{S}$ is minimal. (Note that $\mc{S}$ is a collapse of the tree associated to the transverse covering in \cite[Section~1.3]{Guir-Fourier}.) 

Let $\mc{M}_*\sq\mc{M}$ be the subset of \emph{exit points} for $\mc{M}$ within $\mc{G}$: these are the points $x\in\mc{M}$ for which there exists a point $y\in\mc{G}\setminus\{x\}$ such that the half-open arc $(x,y]$ is contained in $\mc{G}\setminus\mc{M}$.

The quotient $\mc{S}/G$ is a finite bipartite graph with a single white vertex and some number of black vertices. Denoting by $[\mc{M}]$ the unique white vertex of the tree $\mc{S}$ that has $G_{\alpha}$ as its stabiliser, we have that the $G$--stabiliser of each edge incident to $[\mc{M}]$ is contained in the normal subgroup $K_{\alpha}\lhd G_{\alpha}$.
These incident edges are naturally in $1$--to--$1$ correspondence with the points of $\mc{M}_*$. Similarly, black vertices of $\mc{S}/G$ are in $1$--to--$1$ correspondence with $G$--orbits of points in $\mc{M}_*$, while edges of $\mc{S}/G$ correspond to $G_{\alpha}$--orbits of such points. 

\smallskip
{\bf Step~3.} \emph{The amalgams $G\acts\mc{A}(\mc{E})$.}

\smallskip\noindent
Let $\mc{E}$ be a (finite) set of representatives for the $G_{\alpha}$--orbits of edges of $\mc{S}$ incident to the vertex $[\mc{M}]$ (equivalently, for the $G_{\alpha}$--orbits in the set $\mc{M}_*$). We will choose a specific set $\mc{E}$ after Step~6. 

The choice of $\mc{E}$ uniquely determines an amalgamated-product splitting of $G$ as follows. Starting from $\mc{S}$, we first collapse all edges not incident to any $G$--translate of $[\mc{M}]$. Then we fold, for each $g\in G$, the edges of the set $gK_{\alpha}\cdot\mc{E}$ into a single edge. The result $G\acts\mc{A}(\mc{E})$ is the Bass--Serre tree of a writing $G=G_{\alpha}\ast_{K_{\alpha}} H_{\mc{E}}$ for a subgroup $H_{\mc{E}}\leq G$. Any subgroup of $G$ that is elliptic in $\mc{G}$ or $\mc{S}$ has a $G$--conjugate contained in $H_{\mc{E}}$. In the terminology of \Cref{sub:shunning+ascetic}, we also have that $\mc{A}(\mc{E})$ is a $(G_{\alpha},\emptyset)$--isolating splitting of $G$ with locus $K_{\alpha}$.

\smallskip
{\bf Step~4.} \emph{The HNN splitting $G\acts\mc{H}(\mc{E},\eta)$.}

\smallskip\noindent
Let $\eta\colon G_{\alpha}\twoheadrightarrow\Z$ be an epimorphism with $K_{\alpha}\leq\ker(\eta)$. Applying \Cref{lem:isolating->ascetic} to $\mc{A}(\mc{E})$, we obtain an HNN splitting $G\acts\mc{H}(\mc{E},\eta)$ over the subgroup $\ker(\eta)$. The subgroup $H_{\mc{E}}$ is elliptic in $\mc{H}(\mc{E},\eta)$, as are all subgroups of $G$ that are elliptic in $\mc{A}(\mc{E})$ and do not intersect any $G$--conjugate of $G_{\alpha}\setminus\ker(\eta)$. Moreover, we can choose within $G_{\alpha}$ a stable letter for the HNN splitting $\mc{H}(\mc{E},\eta)$: it suffices to pick any element $t\in G_{\alpha}$ with $\eta(t)=\pm 1$.

(Note that $G\acts\mc{S}$ was also $(G_{\alpha},\emptyset)$--isolating, but applying \Cref{lem:isolating->ascetic} directly to $\mc{S}$ would have given us less control over many hidden choices. In particular, the subgroup $H_{\mc{E}}$ might not have stayed elliptic in such an HNN, which would be problematic in the discussion after Step~6.)

We are finally ready to define a shrunk homomorphism $\rho_{\mc{E},\eta,\delta}\colon G\ra\isom(X)$, which depends on the choices of $\mc{E},\eta$ and $\mc{G}$, as well as on a small parameter $\delta>0$.

\begin{defn}\label{defn:shrunk_homo_lines}
    Let $\alpha$ be a shrinkable line and let the HNN splitting $G\acts\mc{H}(\mc{E},\eta)$ be as above, for some choice of $\mc{E}$ and $\eta$. Pick a stable letter $t\in G_{\alpha}$, and let $U$ be the $G$--stabiliser of some vertex on the axis ${\rm Ax}(t;\mc{H}(\mc{E},\eta))$. Suppose that the shortening isometries $\Phi_{\eps}$ in \Cref{defn:shortening_isom} move walls along $\alpha$ in the direction opposite to the direction of translation of $t$ along $\alpha$ (otherwise replace $\Phi_{\eps}$ with its inverse). For each $\delta>0$, the \emph{shrunk homomorphism} $\rho_{\mc{E},\eta,\delta}\colon G\ra\isom(X)$ is defined by $\rho_{\mc{E},\eta,\delta}(g)=\rho(g)$ for all $g\in U$ and $\rho_{\mc{E},\eta,\delta}(t)=\Phi_{\delta}\cdot\rho(t)$.
\end{defn}

Note that the shrunk homomorphism is well-defined because $\Phi_{\delta}$ commutes with $\rho(G_{\alpha})$, and hence with $\rho(\ker(\eta))$. If we are not careful with the choice of the fundamental domain $\mc{E}\sq\mc{M}_*$, the shrunk homomorphism might be ``longer'' than $\rho$. We thus proceed to describe what is required of $\mc{E}$.

\smallskip
{\bf Step~5.} \emph{The $\mc{E}$--form of an element $g\in G$.}

\smallskip\noindent
Consider an element $\g\in G$ that is loxodromic in $\mc{G}$ with $\beta:={\rm Ax}(\g;\mc{G})\cap\mc{M}$ an arc. Let $x_{\g}$ be the initial endpoint of $\beta$, orienting ${\rm Ax}(\g;\mc{G})$ in the direction of translation of $\g$. Let $\beta_1=\beta,\beta_2,\dots,\beta_k$ be the (maximal) oriented arcs that $[x_{\g},\g x_{\g}]$ shares with the $G$--translates of the subtree $\mc{M}$. By Step~1, the $\beta_i$ have pairwise disjoint interiors, and there are only finitely many of them; moreover, for each $g\in G$, the intersection ${\rm Ax}(\g;\mc{G})\cap g\mc{M}$ either contains at most one point or is of the form $\g^i\beta_j$ for some $i\in\Z$ and $1\leq j\leq k$. Choose elements $g_i\in G$ such that $\beta_i\sq g_i\mc{M}$. Note that the endpoints of each $\beta_i$, which we denote by $\beta_i^{\pm}$, lie in the set of exit points $g_i\mc{M}_*$. Thus, there are elements $b_i^{\pm}\in G_{\alpha}$ such that $g_i^{-1}\beta_i^{\pm}\in b_i^{\pm}\cdot\mc{E}$. We then set $a_i:=b_i^+(b_i^-)^{-1}\in G_{\alpha}$.

We should briefly consider how the $a_i$ are affected by the choices involved in their definition. The elements $g_i$ are uniquely defined only up to right multiplication by elements of $G_{\alpha}$. If we replace $g_i$ with $g_ia$ for some $a\in G_{\alpha}$, then the elements $b_i^{\pm}$ get replaced by $a^{-1}b_i^{\pm}$, and the element $a_i$ gets replaced by $a^{-1}a_ia$. However, since $K_{\alpha}$ is normal in $G_{\alpha}$ and $G_{\alpha}/K_{\alpha}$ is abelian, there exists an element $u\in K_{\alpha}$ such that $a^{-1}a_ia=a_iu$. This shows that the elements $a_i$ are uniquely defined up to right multiplication by $K_{\alpha}$.

Now, it should be clear that we can write $\g$ with respect to the splitting $G_{\alpha}\ast_{K_{\alpha}} H_{\mc{E}}$ as
\[ \g=u(a_1h_1)(a_2h_2)\dots (a_kh_k), \]
where the elements $a_i\in G_{\alpha}$ are the ones defined above, $u\in K_{\alpha}$ is some other element, and finally $h_i\in H_{\mc{E}}\setminus K_{\alpha}$ is inductively defined as any element of $H_{\mc{E}}$ mapping the subtree $\mc{M}\sq\mc{G}$ to $(ua_1h_1\dots a_{i-1}h_{i-1}a_i)^{-1}g_i\mc{M}$. We refer to the above writing of $\g$ as the \emph{$\mc{E}$--form of $\g$}, recalling that all elements $a_i,h_i$ are defined only up to right multiplication by $K_{\alpha}$. Note that the $\mc{E}$--form of $\g$ can differ from the normal form of $\g$ with respect to the amalgam $G_{\alpha}\ast_{K_{\alpha}} H_{\mc{E}}$, though only because the elements $a_i$ are allowed to vanish. 
We refer to the $a_i$ as the \emph{$G_{\alpha}$--components} of the $\mc{E}$--form.

Note that we have defined the $\mc{E}$--form only for those elements $\g\in G$ that are loxodromic in $\mc{G}$ with ${\rm Ax}(\g;\mc{G})\cap\mc{M}$ an arc. However, every element of $G$ is either conjugate to such an element, or it is conjugate into $G_{\alpha}\cup H_{\mc{E}}$.

We now explain that $\mc{E}$--forms need to have additional properties for $\mc{E}$ to be useful in shortening the action. First, however, it is convenient to introduce the following shorthand.

\begin{defn}
    A geometric approximation $\mc{G}\ra\mscr{T}$ is \emph{well-adapted} to a finite subset $F\sq G$ if we have $\ell(\g;\mc{G})=\ell(\g;\mscr{T})$ for all elements $\g\in F$ (in general, only the inequality $\geq$ holds).
\end{defn}

{\bf Step~6.} \emph{Good fundamental domains $\mc{E}\sq\mc{M}_*$.}

\smallskip\noindent
Let $\g\in G$ again be loxodromic in $\mc{G}$ with ${\rm Ax}(\g;\mc{G})\cap\mc{M}$ an arc, and retain the notation from Step~5. Suppose in addition that $\mc{G}$ is well-adapted to $\g$, which implies that the morphism $f\colon\mc{G}\ra\mscr{T}$ is isometric on the entire axis ${\rm Ax}(\g;\mc{G})$. Thus, we have arcs $f(\beta_i)\sq {\rm Ax}(g;\mscr{T})\cap g_i\alpha$. 
(It is important to remember that the picture in $\mscr{T}$ is different from the one in $\mc{G}$ in general: $f(\beta_i)$ can be a \emph{proper} sub-arc of the intersection ${\rm Ax}(g;\mscr{T})\cap g_i\alpha$, and the arc $f([x_{\g},\g x_{\g}])$ can intersect infinitely many other $G$--translates of the line $\alpha$.)

For each index $i$ such that $a_i\not\in K_{\alpha}$, we can compare two orientations on the line $\alpha$. The first orientation is obtained by considering the arc $f(\beta_i)\sq g_i\alpha\cap{\rm Ax}(\g;\mscr{T})$, which we orient in the direction of translation of $\g$, then extending this orientation to the whole $g_i\alpha$, and transferring it back to $\alpha$ using the action of $g_i^{-1}$ (this is well-defined because the action $G_{\alpha}\acts\alpha$ is orientation-preserving, since $G_{\alpha}/K_{\alpha}$ is free abelian by assumption).
The second orientation is simply given by the direction of translation of the element $a_i\in G_{\alpha}$ along $\alpha$. If these two orientations coincide for each $1\leq i\leq k$ for which $a_i\not\in K_{\alpha}$, then we say that $\mc{E}$ is \emph{good} for the element $\g$. We say that $\mc{E}$ is \emph{very good} for $\g$ if, in addition, there exists at least one $G_{\alpha}$--component $a_i\not\in K_{\alpha}$.

For elements $\g\in G$ such that ${\rm Ax}(\g;\mc{G})\cap\mc{M}$ is not an arc, we define goodness as follows. First, if $\mc{E}$ is good (resp.\ very good) for some $G$--conjugate of $\g$, then we say that $\mc{E}$ is also good (resp.\ very good) for $\g$. For elements $\g\in G_{\alpha}$, we declare all choices of $\mc{E}$ to be very good. Finally, for elements $\g\in H_{\mc{E}}$, we instead declare all choices of $\mc{E}$ to be good, and none very good.

\begin{rmk}
    The following is the motivation for the definition of goodness. Pick an index $i_0$ such that $a_{i_0}\not\in K_{\alpha}$, and define $a_{i_0}'\in G_{\alpha}$ to be an element translating along $\alpha$ in the same direction as $a_{i_0}$ by an amount that is infinitesimally smaller than $\ell(a_{i_0};\mscr{T})$. If $\mc{E}$ is very good for $\gamma$, then replacing $a_{i_0}$ with $a_{i_0}'$ in the $\mc{E}$--form of $\g$ causes $\ell(\g;\mscr{T})$ to decrease. Instead, if $\mc{E}$ is not even good for $\gamma$ and $i_0$ is an index witnessing this, then replacing $a_{i_0}$ with $a_{i_0}'$ will cause $\ell(\g;\mscr{T})$ to increase.
\end{rmk}

Our goal is now to construct good fundamental domains $\mc{E}\sq\mc{M}_*$ for the $G_{\alpha}$--action.

\begin{lem}\label{lem:finding_good}
    Let the geometric approximation $\mc{G}$ be well-adapted to a finite subset $F\sq G$. Suppose that $F$ contains a loxodromic element $\g_0$ such that ${\rm Ax}(\g_0;\mc{G})\cap\mc{M}$ is an arc. Then there exists a fundamental domain $\mc{E}\sq\mc{M}_*$ that is good for all elements of $F$ and very good for at least one.
\end{lem}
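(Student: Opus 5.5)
The plan is to reduce goodness to a bookkeeping statement about positions of points on the line $\alpha$, and then choose the representatives $\mc{E}$ so that their images under $f$ all lie in a short window of $\alpha$. Fix an orientation of $\alpha$ and let $\tau\colon G_{\alpha}\ra\R$ be the signed translation homomorphism. It is well-defined and vanishes exactly on $K_{\alpha}$, because $G_{\alpha}/K_{\alpha}$ is free abelian, acts freely on $\alpha$, and preserves orientation. Since $\mc{G}$ is well-adapted to $F$, the morphism $f$ is isometric on ${\rm Ax}(\g;\mc{G})$ for every $\g\in F$. So, after replacing each $\g\in F$ by a conjugate whose axis meets $\mc{M}$ in an arc (elements of $G_{\alpha}\cup H_{\mc{E}}$ impose no condition), we get finitely many arcs $\beta_i=\beta_i(\g)$ with $g_i^{-1}f(\beta_i)\sq\alpha$. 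Each such arc has positive length, and we let $\lambda>0$ be the minimum of these lengths.

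\textbf{Key identity.} Each representative $e\in\mc{E}$ determines a point $p(e):=f(e)\in\alpha$, because $\mc{M}_*\sq\mc{M}$ and $f(\mc{M})\sq\alpha$ by the choice of $\mc{G}$. By construction $g_i^{-1}\beta_i^{\pm}=b_i^{\pm}e_i^{\pm}$ for some $e_i^{\pm}\in\mc{E}$. Applying $f$, the oriented arc $g_i^{-1}f(\beta_i)\sq\alpha$ runs from $b_i^-p(e_i^-)$ to $b_i^+p(e_i^+)$. Writing positions on $\alpha$ as real coordinates, its signed length equals
\[ \tau(b_i^+)-\tau(b_i^-)+\big(p(e_i^+)-p(e_i^-)\big)=\tau(a_i)+\big(p(e_i^+)-p(e_i^-)\big). \]
The sign of the left-hand side is exactly the orientation transported from ${\rm Ax}(\g;\mscr{T})$ in Step~6, and its absolute value is at least $\lambda$. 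Hence, if all points $p(e)$ with $e\in\mc{E}$ lie in a half-open window $I\sq\alpha$, the correction term has absolute value $<|I|$.

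\textbf{Choice of $\mc{E}$.} This splits into two cases according to the action $G_{\alpha}/K_{\alpha}\acts\alpha$.
\begin{itemize}
    \item If the action has dense orbits, pick any point $x\in\alpha$ and, in each of the finitely many $G_{\alpha}$--orbits of $\mc{M}_*$, a representative $e$ with $p(e)$ within $\lambda/3$ of $x$. Then $|\tau(a_i)-\text{signed length}|<\lambda$, so every $a_i$ lies outside $K_{\alpha}$ and $\tau(a_i)$ has the correct sign. This makes $\mc{E}$ good for all of $F$, and very good for $\g_0$.
    \item If the action is discrete, then $G_{\alpha}/K_{\alpha}\cong\Z$ with translation length $L$. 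Choose representatives with $p(e)$ in a fundamental window $[x,x+L)$. Now $\tau(a_i)=nL$ with $|nL-s|<L$, where $s$ is the signed length of $g_i^{-1}f(\beta_i)$. So either $n=0$, which is allowed, or $n$ has the sign of $s$; in either case $\mc{E}$ is good. For very-goodness we place the cut point $x$ in the interior of $g_1^{-1}f(\beta_1(\g_0))$. The two endpoints of this arc then fall into different translates of the window, which forces $n\neq 0$ for that component.
\end{itemize}

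\textbf{Main obstacle.} I expect the delicate step to be the key identity itself, namely making sure the conventions match. One must check that the ambiguity of $a_i$ up to $K_{\alpha}$ and the choice of $g_i$ up to $G_{\alpha}$ do not affect $\tau(a_i)$. One must also check that the orientation transported by $g_i^{-1}$ is the one measured by signed length in the coordinate on $\alpha$, and that $f(e)$ genuinely lies on $\alpha$ at the claimed position. Both should follow from $G_{\alpha}$ preserving the orientation of $\alpha$ and from $f$ being isometric on the arcs $[\tilde x,s\tilde x]$, which generate $\mc{M}$. A second point to verify in the discrete case is that a single cut point serves all elements of $F$ simultaneously. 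This holds because goodness only needs $|nL-s|<L$, which holds for any window of length $L$, while very-goodness is required for only the single element $\g_0$.
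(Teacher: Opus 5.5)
Your proposal is correct, and its overall strategy is the paper's. Both pass to the free abelian group $A=G_\alpha/K_\alpha$ acting by translations on $\alpha$, and choose the representatives so that their images under $f$ lie in a short window. In the indiscrete case, your choice (all $p(e)$ within $\lambda/3$ of a fixed point) is exactly the paper's fundamental domain of diameter smaller than every arc of $\mf{B}$.

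You differ in the bookkeeping and in the cyclic case. Your identity $s_i=\tau(a_i)+\big(p(e_i^+)-p(e_i^-)\big)$ turns goodness explicitly into the condition that $\tau(a_i)$ is zero or has the sign of $s_i$. The paper leaves this implicit in its phrasing via ``$a_\beta\beta^-$ and $\beta^+$ lie in the same $A$--translate of $\overline{\mc{E}}$''.

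When $A\cong\Z$, the paper does three things:
\begin{enumerate}
\item it takes $\overline{\mc{E}}$ of minimal diameter;
\item it proves goodness by showing that a bad arc would let the diameter decrease;
\item it enforces very-goodness by moving the maximum $\mu$ of $\overline{\mc{E}}$ to $a^{-1}\mu$.
\end{enumerate}
You instead note that any half-open window of length $L$ already forces goodness, since $|nL-s|<L$ and $s\neq 0$. You then get very-goodness by placing the cut point inside an arc of $\g_0$.

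Your route is more direct and gives very-goodness for $\g_0$ itself. It also avoids a small point in the paper's last step. Replacing only $\mu$ changes some $a_\beta$ only when the $A$--orbit of $\mu$ contains an endpoint of an arc of $\mf{B}$. In general that exchange must therefore be repeated until the cut passes such an endpoint, which your choice of cut point achieves at once.
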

\begin{proof}
    We first need to rephrase the problem purely in terms of a group of translations of the real line. In this direction, let $\tilde{\mf{B}}$ be the collection of oriented arcs of the form $g^{-1}{\rm Ax}(\g;\mc{G})\cap\mc{M}$, where $\g$ varies among loxodromics in $F$ while $g$ varies in $G$, and where we consider the orientations induced by the direction of translation of $g^{-1}\g g$ along $g^{-1}{\rm Ax}(\g;\mc{G})$. As discussed in Step~5, $\tilde{\mf{B}}$ is a finite collection of oriented arcs with endpoints in $\mc{M}_*$, and the morphism $f\colon\mc{G}\ra\mscr{T}$ is isometric on each of them; the existence of the element $\g_0$ also implies that $\tilde{\mf{B}}\neq\emptyset$. Let $\mf{B}$ be the finite collection of oriented arcs within $\alpha$ that are the image under $f$ of the arcs in $\tilde{\mf{B}}$. For each $\beta\in\mf{B}$, we denote by $\beta^-$ its initial endpoint and by $\beta^+$ its terminal one. Note that $\beta^{\pm}\in f(\mc{M}_*)\sq\alpha$. 

Consider now the free abelian group $A:=G_{\alpha}/K_{\alpha}$ and its (free) action by translations on $\alpha$. The action $A\acts f(\mc{M}_*)$ is cofinite, since so is the action $G_{\alpha}\acts\mc{M}_*$. Let $\overline{\mc{E}}\sq f(\mc{M}_*)$ be a finite set of representatives for the orbits of the $A$--action. The choice of $\overline{\mc{E}}$ determines, for each arc $\beta\in\mf{B}$, a unique element $a_{\beta}\in A$ such that the points $a_{\beta}\cdot\beta^-$ and $\beta^+$ lie in the same $A$--translate of $\overline{\mc{E}}$.

Our goal is thus to choose $\overline{\mc{E}}$ so that, for all $\beta\in\mf{B}$ for which $a_{\beta}\neq 0$, the element $a_{\beta}$ translates along $\alpha$ in the direction of the orientation of $\beta=[\beta^-,\beta^+]$. In addition, we wish to ensure that $a_{\beta_0}\neq 0$ for at least one arc $\beta_0\in\mf{B}$. Once we have such a set $\overline{\mc{E}}$, any choice of the fundamental domain $\mc{E}$ within the preimage $f^{-1}(\overline{\mc{E}})$ will satisfy the statement of the lemma.

We thus proceed to construct $\overline{\mc{E}}$, distinguishing two cases based on the rank of $A$. If $\rk(A)\geq 2$, then the action $A\acts\alpha$ is indiscrete, and there are fundamental domains of arbitrarily small diameter for $A\acts f(\mc{M}_*)$. It then suffices to choose $\overline{\mc{E}}$ with diameter less than the length of all arcs in $\mc{B}$.

Suppose instead that $A\cong\Z$ and let $a\in A$ be one of its two generators. Let us choose $\overline{\mc{E}}$ so that it has the smallest possible diameter. If $\beta\in\mf{B}$ were an arc such that $a_{\beta}\beta^-<\beta^-<\beta^+$ (identifying $\alpha$ with $\R$ according to the direction of translation of $a$), we would have $\{a_{\beta}\beta^-,\beta^+\}\sq\overline{\mc{E}}$ up to replacing $\overline{\mc{E}}$ with an $A$--translate. We could then decrease the diameter of $\overline{\mc{E}}$ by replacing it with 
\[ a_{\beta}^{-1}\big(\overline{\mc{E}}\cap (-\infty,a_{\beta}\beta^-]\big)\cup\big(\overline{\mc{E}}\cap (a_{\beta}\beta^-,+\infty)\big) ,\] 
which is a contradiction. Thus, $\overline{\mc{E}}$ satisfies our requirements, except for the possibility that $a_{\beta}=0$ for all $\beta\in\mf{B}$. If the latter occurs, we can assume that all arcs in $\mf{B}$ have both endpoints in $\overline{\mc{E}}$ (as we can always replace an element of $\mf{B}$ with one of its $A$--translates). Denoting by $\mu$ the maximum of $\overline{\mc{E}}$ (with respect to the identification $\alpha\cong\R$), we can then replace $\overline{\mc{E}}$ with $\overline{\mc{E}}\setminus\{\mu\}\cup\{a^{-1}\mu\}$, and it is straightforward to check that this new set satisfies all requirements.
\end{proof}

We can now focus on the choice of the epimorphism $\eta\colon G_{\alpha}\twoheadrightarrow\Z$ vanishing on $K_{\alpha}$. We say that the pair $(\mc{E},\eta)$ is \emph{very good} for an element $\g\in G$ if $\mc{E}$ is good for $\g$ and there exists at least one $G_{\alpha}$--component $a_i\in G_{\alpha}$ in the $\mc{E}$--form of $\g$ such that $\eta(a_i)\neq 0$ (in particular, if $\g\in G_{\alpha}$, we require that $\eta(\g)\neq 0$). We say that the pair $(\mc{E},\eta)$ is \emph{good} for $\g$ simply if $\mc{E}$ is good, regardless of $\eta$.

If $\mc{E}$ is very good for an element $\g\in G$, we can always find an epimorphism $\eta\colon G_{\alpha}\twoheadrightarrow\Z$ vanishing on $K_{\alpha}$ such that the pair $(\mc{E},\eta)$ is very good for $\g$. Indeed, since $\mc{E}$ is very good for $\g$, there exists at least one $G_{\alpha}$--component $a_i$ in the $\mc{E}$--form of $\g$ such that $a_i\not\in K_{\alpha}$. The fact that the quotient $G_{\alpha}/K_{\alpha}$ is free abelian then implies the existence of the required epimorphism $\eta$.

Summing up, we have essentially shown the following (using the notation from the above steps).

\begin{prop}\label{prop:shrinkable_line}
    Let $\alpha\sq\mscr{T}$ be a shrinkable line. Let $F\sq G$ be a finite subset containing at least one loxodromic element $\g_0$ such that ${\rm Ax}(\g_0;\mscr{T})\cap\alpha$ contains an arc.
    \begin{enumerate}
    \setlength\itemsep{.2em}
        \item There exists a geometric approximation $\mc{G}\ra\mscr{T}$ such that $\mc{G}$ is well-adapted to $F$ and ${\rm Ax}(\g_0;\mscr{T})\cap\mc{M}$ contains an arc. Moreover, there exist a fundamental domain $\mc{E}\sq\mc{M}_*$ and an epimorphism $\eta\colon G_{\alpha}\twoheadrightarrow\Z$ such that the pair $(\mc{E},\eta)$ is good for all elements of $F$ and very good for at least one of them (possibly different from $\g_0$).
        \item Let $(\mc{E},\eta)$ be any pair that is good for all elements of $F$ and very good for some element $\gamma_1\in F$. Then there exists $\delta_0>0$ such that, for all $0<\delta<\delta_0$, we have:
        \begin{align*}
            \ell(\rho_{\mc{E},\eta,\delta}(\g);X)&\leq\ell(\rho(\g);X),\quad\forall\g\in F, & \ell(\rho_{\mc{E},\eta,\delta}(\g_1);X)&<\ell(\rho(\g_1);X).
        \end{align*}
    \end{enumerate}
\end{prop}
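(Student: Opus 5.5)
Part~(1) is essentially a matter of assembling Steps~1--6 with \Cref{lem:finding_good}. By \cite{LP97}, geometric approximations $f\colon\mc{G}\ra\mscr{T}$ converge strongly, so for $\mc{G}$ close enough we get three things at once. First, $\ell(\g;\mc{G})=\ell(\g;\mscr{T})$ for every $\g\in F$; this needs only finitely many conditions, each met by isometrically lifting a fundamental arc of the axis. Second, $f$ is isometric on the arcs $[\tilde x,s\tilde x]$, $s\in S$, as required in Steps~1--2. Third, a chosen arc of ${\rm Ax}(\g_0;\mscr{T})\cap\alpha$ lifts into $\mc{M}$, which gives an arc in ${\rm Ax}(\g_0;\mc{G})\cap\mc{M}$. \Cref{lem:finding_good} then provides $\mc{E}$ good for all of $F$ and very good for some $\g_1\in F$. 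The paragraph preceding the proposition upgrades this to a pair $(\mc{E},\eta)$: take $\eta$ nonzero on a component $a_i\notin K_\alpha$ of $\g_1$, using that $G_\alpha/K_\alpha$ is free abelian.

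For part~(2), I would first reduce to elements of $F$ that are loxodromic in $\mc{G}$ with ${\rm Ax}(\g;\mc{G})\cap\mc{M}$ an arc. Conjugates are harmless, since conjugating in $G$ does not change translation lengths for either homomorphism. For $\g\in H_{\mc{E}}$, or more generally $\g$ elliptic in $\mc{H}(\mc{E},\eta)$, the homomorphisms $\rho_{\mc{E},\eta,\delta}(\g)$ and $\rho(\g)$ are conjugate, as in \Cref{prop:shrinkable_arc}(1), so lengths are equal. For $\g\in G_\alpha$, the construction gives $\rho_{\mc{E},\eta,\delta}(\g)=\Phi_\delta^{\eta(\g)}\rho(\g)$ up to conjugation, because $\Phi_\delta$ commutes with $\rho(G_\alpha)$. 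Taking a point $x$ of $\mc{L}(\rho(\g);X)$ projecting to $\alpha$, the wallpairs from $\alpha$ of length $\delta|\eta(\g)|<\ell(\g;\mscr{T})$ separate $x$ from $\rho(\g)x$. Since $\Phi_\delta$ moves walls against the translation direction, \Cref{rmk:basic_shortening} shows the length does not increase, and strictly decreases when $\eta(\g)\neq0$.

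For the general case, write $\g$ in its $\mc{E}$--form $u(a_1h_1)\cdots(a_kh_k)$. In the shrunk homomorphism each $a_i$ is replaced by $\Phi_\delta^{\eta(a_i)}\rho(a_i)$, up to conjugation by elements that $\rho'$ and $\rho$ treat identically; I will need to check this from the HNN structure of $\mc{H}(\mc{E},\eta)$ and $t\in G_\alpha$. Choose $x\in\mc{L}(\rho(\g);X)$ with $\Pi(x)$ the start of $f([x_\g,\g x_\g])$. Well-adaptedness makes $f$ isometric on the axis, so the arcs $f(\beta_i)\sq g_i\alpha$ appear in order along ${\rm Ax}(\g;\mscr{T})$. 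Goodness ensures that on each $f(\beta_i)$ with $a_i\notin K_\alpha$ the direction of $a_i$ agrees with the direction of $\g$. Hence, once $\delta|\eta(a_i)|$ is less than every length of $f(\beta_i)$ (this defines $\delta_0$, taking the minimum over the finitely many components of elements of $F$), the shifts by $g_i\Phi_\delta^{\eta(a_i)}g_i^{-1}$ are realised by wallpairs arising from $g_if(\beta_i)$. These wallpairs, concatenated, form a chain separating $x$ from $\rho(\g)x$, and \Cref{lem:basic_shortening}(1) gives $d(x,\rho'(\g)x)\leq d(x,\rho(\g)x)=\ell(\rho(\g);X)$. The inequality is strict for $\g_1$, since some $\eta(a_i)\neq0$ gives a non-degenerate wallpair.

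I expect the main obstacle to be the bookkeeping that matches the HNN normal form over $\ker\eta$ with the $\mc{E}$--form over the amalgam $G_\alpha\ast_{K_\alpha}H_{\mc{E}}$. Concretely, I must show that each $G_\alpha$--component is modified exactly by $\Phi_\delta^{\eta(a_i)}$, up to the ambiguity by right multiplication by $K_\alpha$, and that the $K_\alpha$ and $H_{\mc{E}}$ parts are untouched. My first attempt would express $a_i=t^{\eta(a_i)}k_i$ with $k_i\in\ker\eta$, and use that $H_{\mc{E}}$ and $\ker\eta$ both lie in the vertex group $U$ where $\rho'=\rho$. A further point needing care is that the wallpairs must be genuinely separating in the correct orientation; this is exactly where goodness of $\mc{E}$ is used.
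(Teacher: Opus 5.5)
Your outline is the paper's own proof of this proposition. For part~(1) you take a close geometric approximation and apply \Cref{lem:finding_good}. For part~(2) you treat elements of $G_\alpha$ directly, handle elements elliptic in $\mc{H}(\mc{E},\eta)$ by conjugacy, and treat the rest through the $\mc{E}$--form and \Cref{lem:basic_shortening}. Your bookkeeping is also right: with $\eta(t)=1$ and $a=t^{\eta(a)}k$, where $k\in\ker\eta\le U$, one gets exactly $\rho_{\mc{E},\eta,\delta}(a)=\Phi_\delta^{\eta(a)}\rho(a)$ for every $a\in G_\alpha$.

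The step that fails is the orientation claim ``$\Phi_\delta$ moves walls against the translation direction'', and the paper's proof makes the same claim. By \Cref{defn:shrunk_homo_lines}, $\Phi_\delta$ is oriented against $t$, so $\Phi_\delta^{\eta(a)}$ moves walls along $\alpha$ against $t^{\eta(a)}$, not against $a$. It therefore shortens $a$ only if $a$ and $t^{\eta(a)}$ translate in the same direction along $\alpha$. Goodness of $\mc{E}$ only aligns each $a_i$ with $\g$ along $f(\beta_i)$ and says nothing about $\eta$. When $G_\alpha/K_\alpha\cong\Z$ the alignment is automatic, since then $a\in t^{\eta(a)}K_\alpha$. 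In higher rank it fails.

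Here is a counterexample. Let $\Z^2=\langle a,b\rangle$ act on $\alpha=\R$ with $a,b$ translating by $1,\sqrt{2}$. Let $X=\mscr{T}$ be the tree of the graph of actions of $\Z^2\ast\Z$ with vertex tree $\alpha$ and one loop edge with trivial stabiliser. Then $\alpha$ is ascetic, and it is shrinkable with $\Phi_\eps\in\rho(\Z^2)$ translating $\alpha$ by $\eps\in(\Z+\sqrt{2}\Z)\cap(0,\infty)$. Take $\eta(a)=0$, $\eta(b)=1$, $t=b$ and $F=\{b,a^{-2}b\}$. Then $(\mc{E},\eta)$ is good and very good for both elements. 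Yet $\rho_{\mc{E},\eta,\delta}(a^{-2}b)=\Phi_\delta\rho(a^{-2}b)$ translates $\alpha$ by $\sqrt{2}-2-\delta$, so it is strictly longer for every $\delta$ at which $\Phi_\delta$ is defined. The same sign problem affects the components $a_i$ in your $\mc{E}$--form argument.

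The repair is to impose a compatibility condition. Let $\chi\colon G_\alpha\to\R$ be the translation character on $\alpha$. Require $\eta(a)\chi(a)\cdot\eta(t)\chi(t)\ge 0$ for all $G_\alpha$--components $a$ of elements of $F$ and for all elements of $F$ conjugate into $G_\alpha$. Such a pair $(\eta,t)$ exists. Since $\chi$ is injective on $G_\alpha/K_\alpha\cong\Z^r$, a primitive integral character close in direction to $\chi$ has the same signs as $\chi$ on the finitely many relevant nonzero classes. If $r\ge 2$, one can also pick $t\in\eta^{-1}(1)$ with $\chi(t)>0$. Choose $(\eta,t)$ this way in part~(1), which also makes very goodness automatic, and add the condition to the hypotheses of part~(2). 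Each $\Phi_\delta^{\eta(a_i)}$, suitably conjugated, is then a shortening isometry for wallpairs from sub-arcs of $f(\beta_i)$, and your chain argument via \Cref{lem:basic_shortening}(1) goes through.
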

\begin{proof}
    Any sufficiently close geometric approximation $f\colon\mc{G}\ra\mscr{T}$ is well-adapted to $F$, satisfies the assumptions in the paragraph before Step~1, and has the property that ${\rm Ax}(\g_0;\mc{G})\cap\mc{M}$ contains an arc. We can then invoke \Cref{lem:finding_good} to find a good fundamental domain $\mc{E}\sq\mc{M}_*$ that is good for all elements of $F$ and very good for at least one of them. Finally, arguing as in the paragraphs after \Cref{lem:finding_good}, we find a suitable epimorphism $\eta$. This proves part~(1).

    We now discuss part~(2). For an element $\g\in G_{\alpha}$, we have 
    \[ \rho_{\mc{E},\eta,\delta}(\g)=\Phi_{\delta}^{\eta(\g)}\rho(\g) .\] 
    If $\eta(\g)=0$, then the translation length $\ell(\rho_{\mc{E},\eta,\delta}(\g);X)$ is equal to $\ell(\rho(\g);X)$. If instead $\eta(\g)\neq 0$, then we have $\ell(\g;\mscr{T})>0$ (since $\eta$ vanishes on $K_{\alpha}$) and we have $\ell(\rho_{\mc{E},\eta,\delta}(\g);X)<\ell(\rho(\g);X)$ provided that $\delta|\eta(\g)|<\ell(\g;\mscr{T})$ (for instance, using \Cref{lem:basic_shortening}). This shows that, choosing $\delta_0$ small enough, it suffices to prove part~(2) under the assumption that no element of $F$ has a conjugate inside $G_{\alpha}$.
    
    If an element $\g\in G$ is elliptic in the splitting $\mc{S}$ (from Step~2), then it is elliptic in the amalgam $\mc{A}(\mc{E})$, and it is consequently either conjugate into $G_{\alpha}$ or elliptic in the HNN splitting $\mc{H}(\mc{E},\eta)$. In the latter case, $\rho_{\mc{E},\eta,\delta}(\g)$ and $\rho(\g)$ are conjugate within $\isom(X)$, so they have the same translation length in $X$. As a consequence, up to replacing each element of $F$ with a $G$--conjugate, it is not restrictive to assume that all $\g\in F$ are loxodromic in $\mc{G}$ with ${\rm Ax}(\g;\mc{G})\cap\mc{M}$ being an arc.

    We can thus consider, for each $\g\in F$, its $\mc{E}$--form
    \[ \g=u(a_1h_1)(a_2h_2)\dots (a_kh_k). \]
    Define the integer $n(\g):=\max_i|\eta(a_i)|$ and let $L(\g)>0$ be length of the shortest arc among the arcs $\beta_1,\dots,\beta_k$ associated with $\g$ in Step~5 (namely the arcs that ${\rm Ax}(\g;\mc{G})$ shares with the $G$--translates of the $G_{\alpha}$--minimal subtree $\mc{M}\sq\mc{G}$). If $n(\g)=0$, we again have that $\g$ is elliptic in $\mc{H}(\mc{E},\eta)$, and so $\rho_{\mc{E},\eta,\delta}(\g)$ is conjugate to $\rho(\g)$ within $\isom(X)$. If instead $n(\g)>0$, we have the strict inequality $\ell(\rho_{\mc{E},\eta,\delta}(\g);X)<\ell(\rho(\g);X)$ provided that $n(\g)\delta<L(\g)$: this can be deduced from \Cref{lem:basic_shortening} and the discussion in Steps~5 and~6, choosing wallpairs arising from small sub-arcs of the arcs $f(\beta_i)$ (and recalling that, by \Cref{defn:shrinkable_lines}, the shortening isometries $\Phi_{\delta}$ simultaneously shorten \emph{all} wallpairs along $\alpha$). In conclusion, it suffices to set $\delta_0:=\min_{\g\in F} L(\g)/n(\g)$.
\end{proof}

\subsection{Shortening degenerations of special groups}\label{sub:shortening_special}

We now apply the general discussion in \Cref{sub:shortening_median} to degenerations of special groups, searching for shrinkable arcs and shrinkable lines. We will obtain two tools to shorten sequences of automorphisms: Propositions~\ref{prop:shortening_inert} and~\ref{prop:shortening_motile} respectively. This will lead to the shortening theorem (\Cref{thm:shortening}).

Consider a special group $G$ and a sequence $\varphi_n\in\Aut(G)$ projecting to an infinite sequence $\phi_n\in\Out(G)$. Realise $G$ as a convex-cocompact subgroup of a RAAG $\mc{A}_{\G}$, and choose a non-principal ultrafilter $\om$. As in \Cref{sec:degenerations}, we obtain a degeneration $G\acts\X_{\om}$ and $\R$--trees $G\acts T^v_{\om}$ for $v\in\G$, with projections $\pi^v_{\om}\colon\X_{\om}\ra T^v_{\om}$. Since we will also work with other $G$--actions on $\X_{\om}$, we denote by $\rho\colon G\ra\isom(\X_{\om})$ the homomorphism given by $G\acts\X_{\om}$.

Let $\mscr{C}\sq\X_{\om}$ be a minimal $G$--invariant convex subset, which exists by \cite[Theorem~C]{Fio10b}. Note that $\mscr{C}$ is not closed in $\X_{\om}$ in general, and thus possibly incomplete. Nevertheless, $\mscr{C}$ is a finite-rank geodesic median space with compact intervals. For each $v\in\G$, the projection $\pi^v_{\om}(\mscr{C})$ is either the $G$--minimal subtree of $T^v_{\om}$ (if $G$ is not elliptic in $T^v_{\om}$), or else a single point of $T^v_{\om}$ fixed by $G$.

Picking a vertex $v\in\G$ such that $G$ is non-elliptic in $T^v_{\om}$, and setting $X:=\mscr{C}$, $\mscr{T}:=\Min(G;T^v_{\om})$ and $\Pi:=\pi^v_{\om}|_{\mscr{C}}$, we place ourselves in the situation described at the start of \Cref{subsub:shrinkable_arcs}.  

The next result involves a subgroup $H\leq G$. We will therefore speak of ``$G$--stable'' and ``$G$--inert'' arcs to emphasise that these notions are meant with respect to the $G$--action. Recall that $\mf{E}_G\sq\X_{\G}$ denotes a $G$--essential convex subcomplex; we have $\ell(g;\mf{E}_G)=\ell(g;\X_{\G})$ for all $g\in G$.

\begin{prop}\label{prop:shortening_inert}
    Consider a hierarchy member $H\in\mf{H}(G)$, 
    a vertex $v\in\G$ such that $H$ is not elliptic in $T^v_{\om}$, and a $G$--stable arc $\beta\sq\Min(H;T^v_{\om})$. Suppose that {\bf both} the following hold:
    \begin{enumerate}
        \item[(a)] $\beta$ is $G$--inert, and the normaliser $N_H(G_{\beta})$ is elliptic in $T^v_{\om}$;
        \item[(b)] $\beta$ is not contained in any indiscrete line for $G\acts T^v_{\om}$.
    \end{enumerate}
    Then, for every finite subset $F\sq G$, there exist a $1$--edge splitting $G\acts T$ and a sequence of Dehn twists $\tau_n\in\Aut(G)$ with respect to $T$ such that all the following hold.
    \begin{enumerate}
        \item For each $\g\in F$, we have $\ell(\varphi_n\tau_n(\g);\mf{E}_G) \leq \ell(\varphi_n(\g);\mf{E}_G)$ for $\om$--all $n$. The strict inequality holds if the element $\g\in F$ is loxodromic in $T^v_{\om}$ with axis sharing an arc with $\beta$.
        \item Edge groups of $G\acts T$ are conjugate to $G_{\beta}$, which lies in $\mc{Z}(G)$. If a subgroup of $G$ is not elliptic in $T$, then it contains an element that is loxodromic in $T^v_{\om}$ with axis sharing an arc with a $G$--translate of $\beta$. 
        \item If the $\varphi_n$ are coarse-median preserving, then the $\tau_n$ are folds or partial conjugations.
    \end{enumerate}
\end{prop}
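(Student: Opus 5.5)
The plan is to produce a shrinkable arc in the sense of \Cref{defn:shrinkable_arcs} and then transport the resulting shrunk homomorphism from the degeneration back to the finite level as a sequence of Dehn twists. First, since $\beta$ is $G$--inert and, by (b), not contained in any indiscrete line, \Cref{thm:inert_vs_motile}(1) gives $G_{\beta}\in\mc{Z}(G)$. In particular $G_{\beta}$ is finitely generated and convex-cocompact, so by \Cref{lem:cc_basics}(2) the normaliser $N_G(G_{\beta})$ virtually splits as $G_{\beta}\x\Perp_G(G_{\beta})$.

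The next step is to show that $N_G(G_{\beta})$, and not just $N_H(G_{\beta})$, is elliptic in $T^v_{\om}$. If it were not, its minimal subtree would share an arc with $\beta\sq\Min(H;T^v_{\om})$. Applying \Cref{prop:no_indecomposable_crossing} and the inductive structure of $\mf{H}(G)$, as in that proof (where we showed $R_U\leq H$), should force the non-elliptic part to lie in $H$, contradicting (a). If this route is awkward, the fallback is to note that $\beta$ lies in an edge-like arc or in an indecomposable component, and to treat the two cases separately.

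Granting ellipticity, I would take a geometric approximation $\mc{G}\ra\mscr{T}$ that is well-adapted to $F$ and in which $\beta$ (after shrinking it to a stable sub-arc) lifts to an edge-like arc with the same stabiliser $G_{\beta}$. Because $G_{\beta}$ is finitely generated, it stabilises the lift in any sufficiently close approximation. The point fixed by $N_G(G_{\beta})$ is what lets us pick a shortening isometry commuting with $\rho(G_{\beta})$. Concretely, the isometry should come from the action of an element of $\Perp_G(G_{\beta})$ translating in the direction of $\beta$, or, at the level of $\X_{\om}$, from the product decomposition of the $G_{\beta}$--fixed set. This gives a shrinkable pair $(\beta,\mc{G})$. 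The associated $1$--edge splitting $T:=\mc{G}(\tilde\beta)$ has edge groups conjugate to $G_{\beta}\in\mc{Z}(G)$. Item~(2) then follows from how $T$ is built out of translates of $\tilde\beta$: any subgroup that is non-elliptic in $T$ contains an element whose axis in $\mc{G}$ crosses a translate of $\tilde\beta$, and hence whose axis in $T^v_{\om}$ shares an arc with a translate of $\beta$.

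Next, \Cref{prop:shrinkable_arc} gives $\ell(\rho'(\g);X)\leq\ell(\rho(\g);X)$ for all $\g\in F$, with strict inequality when the axis of $\g$ shares an arc with $\beta$. To obtain actual twists, I would realise the shortening isometry as an $\om$--limit of rescaled actions of elements $z_n\in\varphi_n(Z_G(G_{\beta}))$, and set $\tau_n$ to be the Dehn twist of $T$ with multiplier $\varphi_n^{-1}(z_n)$, raised to a suitable power. Then $\rho\circ\tau_n$ approximates $\rho'$, and since lengths in $\X_{\om}$ are the $\om$--limits of $\frac{1}{\lambda_n}\ell(\varphi_n(\cdot);\mf{E}_G)$, the strict inequalities transfer to $\om$--all $n$.

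The main obstacle is the non-strict inequality for $\g\in F$ at the finite level. Equality in the limit does not imply $\leq$ for $\om$--all $n$, so this has to be checked directly in $\mf{E}_G$: apply \Cref{lem:basic_shortening} to honest wallpairs of hyperplanes of $\mf{E}_G$, which requires choosing $z_n$ so that it acts as a genuine shortening isometry on the relevant shores. For item~(3), the multiplier lies in $\Perp_G(G_{\beta})$ (or in a vertex group, in the amalgam case), so $\tau_n$ is a fold or a partial conjugation. In the coarse-median preserving case this choice is available because arc stabilisers are then centralisers with trivial line actions, by \Cref{thm:inert_vs_motile}.
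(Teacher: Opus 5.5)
Your architecture matches the paper's: produce a shrinkable pair $(\beta,\mc{G})$, apply \Cref{prop:shrinkable_arc}, and recognise the shrunk action as the degeneration of $\varphi_n\tau_n$. However, the step that actually produces the twists is missing, and you replace it with a claim you do not justify.

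\textbf{Ellipticity and the multipliers.} You want $N_G(G_\beta)$ elliptic so that you can take multipliers in $Z_G(G_\beta)$. Your argument for this does not work. Nothing forces $\Min(N_G(G_\beta);T^v_\om)$ to meet $\beta$: elements of $Z_G(G_\beta)$ can translate along parts of $\Fix(G_\beta;T^v_\om)$ away from $\Min(H;T^v_\om)$. Also, \Cref{prop:no_indecomposable_crossing} concerns stabilisers of indecomposable components, not normalisers. Hypothesis (a) only gives $N_H(G_\beta)$ elliptic. So the multipliers must lie in $Z_H(G_\beta)$, which is what makes them fix the vertex $p$ of the edge and the twists well defined. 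They must also have a \emph{uniform} defect.

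The paper obtains them as follows. Since $\mf{H}(G)$ is $\Aut(G)$--invariant and finite up to conjugacy, one may assume $\varphi_n(H)=H$. Since $\beta$ is \emph{inert}, the walls $\mf{w},\mf{w}'$ are limits of hyperplanes $\mf{w}_n,\mf{w}_n'$ dual to edges of $\Min(H;T^v)\cap\Fix(\varphi_n(G_\beta);T^v)$. These hyperplanes cross an $H$--essential subcomplex $\mf{E}_H$, and their shores are preserved by $\varphi_n(G_\beta)$. The orthogonal parabolic stratum through a vertex of $\mf{s}_n'\cap\mf{E}_H$ also passes through $\mf{s}_n$. Its $H$--stabiliser has at most $C=C(G)$ vertex orbits on its intersection with $\mf{E}_H$.

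This yields $g_n\in\Perp_H(\varphi_n(G_\beta))$ with $d(g_nx,\pi_n(x))\le C$ for all $x\in\mf{s}_n'$; the displacement is constant along the shore by \Cref{rmk:orthogonal_preserves_hyperplanes}. Since $C/\lambda_n\to 0$, the map $(q_n)\mapsto(g_nq_n)$ is a shortening isometry of $\X_\om$ commuting with $\rho(G_\beta)$. The multipliers are $h_n:=\varphi_n^{-1}(g_n)\in Z_H(G_\beta)$, with no powers involved; powers are the mechanism of \Cref{prop:shortening_motile}. Neither ``an element of $\Perp_G(G_\beta)$ translating along $\beta$'' nor ``the product decomposition of the fixed set'' supplies this bound. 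You also never use inertness beyond $G_\beta\in\mc{Z}(G)$. Item~(3) comes from this construction too: for coarse-median preserving $\varphi_n$, \Cref{rmk:cmp_preserves_Perp} gives $h_n\in\Perp_H(G_\beta)$. In general $h_n$ lies only in $Z_H(G_\beta)$, so your claim that the multiplier always lies in $\Perp_G(G_\beta)$ is unfounded.

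\textbf{Steps asserted without proof.} Edge-likeness of the lift is exactly where (a) and $G$--stability enter. Choose $\mc{G}$ with $\tilde\beta\sq\Min(H;\mc{G})$ and $N_H(G_\beta)$ elliptic in $\mc{G}$. If $\tilde\beta$ met an indecomposable component $U$, stability would give $K_U=G_\beta$. Then \Cref{prop:no_indecomposable_crossing} makes the image of $H_U$ in the infinite group $G_U/G_\beta$ of finite index. But indecomposability forces $H_U\le N_H(G_\beta)$, which is elliptic, a contradiction.

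For item~(2), your ``hence'' fails: the morphism $\mc{G}\ra T^v_\om$ can fold, and the statement concerns arbitrary subgroups. The paper chooses $\mc{G}$ so that every subgroup preserving a subtree of $T^v_\om$ sharing no arc with $G\cdot\beta$ also preserves such a subtree of $\mc{G}$. This uses that the tree obtained by collapsing the complement of $G\cdot\beta$ has finitely generated point stabilisers in finitely many conjugacy classes \cite[Corollary~B.3]{Fio11}.

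\textbf{The non-strict inequality.} Your proposed fix does not work, because at the finite level $g_n$ is only a shortening isometry up to the error $C$, so \Cref{lem:basic_shortening} does not apply. No such argument is needed. If $\g\in F$ is elliptic in $T$, then $\tau_n(\g)$ is conjugate to $\g$, giving equality. If $\g$ is loxodromic in $T$, the above choice of $\mc{G}$ and isometry on axes of elements of $F$ put a translate of $\beta$ in its axis in $T^v_\om$. Then \Cref{prop:shrinkable_arc}(2) gives a strict inequality in the limit, hence for $\om$--all $n$.
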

\begin{proof}
    Since the arc $\beta$ is inert and not contained in an indiscrete line, \Cref{thm:inert_vs_motile} guarantees that $\varphi_n(G_{\beta})$ is elliptic in $T^v$ for $\om$--all $n$ and that $G_{\beta}\in\mc{Z}(G)$. In particular, both $G_{\beta}$ and the normaliser $N_H(G_{\beta})$ are finitely generated. Let $f\colon\mc{G}\ra\Min(G;T^v_{\om})$ be a geometric approximation such that:
    \begin{itemize}
        \item $\beta$ lifts isometrically to an arc $\tilde\beta\sq\mc{G}$ fixed by $G_{\beta}$ and contained in $\Min(H;\mc{G})$;
        \item the normaliser $N_H(G_{\beta})$ is elliptic in $\mc{G}$; 
        \item if $\g\in F$ is loxodromic in $T^v_{\om}$, then $f$ is isometric on ${\rm Ax}(\g;\mc{G})$;
        \item if a subgroup $K\leq G$ leaves invariant a subtree of $T^v_{\om}$ that does not share any arcs with the $G$--translates of $\beta$, then $K$ leaves invariant a subtree of $\mc{G}$ that does not share arcs with the $G$--translates of $\tilde\beta$.
    \end{itemize}
    Finding $\mc{G}$ satisfying the first three items is standard. As to the fourth one, consider the $\R$--tree $\mc{T}$ obtained from $T^v_{\om}$ by collapsing to a point the closure of each connected component of $T^v_{\om}\setminus\bigcup_{g\in G}g\beta$. The induced action $G\acts\mc{T}$ is minimal and its arc-stabilisers lie in $\mc{Z}(G)$, so \cite[Corollary~B.3]{Fio11} implies that point-stabilisers of $\mc{T}$ are finitely generated and fall into finitely many $G$--conjugacy classes of subgroups $[P_1],\dots,[P_s]$. We can then choose $\mc{G}$ so that each of the $P_i$ leaves invariant a subtree not sharing arcs with any $G$--translate of $\tilde\beta$,
    and the fourth item follows.

    Now that we have $\mc{G}$, suppose for the sake of contradiction that $\tilde\beta$ shares an arc with an indecomposable component $U\sq\mc{G}$. Since $\beta$ is $G$--stable by hypothesis, the kernel of the action $G_U\acts U$ coincides with $G_{\beta}$. \Cref{prop:no_indecomposable_crossing} then implies that the stabiliser $H_U$ projects to an infinite subgroup of the quotient $G_U/G_{\beta}$. However, indecomposability of $U$ implies that $H_U\leq N_H(G_{\beta})$, and the latter subgroup is elliptic in $\mc{G}$. This is a contradiction.

    In conclusion, up to shrinking $\beta$, we can assume that the arc $\tilde\beta\sq\mc{G}$ is edge-like. Define $G\acts T$ as the $1$--edge splitting over $G_{\beta}$ obtained by collapsing to a point the closure of each connected component of $\mc{G}\setminus\bigcup_{g\in G}g\tilde\beta$. Our choice of the geometric approximation $\mc{G}$ guarantees that every subgroup of $G$ that is elliptic in $T^v_{\om}$ is also elliptic in $T$. Additionally, loxodromic elements of $F$ are also elliptic in $T$ if their axis in $T^v_{\om}$ does not share arcs with any $G$--translate of $\beta$. 
    
    Let $e\sq T$ be the edge corresponding to the arc $\tilde\beta$. Let $p\in T$ be a vertex fixed by both $G_{\beta}$ and the normaliser $N_H(G_{\beta})$. Since $G_{\beta}$ is convex-cocompact, it is not properly contained in any of its $G$--conjugates (\Cref{lem:cc_basics}(3)); thus, up to replacing $\tilde\beta$ with a $G$--translate, we can assume that $p\in e$. Let $p'$ be the vertex of $e$ other than $p$. Let $A$ and $B$ be the $G$--stabilisers of $p$ and $p'$, respectively; if $T$ is an HNN extension, we also pick an element $t\in G$ with $tp=p'$. 

    Now, consider the minimal $\rho(G)$--invariant convex subset $\mscr{C}\sq\X_{\om}$ and its equivariant projection $\pi^v_{\om}\colon\mscr{C}\ra\Min(G;T^v_{\om})$. Let $(\mf{w},\mf{w}')$ be a non-degenerate wallpair of $\mscr{C}$ arising from the arc $\beta$, and let $\mf{W},\mf{W}'$ be the corresponding geometric walls. Identifying $\beta$ with $\tilde\beta\sq\mc{G}$ and $e\sq T$, we should name the walls so that the point $\pi^v_{\om}(\mf{W})$ is closer to $p$, while $\pi^v_{\om}(\mf{W}')$ is closer to $p'$. 

    Recall that the action $G\acts T^v_{\om}$ is the $\om$--limit of countably many copies of the action $G\acts T^v$, with the $n$--th copy rescaled by a factor and twisted by the automorphism $\varphi_n$. Moreover, $\pi^v(\mf{E}_G)=\Min(G;T^v)$. Since the arc $\beta$ is $G$--inert and contained in $\Min(H;T^v_{\om})$, there exist hyperplanes $\mf{w}_n,\mf{w}_n'\in\mscr{W}(\mf{E}_G)$ originating from edges of $\Min(\varphi_n(H);T^v)\cap\Fix(\varphi_n(G_{\beta});T^v)$ and converging to the walls $\mf{w},\mf{w}'\in\mscr{W}(\X_{\om})$. Let $\mf{s}_n\sq\mf{w}_n$ and $\mf{s}_n'\sq\mf{w}_n'$ be their shores within $\mf{E}_G$, and let $\pi_n\colon\mf{s}_n'\ra\mf{s}_n$ be the gate-projection (which is an isometry).

    \smallskip
    {\bf Claim.} \emph{There exist a constant $C$ (only depending on $G$, viewed as a subgroup of $\mc{A}_{\G}$) and elements $h_n\in Z_H(G_{\beta})$ such that the following holds for $\om$--all $n$. For every point $x\in\mf{s}_n'\sq\mf{E}_G$, we have 
    \[ d(\varphi_n(h_n)x,\pi_n(x))\leq C .\]
    If the automorphisms $\varphi_n$ are coarse-median preserving, then we additionally have $h_n\in\Perp_H(G_{\beta})$.
    }
    
    \smallskip
    \emph{Proof of claim.} Since $\mf{H}(G)$ is $\Aut(G)$--invariant and finite up to $G$--conjugacy, the conjugacy class of $\varphi_n(H)$ is $\om$--constant. Up to composing each $\varphi_n$ with an inner automorphism of $G$, we can thus assume that $\varphi_n(H)=H$ for $\om$--all $n$. Choose a convex $H$--essential subcomplex $\mf{E}_H\sq\mf{E}_G\sq\X_{\G}$. Let $C$ be a constant such that, for each parabolic stratum $\mc{P}\sq\X_{\G}$ with $\mc{P}\cap\mf{E}_H\neq\emptyset$, the intersection $\mc{P}\cap\mf{E}_H$ is acted upon by its $H$--stabiliser with at most $C$ orbits of vertices (see \Cref{subsub:orthogonals} for terminology). Note that the constant $C$ is bounded only depending on $G$.
    
    Now, the hyperplanes $\mf{w}_n,\mf{w}_n'\in\mscr{W}(\mf{E}_G)\sq\mscr{W}(\X_{\G})$ are $H$--essential for $\om$--all $n$, by construction, and so they cross the subcomplex $\mf{E}_H$. This implies that $\mf{s}_n\cap\mf{E}_H\neq\emptyset$ and $\mf{s}_n'\cap\mf{E}_H\neq\emptyset$. Pick some vertex $y_n\in\mf{s}_n'\cap\mf{E}_H$ and let $\mc{P}_n\sq\X_{\G}$ be the orthogonal subcomplex for $\mf{s}_n'$ through $y_n$ (see again \Cref{subsub:orthogonals}); it follows that $\mc{P}_n$ is also the orthogonal subcomplex for $\mf{s}_n$ through the point $\pi_n(y_n)$, which lies in $\mf{s}_n\cap\mf{E}_H$. By the choice of the constant $C$, there exists an element $g_n$ in the $H$--stabiliser of $\mc{P}_n$ such that $d(g_ny_n,\pi_n(y_n))\leq C$. At the same time, since $g_n\mc{P}_n=\mc{P}_n$, the element $g_n$ leaves invariant each hyperplane in the set $\mscr{W}(\mf{s}_n)=\mscr{W}(\mf{s}_n')$ by \Cref{rmk:orthogonal_preserves_hyperplanes}, and so we have $d(g_nx,\pi_n(x))=d(g_ny_n,\pi_n(y_n))\leq C$ for all points $x\in\mf{s}_n'$.

    Finally, since $\varphi_n(G_{\beta})$ leaves invariant the hyperplanes $\mf{w}_n,\mf{w}_n'$, it also leaves invariant the shores $\mf{s}_n,\mf{s}_n'$ and so it admits an essential core within either of them. This implies that $g_n\in\Perp_H(\varphi_n(G_{\beta})$ and, in particular, that $g_n\in Z_H(\varphi_n(G_{\beta}))$. Setting $h_n:=\varphi_n^{-1}(g_n)$, we obtain $h_n\in Z_H(G_{\beta})$ and, if $\varphi_n$ is coarse-median preserving, also $h_n\in\Perp_H(G_{\beta})$, as claimed.
    \hfill$\blacksquare$
    
    \smallskip
    We can now use the elements $h_n$ provided by the claim to define Dehn twists $\tau_n\in\Aut(G)$ with respect to the splitting $T$. We declare $\tau_n$ to be the identity on the subgroup $A\leq G$, and to either equal $b\mapsto h_nbh_n^{-1}$ on $B$ (in the amalgam case), or map $t\mapsto h_nt$ (in the HNN case).
    According to \Cref{defn:DT_types}, the $\tau_n$ are partial conjugations when $T$ is an amalgam, and they are folds if $T$ is an HNN splitting and the automorphisms $\varphi_n$ are coarse-median preserving, yielding Item~(3).

    Representing points of the degeneration $\X_{\om}$ by equivalence classes of sequences $(q_n)\in\X_{\G}^{\N}$, let $\Phi\colon\X_{\om}\ra\X_{\om}$ be the isometry defined by $(q_n)\mapsto (\varphi_n(h_n)q_n)$. 
    It is immediate to check that $\Phi$ is well-defined and a shortening isometry for the wallpair $(\mf{w},\mf{w}')$, in the sense of \Cref{defn:shortening_isom}. In particular, $\Phi$ witnesses the fact that the pair $(\mc{G},\beta)$ is a shrinkable arc (\Cref{defn:shrinkable_arcs}) and this yields a shrunk homomorphism $\rho'\colon G\ra\isom(\X_{\om})$ (\Cref{defn:shrunk_homo_arcs}). Finally, \Cref{prop:shrinkable_arc} guarantees that $\ell(\rho'(\g);\X_{\om})\leq\ell(\rho(\g);\X_{\om})$ for all $\g\in F$, with the strict inequality holding if $\g$ is loxodromic in $T^v_{\om}$ with axis sharing an arc with $\beta$.
    
    To conclude, observe that the degeneration associated to the sequence of automorphisms $\varphi_n\tau_n$ (using the same scaling factors as for the sequence $\varphi_n$) is precisely the $G$--action on $\X_{\om}$ induced by the shrunk homomorphism $\rho'$. The previous inequalities then amount precisely to Item~(1) in the statement of the proposition (for convergence of length functions as we take the $\om$--limit, see e.g.\ \cite[Lemma~7.9(2)]{Fio10a}). This concludes the proof.
\end{proof}

The following is needed to construct shortening isometries of $\X_{\om}$ in the next case we consider. The important point is that the right-hand side of the inequality in the lemma is independent of the integer $n$. Note that this badly fails for elements $g$ that are not convex-cocompact.

\begin{lem}\label{lem:cc_element_shortening}
    Let $Q$ be the number of vertex orbits for $G\acts\mf{E}_G$, and set $D:=\dim\mf{E}_G$. Consider a convex-compact element $g\in G$ and a hyperplane $\mf{w}\in\mscr{W}(\mf{E}_G)$ skewered by $g$. For $n\in\N$, let $\mf{s}_n\sq\mf{w}$ and $\mf{s}_n'\sq g^n\mf{w}$ be the shores for the pair of hyperplanes $\mf{w},g^n\mf{w}$, and denote by $\pi_n\colon\mf{s}_n'\ra\mf{s}_n$ the nearest-point projection. Then, for all $n\geq 4D$ and all vertices $x\in\mf{s}_n'$, we have
    \[ d(g^{-n}x,\pi_n(x))\leq 2Q+4D\cdot \ell(g;\mf{E}_G) .\]
\end{lem}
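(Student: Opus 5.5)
The plan is to compare the two maps $x\mapsto g^{-n}x$ and $\pi_n$ on the shore $\mf{s}_n'$ by looking at hyperplanes. Both are isometries $\mf{s}_n'\ra\mf{s}_n$: $g^{-n}$ maps $g^n\mf{w}$ to $\mf{w}$, and $\pi_n$ restricted to the shore is the isometry between shores. They do not in general coincide, because $g^{-n}$ need not preserve $\mf{s}_n'$.

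\textbf{Step 1: reduce to a bound at one vertex.} Every hyperplane crossing $\mf{s}_n'$ crosses both $\mf{w}$ and $g^n\mf{w}$, and the hyperplanes crossing both $\mf{w}$ and $g^n\mf{w}$ are exactly those crossing the shores. For $n\geq 4D$ these hyperplanes should be transverse to a long chain of $g$--translates of $\mf{w}$, namely $\mf{w},g\mf{w},\dots,g^n\mf{w}$, which are nested since $g$ skewers $\mf{w}$. Hence they cross all the $g^i\mf{w}$ and are therefore $g$--invariant. The argument for this is the standard one in CAT(0) cube complexes: a hyperplane $\mf{u}$ transverse to $g^i\mf{w}$ for $0\le i\le n$ with $n$ larger than a multiple of the dimension must equal $g\mf{u}$. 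Otherwise $\mf{u},g\mf{u},\dots$ together with the chain $g^i\mf{w}$ would produce too large a family of pairwise transverse or nested hyperplanes, contradicting $\dim=D$. Consequently the hyperplanes of $\mf{s}_n'$ lie in the orthogonal set $\mscr{W}(\mf{s}_n')$, and, as in \Cref{rmk:orthogonal_preserves_hyperplanes}, $g$ preserves each of them. Then for any two vertices $x,y\in\mf{s}_n'$, the isometries $g^{-n}$ and $\pi_n$ both preserve the half-space data along hyperplanes of $\mscr{W}(\mf{s}_n')$. So $d(g^{-n}x,\pi_n(x))$ is independent of $x$, exactly as in the claim in the proof of \Cref{prop:shortening_inert}, and it suffices to bound it at a single well-chosen vertex.

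\textbf{Step 2: choose the vertex.} Pick a vertex $z$ on a combinatorial axis (or minset) of $g$ in $\mf{E}_G$ adjacent to $\mf{w}$. Convex-cocompactness of $\langle g\rangle$ should let us find such $z$ within distance controlled by $Q$ (cocompactness of $G\acts\mf{E}_G$ gives any vertex within $Q$ of a $G$--translate of a fixed vertex). Then $d(z,g^nz)=n\,\ell(g)$ up to a bounded error. Let $x$ be the gate-projection of $g^nz$ to $\mf{s}_n'$. The hyperplanes separating $g^nz$ from $x$ are those separating $g^n z$ from $\mf{s}_n'$. Hyperplanes separating $z$ from $\mf{w}$-side data can only be those near the ends of the chain, i.e.\ they fail to cross many of the $g^i\mf{w}$. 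By the dimension argument, only about $4D$ translates' worth of such hyperplanes occur, giving a contribution of at most $4D\ell(g)$ plus the $Q$--type error. The same estimate applies to $\pi_n(x)$ versus $z$. The triangle inequality, after rewriting $d(g^{-n}x,\pi_n(x))\le d(g^{-n}x,z)+d(z,\pi_n(x))$, then yields $2Q+4D\ell(g;\mf{E}_G)$.

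\textbf{Main obstacle.} The main obstacle is Step 2's counting: showing that the hyperplanes separating the axis vertex from the shores number at most $4D\,\ell(g)$ plus a constant independent of $n$. I would attempt this by partitioning such hyperplanes by which $g^i\mf{w}$ they fail to cross, and using that a hyperplane missing $g^i\mf{w}$ and $g^j\mf{w}$ with $i<j$ misses everything in between. Only hyperplanes missing one of the first or last $2D$ translates can then contribute, and each block of translates contributes at most $\ell(g)$ hyperplanes per translate.
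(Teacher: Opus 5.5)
Your Step~1 is correct in substance. A hyperplane crossing the shores is transverse to $\mf{w}$ and $g^n\mf{w}$, hence to $g^{4D}\mf{w}$. So it is $g$--invariant, and its sides are preserved since there are no hyperplane inversions in $\X_{\G}$. Thus $g^{-n}$ and $\pi_n$ treat $\mscr{W}(\mf{s}_n')$ identically, and $d(g^{-n}x,\pi_n(x))$ does not depend on $x\in\mf{s}_n'$.

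Two corrections, though. First, $g^{-n}$ does not map $\mf{s}_n'$ onto $\mf{s}_n$, but onto the \emph{other} shore of $\mf{w}$, the one facing $g^{-n}\mf{w}$. Comparing these two shores of $\mf{w}$ is the whole content of the lemma. Second, the $g$--invariance does not follow from a dimension count. Take $g=ab$ in $\Z^2=\langle a,b\rangle$ acting on the square tiling of $\R^2$: every horizontal hyperplane is transverse to all the (skewered) vertical ones, yet none is $g$--invariant. This is exactly where convex-cocompactness of $\langle g\rangle$ is used, via Lemma~3.10 of \cite{Fio10a}, as quoted in the paper's proof.

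The genuine gap is Step~2, where the actual estimate lives. The tool you propose for the count is false: a hyperplane missing $g^i\mf{w}$ and $g^j\mf{w}$ need not miss everything in between. It is crossing both ends of a nested chain that forces crossing the middle. The $Q$ term has no identified source. Also, bounding $d(g^{-n}x,z)$ and $d(z,\pi_n(x))$ separately by $4D\ell(g)+Q$ each would give $2Q+8D\ell(g)$, not the stated bound.

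The missing idea is the paper's counting. If $\mf{u}$ is skewered by $g$ and transverse to $\mf{w}$, then $g^k\mf{u}$ is not transverse to $\mf{w}$ for $k\geq 4D$. Otherwise $\mf{w}$ would be transverse to $\mf{u}$ and $g^{4D}\mf{u}$, and Lemma~3.10 of \cite{Fio10a} would give $g\mf{w}=\mf{w}$. Hence at most $4D\cdot\ell(g;\mf{E}_G)$ hyperplanes are simultaneously skewered by $g$ and transverse to $\mf{w}$.

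With this your route closes, and more cheaply than you expected:
\begin{enumerate}
\item Take $z$ an endpoint of the edge of a combinatorial axis of $g$ dual to $\mf{w}$, and let $x$ be the gate-projection of $g^nz$ to $\mf{s}_n'$.
\item Any hyperplane other than $\mf{w}$ separating $z$ from $g^{-n}x$ (resp.\ from $\pi_n(x)$) crosses $\mf{w}$.
\item It cannot cross the shores, since those hyperplanes are $g$--invariant and cannot separate $z$ from $g^{\pm n}z$.
\item It therefore separates $z$ from $g^{-n}z$ (resp.\ $g^nz$), so it is skewered by $g$.
\item Hence every hyperplane separating $g^{-n}x$ from $\pi_n(x)$, except possibly $\mf{w}$, lies in that set of size at most $4D\ell(g;\mf{E}_G)$.
\item Step~1 then propagates the bound to all $x\in\mf{s}_n'$, with no $Q$ at all.
\end{enumerate}

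The paper argues differently. It passes to $G\cap P^{\perp}$, where $g$ has cyclic centraliser, so that both shores have diameter at most $Q$. It then uses the same orbit argument to count the hyperplanes separating $\mf{s}_n$ from $g^{-n}\mf{s}_n'$. Your independence-of-$x$ observation is what lets you bypass that reduction.
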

\begin{proof}
    We first reduce to the case when $Z_G(g)\cong\Z$, which will require replacing the group $G$. The centraliser $Z_{\mc{A}_{\G}}(g)$ splits as $C\x P$, where $C$ is the maximal cyclic subgroup of $\mc{A}_{\G}$ containing $g$, and where $P\leq\mc{A}_{\G}$ is a parabolic subgroup. As in the statement of the lemma, we consider some $n\in\N$ and a vertex $x\in\mf{s}_n'\sq\mf{E}_G$. Let $\Pi\sq\X_{\G}$ be the parabolic stratum containing $x$ such that the $\mc{A}_{\G}$--stabiliser of $\Pi$ is the orthogonal parabolic subgroup $P^{\perp}$.
    Since distinct $\mc{A}_{\G}$--translates of $\Pi$ are disjoint, the action $G\cap P^{\perp}\acts\Pi\cap\mf{E}_G$ has at most $Q$ orbits of vertices. Moreover, the element $g$ lies in $G\cap P^{\perp}$ and has cyclic centraliser within this subgroup. Finally, the hyperplanes $\mf{w}$ and $g^n\mf{w}$ cross the convex subcomplex $\Pi\cap\mf{E}_G$, and $x$ lies in the shore of $g^n\mf{w}$ with respect to $\mf{w}$ within $\Pi\cap\mf{E}_G$. We can thus replace the group $G$ with $G\cap P^{\perp}$, and the complex $\mf{E}_G$ with a $G\cap P^{\perp}$--essential subcomplex of $\Pi\cap\mf{E}_G$; it suffices to prove the lemma in this context.

    We thus assume that $Z_G(g)\cong\Z$. The fact that $g$ is convex-cocompact implies that, if a hyperplane $\mf{v}\in\mscr{W}(\X_{\G})$ is skewered by $g$ and if a hyperplane $\mf{u}\in\mscr{W}(\X_{\G})$ is transverse to both $\mf{v}$ and $g^{4D}\mf{v}$, then $g\mf{u}=\mf{u}$ (see e.g.\ \cite[Lemma~3.10]{Fio10a}).
    Since $Z_G(g)\cong\Z$, it follows that, for every hyperplane $\mf{v}\in\mscr{W}(\mf{E}_G)$ skewered by $g$ and for every integer $k\geq 4D$, each of the shores within $\mf{E}_G$ for the pair of hyperplanes $\mf{v},g^k\mf{v}$ has trivial $G$--stabiliser (indeed, any element preserving a shore would only skewer $\langle g\rangle$--invariant hyperplanes and thus commute with $g$). Since each of these shores is the intersection with $\mf{E}_G$ of a parabolic stratum of $\X_{\G}$, it follows that these shores each contain at most $Q$ vertices and so have diameter $\leq Q$.

    Now, consider $n\geq 4D$ and the shores $\mf{s}_n\sq\mf{w}$ and $\mf{s}_n'\sq g^n\mf{w}$. Note that $g^{-n}\mf{s}_n'$ is the shore within $\mf{w}$ for the pair $\mf{w},g^{-n}\mf{w}$. Let $\mc{U}$ be the set of hyperplanes separating $\mf{s}_n$ from $g^{-n}\mf{s}_n'$. Each $\mf{u}\in\mc{U}$ is transverse to $\mf{w}$; moreover, $\mf{u}$ separates $g^n\mf{w}$ from $g^{-n}\mf{w}$ and so $\mf{u}$ is skewered by $g$. For $k\geq 4D$, the hyperplane $g^k\mf{u}$ cannot be transverse to $\mf{w}$; otherwise, as observed in the previous paragraph, we would have $g\mf{w}=\mf{w}$, contradicting the fact that $\mf{w}$ is skewered by $g$. This shows that no two hyperplanes in $\mc{U}$ are in the same $\langle g^{4D}\rangle$--orbit, and hence $|\mc{U}|\leq 4D\cdot\ell(g;\mf{E}_G)$.

    In conclusion, the sets $\mf{s}_n$ and $g^{-n}\mf{s}_n'$ are at distance $\leq 4D\cdot\ell(g;\mf{E}_G)$, and they each have diameter $\leq Q$. This implies that $\mf{s}_n\cup g^{-n}\mf{s}_n'$ has diameter $\leq 2Q+4D\cdot\ell(g;\mf{E}_G)$, which implies the thesis.
\end{proof}

The next result is based on the observation that IOM-lines are shrinkable lines when the centre of their stabiliser is elliptic. We discuss in \Cref{rmk:shortening_motile+} what happens when the centre is non-elliptic.

\begin{prop}\label{prop:shortening_motile}
    Consider an IOM-line $\alpha\sq T^v_{\om}$ sharing an arc with $\Min(G;T^v_{\om})$, 
    and suppose that the centre of $G_{\alpha}$ is elliptic in $T^v_{\om}$. 
    Then, for every finite set $F\sq G$ and any finite collection $\mscr{F}$ of finitely generated subgroups of $G$ that are elliptic in $T^v_{\om}$, there are a $1$--edge splitting $G\acts T$ and a sequence of Dehn twists $\tau_n\in\Aut(G)$ with respect to $T$ such that the following hold.
    \begin{enumerate}
        \item For each $\g\in F$, we have $\ell(\varphi_n\tau_n(\g);\mf{E}_G) \leq \ell(\varphi_n(\g);\mf{E}_G)$ for $\om$--all $n$.
        \item The strict inequality $\ell(\varphi_n\tau_n(\g);\mf{E}_G) < \ell(\varphi_n(\g);\mf{E}_G)$ holds for $\om$--all $n$ for at least one element $\g\in F$, if at least one element of $F$ is loxodromic in $T^v_{\om}$ with axis sharing an arc with $\alpha$. 
        \item If the action $G_{\alpha}\acts\alpha$ is nontrivial, then the splitting $T$ is $(G_{\alpha},\mscr{F})$--ascetic and the twists $\tau_n$ are $(A,\mscr{F})$--ascetic, where $A\in\Sal(G)$ is the centre of $G_{\alpha}$. If instead $G_{\alpha}$ is el\-lip\-tic in $T^v_{\om}$, then $T$ is $(G_{\alpha},\mscr{F})$--isolating with edge groups conjugate to $G_{\alpha}\in\mc{Z}(G)$. 
        \item If the $\varphi_n$ are coarse-median preserving, then so are the $\tau_n$. Moreover, $G_{\alpha}$ is elliptic in $T^v_{\om}$, and the $\tau_n$ are skews or partial conjugations. 
    \end{enumerate}
\end{prop}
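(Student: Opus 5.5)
The plan is to show that, under the ellipticity hypothesis on the centre $A$ of $G_\alpha$, the line $\alpha$ is a shrinkable line in the sense of \Cref{defn:shrinkable_lines} for the projection $\pi^v_\om\colon\mscr{C}\ra\Min(G;T^v_\om)$. Then \Cref{prop:shrinkable_line} gives a shrunk homomorphism $\rho_{\mc{E},\eta,\delta}$ that does not lengthen any element of $F$ and strictly shortens one of them. Finally, this shrunk homomorphism should be realised as the degeneration of $\varphi_n\tau_n$, where the $\tau_n$ are Dehn twists with respect to the HNN splitting $T:=\mc{H}(\mc{E},\eta)$ of Step~4.

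\textbf{Conditions of \Cref{defn:shrinkable_lines}.} The degenerate case where $G_\alpha$ acts trivially on $\alpha$ should be handled separately. In that case $G_\alpha\in\mc{Z}(G)$ is elliptic, and I would argue as in \Cref{prop:shortening_inert} with a shrinkable arc of $\alpha$, producing an isolating splitting with edge groups conjugate to $G_\alpha$.

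Assume now that $G_\alpha\acts\alpha$ is nontrivial. Then $\alpha$ is ascetic by \Cref{lem:line_addenda}(3), and $G_\alpha=Z_G(g)\in\mc{Z}(G)$ is finitely generated. The quotient $G_\alpha/K_\alpha$ is free abelian by \Cref{thm:inert_vs_motile}.

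The substantive part of the definition is condition (2). I would follow \Cref{rmk:short_central_elements} and use elements $h_n$ with $G_\alpha=Z_G(h_n)$, where $\varphi_n(h_n)$ is convex-cocompact and loxodromic in $T^v$, and $\ell(\varphi_n(h_n);T^v)/\lambda_n\to0$. Since $A$ is elliptic, these $h_n$ cannot be $\om$-constant, and they lie in the centre of $\varphi_n^{-1}(Z_G(g_n))$.

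For a given $\eps$, I would choose exponents $k_n$ with $k_n\ell(\varphi_n(h_n);T^v)/\lambda_n\to\eps$. I would then define $\Phi_\eps$ on $\X_\om$ by $(q_n)\mapsto(\varphi_n(h_n)^{-k_n}q_n)$. \Cref{lem:cc_element_shortening} gives the uniform bound
\[ d\big(g^{-k}x,\pi_k(x)\big)\leq 2Q+4D\,\ell(g;\mf{E}_G), \]
which is independent of $n$. After rescaling by $\lambda_n$ this error vanishes, so $\Phi_\eps$ is a shortening isometry simultaneously for all wallpairs at distance $\eps$ along $\alpha$. It commutes with $\rho(G_\alpha)$ because $\varphi_n(h_n)$ is central in $\varphi_n(G_\alpha)$.

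One point must be checked here: $\Phi_\eps$ has to move $\alpha$ itself and not act trivially on it. This is where I expect to use that $\alpha$ is IOM, and in particular its motile structure.

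\textbf{Turning the shrunk homomorphism into Dehn twists.} Next, apply \Cref{prop:shrinkable_line} with a geometric approximation chosen so that the finitely generated subgroups in $\mscr{F}$ are elliptic in $\mc{G}$. This yields $(\mc{E},\eta)$ and $\delta$. The shrunk homomorphism $\rho_{\mc{E},\eta,\delta}$ fixes $U$ and sends $t\mapsto\Phi_\delta\rho(t)$. It is exactly the degeneration of $\varphi_n\tau_n$, where $\tau_n$ is the Dehn twist of $\mc{H}(\mc{E},\eta)$ that fixes $U$ and sends $t\mapsto h_n^{-k_n}t$. The multiplier $h_n^{-k_n}$ lies in the centre of $G_\alpha$, namely $A$, and it centralises the edge group $\ker\eta$.

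The $\om$-limit of translation lengths then gives Items~(1) and~(2), as at the end of \Cref{prop:shortening_inert}.

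\textbf{Remaining items.} For Item~(3): the splitting $\mc{H}(\mc{E},\eta)$ is $(G_\alpha,\mscr{F})$-ascetic by \Cref{lem:isolating->ascetic}. The twists satisfy $\tau_n(z)\in z\langle a_0\rangle$ on $Z_G(A)=G_\alpha$. Salient abelians not containing a conjugate of $A$ are elliptic, by the argument of \Cref{prop:salient_vs_degenerations}(1). Together these make the $\tau_n$ into $(A,\mscr{F})$-ascetic twists, and \Cref{lem:line_stab_SVP} gives $A\in\Sal(G)$.

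For Item~(4): \Cref{lem:line_addenda}(4) shows that there are no IOM-lines in the coarse-median preserving case. So only the elliptic case can occur, and there the twists are skews with convex-cocompact multiplier and elliptic centraliser, or partial conjugations. These are coarse-median preserving by \Cref{thm:cmp_DT}.

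\textbf{Main obstacle.} I expect the construction of $\Phi_\eps$ to be the hardest step. It requires two things at once: that it works uniformly for all wallpairs along $\alpha$, and that it genuinely translates along $\alpha$. Both must be obtained from \Cref{lem:cc_element_shortening} while the elements $\varphi_n(h_n)$ vary with $n$.
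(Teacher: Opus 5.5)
Your overall route is the paper's: show $\alpha$ is shrinkable using powers of central elements, apply \Cref{prop:shrinkable_line}, and realise $\rho_{\mc{E},\eta,\delta}$ as the degeneration of $\varphi_n\tau_n$ for twists of $\mc{H}(\mc{E},\eta)$. However, the step you flag as hardest contains a genuine gap.

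The bound $2Q+4D\,\ell(g;\mf{E}_G)$ of \Cref{lem:cc_element_shortening} is uniform in the \emph{exponent}, not in the sequence index. With $g=\varphi_n(h_n)$ it reads $2Q+4D\,\ell(\varphi_n(h_n);\mf{E}_G)$, so for it to vanish after rescaling you need $\ell(\varphi_n(h_n);\mf{E}_G)=o(\lambda_n)$. Nothing you cite gives this. \Cref{rmk:short_central_elements} only controls the translation length in the single tree $T^v$, which can a priori be much smaller than that in $\mf{E}_G$. The same estimate is needed just to know that $(q_n)\mapsto(\varphi_n(h_n)^{-k_n}q_n)$ is a well-defined isometry of $\X_\om$, since $k_n\approx\eps\lambda_n/\ell(\varphi_n(h_n);T^v)$.

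Tellingly, your proposal never uses the hypothesis that $\alpha$ shares an arc with $\Min(G;T^v_\om)$, and this is exactly where the paper uses it. It proves $\lim_\om\frac{1}{\lambda_n}\ell(\varphi_n(h_n);\mf{E}_G)=0$ by contradiction. Suppose $\ell(\varphi_n(h_n);\mf{E}_G)/\ell(\varphi_n(h_n);T^v)$ diverged. Take arcs $\beta_n$ of the axes of $\varphi_n(h_n)$ in $T^v$ converging to an arc $\beta\sq\alpha\cap\Min(G;T^v_\om)$. Take hyperplanes $\mf{w}_n$ and $\varphi_n(h_n)^{c_n}\mf{w}_n$ dual to edges of $\beta_n$, with $c_n\,\ell(\varphi_n(h_n);T^v)\geq|\beta_n|/2$. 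By Lemma~3.10 of \cite{Fio10a}, $\mf{w}$ and $g^{4D}\mf{w}$ are separated by at least $\ell(g;\mf{E}_G)$ hyperplanes. So these two hyperplanes are at rescaled distance at least about $\frac{|\beta_n|}{8D\lambda_n}\cdot\frac{\ell(\varphi_n(h_n);\mf{E}_G)}{\ell(\varphi_n(h_n);T^v)}\to\infty$. But they must converge to two walls of $\X_\om$, because $\beta\sq\Min(G;T^v_\om)=\pi^v_\om(\mscr{C})$.

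Your treatment of the case where $G_\alpha$ acts trivially on $\alpha$ has a related problem. In the proof of \Cref{prop:shortening_inert}, the shortening isometry is built from hyperplanes dual to edges of $\Fix(\varphi_n(G_\beta);T^v)$, which exist only because $\beta$ is inert. Here $\alpha$ is motile, so $\varphi_n(G_\alpha)$ is not elliptic in $T^v$ and that construction is unavailable. The paper instead uses the same $\Phi_\delta$ (powers of the central elements) to witness that $(\beta,\mc{G})$ is a shrinkable arc, with $N_G(G_\alpha)=G_\alpha$ making the lift of $\beta$ edge-like. So this case also rests on the estimate above.

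Two smaller points. First, the fact that $\Phi_\eps$ genuinely translates along $\alpha$ has nothing to do with motility. It follows from \Cref{thm:inert_vs_motile}(2) and \Cref{lem:line_addenda}(1): the axes of $\varphi_n(h_n)$ are the lines $\Min(\varphi_n(G_\alpha);T^v)$, which converge to $\alpha$. Second, the $h_n$ may well be $\om$--constant, for instance when $A\cong\Z$; nothing depends on this, but your claim otherwise is false.
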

\begin{proof}
Our goal is to show that the line $\alpha$ is shrinkable (\Cref{defn:shrinkable_lines}), so that we can exploit the construction described in \Cref{subsub:shrinkable_lines}. (This will only be possible if $G_{\alpha}$ is not elliptic.)

Recall that the action $G\acts T^v_{\om}$ is the $\om$--limit of countably many copies of $G\acts T^v$, with the $n$--th copy twisted by the automorphism $\varphi_n$ and rescaled by a factor $\lambda_n>0$. As usual, let $K_{\alpha}$ be the kernel of the action $G_{\alpha}\acts\alpha$. \Cref{rmk:short_central_elements} yields elements $g_n$ with $Z_G(g_n)=G_{\alpha}$ and such that $\varphi_n(g_n)$ is convex-cocompact in $G$ and loxodromic in $T^v$ with $\lim_{\om}\frac{1}{\lambda_n}\ell(\varphi_n(g_n);T^v)=0$. Since the centre of $G_{\alpha}$ is elliptic by hypothesis, we have $g_n\in K_{\alpha}$. For each real number $\delta>0$, we can find a (diverging) sequence of natural numbers $a_n(\delta)$ such that 
\[ \lim_{\om} \tfrac{1}{\lambda_n}\ell(\varphi_n(g_n^{a_n(\delta)});T^v)=\lim_{\om} a_n(\delta)\cdot \tfrac{1}{\lambda_n} \ell(\varphi_n(g_n);T^v)=\delta . \]
Representing points of $\X_{\om}$ by sequences $(q_n)\in\X_{\G}^{\N}$, we can then consider the isometry $\Phi_{\delta}\colon\X_{\om}\ra\X_{\om}$ given by $(q_n)\mapsto (\varphi_n(g_n^{a_n(\delta)})q_n)$. Note that $\Phi_{\delta}$ commutes with $\rho(G_{\alpha})$ within $\isom(\X_{\om})$, since $g_n$ lies in the centre of $G_{\alpha}$. We now wish to show that $\Phi_{\delta}$ is a shortening isometry (and that it is well-defined). This requires the following observation.

\smallskip
{\bf Claim.} \emph{We have $\lim_{\om}\frac{1}{\lambda_n}\ell(\varphi_n(g_n);\mf{E}_G)=0$.}

\smallskip\noindent
\emph{Proof of claim.} 
We we will deduce this from the fact, seen above, that $\lim_{\om}\frac{1}{\lambda_n}\ell(\varphi_n(g_n);T^v)=0$. Suppose for the sake of contradiction that $\lim_{\om} \ell(\varphi_n(g_n);\mf{E}_G)/\ell(\varphi_n(g_n);T^v)=+\infty$.

The key point is that, by assumption, the line $\alpha$ shares an arc $\beta$ with $\Min(G;T^v_{\om})$.
Choose arcs $\beta_n\sq{\rm Ax}(\varphi_n(g_n);T^v)$ converging to $\beta$ and note that, denoting arc-lengths by $|\cdot|$, we have $\lim_{\om}|\beta_n|/\lambda_n=|\beta|>0$. We can choose hyperplanes $\mf{w}_n\in\mscr{W}(\mf{E}_G)$ such that both $\mf{w}_n$ and $\varphi_n(g_n^{c_n})\mf{w}_n$ are dual to edges of the arc $\beta_n\sq T^v$, for some integers $c_n$ such that $c_n\ell(\varphi_n(g_n);T^v)\geq |\beta_n|/2$ for $\om$--all $n$. By \cite[Lemma~3.10]{Fio10a}, the hyperplanes $\mf{w}_n$ and $\varphi_n(g_n^{4D})\mf{w}_n$ are separated by at least $\ell(\varphi_n(g_n);\mf{E}_G)$ hyperplanes of $\mf{E}_G$. In particular, we have
\[ \lim_{\om} \frac{1}{\lambda_n} d_{\mf{E}_G}(\mf{w}_n,\varphi_n(g_n^{c_n})\mf{w}_n) \geq \lim_{\om} \frac{1}{\lambda_n} \left\lfloor\frac{c_n}{4D}\right\rfloor \ell(\varphi_n(g_n);\mf{E}_G) \geq \frac{1}{8D} \lim_{\om} \frac{|\beta_n|}{\lambda_n }\frac{\ell(\varphi_n(g_n);\mf{E}_G)}{\ell(\varphi_n(g_n);T^v)}=+\infty . \]
This implies that the hyperplanes $\mf{w}_n$ and $\varphi_n(g_n^{c_n})\mf{w}_n$ cannot converge to two walls of $\X_{\om}$. But, at the same time, they also must converge to two such walls because the arcs $\beta_n\sq T^v$ converge to the arc $\beta\sq T^v_{\om}$ and we have $\beta\sq\Min(G;T^v_{\om})=\pi^v_{\om}(\mscr{C})\sq\pi^v_{\om}(\X_{\om})$. This is a contradiction.
\hfill$\blacksquare$

\smallskip
We now claim that, for all $\delta>0$, the isometry $\Phi_{\delta}$ is well-defined and that it is a shortening isometry for all wallpairs $(\mf{w},\mf{w}')$ of $\X_{\om}$ arising from pairs of points of $\alpha$ at distance $\delta$ (up to suitably replacing $a_n(\delta)$ with $-a_n(\delta)$). In order to see this, choose wallpairs $(\mf{w}_n,\varphi_n(g_n^{-a_n(\delta)})\mf{w}_n)$ of $\mf{E}_G$ that are skewered by $\varphi_n(g_n)$ and converge to $(\mf{w},\mf{w}')$. Consider the shores $\mf{s}_n\sq\mf{w}_n$, $\mf{s}_n'\sq\varphi_n(g_n^{-a_n(\delta)})\mf{w}_n$, and also let $\mf{s},\mf{s}'\sq\X_{\om}$ be the shores for the geometric walls of $\X_{\om}$ corresponding to $\mf{w},\mf{w}'$. Let $\pi_n\colon\mf{s}_n'\ra\mf{s}_n$ and $\pi\colon\mf{s}'\ra\mf{s}$ be the nearest-point projections. Representing each point $x\in\mf{s}'$ by a sequence $(x_n)\in\prod_n\mf{s}_n'$, and using \Cref{lem:cc_element_shortening} 
and the above claim, we obtain
\[ d(\Phi_{\delta}x,\pi(x))=\lim_{\om}\frac{d(\varphi_n(g_n^{a_n(\delta)})x_n,\pi_n(x_n))}{\lambda_n} \leq 4D\cdot \lim_{\om}\frac{\ell(\varphi_n(g_n);\mf{E}_G)}{\lambda_n}=0 ,\]
where $D=\dim\mf{E}_G$. This proves all our claims about $\Phi_{\delta}$.

We now complete the proof of the proposition by distinguishing two cases.

\smallskip
{\bf Case~1:} \emph{$G_{\alpha}\neq K_{\alpha}$.}

\noindent
In this case, the above discussion shows that the line $\alpha$ is shrinkable in the sense of \Cref{defn:shrinkable_lines}. Arguing as in Steps~1--6 of \Cref{subsub:shrinkable_lines} and invoking \Cref{prop:shrinkable_line}, we find a $(G_{\alpha},\emptyset)$--ascetic HNN splitting $G\acts\mc{H}(\mc{E},\eta)$. We can ensure that all subgroups in $\mscr{F}$ are elliptic in $\mc{H}(\mc{E},\eta)$ by considering geometric approximations in \Cref{subsub:shrinkable_lines} in which the elements of $\mscr{F}$ are elliptic. If $B\in\Sal(G)$ does not contain a $G$--conjugate of the centre $A$ of $G_{\alpha}$, then $B$ is elliptic in $\mc{H}(\mc{E},\eta)$: this is the second part of Item~(1) of \Cref{prop:salient_vs_degenerations}.

Considering the shrunk homomorphism $\rho':=\rho_{\mc{E},\eta,\delta}$ (\Cref{defn:shrunk_homo_lines}) for a sufficiently small parameter $\delta$, we have $\ell(\rho'(\g);\X_{\om})\leq\ell(\rho(\g);\X_{\om})$ for all $\g\in F$, and the strict inequality holds for at least one element of $F$, provided that $F$ contains loxodromic elements $\g'$ such that ${\rm Ax}(\g';T^v_{\om})$ shares an arc with a $G$--translate of the line $\alpha$. Choose a stable letter $t\in G_{\alpha}$ for the HNN splitting $G\acts\mc{H}(\mc{E},\eta)$ and let $U$ be the $G$--stabiliser of a vertex of $T$ on the axis of $t$. We define the Dehn twist $\tau_n\in\Aut(G)$ so that $\tau_n|_U$ is the identity and $\tau_n(t)=g_n^{a_n(\delta)}t$, where the elements $g_n\in Z_{K_{\alpha}}(G_{\alpha})$ and integers $a_n(\delta)$ were defined above. Note that $g_n$ fixes ${\rm Ax}(t;T)$ pointwise and commutes with $G_{\alpha}$, and so it commutes with the $G$--stabiliser of the edges of ${\rm Ax}(t;T)$. Thus $\tau_n$ is well-defined.
Moreover, $\tau_n$ is an $(A,\mscr{F})$--ascetic twist (we have $A\in\Sal(G)$ by \Cref{lem:line_stab_SVP}(1)).

Finally, observe that the degeneration determined by the automorphisms $\varphi_n\tau_n$ (still using the $\lambda_n$ as scaling factors) is precisely the $G$--action on $\X_{\om}$ given by the shrunk homomorphism $\rho'$ (up to replacing $a_n(\delta)$ with $-a_n(\delta)$ in the definition of $\tau_n$ to ensure that orientations match up). This yields Items~(1)--(3) of the proposition in Case~1. As to Item~(4), note that Case~1 does not arise if the automorphisms $\varphi_n$ are coarse-median preserving (see the last statement in \Cref{thm:inert_vs_motile}).

\smallskip
{\bf Case~2:} \emph{$G_{\alpha}=K_{\alpha}$.}

\noindent
Here the argument is similar to the proof of \Cref{prop:shortening_inert}, so we omit details. Choose a geometric approximation $\mc{G}$ of $\Min(G;T^v_{\om})$ so that an arc $\beta\sq\alpha\cap\Min(G;T^v_{\om})$ lifts to an arc $\tilde\beta\sq\Min(G;\mc{G})$ fixed by $G_{\alpha}$. Observe that $N_G(G_{\alpha})=G_{\alpha}$, for instance by \Cref{thm:inert_vs_motile}(2). 
Thus, since the normaliser $N_G(G_{\alpha})$ is elliptic in $\mc{G}$, the arc $\tilde\beta$ shares at most one point with each indecomposable component of $\mc{G}$ and, up to shrinking $\beta$ and $\tilde\beta$, we can assume that $\tilde\beta$ is edge-like. Fixing a sufficiently small $\delta>0$, the shortening isometry $\Phi_{\delta}$ constructed above witnesses that the pair $(\beta,\mc{G})$ is a shrinkable arc, and we obtain a shrunk homomorphism $\rho'$. Let $G\acts T$ be the $1$--edge splitting over $G_{\alpha}$ that we obtain from $\mc{G}$ by collapsing arcs disjoint from $G$--translates of $\tilde\beta$. There are Dehn twists $\tau_n\in\Aut(G)$ with respect to $T$, with multiplier $g_n^{a_n(\delta)}$, such that the degeneration given by the automorphisms $\varphi_n\tau_n$ is the $G$--action on $\X_{\om}$ given by the shrunk homomorphism $\rho'$. Items~(1) and~(2) of the proposition then follow from \Cref{prop:shrinkable_arc}, while Item~(3) is clear.

As to Item~(4), note that, according to \Cref{defn:DT_types}, the Dehn twists $\tau_n$ are either partial conjugations (if $T$ is an amalgam) or skews (if $T$ is an HNN). By \Cref{thm:cmp_DT}, the former are always coarse-median preserving, while the latter are if $g_n$ is convex-cocompact and $Z_G(g_n)$ is elliptic in $T$. As mentioned at the start, we have $Z_G(g_n)=G_{\alpha}$, so this subgroup is indeed elliptic in $T$. Finally, convex-cocompactness of $g_n$ follows from convex-cocompactness of $\varphi_n(g_n)$, provided that $\varphi_n$ is coarse-median preserving. This completes the proof of the proposition.
\end{proof}

\begin{rmk}\label{rmk:shortening_motile+}
    \Cref{prop:shortening_motile} can fail when the centre $A\leq G_{\alpha}$ is non-elliptic in $T^v_{\om}$. We still have elements $g_n\in A$ such that $\varphi_n(g_n)$ is convex-cocompact in $G$ and loxodromic in $T^v$ with $\lim_{\om}\frac{1}{\lambda_n}\ell(\varphi_n(g_n);T^v)=0$, and we can still define a shortening isometry $\Phi_{\delta}\colon\X_{\om}\ra\X_{\om}$ in the same way. However, there might not be any \emph{single} HNN splitting $G\acts\mc{H}(\mc{E},\eta)$ in which all the $g_n$ are elliptic. Thus, the $g_n$ only define Dehn twists $\tau_n$ with respect to a sequence of \emph{distinct} HNN splittings $\mc{H}(\mc{E},\eta_n)$, 
    and hence the degeneration associated with the sequence $\varphi_n\tau_n$ is \emph{not} the shortened $G$--action on $\X_{\om}$ determined by $\Phi_{\delta}$ (because the $\tau_n$ do not pairwise commute).

    This explains why the discussion in \Cref{sub:shortening_median} does not apply when $A$ is non-elliptic. In fact, we know that in general it is impossible to only use Dehn twists to shorten: for the groups $G$ constructed in \Cref{ex:poisonous_centre}, the groups $\Out(G)$ are not virtually generated by Dehn twists. In their degenerations, all trees $\Min(G;T^v_{\om})$ will typically be $G$--equivariantly isometric inert lines, with $A=Z_G(G)\cong\Z^2$ acting freely and indiscretely. Every Dehn twist of $A$ has a uniform power that extends to a Dehn twist of $G$, but this does not always suffice to shorten the action.

    We will avoid these issues by using \Cref{prop:reduction_to_preserving_complements} to reduce to the situation where all IOM-lines have elliptic centre, so that \Cref{prop:shortening_motile} suffices for our purposes.
\end{rmk}

Finally, we record here the following simple observation, which is useful to check that Propositions~\ref{prop:shortening_inert} and~\ref{prop:shortening_motile} shorten a generating set that is given a priori.

\begin{lem}\label{lem:axis_in_S^2}
    Let $G$ be a group with a finite generating set $S$ with $1\in S$. Consider a minimal $\R$--tree $G\acts T$ and an arc $\beta\sq T$. Then there exists an element $g\in S^2$ that is loxodromic in $T$ with axis sharing an arc with a $G$--translate of $\beta$.
\end{lem}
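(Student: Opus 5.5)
The plan is a short argument by contradiction, comparing the minimal subtree of $G$ with the arc $\beta$. First I will reduce to the case where $\beta$ lies in the minimal subtree $\Min(G;T)$, which is all of $T$ because the action is minimal. The proof then rests on the standard fact that a group generated by a finite set $S$ acts with a global fixed point as soon as every $s$ and every product $ss'$ with $s,s'\in S$ is elliptic. Here I assume $G$ is non-elliptic in $T$; otherwise the minimal tree is a point and there is no arc $\beta$.

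The first key step is to show that the union of the axes of loxodromic elements of $S^2$ has $G$-translates covering every arc of $T$. Let $Y$ be the union of the $G$-translates of all axes ${\rm Ax}(g;T)$ with $g\in S^2$ loxodromic. This set is $G$-invariant. I will check that it is nonempty. If every element of $S^2$ were elliptic, then every $s\in S$ and every product $ss'$ with $s,s'\in S$ would be elliptic. By Serre's lemma, extended to $\R$-trees (for instance as in Culler--Morgan), $\langle S\rangle=G$ would then fix a point, which is a contradiction.

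The next step is to show that $Y$ is connected, so that it is a $G$-invariant subtree, and hence equals $T$ by minimality. The convex hull of $Y$ is $G$-invariant, so it equals $T$. It therefore suffices to show that every arc of $T$ meets some translate of an axis in a nondegenerate arc, and this is exactly what the conclusion requires.

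To get that from the hull, I will argue as follows. Let $Z$ be the closure of the union of all translates of axes of loxodromic elements of $S^2$. Then $Z$ together with the bridges between components generates the hull. Suppose some open subarc $\beta'\subseteq\beta$ missed every axis translate in a nondegenerate way. Collapse the complement of the $G$-orbit of $\beta'$ in a Bass--Serre-like fashion, as in the collapse constructions already used in the proof of \Cref{prop:shortening_inert}, to obtain a nontrivial minimal $G$-tree $T'$. In $T'$ every element of $S^2$ acts elliptically, because its axis in $T$ is crushed to a point. Serre's lemma then makes $G$ elliptic in $T'$, which contradicts nontriviality of $T'$.

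The main obstacle is making this collapse rigorous for a general $\R$-tree, where the orbit of $\beta'$ may be dense. Instead of collapsing, I will work with the orbit-pseudometric: define $d'(x,y)$ as the measure of $[x,y]\cap G\beta'$ with respect to one-dimensional Lebesgue measure. Its metric quotient is an $\R$-tree $T'$ with an isometric $G$-action. The nondegenerate arc $\beta'$ has positive $d'$-length, which keeps $T'$ nontrivial, and $T'$ is minimal because it is the image of a minimal tree. An element $g\in S^2$ that is loxodromic in $T$ has translation length $0$ in $T'$, because its axis meets $G\beta'$ only in a null set. An element of $S^2$ that is elliptic in $T$ remains elliptic in $T'$. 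So all of $S\cup S^2$ is elliptic in $T'$, and Serre's lemma gives a global fixed point, contradicting minimality and nontriviality. This contradiction produces the desired $g\in S^2$.
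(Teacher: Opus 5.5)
Your argument is correct, but it follows a different route from the paper. Only your last paragraph is actually needed. The discussion of the set $Y$, its convex hull, the closure $Z$ and the ``Bass--Serre-like'' collapse either restates the conclusion or is superseded, so you should delete it. State the contradiction hypothesis directly as the negation of the lemma: no loxodromic element of $S^2$ has axis sharing an arc with a $G$--translate of $\beta$. You can then take $\beta'=\beta$.

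With that setup the pseudometric argument goes through:
\begin{itemize}
\item Since $G$ is finitely generated it is countable. So for each loxodromic $g\in S^2$ the set ${\rm Ax}(g;T)\cap G\beta$ is countable, hence null, and $g$ fixes the image in $T'$ of any point of its axis.
\item The pseudometric $d'$ is $G$--invariant and additive along geodesics of $T$. This additivity is what makes the quotient a geodesic $0$--hyperbolic space, i.e.\ an $\R$--tree; it deserves a sentence of justification rather than a bare assertion.
\item The preimage of a $G$--fixed point of $T'$ is a nonempty $G$--invariant convex subset of $T$. By minimality it is all of $T$, contradicting the fact that $\beta$ has positive $d'$--length.
\end{itemize}

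The paper argues directly. It builds a finite connected subtree $\Delta$ from two kinds of pieces: fundamental domains $\gamma_s\sq{\rm Ax}(s;T)$ of the axes of loxodromic generators, and bridges $\delta_{s,s'}$ between disjoint characteristic sets $\mc{M}(s),\mc{M}(s')$. It observes that $s\Delta\cap\Delta\neq\emptyset$ for all $s\in S$, so $G\Delta$ is a connected invariant subtree and hence equals $T$. It then uses the standard fact that $\delta_{s,s'}\sq{\rm Ax}(ss';T)$.

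Your route never computes axes of products of isometries with disjoint characteristic sets explicitly; that computation is hidden inside Serre's lemma. The cost is building an auxiliary $\R$--tree and using countability of $G$. The paper's route is more elementary and yields a stronger conclusion: the $G$--translates of axes of loxodromic elements of $S^2$ actually cover $T$.
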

\begin{proof}
    Let $\mc{M}(s)$ denote the fixed set of $s$ if $s$ is elliptic, or the axis of $s$ if $s$ is loxodromic. For $s,s'\in S$ with $\mc{M}(s)\cap\mc{M}(s')=\emptyset$, let $\delta_{s,s'}$ be the shortest arc from $\mc{M}(s)$ to $\mc{M}(s')$. For each loxodromic element $s\in S$, pick an arc $\g_s\sq{\rm Ax}(s;T)$ of length $\ell(s;T)$. We choose $\gamma_s$ so that it intersects at least one arc $\delta_{s',s''}$ with $s',s''\in S$; if this is not possible, then $\bigcap_{s\in S}\mc{M}(s)$ is nonempty and we pick $\gamma_s$ containing a chosen basepoint in this intersection (the same for all $s$).
    
    Let $\Delta\sq T$ be the union of the arcs $\delta_{s,s'}$ for $s,s'\in S$ with $\mc{M}(s)\cap\mc{M}(s')=\emptyset$, also together with the arcs $\gamma_s$ with $s\in S$ loxodromic. Note that $\Delta$ is connected and $s\Delta\cap\Delta\neq\emptyset$ for all $s\in S$. It follows that $T=\bigcup_{g\in G}g\Delta$ and so there exists an arc $\beta_0\sq\Delta$ that is contained in a $G$--translate of $\beta$. Up to shrinking $\beta_0$, we have either $\beta_0\sq\gamma_s$ or $\beta_0\sq\delta_{s,s'}$ for some $s,s'\in S$. In the former case, $\beta_0$ is contained in the axis of $s\in S\sq S^2$, while in the latter $\beta_0$ is contained in the axis of $ss'\in S^2$. This proves the lemma.
\end{proof}

\subsection{The shortening theorem}\label{sub:shortening_theorem}

Let $G$ be special. Realise $G$ as a convex-cocompact subgroup of a RAAG $\mc{A}_{\G}$, and a fix a reference system $\mf{R}=(\mc{H},\ll,\{S_H\}_{H\in\mc{H}})$. As in \Cref{sub:reference_systems}, this determines an equivalence relation $\asymp$ on $\Out(G)$ all of whose equivalence classes are finite, and a well-ordering $\sqsubseteq$ on $\Out(G)/\asymp$. For each $\varphi\in\Aut(G)$, we denote by $[\varphi]$ its projection to $\Out(G)/\asymp$. We introduced centraliser twists and ascetic twists in Definitions~\ref{defn:DT_main_types} and~\ref{defn:ascetic_twist}, respectively. The family $\Sal(G)$ was discussed in \Cref{subsub:salient}, and IOM-lines in \Cref{sec:degenerations}.

\begin{defn}
    A degeneration $G\acts\X_{\om}$ is \emph{decentralised} if, for every $v\in\G$ such that $G$ is non-elliptic in $T^v_{\om}$ and for every IOM-line $\alpha\sq T^v_{\om}$, the centre of the stabiliser $G_{\alpha}$ is elliptic in $T^v_{\om}$.
\end{defn}

One source of decentralised degenerations are sequences of automorphisms relative to a system of complements, see \Cref{prop:salient_vs_degenerations}(2). Our hierarchical shortening argument only works for decentralised degenerations (some motivation was given in \Cref{rmk:shortening_motile+}), but this will suffice.

\begin{thm}\label{thm:shortening}
    Let $G\acts\X_{\om}$ be the degeneration arising from a sequence $\varphi_n\in\Aut(G)$. Let $\mscr{E}$ be the family of all subgroups of $G$ elliptic in $\X_{\om}$, and let $\mscr{F}\sq\mscr{E}$ be a finite subset of finitely generated subgroups. If $\X_{\om}$ is decentralised, there is a subgroup $\Delta(\X_{\om})\leq\Aut(G)$ with the following properties.
    \begin{enumerate}
        \item There are elements $\delta_n\in\Delta(\X_{\om})$ with $[\varphi_n\delta_n]\sqsubset[\varphi_n]$ for $\om$--all $n$.
        \item The group $\Delta(\X_{\om})$ is either generated by finitely many centraliser twists relative to $\mscr{E}\cup\Sal(G)$, or by finitely many ascetic twists relative to $\mscr{F}$.
        \item If the $\varphi_n$ are coarse-median preserving, then $\Delta(\X_{\om})$ is generated by finitely many coarse-median preserving centraliser twists relative to $\mscr{E}\cup\Sal(G)$.
    \end{enumerate}
\end{thm}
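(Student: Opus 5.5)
The plan is to select, via the reference system $\mf{R}=(\mc{H},\ll,\{S_H\})$, the $\ll$--smallest $H\in\mc{H}$ that is non-elliptic in $\X_{\om}$, and to shorten $\varphi_n(S_H^2)$ while leaving unchanged the lengths of $\varphi_n(S_K^2)$ for all $K\ll H$. Such an $H$ exists because $G\in\mc{H}$ is non-elliptic in $\X_{\om}$, the degeneration having unbounded orbits. For $K\ll H$, the group $K$ is elliptic in $\X_{\om}$, and I will arrange that it is elliptic in the splitting $T$ used to build the Dehn twists. Then each $\tau_n$ acts on $K$ as an inner automorphism, so $\varphi_n\tau_n$ and $\varphi_n$ agree on $K$ up to conjugation and $[\varphi_n\tau_n]\asymp_K[\varphi_n]$. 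The real work is to obtain $\ell(\varphi_n\tau_n(S_H^2);\mf{E}_G)<\ell(\varphi_n(S_H^2);\mf{E}_G)$ for $\om$--all $n$. Once that holds, the definition of $\sqsubset$ gives $[\varphi_n\tau_n]\sqsubset[\varphi_n]$, and I will take $\Delta(\X_{\om})$ to be the group generated by the finitely many twists involved, namely powers or multipliers coming from a single splitting $T$. These are finitely many only up to multiplier, so I will pass to the subgroup generated by a finite generating set of the relevant abelian multiplier group.

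First I handle the case $H\in\mc{F}(H')$, i.e.\ a free factor in the enlarged hierarchy. Here I expect the argument to reduce to a free-product shortening statement of the kind the introduction attributes to the appendix, applied to $H'/Z_{H'}(H')$. I will set this aside and focus on $H\in\mf{H}(G)$.

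Next I choose $v\in\G$ with $H$ non-elliptic in $T^v_{\om}$ and work in $\Min(H;T^v_{\om})$. By \Cref{lem:axis_in_S^2}, every arc of $\Min(H;T^v_{\om})$ meets the axis of some element of $S_H^2$ in a translate, so any shortening that strictly shortens elements whose axes cross a given arc will strictly shorten $S_H^2$. By \Cref{rmk:BF-stable}, I can find a $G$--stable arc $\beta$ in $\Min(H;T^v_{\om})$, and I split into cases. (i) If $\beta$ lies in an IOM-line $\alpha$, decentralisation makes the centre of $G_\alpha$ elliptic, so \Cref{prop:shortening_motile} applies with $F=S_H^2\cup\bigcup_{K\ll H}S_K^2$ and $\mscr{F}$ the given finite family. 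This produces either ascetic twists relative to $\mscr{F}$ or twists over $G_\alpha\in\mc{Z}(G)$. (ii) If $\beta$ is inert and not in an indiscrete line, I need the hypothesis of \Cref{prop:shortening_inert} that $N_H(G_\beta)$ is elliptic; that proposition then yields centraliser twists. Relativity to $\mscr{E}\cup\Sal(G)$ should follow from Item (2) of \Cref{prop:shortening_inert}: an element non-elliptic in $T$ has axis crossing $\beta$, whereas subgroups in $\mscr{E}$ are elliptic, and salient abelians would produce IOM-lines by \Cref{rmk:abelians_give_IOM}, which case (ii) excludes.

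The main obstacle is the remaining situation in case (ii), where $N_H(G_\beta)$ is non-elliptic in $T^v_{\om}$; this is exactly where the hierarchy is used. I would try to show that some member of $\mf{H}(G)$ lying strictly below $H$ is then non-elliptic, contradicting minimality. If $G_\beta\neq\{1\}$, the normaliser of the convex-cocompact group $G_\beta$ virtually splits as $G_\beta\times P$ by \Cref{lem:cc_basics}(2), which makes $N_H(G_\beta)$ a virtual product. So it lies in a child of $H$, or in its centre or an extended factor, or in a juncture $H\cap Z_G(g)$ via property $(H3)$. Any of these is a smaller hierarchy member, hence elliptic, and this is the contradiction. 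If $G_\beta$ is trivial, I would reduce to trees with trivial arc stabilisers, which is what $(H3)$ is intended for, and then run the Rips machine to obtain surface- or axial-type shortenings via \Cref{prop:no_indecomposable_crossing}. For Item (3), the coarse-median preserving clauses of both propositions, together with \Cref{lem:line_addenda}(4) ruling out IOM-lines, give folds, skews or partial conjugations that are coarse-median preserving.
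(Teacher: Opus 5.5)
The genuine gap is your ``remaining situation'', where $N_H(G_\beta)$ (equivalently $\Perp_H(G_\beta)$) is non-elliptic for every arc. This is not contradictory; it is the main case of the theorem.

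Your cases (i) and (ii) do match the paper's first two steps. There, one first disposes of \emph{every} IOM-line sharing an arc with $\Min(H^*;T^v_{\om})$, which is what later makes $\Sal(G)$ elliptic in the inert splitting.

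Your inference that $N_H(G_\beta)$ is a virtual product fails. In the virtual splitting $N_G(G_\beta)\approx G_\beta\times P$, the factor $G_\beta$ is a $G$--stabiliser and typically meets $H$ trivially. Concretely, take $G=F_2\times F_2=A\times B$ and $\varphi_n=\psi^n\times\id$ with $\psi$ fully irreducible. Then $H=A$. It has no children, since $\mc{S}(A)=\emptyset$, and no junctures, since $N_G(A)=G$. Every arc of $\Min(A;T^v_{\om})$ has $G_\beta=B\neq\{1\}$, and $N_A(G_\beta)=A$ is non-elliptic and not a virtual product. So there is no contradiction with minimality.

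What the hierarchy actually gives is structure:
\begin{itemize}
\item by (H1) and minimality, $H$ virtually splits as $H'\times Z_H(H)$ with $H'$ strongly irreducible and $Z_H(H)$ elliptic;
\item by (H3), $G_\beta\le N_G(H)$;
\item by (H2), $H'\cap G_\beta$ is trivial.
\end{itemize}
Hence $G_\beta=\Perp_G(H')$ for every arc, i.e.\ $H'$ acts on $\Min(H;T^v_{\om})$ with trivial arc-stabilisers.

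The real work is then to shorten such an action of a possibly freely decomposable $H'$ \emph{inside $\X_{\om}$}. ``Run the Rips machine'' does not achieve this, for two reasons.

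First, Rips--Sela shortening relies on one-endedness. With trivial arc-stabilisers, simplicial parts and exotic components of $H'$ give free splittings and are not shortened by surface or axial twists. The paper handles this as follows:
\begin{itemize}
\item it builds a $(\mc{Z}(G),\mscr{E}\cup\Sal(G))$--splitting of $G$ with edge groups conjugate to $\Perp_G(H')$, whose restriction to $H'$ is a free splitting with the same elliptic subgroups as $H'\acts\X_{\om}$;
\item it extends the pure Fuchs--Rabinowitz group of $N_G(H')/\Perp_G(H')$ to Dehn twists of $G$, using $N_G(\Perp_G(H'))=N_G(H')$;
\item it applies \Cref{cor:shortening_free_products}.
\end{itemize}

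Second, a twist shortening one tree $T^v_{\om}$ may lengthen the others. For surface components, the paper passes to the minimal $H'$--invariant convex subset of $\X_{\om}$. This subset is tree-graded, with pieces coming from one-ended free factors, which are elliptic by minimality. The paper picks a piece-averse surface component on which $\pi^v_{\om}$ is injective, and transfers the Rips--Sela shortening from an $\R$--tree quotient back to $\X_{\om}$.

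You also have the free-factor case backwards. When $H^*$ is a one-ended free factor $J$, the appendix is not used. One-endedness, together with the removal of IOM-lines, leaves only surface-type components, which are shortened by twists along simple closed curves. The appendix is needed precisely when $H^*=H$ and $H'$ is infinitely-ended with its one-ended free factors already elliptic.

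Finally, \Cref{prop:no_indecomposable_crossing} is not a shortening tool. It only serves to make the lifted arc edge-like inside \Cref{prop:shortening_inert}.
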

\begin{proof}
The proof of the theorem is long, so we subdivide it into eight steps. Throughout, let $H^*$ be the minimum element of $(\mc{H},\ll)$ that is not elliptic in $\X_{\om}$. Recall that $H^*$ lies in the enlarged hierarchy $\mf{H}^*(G)$ (\Cref{defn:enlarged_hierarchy}) and thus there exists $H\in\mf{H}(G)$ such that either $H^*=H$ or $H^*/Z_H(H)$ is a $1$--ended free factor of $H/Z_H(H)$. By \Cref{defn:reference_system}(b), we also have that $H$ is the minimum element of $(\mc{H}\cap\mf{H}(G),\ll)$ that is not elliptic in $\X_{\om}$.

\smallskip
{\bf Step~1.} \emph{Consider a vertex $v\in\G$ such that $H^*$ is non-elliptic in $T^v_{\om}$. We can assume that the subtree $\Min(H^*;T^v_{\om})$ does not share any arcs with IOM-lines of $G\acts T^v_{\om}$. In particular, we have $G_{\beta}\in\mc{Z}(G)$ for every arc $\beta\sq\Min(H^*;T^v_{\om})$, and the group $H$ is non-abelian.}

\smallskip\noindent
Suppose that $\Min(H^*;T^v_{\om})$ shares an arc with an IOM-line $\alpha\sq T^v_{\om}$. Let $A$ be the centre of $G_{\alpha}$, which is elliptic in $T^v_{\om}$ by the hypothesis that $\X_{\om}$ is decentralised. Let $S_{H^*}\sq H^*$ be the finite generating set chosen by the reference system $\mf{R}$. By \Cref{lem:axis_in_S^2}, there exists an element $h\in S_{H^*}^2$ that is loxodromic in $T^v_{\om}$ with axis sharing an arc with a $G$--translate of $\alpha$. Now, \Cref{prop:shortening_motile} provides an HNN splitting $G\acts T$ and a sequence of $(A,\mscr{F})$--ascetic twists $\tau_n$ with respect $T$ such that $\ell(\varphi_n\tau_n(S_{H^*}^2);\mf{E}_G)<\ell(\varphi_n(S_{H^*}^2);\mf{E}_G)$ for $\om$--all $n$. Every $K\in\mc{H}$ with $K\ll H^*$ is elliptic in $\X_{\om}$ and hence in $T$ (without loss of generality), 
so the $\tau_n$ are inner on $K$, and we have $\ell(\varphi_n\tau_n(S_K^2);\mf{E}_G)=\ell(\varphi_n(S_K^2);\mf{E}_G)$. This shows that we have $[\varphi_n\tau_n]\sqsubset[\varphi_n]$ for $\om$--all $n$. We can then define $\Delta(\X_{\om})\leq\Aut(G)$ as the group generated by all $(A,\mscr{F})$--ascetic twists with respect to $T$. Since $A$ is finitely generated, $\Delta(\X_{\om})$ is finitely generated.

This proves the theorem in this case, so we can assume that $\Min(H^*;T^v_{\om})$ does not share arcs with IOM-lines. \Cref{thm:inert_vs_motile} then implies that the $G$--stabiliser of each arc of $\Min(H^*;T^v_{\om})$ lies in $\mc{Z}(G)$. Since $H$ lies in $\mf{H}(G)$, it is staunchly $G$--parabolic. Thus, $H$ must be non-abelian, as otherwise $\Min(H^*;T^v_{\om})=\Min(H;T^v_{\om})$ would be an IOM-line by \Cref{rmk:abelians_give_IOM}. 

Note that Step~1 can be skipped if the $\varphi_n$ are coarse-median preserving, since then there are no motile lines by \Cref{thm:inert_vs_motile}, and no indiscrete lines by \Cref{lem:line_addenda}(4).

\smallskip
{\bf Step~2.} \emph{We can assume that $\Perp_H(G_{\beta})$ is non-elliptic in $T^v_{\om}$ for each arc $\beta\sq\Min(H^*;T^v_{\om})$.}

\smallskip\noindent
Say that $\beta\sq\Min(H^*;T^v_{\om})$ is an arc with $\Perp_H(G_{\beta})$ elliptic in $T^v_{\om}$. By \Cref{rmk:BF-stable}, there exists a $G$--stable arc $\beta'\sq\beta$. Since we have $G_{\beta'}\geq G_{\beta}$ and $\Perp_H(G_{\beta'})\leq\Perp_H(G_{\beta})$, the group $\Perp_H(G_{\beta'})$ is also elliptic in $T^v_{\om}$. Thus, we can assume without loss of generality that the arc $\beta$ is itself $G$--stable. Moreover, the arc $\beta$ is $G$--inert by Step~1. The normaliser $N_H(G_{\beta})$ virtually splits as $H_{\beta}\x\Perp_H(G_{\beta})$ by \Cref{lem:cc_basics}(2), and so it is elliptic in $T^v_{\om}$. 

Now, invoking \Cref{prop:shortening_inert} (and again \Cref{lem:axis_in_S^2}), we obtain a $(\mc{Z}(G),\mscr{E})$--splitting 
$G\acts T$ and Dehn twists $\tau_n\in\Aut(G)$ with respect to $T$ such that $[\varphi_n\tau_n]\sqsubset[\varphi_n]$ for $\om$--all $n$. Note that all elements of $\Sal(G)$ are elliptic in $T$ by \Cref{prop:shortening_inert}(2), because their minimal subtrees in $T^v_{\om}$ are IOM-lines by \Cref{rmk:abelians_give_IOM} (if they are non-elliptic), and so they cannot share any arcs with $\beta$ by Step~1. We define $\Delta(\X_{\om})$ as the group generated by Dehn twists with respect to $T$. This is finitely generated because, for every edge group $E$ and vertex group $V$, the centraliser $Z_V(E)$ is convex-cocompact in $G$ (see \cite[Proposition~2.30]{Fio11}). If the $\varphi_n$ are coarse-median preserving, \Cref{prop:shortening_inert}(3) guarantees that it suffices to define $\Delta(\X_{\om})$ as the group generated by folds or partial conjugations with respect to $T$, which are coarse-median preserving by \Cref{thm:cmp_DT}. This proves the theorem in this case.

\smallskip
{\bf Step~3.} \emph{The group $H$ virtually splits as $H'\x Z_H(H)$, with $H'$ strongly irreducible, $G$-parabolic and with $N_G(\Perp_G(H'))=N_G(H')$. The centre $Z_H(H)$ is elliptic in $\X_{\om}$.}

\smallskip\noindent
If $H$ had at least two extended factors, then $\ll$--minimality of $H$ would imply that each of these factors is elliptic in $\X_{\om}$ and so $H$ would itself be elliptic in $\X_{\om}$, a contradiction. Thus, $H$ is the only extended factor of itself, and this means that $H$ virtually splits as $H'\x Z_H(H)$ for a strongly irreducible, $H$--parabolic subgroup $H'$. Since $H$ is $G$--parabolic and non-abelian (by Step~1), then so is $H'$. Invoking again $\ll$--minimality of $H$, we also see that $Z_H(H)$ is elliptic in $\X_{\om}$.

We are left to prove the equality $N_G(\Perp_G(H'))=N_G(H')$. Since conjugations respect orthogonals, we have $N_G(H')\leq N_G(\Perp_G(H'))\leq N_G(\Perp_G\Perp_G(H'))$, and so it suffices to show that $\Perp_G\Perp_G(H')=H'$. Recall that $H=Z_GZ_G(H)$ by \Cref{prop:hierarchy_properties}(2). Since $H'$ has trivial centre, \Cref{lem:cc_basics}(2) implies that we have $\Perp_G(H')=Z_G(H')\geq Z_G(H)$ and hence $\Perp_G\Perp_G(H')\leq Z_GZ_G(H)=H$. Now, observing that $Z_H(H)\leq\Perp_G(H')$, we conclude that $\Perp_G\Perp_G(H')=H'$, completing Step~3.

\smallskip
{\bf Step~4.} \emph{For each arc $\beta\sq\Min(H^*;T^v_{\om})$, we have $G_{\beta}=\Perp_G(H')$.}

\smallskip\noindent
First, we show that $\Perp_G(H')\leq G_{\beta}$ for all arcs $\beta\sq\Min(H^*;T^v_{\om})$. By Step~1, we have $G_{\beta}\in\mc{Z}(G)$, so $G_{\beta}$ is convex-cocompact. If $\Perp_G(H')$ were to contain an element $g$ that is loxodromic in $T^v_{\om}$, then the line $\g:={\rm Ax}(g;T^v_{\om})$ would be $H'$--invariant, as $H'$ commutes with $\Perp_G(H')$. The action $H'\acts\g$ would be nontrivial because $Z_H(H)$ is elliptic in $T^v_{\om}$ and $H$ is not. Thus, the kernel of the action $H'\acts\g$ would be a proper, convex-cocompact, co-abelian subgroup of $H'$, violating strong irreducibility of $H'$ by \Cref{lem:cc_basics}(2). This shows that $\Perp_G(H')$ is elliptic in $T^v_{\om}$, 
and so it fixes $\Min(H';T^v_{\om})=\Min(H;T^v_{\om})$ pointwise, which contains $\Min(H^*;T^v_{\om})$.

Next, we claim that $G_{\beta}\leq N_G(H)$. Otherwise, there would exist an element $g\in G_{\beta}\setminus N_G(H)$ and we would have $\Perp_H(G_{\beta})\leq H\cap Z_G(g)$. The subgroup $H\cap Z_G(g)$ lies in $\mc{J}_G'(H)$ (see \Cref{sub:junctures}) and so it is contained in an element of $\mc{J}_G(H)$, which is elliptic in $\X_{\om}$ by $\ll$--minimality of $H$ (and by \Cref{defn:hierarchy}(H3)). This contradicts the fact that $\Perp_H(G_{\beta})$ is not elliptic in $T^v_{\om}$, which we assumed in Step~2, and so our claim is proven.

Now, the inclusions $\Perp_G(H')\leq G_{\beta}\leq N_G(H)$ and parts~(2) and~(4) of \Cref{lem:cc_basics} show that $G_{\beta}$ virtually splits as $H'_{\beta}\x\Perp_G(H')$. Moreover, since $Z_H(H)\leq G_{\beta}$, we have $\Perp_{H'}(G_{\beta})=\Perp_H(G_{\beta})$, and this subgroup is not elliptic in $T^v_{\om}$ by Step~2. Thus, if $H'_{\beta}$ were nontrivial, then the subgroup $H'_{\beta}\x\Perp_{H'}(G_{\beta})$ would be a nontrivial product, and so it would be contained in a singular subgroup $S\in\mc{S}(H')$. However, the $G$--parabolic closure of $S\x Z_H(H)$ is elliptic in $T^v_{\om}$ by $\ll$--minimality of $H$ (and by \Cref{defn:hierarchy}(H2)). This would contradict the fact that $\Perp_{H'}(G_{\beta})$ is not elliptic.

This shows that $H'_{\beta}=\{1\}$ and hence $G_{\beta}=\Perp_G(H')$, since $G_{\beta}$ is a centraliser and thus root-closed. 

\smallskip
{\bf Step~5.} \emph{We can assume that $H'$ has infinitely many ends, and that the $1$--ended free factors of $H'$ are elliptic in $\X_{\om}$. In particular, we have $H^*=H$.}

\smallskip\noindent
The group $H^*$ virtually splits as $J\x Z_H(H)$, where $J$ is either $H'$ or a $1$--ended free factor of $H'$. Suppose for a moment that $J$ is $1$--ended.

Consider a vertex $v\in\G$ such that $J$ is non-elliptic in $T^v_{\om}$. Let $G\acts\mc{G}$ be a geometric approximation of $\Min(G;T^v_{\om})$; we can use \cite[Corollary~B.3]{Fio11} to ensure that all elements of $\mscr{E}\cup\Sal(G)$ are elliptic in $\mc{G}$. 
By Step~4, the action $H'\acts\Min(H;T^v_{\om})$ has trivial arc-stabilisers. Since $J$ is $1$--ended, the action $J\acts\Min(J;\mc{G})$ has no edge-like arcs or exotic components; it also has no axial components by Step~1. Thus, the tree $\Min(J;\mc{G})$ is covered by indecomposable components of surface type (with respect to the $J$--action), and each of these is contained in an indecomposable component of surface type of $\mc{G}$ (with respect to the $G$--action),
whose arc-stabilisers all equal $\Perp_G(H')$.
Essential simple closed curves on the associated surfaces give cyclic splittings of $J$ that extend to $(\mc{Z}(G),\mscr{E}\cup\Sal(G))$--splittings of $G$, whose edge groups contain $\Perp_G(H')$ as a co-cyclic subgroup.

Now, let $U_1,\dots,U_k$ be representatives of the $J$--orbits of surface type components that arise as we vary $v\in\G$ among vertices with $J\acts T^v_{\om}$ non-elliptic. The $J$--stabiliser of each $U_i$ is (contained in) a quadratically hanging vertex group of the JSJ decomposition of $J$ over cyclic subgroups. Let $\Theta\leq\Aut(J)$ be the group generated by Dehn twists in the cyclic splittings of $J$ corresponding to essential simple closed curves on the surfaces associated to the $U_i$. Finitely many of these twists suffice to generate $\Theta$, and each of them extends to a Dehn twist of $G$ with respect to a $(\mc{Z}(G),\mscr{E}\cup\Sal(G))$--splitting. Define $\Delta(\X_{\om})\leq\Aut(G)$ as the subgroup generated by these finitely many Dehn twists. This guarantees that there are elements $\delta_n\in\Delta(\X_{\om})$ such that $J$ is elliptic in the degeneration given by the sequence $\varphi_n\delta_n\in\Aut(G)$ (if we use the same scaling factors as for $\X_{\om}$),
and so we have $[\varphi_n\delta_n]\sqsubset[\varphi_n]$ for $\om$--all $n$. Moreover, the elements of $\Delta(\X_{\om})$ are coarse-median preserving by \Cref{thm:cmp_DT}(2) (regardless of the $\varphi_n$). This proves the theorem in this case.

Thus, we can assume in the rest of the proof that $J$ is not $1$--ended. This means that $H=H^*$ and that $H/Z_H(H)$ is not $1$--ended. By $\ll$--minimality, we also have that all subgroups in $\mc{F}(H)$ are elliptic in $\X_{\om}$. This implies that $H'$ is not $1$--ended, and that its $1$--ended free factors are elliptic in $\X_{\om}$, completing Step~5.

To state Step~6, we need some terminology: say that a finitely generated subgroup $K\leq H$ is \emph{surface-covered} if, for every $v\in\G$ such that $K$ is non-elliptic in $T^v_{\om}$, each arc of the subtree $\Min(K;T^v_{\om})$ shares an arc with a surface-type component of $\Min(H;T^v_{\om})$.
Here, with an abuse, we speak of surface-type components of $\Min(H;T^v_{\om})$ referring to images of surface-type indecomposable components of geometric approximations of $H\acts\Min(H;T^v_{\om})$.

\smallskip
{\bf Step~6.} \emph{We can assume that all surface-covered subgroups $K\leq H$ are elliptic in $\mc{X}_{\om}$.}

\smallskip\noindent
Step~6 will take some work, which we split into claims. To begin with, let $\mscr{Y}\sq\X_{\om}$ be a minimal $H'$--invariant (or, equivalently, $H$--invariant) convex subset, 
which exists by \cite[Theorem~C]{Fio10b}. Without loss of generality, $\mscr{Y}$ is contained in the $\om$--limit of copies of an $H'$--essential convex subcomplex $\mf{E}_{H'}\sq\X_{\G}$, which is quasi-isometric to $H'$. 
Thus, Step~5 implies that $\mscr{Y}$ is tree-graded, in the sense of \cite{Drutu-Sapir05}. Pieces correspond to ultralimits of left cosets of $1$--ended free factors of $H'$, and so they are preserved by the $H$--action (as a collection). The $H'$--stabiliser of each piece is contained in a $1$--ended free factor of $H'$, and so it is elliptic in $\mscr{Y}$ by Step~5.

Let $d_p$ be the pseudo-distance on $\mscr{Y}$ defined as follows: given points $x,y\in\mscr{Y}$, pick any geodesic $\gamma$ between them, and let $d_p(x,y)$ be the sum of the lengths of the 
arcs that $\gamma$ spends inside the pieces of $\mscr{Y}$. We also consider the pseudo-distance $d_*:=d-d_p$, where $d$ is the median metric on $\mscr{Y}$ induced by $\X_{\om}$. Let $\mc{T}$ be the quotient metric space associated with $(\mscr{Y},d_*)$. Note that $\mc{T}$ is an $\R$--tree and the quotient map $\pi_{\mc{T}}\colon\mscr{Y}\ra\mc{T}$ is $H$--equivariant, $1$--Lipschitz and median-preserving. Each piece of $\mscr{Y}$ projects to a single point of $\mc{T}$, but it is important to remember that point-preimages $\pi_{\mc{T}}^{-1}(x)$ are not pieces of $\mscr{Y}$ in general (they are not even unions of pieces).
We call a subset $Y\sq\mscr{Y}$ \emph{piece-averse} if it shares at most one point with each piece. 

Now, suppose that there exists a finitely generated, surface-covered subgroup $K\leq H$ that is not elliptic in $\mscr{Y}$. Let $\mscr{Y}^K\sq\mscr{Y}$ be the minimal $K$--invariant convex subset.

\smallskip
{\bf Claim~6.1.} \emph{The subset $\mscr{Y}^K\sq\mscr{Y}$ is piece-averse.}

\smallskip\noindent
\emph{Proof of Claim~6.1.}
Suppose that there is a piece $\mc{P}\sq\mscr{Y}$ such that $\mc{P}\cap \mscr{Y}^K$ contains at least two points $x,y$. Pick a vertex $v\in\G$ such that $\pi^v_{\om}(x)\neq\pi^v_{\om}(y)$. We then have a nontrivial arc in the projection $\pi^v_{\om}(\mc{P}\cap \mscr{Y}^K)$, which is contained in $\pi^v_{\om}(\mscr{Y}^K)=\Min(K;T^v_{\om})$. The fact that $K$ is surface-covered then implies that $\pi^v_{\om}(\mc{P})$ shares an arc with some indecomposable component $U$ of the action $H\acts\Min(H;T^v_{\om})$. Now, the action $H_U\acts U$ is indecomposable by \cite[Lemma~1.19(1)]{Guir-Fourier},
and so, since distinct pieces of $\mscr{Y}$ cannot share walls, we have that $H_U$ leaves $\mc{P}$ invariant. This contradicts the fact, seen above, that $H$--stabilisers of pieces are elliptic in $\mscr{Y}$ and hence in $T^v_{\om}$.
\hfill$\blacksquare$

\smallskip
By Claim~6.1, the set $\mscr{Y}^K$ is an $\R$--tree and the metrics $d$ and $d_*$ coincide on $\mscr{Y}^K$.

\smallskip
{\bf Claim~6.2.} \emph{There exist $v\in\G$ and a surface-type component $U\sq\Min(H;T^v_{\om})$ such that the minimal $H_U$--invariant convex subset $\mscr{Y}^U\sq\mscr{Y}$ is piece-averse and $\pi^v_{\om}$ is injective on $\mscr{Y}^U$.}

\smallskip\noindent
\emph{Proof of Claim~6.2.}
Choose an arc $\beta\sq \mscr{Y}^K$ maximising the set $V(\beta)$ of vertices $v\in\G$ such that $\pi^v_{\om}(\beta)$ is a single point. It follows that, for all $v\in\G\setminus V(\beta)$, the projection $\pi^v_{\om}$ is injective on $\beta$.
Fix some $v\in\G\setminus V(\beta)$ and pick an indecomposable component $U\sq\Min(H;T^v_{\om})$ sharing an arc with $\pi^v_{\om}(\beta)$. In fact, up to shrinking $\beta$, we can assume that $\pi^v_{\om}(\beta)\sq U$. Let $\Om\sq\mscr{Y}$ be the union of all $H_U$--translates of the arc $\beta$. The arc $\beta$ is piece-averse because it is contained in $\mscr{Y}^K$. Thus, if two $H_U$--translates of $\pi^v_{\om}(\beta)$ share an arc, then so do the corresponding translates of $\beta$; in particular, they have to intersect. Along with indecomposability of $U$, this shows that $\Om$ is connected. Adding the fact that $\Om$ is piece-averse, we see that $\Om$ is convex within $\mscr{Y}$, and hence $\mscr{Y}^U=\Om$.

This shows that $\mscr{Y}^U$ is piece-averse. Since $\mscr{Y}^U$ is covered by translates of $\beta$, we also get that $\pi^v_{\om}$ is injective on $\mscr{Y}^U$: indeed, since $\mscr{Y}^U$ is a convex tree and $\pi^v_{\om}$ is median-preserving, if $\pi^v_{\om}$ were not injective on $\mscr{Y}^U$ then it would collapse an arc of $\mscr{Y}^U$ to a point, and so it would collapse to a point a sub-arc of $\beta$, violating maximality of the set $V(\beta)$.
\hfill$\blacksquare$

\smallskip
We can now forget about the subgroup $K$ entirely. We will use the component $U$ provided by Claim~6.2 to construct a coarse-median preserving Dehn twist $\tau\in\Aut(G)$ with respect to a $(\mc{Z}(G),\mscr{E}\cup\Sal(G))$--splitting of $G$ such that $\ell(\tau(S_H^2);\X_{\om})<\ell(S_H^2;\X_{\om})$. These quantities are the $\om$--limits of $\ell(\varphi_n\tau(S_H^2);\mf{E}_G)$ and $\ell(\varphi_n(S_H^2);\mf{E}_G)$, so this will prove the theorem in this case.

By Claim~6.2, we have $d=d_*$ on $\mscr{Y}^U$, and so the projection $\pi_{\mc{T}}\colon\mscr{Y}\ra\mc{T}$ is isometric on $\mscr{Y}^U$. In particular, this shows that the tree $\mc{T}$ is nontrivial. Also by Claim~6.2, the projection $\pi^v_{\om}\colon \mscr{Y}^U\ra U$ is a homeomorphism, which implies that $\pi_{\mc{T}}(\mscr{Y}^U)$ is a surface-type component of the tree $H\acts\mc{T}$.

We can now argue exactly as in \cite[Section~5]{RS94} to shorten the action $H\acts\mc{T}$. The action $H'_U\acts U$ has trivial arc-stabilisers. For any finite subset $F\sq H'_U$, any $\eps>0$ and any point $y\in \pi_{\mc{T}}(\mscr{Y}^U)$, there exists a Dehn twist $\overline\tau_{F,\eps,y}\in\Aut(H'_U)$ such that the elements of $\overline\tau_{F,\eps,y}(F)$ all move the point $y$ by less than $\eps$; see \cite[Proposition~5.2]{RS94}. This Dehn twist originates from a splitting of $H'_U$ over a non-peripheral cyclic subgroup, and thus it extends to a Dehn twist $\tau_{F,\eps,y}\in\Aut(G)$ with respect to a $(\mc{Z}(G),\mscr{E}\cup\Sal(G))$--splitting of $G$, which is coarse-median preserving by \Cref{thm:cmp_DT}(2) (this is all identical to the argument in Step~5). Now, by \Cref{lem:axis_in_S^2}, there are elements of $S_H^2$ that are loxodromic in $\mc{T}$ with axis sharing an arc with $\pi_{\mc{T}}(\mscr{Y}^U)$. By \cite[Theorem~5.1]{RS94} (and its proof), we thus have that, for a suitable choice of $F,\eps,y$, the automorphism $\tau=\tau_{F,\eps,y}$ satisfies $\ell(\tau(S_H^2);\mc{T})<\ell(S_H^2;\mc{T})$. (Note that $H'$ is freely decomposable here, but Rips and Sela do not use their free indecomposability assumption in the parts of the argument that are relevant here.)

So far, we have only described how to shorten the action $H\acts\mc{T}$. However, due to the tree-graded structure of $\mscr{Y}$ and the fact that $\mscr{Y}^U$ is piece-averse, the Dehn twist $\tau$ actually satisfies 
\[ \ell(s;\mc{T})-\ell(\tau(s);\mc{T})=\ell(s;\mscr{Y})-\ell(\tau(s);\mscr{Y}) \]
for all $s\in S_H^2$. This proves the theorem under the assumption that there exists a finitely generated, surface-covered subgroup $K\leq H$ that is not elliptic in $\X_{\om}$. We can thus assume that no such subgroup exists, completing Step~6.

\smallskip
{\bf Step~7.} \emph{There exists a $(\mc{Z}(G),\mscr{E}\cup\Sal(G))$--splitting $G\acts T$ such that:
    \begin{enumerate}
        \item[(a)] the edge-stabilisers of $T$ are all $G$--conjugate to $\Perp_G(H')$;
        \item[(b)] the actions $H'\acts T$ and $H'\acts\X_{\om}$ have the same elliptic subgroups.
    \end{enumerate}
}

\smallskip\noindent
Consider a vertex $v\in\G$ such that $H'$ is not elliptic in $T^v_{\om}$. Let $G\acts\mc{G}$ be a geometric approximation of $\Min(G;T^v_{\om})$ with the same elliptic subgroups as $\Min(G;T^v_{\om})$, which exists by \cite[Corollary~B.3]{Fio11}. 
We pick $\mc{G}$ also ensuring that the approximating morphism $f\colon\mc{G}\ra\Min(G;T^v_{\om})$ maps $\Min(H';\mc{G})$ into $\Min(H';T^v_{\om})$; this is possible because there is a finite set of loxodromics in $H'$ whose axes and their $G$--translates cover $\Min(H';\mc{G})$ (e.g.\ by \Cref{lem:axis_in_S^2}), and so it suffices to ensure that $f$ is isometric along these finitely many axes.

Now, it follows from Steps~4 and 5 
that $\Perp_G(H')$ is the $G$--stabiliser of every arc of $\Min(H';\mc{G})$; in particular, the action $H'\acts\Min(H';\mc{G})$ has trivial arc-stabilisers. There are no axial components in $\mc{G}$ by Step~1. Thus, $\Min(H';\mc{G})$ has a transverse covering by maximal edge-like arcs and indecomposable components of exotic and surface types; let $H'\acts\mc{D}$ be the splitting dual to this transverse covering (see \cite[Section~1.3]{Guir-Fourier}). Note that the $H'$--stabiliser of each exotic component $U$ has a free splitting whose vertex groups are precisely the nontrivial point-stabilisers of the action $H'_U\acts U$ (see \cite[Proposition~7.2]{Guir-CMH}). Using these splittings to refine $\mc{D}$ and then collapsing all edges with nontrivial stabiliser, we obtain a splitting $H'\acts\mc{D}'$ with the following properties:
\begin{itemize}
    \item all edge-stabilisers are trivial;
    \item vertex-stabilisers are maximal subgroups $L\leq H'$ such that either $L$ is elliptic in $\mc{G}$ or $\Min(L;\mc{G})$ is a union of surface-type components of $H'\acts\Min(H';\mc{G})$.
\end{itemize}
Since $N_G(H)$ leaves $\Min(H';\mc{G})$ invariant, the action $H'\acts\mc{D}'$ naturally extends to a splitting $N_G(H)\acts\mc{D}'$ with $\Perp_G(H')$ acting trivially and equalling all edge-stabilisers.

Recall from Step~3, that we have $N_G(\Perp_G(H'))=N_G(H')$. Since $\Perp_G(H')$ is the $G$--stabiliser of each arc of $\Min(H';\mc{G})$, it follows that distinct $G$--translates of the subtree $\Min(H';\mc{G})$ share at most one point. Thus, we can extend the family of $G$--translates of $\Min(H';\mc{G})$ to a transverse covering of $\mc{G}$ and, considering the dual tree, we obtain a splitting $G\acts\mc{D}''$ with a vertex $x\in\mc{D}''$ whose $G$--stabiliser is $N_G(H)$ and such that $G$--stabilisers of edges incident to $x$ are point-stabilisers of the action $N_G(H)\acts\Min(H';\mc{G})$. Now, we can use the splitting $N_G(H)\acts\mc{D}'$ to refine $G\acts\mc{D}''$, then collapse all the old edges of $\mc{D}''$. The result is a splitting $G\acts\mf{D}^{(v)}$ with the following properties:
\begin{itemize}
    \item all edge-stabilisers of $\mf{D}^{(v)}$ are $G$--conjugate to $\Perp_G(H')$;
    \item the subgroups of $G$ elliptic in $T^v_{\om}$ are also elliptic in $\mf{D}^{(v)}$;
    \item if $L$ is a vertex-stabiliser of $H'\acts\mf{D}^{(v)}$, then either $L$ is elliptic in $T^v_{\om}$, or $\Min(L;T^v_{\om})$ is a union of surface-type components of $\Min(H';T^v_{\om})$.
\end{itemize}

Finally, we can consider all the splittings $G\acts\mf{D}^{(v)}$, as $v\in\G$ varies through the vertices such that $H'$ is not elliptic in $T^v_{\om}$. We can combine these into a splitting $G\acts\mf{D}$ whose elliptic subgroups are precisely the subgroups elliptic in all $\mf{D}^{(v)}$, and whose edge-stabilisers are intersections of $G$--conjugates of $\Perp_G(H')$ (see e.g.\ \cite[Lemma~4.15]{Fio11}). Let $G\acts T$ be the collapse of $\mf{D}$ where we kill all edges having no $G$--translate contained in $\Min(H';\mc{D})$. 

Let us check that $T$ is the splitting that we are looking for. The subgroup $\Perp_G(H')$ is elliptic in all $\mf{D}^{(v)}$, so it is elliptic in $\mf{D}$, and so it fixes $\Min(H';\mf{D})$ pointwise. In particular, the $G$--stabiliser of each edge of $\Min(H';\mf{D})$ equals $\Perp_G(H')$,
and hence all edge-stabilisers of $T$ are $G$--conjugate to $\Perp_G(H')$. Note that $\Perp_G(H')\in\mc{Z}(G)$ because, since $H'$ has trivial centre, we have $\Perp_G(H')=Z_G(H')$. The subgroups in $\mscr{E}\cup\Sal(G)$ are elliptic in all $\mf{D}^{(v)}$, and hence elliptic in $\mf{D}$ and $T$. Finally, consider a subgroup $L\leq H'$ that is elliptic in $T$. Since $\Min(H';\mf{D})$ projects injectively to $T$, the group $L$ is also elliptic in $\mf{D}$, and so it is elliptic in all $\mf{D}^{(v)}$. This means that $L$ is ``surface-covered'', as defined at the end of Step~5, and so Step~6 implies that $L$ is elliptic in $\X_{\om}$. This completes Step~7.

\smallskip
{\bf Step~8.} \emph{Proof of the theorem in the last case.}

\smallskip\noindent
Let $G\acts T$ be the splitting constructed in Step~7. The action $N_G(H')/\Perp_G(H')\acts\Min(H';T)$ is a free splitting; its nontrivial vertex-stabilisers form a factor system $\mscr{F}$ for $Q:=N_G(H')/\Perp_G(H')$. Let $\mscr{F}'$ be the family of nontrivial point-stabilisers of the action $H'\acts\X_{\om}$. Note that $H'$ projects injectively to a finite-index subgroup of $Q$ and, under this identification, $\mscr{F}'$ consists precisely of the intersections with $H'$ of the subgroups in $\mscr{F}$. 

Since $N_G(\Perp_G(H'))=N_G(H')$, two edges of $\Min(H';T)$ are in the same $G$--orbit if and only if they are in the same $N_G(H')$--orbit. Thus, every Dehn twist with respect to a $1$--edge collapse of the free splitting $Q\acts\Min(H';T)$ extends to a Dehn twist with respect to a $1$--edge collapse of $G\acts T$. Moreover, these extensions are coarse-median preserving by \Cref{thm:cmp_DT}(1). Let $\Delta(\X_{\om})\leq\Aut(G)$ be subgroup generated by these extensions, noting that $\Delta(\X_{\om})$ is finitely generated. This shows that all elements of the pure Fuchs-Rabinowitz group ${\rm FR}^0(\mscr{F})\leq\Aut(Q)$ extend to elements of $\Delta(\X_{\om})\leq\Aut(G)$ (see \Cref{rmk:FR_generated_by_DTs}).

Finally, \Cref{cor:shortening_free_products} implies that there are elements $\delta_n\in\Delta(\X_{\om})$ such that $H'$ is elliptic in the degeneration given by the sequence $\varphi_n\delta_n\in\Aut(G)$ (if we use the same scaling factors as for $\X_{\om}$).
This implies that $[\varphi_n\delta_n]\sqsubset[\varphi_n]$ for $\om$--all $n$, concluding the entire proof of \Cref{thm:shortening}.
\end{proof}

\section{Proof of the main results}\label{sect:proofs}

We can now use the shortening theorem (\Cref{thm:shortening}) as in \cite{RS94} to prove all our main results (after reducing to the decentralised case by \Cref{prop:reduction_to_preserving_complements}). Recall that, given a family of subgroups $\mscr{E}$, we denote by $\Out(G;\mscr{E}^t)$ the group of outer automorphisms that are trivial on each element of $\mscr{E}$. Given a coarse median structure $[\mu]$ on $G$, we denote by $\Out(G,[\mu])$ the group of outer automorphisms preserving $[\mu]$. We also define $\Out(G,[\mu];\mscr{E}^t):=\Out(G,[\mu])\cap\Out(G;\mscr{E}^t)$.

Items~(1), (4) and~(5) in \Cref{thm:main} prove, respectively, Theorems~\ref{thmnewintro:fg}, \ref{thmnewintro:poison} and~\ref{thmintro:cmp} from the Introduction. \Cref{propnewintro:bad_examples} and \Cref{thmnewintro:fi_no_poison} were shown in \Cref{ex:poisonous_centre} and \Cref{cor:poison_obstructs_DT_generation}(2).

\begin{thm}\label{thm:main}
    Let $G$ be a special group. Let $\mscr{E}$ be any family of subgroups of $G$, and let $\mscr{F}$ be a finite set of finitely generated subgroups. Then all the following statements hold.
    \begin{enumerate}
    \setlength\itemsep{.2em}
        \item The group $\Out(G;\mscr{F}^t)$ is finitely generated.
        \item The group $\Out(G;\mscr{F}^t)$ is virtually generated by centraliser twists relative to $\mscr{F}\cup\Sal(G)$, ascetic twists relative to $\mscr{F}$, and pseudo-twists relative to $\mscr{F}$.
        \item If $G$ has no $\mscr{F}$--poison subgroups, then $\Out(G;\mscr{F}^t)$ is virtually generated by centraliser twists relative to $\mscr{F}\cup\Sal(G)$ and ascetic twists relative to $\mscr{F}$.
        \item The group $\Out(G;\mscr{F}^t)$ is virtually generated by Dehn twists with respect to (arbitrary) splittings of $G$ relative to $\mscr{F}$ if and only if $G$ has no $\mscr{F}$--poison subgroups. 
        \item Let $[\mu]$ be a coarse median structure on $G$ induced by a convex-cocompact embedding into a RAAG. The group $\Out(G,[\mu];\mscr{E}^t)$ is virtually generated by finitely many coarse-median preserving centraliser twists relative to $\mscr{E}\cup\Sal(G)$.
    \end{enumerate}
\end{thm}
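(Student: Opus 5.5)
The plan is the Rips--Sela well-ordering argument, run with the order $\sqsubseteq$ of \Cref{lem:well-ordering} and reduced to the decentralised case via \Cref{prop:reduction_to_preserving_complements}.

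\textbf{Reduction to automorphisms trivial on complements.} Fix an $\mscr{F}$--system of complements $\mf{C}$ and set $\mscr{F}':=\mscr{F}\cup\mf{C}$. Each $\mf{C}_A$ is finitely generated and there are finitely many up to conjugacy, so $\mscr{F}'$ can be taken to be a finite set of finitely generated subgroups. By \Cref{prop:reduction_to_preserving_complements}, $\Aut(G;\mscr{F}^t)=\mc{U}_{\mf{C}}\mc{V}_{\mf{C}}$ with two properties: $\mc{U}_{\mf{C}}$ is a finite union of cosets of $\Aut(G;\mscr{F}'^t)$, and $\mc{V}_{\mf{C}}$ is finitely generated by pseudo-twists, or by ascetic twists in the absence of poison. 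Hence for items (1)--(3) it suffices to show that $\Out(G;\mscr{F}'^t)$ is generated by finitely many centraliser twists relative to $\mscr{F}'\cup\Sal(G)$ and ascetic twists relative to $\mscr{F}'$, together with a finite set.

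\textbf{Finite generation of $\Out(G;\mscr{F}'^t)$.} Fix a reference system and the order $\sqsubseteq$. Let $\mc{B}\subseteq\Out(G;\mscr{F}'^t)$ be a finite set to be determined (the ``short'' elements). Suppose no finitely generated subgroup $\Delta$ works. Following Rips--Sela, I would argue as follows. Assume that for every finite set of twists there is an element $\phi$ not in $\langle\text{twists}\rangle\cdot\mc{B}$, and choose $\phi$ to be $\sqsubseteq$-minimal among such elements; this is possible by \Cref{lem:well-ordering}.

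A cleaner formulation is to show there is a finite set $\Sigma$ of twists (of the allowed types) with the following property: every $\phi$ outside a finite set $\mc{B}$ admits $\sigma\in\langle\Sigma\rangle$ with $[\phi\sigma]\sqsubset[\phi]$. Since $\sqsubseteq$ is a well-order and $\asymp$-classes are finite, induction then gives $\Out(G;\mscr{F}'^t)=\langle\Sigma\rangle\mc{B}'$ for a finite set $\mc{B}'$, and finite generation follows.

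To find $\Sigma$, argue by contradiction. If no finite $\Sigma$ works, take an exhaustion $\Sigma_1\subseteq\Sigma_2\subseteq\dots$ of the countable set of allowed twists and choose $\phi_n$ that cannot be shortened by $\langle\Sigma_n\rangle$. The $\phi_n$ can be taken pairwise distinct and outside any given finite set. Pass to the degeneration $\X_{\om}$. It is decentralised by \Cref{prop:salient_vs_degenerations}(2), since the $\phi_n$ are trivial on $\mf{C}$. \Cref{thm:shortening} then produces a finitely generated $\Delta(\X_{\om})$ of the allowed type together with $\delta_n$ satisfying $[\varphi_n\delta_n]\sqsubset[\varphi_n]$ for $\om$--all $n$. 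Its generators are relative to $\mscr{E}\supseteq\mscr{F}'$, using \Cref{rmk:elliptic_in_degeneration}. These finitely many generators lie in $\Sigma_n$ for large $n$, which is a contradiction.

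\textbf{Items (4) and (5).} For item (4), the forward implication is \Cref{cor:poison_obstructs_DT_generation}(2), and the backward implication is item (3). For item (5), run the same argument with $\phi_n\in\Out(G,[\mu];\mscr{E}^t)$, using \Cref{thm:shortening}(3). No reduction to complements is needed, because coarse-median preserving degenerations have no IOM-lines by \Cref{lem:line_addenda}(4) and are therefore automatically decentralised. When $\mscr{E}$ is infinite, a minimal counterexample still yields a degeneration in which every $E\in\mscr{E}$ is elliptic, so the twists are relative to $\mscr{E}$.

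\textbf{Main obstacle.} The delicate point is the contradiction step. \Cref{thm:shortening} gives shortening for $\om$--all $n$ using a group that depends on $\X_{\om}$, not a single uniform $\Sigma$. The exhaustion argument has to be set up so that the twists produced are relative to $\mscr{F}'$, and so that $\sqsubset$-shortening holds for $\om$--all $n$, not merely along a subsequence. A second point to check is that the generators of $\Delta(\X_{\om})$ are countable in number, so that an exhaustion is possible. This follows because $G$ has countably many splittings with finitely generated data.
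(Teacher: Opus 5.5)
Your proposal is correct and follows essentially the same route as the paper. You reduce via \Cref{prop:reduction_to_preserving_complements} to $\Out(G;(\mscr{F}\cup\mf{C})^t)$, whose degenerations are decentralised by \Cref{prop:salient_vs_degenerations}(2), and then run the Rips--Sela well-ordering argument with \Cref{thm:shortening}. The paper splits that last step in two: first that the group generated by all allowed twists has finite index, then that it is finitely generated, both via $\sqsubseteq$--minimal coset representatives. You merge these into a single exhaustion argument, which works equally well. Items (4) and (5) are handled exactly as in the paper.

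The obstacles you list are not real. A single large $n$ suffices for the contradiction, so the $\om$--all versus subsequence issue never arises. Countability holds simply because $\Aut(G)$ is countable when $G$ is finitely generated. Your stated reason is off, since ascetic splittings can have infinitely generated edge groups.
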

\begin{proof}
    Realise $G$ as a convex-cocompact subgroup of a RAAG $\mc{A}_{\G}$ and fix a reference system $\mf{R}$ (\Cref{defn:reference_system}). This determines an equivalence relation $\asymp$ on $\Out(G)$, whose equivalence classes are finite, 
    and also a well-ordering $\sqsubseteq$ on the quotient $\Out(G)/\asymp$. Choose an $\mscr{F}$--system of complements $\mf{C}=\{\mf{C}_A\}_A$ for the subgroups $A\in\xSal(G)$ (\Cref{defn:SOC}). 
    
    The core of the proof lies in the following fact.

    \smallskip
    {\bf Claim.} \emph{The group $\Out(G;(\mscr{F}\cup\mf{C})^t)$ is virtually generated by finitely many centraliser twists relative to $\mscr{F}\cup\Sal(G)$ and finitely many ascetic twists relative to $\mscr{F}\cup\mf{C}$.}

    \smallskip\noindent
    \emph{Proof of claim.}
    Let $\Delta\leq\Out(G;(\mscr{F}\cup\mf{C})^t)$ be the group generated by all centraliser twists relative to $\mscr{F}\cup\Sal(G)$ and all ascetic twists relative to $\mscr{F}\cup\mf{C}$. 
    
    We first show that $\Delta$ has finite index in $\Out(G;(\mscr{F}\cup\mf{C})^t)$. Thus, suppose for the sake of contradiction that there exists a sequence $\phi_n\in\Out(G;(\mscr{F}\cup\mf{C})^t)$ such that the cosets $\phi_n\Delta$ are pairwise distinct. Since $\sqsubseteq$ is a well-ordering, each coset $\phi_n\Delta$ has $\sqsubseteq$--minima 
    and we can assume that $\phi_n$ is one of them. Let $G\acts\X_{\om}$ be the degeneration determined by the sequence $\phi_n$. This degeneration is decentralised by \Cref{prop:salient_vs_degenerations}(2), and the elements of $\mscr{F}$ are elliptic in it by \Cref{rmk:elliptic_in_degeneration}. Thus, we can apply the shortening theorem (\Cref{thm:shortening}) to $\X_{\om}$. The result is a sequence of elements $\delta_n\in\Delta$ giving the strict inequality $\phi_n\delta_n\sqsubset\phi_n$ for $\om$--all $n$. In particular, this inequality holds for at least one index $n$, violating minimality of the $\phi_n$. Thus, $\Delta$ must have finite index in $\Out(G;(\mscr{F}\cup\mf{C})^t)$.

    We are left to show that $\Delta$ is finitely generated. For this, choose an enumeration $\tau_n$ of the countably many twists generating $\Delta$, and let $\Delta(n)$ be the subgroup generated by $\tau_1,\dots,\tau_n$. Suppose for the sake of contradiction that $\Delta(n)$ is a proper subgroup of $\Delta$ for all $n$. Up to discarding some $\tau_n$, we can assume that $\tau_{n+1}\not\in\Delta(n)$ for all $n$. Let $\phi_n\in\Out(G)$ be a $\sqsubseteq$--minimum of the coset $\tau_{n+1}\Delta(n)$, which exists because $\sqsubseteq$ is a well-ordering. Note that the $\phi_n$ are pairwise distinct,
    and so they determine a degeneration $G\acts\X_{\om}$, which is again decentralised with all elements of $\mscr{F}$ elliptic. By \Cref{thm:shortening}, there exist a \emph{finitely generated} subgroup $\Delta(\X_{\om})\leq\Delta$ and elements $\delta_n\in\Delta(\X_{\om})$ such that the strict inequality $\phi_n\delta_n\sqsubset\phi_n$ holds for $\om$--all $n$; in particular, this inequality holds for arbitrarily large values of $n$. Now, since $\Delta(\X_{\om})$ is finitely generated, we have $\Delta(\X_{\om})\leq\Delta(n)$ for all large $n$, violating the fact that $\phi_n$ is a $\sqsubseteq$--minimum in the coset $\phi_n\Delta(n)$. In conclusion, $\Delta$ is finitely generated and has finite index in $\Out(G;(\mscr{F}\cup\mf{C})^t)$, proving the claim.
    \hfill$\blacksquare$

    \smallskip
    Now, recall from \Cref{prop:reduction_to_preserving_complements} that we can write $\Out(G;\mscr{F}^t)$ as a product $\mc{U}_{\mf{C}}\cdot\mc{V}_{\mf{C}}$, where:
    \begin{itemize}
        \item $\mc{U}_{\mf{C}}\sq\Out(G;\mscr{F}^t)$ is a finite union of left cosets of $\Out(G;(\mscr{F}\cup\mf{C})^t)$;
        \item $\mc{V}_{\mf{C}}\leq\Out(G;\mscr{F}^t)$ is generated by finitely many pseudo-twists relative to $\mscr{F}$ or, in the case that $G$ has no $\mscr{F}$--poison subgroups, by finitely many ascetic twists relative to $\mscr{F}$.
    \end{itemize}
    By the claim, $\mc{U}_{\mf{C}}$ is a finite union of left cosets of a subgroup $\mc{U}_{\mf{C}}'$ generated by finitely many centraliser twists relative to $\mscr{F}\cup\Sal(G)$ and finitely many ascetic twists relative to $\mscr{F}\cup\mf{C}$. Thus, the group $\langle \mc{U}_{\mf{C}}',\mc{V}_{\mf{C}}\rangle$ has finite index in $\Out(G;\mscr{F}^t)$. This proves Items~(1)--(3) of the theorem, as well as the backward 
    arrow of Item~(4). The forward arrow of Item~(4) was shown in \Cref{cor:poison_obstructs_DT_generation}(2).

    We are left to show Item~(5). Here the proof is almost identical to that of the claim, replacing $\Out(G;(\mscr{F}\cup\mf{C})^t)$ with $\Out(G,[\mu];\mscr{E}^t)$. In fact, we are in a simpler situation because degenerations arising from coarse-median preserving automorphisms never have any IOM-lines: there are no indiscrete lines by \Cref{lem:line_addenda}(4), and no motile lines by \Cref{thm:inert_vs_motile}. Thus, these degenerations are always decentralised, we only need centraliser twists in order to shorten (see \Cref{thm:shortening}(3)), and we can work relative to an arbitrary family $\mscr{E}$ of arbitrary subgroups (the elements of $\mscr{E}$ are still all elliptic in the degeneration by \Cref{rmk:elliptic_in_degeneration}). This concludes the proof of the theorem.
\end{proof}

Recall from \Cref{subsub:singular} that $\mc{S}(G)$ is the family of \emph{singular subgroups} of $G$, that is, the maximal subgroups virtually splitting as a direct product.

\begin{rmk}
    If $\mc{S}(G)\sq\mscr{E}$, then $\Out(G;\mscr{E}^t)$ is generated by finitely many centraliser twists relative to $\mscr{E}$ (that is, one can avoid ascetic twists). For instance, it is not hard to deduce this from the existence of the canonical $(\mc{ZZ}(G),\mc{S}(G))$--splitting constructed in \cite[Theorem~4.28]{Fio11}, arguing as in \cite{Levitt-GD}. 
    Alternatively, one can prove this as in \Cref{thm:main}: one just needs to observe that, if $G\acts\X_{\om}$ is a degeneration in which the elements of $\mc{S}(G)$ are elliptic, then there are no IOM-lines, and so the proof of \Cref{thm:shortening} only requires centraliser twists in order to shorten.

    All required centralised twists are relative to $\mc{S}(G)$ here, and hence coarse-median preserving by \Cref{thm:cmp_DT}. This shows that $\Out(G;\mc{S}(G)^t)$ is virtually contained in $\Out(G,[\mu])$ for all $[\mu]$.
\end{rmk}

We conclude with two examples showing that our results --- particularly the existence of finite-index subgroups $G_0\leq G$ such that $\Out(G_0)$ is virtually generated by Dehn twists --- badly fail for other natural classes of groups.

\begin{ex}\label{ex:IMM}
    \Cref{thmnewintro:fi_no_poison} can fail if $G$ is a non-convex-cocompact subgroup of a RAAG, that is, if $G$ is the fundamental group of a non-compact special cube complex. The theorem can also fail if $G$ is a general $\CAT$ group. Indeed, in both of these classes, there are examples of groups $G$ for which all finite-index subgroups $G_0\leq G$ have infinite $\Out(G_0)$ and still admit no Dehn twists whatsoever. In particular, Swarup's question mentioned in the Introduction \cite[Q2.1]{BesQ} has a negative answer (and this does not require the work in the present article).

    The examples arise from the work of Italiano--Migliorini--Martelli and Martelli, as we now explain. \cite[Section~3]{IMM} constructs\footnote{In \cite{IMM}, the groups $G$ and $H$ are swapped compared to our notation.} a torsion-free hyperbolic group $H$ with a type--$F$ subgroup $G\leq H$ such that $\Out(G)$ is infinite and $G$ admits no splittings over cyclic subgroups. (The same is true of finite-index subgroups of $G$.) As shown in \cite{Groves-Manning-IMM}, this construction can be performed so that $G$ and $H$ embed in a RAAG (here $H$ is convex-cocompact, but $G$ is not). Later work of Martelli \cite[Section~5]{Martelli-closing} shows that we can alternatively ensure that $G$ is $\CAT$.
    
    Now, we claim that $G$ admits no Dehn twists with respect to any splittings (the same argument applies to any finite-index subgroup of $G$). Indeed, for a splitting $G\acts T$ to give rise to a (non-identity) Dehn twist, it is necessary that we have an edge group $E$ and a vertex group $V$ such that $Z_V(E)\neq\{1\}$. Since $G$ is torsion-free and admits no cyclic splittings, the group $E$ must be a non-elementary subgroup of the hyperbolic group $H$. However, this implies that the centraliser $Z_H(E)$ is trivial (since $H$ is torsion-free), and so $Z_V(E)$ is also trivial.
\end{ex}

\begin{ex}\label{ex:nilpotent}
    For a finitely generated nilpotent group $N$, the group $\Out(N)$ is always finitely generated \cite{Auslander-Baumslag}, and even arithmetic over $\Q$ \cite{Baues-Grunewald}. However, $\Out(N)$ is not virtually generated by Dehn twists with respect to splittings of $N$, in general. 

    Indeed, consider $N$ nilpotent and let $\gamma_k(N)$ be the terms in its lower central series, that is, $\gamma_2(N)=[N,N]$ and $\gamma_k(N)=[N,\gamma_{k-1}(N)]$ for $k\geq 3$. Let $n$ be such that $\gamma_n(N)\neq\{1\}$ and $\gamma_{n+1}(N)=\{1\}$, and suppose that $n\geq 2$. 
        
    Let $N\acts T$ be a splitting. Note that $\gamma_n(N)$ is contained in the centre of $N$. Thus, if $\gamma_n(N)$ were to contain a loxodromic element, then $T$ would equal its axis, and so the $N$--action on $T$ would be given by a homomorphism $\rho\colon N\ra\Z\rtimes\Z/2\Z$. Since $\gamma_n(N)$ is central and contained in the commutator subgroup $\gamma_2(N)$, it would follow that $\rho$ vanishes on $\gamma_n(N)$, a contradiction. This shows that $\gamma_n(N)$ is necessarily elliptic in $T$, and so it must actually fix $T$ pointwise, and the action $N\acts T$ factors through an action $N/\gamma_n(T)$. Iterating the argument, we see that any splitting $N\acts T$ factors through an action of the abelianisation $N/[N,N]\acts T$. In particular, all Dehn twists of $N$ are inner on the commutator subgroup $[N,N]$.

    Now, consider the free group $F_n$ and the non-abelian nilpotent group $N_{n,c}=F_n/\gamma_c(F_n)$ for $n,c\geq 3$. 
    We have a natural homomorphism $\rho_{n,c}\colon\Aut(F_n)\ra\Aut(N_{n,c})$ and we claim that the image of $\rho_{n,c}$ always contains an element none of whose nontrivial powers is inner on $[N_{n,c},N_{n,c}]$. We show this assuming for simplicity that $n=3$ and letting $x,y,z$ be a free basis of $F_3$. Consider the automorphism $\varphi\in\Aut(F_3)$ that maps $x\mapsto xz$ while fixing $y$ and $z$. Observe that $\varphi^m([x,y])=x[z^m,y]x^{-1}\cdot[x,y]$. If $\rho_{3,c}(\varphi^m)$ were inner on $[N_{3,c},N_{3,c}]$, then there would exist elements $g\in F_3$ and $u\in\gamma_c(F_n)\leq\gamma_3(F_n)$ such that $\varphi^m([x,y])=u\cdot g[x,y]g^{-1}$. We would then have
    \[ x[z^m,y]x^{-1}\cdot[x,y] = u\cdot g[x,y]g^{-1} \quad\Rightarrow\quad [z^m,y] = x^{-1}\cdot u[g,[x,y]]\cdot x \leq\gamma_3(F_n) . \]
    This is impossible for $m\neq 0$. (It would give a law not satisfied by the Heisenberg group.)
\end{ex}

\appendix
\section{Shortening automorphisms of free products}\label{app:A}

This appendix is devoted to the proof of a shortening theorem for automorphisms of free products (\Cref{cor:shortening_free_products}). Roughly, given a finitely generated group $G$ with a factor system $\mc{F}$, and given an automorphism $\varphi\in\Aut(G)$ that is completely unrelated to $\mc{F}$, we wish to describe how much we can shorten $\varphi$ by means of automorphisms of $G$ that are inner on the elements of $\mc{F}$. The main difficulties arise when the elements of $\mc{F}$ are not freely indecomposable.

\subsection{Generalities}\label{sub_app:generalities}

Let $G$ be a finitely generated group. A \emph{factor system} for $G$ is a collection $\mc{F}$ consisting of the $G$--conjugates of the subgroups $G_1,\dots,G_k$ appearing in some decomposition
\[ G=G_1\ast\dots\ast G_k\ast F_m \]
with $k,m\geq 0$ and $k+m\geq 2$. We require the $G_i$ to be nontrivial, but not that they be freely indecomposable; we allow $\mc{F}$ to be empty if $G\cong F_m$ for $m\geq 2$. The group $G$ admits a factor system whenever it is freely decomposable and not isomorphic to $\Z$.

The \emph{Fuchs-Rabinowitz group} ${\rm FR}(\mc{F})$ is the subgroup\footnote{Note that we have ${\rm FR}(\mc{F})=\Aut(G;\mc{F}^t)$, but we prefer a lighter notation in this appendix.} of $\Aut(G)$ consisting of those automorphisms that on each subgroup $H\in\mc{F}$ coincide with some inner automorphism of $G$. (Note that these automorphisms are not required to preserve any element of $\mc{F}$, and they can equal different inner automorphisms of $G$ on different elements of $\mc{F}$.)
Fix for a moment a writing $G=G_1\ast\dots\ast G_k\ast F_m$ such that $\mc{F}$ consists of the conjugates of the $G_i$, and fix a free generating set $t_1,\dots,t_m$ for $F_m$. It is convenient to name the following three kinds of elementary automorphisms in ${\rm FR}(\mc{F})$.
\begin{enumerate}
    \setlength\itemsep{.2em}
    \item Choosing an index $1\leq i\leq k$ and any element $x\in G_1\cup\dots\cup G_k\cup\{t_1^{\pm},\dots,t_m^{\pm}\}$, there is an automorphism $\kappa_{i,x}\in{\rm FR}(\mc{F})$ that satisfies $\kappa_{i,x}(u)=xux^{-1}$ for all $u\in G_i$ and is the identity on $\bigcup_{j\neq i}G_j\cup\{t_1,\dots,t_m\}$. We refer to $\kappa_{i,x}$ as a \emph{partial conjugation}.
    \item Choosing an index $1\leq i\leq m$ and an element $x\in G_1\cup\dots\cup G_k\cup\{t_1^{\pm},\dots,t_m^{\pm}\}$ with $x\neq t_i^{\pm}$, there are automorphisms $\lambda_{i,x},\rho_{i,x}\in {\rm FR}(\mc{F})$ that satisfy $\lambda_{i,x}(t_i)=xt_i$ and $\rho_{i,x}(t_i)=t_ix$ and equal the identity on all $G_j$ and all $t_j\neq t_i$. We refer to $\lambda_{i,x}$ and $\rho_{i,x}$ as \emph{transvections}.
    \item Choosing an index $1\leq i\leq m$, there is an automorphism $\iota_i\in {\rm FR}(\mc{F})$ that maps $t_i\mapsto t_i^{-1}$ and is the identity on all $G_j$ and all $t_j\neq t_i$. We refer to $\iota_i$ as an \emph{inversion}.
\end{enumerate}
These three kinds elementary automorphisms generate ${\rm FR}(\mc{F})$ \cite{FR1,FR2,McCullough-Miller}, 
while partial conjugations and transvections alone generate a subgroup that we denote by ${\rm FR}^0(\mc{F})$ and refer to as the \emph{pure Fuchs-Rabinowitz group}.
The group ${\rm FR}^0(\mc{F})$ has finite index in ${\rm FR}(\mc{F})$,
and it is independent of the choice of the subgroups $G_j$ and elements $t_j$ made above. 
Note that, since $G$ is finitely generated, so are the groups ${\rm FR}(\mc{F})$ and ${\rm FR}^0(\mc{F})$. 

\begin{rmk}\label{rmk:FR_generated_by_DTs}
    Conjugations and transvections represent the same outer classes in $\Out(G)$ as certain Dehn twists with respect to $1$--edge free splittings of $G$ relative to $\mc{F}$. Indeed, for a partial conjugation $\kappa_{i,x}$, one can consider the amalgamated-product splitting $T$ in which $G_i$ is a vertex group, and in which the other vertex group contains all $G_j\neq G_i$ and all $t_j$. If $x\in G_i$, then $\kappa_{i,x}$ is already a Dehn twist with respect to $T$; if instead $x\not\in G_i$, then $\kappa_{i,x}$ becomes a Dehn twist with respect to $T$ after we compose it with the conjugation by $x^{-1}$ on the whole $G$. The transvections $\lambda_{i,x}$ and $\rho_{i,x}$ are Dehn twists with respect to the HNN splitting in which $t_i$ is a stable letter, while all $G_j$ and all $t_j\neq t_i$ are contained in the same vertex group. Summing up, ${\rm FR}^0(\mc{F})$ is generated by inner automorphisms and by finitely many Dehn twists with respect to $1$--edge free splittings of $G$ relative to $\mc{F}$.
\end{rmk}

The goal of this appendix is to prove the following theorem. Recall the notation $\mf{t}(\cdot;\cdot)$ from \Cref{sub:reference_systems}. We also use the notation $a\vee b$ for $\max\{a,b\}$.

\begin{thm}\label{thm:shortening_free_products} 
    Let $G$ be a (discrete) group with a proper cocompact action $G\acts X$ on a geodesic metric space. There exists a function $\vartheta_X\colon\N_{\geq 1}\ra\N_{\geq 1}$, depending only on $G$ and its action on $X$, such that the following holds. Consider any writing $G=G_1\ast\dots\ast G_k\ast\langle t_1\rangle\ast\dots\ast\langle t_m\rangle$ where $k,m\geq 0$, the $G_i$ are nontrivial subgroups, and the $t_i$ are infinite-order elements. Let $\mc{F}$ be the factor system formed by the $G$--conjugates of $G_1,\dots,G_k$. Choosing any finite generating sets $S_i\sq G_i$ and defining $S:=S_1\cup\dots\cup S_k\cup\{t_1,\dots,t_m\}$, there exists an automorphism $\varphi\in{\rm FR}^0(\mc{F})$ such that
    \[ \mf{t}(\varphi(S);X)\leq \vartheta_X(|S|)\cdot\big(1\vee \max_{1\leq i\leq k}\mf{t}(S_i;X)\big).\]
\end{thm}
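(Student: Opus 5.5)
We plan to reduce to a statement about the Bass--Serre tree of the free splitting and then shorten it combinatorially, in the spirit of Guirardel--Levitt's outer space for free products. Set $\Lambda:=1\vee\max_i\mf{t}(S_i;X)$. Let $G\acts T$ be the Bass--Serre tree of the graph of groups with one vertex carrying each $G_i$, trivial edge groups, and loops for the $t_j$. Consider the set of all $G$--trees $T'$ in the same deformation space as $T$: free splittings whose nontrivial vertex stabilisers are exactly the elements of $\mc{F}$. Any two such trees differ by an element of ${\rm FR}^0(\mc{F})$, up to finitely many graph symmetries and inversions; these can be absorbed by enlarging $\vartheta_X$ or by composing with ${\rm FR}^0$-conjugates. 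The plan is therefore to produce an equivariant map $X\ra T'$ into some such tree that is coarsely controlled, and then to read off a generating set whose displacement in $X$ is bounded.

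First we reduce to a statement with explicit constants. Choose a basepoint $x_0\in X$. For each $i$ choose a point $p_i\in X$ realising $\mf{t}(S_i;X)$ up to $1$, so every $s\in S_i$ moves $p_i$ by at most $\Lambda+1$. Properness and cocompactness of $G\acts X$ give a constant $R$ and a finite set $B\sq G$, depending only on $X$, with $d(x_0,bx_0)\leq R$ for $b\in B$ and $GB$--balls covering $X$. We then need $\varphi\in{\rm FR}^0(\mc{F})$, elements $g_i$ and a point $y$ such that:
\begin{itemize}
\item $\varphi(G_i)=g_iG_ig_i^{-1}$ with $d(y,g_ip_i)$ bounded by $C(|S|)\Lambda$;
\item each $\varphi(t_j)$ moves $y$ by at most $C(|S|)\Lambda$.
\end{itemize}
Because $\varphi$ is conjugation by $g_i$ on $G_i$ (partial conjugations), we get $\varphi(s)=g_isg_i^{-1}$ for $s\in S_i$, which moves $y$ by at most $2d(y,g_ip_i)+\Lambda+1$. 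So only the points $g_ip_i$ and the images of the $t_j$ need to be controlled.

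Next, to choose the $g_i$ and the new free letters, we use a combinatorial covering argument. Consider the union $\Sigma\sq X$ of the $G$--orbits of balls of radius $10\Lambda$ around the $p_i$, together with the $G$--orbit of a ball of radius $R$ around $x_0$. Since $G$ is generated by $S$ and $X$ is geodesic, $\Sigma$, thickened by a bounded amount, is connected. Build an equivariant graph $\mc{K}$ with the following data.
\begin{itemize}
\item Vertices: the translates $gp_i$ (one for each coset $gG_i$) and the translates $bx_0$.
\item Edges: join two vertices when their distance is at most $c\Lambda$, for a constant $c$ depending on $X$.
\end{itemize}
The quotient $\mc{K}/G$ has at most $k+|B|$-ish vertices. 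Crucially, the number of edge orbits is bounded in terms of $|S|$: the relevant points lie within distance controlled by $|S|\Lambda$ of a fixed point along the paths $x_0\to sx_0$, so finiteness comes from properness applied to $\Lambda$-rescaled, coarsely $\Lambda$-dense nets. We then pass to a spanning tree of $\mc{K}/G$ that contains the vertices $[p_i]$. Its complementary edges, together with the vertex groups $G_i$, give a new free product decomposition of $G$ relative to $\mc{F}$. Here we must check that collapsing the non-$p_i$ vertices, whose stabilisers are finite and hence trivial in the torsion-free case, yields a tree in the deformation space of $T$. Generators for this decomposition move a common point by at most (path length)$\times c\Lambda$, which gives the constant $\vartheta_X(|S|)$.

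The main obstacle is the bound on the size of $\mc{K}/G$ depending only on $|S|$, rather than on the geometry of the $G_i$. The point to establish is that the number of $G$--orbits of points we need is at most linear in $|S|$. We will show this by following a geodesic from $x_0$ to $sx_0$ for each $s\in S$, and discretising it at scale $\Lambda$ only where it passes near some $gp_i$. The worry is that a long geodesic segment far from all the $gp_i$ has length not controlled by $\Lambda$. The first thing to try is to replace the base point by a point minimising $\max_s d(y,sy)$ on the tree $T$: in $T$ each $t_j$ can be conjugated by Whitehead-type moves (transvections) until its axis passes through the vertex fixed by $G_1$. This is a shortening in $T$ measured against an equivariant Lipschitz map $X\ra T$ whose Lipschitz constant depends only on $X$. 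We then pull back by a coarse inverse on the bounded pieces.
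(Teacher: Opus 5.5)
There is a genuine gap. Your opening reduction is fine: partial conjugations take care of the $S_i$, so only the points $g_ip_i$ and the new free letters need controlling. The step that fails is ``its complementary edges, together with the vertex groups $G_i$, give a new free product decomposition of $G$ relative to $\mc F$.''

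The graph $\mc K$ is a Rips-type graph. Its vertex set contains the whole orbit $Gx_0$, with an edge between any two of these points at distance at most $c\Lambda$. So already the loops at $[x_0]$ in $\mc K/G$ number about $\#\{h\in G: d(x_0,hx_0)\leq c\Lambda\}$. A $G$--action on such a graph, with these stabilisers, only yields a surjection $G_1\ast\dots\ast G_k\ast F_r\twoheadrightarrow G$ with $r=b_1(\mc K/G)\gg m$. The elements read off from the complementary edges therefore form a short generating set subject to many relations, not a free basis of a complementary free factor.

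Converting a short generating set relative to $\mc F$ into a short basis of a complement is exactly what the theorem asserts. Here lengths are measured in $X$, not in normal forms adapted to $\mc F$. No step of your proposal does this conversion. Two smaller problems: the same count shows that the number of edge orbits is not bounded in terms of $|S|$. Also, the vertex ``$gp_i$, one for each coset $gG_i$'' is not well defined, since $G_i$ does not fix $p_i$.

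The fallback in your last paragraph does not supply the missing uniformity either. Shortening in the Bass--Serre tree of the given splitting is vacuous, since there each $t_j$ already has translation length $1$. Moreover, no equivariant map from $X$ to a tree $T$ in the deformation space of $\mc F$ has Lipschitz constant depending only on $X$. Take $G=F_2=\langle a,b\rangle$ acting on its Cayley tree, and $\mc F$ the conjugates of $\langle ab^N\rangle$. Every such $T$ maps equivariantly and $1$--Lipschitz onto the universal cover of $T/G$. There $a$ acts via its image in $G/\langle\!\langle ab^N\rangle\!\rangle\cong\Z$, which is $N$ times a generator, so $\ell_T(a)\geq N$.

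Even a constant of order $\Lambda$ would not suffice by itself. There is no coarse inverse $T\ra X$ with controlled constants when the vertex groups are infinite, and locating the pieces $gG_i$ inside $X$ is the problem itself. This is exactly the obstruction noted in the remark before the proof: everything is easy if $\vartheta$ may depend on $\mc F$.

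The paper obtains uniformity by working in a tree independent of $\mc F$: the Grushko tree of $G$, with freely indecomposable vertex groups, and its tree of Cayley graphs $\G$. Free indecomposability forces each $\varphi(G_i)$ to contain or meet trivially each vertex stabiliser (\Cref{lem:using_free_indecomposability}). The proof then inducts on Grushko triples, absorbing short free letters into $\mc D$ and merging $\mc D$--factors whose minimal subtrees overlap. In the remaining case it minimises the spanning-tree complexity $\s_T$ over ${\rm FR}^0(\mscr G)$ and uses ping-pong arguments to show that $\mc C=\emptyset$ and that the subtrees $C_i$ are uniformly close.
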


We immediately deduce the following consequence.

\begin{cor}\label{cor:shortening_free_products}
    Let $G$ be a finitely generated group with a factor system $\mc{F}$. Fix a (locally finite) Cayley graph $\G$ for $G$. Let $S\sq G$ be any finite generating set and let $S_1,\dots,S_k$ be finite generating sets of representatives of the subgroups in $\mc{F}$. Then there exists a constant $K\geq 1$ such that, for every automorphism $\psi\in\Aut(G)$, there exists an automorphism $\varphi\in{\rm FR}^0(\mc{F})\leq\Aut(G)$ with
    \[ \mf{t}(\psi\varphi(S);\G)\leq K\cdot\big(1\vee\max_{1\leq i\leq k}\mf{t}(\psi(S_i);\G)\big) .\]
\end{cor}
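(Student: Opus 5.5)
The plan is to deduce the corollary from \Cref{thm:shortening_free_products} by applying it to the factor system $\psi^{-1}(\mc{F})$, or equivalently by transporting the free-product writing through $\psi$.

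Start by fixing once and for all a writing $G=G_1\ast\dots\ast G_k\ast\langle t_1\rangle\ast\dots\ast\langle t_m\rangle$ realising $\mc{F}$. Without loss of generality, the $S_i$ generate these particular representatives $G_i$; changing representatives by conjugation changes $\mf{t}(\psi(S_i);\G)$ only by a bounded additive amount, which can be absorbed into $K$. Set $S_0:=S_1\cup\dots\cup S_k\cup\{t_1,\dots,t_m\}$, which is a generating set of $G$.

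Given $\psi\in\Aut(G)$, apply \Cref{thm:shortening_free_products} with $X=\G$ to the writing
\[ G=\psi(G_1)\ast\dots\ast\psi(G_k)\ast\langle\psi(t_1)\rangle\ast\dots\ast\langle\psi(t_m)\rangle, \]
with generating sets $\psi(S_i)$. The associated factor system is $\psi(\mc{F})$. The theorem produces $\varphi'\in{\rm FR}^0(\psi(\mc{F}))$ satisfying
\[ \mf{t}(\varphi'\psi(S_0);\G)\leq\vartheta_{\G}(|S_0|)\cdot\big(1\vee\max_i\mf{t}(\psi(S_i);\G)\big). \]
Since ${\rm FR}^0(\psi(\mc{F}))=\psi\,{\rm FR}^0(\mc{F})\,\psi^{-1}$, we have $\varphi'\psi=\psi\varphi$ with $\varphi:=\psi^{-1}\varphi'\psi\in{\rm FR}^0(\mc{F})$. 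This conjugation identity holds because the definition of ${\rm FR}^0$ is independent of the chosen writing, and $\psi$ carries partial conjugations and transvections for one writing to those of the image writing. Hence $\mf{t}(\psi\varphi(S_0);\G)$ is bounded by $\vartheta_{\G}(|S_0|)(1\vee\max_i\mf{t}(\psi(S_i);\G))$.

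Finally, pass from $S_0$ to the arbitrary generating set $S$. Each $s\in S$ is a word of length at most $L$ in $S_0$, where $L$ depends only on $S$ and $S_0$. So for any point $x$, $d(x,\psi\varphi(s)x)\leq L\max_{u\in S_0}d(x,\psi\varphi(u)x)$, which gives $\mf{t}(\psi\varphi(S);\G)\leq L\,\mf{t}(\psi\varphi(S_0);\G)$. Taking $K:=L\,\vartheta_{\G}(|S_0|)$, plus the constant absorbed earlier, completes the proof. The only point requiring care is the equivariance ${\rm FR}^0(\psi(\mc{F}))=\psi{\rm FR}^0(\mc{F})\psi^{-1}$; everything else is routine.
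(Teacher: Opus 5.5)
Your proposal is correct and follows the paper's proof: conjugate the $S_i$ to generate the chosen factors $G_i$, apply the theorem to the $\psi$-transported writing, pull the resulting element of ${\rm FR}^0(\psi(\mc{F}))$ back to ${\rm FR}^0(\mc{F})$ by conjugating with $\psi$, and pass from $S_0$ to $S$ via word length. One small point: the initial conjugation leaves $\mf{t}(\psi(S_i);\G)$ exactly unchanged, since $\mf{t}(g\Omega g^{-1};\G)=\mf{t}(\Omega;\G)$, so no additive constant needs to be absorbed into $K$.
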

\begin{proof}
    Choose a writing $G=G_1\ast\dots\ast G_k\ast\langle t_1\rangle\ast\dots\ast\langle t_m\rangle$ such that $\mc{F}$ consists of the conjugates of the $G_i$. Up to conjugating the sets $S_i$ by elements of $G$, which does not alter the value of $\mf{t}(\psi(S_i);\G)$, we can assume that $S_i$ is a finite generating set of the chosen $G_i$. Set $S':=S_1\cup\dots\cup S_k\cup\{t_1,\dots,t_m\}$. Let $\vartheta_{\G}$ be the function provided by \Cref{thm:shortening_free_products} in relation to the $G$--action on the Cayley graph $\G$ mentioned in the statement of the corollary. For every $\psi\in\Aut(G)$, we can then apply \Cref{thm:shortening_free_products} to the generating set $\psi(S')$ to obtain an automorphism $\varphi\in{\rm FR}^0(\psi(\mc{F}))$ such that
    \[ \mf{t}(\varphi\psi(S');\G) \leq \vartheta_{\G}(|S'|)\cdot\big(1\vee\max_{1\leq i\leq k} \mf{t}(\psi(S_i);\G) \big) .\]
    Note that the automorphism $\varphi\psi$ can be rewritten as $\psi\varphi'$ for $\varphi':=\psi^{-1}\varphi\psi\in{\rm FR}^0(\mc{F})$. Finally, observe that, if $S$ is the generating set appearing in the statement of the corollary, we have
    \[ \mf{t}(\psi\varphi'(S);\G)\leq \max_{s\in S}|s|_{S'}\cdot \mf{t}(\psi\varphi'(S');\G) ,\]
    where $|\cdot|_{S'}$ denotes word lengths with respect to $S'$. Combining this with the previous inequality proves the corollary for $K:=\vartheta_{\G}(|S'|)\cdot \max_{s\in S}|s|_{S'}$.
\end{proof}

\begin{rmk}
    It is easy to prove \Cref{thm:shortening_free_products} if we allow the function $\vartheta$ to depend on the factor system $\mc{F}$, as well as on the action $G\acts X$: indeed, it suffices to consider a quasi-isometry between $X$ and a tree of spaces in which the vertex spaces are Cayley graphs of the subgroups in $\mc{F}$, and then use the fact that ${\rm FR}(\mc{F})$ acts cofinitely on the space of free splittings of $G$ relative to $\mc{F}$. Unfortunately, it does not seem possible to readily deduce \Cref{thm:shortening_free_products} from this observation: although there are only finitely many $\Aut(G)$--orbits of factor systems, the function $\vartheta_{X,\mc{F}}(\cdot)$ gets worse and worse as we replace $\mc{F}$ by its $\Aut(G)$--translates.
\end{rmk}

The rest of the appendix is devoted to the proof of \Cref{thm:shortening_free_products}. Rather than with the parameter $\mf{t}(\Om;X)$ appearing in the theorem, it will be more convenient to work with
\begin{equation}\label{eq:mf(s)}
    \mf{s}(\Om;X):= \inf_{x\in X}\sum_{s\in S} d(x,sx) ,
\end{equation}
for a finite set $\Om\sq G$ and an action $G\acts X$. We clearly have $\mf{t}(\Om;X)\leq \mf{s}(\Om;X)\leq |\Om|\cdot\mf{t}(\Om;X)$.

\subsection{Grushko triples}

Let $G$ be finitely generated. We define a \emph{Grushko triple} of $G$ as a triple $\mscr{G}=(\mc{G},\mc{C},\mc{D})$ where $\mc{G}=\{G_1,\dots,G_k\}$ is a set of nontrivial subgroups with 
\[ G=G_1\ast\dots\ast G_k , \]
and where $\{1,\dots,k\}=\mc{C}\sqcup\mc{D}$ is a partition such that $G_i\cong\Z$ for all $i\in\mc{C}$. We allow $k=1$. Note that $G_i$ is allowed to be infinite cyclic also for $i\in\mc{D}$, and that none of the $G_i$ is required to be freely indecomposable. We denote by $t_i\in G_i$ a choice of generator for each $i\in\mc{C}$.

The \emph{Fuchs-Rabinowitz group} ${\rm FR}(\mscr{G})\leq\Aut(G)$ is defined simply as the Fuchs-Rabinowitz group of the factor system $\{gG_ig^{-1}\mid g\in G,\ i\in\mc{D}\}$; we define ${\rm FR}^0(\mscr{G})$ analogously. As such, note that ${\rm FR}(\mscr{G})$ and ${\rm FR}^0(\mscr{G})$ are completely unaffected by the groups $G_i$ with $i\in\mc{C}$. Given any automorphism $\varphi\in\Aut(G)$, we define the Grushko triple $\varphi(\mscr{G})$ as the one given by the subgroups $\varphi(G_i)$ with $G_i\in\mc{G}$, keeping the same partition of the index set. 

A Grushko triple $\mscr{G}$ \emph{dominates} another (written $\mscr{G}\succ\mscr{G}'$) if $\mscr{G}'=(\mc{G}',\mc{C}',\mc{D}')$ is obtained from $\mscr{G}$ by a (nontrivial) sequence of the following two moves.
\begin{enumerate}
\item[(M1)] Making a $\mc{C}$--indexed subgroup $\mc{D}$--indexed: pick some $i\in\mc{C}$ and set $\mc{G}':=\mc{G}$, $\mc{D}'=\mc{D}\cup\{i\}$, $\mc{C}'=\mc{C}\setminus\{i\}$.
\item[(M2)] Merging two $\mc{D}$--indexed subgroups: pick distinct indices $i,j\in\mc{D}$ and set $\mc{G}':=\mc{G}\setminus\{G_i,G_j\}\cup\{\langle G_i,G_j\rangle\}$, with $\mc{D}'$ being the union of $\mc{D}\setminus\{i,j\}$ and a new element indexing $\langle G_i,G_j\rangle$.
\end{enumerate}
Note that domination $\mscr{G}\succ\mscr{G}'$ yields inclusions ${\rm FR}(\mscr{G}')\leq{\rm FR}(\mscr{G})$ and ${\rm FR}^0(\mscr{G}')\leq{\rm FR}^0(\mscr{G})$.

We define the \emph{complexity} $\kappa(\mscr{G})$ of a Grushko triple as the lexicographic pair of cardinalities $(|\mc{G}|,|\mc{C}|)$, with the leftmost coordinate taking precedence. With this in mind, $\mscr{G}\succ\mscr{G}'$ implies that $\kappa(\mscr{G})>\kappa(\mscr{G}')$. Note that only a uniformly bounded number of Grushko triples are dominated by any given one.

Given a free splitting $G\acts T$ and a subgroup $H\leq G$, we define $\mc{M}_T(H):=\Min(H;T)$ if $H$ is not elliptic in $T$, and $\mc{M}_T(H):=\Fix(H;T)$ if it is. In particular, if $H\cong\Z$ and $H$ is not elliptic, we have $\mc{M}_T(H)\cong\R$. Fixed sets are singletons, since $T$ has trivial edge-stabilisers.

The following measures the complexity of a Grushko triple with respect to a free splitting of $G$.

\begin{defn}
    Consider a free splitting $G\acts T$ and an (unrelated) Grushko triple $\mscr{G}$. A \emph{$T$--spanning tree} for $\mscr{G}$ is a subtree $\Sigma\sq T$ such that:
    \begin{itemize}
        \item $\Sigma$ intersects $\mc{M}_T(G_i)$ for all $i\in\mc{C}\cup\mc{D}$;
        \item for each $i\in\mc{C}$ with $\ell_T(t_i)\geq 2$, the arc $\Sigma\cap\mc{M}_T(G_i)$ has length $\geq \ell_T(t_i)-1$.
    \end{itemize}
    We denote by $\s_T(\mscr{G})$ the minimum number of edges in a $T$--spanning tree for $\mscr{G}$.
\end{defn}

\subsection{Simplifying Grushko triples}

In view of the Milnor--Schwarz lemma, it suffices to prove \Cref{thm:shortening_free_products} for a single proper cocompact action $G\acts X$. We will use the $G$--action on the graph $\G$ constructed as follows.

To begin with, let $G\acts T$ be the Bass--Serre tree of a splitting of $G$ of the form
    \[
    \begin{tikzpicture}[baseline=(current bounding box.center)]
        \draw[fill] (0,0) circle [radius=0.05cm];
        \draw[fill] (-.6,.6) circle [radius=0.05cm];
        \draw[fill] (.6,.6) circle [radius=0.05cm];
        \draw[thick] (0,0) -- (-.6,.6);
        \draw[thick] (0,0) -- (.6,.6);
        \node[above] at (-.6,.6) {$K_1$};
        \node[above] at (.6,.6) {$K_r$};
        \node[left] at (0,0) {$\{1\}$};
        \draw[thick,rotate around={45:(-.4,-.4)}] (-.4,-.4) ellipse (0.564cm and 0.15cm);
        \draw[thick,rotate around={-45:(.4,-.4)}] (.4,-.4) ellipse (0.564cm and 0.15cm);
        \node at (0,.7) {$\dots$};
        \node at (0,-.7) {$\dots$};
        \node at (-1,-1) {$u_1$};
        \node at (1,-1) {$u_s$};
        \node at (1.5,0) {,};
    \end{tikzpicture}
    \]
where the $K_i$ are freely indecomposable subgroups of $G$, and the $u_i$ are some stable letters. Let $\G$ be the graph, equipped with a proper cocompact $G$--action, that is obtained from $T$ by (equivariantly) blowing up each vertex with nontrivial $G$--stabiliser to a Cayley graph of said stabiliser (with respect to some finite generating set). In particular, $\G$ is a tree of Cayley graphs, and there is a natural $1$--Lipschitz $G$--equivariant projection $\pi\colon\G\ra T$. Edges of $\G$ come in two kinds: edges that $\pi$ collapses to a vertex of $T$, and edges that $\pi$ maps isometrically onto an edge of $T$. 

Given two edges $e,f\sq T$, we denote by $d_{\G}(e,f)$ the distance in the graph $\G$ of the two edges of $\G$ to which $e$ and $f$ lift. If $g\in G$ is loxodromic in $T$, we define $\ell_{\G}(g):=d_{\G}(e,ge)+1$ for any edge $e\sq\mc{M}_T(g)$; note that this quantity is independent of the choice of $e$.

\begin{ass}\label{label:setup}
    Throughout the following discussion, we consider the following fixed objects.
    \begin{itemize}
        \item The tree $T$ and graph $\G$ are the ones defined just above.
        \item The Grushko triple $\mscr{G}=(\mc{G},\mc{C},\mc{D})$ is completely arbitrary, that is, unrelated to $T$.
        \item We fix finite generating sets $S_i\sq G_i$ for all $i\in\mc{D}$ and set 
        \[ D:= \max_{i\in\mc{D}}\mf{s}(S_i;\G) ,\]
        using the notation introduced in \Cref{eq:mf(s)}.
        \item For each $i\in\mc{C}\cup\mc{D}$, we write $C_i:=\mc{M}_T(G_i)$. In particular, $C_i\cong\R$ for all $i\in\mc{C}$, in view of \Cref{lem:using_free_indecomposability}(1) just below.
        \item For each $i\in\mc{C}\cup\mc{D}$, we denote by $\mc{E}_i$ the set of edges $e\sq T$ such that $e\cap C_i\neq\emptyset$ and ${\rm int}(e)$ separates $C_i$ from a point in the union $\bigcup_{j\in\mc{C}\cup\mc{D}}C_j$. In particular, each edge in $\mc{E}_i$ shares exactly one vertex with $C_i$. 
        \item For each $i\in\mc{C}\cup\mc{D}$, we denote by $\mc{V}_i$ the set of vertices of $C_i$ at which the edges in $\mc{E}_i$ are based. In other words, the points in $\mc{V}_i$ are the projections to $C_i$ of the points in $\bigcup_{j\in\mc{C}\cup\mc{D}}C_j\setminus C_i$.
    \end{itemize}
\end{ass}

Our goal is now to modify $\mscr{G}$ by applying an element of ${\rm FR}^0(\mscr{G})$ so as to ``simplify'' its aspect in relation to the action $G\acts T$. Note that applying such an automorphism does not alter the quantities $\mf{s}(S_i;\G)$, nor the constant $D$.

The simplification of $\mscr{G}$ will occur progressively over the next few lemmas, beginning with \Cref{lem:spanning_tree_long}. Before starting, however, we need to collect some preliminary observations in the next result. We denote by $\Gr(\cdot)$ the Grushko rank of a group.

\begin{lem}\label{lem:using_free_indecomposability}
    Under \Cref{label:setup}, the following statements hold for all $\varphi\in{\rm FR}(\mscr{G})$.
    \begin{enumerate}
    \setlength\itemsep{.2em}
        \item The elements $\varphi(t_i)$ with $i\in\mc{C}$ are loxodromic in $T$.
        \item For each $i\in\mc{D}$ and each vertex $v\in T$, either the $\varphi(G_i)$--stabiliser of $v$ is trivial, or $\varphi(G_i)$ contains the entire $G$--stabiliser of $v$. In the latter case, the $\varphi(G_i)$--stabiliser of $v$ acts transitively on edges of $T$ incident to $v$.
        \item For all $i\in\mc{D}$, there is a connected $\varphi(G_i)$--invariant subgraph of $\G$ with $\leq D$ $\varphi(G_i)$--orbits of edges. In particular, the action $\varphi(G_i)\acts\mc{M}_T(\varphi(G_i))$ has $\leq D$ orbits of edges.
        \item Consider $i\in\mc{D}$ and an arc $\beta\sq\mc{M}_T(\varphi(G_i))$ whose interior vertices have trivial $\varphi(G_i)$--stabiliser and lie in pairwise distinct $\varphi(G_i)$--orbits. Then, denoting by $e$ and $f$ the first and last edge of $\beta$, we have $d_{\G}(e,f)<6\Gr(G_i)^2 D$.
        \item If $i,j\in\mc{D}$ are distinct and the intersection $\mc{M}_T(\varphi(G_i))\cap\mc{M}_T(\varphi(G_j))$ contains either an edge or a vertex with nontrivial $\varphi(G_i)$--stabiliser, then there exists an automorphism $\varphi'\in{\rm FR}^0(\mscr{G})$ such that $\mf{s}(\varphi'(S_i)\cup\varphi'(S_j);\G)\leq 2D(1+|S_i|+|S_j|)$.
    \end{enumerate}
\end{lem}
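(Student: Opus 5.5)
The plan is to prove the five items in order, using that $\varphi(\mscr{G})$ is again a Grushko triple and that $T$ is the Bass--Serre tree of a Grushko-type splitting whose nontrivial vertex groups $K_j$ are freely indecomposable. Since $\varphi\in{\rm FR}(\mscr{G})$ restricts to conjugations on the $G_i$ with $i\in\mc{D}$, we may replace $\mscr{G}$ by $\varphi(\mscr{G})$ in (2)--(4), noting that $\mf{s}(\varphi(S_i);\G)=\mf{s}(S_i;\G)\leq D$.

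For (1), let $i\in\mc{C}$. Then $\varphi(t_i)$ generates a free factor $\langle\varphi(t_i)\rangle$ of $G$. If it fixed a vertex $v$, then $\varphi(t_i)$ would lie in a conjugate of some $K_j$. By the Kurosh subgroup theorem, $K_j\cap\langle\varphi(t_i)\rangle$ would be a free factor of $K_j$. Free indecomposability then forces $K_j\cong\Z$ to be generated by a root of $\varphi(t_i)$. But a free factor is root-closed, so $\varphi(t_i)$ would generate $K_j$. I expect this edge case to be ruled out by the form of the splitting, since cyclic factors are absorbed into the stable letters $u_i$; it may need a harmless normalisation of $T$.

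For (2), apply Kurosh to the free factor $\varphi(G_i)$ and the vertex group $G_v=gK_jg^{-1}$. The intersection $\varphi(G_i)\cap G_v$ is a free factor of $G_v$, so by indecomposability it is either trivial or all of $G_v$. Transitivity on incident edges follows because edges at $v$ are the cosets of $G_v$ (all edge stabilisers are trivial), so $G_v$ acts simply transitively on the edges at $v$ of each orbit type. For (3), take $x\in\G$ nearly realising $\mf{s}(S_i;\G)\leq D$, join $x$ to $sx$ by geodesics for $s\in S_i$, and take the $\varphi(G_i)$--translates of the union. This gives a connected invariant subgraph with at most $D$ orbits of edges, and projecting it by $\pi$ gives the bound on $\mc{M}_T$.

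Item (4) is the main obstacle. The idea is to consider the quotient graph of groups $\varphi(G_i)\backslash\mc{M}_T(\varphi(G_i))$. It is a graph with trivial edge groups, at most $D$ edges by (3), and first Betti number plus number of nontrivial vertex groups bounded by $\Gr(G_i)$. An arc $\beta$ whose interior vertices are free and lie in distinct orbits projects to an embedded path through valence-$2$ vertices with trivial groups, apart from possible repetition of edges. So $\beta$ has at most about $\Gr(G_i)$ edges in the $T$--metric, although we need the bound in the $\G$--metric. To get it, I would use the subgraph of (3): each edge of $\beta$ lifts, and the path in the invariant subgraph between lifts of consecutive edges passes through vertex-space Cayley graphs of trivial-stabiliser vertices. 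Counting orbits of edges gives $d_\G(e,f)\leq(\#\text{edges of }\beta)\cdot D$, with combinatorial factors. I would aim for a bound of the form $c\,\Gr(G_i)^2D$ by counting edges in the core graph, which has at most $3\Gr$ edges of valence at least $3$, and then taking a crude product.

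For (5), if $\mc{M}_T(\varphi(G_i))$ and $\mc{M}_T(\varphi(G_j))$ share an edge or a vertex with nontrivial $\varphi(G_i)$--stabiliser, choose basepoints $x_i,x_j\in\G$ for $S_i,S_j$ as in (3). The overlap gives an element $g\in\varphi(G_i)$ bringing the two invariant subgraphs within distance about $D$. Then apply a partial conjugation $\kappa$ in ${\rm FR}^0$, conjugating $G_j$ by a word taking one basepoint near the other, so that a single point $x$ satisfies $\sum_s d(x,sx)\leq 2D(1+|S_i|+|S_j|)$, using the triangle inequality. In the vertex case, (2) supplies the transitivity needed to slide from one edge to an adjacent one.
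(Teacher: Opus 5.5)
Items (1)--(3) are fine and match the paper in substance. You apply the Kurosh theorem to the free factors $\langle\varphi(t_i)\rangle$ and $\varphi(G_i)$, whereas the paper uses that $\varphi^{-1}(K)$ is elliptic in a free splitting whose vertex groups are the $G_i$ with $i\in\mc{D}$. The normalisation $K_j\not\cong\Z$ that you flag is tacitly assumed in the paper as well.

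Your outline of (5) is also the paper's, but the key fact should be stated precisely: the invariant subgraphs $A_i,A_j\sq\G$ from (3) actually intersect, since a common edge of the two minimal subtrees has a unique lift and it lies in both. In the vertex case, edge-sliding via transitivity does not cover $\varphi(G_i)=G_v$, where $\mc{M}_T(\varphi(G_i))=\{v\}$. Use (2) instead: $G_v\leq\varphi(G_i)$, so $A_i$ contains every vertex of the Cayley graph $\pi^{-1}(v)$, which $A_j$ meets. Conjugating both $\varphi(S_i)$ and $\varphi(S_j)$ towards a common point of $A_i\cap A_j$ gives the stated constant; conjugating only one factor does not in general. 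The genuine gap is in (4).

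\textbf{The gap in (4).} Your count of the edges of $\beta$ is false. Nothing forces the interior vertices of $\beta$ to have valence $2$ in $\varphi(G_i)\backslash\mc{M}_T(\varphi(G_i))$. More seriously, this quotient can contain arbitrarily long chains of valence-$2$ vertices with trivial vertex group, since (3) only bounds its number of edges by $D$, not by a function of $\Gr(G_i)$. For instance, take $G=F_3=\langle u_1,u_2,u_3\rangle$ (so $T=\G$ is the Cayley tree) and $G_i=\langle u_1^mu_2,u_3\rangle$, a free factor. The arc from $1$ to $u_1^mu_2$ on the axis of $u_1^mu_2$ satisfies the hypotheses of (4), has $m+1$ edges, and has $d_{\G}(e,f)=m-1$, while $\Gr(G_i)=2$. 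So $\beta$ can have of the order of $D$ edges, and your product bound $(\#\text{edges of }\beta)\cdot D$ only yields $O(D^2)$, not something of the form $c\,\Gr(G_i)^2D$.

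\textbf{The missing idea} is a global count of $\varphi(G_i)$--orbits. Let $A$ be the invariant subgraph from (3).
\begin{enumerate}
\item For each vertex $w\in T$, the intersection $A\cap\pi^{-1}(w)$ is connected. A path in $A$ that leaves the vertex space $\pi^{-1}(w)$ must return through the same lifted edge, and hence to the same point, so such excursions can be cut out.
\item Consider the lifts of the interior edges of $\beta$, together with the edges of $A\cap\pi^{-1}(w)$ for $w$ ranging over the interior vertices of $\beta$. These lie in pairwise distinct $\varphi(G_i)$--orbits, precisely because those vertices have trivial $\varphi(G_i)$--stabiliser and are pairwise inequivalent.
\item Concatenating therefore gives a path in $A$ between the lifts of $e$ and $f$ with at most $D$ edges, so $d_{\G}(e,f)\leq D$, which is more than enough.
\end{enumerate}

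The paper proceeds differently. It bounds by $6\Gr(G_i)^2$ a set of ``landmark'' edges: one edge in each maximal subarc of $\beta$ whose interior vertices have degree $2$ in $\mc{M}_T(G_i)$, the remaining edges of $\beta$, and the edges of $\mc{M}_T(G_i)$ branching off at interior vertices of $\beta$. It then follows a path built from geodesics of length $\leq D$ between consecutive points of $G_ix$, and shows that consecutive landmarks are $2D$--close. There too, the essential point is that a long valence-$2$ chain costs only one landmark.
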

\begin{proof}
    Let $G\acts T'$ be a free splitting having the conjugates of the $G_i$ with $i\in\mc{D}$ as its nontrivial vertex groups. Instead, if $K$ is a nontrivial vertex group of $T$, then $K$ is freely indecomposable. Thus, for every automorphism $\varphi\in\Aut(G)$, the subgroup $\varphi^{-1}(K)$ is elliptic in $T'$. For each index $i\in\mc{C}$, the element $t_i$ is loxodromic in $T'$, and hence $t_i\not\in\varphi^{-1}(K)$ and $\varphi(t_i)\not\in K$. This proves part~(1). Similarly, for each index $i\in\mc{D}$, we have either $\varphi^{-1}(K)\cap G_i=\{1\}$ or $\varphi^{-1}(K)\leq G_i$, and hence either $K\cap\varphi(G_i)=\{1\}$ or $K\leq\varphi(G_i)$. By construction, each nontrivial vertex-stabiliser of $T$ acts transitively on the incident edges of $T$, and so part~(2) follows.

    Regarding parts~(3) and~(4), note that it suffices to prove them for $\varphi=\id_G$, since $\varphi(G_i)$ is conjugate to $G_i$ for all $\varphi\in{\rm FR}(\mscr{G})$ and $i\in\mc{D}$.
    
    Thus, consider an index $i\in\mc{D}$ and pick a vertex $x\in\G$ realising $\mf{s}(S_i;\G)$. It follows that $\sum_{s\in S_i}d(x,sx)\leq D$, and hence there exists a connected subgraph $A_i^0\sq\G$ with at most $D$ edges containing the set $\{x\}\cup\{sx\mid s\in S_i\}$. Define $A_i$ as the union of all $G_i$--translates of $A_i^0$. Note that $A_i$ is a connected $G_i$--invariant subgraph of $\G$ with at most $D$ $G_i$--orbits of edges. The same holds for the projection $\pi(A_i)\sq T$, which must contain the subtree $\mc{M}_T(G_i)$. This proves part~(3).

    For part~(4), consider an arc $\beta\sq\mc{M}_T(G_i)$ as in the statement. Let $\beta_1,\dots,\beta_s$ be the maximal sub-arcs of $\beta$ all of whose interior vertices have degree $2$ within $\mc{M}_T(G_i)$. Choose an edge $\eps_i$ within each $\beta_i$. Let $E$ be the the union of $\{\eps_1,\dots,\eps_s\}$ with the the set of edges of $\beta\setminus(\beta_1\cup\dots\cup\beta_s)$, also adding $e$ and $f$ to $E$ if necessary. Finally, let $E'$ be the union of $E$ with the set of edges of $\mc{M}_T(G_i)$ that are based at interior vertices of $\beta$ without being contained in $\beta$.

    \smallskip
    {\bf Claim.} \emph{We have $|E'|\leq 6\Gr(G_i)^2$.}

    \smallskip\noindent
    \emph{Proof of claim.}
    First, we show that $|E|\leq 3\Gr(G_i)$. For this, note that no two edges of $\beta$ are in the same $G_i$--orbit, except possibly $e$ and $f$. Consider the graph $\Delta$ obtained from the quotient $\mc{M}_T(G_i)/G_i$ by replacing with a single edge each maximal path all of whose interior vertices have degree $2$ and trivial vertex group. A standard argument shows that $\Delta$ has at most $3\Gr(G_i)-3$ edges. 
    Since the edges in $E\setminus\{e,f\}$ give distinct edges of $\Delta$, we obtain the bound on $|E|$.

    Now, consider any vertex $v$ in the interior of $\beta$. We claim that $v$ has degree $\leq 2\Gr(G_i)$ in $\mc{M}_T(G_i)$. For this, construct a splitting $G_i\acts T'$ by starting with $\mc{M}_T(G_i)$ and collapsing all edges that do not have a $G_i$--translate incident to $v$. Since the $G_i$--stabiliser of $v$ is trivial by hypothesis, the edges of $\mc{M}_T(G_i)$ incident to $v$ project to pairwise distinct edges of $T'/G_i$, possibly except for some pairs of edges projecting to the same loops. The bound on the degree of $v$ then follows from standard bounds on the number of edges of free splittings.
    
    Finally, combining the previous two inequalities, we immediately get $|E'|\leq 6\Gr(G_i)^2$.
    \hfill$\blacksquare$

    \smallskip
    Now, let $\tilde e,\tilde f$ be the unique lifts of $e,f\sq\beta$ to edges of $\G$. Let again $x\in\G$ be the vertex considered in part~(3), and pick elements $g,g'\in G_i$ such that the path $\beta$ separates $\pi(gx)$ from $\pi(g'x)$ in $T$. Write $g'=gs_1\dots s_r$ for elements $s_r\in S_i^{\pm}$ and choose a geodesic $\gamma_i\sq\G$ from $gs_1\dots s_{i-1}x$ to $gs_1\dots s_i x$ for each $i$. The concatenation $\gamma_1\dots\gamma_r$ is a path from $gx$ to $g'x$ in $\G$, and so it contains both $\tilde e$ and $\tilde f$. Each segment $\gamma_i$ has length $\leq D$. Moreover, no segment $\gamma_i$ is entirely contained in the preimage $\pi^{-1}({\rm int}(\beta))$, because the endpoints of $\gamma_i$ are in the same $G_i$--orbit, while no two vertices in ${\rm int}(\beta)$ are, by hypothesis. As a consequence, we obtain a sequence of distinct edges $e_1,\dots,e_u\in E'$ such that $e_1=e$, $e_u=f$, and for each $i$ the lifts to $\G$ of the edges $e_i,e_{i+1}$ are contained in some union $\gamma_j\cup\gamma_{j+1}$. In particular, $d_{\G}(e_i,e_{i+1})<2D$. Combined with the claim, this yields
    \[ d_{\G}(e,f)< 2D|E'|\leq 6\Gr(G_i)^2 D,\]
    completing the proof of part~(4).

    Finally, we prove part~(5). Since $\varphi(G_i)$ and $\varphi(G_j)$ are conjugate to $G_i$ and $G_j$, respectively, we can argue as in part~(3) to find connected subgraphs $A_i,A_j\sq\G$ on which $\varphi(G_i)$ and $\varphi(G_j)$ act with at most $D$ orbits of edges. Any edge of $\mc{M}_T(\varphi(G_i))$ lifts to a unique edge of $\G$, and this lift lies in $A_i$. Similarly, for any vertex $v\in\mc{M}_T(\varphi(G_i))$ with nontrivial $\varphi(G_i)$--stabiliser, the entire (vertex set of the) preimage $\pi^{-1}(v)$ is contained in $A_i$, by part~(2). In conclusion, if the intersection $\mc{M}_T(\varphi(G_i))\cap\mc{M}_T(\varphi(G_j))$ contains either an edge or a vertex with nontrivial $\varphi(G_i)$--stabiliser, we have $A_i\cap A_j\neq\emptyset$.

    Now, pick a vertex $y\in A_i\cap A_j$. Also pick vertices $x_i\in A_i$ and $x_j\in A_j$ realising $\mf{s}(\varphi(S_i);\G)$ and $\mf{s}(\varphi(S_j);\G)$, respectively. Choose elements $g_i\in G_i$ and $g_j\in G_j$ such that $d(\varphi(g_i)x_i,y)\leq D$ and $d(\varphi(g_j)x_j,y)\leq D$. We then have
    \begin{align*}
        \mf{s}(\varphi(g_iS_ig_i^{-1})\cup \varphi(g_jS_jg_j^{-1});\G) &\leq \mf{s}(\varphi(S_i);\G)+2|S_i|d(\varphi(g_i)x_i,y)+\mf{s}(\varphi(S_j);\G)+2|S_j|d(\varphi(g_j)x_j,y) \\
        &\leq 2D +2|S_i|D+2|S_j|D .
    \end{align*}
    Finally, there exists an automorphism $\varphi'\in{\rm FR}^0(\mscr{G})$ such that $\varphi'(S_i)=\varphi(g_iS_ig_i^{-1})$ and $\varphi'(S_j)=\varphi(g_jS_jg_j^{-1})$: indeed $\varphi$ can be replaced by an element of ${\rm FR}^0(\mscr{G})$ without altering $\varphi(S_i)$ and $\varphi(S_j)$, after which we can right-multiply by an element of ${\rm FR}^0(\mscr{G})$ equaling the conjugations by $g_i$ and $g_j$ on $G_i$ and $G_j$, respectively. This concludes the proof of part~(5) and of the entire lemma.
\end{proof}

We now make additional assumptions, to rule out ``easy'' ways of simplifying the triple $\mscr{G}$.

\begin{ass}\label{ass:assumptions}
    In addition to \Cref{label:setup}, suppose that all $\varphi\in{\rm FR}^0(\mscr{G})$ satisfy:
\begin{enumerate}
    \item[(i)] for all $i\in\mc{C}$, we have $\ell_{\G}(\varphi(t_i))>24\Gr(G)^3 D$;
    \item[(ii)] for all $i,j\in\mc{D}$ with $i\neq j$, the intersection $\mc{M}_T(\varphi(G_i))\cap\mc{M}_T(\varphi(G_j))$ is either empty, or a single vertex with trivial $\varphi(G_i)$-- and $\varphi(G_j)$--stabiliser;
    \item[(iii)] we have $\s_T(\varphi(\mscr{G}))\geq\s_T(\mscr{G})$.
\end{enumerate}
\end{ass}

Roughly, Items~(i) and~(ii) can always be ensured by enlarging the constant $D$ and replacing the Grushko triple $\mscr{G}$ by a triple $\mscr{G}'\prec\mscr{G}$. Item~(iii) can then always be ensured by replacing $\mscr{G}$ by $\varphi(\mscr{G})$ for some $\varphi\in{\rm FR}^0(\mscr{G})$. We will explain this in detail in the proof of \Cref{thm:shortening_free_products} below.

\begin{lem}\label{lem:spanning_tree_long}
Under \Cref{ass:assumptions}, let $\Sigma\sq T$ be a $T$--spanning tree for $\mscr{G}$ with exactly $\s_T(\mscr{G})$ edges. Then, the following are satisfied.
\begin{enumerate}
\item For each $i\in\mc{C}\cup\mc{D}$, no two points of $\Sigma$ lie in the same $G_i$--orbit.
\item For each $i\in\mc{C}$, the intersection $\gamma_i:=\Sigma\cap C_i$ is an arc of length exactly $\ell_T(t_i)-1$. For every $j\in\mc{C}\setminus\{i\}$, we have $C_i\cap C_j\sq\gamma_i\cap\gamma_j$. 
\item For all $i\in\mc{C}$ and $j\in\mc{D}$, the $G_j$--stabiliser of each vertex of $\gamma_i$ is trivial.
\item For all $i\in\mc{C}$ and $j\in\mc{D}$, the arc $C_i\cap C_j$ has length $\leq \min\{D+1,\ell_T(t_i)\}$.
\item For each $i\in\mc{D}$, no two vertices in $\mc{V}_i$ lie in the same $G_i$--orbit. Similarly, no two edges in $\mc{E}_i$ lie in the same $G_i$--orbit.
\end{enumerate}
\end{lem}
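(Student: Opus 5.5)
The uniform strategy is a minimality argument. Whenever one of the five properties fails, we will either build a strictly smaller $T$--spanning tree for $\mscr{G}$, contradicting $|\Sigma|=\s_T(\mscr{G})$, or find $\varphi\in{\rm FR}^0(\mscr{G})$ with $\s_T(\varphi(\mscr{G}))<\s_T(\mscr{G})$, contradicting \Cref{ass:assumptions}(iii). Occasionally we will instead contradict (i) or (ii) directly. The basic tool is this: if two points $x,gx\in\Sigma$ with $g\in G_i$ are joined by an arc of $\Sigma$, we replace $G_i$ by its conjugate $g^{-1}G_ig$ (or $t_i$ by $g^{-1}t_ig$). That move is realised by a partial conjugation or transvection in ${\rm FR}^0(\mscr{G})$ which fixes the other factors. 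Its effect on the geometry is to move $C_i$ to $g^{-1}C_i$. We then cut $\Sigma$ along the arc $[x,gx]$ and reattach the piece containing $C_i$ via $g^{-1}$, which yields a spanning tree for the new triple with fewer edges.

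For part~(1), take $x\neq gx$ in $\Sigma$ with $g\in G_i$. Removing an edge of $[x,gx]$ disconnects $\Sigma$ into two subtrees. One of them, say $\Sigma_1$, contains $C_i\cap\Sigma$ (or else $C_i$ sits in the middle, and we handle that case separately). If $\Sigma_1$ contains $gx$, we replace $\Sigma$ by $\Sigma_2\cup g^{-1}\Sigma_1$ and conjugate $G_i$ by $g$. The other factors $G_j$ still meet $\Sigma_2$, or meet $g^{-1}\Sigma_1$ after we also conjugate them; conjugating all factors meeting $\Sigma_1$ simultaneously by $g^{-1}$ is again an element of ${\rm FR}^0(\mscr{G})$ (a product of partial conjugations and transvections). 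The result is connected and has strictly fewer edges, which contradicts (iii).

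Part~(2) follows by trimming. If $\Sigma\cap C_i$ were longer than $\ell_T(t_i)-1$, it would contain two points in the same $\langle t_i\rangle$--orbit, violating~(1). By minimality, length is never wasted. The inclusion $C_i\cap C_j\sq\gamma_i\cap\gamma_j$ should come from exchanging which portions of the axes are used, since any spanning tree can be modified to pass through the overlap without gaining edges.

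For part~(3), suppose a vertex of $\gamma_i$ has nontrivial $G_j$--stabiliser. Then by \Cref{lem:using_free_indecomposability}(2), $G_j$ contains the whole vertex group $K$ and acts transitively on the incident edges. Applying \Cref{lem:using_free_indecomposability}(4) to subarcs of $C_j$, one then bounds $\ell_\G(t_i)$ by a multiple of $\Gr(G)^2D$. Alternatively, a transvection of $t_i$ by an element of $K$ shortens the spanning tree. Either way (i) or (iii) is contradicted.

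Part~(4) bounds the overlap. The bound by $\ell_T(t_i)$ holds because a longer overlap would contain $t_i$--translates of edges, and \Cref{lem:using_free_indecomposability}(3) shows that $C_j$ has few $G_j$--orbits of edges; this contradicts~(1) or~(i). The bound $D+1$ uses that an arc of $C_j$ with trivial stabilisers (from~(3)) whose vertices lie in distinct $G_j$--orbits (from~(1)) has length controlled by the $\leq D$ edge orbits.

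For part~(5), two points of $\mc{V}_i$ (or two edges of $\mc{E}_i$) in the same $G_i$--orbit correspond to two other axes/cores attached to $C_i$ at $G_i$--equivalent places. Conjugating one of the attached factors by that element of $G_i$, via a partial conjugation or transvection, produces a spanning tree that reuses the branch. This shortens $\s_T$ and contradicts~(iii).

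The main obstacle is the bookkeeping in part~(1): checking that the reattached tree still meets every $C_j$, still satisfies the length condition for each $i\in\mc{C}$, and that the conjugation is genuinely in ${\rm FR}^0$ when several factors must be moved together.
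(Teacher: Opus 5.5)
Your overall strategy matches the paper: contradict the minimality of $\Sigma$, or \Cref{ass:assumptions}(iii), using elements of ${\rm FR}^0(\mscr{G})$. Your cut-and-reattach move in part~(1) is also the paper's. But part~(1) has a genuine gap exactly where you say ``bookkeeping''. (As written, your ``basic tool'' is also empty: for $g\in G_i$ we have $g^{-1}G_ig=G_i$ and $g^{-1}C_i=C_i$. What is really used is the identity on $G_i$ and on factors whose $C_j$ meets only the side that stays put, and conjugation by $g^{-1}$ on factors whose $C_j$ meets only the translated side.)

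The problem is an index $j\in\mc{C}\setminus\{i\}$ whose axis $C_j$ contains the cut edge $e$. Nothing excludes this: for loxodromic $g$ with $x$ on its axis one cuts inside $[x,gx]\sq C_i$, and $C_j$ may overlap $C_i$ there. Then $e$ splits $\Sigma\cap C_j$ into two pieces. Neither keeping $t_j$ nor conjugating it by $g^{-1}$ guarantees an arc of the relevant axis of length $\geq\ell_T(t_j)-1$ inside $\Sigma'$, so $\Sigma'$ need not be spanning.

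The paper uses a \emph{transvection} here. Let $A\ni x$ and $B\ni gx$ be the components of $\Sigma\setminus{\rm int}(e)$, and $\Sigma'=A\cup g^{-1}B$. Set $\varphi(t_j)=g^{-1}t_j$, or $\varphi(t_j^{-1})=g^{-1}t_j^{-1}$ depending on orientation. If $a\in A$ and $b\in B$ bound a subarc of $\Sigma\cap C_j$ of length $\ell_T(t_j)-1$ through $e$ with $d(t_ja,b)=1$, then $\varphi(t_j)$ moves $a\in\Sigma'$ to a neighbour of $g^{-1}b\in\Sigma'$. This gives the required arc of its axis. The elliptic case ($g$ fixing $p\in C_i$) also needs its own choice: take $x$ adjacent to $p$ and cut $e=g[p,x]$.

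The remaining parts rely on a second device your sketch never uses. One finds a transvection $\varphi\in{\rm FR}^0(\mscr{G})$ for which the \emph{same} $\Sigma$ is still a $T$--spanning tree of $\varphi(\mscr{G})$. By (iii) it is then minimal, and $\varphi(\mscr{G})$ again satisfies \Cref{ass:assumptions}. Meanwhile some $\varphi(G_k)$ sends a point of $\Sigma$ to a different point of $\Sigma$, contradicting part~(1) for $\varphi(\mscr{G})$. Here is how it enters each part.

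\emph{Part~(2).} The first half is fine. For the inclusion $C_i\cap C_j\sq\gamma_i\cap\gamma_j$, ``modifying $\Sigma$ to pass through the overlap without gaining edges'' contradicts nothing. Instead, suppose an edge $e\sq C_i\cap C_j$ meets $\Sigma$ in one vertex, and orient $t_i,t_j$ so that $e$ follows $\gamma_i$ and $\gamma_j$. Then $t_i$ and $t_j$ send the initial vertices of $\gamma_i$ and $\gamma_j$ to the same endpoint of $e$, so $t_i\mapsto t_j^{-1}t_i$ is such a $\varphi$.

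\emph{Part~(3).} This is the same device with $t_i\mapsto ht_i$, where $h\in G_j$ fixes the vertex and carries the edge of $C_i$ there lying outside $\gamma_i$ onto the one inside. This is your second alternative; the first is unsupported.

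\emph{Part~(4).} You apply (1) and (3) to $C_i\cap C_j$, but these only control $\Sigma$ and $\gamma_i$. For $j\in\mc{D}$ the arc $C_i\cap C_j$ need not lie in $\Sigma$; this is exactly what the later admissible and perfect spanning trees address. The paper first gets $|C_i\cap C_j|\leq\ell_T(t_i)$ from (i). Indeed, edges $e,t_ie\sq C_j$ give $g\in G_j$ with $d_\G(ge,t_ie)\leq D$ by \Cref{lem:using_free_indecomposability}(3), so the transvected element $g^{-1}t_i$ is short in $\G$. It then \emph{slides} $\Sigma$ along $C_i$, via elements of ${\rm FR}^0(\mscr{G})$ that are the identity on $G_i\cup G_j$ and preserve the edge count. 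It stops once the tree contains a subarc of $C_i\cap C_j$ of length $D+1\leq\ell_T(t_i)-1$. Only then do (1) and the bound of $D$ edge orbits clash.

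\emph{Part~(5).} The only nontrivial case is when an attachment point, say $x_2$ with $gx_2=x_3$, is not in $\Sigma$. Then (ii), (2) and (4) force $x_2$ onto a unique axis $C_2$ with $2\in\mc{C}$ and, after orienting, $t_2^{-1}x_2\in\Sigma$. If $x_3\in\Sigma$, the transvection $t_2\mapsto gt_2$ sends $t_2^{-1}x_2$ to another point of $\Sigma$. If $x_3\notin\Sigma$, then symmetrically $t_3^{-1}x_3\in\Sigma$, and $t_2\mapsto t_3^{-1}gt_2$ does the same. Conjugating an attached factor by $g$ does not achieve this. The edge statement reduces to $C_i$ being a point, and then to the elliptic case of~(1).
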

\begin{proof}
The proof of all parts of the lemma is based on roughly the same idea: if they fail, one can construct an automorphism $\varphi\in{\rm FR}^0(\mscr{G})$ violating one of our assumptions (usually Item~(iii) in \Cref{ass:assumptions}). Details vary, so we treat each part separately.

\smallskip
{\bf Part~(1).} Suppose for the sake of contradiction that there exist an index $i\in\mc{C}\sqcup\mc{D}$, an element $g\in G_i$, and a point $x\in\Sigma$ with $gx\in\Sigma\setminus\{x\}$. For simplicity, say that $i=1$.

We first treat the case when $g$ is loxodromic in $T$. Here it is not restrictive to assume that $x,gx\in\Sigma\cap\mc{M}_T(g)$. Note that $[x,gx]\sq C_1$, and let $e$ be the edge of the arc $[x,gx]$ that is incident to $gx$. Let $A,B$ be the connected components of $\Sigma\setminus{\rm int}(e)$ containing, respectively, $x$ and $gx$. Set $\Sigma':=A\cup g^{-1}B$ and note that $\Sigma'$ is a tree with strictly fewer edges than $\Sigma$. We define an automorphism $\varphi\in{\rm FR}^0(\mscr{G})$ as follows:
\begin{enumerate}
\item[(a)] for $j=1$, the automorphism $\varphi$ is the identity on $G_j$;
\item[(b)] if $C_j\cap B=\emptyset$, then $\varphi$ is the identity on $G_j$;
\item[(c)] if $C_j\cap A=\emptyset$, then $\varphi(u)=g^{-1}ug$ for all $u\in G_j$;
\item[(d)] if $e\sq C_j$ and $j\in\mc{D}$, then $\varphi$ is the identity on $G_j$;
\item[(e)] if $e\sq C_j$ and $j\in\mc{C}\setminus\{1\}$, then $\varphi(t_j)=g^{-1}t_j$ if $t_je$ is on the same side of $e$ as $B$, and $\varphi(t_j^{-1})=g^{-1}t_j^{-1}$ otherwise.
\end{enumerate}
We claim that $\Sigma'$ is a $T$--spanning tree for $\varphi(\mscr{G})$ and hence $\s_T(\varphi(\mscr{G}))<\s_T(\mscr{G})$, which will contradict Item~(iii) of \Cref{ass:assumptions}.

We now prove the claim. Recall that the elements $\varphi(t_i)$ with $i\in\mc{C}$ are all loxodromic in $T$ by \Cref{lem:using_free_indecomposability}(1). When the index $j$ falls into Cases~(b)--(d), it is straightforward to check that the intersection $\Sigma'\cap\mc{M}_T(\varphi(G_j))$ is nonempty, and that it has length $\geq\ell_T(\varphi(t_j))-1$ if $j\in\mc{C}$. In Case~(a), it is clear that $\Sigma'\cap C_1\neq\emptyset$ and, if $1\in\mc{C}$, the arc $[x,gx]\setminus e$ is contained in $\Sigma'$ and has length $\geq\ell_T(g)-1\geq\ell_T(t_1)-1$. Finally, we discuss Case~(e), supposing without loss of generality that $\varphi(t_j)=g^{-1}t_j$. Since $\Sigma$ is a $T$--spanning tree for $\mscr{G}$, there exists an arc $\eta\sq\Sigma\cap C_j$ of length $\ell_T(t_j)-1$ and, since $e\sq\Sigma\cap C_j$, we can choose the arc $\eta$ so that $e\sq\eta$. Letting $a,b$ be the endpoints of $\eta$, respectively in $A$ and $B$, we have $d(t_ja,b)=1$. Note that the arc $[a,g^{-1}b]$ is entirely contained in $\Sigma'$, and that the point $\varphi(t_j)a=g^{-1}t_ja$ is adjacent to $g^{-1}b$. Now, since $\varphi(t_j)$ takes a point of $\Sigma'$ to a point at distance $\leq 1$ from $\Sigma'$, it follows that $\Sigma'$ contains an arc of the axis of $\varphi(t_j)$ of length $\geq\ell_T(\varphi(t_j))-1$ (since $\varphi(t_j)$ cannot be elliptic). This proves the claim, and yields the required contradiction when $g$ is loxodromic.

We now treat the case when $g$ is elliptic in $T$. Here we necessarily have $1\in\mc{D}$ and, denoting by $p\in C_1$ the unique fixed point of $g$, there exists an edge $f$ incident to $p$ such that $f\cup gf\sq\Sigma$. We can assume that $x$ is the vertex in $f\setminus\{p\}$ and, setting $e:=gf$, we can then define exactly as above the sets $A,B,\Sigma'$ and the automorphism $\varphi\in{\rm FR}^0(\mscr{G})$. The same claim stands, with the same proof, and it again yields a contradiction. This completes the proof of part~(1).

\smallskip
{\bf Part~(2).} Consider $i\in\mc{C}$ and suppose again for simplicity that $i=1$. The arc $\gamma_1:=\Sigma\cap C_1$ has length at least $\ell_T(t_1)-1$ because $\Sigma$ is a $T$--spanning tree for $\mscr{G}$, and it also has length at most this quantity by part~(1). Hence the length of $\gamma_1$ is exactly $\ell_T(t_1)-1$. Considering $j\in\mc{C}$ such that $C_1\cap C_j\neq\emptyset$, we are left to show that $C_1\cap C_j\sq\Sigma$. 

Note that $C_1\cap C_j$ intersects $\Sigma$ by Helly's lemma, since $\Sigma$ intersects both $C_1$ and $C_j$ by definition. Supposing for the sake of contradiction that the arc $C_1\cap C_j$ is not contained in $\Sigma$, there exists an edge $e\sq C_1\cap C_j$ sharing a single vertex with $\Sigma$; this vertex is also the only point of intersection of $e$ with the arcs $\gamma_1$ and $\gamma_j$. Up to replacing $t_1$ and $t_j$ with their inverses, we can assume that $e$ comes after the arcs $\gamma_1$ and $\gamma_j$ along the lines $C_1$ and $C_j$ in the direction of translation of $t_1$ and $t_j$. Letting $e_1\sq\gamma_1$ and $e_j\sq\gamma_j$ be the edges that are farthest from $e$, we have $t_1e_1=e=t_je_j$. Thus, the element $t_j^{-1}t_1$ takes $e_1$ to $e_j$, and hence either $t_j^{-1}t_1$ is elliptic or the union $\gamma_1\cup\gamma_j\sq\Sigma$ contains an arc of $\mc{M}_T(t_j^{-1}t_1)$ of length $\ell_T(t_j^{-1}t_1)$. 

Letting $\varphi\in{\rm FR}^0(\mscr{G})$ be the automorphism that satisfies $\varphi(t_1)=t_j^{-1}t_1$ and equals the identity on $\bigcup_{i\neq 1}G_i$, it follows that $\varphi(t_1)$ is not elliptic (because of \Cref{lem:using_free_indecomposability}(1)), and therefore $\Sigma$ is a $T$--spanning tree for $\varphi(\mscr{G})$. This contradicts part~(1) of the lemma applied to the Grushko triple $\varphi(\mscr{G})$ (which still satisfies \Cref{ass:assumptions}), since the two edges $e_1,e_j\sq\Sigma$ are in the same $\varphi_1(G_1)$--orbit. This proves part~(2).

\smallskip
{\bf Part~(3).} Consider $i\in\mc{C}$, $j\in\mc{D}$, and a point $x\in\gamma_i$. Say for simplicity that $i=1$ and $j=2$. Let $e,f$ be the two edges of $C_1$ incident to $x$. Suppose for the sake of contradiction that the $G_2$--stabiliser of $x$ is nontrivial. Then $e$ and $f$ are in the same $G_2$--orbit, by \Cref{lem:using_free_indecomposability}(2); pick an element $g\in G_2$ with $ge=f$. By part~(1), the edges $e$ and $f$ do not both lie in $\Sigma$. At the same time, since $x\in\gamma_1$, at least one among $e$ and $f$ does lie in $\Sigma$ (note that we cannot have $\gamma_1=\{x\}$, as otherwise we would have $\ell_T(t_1)=1$ and either $gt_1$ or $g^{-1}t_1$ would give an edge inversion). 

Thus, we can suppose without loss of generality that $e\not\sq\gamma_1$ and $f\sq\gamma_1$, and that $e$ follows $f$ along $C_1$ in the direction of translation of $t_1$. Consider the automorphism $\varphi\in{\rm FR}^0(\mscr{G})$ mapping $t_1\mapsto g^{-1}t_1$ and equaling the identity on all $G_j\in\mc{G}$ with $j\neq 1$. The element $\varphi(t_1)$ maps a vertex of $\gamma_1$ (its initial endpoint) to another vertex of $\gamma_1$ (the one preceding the terminal endpoint $x$). Thus, $\Sigma$ is still a $T$--spanning tree for $\varphi(\mscr{G})$, and therefore $\varphi(\mscr{G})$ still satisfies \Cref{ass:assumptions}. The fact that two points of $\gamma_1=C_1\cap\Sigma$ are in the same $\varphi(G_1)$--orbit now violates part~(1), providing the required contradiction and proving part~(3).

\smallskip
{\bf Part~(4).} We begin by stating the following claim, which allows us to slide $T$--spanning trees along axes, and which will also be needed in the proof of part~(5) below. As usual, for simplicity, we consider the index $i=1$ and suppose that $1\in\mc{C}$. For each index $j$, we denote by $\Pi_j$ the projection of $C_j$ to $C_1$ (this is simply the intersection $C_1\cap C_j$ if the latter is nonempty). 

\smallskip
{\bf Claim.} \emph{Let $e$ be the initial edge of the arc $\gamma_1=\Sigma\cap C_1$ (orienting $\gamma_1$ in the direction of translation of $t_1$). There exist $\varphi\in{\rm FR}^0(\mscr{G})$ and a $T$--spanning tree $\Sigma'$ for $\varphi(\mscr{G})$ such that:
\begin{itemize}
    \item $\Sigma$ and $\Sigma'$ have the same number of edges;
    \item $\Sigma'\cap C_1=\overline{(\gamma_1\setminus e)}\cup f$, where $f$ is the (unique) edge whose interior separates $\gamma_1$ from $t_1e$;
    \item for each index $j$, the automorphism $\varphi$ is equal to the identity on $G_j$ except when $\Pi_j\cap\gamma_1$ equals the initial endpoint of $\gamma_1$, and except when we have $j\in\mc{C}\setminus\{1\}$ and $e\sq C_j$.
\end{itemize}
}

We omit the proof of the claim since it is analogous to the proof of the claim used for part~(1): if $A,B$ are the connected components of $\Sigma\setminus{\rm int}(e)$ containing, respectively, the initial endpoint of $\gamma_1$ and the entirety of $\gamma_1\setminus e$, then one has to consider $\Sigma'=t_1A\cup B\cup f$.

Now, suppose for the sake of contradiction that, for some index $j\in\mc{D}$, the arc $C_1\cap C_j$ has length $|C_1\cap C_j|>\min\{D+1,\ell_T(t_1)\}$. If we had $|C_1\cap C_j|\geq \ell_T(t_1)+1$, then there would exist an edge $e\sq C_j$ such that $t_1e\sq C_j$. By \Cref{lem:using_free_indecomposability}(3), there would exist an element $g\in G_j$ with $d_{\G}(ge,t_1e)\leq D$, and hence we would have $\ell_{\G}(g^{-1}t_1)\leq D$. This would violate Item~(i) of \Cref{ass:assumptions}, since there exists an element of ${\rm FR}^0(\mscr{G})$ taking $t_1$ to $g^{-1}t_1$.

Thus, we must have $D+2\leq|C_1\cap C_j|\leq\ell_T(t_1)$. Hence there exists an arc $\eta\sq C_1\cap C_j$ of length $D+1\leq \ell_T(t_1)-1$. Now, by a repeated application of the above claim, we can find an automorphism $\varphi\in{\rm FR}^0(\mscr{G})$ that equals the identity on $G_1\cup G_j$ and such that $\varphi(\mscr{G})$ admits a $T$--spanning tree $\Sigma'$ with the same number of edges as $\Sigma$ and with $\eta\sq\Sigma'$. Note that $\varphi(\mscr{G})$ still satisfies \Cref{ass:assumptions} 
and so, by part~(1), no two edges of $\Sigma'$ can be in the same $G_j$--orbit. At the same time, the fact that $\eta$ has length $D+1$ and \Cref{lem:using_free_indecomposability}(3) imply that two edges of $\eta\sq\Sigma'$ actually are in the same $G_j$--orbit. This is the required contradiction, proving part~(4).

\smallskip
{\bf Part~(5).} Say as usual that $i=1$ and $1\in\mc{D}$. We first show that no two vertices in $\mc{V}_1$ lie in the same $G_1$--orbit. Thus, suppose for the sake of contradiction that there exist edges $f_2,f_3\in\mc{E}_1$ based at distinct vertices $x_2,x_3\in C_1$, respectively, and that $gx_2=x_3$ for some $g\in G_1$.

Part~(1) shows that $x_2$ and $x_3$ cannot both lie in $\Sigma$; say without loss of generality that $x_2\not\in\Sigma$. This implies that $x_2$ does not lie in the projection to $C_1$ of any set $C_j$ such that $C_j\cap C_1$ has at most one point. In view of Item~(ii) of \Cref{ass:assumptions}, this means that $x_2$ lies in $C_1\cap C_j$ for at least one index $j\in\mc{C}$, and for no index in $\mc{D}\setminus\{1\}$. In fact, part~(2) shows that $x_2$ lies in $C_j$ for a \emph{unique} index $j\in\mc{C}$. Up to re-indexing, we can assume that $j=2$. The edge $f_2$ is then contained in the line $C_2$. We orient the line $C_2$ in the direction of translation of $t_2$ and, up to replacing $t_2$ with its inverse, we can assume that $f_2$ comes after the arc $C_1\cap C_2$ along $C_2$.

Observe that we have $t_2^{-1}x_2\in\Sigma$. Indeed, if this were to fail, we would have $[t_2^{-1}x_2,x_2]\cap\Sigma=\emptyset$, since $\Sigma\cap C_2$ has length $\ell_T(t_2)-1$ and $x_2\not\in\Sigma$. At the same time, we must have $\Sigma\cap C_1\cap C_2\neq\emptyset$ by Helly's lemma, and so the intersection $C_1\cap C_2$ would have to contain at least one edge outside the arc $[t_2^{-1}x_2,x_2]$. However, this would imply that the arc $C_1\cap C_2$ has length $\geq \ell_T(t_2)+1$, contradicting part~(4).

Now, if $x_3\in\Sigma$, we can consider the automorphism $\varphi\in{\rm FR}^0(\mscr{G})$ taking $t_2$ to $gt_2$ and equaling the identity on all $G_j$ with $j\neq 2$. We have that $\varphi(t_2)$ takes the point $t_2^{-1}x_2\in\Sigma$ to the point $x_3\in\Sigma$. These two points are distinct, because $\varphi(t_2)$ cannot be elliptic in $T$ by \Cref{lem:using_free_indecomposability}(1). Thus $\Sigma$ is a $T$--spanning tree for $\varphi(\mscr{G})$, which implies that $\varphi(\mscr{G})$ satisfies \Cref{ass:assumptions}, and finally violates part~(1) since two distinct vertices of $\Sigma$ lie in the same $\varphi(G_2)$--orbit.

Thus, we must have $x_3\not\in\Sigma$. Exactly as above, this implies that $x_3$ lies in $C_j$ for a unique index $j\in\mc{C}\cup\mc{D}$, and that we have $j\in\mc{C}$ and $f_3\sq C_j$. Note that we have $j\neq 2$, since otherwise we would have $d(x_2,x_3)\leq \ell_T(t_2)$ by part~(4), and this would contradict the fact that $\Sigma\cap\{x_2,x_3\}=\emptyset$ while $\Sigma\cap C_2$ has length $\ell_T(t_2)-1$. Thus, we can suppose that $j=3$ up to reindexing, and that the arc $C_1\cap C_3$ precedes the edge $f_3$ along $C_3$, in the direction of translation of $t_3$. Again as above, we see that $t_3^{-1}x_3\in\Sigma$. We can then consider the automorphism $\varphi\in{\rm FR}^0(\mscr{G})$ taking $t_2$ to $t_3^{-1}gt_2$ and equaling the identity on all $G_j$ with $j\neq 2$. As in the previous paragraph, this yields a contradiction, since $\varphi(t_2)$ takes the point $t_2^{-1}x_2\in\Sigma$ to the point $t_3^{-1}x_3\in\Sigma$. This the final contradiction, proving that no two vertices in $\mc{V}_1$ can lie in the same $G_1$--orbit.

We are left to show that no two edges in $\mc{E}_1$ lie in the same $G_1$--orbit. Suppose again for the sake of contradiction that there exist distinct edges $f_2,f_3\in\mc{E}_1$ and an element $g\in G_1$ with $gf_2=f_3$. By the previous discussion, $f_2$ and $f_3$ must be based at the same vertex of $C_1$. Call $x$ this vertex and note that $gx=x$. \Cref{lem:using_free_indecomposability}(2) then implies that $G_1$ contains the $G$--stabiliser of $x$, and that all edges of $T$ incident to $x$ are in the same $G_1$--orbit. Since $f_2\not\sq C_1$ by definition, the latter implies that $C_1=\{x\}$ and that $G_1$ equals the $G$--stabiliser of $x$. In view of Item~(ii) of \Cref{ass:assumptions},
$C_1$ is disjoint from all sets $C_j$ with $j\in\mc{D}\setminus\{1\}$. 
We can then once more construct an automorphism $\varphi\in{\rm FR}^0(\mscr{G})$ such that $\s_T(\varphi(\mscr{G}))<\s_T(\mscr{G})$, arguing as in the last paragraph of the proof of part~(1) (the elliptic case). This contradicts Item~(iii) of \Cref{ass:assumptions}, proving part~(5). 

This concludes the proof of the entire lemma.
\end{proof}

\begin{rmk}
    \Cref{lem:spanning_tree_long}(5) implies that, whenever $C_i$ is a single point, the set $\mc{E}_i$ consists of a single edge. Also note that, if $i\in\mc{D}$ and $C_i$ is not a single point, then the $G_i$--stabiliser of each point $x\in\mc{V}_i$ is trivial (otherwise the entire $1$--neighbourhood of $x$ would be contained in $C_i$, by \Cref{lem:using_free_indecomposability}(2), contradicting the fact that $x\in\mc{V}_i$).
\end{rmk}

Now comes the most technical part of the proof of \Cref{thm:shortening_free_products}. The main difficulty is that, for $i\in\mc{C}$ and $j\in\mc{D}$, the arc $\gamma_i=C_i\cap\Sigma$ does not always contain the entire projection of $C_j$ to $C_i$ (unlike when $j\in\mc{C}$). Our goal will be to find spanning trees with the following properties.

\begin{defn}\label{defn:perfect+admissible}
    Let $\Sigma\sq T$ be a $T$--spanning tree for $\mscr{G}$ with exactly $\s_T(\mscr{G})$ edges.
    \begin{itemize}
        \item We say that $\Sigma$ is \emph{perfect} if, for all $i\in\mc{C}$ and $j\in\mc{D}$, the intersection $\gamma_i=\Sigma\cap C_i$ 
        contains the entire projection of $C_j$ to $C_i$.
        \item We say that $\Sigma$ is \emph{admissible} if, for all $i\in\mc{C}$, there exists an arc $\gamma_i^*$ such that $\gamma_i\sq\gamma_i^*\sq C_i$ and such that all of the following hold:
        \begin{itemize}
            \item $\gamma_i^*$ contains the projection of $C_j$ to $C_i$ for all $j\in\mc{D}$;
            \item if $\gamma_i^*\neq\gamma_i$, the difference $\gamma_i^*\setminus\gamma_i$ consists of a single (half open) edge;
            \item if $\gamma_i^*\neq\gamma_i$, there exists an index $k(i)\in\mc{D}$ such that $C_{k(i)}$ contains the edge $\gamma_i^*\setminus\gamma_i$, and the projection of $C_j$ to $C_i$ is actually contained in $\gamma_i$ for all $j\in\mc{D}\setminus\{k(i)\}$. 
        \end{itemize}
    \end{itemize}
\end{defn}

We construct admissible spanning trees in the next result, and perfect ones in \Cref{lem:small_gamma} below. It is convenient to introduce now the following notion, which is used in several proofs.

\begin{defn}\label{defn:boring}
    An arc $\beta\sq T$ is \emph{boring} if, for each pair of indices $i,j\in\mc{C}\cup\mc{D}$, the open arc ${\rm int}(\beta)$ is either disjoint from, or entirely contained in, the projection of $C_i$ to $C_j$.
\end{defn}

Note that we allow $i=j$ in the previous definition, and so boring arcs $\beta$ have the property that, for each index $i\in\mc{C}\cup\mc{D}$, either $\beta\sq C_i$ or ${\rm int}(\beta)\cap C_i=\emptyset$. 

\begin{lem}\label{lem:smallish_gamma}
    Suppose that \Cref{ass:assumptions} holds. Up to replacing $\mscr{G}$ with $\psi(\mscr{G})$ for some $\psi\in{\rm FR}^0(\mscr{G})$ with $\s_T(\psi(\mscr{G}))=\s_T(\mscr{G})$, there exists an admissible $T$--spanning tree for $\mscr{G}$.
\end{lem}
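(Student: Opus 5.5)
We first analyse, for a fixed minimal $T$--spanning tree $\Sigma$ and an index $i\in\mc{C}$, how the projections $\Pi_j$ of the sets $C_j$ with $j\in\mc{D}$ sit on the line $C_i$ relative to $\gamma_i=\Sigma\cap C_i$. By Helly's lemma each $\Pi_j$ meets $\Sigma$, since $\Sigma$ meets $C_j$ and $C_i$. So if $\Pi_j\not\sq\gamma_i$, then $\Pi_j$ sticks out of $\gamma_i$ through one of its endpoints. Two facts should then be shown. First, at most one index $j\in\mc{D}$ can stick out at each end. Second, it sticks out by at most one edge, after sliding. For the first fact, suppose $C_j$ and $C_{j'}$ with $j\neq j'\in\mc{D}$ both contain an edge $e\sq C_i\setminus\gamma_i$ adjacent to $\gamma_i$. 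Then $e\sq C_j\cap C_{j'}$, which contradicts Item~(ii) of \Cref{ass:assumptions}. If instead the two projections leave through the same endpoint only at a vertex, they are nested along $C_i$, and we compare them using \Cref{lem:spanning_tree_long}(5) and (3). Sticking out at both ends should be excluded using \Cref{lem:spanning_tree_long}(4) together with $|\gamma_i|=\ell_T(t_i)-1$. In that case the two outside edges $e,t_ie$ would lie in the $C_j$'s, and an element of $G_j$ or $G_{j'}$ would move $e$ close to $t_ie$. By \Cref{lem:using_free_indecomposability}(3) this produces a short element $g^{-1}t_i$, violating Item~(i).

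The main tool is the sliding claim from the proof of \Cref{lem:spanning_tree_long}(4). For each $i\in\mc{C}$ it replaces $\Sigma$ by a spanning tree $\Sigma'$ for $\varphi(\mscr{G})$, with $\varphi\in{\rm FR}^0(\mscr{G})$, having the same number of edges, and shifts $\gamma_i$ by one edge along $C_i$. Since $\s_T$ cannot drop by Item~(iii), $\varphi(\mscr{G})$ still satisfies \Cref{ass:assumptions} and $\s_T$ is unchanged. The plan is as follows. For each $i\in\mc{C}$ we slide $\gamma_i$ in the direction needed to absorb the projection $\Pi_j$ that sticks out. By \Cref{lem:spanning_tree_long}(4), $|\Pi_j|\leq\ell_T(t_i)$, while $|\gamma_i|=\ell_T(t_i)-1$. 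After sliding, either $\Pi_j\sq\gamma_i$, or $\Pi_j$ exceeds $\gamma_i$ by exactly one edge. That edge is then the half-open edge $\gamma_i^*\setminus\gamma_i$, and we set $k(i)=j$. We must also check that this sliding does not push another projection $\Pi_{j'}$ out of the other end. This is where the uniqueness from the first paragraph and the bound $|\Pi_j|\leq\ell_T(t_i)$ are used: all projections together must fit in an arc of length $\ell_T(t_i)$, since otherwise two of their edges would be $t_i$--translates and we would again obtain a short element.

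To handle all $i\in\mc{C}$ simultaneously, we perform the slidings one index at a time. By \Cref{lem:spanning_tree_long}(2), the arcs $C_i\cap C_{i'}$ with $i,i'\in\mc{C}$ lie in $\gamma_i\cap\gamma_{i'}$. We therefore arrange, using the third bullet of the sliding claim, that the automorphism used for $i$ is the identity on all $G_{i'}$ already treated, and that $\gamma_{i'}$ is unchanged. This choice is available if we slide $\gamma_i$ away from the sets $C_{i'}$, and this has to be verified; it is the main obstacle. The danger is that sliding along $C_i$ alters the conjugates $\varphi(t_{i'})$ whose axes contain the initial edge $e$. Such an axis passes through $e$, and the resulting change of $\gamma_{i'}$ could undo earlier work. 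I would try to show that this cannot happen: $e$ lies in $C_i\cap C_{i'}\sq\gamma_{i'}$, so it lies inside the region already accounted for and sliding towards $\Pi_j$ avoids it. Failing that, I would induct on a potential, namely the total number of edges by which projections stick out of the arcs $\gamma_i$, and show that each slide strictly decreases it. The final $\psi$ is the composition of the sliding automorphisms, and the resulting $\Sigma$ is admissible.
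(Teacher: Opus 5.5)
Your treatment of a single index $i\in\mc{C}$ is workable. Sliding edge by edge until $\Pi_{k(i)}$ sticks out by at most one edge is a reasonable alternative to the paper's single cut at a boring arc that is long in $\G$, which is where the paper uses Item~(i). Two of your justifications are wrong, though. Take $j\neq j'\in\mc{D}$ with $e_i\sq C_j$ and $t_ie_i\sq C_{j'}$, where $e_i,t_ie_i$ are the edges of $C_i$ adjacent to $\gamma_i$. No element of $G_j$ or $G_{j'}$ moves $e_i$ near $t_ie_i$, so no short element $g^{-1}t_i$ appears. The same objection applies to your claim that the projections ``fit in an arc of length $\ell_T(t_i)$''.

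The correct tool is Item~(ii) of \Cref{ass:assumptions}, applied to the partial conjugation in ${\rm FR}^0(\mscr{G})$ that conjugates $G_{j'}$ by $t_i^{-1}$ and fixes $G_j$: then $C_j\cap t_i^{-1}C_{j'}$ would contain an edge. Combined with \Cref{lem:spanning_tree_long}(4), this gives a single index $k(i)$ overall, sticking out at one end only. It also shows that if $\Pi_{k(i)}$ sticks out by $m\geq 2$ edges, then neither $e_i$ nor the first $m-1$ edges of $\gamma_i$ lie in any $C_j$ with $j\in\mc{D}$. That is what makes each forward slide safe for index $i$ itself.

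The genuine gap is the interaction between different indices of $\mc{C}$. You flag it as the main obstacle but do not resolve it, and the resolution you sketch is wrong. The slide direction is forced (towards $\Pi_{k(i)}$), and the slide deletes the initial edge $e$ of $\gamma_i$. Nothing prevents $e\sq C_i\cap C_{i'}$ for an already treated $i'\in\mc{C}$: \Cref{lem:spanning_tree_long}(2) places that arc inside $\gamma_i\cap\gamma_{i'}$, but does not keep it away from the ends of $\gamma_i$. In that case the slide must transvect $t_{i'}$, for example $t_{i'}\mapsto t_{i'}t_i^{-1}$, which changes the line $C_{i'}$ and the arc $\gamma_{i'}$. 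So ``sliding towards $\Pi_j$ avoids it'' is backwards, since sliding forward removes exactly $e$. Moreover, every $G_{i'}$ or $G_j$ whose projection meets $\gamma_i$ only in its initial vertex is conjugated by $t_i$, and this alters the projections onto lines $C_{i'}$ that were left fixed. Hence ``identity on the previously treated $G_{i'}$'' is not available, and the decreasing potential you fall back on is only asserted.

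What is needed, and what the paper does, is to take as complexity the number of bad indices $i\in\mc{C}$, namely those with $C_{k(i)}\cap C_i\not\sq\gamma_i\cup t_ie_i$. One then checks that the move repairing one bad index preserves the good/bad status of every other $j\in\mc{C}$. Write $A\ni a$ and $B$ for the two components of $\Sigma$ cut along the arc one moves across. The key point is that the new generator $t_jt_1^{-1}$ maps the oriented edge $t_1e_j$ onto $t_je_j$, and these orientations are compatible because $C_1\cap C_j\sq\gamma_1\cap\gamma_j$. Consequently both ends of $\gamma_j$, and the projections near them, are transported coherently: what lies on the $A$--side is conjugated by $t_1$ as a block, and what lies on the $B$--side is fixed. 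Until you carry out this verification, your one-index-at-a-time induction does not close.
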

\begin{proof}
    Consider a $T$--spanning tree $\Sigma$ for $\mscr{G}$ with exactly $\s_T(\mscr{G})$ edges, and set as usual $\gamma_i:=\Sigma\cap C_i$ for all $i\in\mc{C}$. Note that there exists an edge $e_i\sq C_i$ such that $e_i$ and $t_ie_i$ are precisely the two edges of $C_i$ that share a single vertex with $\gamma_i$. For every $j\in\mc{C}\setminus\{i\}$, the projection of $C_j$ to $C_i$ is contained in $\gamma_i$ by \Cref{lem:spanning_tree_long}(2). If $j\in\mc{D}$, then the projection of $C_j$ to $C_i$ must intersect $\gamma_i$ (by the definition of spanning tree). Finally, by Item~(ii) in \Cref{ass:assumptions}, there exists at most one index $k(i)\in\mc{D}$ such that $C_j$ contains one among $e_i$ and $t_ie_i$. 
    Up to inverting $t_i$, it is not restrictive to assume in what follows that, whenever $k(i)$ is defined, it is the edge $t_ie_i$ to be contained in $C_{k(i)}$. 
    
    Summing up, for every $j\in(\mc{C}\cup\mc{D})\setminus\{i,k(i)\}$, the projection of $C_j$ to $C_i$ is contained in $\gamma_i$. Moreover, we have $t_ie_i\sq (C_i\cap C_{k(i)})\setminus\gamma_i$.
    
    In order to prove the lemma, we are left to apply an element of ${\rm FR}^0(\mscr{G})$ so as to ensure that, whenever $k(i)$ is defined, we have $C_{k(i)}\cap C_i\sq\gamma_i\cup t_ie_i$ (without increasing $\s_T(\mscr{G})$). For this, we define the complexity $\mf{k}(\Sigma;\mscr{G})\geq 0$ as the number of indices $i\in\mc{C}$ such that $k(i)$ is defined and $C_{k(i)}\cap C_i\not\sq\gamma_i\cup t_ie_i$. We then set $\mf{k}(\mscr{G})$ as the minimum of the integers $\mf{k}(\Sigma;\mscr{G})$ as $\Sigma$ varies among $T$--spanning trees for $\mscr{G}$ with exactly $\s_T(\mscr{G})$ edges. We can assume that there does not exist any $\psi\in{\rm FR}^0(\mscr{G})$ with $\s_T(\psi(\mscr{G}))=\s_T(\mscr{G})$ and $\mf{k}(\psi(\mscr{G}))<\mf{k}(\mscr{G})$. Our goal is then to show that $\mf{k}(\mscr{G})=0$.

    Suppose for the sake of contradiction that $\mf{k}(\mscr{G})>0$. Fix a $T$--spanning tree $\Sigma$ realising $\mf{k}(\mscr{G})$, and define $\gamma_i,e_i$ as above. Consider an index $i\in\mc{C}$ such that $k(i)$ is defined and $C_{k(i)}\cap C_i\not\sq\gamma_i\cup t_ie_i$. Say for simplicity that $i=1$. For each $j$, denote by $\Pi_j$ the projection of $C_j$ to $C_1$. We also orient each line $C_i$ with $i\in\mc{C}$ in the direction of translation of $t_i$. We complete the proof by distinguishing three cases requiring slightly different arguments.

    \smallskip
    {\bf Case~1.} \emph{There exists a boring arc $\beta\sq\gamma_1$ (\Cref{defn:boring}) such that, denoting by $f$ and $f'$ the first and last edges of $\beta$, we have $d_{\G}(f,f')\geq 6\Gr(G)^2 D$.}

    \noindent
    Note that we cannot have any inclusions $\beta\sq\Pi_j$ with $j\in\mc{D}$. Indeed, parts~(1) and~(3) of \Cref{lem:spanning_tree_long} imply that the interior vertices of $\beta$ have trivial $G_j$--stabiliser and lie in pairwise distinct $G_j$--orbits.
    Thus, the inclusion $\beta\sq C_j$ would contradict \Cref{lem:using_free_indecomposability}(4).
    
    Now, write $\gamma_1=[a,b]$ with $a$ preceding $b$ along $C_1$. Note that $\beta$ is contained in $\Sigma$ and does not contain any branch points of the finite tree $\Sigma$. Thus, we can write $\Sigma=A\cup\beta\cup B$, where $A$ and $B$ are the connected components of $\Sigma\setminus{\rm int}(\beta)$ containing $a$ and $b$, respectively. Set $\Sigma':=B\cup t_1e_1\cup t_1 A\cup t_1\overline{(\beta\setminus f')}$ 
    and observe that $\Sigma'$ is connected and has the same number of edges as $\Sigma$. Define $\psi\in{\rm FR}^0(\mscr{G})$ as follows:
    \begin{enumerate}
        \item[(a)] $\psi$ is the identity on $G_1$;
        \item[(b)] if $\Pi_j\sq B$, then $\psi$ is the identity on $G_j$;
        \item[(c)] if $\Pi_j\sq A$, then $\psi(u)=t_1ut_1^{-1}$ for all $u\in G_j$;
        \item[(d)] if $\beta\sq\Pi_j$ (which can only happen for $j\in\mc{C}$) and $j\neq 1$, then $\psi(t_j)=t_jt_1^{-1}$ if $t_1$ and $t_j$ translate in the same direction along $\beta$, and $\psi(t_j^{-1})=t_j^{-1}t_1^{-1}$ otherwise.
    \end{enumerate}
    It is straightforward to check that $\Sigma'$ is a $T$--spanning tree for $\psi(\mscr{G})$ (in fact, one can view this as an iterated application of the claim used in the proof of \Cref{lem:spanning_tree_long}(4)). Thus, we have $\s_T(\psi(\mscr{G}))=\s_T(\mscr{G})$. It is also straightforward to check that indices $j\in\mc{C}\setminus\{1\}$ give a nonzero contribution to the complexity $\mf{k}(\psi(\mscr{G}))$ if and only if they do to the complexity $\mf{k}(\mscr{G})$: the most important observation is that, if $\psi(t_j)=t_jt_1^{-1}$, then $\psi(t_j)$ takes the oriented edge $t_1e_j$ to the oriented edge $t_je_j$, and we have $t_1e_j\cap t_1A\neq\emptyset$ and $t_je_j\cap B\neq\emptyset$; since we have $C_1\cap C_j\sq\gamma_1\cap\gamma_j$ by \Cref{lem:spanning_tree_long}(2), the oriented edges $t_1e_j$ and $t_je_j$ have compatible orientations, and so they lie on the axis of $\varphi(t_j)$.

    Finally, the index $i=1$ contributes to the complexity $\mf{k}(\mscr{G})$, but not to the complexity $\mf{k}(\psi(\mscr{G}))$: namely, the projection of $\mc{M}_T(\varphi(G_j))$ to $C_1$ is contained in $\Sigma'\cap C_1$ for all $j\in(\mc{C}\cup\mc{D})\setminus\{1\}$. For $j=k(1)$, this uses the fact that $t_1\beta\not\sq C_{k(1)}$ because of \Cref{lem:using_free_indecomposability}(4). In conclusion, we obtain $\mf{k}(\psi(\mscr{G}))<\mf{k}(\mscr{G})$, which is the required contradiction in Case~1.

    \smallskip
    {\bf Case~2.} \emph{There exist two consecutive edges $h,h'\sq C_1$ such that $d_{\G}(h,h')\geq 6\Gr(G)^2 D$.}

    \noindent
    Let $v$ be the shared vertex of $h,h'$. By a repeated application of the claim in the proof of \Cref{lem:spanning_tree_long}(4), we can apply an element of ${\rm FR}^0(\mscr{G})$ to slide $\Sigma$ along $C_1$ until $\gamma_1$ starts with the vertex $v$. As in Case~1, this does not alter which indices $i\in\mc{C}\setminus\{1\}$ contribute to the complexity $\mf{k}(\mscr{G})$. At the same time, if $\gamma_1$ starts with $v$, this means that $C_1\cap C_{k(1)}$ is contained in $\gamma_1\cup t_1e_1$, as otherwise $C_{k(1)}$ would contain a $G_1$--translate of $h\cup h'$, violating \Cref{lem:using_free_indecomposability}(4). Again, this violates minimality of $\mf{k}(\mscr{G})$.

    \smallskip
    {\bf Case~3.} \emph{There exists no arc $\beta$ as in Case~1 and no pair of edges $h,h'$ as in Case~2.}

    \noindent
    Consider a subset $\Delta\sq\gamma_1$ of cardinality $\leq 2(|\mc{G}|-1)$ that contains each $\Pi_j\cap\gamma_1$ that is a singleton, as well as the endpoints of each $\Pi_j\cap\gamma_1$ that is a nontrivial arc. The complement $\gamma_1\setminus\Delta$ consists of at most $2|\mc{G}|-1$ open arcs. 
    
    Let $\delta$ be one of the maximal open arcs in $\gamma_1\setminus\Delta$. For each $j\in(\mc{C}\cup\mc{D})\setminus\{1\}$, the open arc $\delta$ is either disjoint from or contained in the projection $\Pi_j$. Thus, if $d,d'$ are the first and last edge of $\delta$, we must have $d_{\G}(d,d')<6\Gr(G)^2 D$, as otherwise we would fall into Case~1.

    At the same time, we have $d_{\G}(h,h')\leq 6\Gr(G)^2 D$ for any pair of consecutive edges $h,h'$ of the arc $e_1\cup\gamma_1\cup t_1e_1$, otherwise we would fall into Case~2. As a consequence, we obtain
    \[ \ell_{\G}(t_1)=d_{\G}(e_1,t_1e_1)< 6\Gr(G)^2 D\cdot \big(1 + 2(|\mc{G}|-1) + (2|\mc{G}|-1) + 1\big) \leq 24\Gr(G)^3 D . \]
    This contradicts Item~(i) of \Cref{ass:assumptions}, concluding the proof of the lemma.
\end{proof}

The next lemma is the main step in upgrading admissible spanning trees to perfect ones.

\begin{lem}\label{lem:no_barriers}
    Under \Cref{ass:assumptions}, suppose that there exists an admissible $T$--spanning tree $\Sigma$ for $\mscr{G}$. Then, for every $i\in\mc{C}$, every vertex of $C_i$ has trivial $G$--stabiliser.
\end{lem}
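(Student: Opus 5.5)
We argue by contradiction: suppose that for some $i\in\mc{C}$, say $i=1$, there is a vertex $v\in C_1$ with nontrivial $G$--stabiliser $K$. Then $K$ is one of the freely indecomposable vertex groups $K_r$ of $T$, and it acts transitively on the edges incident to $v$. The plan has two parts. First we show that $v$ lies in no $C_j$ with $j\in\mc{D}$. Then we use $K$ to build an element of ${\rm FR}^0(\mscr{G})$ that contradicts \Cref{ass:assumptions}, most likely Item~(iii) via \Cref{lem:spanning_tree_long}(1).

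\emph{Step 1: locating $v$.} Let $e,f$ be the two edges of $C_1$ at $v$, and choose $k\in K$ with $ke=f$. If $v\in\gamma_1=\Sigma\cap C_1$, we argue as in the proof of \Cref{lem:spanning_tree_long}(3), but with $k$ in place of an element of $G_j$. The point $v$ cannot be an interior vertex of $\gamma_1$, since then $e,f\sq\Sigma$ would be in the same $K$--orbit. The trouble is that $K$ need not lie in any $G_j$ with $j\in\mc{D}$, so the move $t_1\mapsto k^{-1}t_1$ is not available in ${\rm FR}^0(\mscr{G})$ unless $K\leq G_j$. This is where the first use of $\mc{D}$ comes in. By \Cref{lem:using_free_indecomposability}(2), for each $j\in\mc{D}$ either $K\leq G_j$ or $K\cap G_j=\{1\}$. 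If $K\leq G_j$, then $v\in C_j$, so $C_1\cap C_j$ contains $v$ together with the whole star of $v$. This contradicts \Cref{lem:spanning_tree_long}(3) when $v\in\gamma_1$, and it contradicts admissibility otherwise, because the projection of $C_j$ to $C_1$ would then contain both $e$ and $f$, hence a two-edge arc, and could not fit inside $\gamma_1^*$ beyond one extra edge. Thus $K\cap G_j=\{1\}$ for all $j\in\mc{D}$. Since $G$ is generated by the $G_j$ with $j\in\mc{D}$ and the $t_j$ with $j\in\mc{C}$, and since $K$ is freely indecomposable and elliptic in the Grushko-type splitting $T'$ from the proof of \Cref{lem:using_free_indecomposability}, we get that $K$ is conjugate into some $G_j$ with $j\in\mc{D}$, or $K$ is trivial. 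So $K=gK'g^{-1}$ with $K'\leq G_{j_0}$.

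\emph{Step 2: the contradiction.} Consider the line $C_1$ near $v$. Because $K=gK'g^{-1}$, the subtree $g\,C_{j_0}$ contains the star of $v$. We then compare $\ell_{\G}$ of $t_1$ with that of $k^{\pm1}t_1$ and its conjugates, using \Cref{lem:using_free_indecomposability}(3) on the translate $gG_{j_0}g^{-1}$. We expect to produce either an element of ${\rm FR}^0(\mscr{G})$ of the form $t_1\mapsto (g k' g^{-1})t_1$ that shortens $\s_T$, or a violation of Item~(i), since the two edges at $v$ lie at $\G$--distance at most $2D$ after applying an element of the conjugate factor. To realise $g k'g^{-1}$ inside ${\rm FR}^0(\mscr{G})$, we would first use partial conjugations and transvections to conjugate $G_{j_0}$ by $g$, keeping $\Sigma$ admissible via the sliding claim from \Cref{lem:spanning_tree_long}(4).

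The main obstacle is Step~2, and specifically justifying that the necessary conjugation of $G_{j_0}$ can be carried out without increasing $\s_T$ or destroying admissibility. The first thing to try is the case $v\in\gamma_1^*$, where admissibility directly gives an index $k(1)$ with $C_{k(1)}$ adjacent to $v$, and then mimicking \Cref{lem:spanning_tree_long}(3).
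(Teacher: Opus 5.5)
Your Step~1 reaches a correct intermediate conclusion, and more directly than you argue it. After translating by a power of $t_1$ you may assume $v\in\gamma_1$, and then \Cref{lem:spanning_tree_long}(3) immediately gives $K\cap G_j=\{1\}$ for all $j\in\mc{D}$. Ellipticity of the freely indecomposable group $K$ in the free splitting $T'$ then yields $K=gK'g^{-1}$ with $K'\leq G_{j_0}$ and $g\notin G_{j_0}$. Your remark that $v$ cannot be an interior vertex of $\gamma_1$ because $e,f$ lie in one $K$--orbit is unjustified: \Cref{lem:spanning_tree_long}(1) only concerns $G_i$--orbits with $i\in\mc{C}\cup\mc{D}$, not orbits of an arbitrary subgroup such as $K$.

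The genuine gap is Step~2. It carries the entire content of the lemma, and you only describe it as something you expect to work. The move $t_1\mapsto k^{\pm1}t_1$ used in \Cref{lem:spanning_tree_long}(3) is unavailable in ${\rm FR}^0(\mscr{G})$ precisely because $g\notin G_{j_0}$. Your proposed repair does not work either. The map that conjugates $G_{j_0}$ by $g$ and fixes the other factors is in general not an automorphism: for nontrivial $a\in A$, $b\in B$, the subgroup $\langle B,abAb^{-1}a^{-1}\rangle$ of $A\ast B$ is proper. Moreover, any element of ${\rm FR}^0(\mscr{G})$ that does send $G_{j_0}$ to $gG_{j_0}g^{-1}$ moves $C_{j_0}$ to $gC_{j_0}$, possibly along with other $C_i$, with no control on $\s_T$ or on admissibility. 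The appeal to Item~(i) is likewise empty. Distances inside a $gG_{j_0}g^{-1}$--invariant subgraph of $\G$ say nothing about $\ell_{\G}(\varphi(t_1))$ for $\varphi\in{\rm FR}^0(\mscr{G})$, unless $k^{\pm1}t_1$ is such an image, which is exactly what fails. Tellingly, admissibility, the actual hypothesis of the lemma, plays no role in your Step~2.

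The missing idea is that you should not look for a contradicting automorphism around $v$ at all. Instead, show by a ping-pong argument that no nontrivial element fixes a vertex $x\in\gamma_1$. The paper supposes $g\neq 1$ fixes $x$, notes $g\notin G_1$, and writes $g=g_1\cdots g_nh$ in normal form for $G=\ast_iG_i$ (cyclic factors included), with $h\in G_1$. It sets $x_j:=g_j\cdots g_nh\cdot x$ and calls $j$ good in the following situations:
\begin{itemize}
\item for $i_j\in\mc{D}$, when the projection of $x_j$ to $C_{i_j}$ avoids $\mc{V}_{i_j}$;
\item for $i_j\in\mc{C}$, when that projection avoids $\gamma^*_{i_j}$;
\item in further cases handling the extra edge $\gamma_i^*\setminus\gamma_i$, through $C_{k(i)}$ and the set $\mc{V}_i'$ of vertices of $t_i^{-1}C_{k(i)}$ onto which the other $C_j$ project.
\end{itemize}
Goodness propagates from $j$ to $j-1$. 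This uses \Cref{lem:spanning_tree_long}(5) together with an auxiliary claim, proved by sliding $\Sigma$ along $C_i$, that no nontrivial element of $G_{k(i)}$ moves a point of $t_i\mc{V}_i'$ into $\mc{V}_{k(i)}$. A final case analysis shows that one of $n,n-1,n-2$ is good, whence $gx\neq x$. Admissibility is precisely what makes this ping-pong close up.
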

\begin{proof}
    Let $\Sigma\sq T$ be an admissible $T$--spanning tree. Set as usual $\gamma_i:=\Sigma\cap C_i$ for all $i\in\mc{C}$, and let $\gamma_i^*$ and $k(i)$ be as provided by admissibility. We orient each line $C_i$ with $i\in\mc{C}$ in the direction of translation of $t_i$. Up to replacing some $t_i$ with $t_i^{-1}$, we can assume that, whenever $\gamma_i^*\neq\gamma_i$, the edge in $\gamma_i^*\setminus\gamma_i$ follows $\gamma_i$ along $C_i$.
    
    Consider some index $i_0\in\mc{C}$ and a vertex $x\in C_{i_0}$. We will show that the $G$--stabiliser of $x$ is trivial. Without loss of generality, we can suppose that $i_0=1$. Up to replacing $x$ with a $G_1$--translate, it is also not restrictive to assume that $x\in\gamma_1$. \Cref{lem:spanning_tree_long}(3) then shows that, for each $j\in\mc{C}\cup\mc{D}$, the $G_j$--stabiliser of $x$ is trivial.
    
    For those indices $i\in\mc{C}$ for which $k(i)$ is defined, we define $\mc{V}_i'$ as the set of vertices $v\in t_i^{-1}C_{k(i)}$ such that $v$ is the projection to $t_i^{-1}C_{k(i)}$ of a point in $\bigcup_{j\in\mc{C}\cup\mc{D}} C_j\setminus t_i^{-1}C_{k(i)}$. Note that, by admissibility, the intersection $\gamma_i\cap t_i^{-1}C_{k(i)}$ is precisely the initial endpoint of $\gamma_i$.

    \smallskip
    {\bf Claim~1.} \emph{The following holds for all $i\in\mc{C}$ with $\gamma_i^*\neq\gamma_i$: if we have $g\in G_{k(i)}$ and $v\in t_i\mc{V}_i'$ with $gv\in\mc{V}_{k(i)}$, then $g=1$ and $v$ is an endpoint of the arc $C_i\cap C_{k(i)}$.}

    \smallskip\noindent
    \emph{Proof of Claim~1.}
    Consider points $y\in\mc{V}_i'$ and $x\in\mc{V}_{k(i)}$, and suppose that there exists an element $g\in G_{k(i)}$ such that $x=gt_i\cdot y$. If $y$ is an endpoint of the arc $t_i^{-1}C_{k(i)}\cap C_i$, then $t_iy\in\mc{V}_{k(i)}$. Thus, we cannot have $g\neq 1$ because of \Cref{lem:spanning_tree_long}(5) and \Cref{lem:using_free_indecomposability}(2), and hence $g=1$ and we are in the one allowed exception. In the rest of the proof of the claim, we therefore assume that $y$ is not an endpoint of the arc $t_i^{-1}C_{k(i)}\cap C_i$. By the same argument, writing $gt_i=t_i(t_i^{-1}gt_i)$, we also assume that $x$ is not an endpoint of the arc $C_{k(i)}\cap C_i$.

    Let $x',y'$ be the projections of $x,y$ to $C_i$. Choose indices $k_x,k_y\in\mc{C}\cup\mc{D}$ and points $z_x\in C_{k_x}\setminus C_{k(i)}$ and $y\in C_{k_y}\setminus t_i^{-1}C_{k(i)}$ whose projections to $C_{k(i)}$ and $t_i^{-1}C_{k(i)}$ are $x$ and $y$, respectively. Since $x$ and $y$ are not endpoints of $C_{k(i)}\cap C_i$ and $t_i^{-1}C_{k(i)}\cap C_i$, respectively, we have that $x'$ and $y'$ are also the projections to $C_i$ of $z_x$ and $z_y$. For the same reason, we have that $\{k_x,k_y\}\cap\{i,k(i)\}=\emptyset$. The combination of the last two observations implies that $\{x',y'\}\sq\gamma_i$. As a consequence, we have that $g\neq 1$, as no two points of $\gamma_i$ are in the same $G_i$--orbit.
    
    Now, let $m$ be the second endpoint of the arc $t_i^{-1}C_{k(i)}\cap C_i$ (with respect to the chosen orientation on $C_i$). 
    Let $e\sq C_i$ be the edge containing $m$ and following it along $C_i$. 
    Before continuing, we need to formulate and prove a sub-claim.
    
    Consider for a moment an index $i'\in\mc{C}\setminus\{i\}$ such that we have $k(i')=k(i)$ and $e\sq C_i\cap C_{i'}$ (in particular, $t_i$ and $t_{i'}$ translate in the same direction along $C_i\cap C_{i'}$, by our conventions). Letting $q$ be the point where the line $C_{i'}$ exits the set $C_{k(i')}$, we claim that $q$ is also the point where the axis of $t_{i'}t_i^{-1}$ exits the set $C_{k(i)}=C_{k(i')}$ (recall that this element is indeed loxodromic by \Cref{lem:using_free_indecomposability}(1)). In order to prove this sub-claim, denote by $f$ the one edge in the difference $\gamma_{i'}^*\setminus\gamma_{i'}$ and note that $q$ is its terminal endpoint; also let $f'$ be the edge of $C_{i'}$ right after $f$. The point $t_{i'}^{-1}q$ is the initial endpoint of $\gamma_{i'}$ and hence the edge $t_{i'}^{-1}f$ is not contained in $C_i$. Thus, the oriented edges $f$ and $t_it_{i'}^{-1}f$ have compatible orientations, and so they lie on the axis of $t_{i'}t_i^{-1}$. 
    If we also have that $t_{i'}^{-1}f'\not\sq C_i$, then the same argument shows that $f'$ lies on the axis of $t_{i'}t_i^{-1}$, and so $q$ is indeed the exit point of this axis from $C_{k(i')}$. If instead $t_{i'}^{-1}f'\sq C_i$, denote by $\eps$ the edge of $C_i$ preceding $t_{i'}^{-1}f'$ along $C_i$. The edge $t_{i'}\eps$ is then the edge of the axis of $t_{i'}t_i^{-1}$ following $f$, so our goal is to show that $t_{i'}\eps\not\sq C_{k(i')}$. For this, note that, since $e\sq C_i\cap C_{i'}$ and $t_{i'}^{-1}f\not\sq C_i$, 
    we have $\eps\sq t_i^{-1}C_{k(i)}$ (recall that $e$ was defined as the first edge after $t_i^{-1}C_{k(i)}$ along $C_i$). Thus, if we were to have $t_{i'}\eps\sq C_{k(i')}$, then the element $t_{i'}t_i^{-1}$ would take an edge of $C_{k(i')}$ to an edge of $C_{k(i')}$, which would violate \Cref{lem:spanning_tree_long}(4) (after applying an element of ${\rm FR}^0(\mscr{G})$). This proves the sub-claim.

    We are now ready to complete the proof of Claim~1. Write $\Sigma=A\cup e\cup B$, where $A$ and $B$ are the connected components of $\Sigma\setminus{\rm int}(e)$ containing, respectively, the negative and positive semi-axis of $t_i$. By the claim used in the proof of \Cref{lem:spanning_tree_long}(4), we can apply an automorphism $\varphi\in{\rm FR}^0(\mscr{G})$ so as to slide $\Sigma$ along $C_i$ until $e$ is the last edge of $C_i$ before the beginning of the arc $\Sigma\cap C_i$. 
    The restriction of $\varphi$ to $G_{k(i)}$ is the identity, and one can check that the points $x$ and $t_iy$ now both lie in $\mc{V}_{k(i)}$: if $k_x$ (resp.\ $k_y$) lies in $\mc{C}$, this uses the sub-claim; if instead $k_x\in\mc{D}$, this uses the observation that $x'$ is the projection to $C_i$ of the entire set $C_{k_x}$ and so $\varphi$ is the identity on $G_{k_x}$ (resp.\ the conjugation by $t_i$ on $G_{k_y}$).

    Finally, if $t_iy\neq x$, we can invoke \Cref{lem:spanning_tree_long}(5) to conclude that $g$ cannot map one $t_iy$ to $x$. If instead $t_iy=x$, we can use the fact that $g\neq 1$ and \Cref{lem:using_free_indecomposability}(2) to reach the same conclusion. This proves Claim~1.
    \hfill$\blacksquare$

    \smallskip
    Now that Claim~1 is proven, we will use a delicate ping-pong argument to show that the $G$--stabiliser of the vertex $x$ must be trivial. Suppose for the sake of contradiction that there exists an element $g\in G\setminus\{1\}$ such that $gx=x$. Since $g\not\in G_1$, we can write $g=g_1g_2\dots g_nh$ with $n\geq 1$, $g_j\in G_{i_j}\setminus\{1\}$, $i_n\neq 1$ and $h\in G_1$. Here we assume that $i_j\neq i_{j+1}$ for all $j$. For $1\leq j\leq n$, define the point $x_j:=g_jg_{j+1}\dots g_nh\cdot x$. We say that $j$ is a \emph{good index} if one of the following holds.
    \begin{enumerate}
        \item[(d1)] We have $i_j\in\mc{D}$, $x_j\not\in C_{i_j}$, and the projection of $x_j$ to $C_{i_j}$ does not lie in $\mc{V}_{i_j}$.
        \item[(d2)] We have $i_j\in\mc{D}$, the set $C_{i_j}$ is a singleton, and $x_j$ is separated from $C_{i_j}$ by the interior of an edge $e_j$ containing $C_{i_j}$ and with $e_j\not\in\mc{E}_{i_j}$.
        \item[(c1)] We have $i_j\in\mc{C}$ and the projection of $x_j$ to $C_{i_j}$ lies outside $\gamma_{i_j}^*$.
        \item[(c2)] We have $i_j\in\mc{C}$, $\gamma_{i_j}^*\neq\gamma_{i_j}$, $x_j\not\in C_{k(i_j)}$, the projection of $x_j$ to $C_{k(i_j)}$ lies in $t_{i_j}\mc{V}_{i_j}'$, and this projection is not the initial endpoint of the arc $C_{i_j}\cap C_{k(i_j)}$.
        \item[(c3)] We have $i_j\in\mc{C}$, $\gamma_{i_j}^*\neq\gamma_{i_j}$, $x_j\not\in t_{i_j}^{-1}C_{k(i_j)}$, and the projection of $x_j$ to $t_{i_j}^{-1}C_{k(i_j)}$ is not in $\mc{V}_{i_j}'$.
    \end{enumerate}
    With this definition, we have the following. 

    \smallskip
    {\bf Claim~2.} \emph{If some index $j$ with $2\leq j\leq n$ is good, then $j-1$ is good as well.}

    \smallskip\noindent
    \emph{Proof of Claim~2.}
    Suppose first that $i_{j-1}\in\mc{D}$. If we have $i_j\in\mc{C}$ and $i_{j-1}=k(i_j)$, then $j-1$ is good by Claim~1 and \Cref{lem:spanning_tree_long}(5). Otherwise, goodness of $j$ implies that either $x_j\not\in C_{i_{j-1}}$ and the projection of $x_j$ to $C_{i_{j-1}}$ lies in $\mc{V}_{i_{j-1}}$, or there exists an edge $\eps_{j-1}\in\mc{E}_{i_{j-1}}$ whose interior separates $x_j$ from $C_{i_{j-1}}$. (If $i_j\in\mc{C}$, the proof of the latter fact again uses part of Claim~1 in the following form: for each $i\in\mc{C}$, the intersection $t_i\mc{V}_i'\cap\mc{V}_{k(i)}$ consists precisely of the endpoints of the arc $C_i\cap C_{k(i)}$.) Once the previous property is obtained, the fact that $j-1$ is good follows from \Cref{lem:spanning_tree_long}(5).

    Suppose now that $i_{j-1}\in\mc{C}$. Let $x_j'$ be the projection of $x_j$ to $C_{i_{j-1}}$. Goodness of $j$ implies that $x_j'\in\gamma_{i_{j-1}}^*$. If $x_j'$ is not an endpoint of $\gamma_{i_{j-1}}^*$, then $g_{j-1}x_j'\not\in\gamma_{i_{j-1}}^*$, and since this is the projection of $x_{j-1}=g_{j-1}x_j$, we have that $j-1$ is good. If $x_j'$ is the initial endpoint of $\gamma_{i_{j-1}}^*$, then $x_j\not\in t_{i_{j-1}}^{-1}C_{k(i_{j-1})}$, and the projection of $x_j$ to $t_{i_{j-1}}^{-1}C_{k(i_{j-1})}$ lies in $\mc{V}_{i_{j-1}}'$ and is not the initial endpoint of the arc $C_{i_{j-1}}\cap t_{i_{j-1}}^{-1}C_{k(i_{j-1})}$; considering $x_{j-1}=g_{j-1}x_j$, it is then immediate that $j-1$ is good. Finally, if $x_j'$ is the terminal endpoint of $\gamma_{i_{j-1}}^*$, then we must have $i_j=k(i_{j-1})$, and it easily follows from Claim~1 that $j-1$ is good. This proves Claim~2.
    \hfill$\blacksquare$

    \smallskip

    Claim~2 shows that, if any index $j\leq n$ is good, then $1$ is good. In turn, if $1$ is good, then the point $x_1=gx$ cannot coincide with the point $x\in\gamma_1\sq C_1$. In view of this, we conclude the proof of the lemma by showing that the index $n$ is good (or, possibly, that $n-1$ or $n-2$ is good). For this, recall that $hx\in C_1$ and let $x'$ denote the projection of $hx$ to $C_{i_n}$.
    
    Suppose first that $i_n\in\mc{D}$. Note that $x'\in\mc{V}_{i_n}$. If $C_{i_n}$ is not a singleton, then \Cref{lem:using_free_indecomposability}(2) and \Cref{lem:spanning_tree_long}(5) show that $g_nx'\not\in\mc{V}_{i_n}$. Therefore, if $C_{i_n}$ is not a singleton and if $hx\not\in C_{i_n}$, then $n$ is good. If $C_{i_n}$ is a singleton, then $C_{i_n}$ is necessarily disjoint from $C_1$, and hence the interior of the one edge in $\mc{E}_{i_n}$ separates $C_{i_n}$ from $hx$; thus, \Cref{lem:spanning_tree_long}(5) shows once more that $n$ is good. Finally, suppose that $C_{i_n}$ is not a singleton and that $hx\in C_{i_n}$. In this case, $x_n=g_nhx$ is a point of $C_{i_n}$ not lying in $\mc{V}_{i_n}$; in particular, we have $x_n\neq x$. If $n=1$, we are done. If instead $n>1$, we will show that $n-1$ is good. If $x_n\not\in C_{i_{n-1}}$, then we can argue as in the previous cases to deduce that $n-1$ is good. If instead $x_n\in C_{i_{n-1}}$, then the fact that $x_n\not\in\mc{V}_{i_n}$ implies that $i_{n-1}\in\mc{C}$ and that $x_n$ is not an endpoint of $C_{i_n}\cap C_{i_{n-1}}$. In particular, $x_n$ is not an endpoint of the arc $\gamma_{i_{n-1}}^*$, and hence $x_{n-1}=g_{n-1}x_n\not\in\gamma_{i_{n-1}}^*$, showing again that $n-1$ is good.

    Suppose now that $i_n\in\mc{C}$. Note that $x'\in\gamma_{i_n}$ by \Cref{lem:spanning_tree_long}(2). If $x'$ is not the initial endpoint of $\gamma_{i_n}$, or if $\gamma_{i_n}^*=\gamma_{i_n}$, or again if $g_n\neq t_{i_n}$, then we have $g_nx'\not\in\gamma_{i_n}^*$. Since $g_nx'$ is the projection of $x_n$ to $C_{i_n}$, it then follows that $n$ is good. Suppose instead that $g_n=t_{i_n}$, that $\gamma_{i_n}^*\neq\gamma_{i_n}$, and that $x'$ is indeed the shared endpoint of $\gamma_{i_n}$ and $\gamma_{i_n}^*$. In this case, let $x''$ be the projection of $hx$ to $t_{i_n}^{-1}C_{k(i_n)}$; note that $x''\in\mc{V}_{i_n}'$ and $x''$ is not the initial endpoint of the arc $C_{i_n}\cap t_{i_n}^{-1}C_{k(i_n)}$. Thus, if $hx\not\in t_{i_n}^{-1}C_{k(i_n)}$, then $n$ is good. Suppose that we instead have $hx\in t_{i_n}^{-1}C_{k(i_n)}$, so that $x_n=t_{i_n}hx$ lies in $C_{k(i_n)}$; note that we have $x_n\in t_{i_n}\mc{V}_{i_n}'$, since $x=x''$ in this case. The projection of $x_n$ to $C_{i_n}$ is the vertex in $\gamma_{i_n}^*\setminus\gamma_{i_n}$, so we have $x_n\neq x$ and we are done if $n=1$. If instead $n>1$, we aim to show that $n-1$ is good. Note that either $x_n$ is the final endpoint of $\gamma_{i_n}^*$, or we have $x_n\not\in\mc{V}_{k(i_n)}$ by Claim~1. Therefore, if $i_{n-1}\neq k(i_n)$, we have $x_n\not\in C_{i_{n-1}}$
    and this implies that $n-1$ is good arguing as above. Finally, suppose that $i_{n-1}=k(i_n)$. The point $x_{n-1}=g_{n-1}x_n$ then satisfies $x_{n-1}\in C_{i_{n-1}}$ and $x_{n-1}\not\in\mc{V}_{i_{n-1}}$ by Claim 1. If $n=2$, we again have $gx=x_{n-1}\neq x$ and we are done. If instead $n>2$, then we certainly have $x_{n-1}\not\in C_{i_{n-2}}$ and hence $n-2$ is good, arguing as above. This concludes the proof of the lemma.
\end{proof}

Now, that \Cref{lem:no_barriers} is proved, we can use it to obtain perfect spanning trees.

\begin{lem}\label{lem:small_gamma}
    Suppose that \Cref{ass:assumptions} holds. Up to replacing $\mscr{G}$ with $\psi(\mscr{G})$ for some $\psi\in{\rm FR}^0(\mscr{G})$ with $\s_T(\psi(\mscr{G}))=\s_T(\mscr{G})$, there exists a perfect $T$--spanning tree for $\mscr{G}$.
\end{lem}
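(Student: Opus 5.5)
First, apply \Cref{lem:smallish_gamma} to replace $\mscr{G}$ by $\psi(\mscr{G})$ and fix an admissible $T$--spanning tree $\Sigma$ with exactly $\s_T(\mscr{G})$ edges. Let $\gamma_i,\gamma_i^*,k(i)$ be as in \Cref{defn:perfect+admissible}. As in that proof, orient each $C_i$ with $i\in\mc{C}$ so that, whenever $\gamma_i^*\neq\gamma_i$, the extra edge $\eps_i:=\overline{\gamma_i^*\setminus\gamma_i}$ follows $\gamma_i$. We then induct on the number of indices $i\in\mc{C}$ with $\gamma_i^*\neq\gamma_i$; the goal at each step is to eliminate one such index.

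Fix such an $i$, say $i=1$. The key input is \Cref{lem:no_barriers}: every vertex of $C_1$ has trivial $G$--stabiliser, and in particular trivial $G_{k(1)}$--stabiliser. The plan is to modify $t_1$ by an element of $G_{k(1)}$, exactly as in the proof of \Cref{lem:spanning_tree_long}(3). Let $p$ be the terminal endpoint of $\gamma_1$ (the initial vertex of $\eps_1$). Both $p$ and the terminal endpoint of $\eps_1$ lie in $C_{k(1)}$. By \Cref{lem:using_free_indecomposability}(3), $G_{k(1)}$ acts on $C_{k(1)}$ with boundedly many edge orbits, and the vertices of $C_{k(1)}\cap C_1$ have trivial stabiliser. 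I would try to choose $g\in G_{k(1)}$ so that the automorphism $\varphi\in{\rm FR}^0(\mscr{G})$ with $t_1\mapsto g t_1$ (identity on the other factors) satisfies the following: the new axis $\mc{M}_T(gt_1)$ passes through $\gamma_1$ and through $C_{k(1)}$ at a vertex of $\gamma_1$, and it leaves $C_{k(1)}$ immediately. A natural candidate is the element $g$ mapping $t_1$ applied to the initial vertex of $\gamma_1$ back into $\Sigma$. If $C_{k(1)}\cap C_1$ contains $\eps_1$, then a suitable $g\in G_{k(1)}$ translates the exit vertex of $t_1\gamma_1$ back onto $p$.

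After this modification, check three things, in the same way as in the proof of \Cref{lem:spanning_tree_long}. First, $\Sigma$ (or $\Sigma$ slid along the new axis, using the sliding claim from the proof of \Cref{lem:spanning_tree_long}(4)) is still a $T$--spanning tree with $\s_T(\mscr{G})$ edges for $\varphi(\mscr{G})$. Hence \Cref{ass:assumptions} persists, by (iii) and the minimality of $\s_T$. Second, the projections to the new axis of all $C_j$ with $j\in\mc{D}$ are contained in the new $\gamma_1$. Third, no other index $j\in\mc{C}$ changes status, because $\varphi$ is the identity on $G_j$ for $j\neq1$.

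For the second check, use admissibility: every $C_j$ with $j\neq k(1)$ already projects into $\gamma_1$. Only $C_{k(1)}$ needs care, and the choice of $g$ is designed to make the new axis meet $C_{k(1)}$ only inside $\gamma_1$. If instead the new $\gamma_1^*$ strictly increased, I would derive a contradiction: $gt_1$ or its inverse would map an edge of $C_{k(1)}$ into $C_{k(1)}$, violating \Cref{lem:spanning_tree_long}(4), or it would produce two points of $\Sigma$ in one $\varphi(G_1)$--orbit, violating \Cref{lem:spanning_tree_long}(1).

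The main obstacle is the choice of $g$ together with the verification that the new axis of $gt_1$ does not re-enter $C_{k(1)}$ on the far side. If the direct choice fails, the fallback is to argue by contradiction as in Case~3 of \Cref{lem:smallish_gamma}. The idea there is that if no $g$ works, then $C_{k(1)}\cap C_1$ stays long. Combined with trivial vertex stabilisers (\Cref{lem:no_barriers}) and \Cref{lem:using_free_indecomposability}(4), this would bound $\ell_{\G}(t_1)$ by $24\Gr(G)^3D$, contradicting \Cref{ass:assumptions}(i).
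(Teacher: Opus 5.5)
\textbf{There is a genuine gap.} Your central move, replacing $t_1$ by $gt_1$ with $g\in G_{k(1)}$, is never shown to work, and it cannot work as intended.

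\emph{Your explicit candidate does not exist.} You ask for $g$ with $gt_1a\in\Sigma$ (for instance $gt_1a=p$), where $a$ is the initial vertex of $\gamma_1$. By \Cref{lem:using_free_indecomposability}(1), $gt_1$ is loxodromic, so $gt_1a\neq a$. Then $\Sigma\supseteq[a,gt_1a]$ is a $T$--spanning tree for $\varphi(\mscr{G})$ with $\s_T(\mscr{G})$ edges. But it contains two points of a single $\varphi(G_1)$--orbit, which contradicts \Cref{lem:spanning_tree_long}(1) applied to $\varphi(\mscr{G})$.

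\emph{Changing the generator does not address the real defect.} Already for $G=\langle t_1\rangle\ast G_{k(1)}$, write $C_1\cap C_{k(1)}=[c,t_1a]$ with $c\in\gamma_1$, $c\neq a$. For $g\in G_{k(1)}$, the axis of $gt_1$ contains $[a,c]$ and meets $C_{k(1)}$ exactly in $[c,gt_1a]$. Minimality of $\s_T$ then forces $d(c,gt_1a)\geq d(c,t_1a)$. So either $\s_T$ strictly increases, which the lemma forbids, or you reproduce the same overlap: the inherited fundamental domain again stops one edge short of where the axis exits $C_{k(1)}$. The problem is where $\Sigma$ cuts the axis $C_1$, not which generator translates along it.

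\emph{Your fallback does not rescue this.} The bound $\ell_{\G}(t_1)<24\Gr(G)^3D$ in Case~3 of \Cref{lem:smallish_gamma} needs two hypotheses: $\gamma_1$ contains no boring arc whose end edges are far apart in $\G$, and no two consecutive edges are far apart. The failure of your choice of $g$ gives neither. In fact, under these hypotheses long boring arcs always exist.

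\textbf{The missing idea.} Use \Cref{lem:no_barriers} quantitatively, and then slide rather than twist.
\begin{enumerate}
\item Trivial vertex stabilisers along $C_1$ give $\ell_{\G}(t_1)=\ell_T(t_1)$. By \Cref{ass:assumptions}(i), $\ell_T(t_1)>24\Gr(G)^3D$.
\item The counting from Case~3 of \Cref{lem:smallish_gamma}, now with lengths in $T$, then produces a boring arc $\beta\subseteq\gamma_1$ of length $\geq D+2$.
\item By \Cref{lem:spanning_tree_long}(4), $\beta$ is not contained in $C_1\cap C_j$ for any $j\in\mc{D}$. Being boring, its interior therefore misses the projections to $C_1$ of all such $C_j$. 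In particular, $C_1\cap C_{k(1)}$ lies after $\beta$.
\item Keep $t_1$ and run Case~1 of \Cref{lem:smallish_gamma}. Replace $\Sigma$ by $B\cup t_1e_1\cup t_1A\cup t_1\overline{(\beta\setminus f')}$, conjugate by $t_1$ the factors lying on the $A$--side, and apply $t_j\mapsto t_jt_1^{-1}$ to the $\mc{C}$--axes that cross $\beta$.
\end{enumerate}
This preserves $\s_T$ and admissibility, and leaves the status of the other indices in $\mc{C}$ unchanged. It moves the cut edge into $\beta$, so the new $\gamma_1$ contains the projection of $C_{k(1)}$. Hence the number of indices $i\in\mc{C}$ with $\gamma_i^*\neq\gamma_i$ drops, and induction on this number yields a perfect spanning tree.
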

\begin{proof}
    In view of \Cref{lem:smallish_gamma}, we can assume that there exist admissible spanning trees for $\mscr{G}$. Given an admissible spanning tree $\Sigma$, we define the integer $\mf{h}(\Sigma;\mscr{G})$ as the number of indices $i\in\mc{C}$ for which $\gamma_i^*\neq\gamma_i$. We then define $\mf{h}(\mscr{G})$ as the minimum of $\mf{h}(\Sigma;\mscr{G})$ as $\Sigma$ varies among admissible spanning trees for $\mscr{G}$. Assume that there does not exist any $\psi\in{\rm FR}^0(\mscr{G})$ such that, simultaneously, we have $\s_T(\psi(\mscr{G}))=\s_T(\mscr{G})$, the triple $\psi(\mscr{G})$ has admissible spanning trees, and we have $\mf{h}(\psi(\mscr{G}))<\mf{h}(\mscr{G})$. Our goal is then to show that $\mf{h}(\mscr{G})=0$.

    Suppose for the sake of contradiction that $\mf{h}(\mscr{G})>0$. Fix an admissible spanning tree $\Sigma$ realising $\mf{h}(\mscr{G})$, and define the arcs $\gamma_i,\gamma_i^*$ accordingly. Up to reindexing $\mc{G}$, we can assume that $1\in\mc{C}$ and $\gamma_1^*\neq\gamma_1$. \Cref{lem:no_barriers} implies that $\ell_{\G}(t_1)=\ell_T(t_1)$. In particular, in view of Item~(i) of \Cref{ass:assumptions}, we have $\ell_T(t_1)\geq 24\Gr(G)^3 D$. As a consequence, we can argue as in Case~3 of the proof of \Cref{lem:smallish_gamma} to show that there exists a boring arc $\beta\sq\gamma_1$ of length $\geq D+2$ (now computing lengths in $T$). By \Cref{lem:spanning_tree_long}(4), $\beta$ is not contained in $C_1\cap C_j$ for any $j\in\mc{D}$. We can then argue as in Case~1 of the proof of \Cref{lem:smallish_gamma} to produce an automorphism $\psi\in{\rm FR}^0(\mscr{G})$ such that $\s_T(\psi(\mscr{G}))=\s_T(\mscr{G})$, such that $\psi(\mscr{G})$ has admissible spanning trees, and such that $\mf{h}(\psi(\mscr{G}))<\mf{h}(\mscr{G})$. This is the required contradiction.
\end{proof}

Now that we have perfect spanning trees, we can once more use a ping-pong argument to show that the subtrees $C_i$ must be pairwise close in $T$ (and moreover have pairwise close lifts to $\G$). The main ingredient for this is the following.

\begin{lem}\label{lem:ping_pong}
    Under \Cref{ass:assumptions}, suppose that there exists a perfect spanning tree $\Sigma$ for $\mscr{G}$. Let $\beta\sq T$ be an arc (possibly degenerated to a point) satisfying {\bf one} of the following:
    \begin{enumerate}
        \item[(a)] the subtrees $C_1,C_2$ are disjoint and $\beta$ is the shortest arc connecting $C_1$ to $C_2$;
        \item[(b)] the intersection $C_1\cap C_2$ is a single point and $\beta=C_1\cap C_2$;
        \item[(c)] we have $1\in\mc{C}$ and $\beta\sq\gamma_1$.
    \end{enumerate}
    If $\beta$ is non-degenerate, assume in addition that $\beta$ is boring and that ${\rm int}(\beta)\cap C_j=\emptyset$ for all $j\in\mc{D}$. Then, $g\beta\cap\beta=\emptyset$ for all $g\in G\setminus\{1\}$, possibly except if we are in Case~(a) and $g\in G_1\cup G_2$.
\end{lem}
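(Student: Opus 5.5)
We run a ping-pong argument modelled on the proof of \Cref{lem:no_barriers}, now simplified because $\Sigma$ is perfect: for every $i\in\mc{C}$ the arc $\gamma_i=\Sigma\cap C_i$ already contains the projections to $C_i$ of all $C_j$. Fix $g\in G\setminus\{1\}$ (excluding $g\in G_1\cup G_2$ in Case~(a)). Write $g$ in normal form $g=g_1\cdots g_n$ with $g_j\in G_{i_j}\setminus\{1\}$ and consecutive indices distinct. Absorb the first and last letters into the allowed exceptional factors when possible: in Case~(a), write $g=h g' h'$ with $h,h'\in G_1\cup G_2\cup\{1\}$ so that the reduced middle word does not start or end in a way that lets $h,h'$ fix $\beta$. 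The goal is to show that $g\beta$ lies in a region of $T$ disjoint from $\beta$.

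\emph{Ping-pong regions.} For each $i\in\mc{D}$ define $P_i$ as the union of the components of $T\setminus C_i$ whose projection to $C_i$ avoids $\mc{V}_i$, together with, when $C_i$ is a point, the branches through edges not in $\mc{E}_i$. For $i\in\mc{C}$, let $P_i$ be the set of points whose projection to $C_i$ lies outside $\gamma_i$. By perfection of $\Sigma$ and \Cref{lem:spanning_tree_long}(5), every $g_j\in G_{i_j}\setminus\{1\}$ maps the complement of $P_{i_j}$, in particular every $C_k$ with $k\neq i_j$ and every point of $\Sigma$, into $P_{i_j}$. For $i\in\mc{D}$ this uses that vertices of $\mc{V}_i$ (and edges of $\mc{E}_i$) lie in distinct $G_i$--orbits, together with \Cref{lem:using_free_indecomposability}(2) for elliptic elements. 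For $i\in\mc{C}$ it uses that $\gamma_i$ has length $\ell_T(t_i)-1$ and contains all the relevant projections. Next we check the nesting $g_j P_{i_{j+1}}\subseteq P_{i_j}$ for consecutive letters, exactly as in Claim~2 of \Cref{lem:no_barriers}. This step is simpler here because the auxiliary regions involving $k(i)$ disappear. Inductively, $g\beta$ lands in $P_{i_1}$.

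\emph{Placing $\beta$.} It remains to see that $\beta$ (or its interior, with endpoints handled separately) is disjoint from $P_{i_1}$ and from $g_1P_{i_2}$. In Case~(c), $\beta\subseteq\gamma_1\subseteq\Sigma$, so its projection to every $C_i$ lies in $\gamma_i$ or in $\mc{V}_i$. Hence $\beta$ avoids every $P_i$, except possibly at points equal to $\mc{V}_i$. For $i=1$ and $g\in G_1$ this follows from \Cref{lem:spanning_tree_long}(1). In Cases~(a)/(b), $\beta$ is the bridge between $C_1$ and $C_2$. Its interior is boring and avoids every $C_j$ with $j\in\mc{D}$, so for each $i$ all of $\beta$ projects to a single point of $\mc{V}_i$ or into $\gamma_i$. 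Thus $\beta$ lies outside all $P_i$. The exception $g\in G_1\cup G_2$ is necessary precisely because such $g$ fixes an endpoint of $\beta$ when $C_1$ or $C_2$ is a point.

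\emph{Main obstacle.} The delicate part is the boundary behaviour: the endpoints of $\beta$ may lie in some $C_i$, and a first or last letter $g_j$ could then map an endpoint of $\beta$ to a point of $\beta$. This is where the boring hypothesis, the condition ${\rm int}(\beta)\cap C_j=\emptyset$ for $j\in\mc{D}$, and \Cref{lem:spanning_tree_long}(3) (trivial $G_j$--stabilisers along $\gamma_i$) are needed. I would handle it by a short case analysis on whether $i_1,i_n$ lie in $\mc{C}$ or $\mc{D}$, mirroring the final paragraphs of the proof of \Cref{lem:no_barriers}. Where necessary, I would pass to $n-1$ or $n-2$ as done there.
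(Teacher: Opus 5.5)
Your plan is the same as the paper's: regions $P_i$ encoding the good-index conditions, nesting as in Claim~2 of \Cref{lem:no_barriers}, then an analysis of the last letter. However, the step that carries the content of this lemma is deferred, and the claim meant to make it automatic is false. For $i\in\mc{D}$, a nontrivial $g\in G_i$ preserves $C_i$ (and fixes it when $C_i$ is a single point), while $P_i\cap C_i=\emptyset$. So $g$ does \emph{not} map the complement of $P_i$ into $P_i$, and neither $\Sigma$ (which meets $C_i$) nor a set $C_k$ meeting $C_i$ is sent into $P_i$. The nesting $g_jP_{i_{j+1}}\sq P_{i_j}$ is still true, but only because $P_{i_{j+1}}\cap C_{i_j}=\emptyset$. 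Hence ``$\beta$ lies outside every $P_i$'' does not start the induction. If $i_n\in\mc{D}$ and an endpoint of $\beta$ lies on $C_{i_n}$ (possible in all three cases), then $g_n\beta$ still meets $C_{i_n}$ and is not in $P_{i_n}$. This is exactly where the exception of Case~(a) comes from, so it cannot be left to an unspecified case analysis. Nor can you ``absorb'' letters of $G_1\cup G_2$: only $g\in G_1\cup G_2$ is exempted, and e.g.\ $g=g_2h$ with $g_2\in G_2\setminus\{1\}$, $h\in G_1\setminus\{1\}$ must still move $\beta$ off itself.

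The missing argument is as follows. For $g\notin G_1$ (the case $g\in G_1$ is immediate, or exempt in Case~(a)), write $g=g_1\cdots g_nh$ with $h\in G_1$ and $i_n\neq 1$. Call $j$ good if $g_j\cdots g_nh\beta\sq P_{i_j}$.
\begin{enumerate}
\item If $h\beta\cap C_{i_n}=\emptyset$, then $n$ is good (via \Cref{lem:spanning_tree_long}(5) when $i_n\in\mc{D}$). This always happens for $h\neq 1$: immediately in Case~(c), and via \Cref{lem:using_free_indecomposability}(2) and \Cref{lem:spanning_tree_long}(5) in Cases~(a),~(b).
\item If $h=1$, boringness leaves two options. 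Either $\beta\sq C_{i_n}$, which forces $i_n\in\mc{C}$ and $\beta\sq\gamma_{i_n}$, so $n$ is good. Or $\beta\cap C_{i_n}=\{p\}$ for an endpoint $p$, handled in the next three items.
\item If $i_n\in\mc{C}$, then $p\in\gamma_{i_n}$ and $g_np\notin\gamma_{i_n}$, so $n$ is good.
\item If $i_n\in\mc{D}$ and $g_np=p$, then \Cref{lem:using_free_indecomposability}(2), Item~(ii) of \Cref{ass:assumptions}, perfectness and \Cref{lem:spanning_tree_long}(3), together with the hypothesis on ${\rm int}(\beta)$, leave only Case~(a) with $i_n=2$ and $C_2=\{p\}$. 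For $n=1$ this is the allowed exception. For $n>1$, the arc $g_n\beta$ leaves $p$ through an edge outside $\mc{E}_2$, so it misses $C_{i_{n-1}}$ and $n-1$ is good.
\item If $i_n\in\mc{D}$ and $g_np\neq p$, then $g_np\notin\mc{V}_{i_n}$ by \Cref{lem:spanning_tree_long}(5). For $n=1$, $\beta$ and $g\beta$ hang off the distinct vertices $p,g_1p$ of $C_{i_1}$ and are disjoint. For $n>1$, either $g_n\beta$ misses $C_{i_{n-1}}$, or $i_{n-1}\in\mc{C}$ and $g_n\beta$ meets $C_{i_{n-1}}$ in a single point of $\gamma_{i_{n-1}}$; either way $n-1$ is good.
\end{enumerate}
Propagating goodness down to $j=1$ then gives $g\beta\cap\beta=\emptyset$.
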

\begin{proof}
    The proof is based on a similar --- and much simpler --- version of the ping-pong argument used in the proof of \Cref{lem:no_barriers}. The lemma is clear if $g\in G_1$ (recalling Item~(ii) of \Cref{ass:assumptions} for Case~(b)). Thus, consider an element $g\in G\setminus G_1$ and write $g=g_1g_2\dots g_nh$ with $n\geq 1$, $g_j\in G_{i_j}\setminus\{1\}$, $i_n\neq 1$ and $h\in G_1$. Here we assume that $i_j\neq i_{j+1}$ for all $j$. For $1\leq j\leq n$, define the arc $\beta_j:=g_jg_{j+1}\dots g_nh\cdot\beta$. We say that $j$ is a \emph{good index} if one of the following holds.
    \begin{enumerate}
        \item[(d1)] We have $i_j\in\mc{D}$, $\beta_j\cap C_{i_j}=\emptyset$, and the projection of $\beta_j$ to $C_{i_j}$ does not lie in $\mc{V}_{i_j}$.
        \item[(d2)] We have $i_j\in\mc{D}$, $\beta_j\cap C_{i_j}=\emptyset$, the set $C_{i_j}$ is a singleton, and the interior of the one edge in $\mc{E}_{i_j}$ does not separate $\beta_j$ from $C_{i_j}$.
        \item[(c)] We have $i_j\in\mc{C}$ and the projection of $\beta_j$ to $C_{i_j}$ is disjoint from the arc $\gamma_{i_j}$.
    \end{enumerate}
    It is straightforward to check that, if any index $j\leq n$ is good, then $1$ is good. Moreover, if $1$ is good, then $g\beta\cap\beta=\emptyset$. 

    Our goal is thus to show that $n$ is good (modulo exceptions). If $h\beta\cap C_{i_n}=\emptyset$, then $n$ is good: this is immediate if $i_n\in\mc{C}$ and uses \Cref{lem:spanning_tree_long}(5) if $i_n\in\mc{D}$. Note that we can only have $h\beta\cap C_{i_n}\neq\emptyset$ if $h=1$: this is immediate in Case~(c), while it uses \Cref{lem:using_free_indecomposability}(2) and \Cref{lem:spanning_tree_long}(5) in Cases~(a) and~(b). Finally, if we have $h\beta\sq C_{i_n}$, then $i_n\in\mc{C}$ (by our hypotheses) and we have $h\beta\sq\gamma_{i_n}$; hence $\beta_n\cap\gamma_{i_n}=\emptyset$ and $n$ is good. 

    In conclusion, we can suppose in the rest of the proof that $h=1$, that $\beta\cap C_{i_n}\neq\emptyset$, and that $C_{i_n}\cap\beta$ is an endpoint of $\beta$ (since $\beta$ is either boring or degenerate). Calling $p$ this endpoint, we have that $p$ is also the projection of $\beta$ to $C_{i_n}$. If $i_n\in\mc{C}$, then $p\in\gamma_{i_n}$ and $g_np\not\in\gamma_{i_n}$, so $n$ is good. If $i_n\in\mc{D}$, we need to distinguish two possibilities. First, if $g_np=p$, then \Cref{lem:using_free_indecomposability}(2) and Item~(ii) of \Cref{ass:assumptions} imply that we are in Case~(a) with $i_n=2$ and $C_2$ is a singleton. If $n=1$, this is one of the allowed exceptions; if $n>1$, we have $\beta_n\cap C_{i_{n-1}}=\emptyset$ by \Cref{lem:spanning_tree_long}(5), which implies that $n-1$ is good as above. Finally, if we instead have $g_np\neq p$, then $g_np\not\in\mc{V}_{i_n}$ by \Cref{lem:spanning_tree_long}(5). If $n=1$, this implies that $g\beta\cap\beta=\emptyset$ and we are done. Suppose instead that $n>1$. If $\beta_n\cap C_{i_{n-1}}=\emptyset$, we again have that $n-1$ is good. If instead $\beta_n\cap C_{i_{n-1}}\neq\emptyset$, then we must have $i_{n-1}\in\mc{C}$ and the intersection $\beta_n\cap C_{i_{n-1}}$ is a single point of $\gamma_{i_{n-1}}$, which again implies that $n-1$ is good. This concludes the proof of the lemma.
\end{proof}

\begin{cor}\label{cor:C_is_empty}
    Under \Cref{label:setup}, suppose that $\mc{D}\neq\emptyset$ and that Items~(i) and~(ii) in \Cref{ass:assumptions} hold. Then we have $\mc{C}=\emptyset$.
\end{cor}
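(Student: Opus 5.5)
Since only Items~(i) and~(ii) are assumed, the plan is to first upgrade to the full \Cref{ass:assumptions} without disturbing them, and then obtain a contradiction from a short element. Items~(i) and~(ii) are quantified over all of ${\rm FR}^0(\mscr{G})$, so they hold for every triple $\psi(\mscr{G})$ with $\psi\in{\rm FR}^0(\mscr{G})$, and they are unaffected by such a replacement (this leaves $D$ unchanged). Since $\s_T$ takes values in $\N$, I choose $\psi$ minimising $\s_T(\psi(\mscr{G}))$ and replace $\mscr{G}$ by $\psi(\mscr{G})$; then Item~(iii) holds as well. Next I apply \Cref{lem:smallish_gamma} and then \Cref{lem:small_gamma}, each again replacing $\mscr{G}$ by an ${\rm FR}^0$--image with the same $\s_T$, to get a perfect $T$--spanning tree $\Sigma$. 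Now suppose for contradiction that some index, say $1$, lies in $\mc{C}$. By \Cref{lem:no_barriers}, every vertex of $C_1$ has trivial $G$--stabiliser, so $\ell_{\G}(t_1)=\ell_T(t_1)$. Item~(i) then gives $\ell_T(t_1)>24\Gr(G)^3D$, so the arc $\gamma_1=\Sigma\cap C_1$ has length $\ell_T(t_1)-1$, which is very large.

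The aim is to show that $\gamma_1$ is too long to be consistent with a minimal spanning tree, using that $\mc{D}\neq\emptyset$. I cut $\gamma_1$ at the at most $2(|\mc{G}|-1)$ endpoints of the projections $\Pi_j$ of the $C_j$ to $C_1$, as in Case~3 of \Cref{lem:smallish_gamma}. This leaves a boring arc $\beta\sq\gamma_1$ whose length is a large multiple of $D$. By \Cref{lem:spanning_tree_long}(4), each $C_1\cap C_j$ with $j\in\mc{D}$ has length $\le D+1$, so after shrinking $\beta$ I may assume ${\rm int}(\beta)\cap C_j=\emptyset$ for all $j\in\mc{D}$. \Cref{lem:ping_pong}(c) then says that the translates $g\beta$ with $g\neq1$ are disjoint from $\beta$. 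In other words, $\beta$ embeds in $T/G$, and its interior vertices have trivial stabilisers, so $\beta$ is a long, essentially free stretch of the Bass--Serre graph.

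For the contradiction I would use the factor $G_j$ with $j\in\mc{D}$. Being a vertex group of $T/G$-structure, $G_j$ is carried by the subgraph coming from $C_j$, which by \Cref{lem:using_free_indecomposability}(3) has at most $D$ orbits of edges. The quotient graph $T/G$ has a uniformly bounded number of edges, because $T$ is fixed (it is the tree from \Cref{label:setup}). Yet it would have to contain the embedded image of $\beta$, whose length exceeds that bound once $D\ge1$ and Item~(i) holds. More precisely, $T/G$ is a fixed finite graph with, say, $N$ edges, and an embedded arc of $T/G$ has length at most $N$. Since $\ell_T(t_1)$ can be forced large only through $D$, what I need is $24\Gr(G)^3D$ exceeding a fixed multiple of $N$, so that $\beta$ cannot embed in $T/G$. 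I expect this to be the main obstacle: the constants must be matched using that $D\ge1$ and that $N$ is bounded in terms of $\Gr(G)$. If the counting does not close, I would fall back on a ping-pong argument. Here I would use that the $G$--orbit of $\beta$ covers the $t_1$--axis with period $\ell_T(t_1)$, while \Cref{lem:ping_pong} forbids two distinct translates of $\beta$ from overlapping along $\gamma_1$, and I would count orbits of boring subarcs against the bound of $3\Gr(G)$ on orbits of edges in $\Min(G;T)$, as in the Claim in \Cref{lem:using_free_indecomposability}.

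Either way, this contradicts the assumption that $1\in\mc{C}$, so $\mc{C}=\emptyset$.
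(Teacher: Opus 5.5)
Your argument is the paper's argument, except that you leave the decisive comparison unfinished. The shared steps are these. You upgrade to the full \Cref{ass:assumptions} by minimising $\s_T$, which is what the paper's ``after applying an element of ${\rm FR}^0(\mscr{G})$'' leaves implicit. You obtain a perfect spanning tree from \Cref{lem:smallish_gamma} and \Cref{lem:small_gamma}. You use \Cref{lem:no_barriers} to get $\ell_T(t_1)=\ell_{\G}(t_1)>24\Gr(G)^3D$. You cut $\gamma_1$ at the at most $2(|\mc{G}|-1)$ endpoints of the $\Pi_j$, and you control the pieces with \Cref{lem:spanning_tree_long}(4) and \Cref{lem:ping_pong}(c). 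However, you end with ``if the counting does not close\dots'', and your bound $N$ for embedded arcs in $T/G$ is never related to $\Gr(G)$ or $D$. That is the one real gap.

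The missing observation is that $T$ is not an arbitrary splitting but the specific tree of \Cref{label:setup}. So $T/G$ is a star: a central vertex with trivial group, one edge to each of $K_1,\dots,K_r$, and the $s$ loops $u_1,\dots,u_s$ attached at the centre. An arc of $T$ injecting into $T/G$ cannot contain a lift of a loop, since both endpoints would map to the centre. Its vertices therefore alternate between lifts of the centre and lifts of the $K_i$, and the centre occurs at most once, so the arc has at most $2$ edges. Even your crude bound works once you note $N=r+s\le\Gr(G)$.

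The contradiction then closes as follows; it is cleanest to bound every piece, as the paper does. A piece contained in some $C_j$ with $j\in\mc{D}$ has length at most $D+1$ by \Cref{lem:spanning_tree_long}(4). Any other piece is boring with interior disjoint from all such $C_j$. It therefore injects into $T/G$ by \Cref{lem:ping_pong} and has at most $2\le D+1$ edges. This uses $D\ge1$, which is exactly where $\mc{D}\neq\emptyset$ enters. Summing over at most $2|\mc{G}|-1$ pieces gives $\ell_T(t_1)\le 1+(2|\mc{G}|-1)(D+1)\le 24\Gr(G)^3D$, contradicting Item~(i). Your longest-piece version also works, since a piece longer than $D+1\ge2$ is then ruled out on both counts.

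Two parts of your write-up should be dropped. The ping-pong fallback is unnecessary. The sentence calling $G_j$ a vertex group of $T$ is wrong: the $G_j$ come from a Grushko triple unrelated to $T$, and the argument uses them only through $D\ge1$.
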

\begin{proof}
    After applying an element of ${\rm FR}^0(\mscr{G})$, \Cref{lem:small_gamma} guarantees that there exists a perfect spanning tree $\Sigma$ for $\mscr{G}$. Suppose for the sake of contradiction that $1\in\mc{C}$. Denote by $\Pi_j$ the projection of $C_j$ to $C_1$. We now argue as in Case~(iii) of the proof of \Cref{lem:smallish_gamma}.

    Consider a subset $\Delta\sq\gamma_1$ of cardinality $\leq 2(|\mc{G}|-1)$ that contains each $\Pi_j\cap\gamma_1$ that is a singleton, as well as the endpoints of each $\Pi_j\cap\gamma_1$ that is a nontrivial arc. The complement $\gamma_1\setminus\Delta$ consists of at most $2|\mc{G}|-1$ open arcs. Let $\delta$ be the closure of one of these arcs. If $\delta\sq C_j$ for some $j\in\mc{D}$, then $\delta$ has length $\leq D+1$ by \Cref{lem:spanning_tree_long}(4). Otherwise, $\delta$ projects injectively to the quotient $T/G$, by \Cref{lem:ping_pong}; in the latter case, $\delta$ contains at most $2$ edges. 
    This shows that 
    \[ \ell_T(t_1)\leq 1+(2|\mc{G}|-1)(D+1)\leq 24\Gr(G)^3 D ,\] 
    since $D\geq 1$ because of the fact that $\mc{D}\neq\emptyset$. At the same time, \Cref{lem:no_barriers} implies that $\ell_{\G}(t_1)=\ell_T(t_1)$. This violates Item~(i) of \Cref{ass:assumptions}, providing the required contradiction.
\end{proof}

\begin{cor}\label{cor:if_empty_C}
    Under \Cref{label:setup}, suppose that $\mc{C}=\emptyset$ and that Item~(ii) in \Cref{ass:assumptions} holds. Then, setting $S:=\bigcup_iS_i$, there exists an automorphism $\psi\in{\rm FR}^0(\mscr{G})$ such that
    \[ \mf{s}(\psi(S);\G)\leq 5\Gr(G)|S|D .\]
\end{cor}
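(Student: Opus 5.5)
First I reduce to the setting of \Cref{ass:assumptions}. Item~(i) holds vacuously since $\mc{C}=\emptyset$, and Item~(ii) holds by hypothesis; I may assume it for every element of ${\rm FR}^0(\mscr{G})$, since otherwise \Cref{lem:using_free_indecomposability}(5) would already hand us a good automorphism. For Item~(iii), I replace $\mscr{G}$ by $\psi_0(\mscr{G})$, where $\psi_0\in{\rm FR}^0(\mscr{G})$ minimises $\s_T(\psi_0(\mscr{G}))$. This leaves $D$ and all the quantities $\mf{s}(S_i;\G)$ unchanged. Since $\mc{C}=\emptyset$, the conditions in \Cref{defn:perfect+admissible} are vacuous, so any minimal spanning tree $\Sigma$ is perfect, and \Cref{lem:spanning_tree_long} and \Cref{lem:ping_pong} both apply.

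Next I bound the size of $\Sigma$. Since $\Sigma$ is minimal, it is the convex hull of finitely many points, one chosen in each $C_i$. I subdivide $\Sigma$ at the points of $\bigcup_i(\Sigma\cap C_i)$ boundaries, at its branch points, and at projections of one $C_i$ onto another; this gives $O(|\mc{G}|)$ pieces. Each piece is of one of two kinds. The first kind is a connecting arc of the form (a) or (b) in \Cref{lem:ping_pong}, further cut into boring sub-arcs whose interiors avoid every $C_j$. The second kind is a sub-arc of some $\Sigma\cap C_i$. For the first kind, \Cref{lem:ping_pong} shows that the interior of the arc injects into $T/G$. Together with \Cref{lem:spanning_tree_long}(1), this means each such arc has at most $2$ edges, because $T/G$ is a rose-like graph whose edges have length $1$. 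For the second kind, $\Sigma\cap C_i$ contains no two points in the same $G_i$--orbit, so \Cref{lem:using_free_indecomposability}(3),(4) bounds its $d_{\G}$--diameter by roughly $6\Gr(G_i)^2D$, or more simply by $D$ if we use an orbit-counting argument. Summing these contributions bounds the $\G$--diameter of the lift of $\Sigma$ by $C\cdot\Gr(G)D$, where the constant is arranged to give the $5\Gr(G)$ factor; I use $|\mc{G}|\leq\Gr(G)$ here.

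Finally I choose a basepoint. For each $i$, take $x_i$ realising $\mf{s}(S_i;\G)$. Conjugating $G_i$ by an element of $G_i$ costs nothing, and it moves $x_i$ to within $D$ of the lift of $\Sigma\cap C_i$; the argument here is as in \Cref{lem:using_free_indecomposability}(5). Composing the $\psi_0$ with these factorwise conjugations gives $\psi\in{\rm FR}^0(\mscr{G})$. Then, for a single point $y$ on the lift of $\Sigma$, each $s\in\psi(S_i)$ satisfies $d(y,sy)\leq d(x_i,sx_i)+2d(y,x_i)$. Summing over all $s\in S$ gives $\mf{s}(\psi(S);\G)\leq D+2|S|(\mathrm{diam}+D)$, which is $\leq 5\Gr(G)|S|D$ after tuning the constants. \textbf{The main obstacle} is the second step: making the diameter of $\Sigma$ depend only linearly on $\Gr(G)D$, and in particular controlling the pieces inside the $C_i$ without the quadratic loss from \Cref{lem:using_free_indecomposability}(4). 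If that loss cannot be avoided, I would weaken the constant accordingly.
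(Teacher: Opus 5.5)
Your outline (pass to a minimal spanning tree $\Sigma$, bound its size, pick a basepoint near $\Sigma$, conjugate each $G_i$ by an element of itself) matches the paper's. However, Step~2 does not deliver the stated estimate, and the obstacle you flag is exactly where the missing idea lies.

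You bound $\diam(\Sigma)$ by adding up connecting arcs and pieces of $\Sigma\cap C_i$. The latter pieces cost up to $D$ edges each (by an orbit count), or $6\Gr(G_i)^2D$ (via \Cref{lem:using_free_indecomposability}(4)). Feeding a diameter of order $\Gr(G)D$ into $d(y,sy)\le d(x_i,sx_i)+2d(y,x_i)$ gives a bound $c\,\Gr(G)|S|D$ with $c$ well above $5$, or worse. ``Weakening the constant'' is not a proof of the corollary.

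The paper never measures arcs inside the $C_i$. It calls $i,j\in\mc{D}$ a \emph{tight pair} if $C_i\cap C_j=\emptyset$ and the interior of the shortest arc $\beta_{i,j}$ between them misses every $C_l$. Such an arc is automatically boring, so \Cref{lem:ping_pong}(a) applies to it as it stands. (No subdivision is needed; a proper sub-arc would no longer satisfy~(a).) Hence $\beta_{i,j}$ has at most two edges. For two leaves $a,b$ of $\Sigma$, minimality yields indices $i,j$ with $C_i\cap\Sigma=\{a\}$ and $C_j\cap\Sigma=\{b\}$, joined by a chain $i=i_1,\dots,i_k=j$ of tight pairs. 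So $d_T(a,b)\le 2k\le 2\Gr(G)$, and $\Sigma$ lies in the $\Gr(G)$--ball around a centre $\bar x$, with no dependence on $D$. Working with this radius rather than a diameter is also part of what makes the constant $5$ come out.

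Second, you pass from $T$--lengths to ``the $\G$--diameter of the lift of $\Sigma$'' without controlling the $G$--stabilisers of vertices of $\Sigma$. A vertex of $T$ with nontrivial stabiliser $K$ is blown up to a Cayley graph of $K$ in $\G$. So a path that is short in $T$ can be arbitrarily long in $\G$ through such a vertex, and it has no canonical lift.

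The paper handles this explicitly:
\begin{itemize}
\item Non-leaf vertices of $\Sigma$ have trivial $G$--stabiliser, by \Cref{lem:using_free_indecomposability}(2), \Cref{lem:spanning_tree_long}(5) and a ping-pong argument.
\item By \Cref{lem:ping_pong}, the middle vertex of a tight arc has trivial stabiliser, and so does each endpoint unless the corresponding $C_h$ is a singleton.
\end{itemize}
Let $\G_i$ be the connected $G_i$--invariant subgraph with at most $D$ edge-orbits from \Cref{lem:using_free_indecomposability}(3). The two facts above give $d_\G(\G_i,\G_j)\le 2$ for tight pairs. They also give a unique lift $x\in\G$ of $\bar x$ with $d_\G(x,\G_i)\le\Gr(G)$ for every $i$.

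Only then does $D$ enter, once per factor. Choose $x_i\in\G_i$ with $d_\G(x,x_i)\le\Gr(G)$, a minimiser $x_i'\in\G_i$ of $\mf{s}(S_i;\G)$, and $g_i\in G_i$ with $d_\G(g_ix_i',x_i)\le D$. Then $\sum_{s\in S_i}d_\G(g_isg_i^{-1}x,x)\le 2(\Gr(G)+D)|S_i|+D$, and summing over $i$ gives at most $5\Gr(G)|S|D$.

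Two minor points. \Cref{lem:using_free_indecomposability}(5) only controls a single pair $S_i\cup S_j$, so it would not ``hand you a good automorphism''. This is moot, since Item~(ii) is a hypothesis quantified over all of ${\rm FR}^0(\mscr{G})$. Also, the first term of your final sum should be $|\mc{D}|D$, not $D$.
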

\begin{proof}
    After applying an element of ${\rm FR}^0(\mscr{G})$, we can assume that Item~(iii) of \Cref{ass:assumptions} also holds. Let $\Sigma\sq T$ be a $T$--spanning tree with exactly $\s_T(\mscr{G})$ edges. For each $i\in\mc{D}$, choose a connected $G_i$--invariant subgraph $\G_i\sq\G$ such that the action $G_i\acts\G_i$ has $\leq D$ orbits of edges; this is possible by \Cref{lem:using_free_indecomposability}(3).

    Say that a pair of indices $i,j\in\mc{D}$ is \emph{tight} if we have $C_i\cap C_j=\emptyset$ and, denoting by $\beta_{i,j}\sq T$ the shortest arc joining $C_i$ to $C_j$, we have ${\rm int}(\beta_{i,j})\cap C_k=\emptyset$ for all $k\in\mc{D}$. If $i,j$ are a tight pair of indices, then \Cref{lem:ping_pong} shows that $\beta_{i,j}$ projects injectively to the quotient $T/G$, and so $\beta_{i,j}$ consists of at most two edges. \Cref{lem:ping_pong} also shows that the $G$--stabiliser of the vertex in $\beta_{i,j}\setminus(C_i\cup C_j)$ is trivial (if this vertex exists), and that, for each $h\in\{i,j\}$, either $C_h$ is a singleton or the $G$--stabiliser of the vertex in $\beta_{i,j}\cap C_h$ is trivial. In conclusion, we have $d_{\G}(\G_i,\G_j)\leq 2$ for any tight pair of indices $i,j$.

    Consider now the finite tree $\Sigma$. The combination of \Cref{lem:using_free_indecomposability}(2) and \Cref{lem:spanning_tree_long}(5) (and a straightforward ping-pong argument) show that vertices of $\Sigma$ have trivial $G$--stabiliser, with the possible exception of degree--$1$ vertices of $\Sigma$. In turn, for each degree--$1$ vertex $v\in\Sigma$, we have that either the $G$--stabiliser of $v$ is trivial, or there exists $i\in\mc{D}$ such that $C_i=\{v\}$. Now, consider for a moment two degree--$1$ vertices $a,b\in\Sigma$. There exist two indices $i,j\in\mc{D}$ such that $C_i\cap\Sigma=\{a\}$ and $C_j\cap\Sigma=\{b\}$, and there also exists a sequence of indices $i=:i_1,\dots,i_k:=j$ in $\mc{D}$ such that consecutive indices form a tight pair. The above observation about tight pairs and the fact the interior vertices of $\Sigma$ have trivial $G$--stabiliser then imply that $d_T(a,b)\leq 2k\leq 2\Gr(G)$. Given that $a$ and $b$ were arbitrary, this shows that $\diam(\Sigma)\leq 2\Gr(G)$ and, as a consequence, $\Sigma$ is contained in the ball of radius $\Gr(G)$ around a point $\overline x\in\Sigma$. Since we have seen that $\overline x$ has trivial $G$--stabiliser, it admits a unique lift $x\in\G$, and we have $d_{\G}(x,\G_i)\leq\Gr(G)$ for all $i\in\mc{D}$.

    Pick for each $i\in\mc{D}$ a vertex $x_i\in\G_i$ with $d_{\G}(x,x_i)\leq\Gr(G)$. Also pick a vertex $x_i'\in\G_i$ realising $\mf{s}(S_i,\G)$, that is, such that $\sum_{s\in S_i}d_{\G}(sx_i',x_i')\leq D$. Finally, pick an element $g_i\in G_i$ such that $d_{\G}(g_ix_i',x_i)\leq D$. For each $i\in\mc{D}$, we have
    \begin{align*}
        \sum_{s\in S_i}d_{\G}(g_isg_i^{-1}\cdot x,x)&\leq 2\Gr(G)|S_i| + \sum_{s\in S_i}d_{\G}(g_isg_i^{-1}\cdot x_i,x_i) \\
        &\leq 2(\Gr(G)+D)|S_i| + \sum_{s\in S_i}d_{\G}(s\cdot x_i',x_i') \\ 
        &\leq 2(\Gr(G)+D)|S_i|+D \leq 5\Gr(G)|S_i|D .
    \end{align*}
    Now, considering the element $\psi\in{\rm FR}^0(\mscr{G})$ that equals the conjugation by $g_i$ on each $G_i$ with $i\in\mc{D}$, we finally obtain
    \[ \mf{s}(\psi(S);\G)\leq \sum_{i\in\mc{D}}\sum_{s\in S_i}d_{\G}(g_isg_i^{-1}\cdot x,x) \leq 5\Gr(G)|S|D ,\]
    as required.
\end{proof}

\begin{proof}[Proof of \Cref{thm:shortening_free_products}]
    By the Milnor--Schwarz lemma, it suffices to prove the theorem for a specific proper cocompact action $G\acts X$, and we choose this to be the action $G\acts\G$ from \Cref{label:setup}. Let $\mscr{G}$ be the Grushko triple corresponding to the writing $G=G_1\ast\dots\ast G_k\ast\langle t_1\rangle\ast\dots\ast\langle t_m\rangle$ appearing in the statement of the theorem (for clarity, we have $|\mc{D}|=k$ and $|\mc{C}|=m$). Let $S_i\sq G_i$ be the finite generating sets appearing in the theorem statement, and set $S:=\bigcup_{i\in\mc{D}}S_i\cup\{t_1,\dots,t_m\}$.
    
    We will prove the theorem by showing that there exists $\varphi\in{\rm FR}^0(\mscr{G})$ such that 
    \[ \mf{s}(\varphi(S);\G)\leq K_{k,m}\cdot\big(1\vee D) = K_{k,m}\cdot\big(1\vee\max_{i\in\mc{D}}\mf{s}(S_i;\G)\big) ,\]
    where:
    \[ K_{k,m}:=(24\Gr(G)^3)^{m+1}(2+2|S|)^{k+m+1}\leq \big[48\Gr(G)^3(1+|S|)\big]^{\Gr(G)+1}.\]
    We prove this latter statement by induction on the complexity pair $\kappa(\mscr{G})=(k+m,m)$. The base case when $\kappa(\mscr{G})=(1,0)$ is clear, since we have $\mf{s}(S;X)\leq D$ there. For the inductive step, we distinguish three cases, based on whether \Cref{ass:assumptions}(i) fails, \Cref{ass:assumptions}(ii) fails, or neither of the two does.

    Suppose first that Item~(i) of \Cref{ass:assumptions} fails. Thus, there exist an index $i\in\mc{C}$ and an automorphism $\psi\in{\rm FR}^0(\mscr{G})$ such that $\ell_{\G}(\psi(t_i))\leq 24\Gr(G)^3 D$. We can then move the index $i$ into $\mc{D}$ to form a new Grushko triple $\mscr{G}'\prec\psi(\mscr{G})$ with $\kappa(\mscr{G}')=(k+m,m-1)$. Defining $S_j':=\psi(S_j)$ for all $j\in\mc{D}$ and $S_i':=\{\psi(t_i)\}$, we have $\max_{j\in\mc{D}'}\mf{s}(S_j';\G)\leq 24\Gr(G)^3 D$. Therefore, the inductive assumption yields an automorphism $\psi'\in{\rm FR}^0(\psi(\mscr{G}))\leq{\rm FR}^0(\mscr{G})$ such that 
    \[ \mf{s}(\psi'\psi(S);\G)\leq K_{k+1,m-1}\cdot 24\Gr(G)^3 D\leq K_{k,m}D, \]
    as required.

    Suppose now that Item~(ii) of \Cref{ass:assumptions} fails. Thus, there exist a pair of indices $i,j\in\mc{D}$ and an automorphism $\psi\in{\rm FR}^0(\mscr{G})$ such that $\mc{M}_T(\psi(G_i))\cap\mc{M}_T(\psi(G_i))$ either contains an edge or equals a vertex with nontrivial $\psi(G_i)$--stabiliser. We can then merge $\psi(G_i)$ and $\psi(G_j)$ to form a Grushko triple $\mscr{G}'\prec\psi(\mscr{G})$ with $\kappa(\mscr{G}')=(k+m-1,m)$. Defining $S_k':=\psi(S_k)$ for all $k\in\mc{D}$ and choosing $\psi(S_i)\cup\psi(S_j)$ as the generating set for $\langle\psi(G_i),\psi(G_j)\rangle$, we can invoke \Cref{lem:using_free_indecomposability}(5) to obtain
    \[ \max_{k\in\mc{D}'}\mf{s}(S_k';\G)\leq 2(1+|S|)D. \]
    Therefore, the inductive assumption yields an automorphism $\psi'\in{\rm FR}^0(\psi(\mscr{G}))\leq{\rm FR}^0(\mscr{G})$ such that 
    \[ \mf{s}(\psi'\psi(S);\G)\leq K_{k-1,m}\cdot (2+2|S|) D\leq K_{k,m}D , \]
    as required.

    Finally, suppose that Items~(i) and~(ii) of \Cref{ass:assumptions} both hold. If $\mc{D}\neq\emptyset$, then \Cref{cor:C_is_empty} implies that $\mc{C}=\emptyset$, and \Cref{cor:if_empty_C} yields an automorphism $\psi\in{\rm FR}^0(\mscr{G})$ with 
    \[ \mf{s}(\psi(S);\G)\leq 5\Gr(G)|S|D \leq K_{k,m}D .\] 
    If instead $\mc{D}=\emptyset$ (so that $D=0$), let $\psi\in{\rm FR}^0(\mscr{G})$ be an automorphism such that $\psi(\mscr{G})$ satisfies Item~(iii) of \Cref{ass:assumptions}. We can then use \Cref{lem:no_barriers} and \Cref{lem:ping_pong} (similarly to the proof of \Cref{cor:if_empty_C}) to conclude that $\mf{s}(\psi(S);\G)\leq 1$.
    This concludes the proof of the theorem.
\end{proof}

\bibliography{./mybib}
\bibliographystyle{alpha}

\end{document}